\crefname{enumi}{PV}{parts}
\def\blfootnote{\xdef\@thefnmark{}\@footnotetext}
\newtheorem{thm}{Theorem}[section]
\newtheorem{cor}[thm]{Corollary}
\newtheorem{lem}[thm]{Lemma}
\newtheorem{prop}[thm]{Proposition}
\newtheorem{prob}[thm]{Problem}
\theoremstyle{definition}
\newtheorem{defn}[thm]{Definition}
\theoremstyle{remark}
\newtheorem{rem}[thm]{Remark}
\newtheorem{ex}[thm]{Example}
\newtheorem{claim}[thm]{Claim}
\newtheorem{conv}[thm]{Convention}
\newfont{\eufm}{eufm10}
\renewcommand{\phi}{\varphi}
\newcommand{\R}{\mathbb R}
\newcommand{\N}{\mathbb N}
\newcommand{\Z}{\mathbb Z}
\newcommand{\Q}{\mathbb Q}
\newcommand{\C}{\mathbb C}
\renewcommand{\H}{\mathbb{H}}
\newcommand{\Hl}{\mathcal{H}}
\newcommand{\pval}{\widetilde{v}}
\newcommand{\Div}{\operatorname{Div}}
\newcommand{\GL}{\operatorname{GL}}
\renewcommand{\Re}{\operatorname{Re}}
\renewcommand{\Im}{\operatorname{Im}}
\newcommand{\semi}{\rtimes}
\newcommand{\Ga}{\Gamma}
\newcommand{\acts}{\curvearrowright}
\newcommand{\op}{\operatorname}
\newcommand{\Aut}{\op{Aut}}
\newcommand{\HG}{\mathcal{H}(G)}
\newcommand{\GG}{\mathcal{G}(G)}
\newcommand{\mc }{\mathcal}
\newcommand{\mf}{\mathfrak}
\newcommand{\ol}{\overline}
\newcommand{\conf}{\op{Conf}}
\newcommand{\Pmap}{\op{Pval}}
\begin{document}
\title{Valuations, completions, and hyperbolic actions of metabelian groups}
\author{Carolyn R. Abbott \and Sahana Balasubramanya \and Alexander J. Rasmussen \and Appendix \ref{sec:appendix} co-authored with Sam Payne}
\date{}

\maketitle

\begin{abstract}
  Actions on hyperbolic metric spaces are an important tool for studying groups, and so it is natural, but difficult,  to attempt to classify all such actions of a fixed group.  In this paper, we build strong connections between hyperbolic geometry and commutative algebra  in order  to classify the cobounded hyperbolic actions of numerous metabelian groups up to a coarse equivalence. In particular, we turn this classification problem into the problems of classifying ideals in the completions of certain rings and calculating invariant subspaces of matrices. We use this framework to classify the cobounded hyperbolic actions of many abelian-by-cyclic groups associated to expanding integer matrices.
Each such action  is equivalent to an action on a tree or on a Heintze group (a classically studied class of negatively curved Lie groups). Our investigations incorporate number systems, factorization in formal power series rings, completions, and valuations.
\end{abstract}

\tableofcontents

\section{Introduction}

Actions on hyperbolic metric spaces (which we call hyperbolic actions) are among the most important tools in geometric group theory. Hyperbolic actions may be used to study the coarse geometry of a wide variety of groups and are  crucial tools for studying bounded cohomology (\cite{bf}), quotients of groups (\cite{dgo}), and decision problems for groups (\cite{geometry}). It is thus a natural, but typically very difficult, problem  to attempt to classify all of the hyperbolic actions of a group.

To solve this problem, one is forced to study \emph{cobounded} hyperbolic actions up to coarse (quasi-isometric) equivalence. These are minor and natural restrictions, especially as hyperbolicity is preserved under quasi-isometries. However, even after these restrictions, many groups of geometric interest have uncountably many distinct equivalence classes of hyperbolic actions (see, e.g., \cite[Theorem 2.6]{ABO}), making the classification problem seem intractable. It is thus natural to attempt to solve this classification problem for groups with fewer hyperbolic actions. The authors made the first steps towards this goal in \cite{Qp, AR, ABR} by completely classifying the hyperbolic actions of certain classically studied solvable groups. See also \cite{bcfs} for a recent classification of hyperbolic actions of certain lattices.
The present paper was born from a realization that all of the classification results of \cite{Qp, AR, ABR} fit within a common framework. By identifying this common framework and developing new tools for working with groups that fit within it, we are able to classify the hyperbolic actions of numerous metabelian, and specifically abelian-by-cyclic, groups. The key idea that we introduce in this paper is a tool for converting this classification problem into a commutative algebra problem for certain groups. Our analysis unites radix representations, completions, and valuations on commutative rings and relates them to properties of hyperbolic actions.

The set $\mathcal H(G)$ of equivalence classes of cobounded hyperbolic actions  of a group $G$ is naturally a poset, as shown in \cite{ABO}. The partial order is roughly given by considering one action to be smaller than another when the smaller action may be obtained by an operation of collapsing an equivariant family of subspaces in the larger action. The equivalence classes in $\mathcal H(G)$ are called \emph{hyperbolic structures on $G$}. The results of \cite{Qp, AR, ABR} all give a complete description of the poset $\mathcal H(G)$ for the group $G$ in question. In this paper, for any group $G$ fitting into our common framework, we give a partial description of the poset $\mathcal H(G)$, while for groups $G$ satisfying certain additional properties we  give a complete description of $\mathcal H(G)$. Before stating our main theorems, we describe the common algebraic framework in which the results apply.

Consider a ring $R$ that is generated as a $\Z$--algebra by an element $\gamma$. Assuming that $\gamma$ is neither a unit nor a zero divisor, we may define an ascending HNN extension $G\vcentcolon= G(R,\gamma)$ of $R$ by the endomorphism of (the abelian group) $R$ defined by multiplication by $\gamma$.  Further assuming some natural restrictions on the algebra of $R$ (see axioms (A1)--(A4) in Section \ref{sec:axioms}), one may write elements of $R$ in ``base-$\gamma$'' as infinite sums $a_0+a_1\gamma+a_2\gamma^2+\cdots$ where the ``digits'' $a_i$ can be chosen to lie in a finite set. We introduce one more axiom (A5), which  ensures that these \emph{radix representations} are somewhat well-behaved. Whenever these axioms hold, we derive a description of the poset of hyperbolic actions $\mathcal H(G)$. This description is in terms of the \emph{$(\gamma)$-adic completion $\widehat R$ of $R$}, a standard tool from commutative algebra.  To the best of our knowledge, this introduces completions of rings for the first time  as a tool in geometric group theory. 

\begin{thm}\label{thm:main}
 Under the assumptions on $G$ above, the poset $\mathcal H(G)$ consists of the following structures (see Figure \ref{fig:metabelian}): a single elliptic structure, which is dominated by a single lineal structure, and two subposets $\mc P_-(G)$ and $\mathcal P_+(G)$ that intersect in the single lineal structure.  The subposet $\mc P_+(G)$ is isomorphic to the opposite of the poset of ideals of the $(\gamma)$-adic completion $\widehat R$ considered up to multiplication by $\gamma$. The poset $\mc P_-(G)$ is a lattice. Moreover, each element of $\mathcal P_+(G)$ contains an action on a simplicial tree.
\end{thm}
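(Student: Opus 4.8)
The plan is to first pin down the type of an arbitrary cobounded hyperbolic action of $G=G(R,\gamma)$ and then handle each type separately, with the bulk of the work going into the quasi-parabolic (focal) structures. Since $G$ is metabelian, hence amenable, it has no cobounded action of general type, so by the Caprace--Cornulier--Monod--Tessera trichotomy every cobounded hyperbolic action is elliptic, horocyclic, lineal, or quasi-parabolic. The trivial action is the unique elliptic structure and the global minimum of $\mathcal H(G)$. The surjection $\chi\colon G\to\Z$ killing $R$ and sending the stable letter $t$ to $1$ gives a lineal structure; I would show it is the only one, and that every horocyclic structure is dominated by it, using that the axioms (notably, $\gamma$ expanding forces $R/(\gamma-1)R$ finite, so $\mathrm{Hom}(G,\R)$ is one-dimensional) together with amenability (homogeneous quasimorphisms on $G$ are homomorphisms). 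The remaining structures are quasi-parabolic; each carries a Busemann homomorphism, necessarily a nonzero real multiple of $\chi$, and the sign of this multiple splits them into $\mathcal P_+(G)$ and $\mathcal P_-(G)$. A quasi-parabolic action dominates the lineal structure via its Busemann function, and a short argument shows the lineal structure is the only structure dominated by members of both families, so $\mathcal P_+(G)\cap\mathcal P_-(G)$ is exactly the lineal structure; adjoining the elliptic structure below gives the stated shape of $\mathcal H(G)$.

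\emph{The $+$ side.} Following the strategy of \cite{Qp,AR,ABR}, a structure in $\mathcal P_+(G)$ should be determined up to equivalence by a coarsely $G$-invariant pseudometric on the horospherical subgroup $R[\gamma^{-1}]=\varinjlim(R\xrightarrow{\gamma}R)$, and the crux is to classify these. For this orientation the relevant topology on $R[\gamma^{-1}]$ has the subgroups $\gamma^{n}R$ as a neighborhood basis of $0$, so completing in the contracting direction is controlled by the $(\gamma)$-adic completion $\widehat R=\varprojlim R/\gamma^{n}R$. Using the radix representations supplied by (A1)--(A4) and their good behavior from (A5), I would show that the admissible pseudometrics correspond precisely to the $(\gamma)$-adically closed, multiplication-by-$\gamma$-stable subgroups of $\widehat R$ --- equivalently, since $\widehat R$ is topologically generated by $\gamma$ over $\Z$, to the ideals of $\widehat R$ --- with two such giving equivalent structures exactly when they agree after inverting $\gamma$, and a larger ideal giving a more collapsed, hence smaller, structure; this is the asserted anti-isomorphism. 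The forward direction demands building the model space, as a horospherical product over the appropriate quotient, and checking coboundedness through radix estimates, while the converse demands that $G$-invariance of the pseudometric force the corresponding subgroup to be $(\gamma)$-adically closed, which I would obtain from confined-subgroup arguments in the spirit of Caprace--Cornulier--Monod--Tessera combined with the ring axioms. I expect \textbf{this bijection, in both directions, to be the main obstacle.}

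\emph{The $-$ side.} Here the contracting map is multiplication by $\gamma^{-1}$ on $R[\gamma^{-1}]$, but $\gamma^{-1}$ is already a unit there, so no completion enters and the admissible $G$-invariant subgroups are honest subgroups of $R[\gamma^{-1}]$; the resulting structures embed in an order-preserving way into the lattice of ideals of the localization $R[\gamma^{-1}]$. I would then check that the subgroups occurring this way are stable under intersection and under taking the subgroup generated by a union, so that meets and joins are inherited and $\mathcal P_-(G)$ is a lattice.

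\emph{Simplicial trees in $\mathcal P_+(G)$.} For an ideal $I\trianglelefteq\widehat R$, put $S=\widehat R/I$; then $S$ is $(\gamma)$-adically complete, $\gamma$ is topologically nilpotent in $S$, and $S/\gamma S$ is a finite quotient of $R/\gamma R$. I would form the $\gamma$-adic Bass--Serre tree $T_S$ of $S$: the locally finite simplicial tree with vertex set $\bigsqcup_{n\in\Z}S[\gamma^{-1}]/\gamma^{-n}S$, with consecutive levels joined by containment of cosets, on which $S[\gamma^{-1}]\rtimes_{\gamma}\Z$ acts coboundedly while fixing an end. Composing $G\to R[\gamma^{-1}]\rtimes_{\gamma}\Z\to S[\gamma^{-1}]\rtimes_{\gamma}\Z$ (induced by $R\to\widehat R\to S$), and using that density of $R$ in $\widehat R$ makes $G$ act transitively on each level of $T_S$, yields a cobounded action of $G$ on a simplicial tree, which is hyperbolic because trees are; by construction it realizes the $\mathcal P_+(G)$-structure coming from the quotient $\widehat R\to S$, that is, the one indexed by $I$. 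Two degenerate cases need a remark: if $\gamma$ is a zero divisor in $S$ one first divides out its $\gamma$-power torsion, which is annihilated by a fixed power of $\gamma$ and so becomes invisible after inverting $\gamma$; and if $I=\widehat R$ the tree degenerates to a line, giving the lineal structure. Taking $I=0$ recovers a tree quasi-isometric to the Bass--Serre tree of the ascending HNN extension $G$ itself. This shows every element of $\mathcal P_+(G)$ contains an action on a simplicial tree, completing the proof.
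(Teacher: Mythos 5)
Your overall architecture — rule out general type by amenability, show a unique elliptic and unique lineal structure, split the quasi-parabolic structures by the sign of the Busemann homomorphism, and then attack $\mathcal P_+$ via the $(\gamma)$-adic completion and $\mathcal P_-$ by a separate lattice argument — matches the paper, and your tree construction, while phrased differently (via $S=\widehat R/I$ and the levelled tree on $\bigsqcup_n S[\gamma^{-1}]/\gamma^{-n}S$ rather than the paper's Bass--Serre tree of $G$ as an ascending HNN extension of the subring $\mathcal C(\mathfrak a)\subseteq\gamma^{-1}R$), is an equivalent presentation: the stabilizer of the level-$0$ vertex in $\gamma^{-1}R$ is exactly $\mathcal C(\mathfrak a)$, so Schwarz--Milnor returns the same hyperbolic structure. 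You also correctly flag the confining-subsets-to-ideals dictionary as the real technical heart.

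However, there are three genuine gaps. First, and most seriously, your treatment of $\mathcal P_-(G)$ is wrong: the confining subsets under $\alpha^{-1}$ that index $\mathcal P_-(G)$ need not be subgroups of $\gamma^{-1}R$, and they need not even be \emph{equivalent} to subgroups (already for $H=\R$ and $\alpha^{-1}=\tfrac12$, the interval $[-1,1]$ is confining but is neither a subgroup nor equivalent to one, since iterating $\alpha^{-1}$ on the subgroup it generates, namely $\R$, never lands in $[-1,1]$). They therefore cannot be embedded into the lattice of ideals of $R[\gamma^{-1}]$, and your proposed route to the lattice structure would fail. The paper instead proves abstractly, and uniformly for both $\mathcal P_+$ and $\mathcal P_-$, that equivalence classes of confining subsets form a lattice, with meet $P\cdot Q$ and join $P\cap Q$; this needs no ring structure whatsoever (Lemmas 4.4--4.6 and Corollary 4.7). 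You should use that argument, not an embedding into an ideal lattice.

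Second, your argument for the uniqueness of the lineal structure handles only \emph{orientable} lineal actions (those whose Busemann pseudocharacter is a homomorphism vanishing on $R[\gamma^{-1}]$). A general lineal action of $G$ could instead permute the two ends of the line. The paper rules this out by a further argument: if the action were non-orientable, then $A$ would act lineally and hence orientably (since $A$ is abelian), so the index-$2$ subgroup $A\rtimes\langle t^2\rangle$ would fix both ends; applying the orientable analysis to that subgroup shows $A$ acts elliptically, forcing the original action to be the orientable one. This is precisely why Proposition 4.1 assumes that the abelianization of $A\rtimes\langle t^2\rangle$ is virtually cyclic, and why Proposition 6.1 verifies that extra hypothesis. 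Your sketch omits this step.

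Third, you attribute finiteness of $R/(\gamma-1)R$ to $\gamma$ being expanding, but Theorem 1.1 does not assume expansion; only axioms (A1)--(A5) are in force. The paper instead proves $R/(1-\gamma)$ is finite cyclic from the axioms alone, the key observation being that $1-a\gamma$ is not a zero divisor for any $a\in R$, because $(1-a\gamma)r=0$ forces $r\in\bigcap_n(\gamma^n)=0$ by (A4). Leaning on expansion here would make the proof inapplicable to the non-expanding cases covered by Theorem 1.1.
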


 Theorem \ref{thm:main} broadly expands our understanding of $\mc H(G)$ for a large class of groups $G$.  In particular, roughly half the work of describing $\mathcal H(G)$, a geometric problem, is reduced to the algebraic problem of classifying the ideals of the completion $\widehat R$ up to equivalence of ideals under multiplication by $\gamma$. (See Section \ref{sec:P_+isotoideals} for the precise definition of the equivalence relation on ideals of $\widehat R$.) 
However, the lattice $\mathcal P_-(G)$ described in Theorem \ref{thm:main} is mysterious in general. We dispel some of the mystery around the lattice $\mathcal P_-(G)$ by describing it completely for a wide class of metabelian groups $G$.

\begin{figure}[h]
\centering

\begin{tabular}{c c c}

\begin{subfigure}[b]{0.32\textwidth}
\centering

\begin{tikzpicture}[scale=0.35]

\node[circle, draw, minimum size=0.8cm] (triv) at (0,-3) {$*$};
\node[circle, draw, minimum size=0.8cm] (lin) at (0,0) {$\R$};
\node[circle, draw, minimum size=0.8cm] (bs) at (6,3) {$T$};

\node (low1) at (2,1) {};
\node (low2) at (3,1) {};
\node (low3) at (4,1) {};

\node (up1) at (2,1.1) {};
\node (up2) at (3,1.1) {};
\node (up3) at (4,1.1) {};

\node (Low1) at (-2,1) {};
\node (Low2) at (-3,1) {};
\node (Low3) at (-4,1) {};

\draw[thick] (triv) -- (lin);
\draw[thick] (lin) -- (low1);
\draw[thick] (lin) -- (low2);
\draw[thick] (lin) -- (low3);

\draw[thick] (lin) -- (Low1);
\draw[thick] (lin) -- (Low2);
\draw[thick] (lin) -- (Low3);

\draw[thick] (up1) -- (bs);
\draw[thick] (up2) -- (bs);
\draw[thick] (up3) -- (bs);

\draw[thick, dotted, red, rotate=296.5] (-0.05, 3.4) ellipse (60pt and 140pt);

\draw[thick, dotted, blue, rotate=63.5] (-0.05, 3.4) ellipse (60pt and 140pt);

\node[red] (ideals) at (6,-1) {$\mathcal{P}_+(G)$};

\node[blue] (others) at (-6,-1) {$\mathcal{P}_-(G)$};

\end{tikzpicture}
\caption{$\mathcal{H}(G)$}
\label{fig:metabelian}
\end{subfigure}

&

\begin{subfigure}[b]{0.32\textwidth}
\centering

\begin{tikzpicture}[scale=0.35]

\node[circle, draw, minimum size=0.8cm] (triv) at (0,-3) {$*$};
\node[circle, draw, minimum size=0.8cm] (lin) at (0,0) {$\R$};
\node[circle, draw, minimum size=0.8cm] (bs) at (6,3) {$T_+$};
\node[circle, draw, minimum size=0.8cm] (Bs) at (-6,3) {$T_-$};

\node (low1) at (2,1) {};
\node (low2) at (3,1) {};
\node (low3) at (4,1) {};

\node (up1) at (2,1.1) {};
\node (up2) at (3,1.1) {};
\node (up3) at (4,1.1) {};

\node (Low1) at (-2,1) {};
\node (Low2) at (-3,1) {};
\node (Low3) at (-4,1) {};

\node (Up1) at (-2,1.1) {};
\node (Up2) at (-3,1.1) {};
\node (Up3) at (-4,1.1) {};

\draw[thick] (triv) -- (lin);
\draw[thick] (lin) -- (low1);
\draw[thick] (lin) -- (low2);
\draw[thick] (lin) -- (low3);

\draw[thick] (lin) -- (Low1);
\draw[thick] (lin) -- (Low2);
\draw[thick] (lin) -- (Low3);

\draw[thick] (up1) -- (bs);
\draw[thick] (up2) -- (bs);
\draw[thick] (up3) -- (bs);

\draw[thick] (Up1) -- (Bs);
\draw[thick] (Up2) -- (Bs);
\draw[thick] (Up3) -- (Bs);

\draw[thick, dotted, red, rotate=296.5] (-0.05, 3.4) ellipse (60pt and 140pt);

\draw[thick, dotted, blue, rotate=63.5] (-0.05, 3.4) ellipse (60pt and 140pt);

\node[red] (ideals) at (6,-1) {$\operatorname{Sub}(\Z/n\Z)$};

\node[blue] (others) at (-6,-1) {$\operatorname{Sub}(\Z/n\Z)$};

\end{tikzpicture}
\caption{$\mathcal{H}((\Z/n\Z)\wr \Z)$}
\label{fig:lamplighter}
\end{subfigure}

&

\begin{subfigure}[b]{0.32\textwidth}
\centering

\begin{tikzpicture}[scale=0.35]

\node[circle, draw, minimum size=0.8cm] (triv) at (0,-3) {$*$};
\node[circle, draw, minimum size=0.8cm] (lin) at (0,0) {$\R$};
\node[circle, draw, minimum size=0.8cm] (hyp) at (-3,3) {$\H^2$};
\node[circle, draw, minimum size=0.8cm] (bs) at (6,3) {$T$};

\node (low1) at (2,1) {};
\node (low2) at (3,1) {};
\node (low3) at (4,1) {};

\node (up1) at (2,1.1) {};
\node (up2) at (3,1.1) {};
\node (up3) at (4,1.1) {};

\draw[thick] (triv) -- (lin) -- (hyp);
\draw[thick] (lin) -- (low1);
\draw[thick] (lin) -- (low2);
\draw[thick] (lin) -- (low3);

\draw[thick] (up1) -- (bs);
\draw[thick] (up2) -- (bs);
\draw[thick] (up3) -- (bs);

\draw[thick, dotted, red, rotate=296.5] (-0.05, 3.4) ellipse (60pt and 140pt);

\node[red] (poset) at (6,-1) {$2^{\{1,\ldots,r\}}$};

\end{tikzpicture}
\caption{$\mathcal{H}(BS(1,n))$}
\label{fig:bs1n}

\end{subfigure}

\end{tabular}

\caption{Posets of hyperbolic actions. Here $2^{\{1,\ldots,k\}}$ denotes the power set with inclusion and $\operatorname{Sub}(\Z/n\Z)$ denotes the poset of subgroups with inclusion.}
\label{fig:posets}
\end{figure}
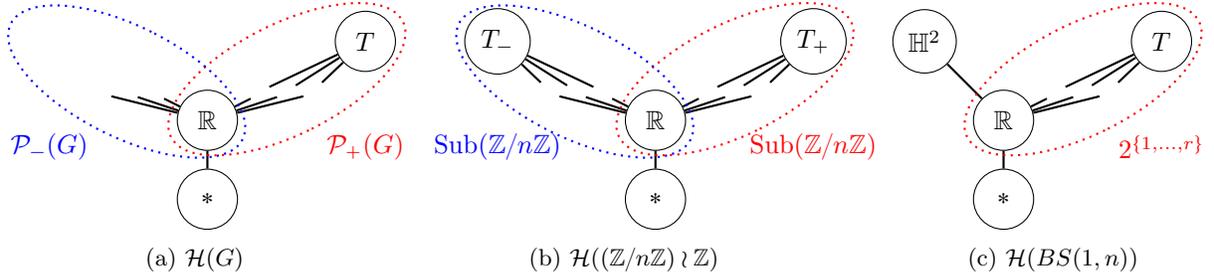

Recall that an \emph{abelian-by-cyclic} group is one of the form $G=A\rtimes \Z$ where $A$ is abelian. We focus on torsion-free, finitely presented abelian-by-cyclic groups. These groups are of interest in dynamics and coarse geometry (see, e.g., \cite{farb_mosher}). Crucially for applying our methods,
such an abelian-by-cyclic group $G$ can be described as an ascending HNN extension of a free abelian group $\Z^n$ \cite{finpres}. That is, there is an endomorphism of $\Z^n$, represented by a matrix $\gamma\in M_n(\Z)$ with $\det \gamma\neq 0$, such that:
\[
G=\left\langle \Z^n,t: tzt^{-1}=\gamma z,\, \forall z\in \Z^n\right\rangle.
\]

In this case, we  completely describe $\mathcal H(G)$ under a few hypotheses on the matrix $\gamma$. We require that all eigenvalues of $\gamma$ lie outside the unit disk in $\C$ (i.e., $\gamma$ is \emph{expanding}) and $\Z^n$ is a cyclic $\Z[x]$-module, where the action of $x$ is induced by that of $\gamma$. We call a matrix $\gamma$ with these  properties \emph{admissible}. The expanding hypothesis on $\gamma$ is the most necessary.  That $\Z^n$ is a cyclic $\Z[x]$-module is always satisfied up to finite index, and  we expect this hypothesis can be removed in future work.

To classify the cobounded hyperbolic actions of $G$, we consider the prime factorization of the characteristic polynomial (which, in this case, is also the minimal polynomial \cite[Section 7.1]{hoffman_kunze}) $p=up_1^{n_1}\cdots p_r^{n_r}$ in the formal power series ring $\Z[[x]]$. Here  $u\in \Z[[x]]$ is a unit and each $p_i\in\Z[[x]]$ is an irreducible power series. Let $[n]$ be the set $\{0,\ldots,n-1\}$ endowed with the standard partial order $\leq$ on $\Z$, and let $\Div(n_1,\ldots,n_r)$ be the poset $[n_1+1]\times \cdots \times [n_r+1]$, which is isomorphic to the poset of divisors of $p=up_1^{n_1}\cdots p_r^{n_r}$ considered up to multiplication by a unit.

\begin{restatable}{thm}{charmin}
\label{thm:char=min}
Let $G$ be an ascending HNN extension of $\Z^n$ by an admissible matrix $\gamma$. Then $\mathcal H(G)$ is as described in Theorem \ref{thm:main} and: 
\begin{enumerate}
\item $\mc P_+(G)$ is isomorphic to $\Div(n_1,\ldots,n_r)$;
\item $\mc P_-(G)$ is isomorphic to the poset of  subspaces  of $\R^n$ that are invariant under $\gamma$; and
\item each element of $\mc P_-(G)$ contains an action on a quasi-convex subspace of a Heintze group.
\end{enumerate}
\end{restatable}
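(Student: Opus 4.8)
The plan is to deduce Theorem~\ref{thm:char=min} from Theorem~\ref{thm:main} by making each of its three abstract descriptions explicit in the admissible case. Since $\Z^n$ is a cyclic $\Z[x]$--module on which $x$ acts by $\gamma$, there is a ring isomorphism $R\cong\Z[x]/(p)$ taking $\gamma$ to the image of $x$, where $p$ is the characteristic ($=$ minimal) polynomial of $\gamma$, and $G=G(R,\gamma)$. The first step is to check that $(R,\gamma)$ satisfies axioms (A1)--(A5) of Section~\ref{sec:axioms}; (A1)--(A4) are elementary for $\Z[x]/(p)$, and (A5) is where the \emph{expanding} hypothesis enters, via the classical fact that an integer matrix with all eigenvalues outside the closed unit disk admits digit (radix) representations with digits in a fixed finite set. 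Note also that $|\det\gamma|\ge 2$, so $p(0)=\pm\det\gamma$ is not a unit and $x$ is neither a unit nor a zero divisor in $R$; thus Theorem~\ref{thm:main} applies, giving the first assertion.

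For (1), I would identify $\widehat R$ and then run a commutative-algebra computation. As $\Z[x]$ is Noetherian, the $(\gamma)$--adic ($=(x)$--adic) completion of $R=\Z[x]/(p)$ is $\widehat R\cong\Z[[x]]/(p\,\Z[[x]])$, and, unwinding the equivalence relation of Section~\ref{sec:P_+isotoideals}, the poset of ideals of $\widehat R$ up to multiplication by $\gamma$ coincides with the poset of ideals of the localization $\widehat R_\gamma:=\widehat R[\gamma^{-1}]\cong\Z[[x]][x^{-1}]/(p)$ (each $\gamma$--equivalence class having its $\gamma$--saturation as canonical representative). The crucial algebraic input is that $\Z[[x]]$ is a UFD whose Jacobson radical is $(x)$, so every maximal ideal contains $x$; hence $\Z[[x]][x^{-1}]$ is a one--dimensional UFD, i.e.\ a PID, and $\widehat R_\gamma$ is a principal ideal ring. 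Since $x\nmid p$ and (as $p$ is monic) no prime integer divides $p$, the factorization $p=u\,p_1^{n_1}\cdots p_r^{n_r}$ persists as a factorization into non-associate irreducibles (up to a unit) in $\Z[[x]][x^{-1}]$, so the Chinese Remainder Theorem gives $\widehat R_\gamma\cong\prod_{i=1}^r\Z[[x]][x^{-1}]/(p_i^{n_i})$, a product of local Artinian chain rings of lengths $n_1,\dots,n_r$. Its ideal poset is therefore $\prod_{i=1}^r[n_i+1]=\Div(n_1,\dots,n_r)$; this is a product of chains, hence self-dual, so passing to the opposite (as Theorem~\ref{thm:main} requires) still gives $\mathcal P_+(G)\cong\Div(n_1,\dots,n_r)$, consistent with the simplicial trees of Theorem~\ref{thm:main} since each factor is a quotient of a PID.

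For (2), I would invoke the description of $\mathcal P_-(G)$ underlying Theorem~\ref{thm:main} --- the ``minus side'' being governed by valuations on $R$, i.e.\ by the Archimedean data of $R$, complementary to the ($\gamma$--adic, non-Archimedean) data seen in (1) --- and specialize it. The point is that $R\otimes_\Z\R\cong\R[x]/(p)\cong\R^n$ as modules over $\R[\gamma]$, with $\gamma$ acting by $x$, and admissibility makes $\R^n$ a \emph{cyclic} $\R[x]$--module; hence its $\R[x]$--submodules, i.e.\ the $\gamma$--invariant subspaces of $\R^n$, are exactly the ideals $(d)/(p)$ for monic $d\mid p$ in $\R[x]$, and they form the self-dual lattice $\prod_j[m_j+1]$ indexed by the multiplicities $m_j$ of the real-irreducible factors of $p$. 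The work is to match the abstract lattice $\mathcal P_-(G)$ with this lattice of invariant subspaces, order-isomorphically. I expect this to be the main obstacle: the description of $\mathcal P_-(G)$ from the general framework is the least concrete ingredient, and one must trace carefully --- through the completion and radix representations --- how a $\gamma$--invariant subspace $W\subseteq\R^n$ records the directions in which the base group $\Z^n$ fails to escape toward the fixed point at infinity; cyclicity is precisely what forces every such abstract datum to be pinned down by, and realized by, a unique invariant subspace.

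Finally, for (3), given a $\gamma$--invariant subspace $W\subseteq\R^n$ let $\bar\gamma$ be the induced expanding automorphism of $\R^n/W$. Choosing a real logarithm $B$ of $\bar\gamma$ (passing, if necessary, to the finite-index subgroup $\Z^n\rtimes 2\Z$ and working with $\bar\gamma^2$, which always has a real logarithm, then inducing the action back up to $G$) yields a connected Heintze group $(\R^n/W)\rtimes_{\exp(tB)}\R$, Gromov hyperbolic by Heintze's theorem since the eigenvalues of $B$ have positive real part; the natural homomorphism from $G$ (or the subgroup) into this group has coarsely dense image, so $G$ acts coboundedly, and one obtains a cobounded hyperbolic action of $G$ on a quasi-convex subspace of a Heintze group. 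I would then compute the equivalence class of this action --- using that the $t$--endpoint is the fixed point at infinity and that $\Z^n$ acts on the horospheres there through its image in $\R^n/W$ --- and check that it is the element of $\mathcal P_-(G)$ attached to $W$ by (2). That last verification is the technical heart of (3); its hyperbolicity is essentially Heintze's theorem.
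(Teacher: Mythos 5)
Your treatment of part (1) is sound and essentially the same as the paper's: identify $\widehat R\cong\Z[[x]]/(p)$ (Lemma~\ref{lem:completionquotient}), pass to the $\gamma$-localization, and apply the Chinese Remainder Theorem to the factorization of $p$. The paper organizes the computation around Lemma~\ref{lem:primepowerideals} rather than by invoking that $x^{-1}\Z[[x]]$ is a PID, but the content is identical.

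Parts (2) and (3) have genuine gaps. For (2), you invoke a ``description of $\mathcal P_-(G)$ underlying Theorem~\ref{thm:main}'' in terms of ``valuations / Archimedean data,'' but no such description exists: Theorem~\ref{thm:main} says only that $\mathcal P_-(G)$ is a lattice, and the $\gamma$-adic/completion apparatus bears entirely on $\mathcal P_+$. Identifying $\gamma$-invariant subspaces of $\R^n$ with ideals of $\R[x]/(p)$ is correct, but it says nothing about confining subsets of $A=\bigcup_i\gamma^{-i}\Z^n$ under $\alpha^{-1}$. The actual content of the proof is the pair of maps $Q\mapsto\mathcal L(Q)=\bigcap_k\overline{\gamma^{-k}Q}$ (a limit set, shown to be a linear subspace via Bourbaki's structure theorem for closed subgroups of $\R^n$) and $V\mapsto Q_\epsilon(V)$ (a neighborhood of $V$ intersected with $A$, adjusted to be $\alpha^{-1}$-stable), together with the nontrivial verification that these are mutually inverse up to equivalence. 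That last step needs the Lagarias--Wang tile $T(\gamma,\mathscr D)$ (to bound word length in bounded subsets of $A$) and Gelfand-type norm estimates on the induced action on $\R^n/V$; none of this is suggested by ``tracing through the completion and radix representations.'' Your sketch does not constitute a proof, and by your own admission you expect it to be ``the main obstacle.''

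For (3), the plan to take a real logarithm of $\bar\gamma^2$, act by the finite-index subgroup $A\rtimes 2\Z$, and then ``induce back up to $G$'' does not work as stated. If $\bar\gamma$ has a negative real eigenvalue (or a real Jordan block with no real square root that acts as $\bar\gamma$), then $t$ does not act by any left translation of the Heintze group $(\R^n/V)\rtimes_{\exp(tB)}\R$: left translation by $(0,1/2)$ applies $\exp(\tfrac12 B)$, which is the ``positive'' square root of $\bar\gamma^2$, not $\bar\gamma$. Cobounded hyperbolic actions of a finite-index subgroup do not simply extend to the ambient group, so you do not obtain an element of $\mathcal H(G)$. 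The paper avoids this entirely by choosing a \emph{complex} one-parameter subgroup $\gamma^t\subset\GL_n(\C)$ through $\gamma$ of the form $\delta^t\epsilon^t$ with $\delta^t$ conjugate into $U(n)$, $\epsilon^t\subset\GL_n(\R)$, and the two commuting (this uses the factor $e^{i\pi t}$ to handle negative real eigenvalues). The full group $G$ then embeds in the Heintze group $\C^n\rtimes_{\gamma^t}\R$, acts isometrically by left translations, and preserves the submanifold $\R^n\times\R$, which is shown to be quasi-isometrically embedded (and hence quasi-convex and hyperbolic) using the unitarity of $\delta^t$; coboundedness on $\R^n\times\R$ then follows from density of $A$ in $\R^n$ via the Lagarias--Wang tile. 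The quotient construction for a general invariant subspace $V$ proceeds the same way on $\C^n/W$. If you want to avoid the complex Heintze group you would have to explain why a direct real construction gives an action of $G$ itself and not merely of $A\rtimes 2\Z$; as written this is a real hole.
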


\emph{Heintze groups} are negatively-curved Lie groups of the form $N\rtimes \R$, where $N$ is a nilpotent Lie group. Under the assumptions of Theorem \ref{thm:char=min},  the group  $G$ admits actions on Heintze groups of the form $\C^k\rtimes \R$, where $\R$ acts on $\C^k$ via a 1-parameter subgroup of $\operatorname{GL}_k(\C)$; see Section~\ref{sec:heintze}. However, these actions are never cobounded, and so do not represent elements of $\mc H(G)$. We show that the subspace $\R^k\times \R$ (which is not generally a subgroup) is quasi-isometrically embedded in this group $\C^k\rtimes \R$. Hence the subspace $\R^k\times \R$ is itself Gromov hyperbolic, and admits a cobounded action of $G$. The elements of $\mc P_-(G)$ are represented by such cobounded hyperbolic actions.

Theorem \ref{thm:char=min} reduces the classification of hyperbolic structures of abelian-by-cyclic groups based on admissible matrices $\gamma$ to two algebraic computations: computing the invariant subspaces of $\gamma$ and computing the prime factorization of $p$ in the formal power series ring $\Z[[x]]$. Computing the invariant subspaces is straightforward using Jordan normal forms (see, e.g., \cite{invariant}), while computing factorizations in formal power series rings can be solved algorithmically (see, e.g., \cite{elliott}).

Theorems \ref{thm:main} and  \ref{thm:char=min} completely recover the main theorems of \cite{AR} and \cite{Qp}. Moreover, they can be applied to describe the poset $\mc H(G)$ for numerous new abelian-by-cyclic groups. We give some examples below; see Section \ref{sec:examples} for details. The methods of Section \ref{sec:examples} can be extended to many other groups.

\begin{ex}
Consider a lamplighter group $(\Z/n\Z)\wr \Z$, where $n\geq 2$. Then $\mathcal H((\Z/n\Z)\wr \Z)$ is as pictured in Figure \ref{fig:lamplighter}: there is a unique lineal structure and two copies of the poset $\operatorname{Sub}(\Z/n\Z)$ of subgroups of $\Z/n\Z$ meeting in the unique lineal structure. Every equivalence class contains an action on a tree or on a point. This description of $\mathcal H((\Z/n\Z)\wr \Z)$ was first obtained in \cite{Qp}. See Section \ref{sec:lamplighterposet} for a new, much shorter proof using Theorem \ref{thm:main}.
\end{ex}

\begin{ex}
Consider a solvable Baumslag-Solitar group $BS(1,n)=\langle t, a : tat^{-1}=a^n\rangle$ for $n\in \Z\setminus \{-1,0,1\}$, and let $n=\pm  p_1^{n_1}\cdots p_r^{n_r}$ be the prime factorization of $n$. Then $\mathcal H(BS(1,n))$ is as pictured in Figure \ref{fig:bs1n}: there is a unique lineal structure, a copy of the poset $2^{\{1,\ldots,r\}}$ of subsets of $\{1,\ldots,r\}$ containing the single lineal structure, and a single additional structure containing an action on the hyperbolic plane $\H^2$. Every element of $2^{\{1,\ldots,r\}}$ contains an action on a tree.  This description of $\mathcal H(BS(1,n))$ was first obtained in \cite{AR}. See Section \ref{sec:lamplighterposet} for a new, much shorter proof using Theorem \ref{thm:char=min}.
\end{ex}

\begin{figure}[h]
\centering

\begin{tabular}{c c c}

\begin{subfigure}[t]{0.33\textwidth}

\centering

\begin{tikzpicture}[scale=0.8]

\node[circle, draw, minimum size=0.1cm] (triv) at (0,-3) {};
\node[circle, draw, minimum size=0.1cm] (lin) at (0,0) {};

\node[circle, draw, minimum size=0.1cm] (10) at (1,2) {};
\node[circle, draw, minimum size=0.1cm] (01) at (3,2) {};
\node[circle, draw, minimum size=0.1cm] (11) at (1,3) {};
\node[circle, draw, minimum size=0.1cm] (02) at (3,3) {};
\node[circle, draw, minimum size=0.1cm] (12) at (2,4) {};

\node[circle, draw, minimum size=0.1cm] (ba) at (-3,2) {};
\node[circle, draw, minimum size=0.1cm] (ab) at (-1,2) {};
\node[circle, draw, minimum size=0.1cm] (bb) at (-3,3) {};
\node[circle, draw, minimum size=0.1cm] (ac) at (-1,3) {};
\node[circle, draw, minimum size=0.1cm] (bc) at (-2,4) {};

\node (p+) at (3.5,3.5) [red] {$\mathcal P_+$};

\node (p-) at (-0.5,3.5) [blue] {$\mathcal P_-$};

\draw[thick] (triv) -- (lin);

\draw[thick, red] (lin) -- (10);
\draw[thick, red] (lin) -- (01);
\draw[thick, red] (10) -- (11);
\draw[thick, red] (01) -- (11);
\draw[thick, red] (01) -- (02);
\draw[thick, red] (02) -- (12);
\draw[thick, red] (11) -- (12);

\draw[thick, blue] (lin) -- (ba);
\draw[thick, blue] (lin) -- (ab);
\draw[thick, blue] (ba) -- (bb);
\draw[thick, blue] (ab) -- (bb);
\draw[thick, blue] (ab) -- (ac);
\draw[thick, blue] (ac) -- (bc);
\draw[thick, blue] (bb) -- (bc);

\end{tikzpicture}
\caption{{$\gamma=\begin{pmatrix} 2 & 0 & 0 \\ 0 & 3 & 1 \\ 0 & 0 & 3 \end{pmatrix}$}}
\label{fig:matrix1}
\end{subfigure}

&

\begin{subfigure}[t]{0.33\textwidth}
\centering

\begin{tikzpicture}[scale=0.8]

\node[circle, draw, minimum size=0.1cm] (triv) at (0,-3) {};
\node[circle, draw, minimum size=0.1cm] (lin) at (0,0) {};

\node[circle, draw, minimum size=0.1cm] (low1) at (1,1) {};
\node[circle, draw, minimum size=0.1cm] (low2) at (2,1) {};
\node[circle, draw, minimum size=0.1cm] (low3) at (3,1) {};
\node[circle, draw, minimum size=0.1cm] (low4) at (4,1) {};

\node[circle, draw, minimum size=0.1cm] (med12) at (0.75,2) {};
\node[circle, draw, minimum size=0.1cm] (med13) at (1.5,2) {};
\node[circle, draw, minimum size=0.1cm] (med14) at (2.25,2) {};
\node[circle, draw, minimum size=0.1cm] (med23) at (3,2) {};
\node[circle, draw, minimum size=0.1cm] (med24) at (3.75,2) {};
\node[circle, draw, minimum size=0.1cm] (med34) at (4.5,2) {};

\node[circle, draw, minimum size=0.1cm] (up123) at (1,3) {};
\node[circle, draw, minimum size=0.1cm] (up124) at (2,3) {};
\node[circle, draw, minimum size=0.1cm] (up134) at (3,3) {};
\node[circle, draw, minimum size=0.1cm] (up234) at (4,3) {};

\node[circle, draw, minimum size=0.1cm] (up) at (2.5,4) {};

\node[circle, draw, minimum size=0.1cm] (left1) at (-1,2) {};
\node[circle, draw, minimum size=0.1cm] (left2) at (-2,2) {};

\node[circle, draw, minimum size=0.1cm] (left) at (-1.5,4) {};

\node (p+) at (4.5,3.5) [red] {$\mathcal P_+$};

\node (p-) at (-0.5,3.5) [blue] {$\mathcal P_-$};

\draw[thick] (triv) -- (lin);
\draw[thick, red] (lin) -- (low1);
\draw[thick, red] (lin) -- (low2);
\draw[thick, red] (lin) -- (low3);
\draw[thick, red] (lin) -- (low4);
\draw[thick, red] (low1) -- (med12);
\draw[thick, red] (low1) -- (med13);
\draw[thick, red] (low1) -- (med14);
\draw[thick, red] (low2) -- (med12);
\draw[thick, red] (low2) -- (med23);
\draw[thick, red] (low2) -- (med24);
\draw[thick, red] (low3) -- (med13);
\draw[thick, red] (low3) -- (med23);
\draw[thick, red] (low3) -- (med34);
\draw[thick, red] (low4) -- (med14);
\draw[thick, red] (low4) -- (med24);
\draw[thick, red] (low4) -- (med34);
\draw[thick, red] (med12) -- (up123);
\draw[thick, red] (med12) -- (up124);
\draw[thick, red] (med13) -- (up123);
\draw[thick, red] (med13) -- (up134);
\draw[thick, red] (med14) -- (up124);
\draw[thick, red] (med14) -- (up134);
\draw[thick, red] (med23) -- (up123);
\draw[thick, red] (med23) -- (up234);
\draw[thick, red] (med24) -- (up124);
\draw[thick, red] (med24) -- (up234);
\draw[thick, red] (med34) -- (up134);
\draw[thick, red] (med34) -- (up234);
\draw[thick, red] (up123) -- (up);
\draw[thick, red] (up124) -- (up);
\draw[thick, red] (up134) -- (up);
\draw[thick, red] (up234) -- (up);
\draw[thick, blue] (lin) -- (left1);
\draw[thick, blue] (lin) -- (left2);
\draw[thick, blue] (left1) -- (left);
\draw[thick, blue] (left2) -- (left);

\end{tikzpicture}
\caption{{$\gamma=\begin{pmatrix} 0 & 0 &-210 \\ 1 & 0 & -1 \\ 0 & 1 & 0 \end{pmatrix}$}}
\label{fig:matrix2}
\end{subfigure}

&

\begin{subfigure}[t]{0.33\textwidth}

\begin{tikzpicture}[scale=0.8]

\node[circle, draw, minimum size=0.1cm] (triv) at (0,-3) {};
\node[circle, draw, minimum size=0.1cm] (lin) at (0,0) {};

\node[circle, draw, minimum size=0.1cm] (100) at (-1,2) {};
\node[circle, draw, minimum size=0.1cm] (010) at (-2,2) {};
\node[circle, draw, minimum size=0.1cm] (001) at (-3,2) {};
\node[circle, draw, minimum size=0.1cm] (110) at (-1,3) {};
\node[circle, draw, minimum size=0.1cm] (101) at (-2,3) {};
\node[circle, draw, minimum size=0.1cm] (011) at (-3,3) {};
\node[circle, draw, minimum size=0.1cm] (111) at (-2,4) {};

\node[circle, draw, minimum size=0.1cm] (tree) at (2,2.5) {};

\node (p+) at (2,3.5) [red] {$\mathcal P_+$};

\node (p-) at (-0.5,3.5) [blue] {$\mathcal P_-$};

\draw[thick] (triv) -- (lin);
\draw[thick, red] (lin) -- (tree);

\draw[thick, blue] (lin) -- (100);
\draw[thick, blue] (lin) -- (010);
\draw[thick, blue] (lin) -- (001);
\draw[thick, blue] (100) -- (110);
\draw[thick, blue] (100) -- (101);
\draw[thick, blue] (010) -- (110);
\draw[thick, blue] (010) -- (011);
\draw[thick, blue] (001) -- (101);
\draw[thick, blue] (001) -- (011);
\draw[thick, blue] (110) -- (111);
\draw[thick, blue] (101) -- (111);
\draw[thick, blue] (011) -- (111);

\end{tikzpicture}
\caption{{$\gamma=\begin{pmatrix} 0 & 0 &0 & 2 \\ 1 & 0 & 0 & 0 \\ 0 & 1 & 0 & 0 \\ 0 & 0 & 1 & 0 \end{pmatrix}$}}
\label{fig:matrix3}
\end{subfigure}

\end{tabular}

\caption{Posets $\mathcal H(G(\gamma))$ for various admissible $\gamma$. Every equivalence class to the left in one of the figures contains an action on a Heintze group, while every class to the right contains an action on a tree.}
\label{fig:posetexamples}
\end{figure}
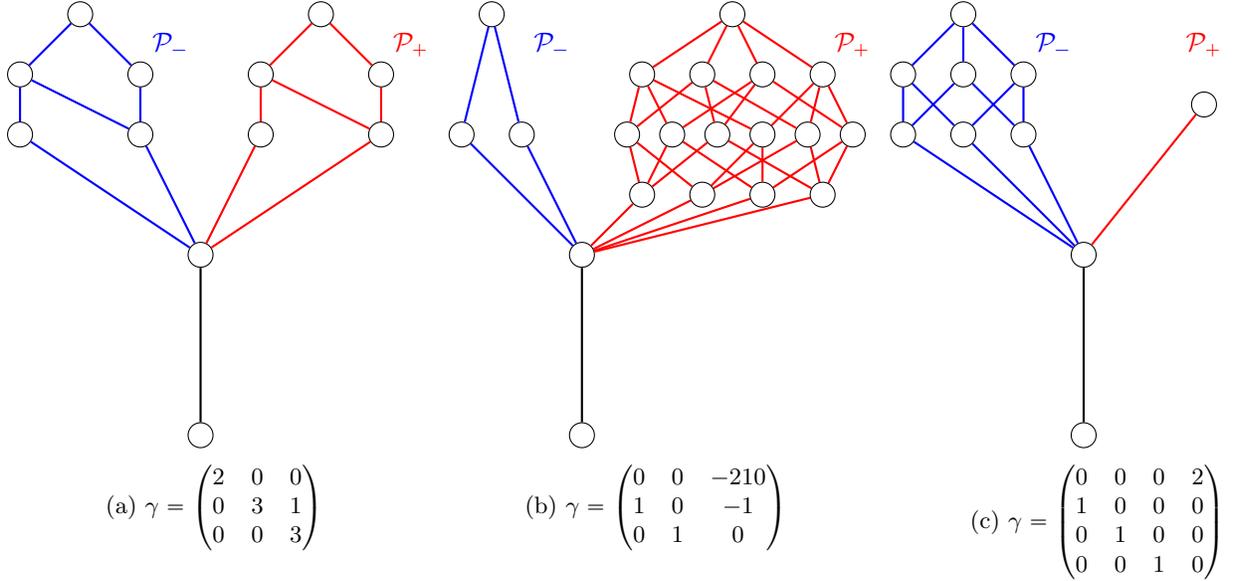

\begin{ex}
Consider a group $G(\gamma)=\langle \Z^n , t : tzt^{-1}=\gamma z \text{ for all } z\in \Z^n\rangle$, where $\gamma\in M_n(\Z)$ is one of the following matrices: \[\gamma = \begin{pmatrix} 2 & 0 & 0 \\ 0 & 3 & 1 \\ 0 & 0 & 3 \end{pmatrix}, \ \ \gamma=\begin{pmatrix} 0 & 0& -210 \\ 1 & 0 & -1\\ 0 & 1 & 0 \end{pmatrix}, \text{ or } \gamma= \begin{pmatrix} 0 & 0 & 0 & 2 \\ 1 & 0 & 0 & 0 \\ 0 & 1 & 0 & 0 \\ 0 & 0 & 1 & 0 \end{pmatrix}.\]  Theorem \ref{thm:char=min} can be used to show that $\mathcal H(G(\gamma))$ is as pictured in Figure \ref{fig:matrix1}, \ref{fig:matrix2}, or \ref{fig:matrix3}, respectively. See Section \ref{sec:otherabcyclicposet} for details.
\end{ex}

 It is not immediately clear how common admissible matrices are.
The following corollaries show that Theorem~\ref{thm:char=min} applies to an abundant family of examples.  Moreover, in the special cases covered by these corollaries, we  give a simple, explicit description of $\mc P_-(G)$.

\begin{restatable}{cor}{polynomialring}
\label{cor:polynomialring}
Let $p(x)\in \Z[x]$ be monic and irreducible with all roots outside the unit disk in $\C$. Set $R=\Z[x]/(p)$, $\gamma=x+(p)$, and $G=G(R,\gamma)$. Then $\mathcal H(G)$ is described by Theorem \ref{thm:char=min} and, moreover, $\mc P_-(G)$ is isomorphic to $\Div(1,\ldots,1)$ where the number of $1$'s between the parentheses is the number of real roots of $p$ plus half the number of complex roots of $p$.
\end{restatable}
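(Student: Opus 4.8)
The plan is to deduce everything from Theorem~\ref{thm:char=min} and then carry out one explicit computation of a poset of invariant subspaces. First I would verify that the setup is of the required form. In the $\Z$-basis $1,x,\dots,x^{n-1}$ of $R\cong\Z^n$, where $n=\deg p$, multiplication by $\gamma=x+(p)$ is given by the companion matrix $C_p$ of $p$; its eigenvalues are the roots of $p$, which lie outside the unit disk by hypothesis, so $C_p$ is expanding, and $\Z^n\cong\Z[x]/(p)$ is cyclic as a $\Z[x]$-module (generated by $1$), so $C_p$ is admissible. Also $\det C_p=\pm p(0)\neq 0$ since $0$ is not a root of $p$, the ring $R$ is a domain because $p$ is irreducible, and $\gamma$ is not a unit since a relation $xq\equiv 1\pmod{p}$ in $R$ would give $p(0)\mid 1$ in $\Z$, contradicting $|p(0)|>1$ (indeed $|p(0)|$ equals the product of the moduli of the roots, each exceeding $1$, and there is at least one root). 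Hence $G=G(R,\gamma)$ is an ascending HNN extension of $\Z^n$ by an admissible matrix, so Theorem~\ref{thm:char=min} applies; in particular $\mathcal{P}_-(G)$ is isomorphic to the poset of $C_p$-invariant subspaces of $\R^n$ ordered by inclusion, and it remains only to identify that poset.

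Extending scalars, $\R^n$ is a cyclic $\R[x]$-module isomorphic to $\R[x]/(p)$ with $\gamma$ acting as multiplication by $x$, so $C_p$-invariant subspaces are exactly $\R[x]$-submodules. Because $p$ is irreducible over $\Q$ and $\Q$ has characteristic zero, $p$ is separable, so over $\R$ it factors as $p=\ell_1\cdots\ell_a\,q_1\cdots q_b$ into pairwise distinct monic irreducibles, the $\ell_i$ linear (one per real root, so $a$ is the number of real roots of $p$) and the $q_j$ quadratic (one per conjugate pair, so $b$ is half the number of complex roots). Distinct irreducible factors of a squarefree polynomial are coprime, so the Chinese Remainder Theorem gives
\[
\R[x]/(p)\;\cong\;\prod_{i=1}^{a}\R[x]/(\ell_i)\;\times\;\prod_{j=1}^{b}\R[x]/(q_j),
\]
a direct sum of $a+b$ simple $\R[x]$-modules (each factor is a field) that are pairwise non-isomorphic (they have distinct annihilators). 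Thus $\R[x]/(p)$ is a multiplicity-free semisimple $\R[x]$-module, its submodules are precisely the partial direct sums of these factors, and the submodule lattice is the Boolean lattice on $a+b$ elements. Since $\Div(1,\dots,1)$ with $k$ ones is by definition $[2]\times\cdots\times[2]$ with $k$ factors — the Boolean lattice on $k$ elements — this gives $\mathcal{P}_-(G)\cong\Div(1,\dots,1)$ with $a+b$ ones, which is the number of real roots of $p$ plus half the number of complex roots.

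Given Theorem~\ref{thm:char=min} as input, the argument is short. The one step requiring care is the final module-theoretic claim: a submodule of a finite-length module that is a direct sum of pairwise non-isomorphic simple modules is again a direct sum of a subfamily of those simples, since $\operatorname{Hom}_{\R[x]}$ between non-isomorphic simple modules vanishes by Schur's lemma, ruling out ``diagonal'' submodules. The remaining checks — admissibility of $C_p$ and squarefreeness of $p$ over $\R$, the latter resting on irreducibility over $\Q$ — are routine, so I do not expect a genuine obstacle here.
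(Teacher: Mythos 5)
Your proof is correct, and the verification of admissibility matches the paper's. The interesting divergence is in how you identify $\mathcal P_-(G)$. The paper simply invokes its Proposition~\ref{prop:posetofsubspaces}, which classifies $\gamma$-invariant subspaces of $\R^n$ in general (using generalized eigenspaces and real Jordan blocks) as $\Div(k_1,\dots,k_r,l_1,\dots,l_s)$ with the $k_i,l_j$ the algebraic multiplicities; since $p$ irreducible over $\Q$ has no repeated roots, all multiplicities are $1$ and the paper is done in one line. You instead give a self-contained module-theoretic computation: observe that $\R^n\cong\R[x]/(p)$ as a cyclic $\R[x]$-module, use separability (from irreducibility over $\Q$ in characteristic $0$) to factor $p$ over $\R$ into distinct irreducibles, apply CRT to get a multiplicity-free semisimple module, and invoke Schur's lemma to conclude the submodule lattice is Boolean on $a+b$ generators. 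Both routes are sound and land in the same place; yours avoids the general machinery of generalized eigenspaces and Jordan normal form by exploiting that the diagonalizable (multiplicity-free) case is all that is needed here, whereas the paper's proposition is built to handle nontrivial Jordan blocks and is reused elsewhere. The only minor redundancy in your write-up is re-checking axioms (A1)--(A4) for $R$ (e.g.\ that $\gamma$ is not a unit and $R$ is a domain), which the paper already establishes in Lemma~\ref{lem:A1A4}; this is harmless but not needed once you have reduced to Theorem~\ref{thm:char=min} via admissibility of the companion matrix.
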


If the constant term of $p$ is square-free, then the problem of computing the prime factorization of $p$ in $\Z[[x]]$ is straightforward, leading to:

\begin{cor}
\label{cor:squarefreeconstant}
Suppose, in addition to to the hypotheses of Corollary \ref{cor:polynomialring}, that the constant term of $p$ is square-free. Then $\mathcal H(G)$ is described by Theorem \ref{thm:char=min} and, moreover,
\begin{enumerate}
\item $\mc P_+(G)$ is isomorphic to $\Div(1,\ldots,1)$ where the number of $1$'s between the parentheses is the number of prime factors of the constant term of $p$; and
\item $\mc P_-(G)$ is isomorphic to $\Div(1,\ldots,1)$ where the number of $1$'s between the parentheses is the number of real roots of $p$ plus half the number of complex roots of $p$.
\end{enumerate}
\end{cor}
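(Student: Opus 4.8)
The plan is to deduce the bulk of the statement directly from Corollary~\ref{cor:polynomialring} and to reduce the only genuinely new assertion to an explicit factorization computation in $\Z[[x]]$. Since the hypotheses here are precisely those of Corollary~\ref{cor:polynomialring} together with the extra requirement that $p(0)$ be square-free, the claim that $\mc H(G)$ is as in Theorem~\ref{thm:char=min}, together with part~(2), is immediate: these are verbatim conclusions of Corollary~\ref{cor:polynomialring}, and in particular the admissibility of the relevant matrix and the description of $\mc P_-(G)$ are inherited. It therefore remains to prove part~(1). Writing $p = u\,p_1^{n_1}\cdots p_r^{n_r}$ for the factorization of $p$ into irreducibles in the UFD $\Z[[x]]$, as in Theorem~\ref{thm:char=min}, I must show that $r$ equals the number $k$ of prime divisors of the constant term $p(0)$ and that $n_1 = \cdots = n_r = 1$; Theorem~\ref{thm:char=min}(1) then identifies $\mc P_+(G)$ with $\Div(1,\dots,1)$ with $k$ ones.

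The key step is the following elementary lemma, which I would prove first: if $f \in \Z[[x]]$ satisfies $f(0) = ab$ with $a,b \in \Z$, $\gcd(a,b) = 1$, and $|a|,|b| \ge 2$, then $f$ factors in $\Z[[x]]$ as a product of two non-units with constant terms $a$ and $b$. To prove it I would construct $g = \sum_{m\ge 0} b_m x^m$ and $h = \sum_{m \ge 0} c_m x^m$ with $gh = f$ by setting $b_0 = a$, $c_0 = b$ and solving for the remaining coefficients recursively: comparing the coefficient of $x^m$ in $gh = f$ yields an equation $a c_m + b\, b_m = d_m$, where $d_m \in \Z$ is determined by $a$, $b$ and the previously chosen coefficients, and this is solvable over $\Z$ by B\'ezout \emph{precisely because} $\gcd(a,b) = 1$. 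Since the units of $\Z[[x]]$ are exactly the power series with constant term $\pm 1$, and $g,h$ have constant terms $a,b$ of absolute value at least $2$, neither $g$ nor $h$ is a unit, so $f$ is reducible.

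Granting the lemma, part~(1) follows by bookkeeping on constant terms. In $p = u\,p_1^{n_1}\cdots p_r^{n_r}$ we have $u(0) = \pm 1$ (and, $p$ being monic, no irreducible factor is a constant), so each $p_i$ is a non-unit with $|p_i(0)| \ge 2$; moreover $p_i(0)$ divides $p(0)$, so every prime dividing $p_i(0)$ divides $p(0)$. If some $|p_i(0)|$ had two coprime factors each of absolute value at least $2$, the lemma would contradict the irreducibility of $p_i$; hence $|p_i(0)| = \ell_{\sigma(i)}^{\,e_i}$ is a prime power for some prime $\ell_{\sigma(i)} \mid p(0)$ and some $e_i \ge 1$. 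Comparing constant terms gives $\pm p(0) = \prod_{i=1}^r \ell_{\sigma(i)}^{\,e_i n_i}$, and square-freeness of $p(0)$ forces, for each prime $\ell_j \mid p(0)$, the exponent $\sum_{i\,:\,\sigma(i)=j} e_i n_i$ to equal $1$; thus there is a unique index $i$ with $\sigma(i) = j$, and for it $e_i = n_i = 1$. Therefore $\sigma$ is injective, and it is surjective since every prime factor of $p(0)$ divides some $p_i(0)$, so $r = k$ and every $n_i = 1$, as required.

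I expect the only step requiring real content to be the reducibility lemma, whose proof is nevertheless short; the conceptual point underlying the corollary is that square-freeness of the constant term \emph{decouples} the rational primes dividing $p(0)$, each becoming responsible for exactly one irreducible factor in $\Z[[x]]$, occurring to the first power. The main thing to be careful about is to ensure the square-free hypothesis is used to pin down \emph{both} the count $r = k$ and the vanishing of all the multiplicities $n_i$, rather than only one of the two, and to observe (via monicity) that there are no constant irreducible factors to worry about.
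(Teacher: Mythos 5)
Your argument is correct, and it uses the same essential ingredients as the paper's proof, though in the reverse direction. The paper proceeds \emph{constructively}: starting from the prime factorization $p(0) = \pm q_1\cdots q_r$ of the square-free constant term, it invokes Corollary~\ref{cor:seriesfactorization} (which rests on the same coprime-constant-term lifting lemma you prove) to build a factorization $p = \pm p_1 \cdots p_r$ with $p_i(0) = q_i$, and then applies Lemma~\ref{lem:primeconstantterm} (prime constant term implies prime power series) to see at once that this is the prime factorization. You instead begin with the abstract prime factorization $p = u\,p_1^{n_1}\cdots p_r^{n_r}$ guaranteed by the UFD property, prove the lifting lemma (Lemma~\ref{lem:powerseriesunits} gives the unit criterion, and the recursive B\'ezout construction is exactly the proof of Lemma~5.9 that the paper omits), deduce that each $|p_i(0)|$ must be a prime power, and then close with exponent bookkeeping: comparing $\pm p(0) = \prod \ell_{\sigma(i)}^{e_i n_i}$ with square-freeness forces $e_i = n_i = 1$ and $\sigma$ to be a bijection. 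This deductive route avoids any need for Lemma~\ref{lem:primeconstantterm} and makes the role of square-freeness perhaps more transparent, at the cost of slightly more bookkeeping; the paper's constructive route is shorter given that its supporting lemmas are already in place. Both are valid, and both correctly identify that the reduction to Corollary~\ref{cor:polynomialring} handles everything except part~(1).
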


Part of the description in Theorem \ref{thm:char=min}  holds in greater generality. In particular, we can drop the requirement that $\Z^n$ is a cyclic module.

\begin{thm}
\label{thm:P-description}
Let $G$ be an ascending HNN extension of $\Z^n$ by a matrix $\gamma\in M_n(\Z)$ which is expanding and whose characteristic and minimal polynomials are equal.  The poset $\mathcal H(G)$ consists of the following structures: a single elliptic structure, which is dominated by a single lineal structure, and two subposets $\mc P_-(G)$ and $\mathcal P_+(G)$ that intersect in the single lineal structure. The poset $\mc P_-(G)$ is isomorphic to the poset of invariant subspaces of $\R^n$ under $\gamma$. Each element of $\mc P_-(G)$ contains an action on a quasi-convex subspace of a Heintze group. Moreover, $\mc P_+(G)$ is a lattice.
\end{thm}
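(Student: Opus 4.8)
The plan is to apply the structural results underlying Theorem~\ref{thm:main} to $G$ directly, regarding $G$ as an ascending HNN extension of the abelian group $\Z^n$ by the injective endomorphism $\gamma$ rather than as a $G(R,\gamma)$. Those results are essentially statements about such HNN extensions; the ring $R$ in Theorem~\ref{thm:main} only serves to produce well-behaved base-$\gamma$ radix representations via axioms~(A1)--(A5). So the first step is to check the module analogues of these axioms for $G$, and the point is that they follow from $\gamma$ being expanding: the classical theory of number systems for expanding integer matrices supplies a finite digit set $D\subseteq\Z^n$ of coset representatives for $\Z^n/\gamma\Z^n$ together with controlled radix expansions, with no recourse to cyclicity of $\Z^n$ as a $\Z[x]$-module. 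Granting this, the machinery of Theorem~\ref{thm:main} gives that $\mathcal H(G)$ has the asserted global shape -- one elliptic structure dominated by one lineal structure, with $\mathcal P_-(G)$ and $\mathcal P_+(G)$ meeting exactly in the lineal structure -- and presents $\mathcal P_+(G)$ as the opposite of the poset of submodules of the $(\gamma)$-adic completion $\widehat{\Z^n}=\varprojlim_k \Z^n/\gamma^k\Z^n$, taken up to multiplication by $\gamma$.

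With this presentation in hand, that $\mathcal P_+(G)$ is a lattice should be routine: submodules of any module form a complete lattice under intersection and sum, and one checks that the equivalence ``up to multiplication by $\gamma$'' from Section~\ref{sec:P_+isotoideals} respects both operations, using that multiplication by $\gamma$ is injective on $\widehat{\Z^n}$ (which holds since $\det\gamma\neq 0$). Hence the quotient poset, and so its opposite $\mathcal P_+(G)$, is a lattice. I would not expect a more explicit description as in Theorem~\ref{thm:char=min}: without the cyclic $\Z[x]$-module hypothesis the completion $\widehat{\Z^n}$ need not be isomorphic to $\Z[[x]]/(p)$, so its submodules need not be indexed by the divisors of $p$ in $\Z[[x]]$.

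For the two remaining claims I would argue that the proofs of parts~(2) and~(3) of Theorem~\ref{thm:char=min} go through with no change, since they only use that $\gamma$ is expanding with equal characteristic and minimal polynomials and never the cyclicity of $\Z^n$ over $\Z[x]$. In outline: to a $\gamma$-invariant subspace $W\subseteq\R^n=\Z^n\otimes_\Z\R$ one associates the Heintze group built from $(\R^n/W)\otimes_\R\C$ with $\R$ acting by $t\mapsto\exp(t\log\gamma)$, checks that $G$ acts coboundedly on a quasi-isometrically embedded, quasi-convex subspace of it, and reads off an element of $\mathcal P_-(G)$; conversely, one shows that the collapsing data the machinery attaches to an arbitrary element of $\mathcal P_-(G)$ is always the $\R$-span of such a $W$, and that $W\mapsto[W]$ is an order-reversing bijection onto $\mathcal P_-(G)$.

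The main obstacle is precisely this converse -- showing that \emph{every} element of $\mathcal P_-(G)$ is accounted for by a $\gamma$-invariant $\R$-subspace, and not by some larger family of sub-representations -- and this is where the equality of the characteristic and minimal polynomials enters. It makes $\gamma$ non-derogatory, so the commutant of $\gamma$ in $M_n(\R)$ equals $\R[\gamma]$ and the poset of $\gamma$-invariant $\R$-subspaces of $\R^n$ is finite and rigid (it is the poset of monic divisors of $p$ in $\R[x]$); this rigidity pins the collapsing data down to a single invariant subspace, whereas in the generality of Theorem~\ref{thm:main}, where it can fail, $\mathcal P_-(G)$ is correspondingly ``mysterious.'' A secondary, bookkeeping obstacle is transferring the ring-theoretic arguments behind Theorem~\ref{thm:main} to the module setting, i.e., replacing ``ideal of the completion $\widehat R$'' by ``submodule of $\widehat{\Z^n}$'' throughout without disturbing the proofs.
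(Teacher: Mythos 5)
Your treatment of $\mathcal P_-(G)$ and the Heintze-group actions is on the right track: you correctly observe that the proofs of parts~(2) and~(3) of Theorem~\ref{thm:char=min} never invoke cyclicity of $\Z^n$ as a $\Z[x]$-module, and your outline (associate to an invariant $V$ the Heintze group built on $(\C^n/W)\rtimes\R$ with quasi-convex subspace $(\R^n/V)\times\R$, and conversely extract an invariant subspace from a confining set) matches the paper's route through Proposition~\ref{prop:P-invar}, Sections~\ref{sec:pminusasinvspaces}--\ref{sec:heintze}, and Lemma~\ref{lem:heintzerepresentative}. One small correction on the role of the non-derogatory hypothesis: the maps $Q\mapsto\mathcal L(Q)$ and $V\mapsto Q_\epsilon(V)$ use only the expanding property via Gelfand-type estimates; non-derogatoriness enters to give the poset of invariant subspaces the form $\Div(k_1,\dots,k_r,l_1,\dots,l_s)$ (Propositions~\ref{prop:invariantsubspaceclassification} and \ref{prop:posetofsubspaces}) and in particular its self-duality, which is what lets the statement read ``isomorphic to'' rather than ``anti-isomorphic to'' the poset of invariant subspaces. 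And $\mathcal P_-(G)$ is mysterious in Theorem~\ref{thm:main} not because non-derogatoriness fails there but because a general $R$ satisfying (A1)--(A5), such as $(\Z/n\Z)[x]$, has no natural expanding action on a real vector space at all.

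Where the proposal goes wrong is in the route to ``$\mathcal P_+(G)$ is a lattice.'' You propose to first recast axioms (A1)--(A5) in a module setting, build a module analogue of the Section~\ref{sec:pplusasideals} correspondence between confining subsets and $\Z[[x]]$-submodules of the completion $\widehat{\Z^n}$, and then deduce the lattice property from the lattice of submodules. That correspondence is \emph{not} established in the paper; the authors explicitly flag ``completing $\Z^n$ \emph{as a module} and classifying the hyperbolic structures of $G$ in terms of closed sub-modules'' as a future problem right after Theorem~\ref{thm:P-description}, and the ring-level proofs genuinely use the ring structure (e.g.\ Lemma~\ref{lem:idealtosubring} shows $\mathcal C(\mathfrak a)$ is a subring). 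So your lattice argument rests on unbuilt machinery. It is also unnecessary: the paper gets the lattice statement for free from Corollary~\ref{cor:lattice} together with Proposition~\ref{prop:generalstructure}, which show at the level of abstract confining subsets that $\mathcal P_\pm(G)$ are lattices (meet $P\cdot Q$, join $P\cap Q$) whenever $G=A\rtimes\Z$ is abelian-by-cyclic with virtually cyclic abelianizations of $G$ and of $A\rtimes\langle t^2\rangle$; Lemma~\ref{lem:abbycyclicabelianization} verifies that hypothesis whenever $\pm1$ is not an eigenvalue of $\gamma$, automatic for expanding $\gamma$. No radix theory, digit sets, completions, or submodule classification is needed for the lattice claim or for the global shape of $\mathcal H(G)$. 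You conflated the two halves of the proof of Theorem~\ref{thm:main}: the general structural half (Proposition~\ref{prop:generalstructure}), which transfers immediately, and the completion/ideal half (Theorem~\ref{thm:I(G)}), which is the part that uses axioms (A1)--(A5) and would require genuinely new work to transplant to modules.
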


We expect that the theory developed in this paper will lay the groundwork for future, more general classifications of hyperbolic actions of metabelian groups. A first direction would be to solve:

\begin{prob} Complete the classification of hyperbolic actions of abelian-by-cyclic groups defined by expanding integer matrices. \end{prob} 

This will necessitate describing $\mathcal P_+(G)$ in the case that $\Z^n$ is not a cyclic $\Z[x]$-module.
We anticipate that this can be done by completing $\Z^n$ \emph{as a module} and classifying the hyperbolic structures of $G$ in terms of closed sub-modules of the completion. A next direction would be:

\begin{prob} Classify hyperbolic actions of finitely generated metabelian groups with abelianizations of rank $>1$. \end{prob}  

The machinery developed in \cite{ABR} may be useful here. For a general metabelian group $G$ with (abelian) commutator subgroup $A=[G,G]$ and abelianization $Q=G/A$, the abelian group $A$ is a $\Z[Q]$-module. It may be possible to classify (certain) hyperbolic structures on $G$ in terms of closed sub-modules of certain completions of $A$.

A direction slightly tangential to the methods of this paper would be the following:

\begin{prob}
Classify hyperbolic actions of polycyclic groups.
\end{prob}

Some cases of this problem were solved in \cite{Largest}, and it is possible that the general case may be approached using invariant subspaces and possibly methods from algebraic number theory.

\subsection{About the proofs}

The proof of Theorem \ref{thm:main} utilizes the description of $G=G(R,\gamma)$ as a semidirect product $G=\gamma^{-1}R\rtimes \Z$, where $\gamma^{-1}R$ is the localization of $R$ with respect to the powers of $\gamma$.
This localization is a larger ring containing $R$ in which division by $\gamma$ is allowed. The proof uses the correspondence, first described in \cite{Amen},  of \emph{confining subsets} of $\gamma^{-1}R$ and certain hyperbolic actions of $G$; such subsets are attracting under multiplication by $\gamma$ and nearly closed under addition. 
Another crucial ingredient in the proof is the \emph{$(\gamma)$-adic completion} $\widehat R$ of $R$. This is the inverse limit of the quotients $R/(\gamma^n)$; see Section \ref{sec:completions} for details.

Theorem \ref{thm:main} is then proven by showing the following correspondences: \[\text{ideals of } \widehat R \leftrightarrow \text{confining subsets of } \gamma^{-1}R \text{ under } \gamma \leftrightarrow \text{valuations on } \gamma^{-1} R \leftrightarrow \text{actions of } G(R,\gamma) \text{ on trees.} \] 
For the first correspondence, we write an element of $\gamma^{-1} R$ with a (possibly infinite) base-$\gamma$ address \[a_{-k} \gamma^{-k} + a_{-k+1} \gamma^{-k+1} + \cdots + a_0 +a_1\gamma +\cdots.\]
The key step is to show that in a confining subset $Q$ of $\gamma^{-1} R$, the fractional parts $a_{-k}\gamma^{-k} + \cdots+a_{-1}\gamma^{-1}$ of elements are governed by an ideal $\frak a$ of $\widehat R$. In particular we show that, up to a small amount of ambiguity, the fractional part 
will agree with the first $k$ digits of an element \[a_{-k}+a_{-k+1}\gamma + \cdots+a_{-1}\gamma^{k-1} + b_k \gamma^k +b_{k+1} \gamma^{k+1} +\cdots \in \frak a,\] giving the correspondence between confining subsets of $\gamma^{-1} R$ and ideals of $\widehat R$.

\vspace{5pt}
The proof of Theorem \ref{thm:char=min} involves two major steps: classifying ideals in the completion of the ring $R=\Z[x]/(p)$ and classifying confining subsets of $\gamma^{-1}R$ under multiplication by $\gamma^{-1}$. For the first step, 
we show that the localization $\gamma^{-1} \widehat R$ is a product of rings with a simple structure: each ring in the product has a single finite chain of ideals, corresponding to a chain of prime power divisors $1, p_i, p_i^2,\ldots, p_i^{n_i}$ of $p$. This uses recent work of McDonough \cite{mcdonough} and Elliott \cite{elliott} on quotients and factorizations in formal power series rings.

For the second step,
we study invariant subspaces of $\gamma$ in $\R^n$. 
We construct a confining subset of $\gamma^{-1}R$ from such an invariant subspace by (essentially) looking at a neighborhood of the subspace. Conversely, to recover an invariant subspace from a confining subset we reverse engineer the construction by taking limits after repeatedly applying $\gamma^{-1}$ to a confining subset.

 For groups $G$ considered in Theorem \ref{thm:char=min}, we use valuations to show that every element of $\mc P_+(G)$ contains an actions on a tree.  There is a natural way to construct a tree associated to a valuation on a set, and this construction can be modified to incorporate a group action (see Section \ref{sec:valuations} and \cite{brown}).  
 For a more general class of metabelian groups, hyperbolic structures in general correspond to a certain form of generalized valuation on an abelian group that we call a \emph{pseudo-valuation}.  This correspondence,  described in Appendix \ref{sec:pseudoval},  could be useful for exploring actions of solvable groups with higher rank abelianizations.

\paragraph{Structure of the paper.} Section \ref{sec:prelims} gives the required preliminary information for this paper. Section \ref{sec:axioms} explains the conditions we consider on the element $\gamma$ and delves into the details of radix representations in base $\gamma$ as well as $(\gamma)-$adic completions. Section \ref{sec:confining} considers the structure of the poset of hyperbolic structures of a general abelian-by-cyclic group.  Section \ref{sec:examples} classifies the hyperbolic actions of many groups using Theorem \ref{thm:main}, some of which were obtained previously in other papers -- this is meant to demonstrate the power of the theorem, as well as provide some intuition to the reader regarding the technical results of the following sections.  Sections \ref{sec:pplusasideals} and \ref{sec:pminusasinvspaces} describe the subposets $\mathcal P_+(G)$ and $\mathcal P_-(G)$ in terms of ideals and invariant subspaces, respectively. Sections \ref{sec:vals} and \ref{sec:heintze} describe the elements of $\mathcal P_+(G)$ and $\mathcal P_-(G)$ via actions on trees and Heintze groups, respectively.   Appendix \ref{sec:appendix} deals with the proof of Theorem \ref{thm:char=min}. Lastly, Appendix \ref{sec:pseudoval} describes the correspondence between confining subsets and pseudo-valuations.

\paragraph{Acknowledgements.} The authors thank Mladen Bestvina, Yair Minsky, Manuel Reyes, and Daniel Erman for helpful conversations. The first author was supported by NSF grants DMS-1803368 and DMS-2106906. The second author was supported by the Deutsche Forschungsgemeinschaft (DFG, German Research Foundation) -Project-ID 427320536 – SFB 1442, as well as under Germany's Excellence Strategy EXC 2044 390685587, Mathematics Münster: Dynamics–Geometry–Structure. The third author was partially supported by NSF grants DMS-1840190 and DMS-2202986.

\section{Background}\label{sec:prelims}

This section describes some necessary background for this paper. We start with a more detailed explanation of our notation for posets and then delve into a discussion of $\Hl(G)$. 

\subsection{Posets}

For $n\in \Z_{\geq 0}$, let $[n]=\{0,1,\ldots,n-1\}$. The set $[n]$ is a poset with the usual partial order $\leq$ on $\Z$. For $n_1,\ldots,n_r\in \Z_{>0}$,  denote by $\Div(n_1,\ldots,n_r)$ the poset $\Div(n_1,\ldots,n_r)\vcentcolon=\prod_{i=1}^r [n_i+1]$ endowed with the product order. Thus, for $(i_1,\ldots,i_r), (j_1,\ldots,j_r)\in \Div(n_1,\ldots,n_r)$ we have $(i_1,\ldots,i_r)\leq (j_1,\ldots,j_r)$ if and only if $i_k\leq j_k$ for each $k$. The poset $\Div(n_1,\ldots,n_r)$ is isomorphic to its opposite poset via the isomorphism sending $(i_1,\ldots,i_r)\in \Div(n_1,\ldots,n_r)$ to $(n_1-i_1,\ldots,n_r-i_r)$. Moreover, $\Div(n_1,\ldots,n_r)$ is a lattice with the meet operation defined by taking the infimum in each entry and the join operation defined by taking the supremum in each entry.

\begin{ex}
The posets $\Div(n_1,\ldots,n_r)$ have many different guises.
\begin{itemize}
\item If $n\in \Z_{>0}$ and $n=p_1^{k_1}\cdots p_r^{k_r}$ is the prime factorization of $n$, then the poset of (positive) divisors of $n$ ordered by $m_1\leq m_2$ if $m_1$ divides $m_2$ is isomorphic to $\Div(k_1,\ldots,k_r)$.
\item If $n\in \Z_{> 0}$ and $\operatorname{Sub}(\Z/n\Z)$\ is the poset of subgroups of $\Z/n\Z$ ordered by inclusion, then $\operatorname{Sub}(\Z/n\Z)$ is isomorphic to the poset of (positive) divisors of $n$, hence isomorphic to $\Div(k_1,\ldots,k_r)$ where $n=p_1^{k_1}\cdots p_r^{k_r}$ is the prime factorization as in the last example.
\item If $r\in \Z_{> 0}$, $X$ is an $r$ element set, and $2^X$ denotes the power set of $X$ ordered by inclusion, then $2^X$ is isomorphic to $\Div(1,\ldots,1)$ (with $r$ entries between the parentheses). The isomorphism can be described as follows: denote by $x_1,\ldots,x_r$ the elements of $X$. The isomorphism $2^X\to \Div(1,\ldots,1)$ is given by sending a subset $Y\in 2^X$ to its \emph{indicator sequence} $(i_1,\ldots,i_r)$ where $i_j=1$ if $x_j\in Y$ and $i_j=0$ if $x_j\notin Y$.
\end{itemize}
\end{ex}

\subsection{Comparing generating sets and group actions} 
 
Throughout this paper, all group actions on metric spaces are assumed to be isometric. Given a metric space $X$, we denote by $d_X$ the distance function on $X$. The $X$ will frequently be dropped from $d_X$ if it is understood by context. Moreover, if $G$ is a group and $S$ is a generating set of $G$, then we denote by $\|\cdot\|_S$ the word norm on $G$ and by $d_S$ the word metric $d_S(g,h)=\|gh^{-1}\|_S$. 

\begin{defn}[{\cite[Definition 1.1]{ABO}}]\label{def-GG}
Let $S$, $T$ be two (possibly infinite) generating sets of a group $G$. We say that $S$ is \emph{dominated} by $T$, written $S\preceq T$, if the identity map on $G$ induces a Lipschitz map of metric spaces $(G, d_T)\to (G, d_S)$. This is equivalent to requiring $ \sup_{t\in T}\|t\|_S<\infty$.  The relation $\preceq$ is a preorder on the set of generating sets of $G$, and therefore it induces an equivalence relation in the standard way:
$$
S\sim T \;\; \Leftrightarrow \;\; S\preceq T \; {\rm and}\; T\preceq S.
$$
This is equivalent to the condition that the Cayley graphs $\Ga(G,S)$ and $\Ga(G, T)$ with respect to $S$ and $T$ are $G$--equivariantly quasi-isometric. We denote by $[S]$ the equivalence class of a generating set $S$, and by $\GG$ the set of all equivalence classes of generating sets of $G$. The preorder $\preceq$ induces a partial order $\preccurlyeq $ on $\GG$ in the standard way:
$$
[S]\preccurlyeq [T] \;\; \Leftrightarrow \;\; S\preceq T.
$$
\end{defn}

For example, all finite generating sets of a finitely generated group are equivalent and the equivalence class containing any finite generating set is the largest element of $\GG$. For every group $G$, the smallest element of $\GG$ is $[G]$. Note also that this order is ``inclusion reversing": if $S$ and $T$ are generating sets of $G$ such that $S\subseteq T$, then $T\preceq S$.

To define a hyperbolic structure on a group, we first recall the definition of a hyperbolic space. In this paper we employ the definition of hyperbolicity via the Rips condition. 

\begin{defn} A metric space $X$ is called \emph{$\delta$--hyperbolic} if it is geodesic and for any geodesic triangle $\Delta $ in $X$, each side of $\Delta $ is contained in the union of the closed $\delta$--neighborhoods of the other two sides.
\end{defn} 

\begin{defn}[{\cite[Definition 1.2]{ABO}}]
A \emph{hyperbolic structure} on $G$ is an equivalence class $[S]\in \GG$ such that the Cayley graph $\Gamma (G,S)$ with respect to $S$. Since hyperbolicity of geodesic metric spaces is quasi-isometry invariant, this definition is independent of the choice of the representative $S$. We denote the set of hyperbolic structures by $\HG$. It is a sub-poset of $\GG$ with the restriction of the partial order on $\GG$.  \end{defn}

The poset $\HG$ classifies the \emph{cobounded} hyperbolic actions of $G$ up to coarsely equivariant quasi-isometry, as we now summarize.
\begin{defn}
The action $G\curvearrowright X$ is \emph{cobounded} if for some (equivalently any) $x\in X$ there exists $R>0$ such that every point of $X$ is distance at most $ R$ from some point of the orbit $Gx$. Given two cobounded hyperbolic actions $G\curvearrowright X$ and $G\curvearrowright Y$, a map $f:X\to Y$ is \emph{coarsely equivariant} if for any $x\in X$ we have \[\sup_{g\in G} d_Y(f(gx),gf(x))<\infty.\] Given $C>0$, the map $f$ is \emph{$C$--coarsely Lipschitz} if \[d_Y(f(x),f(y))\leq Cd_X(x,y)+C\] for all $x,y\in X$. It is a \emph{$C$--quasi-isometry} if it is $C$--coarsely Lipschitz and also satisfies \[\frac{1}{C}d_X(x,y)-C\leq d_Y(f(x),f(y)).\]
\end{defn}

Given  a cobounded hyperbolic action $G\curvearrowright X$, there is an associated hyperbolic structure given by the following Schwarz-Milnor Lemma:

\begin{lem}[{\cite[Lemma 3.11]{ABO}}]\label{lem:MS}
Let $G\curvearrowright X$ be a cobounded hyperbolic action of $G$. Let $B\subseteq X$ be a bounded subset such that $\displaystyle \bigcup_{g\in G} gB=X$. Let $D=\operatorname{diam}(B)$ and let $x\in B$. Then $G$ is generated by the set \[S=\{g\in G: d_X(x,gx)\leq 2D+1\},\] and $X$ is $G$--equivariantly quasi-isometric to $\Gamma(G,S)$.
\end{lem}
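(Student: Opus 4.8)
The plan is to prove that the \emph{orbit map} $\phi\colon G\to X$ given by $\phi(g)=gx$ is a $G$--equivariant quasi-isometry when $G$ is equipped with the word metric $d_S$; the quasi-isometry $\Gamma(G,S)\to X$ is then obtained by extending $\phi$ across each edge of the Cayley graph to a geodesic of $X$ joining the images of its endpoints. Two things must be checked: that $S$ generates $G$ (so that $\Gamma(G,S)$ and $d_S$ are defined), and that $\phi$ satisfies the quasi-isometry inequalities and is coarsely surjective. Note first that $d_X(x,gx)=d_X(g^{-1}x,x)$ for every $g$, so $S$ is symmetric; note also that hyperbolicity plays no role here — the only properties of $X$ used are that it is geodesic and that the $G$--action is cobounded.

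\textbf{Step 1: $S$ generates $G$, with a word-length bound.} Given $g\in G$, choose a geodesic from $x$ to $gx$ and points $x=y_0,y_1,\dots,y_m=gx$ along it with $d_X(y_i,y_{i+1})\le 1$ and $m\le d_X(x,gx)+1$. By coboundedness each $y_i$ lies in some translate $g_iB$; since $x\in B$ we may take $g_0=1$, and since $gx\in gB$ we may take $g_m=g$. Set $s_i=g_i^{-1}g_{i+1}$. Because $y_i$ and $g_ix$ both lie in $g_iB$ (a set of diameter $D$), and likewise $y_{i+1},g_{i+1}x\in g_{i+1}B$, the triangle inequality gives
\[
d_X(x,s_ix)=d_X(g_ix,g_{i+1}x)\le d_X(g_ix,y_i)+d_X(y_i,y_{i+1})+d_X(y_{i+1},g_{i+1}x)\le 2D+1,
\]
so each $s_i\in S$. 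Telescoping, $g=g_0^{-1}g_m=s_0s_1\cdots s_{m-1}$; hence $S$ generates $G$ and $\|g\|_S\le m\le d_X(x,gx)+1$.

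\textbf{Step 2: $\phi$ is a quasi-isometry.} For the upper bound, write $g^{-1}h=s_1\cdots s_k$ with each $s_i\in S$ and $k=\|g^{-1}h\|_S=d_S(g,h)$; using that $G$ acts isometrically,
\[
d_X(gx,hx)=d_X\bigl(x,(g^{-1}h)x\bigr)\le\sum_{i=1}^{k}d_X\bigl(s_1\cdots s_{i-1}x,\,s_1\cdots s_i x\bigr)=\sum_{i=1}^{k}d_X(x,s_ix)\le(2D+1)\,d_S(g,h).
\]
For the lower bound, apply the length estimate of Step 1 to $g^{-1}h$: $d_S(g,h)=\|g^{-1}h\|_S\le d_X\bigl(x,(g^{-1}h)x\bigr)+1=d_X(gx,hx)+1$. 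Finally, coboundedness makes $\phi$ coarsely surjective, since every point of $X$ lies in some $gB$ and is therefore within $D$ of $gx\in\phi(G)$. Putting $C=2D+1$, these bounds give $\tfrac1C d_S(g,h)-C\le d_X(\phi(g),\phi(h))\le C\,d_S(g,h)+C$ together with $C$--coarse surjectivity, so $\phi$ is a $C$--quasi-isometry; it is $G$--equivariant by construction, and extending it to $\Gamma(G,S)$ as above preserves both properties while changing distances by at most $2D+1$.

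\textbf{Main obstacle.} There is no deep difficulty: this is the classical Milnor--Švarc construction, and the only care required is in Step 1 — discretizing the geodesic and choosing the translates $g_i$ so that $g_0=1$ and $g_m=g$ — together with the bookkeeping needed to absorb the various additive ``$\pm1$'' terms into a single constant matching the paper's definition of a quasi-isometry. It is worth recording explicitly that the argument uses neither properness of $X$ nor properness of the action, only that $X$ is geodesic and the action is cobounded, which is exactly the generality in which the lemma is invoked later.
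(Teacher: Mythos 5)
The paper does not prove this lemma; it is cited verbatim from~\cite[Lemma 3.11]{ABO}, so there is no internal proof to compare against. Your argument is a correct and complete rendering of the standard Milnor--\v{S}varc construction, and it is the same approach one finds in~\cite{ABO} and in standard references. Both steps check out: the discretization of a geodesic and the choice of translates $g_i$ with $g_0=1$, $g_m=g$ gives the generation and the word-length bound, and the two inequalities plus $D$-coarse surjectivity of the orbit map give the quasi-isometry. One pedantic remark: when $d_X(x,gx)=0$ you could have $m=0$, which would force $g_0=g_m$ and break the telescoping; but in that degenerate case $g\in S$ outright and $\|g\|_S\le 1\le d_X(x,gx)+1$, so the stated bound still holds — you may as well take $m=\max(1,\lceil d_X(x,gx)\rceil)$ to avoid the case split. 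Your closing observation that only geodesicity of $X$ and coboundedness of the action are used (not hyperbolicity, properness, or discreteness) is correct and matches the generality in which the lemma is invoked.
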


 Thus, up to  equivariant quasi-isometries, hyperbolic actions of $G$ correspond to actions of $G$ on its hyperbolic Cayley graphs. There is an equivalence relation on hyperbolic actions given by $G$--coarsely equivariant quasi-isometry and a preorder on hyperbolic actions defined by $(G\curvearrowright X) \preceq (G \curvearrowright Y)$ if there is a coarsely equivariant coarsely Lipschitz map $Y\to X$. In this case we say that the action $G\curvearrowright X$ is \emph{dominated by} the action $G\curvearrowright Y$. With these relations, the set of \emph{equivalence classes of cobounded hyperbolic actions} of $G$ becomes a poset. This poset is isomorphic to $\HG$ \cite[Proposition~3.12]{ABO}.

   We will frequently speak about representatives of hyperbolic structures. If $[S]\in \Hl(G)$, we say that $[S]$ is represented by a cobounded hyperbolic action $G\curvearrowright X$ if $G\curvearrowright X$ is equivalent to the action $G\curvearrowright \Gamma(G,S)$; that is, if there is a coarsely $G$-equivariant quasi-isometry $X\to \Gamma(G,S)$.

\subsection{Actions on hyperbolic spaces}

Let $G$ be a group acting on a hyperbolic space $X$, and denote the Gromov boundary of $X$ by $\partial X$. In general, $X$ is not assumed to be proper, and its boundary is defined as the set of equivalence classes of sequences convergent at infinity. We also denote by $\Lambda (G)$ the set of limit points of $G$ on $\partial X$. That is, $\Lambda (G)\vcentcolon=\partial X\cap \overline{Gx},$ where $\overline{Gx}$ denotes the closure of a $G$--orbit in $X\cup \partial X$, for any choice of basepoint $x\in X$.  This definition is independent of the choice of $x\in X$. The following theorem summarizes the standard classification of group actions on hyperbolic spaces due to Gromov \cite[Section 8.2]{Gromov} and the results  \cite[Propositions 3.1 and 3.2]{Amen}.

\begin{thm}\label{ClassHypAct}
Let $G$ be a group acting on a hyperbolic space $X$. Then exactly one of the following conditions holds.
\begin{enumerate}
\item[1)] $|\Lambda (G)|=0$. Equivalently,  $G$ has bounded orbits. In this case the action of $G$ is called \emph{elliptic}.

\item[2)] $|\Lambda (G)|=1$. In this case the action of $G$ is called \emph{parabolic}. A parabolic action cannot be cobounded. 

\item[3)] $|\Lambda (G)|=2$. Equivalently, $G$ contains a loxodromic element and any two loxodromic elements have the same limit points on $\partial X$. In this case the action of $G$ is called \emph{lineal}. A lineal action $G \acts X$ is said to be \emph{orientable} if $G$ fixes its limit points on $\partial X$ \emph{pointwise}. 

\item[4)] $|\Lambda (G)|=\infty$. Then $G$ always contains loxodromic elements. In turn, this case breaks into two subcases.
\begin{enumerate}
\item[(a)] $G$ fixes a point of $\partial X$. Equivalently, any two loxodromic elements of $G$ have a common limit point on the boundary. In this case the action of $G$ is called \emph{quasi-parabolic} or \emph{focal}. 
\item[(b)] $G$ does not fix any point of $\partial X$. In this case the action of $G$ is said to be of \emph{general type}.
\end{enumerate}
\end{enumerate}
\end{thm}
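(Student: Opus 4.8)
This theorem packages Gromov's classification \cite[Section 8.2]{Gromov} together with \cite[Propositions 3.1--3.2]{Amen}, and I would assemble it by organizing everything around the cardinality $|\Lambda(G)|\in\{0,1,2,\infty\}$. First I would record the facts that make the case division legitimate: $\Lambda(G)$ is a closed, $G$--invariant subset of $\partial X$ that does not depend on the basepoint (immediate from the isometry-invariance of Gromov products), and, whenever $G$ contains a loxodromic element, $\Lambda(G)$ is the closure of the set of fixed points of loxodromic elements of $G$. The elliptic case is then the easy one: a bounded orbit has empty limit set, and conversely an unbounded orbit produces a boundary limit point via the convergence-to-infinity arguments of \cite{Amen}. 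So $|\Lambda(G)|=0$ characterizes the elliptic actions.

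The engine of the proof is the lemma that \emph{a group with unbounded orbits and no loxodromic element fixes a unique boundary point}, which is exactly the parabolic case $|\Lambda(G)|=1$. I would prove this by contradiction: given distinct limit points $\xi,\eta$ and orbit sequences converging to each, a ping-pong / contraction argument on geodesics joining orbit points near $\xi$ to orbit points near $\eta$ produces an element of positive translation length. Two consequences are immediate. First, whenever $|\Lambda(G)|\ge 2$ --- in particular in cases 3) and 4) --- $G$ contains a loxodromic element $g$, with two distinct fixed points $g^\pm$. Second, $3\le|\Lambda(G)|<\infty$ is impossible: such a $\Lambda(G)$ would be a finite $G$--set, so a suitable power of a loxodromic element would fix it pointwise, contradicting the fact that a loxodromic element fixes exactly two boundary points.

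It then remains to separate the cases $|\Lambda(G)|=2$ and $|\Lambda(G)|=\infty$ using the fixed-point pairs of loxodromic elements. If every loxodromic element of $G$ has fixed-point pair $\{g^+,g^-\}$, then since $\phi g\phi^{-1}$ is loxodromic with pair $\phi\{g^\pm\}$ the group $G$ preserves $\{g^+,g^-\}$; a subgroup of index at most $2$ fixes both points, every orbit of that subgroup stays within bounded distance of the axis of $g$, and hence $\Lambda(G)=\{g^+,g^-\}$ --- this is the lineal case (and the action is orientable precisely when this index is $1$). Otherwise some loxodromic element $h$ has a fixed point outside $\{g^\pm\}$, and conjugating $h$ by powers of $g$ yields infinitely many distinct loxodromic fixed points (and, once a transverse pair of loxodromics is arranged, a free subgroup by ping-pong), so $|\Lambda(G)|=\infty$. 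In the infinite case $G$ contains a loxodromic element by the lemma, and I would split according to whether $G$ fixes a point of $\partial X$. If $G$ fixes $\zeta$, then every loxodromic element has $\zeta$ among its two fixed points, so any two loxodromic elements share $\zeta$ (the focal case); conversely, if any two loxodromic elements share a fixed point, a short combinatorial argument on triples of fixed points --- using $|\Lambda(G)|=\infty$ to discard the degenerate three-point configurations --- shows they all share a single point $\zeta$, and then $\phi g\phi^{-1}$--invariance forces $\phi\zeta=\zeta$ for every $\phi\in G$. If $G$ fixes no boundary point we are in the general type case, and the four cases are mutually exclusive by construction.

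Finally, a parabolic action is never cobounded: in a cobounded action every boundary point is a limit point (follow a ray toward it and use the near-density of the orbit), so $|\Lambda(G)|=1$ would force $|\partial X|\le 1$, while a hyperbolic space with a single boundary point admits no cobounded isometric action fixing that point --- a Busemann-function argument, which is \cite[Proposition 3.2]{Amen}. I expect the real obstacles to be exactly the two steps sensitive to the non-properness of $X$ --- extracting a boundary point from an unbounded orbit, and manufacturing a loxodromic element out of two distinct limit points --- and in practice I would quote \cite[Propositions 3.1--3.2]{Amen} for these rather than reprove them.
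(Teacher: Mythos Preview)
The paper does not prove this theorem: it is stated as background in the preliminaries, with the sentence ``The following theorem summarizes the standard classification of group actions on hyperbolic spaces due to Gromov \cite[Section 8.2]{Gromov} and the results \cite[Propositions 3.1 and 3.2]{Amen}.'' There is therefore no proof in the paper to compare your proposal against.

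That said, your sketch is a faithful and correct outline of how the classification is assembled in the cited sources. Organizing around $|\Lambda(G)|\in\{0,1,2,\infty\}$, isolating the lemma that an unbounded action without loxodromics has a unique limit point, ruling out $3\le|\Lambda(G)|<\infty$ via the two-fixed-points property of loxodromics, and splitting the infinite case by whether there is a global fixed point --- this is exactly the standard architecture. You are also right to flag the two non-properness-sensitive steps (extracting a boundary point from an unbounded orbit, and producing a loxodromic from two distinct limit points) as the places where one should invoke \cite[Propositions 3.1--3.2]{Amen} rather than improvise; these are precisely the contributions of that reference beyond Gromov's original treatment. In short, your proposal is correct and matches the proofs in the literature the paper is citing.
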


It follows from the above that for any group $G$, $$\Hl(G)=\Hl_e(G)\sqcup \Hl_{\ell} (G)\sqcup \Hl_{qp} (G)\sqcup \Hl_{gt}(G)$$
where the sets of elliptic, lineal, quasi-parabolic, and general type hyperbolic structures on $G$ are denoted by $\Hl_e(G)$, $\Hl_{\ell} (G)$, $\Hl_{qp} (G)$, and $\Hl_{gt}(G)$ respectively. Namely, $[S]\in \mathcal H(G)$ lies in $\mathcal H_\ell(G)$ if the action $G\curvearrowright \Gamma(G,S)$ is lineal, and similar definitions hold for the posets $\Hl_e,\Hl_{qp},$ and $\Hl_{gt}$. We direct the reader to \cite[Section 4]{ABO} and \cite[Theorem 4.6]{ABO} for further explanation. We also note that $\Hl_{gt}(G) = \emptyset$ for a solvable group $G$,  as $G$ contains no free subgroups (which can be produced from a general type action using the Ping-Pong lemma). 

\paragraph{The Busemann pseudocharacter.} 
\label{sec:busemann}
A function $q\colon G\to \mathbb R$ is a \emph{quasi-character} (or \emph{quasi-morphism}) if there exists a constant $D$ such that $$|q(gh)-q(g)-q(h)|\le D$$ for all $g,h\in G$. We say that $q$ has \emph{defect at most $D$}. If, in addition, the restriction of $q$ to every cyclic subgroup of $G$ is a homomorphism, then $q$ is called a \emph{pseudocharacter} (or \emph{homogeneous quasi-morphism}). 

Given any action of a group $G$ on a hyperbolic space $X$ fixing a point on the boundary, one can associate a natural pseudocharacter $\beta$ called the \emph{Busemann pseudocharacter}. If $\beta$ is a homomorphism, then the action $G\curvearrowright X$ is called \emph{regular}. As we do not require the exact definition of $\beta$ in this paper, we refer the reader to \cite[Sec. 7.5.D]{Gromov} and \cite[Sec. 4.1]{Man} for details. An element $g\in G$ is loxodromic with respect to the action of $G$ on $X$ if and only if $\beta(g)\ne 0$.  In particular, $\beta$ is not identically zero whenever $G \curvearrowright X$ is quasi-parabolic or orientable lineal. 

Conversely, given a pseudocharacter on a group $G$, one can always construct an orientable lineal action:

\begin{lem}[{\cite[Lemma 4.15]{ABO}}] Let $p\colon G \to \R$ be a non-zero pseudocharacter. Let $C$ be any constant such that the defect of $p$ is at most $C/2$ and there exists a value of $p$ in the interval $(0,C/2)$. Let $$ X =X_{p,C} =\{g \in G: |p(g)| < C\}.$$ Then $X$ generates $G$ and the map $p \colon (G,d_X) \to \R$ is a quasi-isometry. In particular, $[X]$ is orientable lineal. \end{lem}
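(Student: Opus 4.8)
The plan is to exploit the single group element witnessing the hypothesis. Fix $g_0 \in G$ with $\alpha \vcentcolon= p(g_0) \in (0, C/2)$, and let $D \le C/2$ denote the defect of $p$. Since $p$ is a pseudocharacter it is homogeneous, so $p(e) = 0$ and $p(g^{-1}) = -p(g)$; in particular $X$ is symmetric and contains $e$, $g_0$ and $g_0^{-1}$. Two elementary estimates will be used throughout: first, iterating the quasimorphism inequality shows that if $\|k\|_X = m$ then $|p(k)| \le mC + (m-1)D \le \tfrac{3}{2} Cm$; second, combining the quasimorphism inequality with $|p(h) + p(h^{-1})| \le D$ gives $|p(gh^{-1}) - p(g) + p(h)| \le 2D$ for all $g,h \in G$.

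The first step is to show $X$ generates $G$ together with a quantitative bound on word length. Given $k \in G$, choose $n \in \Z$ with $|p(k) - n\alpha| \le \alpha/2$, so that $|n| \le |p(k)|/\alpha + \tfrac{1}{2}$; since $p(g_0^{-n}) = -n\alpha$, the quasimorphism inequality yields $|p(kg_0^{-n})| \le \alpha/2 + D \le \tfrac{3}{4} C < C$, i.e. $kg_0^{-n} \in X$. Then $k = (kg_0^{-n})\,g_0^n$ is a product of at most $1 + |n|$ elements of $X$, so $X$ generates $G$ and $\|k\|_X \le \tfrac{1}{\alpha}|p(k)| + 2$ for every $k$.

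Next I would assemble the quasi-isometry. The first estimate above together with the second gives the upper bound $|p(g) - p(h)| \le \tfrac{3}{2} C\, d_X(g,h) + 2D$; applying the word-length bound to $gh^{-1}$ and again using the second estimate gives the lower bound $d_X(g,h) \le \tfrac{1}{\alpha}|p(g)-p(h)| + \tfrac{2D}{\alpha} + 2$. Since $p(g_0^n) = n\alpha$ for all $n \in \Z$, the image $p(G)$ is $\alpha$-dense in $\R$, so $p \colon (G, d_X) \to \R$ is a quasi-isometry. Consequently $\Gamma(G,X)$ is quasi-isometric to $\R$, hence hyperbolic, so $[X] \in \mathcal H(G)$; and since $p$ --- hence $d_X$ --- is unbounded on $G$ (e.g.\ on $\langle g_0\rangle$), the cobounded action $G \curvearrowright \Gamma(G,X)$ is not elliptic, so by the classification it is lineal.

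Finally, orientability: for a fixed $k \in G$ the quasimorphism inequality gives $\sup_{g \in G}|p(kg) - p(g)| \le |p(k)| + D < \infty$, so under the quasi-isometry $p$ the action of $k$ displaces every vertex of $\Gamma(G,X)$ a uniformly bounded amount in the $\R$-coordinate; hence $k$ fixes both ends $\pm\infty$ of $\R$. Therefore $G$ fixes $\partial\Gamma(G,X)$ pointwise and $[X]$ is orientable lineal. I expect the step requiring the most care to be the quantitative generation estimate $\|k\|_X \le \tfrac{1}{\alpha}|p(k)| + 2$ and the accompanying bookkeeping that forces $kg_0^{-n}$ into $X$ --- this is precisely where the hypotheses $D \le C/2$ and $\alpha \in (0,C/2)$ enter --- together with disciplined use of the defect bounds (and $p(g^{-1}) = -p(g)$) in place of treating $p$ as a homomorphism. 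The left/right convention for the word metric is immaterial here, since $\|k\|_X = \|k^{-1}\|_X$ and the displacement bound has an obvious analogue for right multiplication.
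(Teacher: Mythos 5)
Your proof is correct. The paper simply cites this as \cite[Lemma 4.15]{ABO} without reproducing a proof, so there is nothing in-text to compare against; but the argument you give is the standard one: use the witness $g_0$ with $p(g_0)=\alpha\in(0,C/2)$ to establish the quantitative generation bound $\|k\|_X\le |p(k)|/\alpha+2$, combine it with the defect estimate $|p(k)|\lesssim C\,\|k\|_X$ to get the quasi-isometry, and then read orientability off the uniform displacement bound $\sup_g|p(kg)-p(g)|\le |p(k)|+D$. One small point of rigor you gloss over with ``so by the classification it is lineal'': non-ellipticity alone does not rule out the parabolic case, but since the action on $\Gamma(G,X)$ is cobounded and Theorem~\ref{ClassHypAct}(2) records that parabolic actions are never cobounded (or, more directly, since $g_0$ is loxodromic because $p(g_0^n)=n\alpha$ grows linearly), the limit set has exactly two points and the action is lineal; the rest of your argument, including the careful tracking of the left/right word-metric convention, is sound.
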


Further, using the Busemann pseudocharacter of a quasi-parabolic action in the above lemma yields the following relation between quasi-parabolic and lineal structures. 

\begin{lem}[{\cite[Corollary 4.26]{ABO}}] For any $[Y] \in \Hl_{qp}(G)$, there exists an (orientable) $[Z] \in \Hl_\ell(G)$ such that $[Z]\preccurlyeq [Y]$. In particular, if $\Hl_{qp}(G) \neq  \emptyset$, then $\Hl_\ell(G) \neq \emptyset$. \end{lem}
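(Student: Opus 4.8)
The plan is to deduce this from the Busemann pseudocharacter together with the two preceding lemmas from \cite{ABO} (the one producing an orientable lineal structure from a non-zero pseudocharacter, and the Schwarz--Milnor Lemma~\ref{lem:MS}). First I would fix, via Lemma~\ref{lem:MS}, a representative $G\acts X$ of $[Y]\in\Hl_{qp}(G)$ with $X$ hyperbolic, the action cobounded, and a coarsely equivariant quasi-isometry $X\to\Gamma(G,Y)$. Since the action is quasi-parabolic, $G$ fixes a (unique) point $\xi\in\partial X$, so one can form the associated Busemann pseudocharacter $\beta\colon G\to\R$. As recalled in the discussion of the Busemann pseudocharacter, $g\in G$ is loxodromic for $G\acts X$ iff $\beta(g)\ne 0$, and a quasi-parabolic action contains loxodromics; hence $\beta$ is not identically zero. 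Applying \cite[Lemma 4.15]{ABO} to $\beta$ yields a constant $C$ such that $Z\vcentcolon=\{g\in G:|\beta(g)|<C\}$ generates $G$, the class $[Z]$ is orientable lineal, and $\beta\colon(G,d_Z)\to\R$ is a quasi-isometry. This $[Z]$ is the desired element of $\Hl_\ell(G)$.

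It then remains to verify the domination $[Z]\preccurlyeq[Y]$, i.e.\ $Z\preceq Y$, which by Definition~\ref{def-GG} means producing a constant $K$ with $\|g\|_Z\le K\|g\|_Y$ for all $g\in G$. Because $\beta\colon(G,d_Z)\to\R$ is a quasi-isometry, $\|g\|_Z$ is bounded above by an affine function of $|\beta(g)|$, so it suffices to bound $|\beta(g)|$ linearly in $\|g\|_Y$. For this I would use that the Busemann function $b_\xi$ is $1$--Lipschitz on $X$, so after homogenizing one gets $|\beta(g)|\le d_X(x_0,gx_0)+O(1)$ for a fixed basepoint $x_0$; and coboundedness (again via Lemma~\ref{lem:MS}) gives that $d_X(x_0,gx_0)$ is comparable to $\|g\|_Y$. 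Chaining these three comparisons produces the required $K$, hence $[Z]\preccurlyeq[Y]$. The ``in particular'' statement is then immediate: if $\Hl_{qp}(G)\ne\emptyset$, choose any $[Y]$ in it and the construction outputs $[Z]\in\Hl_\ell(G)$, so $\Hl_\ell(G)\ne\emptyset$.

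The hard part will be the last step, namely making the domination $[Z]\preccurlyeq[Y]$ precise: it passes through three separate coarse comparisons (word length in $Z$ versus $|\beta|$ via \cite[Lemma 4.15]{ABO}; $|\beta|$ versus displacement in $X$ via Lipschitzness of the Busemann cocycle; displacement in $X$ versus $\|\cdot\|_Y$ via Schwarz--Milnor), and one must check that the multiplicative and additive errors in each are uniform in $g$ so that they compose into a single coarsely Lipschitz bound. Everything else is a direct application of results already stated above.
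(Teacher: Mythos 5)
Your proposal is correct and is essentially the argument behind \cite[Corollary 4.26]{ABO}, which the paper cites without reproving: form the Busemann pseudocharacter $\beta$ from the quasi-parabolic action, get $[Z]$ from \cite[Lemma 4.15]{ABO}, and chain the quasi-isometry $\beta\colon(G,d_Z)\to\R$, the $1$--Lipschitz estimate $|\beta(g)|\le d_X(x_0,gx_0)+O(1)$ (the $O(1)$ absorbing both the defect from homogenization and the quasi-Busemann error in a non-proper space), and Schwarz--Milnor to get $\sup_{y\in Y}\|y\|_Z<\infty$. The only cosmetic point is that the domination criterion is $\sup_{y\in Y}\|y\|_Z<\infty$ rather than a multiplicative bound for all $g$, so the affine error terms you worry about composing are harmless: the additive constants simply contribute to the finite supremum.
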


\paragraph{Relation to confining subsets.} Consider a group $G=H\rtimes_\alpha \Z$ where $\alpha \in Aut(H)$ and the generator $t\in \Z$ acts on $H$ by conjugation via $tht^{-1}=\alpha(h)$ for any $h\in H$.  Let $Q$ be a symmetric subset of $H$.  The following definition, developed by Caprace-Cornulier-Monod-Tessera in \cite[Section~4]{Amen}, forms the main tool used in the work in \cite{Qp,AR,Largest}. Here $Q\cdot Q$ denotes the set of products $\{g \cdot h \in H : g,h\in Q\}$. 

\begin{defn}\label{def:confining} Let $(H,\cdot)$ be a group, let $Q$ be a symmetric subset of $H$, and let $\alpha$ be an automorphism of $H$. The action of $\alpha$ is \textit{confining $H$ into $Q$} if it satisfies the following conditions$\colon$ 
\begin{itemize}
\item[(a)] $\alpha(Q) \subseteq Q$ 
\item[(b)] $H = \displaystyle \bigcup_{n \geq 0} \hspace{5pt} \alpha^{-n}(Q)$; and
\item[(c)] $\alpha^{k_0}(Q \cdot Q) \subseteq Q$ for some non-negative integer $k_0$. 
\end{itemize}
We also call the set $Q$ \emph{confining under $\alpha$}. If $\alpha(Q) \subsetneq Q$, then $Q$ is called \emph{strictly confining.} 
\end{defn}

The definition of a confining subset given in \cite{Amen} does not require symmetry of the subset $Q\subseteq H$. However, according to \cite[Theorem~4.1]{Amen}, to classify regular quasi-parabolic structures on the group $G=H\rtimes_\alpha \Z$, it suffices to consider only confining subsets which are symmetric; see also \cite[Proposition~2.6]{AR}.

\begin{prop}[{\cite[Proposition 4.6, Theorem 4.1]{Amen}}]\label{niceprop} Let $H$ be a group and  $\alpha$ an automorphism of $H$ which confines $H$
into some subset $Q \subseteq H$. Consider the group $G=H\rtimes_\alpha \Z$, and let $t$ denote a generator of $\Z$. Define $S=Q\cup \{t^{\pm 1}\}\subseteq G$. Then $\Gamma(G,S)$ is Gromov hyperbolic. If $Q$ is strictly confining then the action $G\curvearrowright \Gamma(G,S)$ is regular quasi-parabolic. If $Q$ is not strictly confining then the action $G\curvearrowright \Gamma(G,S)$ is lineal. 
\end{prop}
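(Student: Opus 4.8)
Since Proposition~\ref{niceprop} is quoted from \cite{Amen}, here is how I would reconstruct its proof. The plan is to convert $\Gamma(G,S)$ into a transparent geometric model, read off hyperbolicity from condition~(c), and then decide the type of the action according to whether the inclusion $\alpha(Q)\subseteq Q$ is strict. First I would fix notation: write elements of $G$ as $ht^m$ with $h\in H$, $m\in\Z$, and call $m$ the \emph{level}. The level map $\tau\colon G\to\Z$, $ht^m\mapsto m$, is a $1$-Lipschitz homomorphism with respect to $S$, since $t^{\pm 1}$ changes the level by $\pm 1$ while each $q\in Q$ fixes it. Right multiplication by generators gives $ht^m\cdot t^{\pm 1}=ht^{m\pm 1}$ and $ht^m\cdot q=\bigl(h\,\alpha^m(q)\bigr)t^m$, so the ``level-$m$ slice'' of $\Gamma(G,S)$, with its $Q$-edges, is a copy of the Cayley graph $X_m:=\Gamma\bigl(H,\alpha^m(Q)\bigr)$, and the $t$-edges join $(h,m)$ to $(h,m\pm 1)$. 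Hence $\Gamma(G,S)$ is literally the mapping telescope of the graphs $X_m$, $m\in\Z$, with bonding maps $X_{m+1}\to X_m$ that are the identity on vertices. Condition~(b) shows $S$ generates $G$: for $h\in H$ put $\ell(h)=\min\{n\ge 0:\alpha^n(h)\in Q\}<\infty$, so $h=t^{-\ell(h)}\alpha^{\ell(h)}(h)\,t^{\ell(h)}$ is an $S$-word, and $\|ht^m\|_S\le |m|+2\ell(h)+1$. For the reverse estimate I would read a geodesic $S$-word for $ht^m$ of length $L$: its $Q$-letters sit at levels $\ge -L$, so applying $\alpha^L$ to the $H$-coordinate pushes each factor into $Q$ by (a) (note $1\in Q$, which follows from (b)), and then (c) together with binary grouping collapses a product of at most $L$ elements of $Q$ into one at the cost of $k_0\lceil\log_2 L\rceil$ further applications of $\alpha$, giving $\ell(h)\le L+k_0\log_2(L+1)+k_0$. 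Thus $\|ht^m\|_S$ is coarsely comparable to $|m|+\ell(h)$.

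For hyperbolicity, note that (a) gives $\alpha^{m+1}(Q)\subseteq\alpha^m(Q)$, so each bonding map $X_{m+1}\to X_m$ is $1$-Lipschitz, and (c) rearranges to $\alpha^m(Q)\cdot\alpha^m(Q)\subseteq\alpha^{m-k_0}(Q)$, which iterates to show that the composite bonding map $X_m\to X_{m-jk_0}$ sends balls of radius $2^j$ into balls of radius $1$. So $\Gamma(G,S)$ is a mapping telescope whose bonding maps are uniformly exponentially contracting, and such spaces (which generalise combinatorial horoballs) are uniformly Gromov hyperbolic: the ``downward'' portions of any geodesic triangle are forced to synchronise, exactly as in the standard proofs that trees are $0$-hyperbolic and that horoballs in $\mathbb{H}^n$ are hyperbolic. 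Proving this clean telescope lemma (or, alternatively, verifying the thin-triangle condition for $\Gamma(G,S)$ directly, using the above length estimate) is the one genuinely geometric step and the main obstacle; everything else is bookkeeping. If $\alpha(Q)=Q$ the model degenerates: then $Q=\bigcup_n\alpha^{-n}(Q)=H$ by (b), every $X_m$ has diameter $1$, and $\Gamma(G,S)$ is quasi-isometric to $\R$, which is hyperbolic.

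Next I would classify the action. Since $\|t^n\|_S=|n|$, the orbit $\{(0,m):m\in\Z\}$ is an isometrically embedded line, so $t$ is loxodromic; let $\xi$ be the endpoint of this axis in the direction $m\to-\infty$. For $g=ht^j$ we have $g\cdot(0,m)=(h,m+j)$, and once $m\le -j-\ell(h)$ the element $h$ lies in $\alpha^{m+j}(Q)$, so $d\bigl((h,m+j),(0,m)\bigr)\le 1+|j|$ uniformly in $m$; hence $g\cdot(0,m)$ and $(0,m)$ converge to the same boundary point and $g\xi=\xi$. Therefore the action has a loxodromic element and fixes a boundary point, so by Theorem~\ref{ClassHypAct} it is either lineal or (focal) quasi-parabolic. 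Moreover $\tau$ agrees, up to scaling, with the Busemann pseudocharacter $\beta$ (both vanish on the non-loxodromic elements $-$ which are exactly the elements of $H$ $-$ and both are nonzero on $t$, and $G/H\cong\Z$), so $\beta$ is a homomorphism and the action is regular in the quasi-parabolic case.

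It remains to separate the two cases. One checks that $\alpha(Q)\subsetneq Q$ is equivalent to $Q\subsetneq H$, which is equivalent to $\ell$ being unbounded on $H$: if $h_0\in H\setminus Q$ then $\ell(\alpha^{-n}h_0)=\ell(h_0)+n\to\infty$, whereas $\alpha(Q)=Q$ forces $Q=H$ and $\ell\equiv 0$. If $Q$ is not strictly confining, then $\Gamma(G,S)$ is quasi-isometric to $\R$ by the second paragraph, so the action is lineal. If $Q$ is strictly confining, then by the length estimate $\sup_{h\in H}\|h\|_S=\infty$, so $H$ does not act with uniformly bounded orbits on $\Gamma(G,S)$; but every $h\in H$ is non-loxodromic (its orbit grows sublinearly, since $\ell(h^n)\le\ell(h)+k_0\lceil\log_2 n\rceil$ by (c)), and in any lineal action a subgroup all of whose elements act non-loxodromically must coarsely fix a point (otherwise two of its ``reflections'' would compose to a loxodromic element), hence has uniformly bounded orbits. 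This contradiction rules out lineality, leaving the regular quasi-parabolic case, as claimed.
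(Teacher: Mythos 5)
The statement is cited from Caprace--Cornulier--Monod--Tessera and is not proved in the paper itself, so the comparison is with the argument in the cited source rather than with an in-paper proof. Your reconstruction follows the same basic design: the level homomorphism $\tau$, the identification of $\Gamma(G,S)$ with a mapping telescope over the graphs $X_m=\Gamma(H,\alpha^m(Q))$, exponential contraction of the bonding maps coming from condition~(c), a two-sided comparison of $\|ht^m\|_S$ with $|m|+\ell(h)$, and classification of the action by locating the fixed boundary point $\xi$ and analysing the Busemann pseudocharacter $\beta$. The regularity observation (that $\beta$ vanishes on $H$, hence differs from a multiple of $\tau$ by a bounded amount, hence equals that multiple and so is a homomorphism) is sound.

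Two steps need more. First, you explicitly defer the hyperbolicity of the telescope by appealing to an analogy with combinatorial horoballs. That analogy is the correct intuition, but it is precisely where the cited source spends its effort, and a complete argument has to verify a thin-triangles (or Gromov four-point) estimate for $\Gamma(G,S)$; your length formula is the right input for such a verification but does not by itself constitute one. Second, the final dichotomy is phrased too loosely. The parenthetical about ``two reflections'' is unnecessary, since you have already shown that $G$, and hence $H$, fixes $\xi$, so no orientation reversal can occur. What does need an argument is the assertion that a subgroup of non-loxodromic elements in a cobounded lineal action has uniformly bounded orbits. In your setting this can be made clean: if the action were lineal, $\Gamma(G,S)$ is a quasi-line, so the isometrically embedded bi-infinite quasi-geodesic $\{t^n\}_{n\in\Z}$ (recall $\|t^n\|_S=|n|$) is coarsely dense; thus every $g\in G$ lies within a uniform distance $C$ of some $t^{n(g)}$, and since $\tau$ is $1$-Lipschitz and $\tau(t^{n(g)})=n(g)$ this forces $|\tau(g)-n(g)|\le C$, whence $\|g\|_S\le 2C+|\tau(g)|$. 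Taking $g=h\in H$ gives $\|h\|_S\le 2C$, contradicting strict confinement, which as you observed makes $\ell$, and therefore $\|\cdot\|_S$, unbounded on $H$. With these two gaps filled, your sketch is a correct proof matching the standard one.
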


In general, confining subsets may not completely describe all the quasi-parabolic actions of the group $G$. However, under certain conditions, including the case when $H$ is abelian, there is a one-to-one correspondence between strictly confining subsets (considered up to a natural equivalence) and quasi-parabolic actions of $G$:

\begin{prop}[{\cite[Proposition 4.5]{Amen}},{\cite[Theorem 1.2]{ABR}}]
\label{prop:qctoconfining}
Let $G=H\rtimes_\alpha \Z$ be a semidirect product and let $G\curvearrowright X$ be a cobounded hyperbolic action with a fixed point on $\partial X$. Let $p:G\to \R$ be the Busemann pseudocharacter associated to this action and suppose that $p(H)=0$. Then there exists a subset $Q\subset H$ which is confining under the action of $\alpha$ or $\alpha^{-1}$ such that $X$ is $G$-equivariantly quasi-isometric to $\Gamma(G,Q\cup\{t^{\pm 1}\})$.
\end{prop}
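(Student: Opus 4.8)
The plan is to construct the confining set $Q$ explicitly as a ``displacement ball'' for $H$ inside $X$ and to verify the axioms of Definition~\ref{def:confining} together with the quasi-isometry by hand. \emph{Reductions.} By the Schwarz--Milnor Lemma~\ref{lem:MS} we may assume $X=\Gamma(G,S)$, a Cayley graph on which $G$ acts by left multiplication; this leaves $p$ unchanged. Fix the basepoint $x_0=1$ and let $\pi\colon G\to\Z$ be the quotient with kernel $H$. Since $p$ is a homogeneous quasimorphism it is conjugation invariant and $p(g^k)=kp(g)$; if $\pi(x)=\pi(y)$ then $x^ky^{-k}\in H$, so $p(x^ky^{-k})=0$, and the quasimorphism inequality for $x^k,y^{-k}$ gives $k\,|p(x)-p(y)|=|p(x^k)+p(y^{-k})|\le(\text{defect})$ for all $k$, whence $p(x)=p(y)$. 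Thus $p$ descends to a homogeneous quasimorphism on $\Z$, i.e.\ $p(g)=\lambda\,\pi(g)$ with $\lambda:=p(t)$. A cobounded action with a fixed boundary point is orientable lineal or quasi-parabolic by Theorem~\ref{ClassHypAct} (it is not elliptic, not parabolic since parabolic actions are not cobounded, and not of general type or non-orientable lineal since those fix no point of $\partial X$), so $p\ne0$ and $\lambda\ne0$. Replacing $t$ by $t^{-1}$ if necessary -- which replaces $\alpha$ by $\alpha^{-1}$, and is exactly why the statement allows either -- we may assume $\lambda>0$; then the forward orbit $(t^j x_0)_{j\ge0}$ converges to the fixed point $\xi\in\partial X$.

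\emph{The contraction estimate and $Q$.} Let $b\colon X\to\R$ be a Busemann function for $\xi$, normalized to be $1$-Lipschitz with $b(gx_0)=\lambda\,\pi(g)+O(\delta)$, so $b\to+\infty$ along the forward $t$-orbit. The geometric heart of the argument is the estimate: there are $\mu>0$ and $c=c(\delta)$ with
\[
d_X\!\left(t^j x_0,\; h\,t^j x_0\right)\ \le\ \max\bigl\{\,d_X(x_0,hx_0)-\mu j,\ \ c\,\bigr\}\qquad(h\in H,\ j\ge0).
\]
This holds because $h$ fixes $\xi$ and, as $p(h)=0$, is non-loxodromic and does not translate along $[x_0,\xi)$; hence $[x_0,\xi)$ and $h[x_0,\xi)=[hx_0,\xi)$ are asymptotic, the distance between $b$-synchronized points decays linearly and then saturates at $O(\delta)$, while $t^j x_0$ (resp.\ $h t^j x_0$) lies $O(\delta)$-close to the point of $[x_0,\xi)$ (resp.\ $[hx_0,\xi)$) at $b$-value $\lambda j$; the bound follows from thin triangles (for non-proper $X$ one argues in the horofunction compactification). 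Now fix $\kappa$ large relative to $c$ and $\mu$ and set
\[
Q\ :=\ \bigl\{\,h\in H\ :\ d_X(t^j x_0,\; h\,t^j x_0)\le\kappa\ \text{ for every } j\ge0\,\bigr\},
\]
which is symmetric because the action is isometric.

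\emph{Confining axioms} (stated for $\alpha^{-1}$; for $\lambda<0$ the same computations give them for $\alpha$). (a) The displacement of $\alpha^{-1}(h)=t^{-1}ht$ at level $j$ is $d_X(t^j x_0,\,t^{-1}h\,t^{j+1}x_0)=d_X(t^{j+1}x_0,\,h\,t^{j+1}x_0)$, the displacement of $h$ at level $j+1$, so $h\in Q\Rightarrow\alpha^{-1}(h)\in Q$. (b) For $h\in H$ the estimate makes its displacement $\le\kappa$ at every level $\ge J$ for some $J$ (the bound is non-increasing in $j$), so $t^{-J}ht^{J}\in Q$ and $h=\alpha^{J}(t^{-J}ht^{J})$, giving $H=\bigcup_{k\ge0}\alpha^{k}(Q)$. (c) For $h_1,h_2\in Q$ the triangle inequality bounds the displacement of $h_1h_2$ by $2\kappa$ at every level $\ge0$, and the estimate then yields a single $k_0$, depending only on $\kappa,\mu,c$, making the displacement of $h_1h_2$ at every level $\ge k_0$ at most $\kappa$; that is, $\alpha^{-k_0}(Q\cdot Q)\subseteq Q$.

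\emph{Generation and the quasi-isometry.} By (b) every $h\in H$ equals $t^{J}h't^{-J}$ with $h'\in Q$, so $Q\cup\{t^{\pm1}\}$ generates $G$. The orbit map $\phi\colon\Gamma(G,Q\cup\{t^{\pm1}\})\to X$, $g\mapsto gx_0$, is $G$-equivariant, coarsely Lipschitz (each generator moves $x_0$ a bounded amount), and coarsely surjective by coboundedness. For the quasi-isometric-embedding bound, given $g$ with $D:=d_X(x_0,gx_0)$, $1$-Lipschitzness of $b$ gives $|\pi(g)|\le\lambda^{-1}(D+O(\delta))$; writing $g=t^{\pi(g)}h$ with $h\in H$ gives $d_X(x_0,hx_0)\le D+|\pi(g)|\,d_X(x_0,tx_0)=O(D)$; and the contraction estimate supplies $J=O(D)$ with $t^{-J}ht^{J}\in Q$, so $\|g\|_{Q\cup\{t^{\pm1}\}}\le|\pi(g)|+2J+1=O(D)$. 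Hence $\phi$ is a $G$-equivariant quasi-isometry, so $X$ is $G$-equivariantly quasi-isometric to $\Gamma(G,Q\cup\{t^{\pm1}\})$ (which is then hyperbolic; alternatively this, and that $Q$ is strictly confining exactly when the action is quasi-parabolic, follow from Proposition~\ref{niceprop}). The only non-formal step is the contraction estimate, so that is where I would concentrate the real work, taking care of the non-proper case there.
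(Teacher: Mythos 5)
This proposition is not proved in the paper; it is cited from \cite[Proposition 4.5]{Amen} and \cite[Theorem 1.2]{ABR} and used as a black box, so there is no internal proof to compare against. Your blind reconstruction is correct and follows the natural route: reduce to a Cayley graph via Schwarz--Milnor, observe that the Busemann pseudocharacter factors as $p=\lambda\pi$ (your deduction that $\pi(x)=\pi(y)\Rightarrow p(x)=p(y)$ is a clean use of homogeneity together with $p|_H=0$), build $Q$ as a displacement ball along the orbit $(t^j x_0)_{j\geq 0}$, verify the three confining axioms from a single uniform contraction estimate, and bound word length by displacement for the quasi-isometry. Isolating the contraction estimate as the only non-formal step is exactly the right way to organize the argument, and your observation that the displacement of $\alpha^{-1}(h)$ at level $j$ equals the displacement of $h$ at level $j+1$ makes axiom (a) and the shifted version of the estimate needed for (c) essentially formal. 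The case split on $\operatorname{sign}(\lambda)$ correctly produces confining under $\alpha^{-1}$ or $\alpha$ as the statement allows, and the lineal (non-strict) case is handled automatically since then $H$ has bounded orbits and $Q=H$.

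On the contraction estimate itself: it is a standard thin-ideal-triangle fact that for two asymptotic rays to $\xi$, parametrized by Busemann value with base points at distance $R$, the synchronized distance is $\leq \max\{R-2t+O(\delta),\,O(\delta)\}$, so your linear bound does hold with $\mu$ close to $2\lambda$ after reparametrizing by $j$. Two things are worth spelling out in a full writeup. First, $t$ is loxodromic because $p(t)=\lambda\neq 0$ and the Busemann pseudocharacter vanishes precisely on non-loxodromics; this is what licenses treating $(t^j x_0)$ as a quasi-geodesic ray converging to $\xi$, which the estimate quietly assumes. Second, in the non-proper case there may be no geodesic ray from $x_0$ to $\xi$, but $(t^j x_0)$ and $(ht^j x_0)$ are quasi-geodesic rays with uniform constants (the latter is the orbit of $hth^{-1}$, conjugate to $t$, and it converges to $h\xi=\xi$), so quasi-geodesic stability together with a horofunction for $\xi$ replaces the missing geodesics. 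The parenthetical you flagged is therefore exactly where the care must go, but it is not a gap.
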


\section{Axioms for commutative rings}\label{sec:axioms} 

In this section we consider groups associated to certain rings. We will introduce a convenient way of parametrizing the elements of such a ring $R$ and its completion $\widehat{R}$.

Consider a ring $R$ together with an element $\gamma\in R$. We assume the following properties, which will naturally lead to our parametrization:
\begin{enumerate}[({A}1)]
\item $\gamma$ is neither a unit nor a zero divisor in $R$;
\item $R$ is generated as a $\Z$-algebra by $\gamma$;
\item the ideal $(\gamma)$ generated by $\gamma$ is finite index in $R$;
\item the intersection of ideals $\bigcap_{n=1}^\infty (\gamma^n)$, is zero.
\end{enumerate}

 By the universal property of the polynomial ring $\Z[x]$, there is a unique homomorphism $\Z[x]\twoheadrightarrow R$ sending $x$ to $\gamma$. Therefore there is a natural isomorphism $R\cong \Z[x]/\mf a$ for some ideal $\mf a$ in $\Z[x]$, so $R$ is a commutative ring. As a quotient of the Noetherian ring $\Z[x]$, it is also necessarily Noetherian.

\begin{rem}
\label{rem:intdoms}
If $R$ happens to be an integral domain, then axiom (A4) follows from the other axioms; see \cite[Corollary 10.18]{atiyah_macdonald}. However, we will not generally assume that $R$ is a domain.
\end{rem}

Consider $R$ with its additive abelian group structure. Left multiplication by $\gamma$ defines an injective endomorphism \emph{of abelian groups} $\alpha\colon R\to R$. We define a group $G(R,\gamma)$ to be the ascending HNN extension of (the abelian group) $R$ by the endomorphism $\alpha$: \[G(R,\gamma)=\langle R, t: trt^{-1}=\gamma r \text{ for } r\in R\rangle.\] We will denote this group simply by $G$ in the case that $R$ and $\gamma$ are clear from context.  There is another description of this group in terms of localizations of rings. The powers $\{1,\gamma,\gamma^2,\ldots\}$ form a multiplicatively closed subset $S$ of $R$. Consider the \emph{localization} $S^{-1}R$ of $R$ by $S$,   denoted simply by $\gamma^{-1}R$ for short. Recall that $\gamma^{-1}R$ consists of the quotients $r/\gamma^k$ for $r\in R$ and $k\geq 0$. There is an equivalence relation on such quotients defined by $r/\gamma^k=s/\gamma^l$ if $r\gamma^l=s\gamma^k$ (a more complicated definition is required if $S$ contains zero divisors, but this is not necessary in our case). The multiplication-by-$\gamma$ endomorphism $\alpha\colon R\to R$ extends to an automorphism (of abelian groups) of $\gamma^{-1}R$ which we will continue to denote by $\alpha$.

The localization $\gamma^{-1}R$ can be used to give another description of $G(R,\gamma)$. We first state the following ubiquitously used and easy to prove lemma:

\begin{lem}[{\cite[Lemma 2.20]{AR}}]
\label{lem:wordrewrite}
Let $G=A\rtimes_\alpha \Z=\langle A, t : tat^{-1}=\alpha(a) \text{ for } a\in A\rangle$. Suppose that $Q\subset A$ is a subset with the property that $Q\cup \{t^{\pm 1}\}$ is a generating set of $G$ and $\alpha(Q)\subset Q$. Then any element $w\in G$ can be written as \[w=t^{-r} x_1\cdots x_m t^\ell\] where $r,\ell\geq 0$, $x_i\in Q$ for all $i$, and $r+\ell+m=\|w\|_{Q\cup \{t^{\pm 1}\}}$. Moreover, if $w\in A$ then $r=\ell$.
\end{lem}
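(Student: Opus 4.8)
\textbf{Proof plan for Lemma~\ref{lem:wordrewrite}.}

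The plan is to prove this by a normal-form argument driven by the relation $tat^{-1} = \alpha(a)$, using the hypothesis $\alpha(Q) \subseteq Q$ to absorb conjugates back into $Q$. First I would observe that since $Q \cup \{t^{\pm 1}\}$ generates $G$, any $w \in G$ is a product $s_1 s_2 \cdots s_k$ with each $s_j \in Q \cup \{t^{\pm 1}\}$, and I may take $k = \|w\|_{Q \cup \{t^{\pm 1}\}}$ by choosing a geodesic word. The goal is to shuffle all the $t^{-1}$'s to the far left and all the $t$'s to the far right, using the relations to move $Q$-letters past powers of $t$ without increasing the total letter count. The key commutation facts are: for $x \in Q$ and $\ell \ge 0$, $t^{\ell} x = \alpha^{\ell}(x) t^{\ell}$ with $\alpha^{\ell}(x) \in Q$ (by induction on $\ell$ using $\alpha(Q) \subseteq Q$); and symmetrically $x t^{-\ell} = t^{-\ell} \alpha^{\ell}(x)$ with $\alpha^{\ell}(x) \in Q$. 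Note these rewrites replace one $Q$-letter by one $Q$-letter, so the length is exactly preserved at each such step.

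The main step is to process the geodesic word from left to right, maintaining an invariant that the processed prefix has the form $t^{-r} x_1 \cdots x_m t^{\ell}$ with $x_i \in Q$, $r, \ell \ge 0$, and the total number of symbols used so far equal to the number of original letters consumed. When the next original letter is an element of $Q$, I push it leftward past the current trailing $t^{\ell}$ using the first commutation fact (turning it into $\alpha^{\ell}$ of itself, still in $Q$), appending it as a new $x_{m+1}$; the count is preserved. When the next letter is $t$, I simply increment $\ell$. When the next letter is $t^{-1}$, there are two cases: if $\ell > 0$, I cancel it against the trailing $t$, which \emph{decreases} the symbol count by $2$ — but this cannot happen in a geodesic word (it would contradict $k = \|w\|$), so in fact this case never occurs; if $\ell = 0$, I push the $t^{-1}$ leftward past $x_1 \cdots x_m$ using the second commutation fact (each $x_i$ becomes $\alpha^{-1}$ of... wait, it becomes $\alpha(x_i) \in Q$ from $x_i t^{-1} = t^{-1}\alpha(x_i)$) and then increment $r$. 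At the end, the whole word is in the claimed form with $r + \ell + m = k = \|w\|_{Q \cup \{t^{\pm 1}\}}$.

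For the final assertion, suppose $w \in A$. Mapping $G \twoheadrightarrow \Z$ by killing $A$ sends $w \mapsto 0$ and $t^{-r} x_1 \cdots x_m t^{\ell} \mapsto \ell - r$, so $\ell = r$. The hard part is really just bookkeeping: one must be careful that the "geodesic word has no $t^{\pm 1}$ cancellation" claim is justified (it follows since any such cancellation yields a strictly shorter word representing $w$), and that the $\alpha^{\pm}$-images genuinely stay inside $Q$ — which for the forward direction is immediate from $\alpha(Q)\subseteq Q$, and for moving $t^{-1}$ past $x_i$ one uses $x_i t^{-1} = t^{-1} \alpha(x_i)$, again landing in $Q$. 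I expect the only subtlety worth stating explicitly is the claim that $r+\ell+m$ equals the word length rather than merely bounding it, which is exactly what the no-cancellation-in-geodesics observation delivers. An alternative, cleaner writeup would induct directly on $\|w\|_{Q \cup \{t^{\pm 1}\}}$: peel off the last generator, apply the inductive normal form, and handle the three cases for the peeled letter as above.
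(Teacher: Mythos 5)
Your proof is correct, and the paper itself does not supply a proof of this lemma (it is cited from [AR]), so there is nothing in-text to compare against; but your argument is the natural one and does establish the statement. A few remarks on the write-up. Your commutation facts are right: $ta=\alpha(a)t$ gives both $t^{\ell}x=\alpha^{\ell}(x)t^{\ell}$ and $xt^{-1}=t^{-1}\alpha(x)$, and $\alpha(Q)\subseteq Q$ (not symmetry or invertibility of $\alpha$ on $Q$) is exactly what keeps the rewritten letters inside $Q$. The invariant $r+\ell+m=j$ is maintained through the $Q$-letter and $t$ cases, and your observation that $r+\ell+m\ge\|w\|$ always holds (since the produced expression uses that many generators) combines with the geodesicity of the input word to force equality at the end. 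In the $t^{-1}$-with-$\ell>0$ case, the appeal to geodesic prefixes being geodesic is a clean way to rule it out; an equivalent alternative, which avoids invoking cancellation at all, is to push the $t^{-1}$ leftward past $t^{\ell}$ trivially and then past the $x_i$'s, incrementing $r$ and keeping $\ell$ as is — this maintains the count, and any resulting "mixed" form $t^{-r'}t^{\ell'}$ with $m=0$ and $r',\ell'>0$ is again ruled out by geodesicity. Either route works; yours is fine. The abelianization argument for $r=\ell$ when $w\in A$ is exactly right.
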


\begin{lem}
\label{lem:semidirectprodstruct}
The ascending HNN extension $G(R,\gamma)$  is isomorphic to the semidirect product $\gamma^{-1}R\rtimes_\alpha \Z$.
\end{lem}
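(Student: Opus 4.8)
The goal is to identify the ascending HNN extension $G(R,\gamma)=\langle R,t: trt^{-1}=\gamma r\rangle$ with the semidirect product $\gamma^{-1}R\rtimes_\alpha\Z$, where $\alpha$ is the (now invertible) multiplication-by-$\gamma$ automorphism of the localization $\gamma^{-1}R$. The natural approach is to write down an explicit homomorphism $\Phi\colon G(R,\gamma)\to \gamma^{-1}R\rtimes_\alpha\Z$ using the presentation of $G(R,\gamma)$, and then exhibit an explicit inverse. First I would use the universal property of the HNN presentation: to define $\Phi$ it suffices to send $r\mapsto (r,0)$ for $r\in R$ and $t\mapsto (0,1)$, and to check this respects the one family of relations. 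The relation $trt^{-1}=\gamma r$ becomes $(0,1)(r,0)(0,-1)=(\alpha(r),0)=(\gamma r,0)$ in the semidirect product, which holds by definition of $\alpha$; the abelian group relations among elements of $R$ are clearly preserved. So $\Phi$ is a well-defined homomorphism.

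Next I would check $\Phi$ is surjective. A general element of $\gamma^{-1}R\rtimes_\alpha\Z$ has the form $(r/\gamma^k, m)$ with $r\in R$, $k\ge 0$, $m\in\Z$. Since $(r/\gamma^k,0)=\alpha^{-k}(r,0)=(0,-k)(r,0)(0,k)=\Phi(t^{-k}rt^k)$ and $(0,m)=\Phi(t^m)$, every element is in the image. For injectivity, I would use the normal form for ascending HNN extensions (Britton's lemma / the standard normal form for ascending HNN extensions), which exactly matches the form in Lemma~\ref{lem:wordrewrite}: every $w\in G(R,\gamma)$ can be written $w=t^{-k}\,x\,t^{\ell}$ with $k,\ell\ge 0$ and $x\in R$, and one checks using the HNN relations that such an expression with $x\notin \gamma R$ (when $k>0$) or appropriate minimality is essentially unique. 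Applying $\Phi$ gives $(x/\gamma^k,\ \ell-k)$; since $\Z$ surjects, injectivity on the $\Z$-coordinate is clear, and injectivity on the $\gamma^{-1}R$-coordinate amounts to: if $x/\gamma^k=x'/\gamma^{k'}$ in $\gamma^{-1}R$ then $t^{-k}xt^k=t^{-k'}x't^{k'}$ in $G(R,\gamma)$, which follows because $x\gamma^{k'}=x'\gamma^k$ in $R$ (recall $\gamma$ is not a zero divisor, axiom (A1), so the localization's equivalence relation is the simple one already noted in the text) and hence $t^{-k}xt^k = t^{-k-k'}(x\gamma^{k'})t^{k+k'}=t^{-k-k'}(x'\gamma^{k})t^{k+k'}=t^{-k'}x't^{k'}$ using $trt^{-1}=\gamma r$ repeatedly.

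Alternatively — and this may be cleaner to write — I would construct the inverse homomorphism directly: define $\Psi\colon \gamma^{-1}R\rtimes_\alpha\Z\to G(R,\gamma)$ by $\Psi(r/\gamma^k, m)=t^{-k}rt^{k+m}$ (equivalently $t^{-k}rt^k\cdot t^m$). One checks this is well-defined (independence of the representative $r/\gamma^k$ uses that $\gamma$ is not a zero divisor, as above) and a homomorphism (a short computation with the multiplication rule $(a,m)(b,n)=(a+\alpha^m(b),m+n)$, translating $\alpha^m$ into conjugation by $t^m$), and then $\Phi\circ\Psi$ and $\Psi\circ\Phi$ are the identity on generators.

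\textbf{Main obstacle.} The one genuinely non-formal point is the injectivity of $\Phi$ (equivalently, well-definedness of $\Psi$): one must know that the normal form $t^{-k}rt^\ell$ is unique enough to pin down $r/\gamma^k\in\gamma^{-1}R$ and $\ell-k\in\Z$, and conversely that distinct group elements are not collapsed. This is where axiom (A1) (that $\gamma$ is not a zero divisor, so the localization map $R\to\gamma^{-1}R$ is injective and the equivalence relation on fractions is the naive one) does the real work. Everything else — the homomorphism property, surjectivity — is routine manipulation of the presentation and the semidirect-product multiplication.
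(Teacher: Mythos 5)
Your proposal is correct and takes essentially the same route as the paper: define the homomorphism on the HNN presentation by $r\mapsto(r,0)$, $t\mapsto(0,1)$, verify surjectivity via conjugation, and deduce injectivity from the normal form of Lemma~\ref{lem:wordrewrite} together with axiom (A1) that $\gamma$ is not a zero divisor. The paper phrases the injectivity step slightly more cleanly, by showing the kernel is trivial directly---a word $t^{-k}rt^{\ell}$ maps to $(\gamma^{-k}r,\ell-k)$, which is the identity only if $\ell=k$ and $r=0$---rather than comparing two arbitrary normal forms, but the content is identical to yours.
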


\begin{proof}
Let $s$ be a generator of $\Z$ in the semidirect product, so that $s$ acts on $\gamma^{-1}R$ by conjugation via multiplication by $\gamma$. As above, $t$ denotes the stable letter of $G=G(R,\gamma)$. Sending $t$ to $s$ and $R$ to $\gamma^{-1}R$ by the natural inclusion defines a homomorphism $F\colon G\to \gamma^{-1}R\rtimes_\alpha \Z$. To see that this homomorphism is surjective, it suffices to show that $\gamma^{-n} r\in \gamma^{-1}R$ (with $r\in R$) is in the image of $F$. We may alternatively write this as $s^{-n}rs^n$, which is clearly in the image of $F$ since $s$ and $r$ are. Applying Lemma \ref{lem:wordrewrite} with $A$ the normal subgroup of $G$ generated by $R$ and $Q=R$,  we may write an element of $G$ as $t^{-k}rt^l$ with $r\in R$ and $k,l\geq 0$. To prove injectivity, consider such an element $t^{-k}rt^l$ of $G$. The image under $F$ is $s^{-k}rs^l=(\gamma^{-k}r)s^{-k+l}$. This is equal to the identity if and only if $-k+l=0$ and $\gamma^{-k}r=0$. Since $\gamma$ is not a zero divisor, we have $r=0$. Thus $t^{-k}rt^l=t^{-k}rt^k$ is the identity of $G$.
\end{proof}

\begin{ex}\label{ex:BS1n,Lamplighter}
The following are examples of rings $R$ and elements $\gamma$ along with the resulting groups $G$.
\begin{itemize}
\item Taking $R=\Z$ and $\gamma=n\in \Z\setminus \{-1,0,1\}$ yields $G(R,\gamma)=\langle a,t:tat^{-1}=a^n\rangle=BS(1,n)\cong \Z[1/n]\rtimes \Z$. Note that $R$ and $\gamma$ satisfy the axioms (A1)--(A4) since $R$ contains no zero divisors, $R/(\gamma)\cong \Z/n\Z$, and $R$ is trivially generated as a $\Z$-algebra by $\gamma$. 
\item Taking $R=(\Z/n\Z)[x]$, $\gamma=x$ yields $G(R,\gamma)\cong (\Z/n\Z)[x^{\pm 1}]\rtimes \Z= (\Z/n\Z)\wr \Z$. Again, the reader may check that $R$ and $\gamma$ satisfy (A1)--(A4). 
\end{itemize}
\end{ex}

When discussing confining subsets, it will usually be convenient to think of $G$ using the semidirect product description given by Lemma \ref{lem:semidirectprodstruct}.

\subsection{Number systems and completions}\label{sec:completions}

In this subsection we parametrize elements of $R$. Specifically, we will write elements of $R$ in ``base-$\gamma$.'' Since the ideal $(\gamma)$ is finite index in $R$, we may choose a finite \emph{transversal} for the cosets of $(\gamma)$ in $R$. That is, $T$ contains exactly one element from each coset of $(\gamma)$. In fact, we will choose a standard transversal as follows:

\begin{lem}\label{lem:Tfinitesetofintegers}
There is a number $d\in \Z_{>0}$ such that $T=[d]=\{0,1,\ldots,d-1\}$ forms a transversal for $(\gamma)$.
\end{lem}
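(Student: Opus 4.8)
The plan is to produce the required transversal by intersecting the subgroup $(\gamma) \leq (R,+)$ with the cyclic subgroup $\langle 1 \rangle \leq (R,+)$ generated by the multiplicative identity, and then reading off the index. First I would recall that by axioms (A2) and the universal property of $\Z[x]$, there is a surjection $\Z[x] \twoheadrightarrow R$ sending $x \mapsto \gamma$, so in particular $R$ is generated as a ring by $\gamma$ and $1$; but more to the point, the additive subgroup of $R$ generated by $1$ is the image of $\Z$ under the canonical ring homomorphism $\Z \to R$, which I will call $\iota$. Set $N = \iota^{-1}((\gamma)) = \{m \in \Z : m \cdot 1 \in (\gamma)\}$. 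This $N$ is an ideal of $\Z$, hence of the form $d\Z$ for a unique $d \geq 0$.

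Next I would verify that $d > 0$ and that $T = \{0, 1, \ldots, d-1\}$ maps bijectively onto a transversal for $(\gamma)$ in $R$ under $\iota$. The map $\Z/d\Z \to R/(\gamma)$ induced by $\iota$ is injective by construction of $d$. Surjectivity is where axiom (A3) enters: since $(\gamma)$ is finite index in $R$ and $R/(\gamma)$ is a quotient of $\Z[x]/(x) \cong \Z$ — indeed $R/(\gamma) \cong \Z[x]/(\mathfrak a + (x))$, and $\Z[x]/(x) \cong \Z$, so $R/(\gamma)$ is a quotient of $\Z$ — the ring $R/(\gamma)$ is a cyclic additive group generated by the image of $1$, hence $\iota$ induces a surjection $\Z \twoheadrightarrow R/(\gamma)$, which factors through $\Z/d\Z$. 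Finiteness of $R/(\gamma)$ (axiom (A3)) forces $d > 0$; indeed $|R/(\gamma)| = d$. Therefore $\iota|_{\{0,\ldots,d-1\}}$ gives exactly one representative of each coset of $(\gamma)$, i.e.\ $T = [d]$ is a transversal (identifying integers with their images in $R$, which is the standing convention once we note $R$ has characteristic dividing... — actually we need not worry, as "$T$ forms a transversal for $(\gamma)$" should be read with these integer images).

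I would then close by noting the one subtlety to address cleanly: the statement says "$T = [d]$ forms a transversal", where the elements $0, 1, \ldots, d-1$ are interpreted as elements of $R$ via $\iota$. Since $R/(\gamma) \cong \Z/d\Z$, distinct elements of $\{0,\ldots,d-1\}$ land in distinct cosets and every coset is hit, which is precisely the transversal property. The main (and only) obstacle is the small-but-essential observation that $R/(\gamma)$ is a \emph{cyclic} group — equivalently that $R/(\gamma)$ is generated as a ring by the image of $\gamma$, which is $0$, hence generated by $1$ — and this is immediate from axiom (A2). Everything else is bookkeeping with the isomorphism $R \cong \Z[x]/\mathfrak a$ and the finiteness hypothesis (A3).
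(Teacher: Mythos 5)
Your proof is correct and takes essentially the same route as the paper: both hinge on the observation that $R/(\gamma)\cong\Z[x]/(\mathfrak a+(x))$ is a quotient of $\Z[x]/(x)\cong\Z$, hence a cyclic additive group, with (A3) supplying finiteness so that the order $d$ is positive. The only cosmetic difference is that you work through the kernel $\iota^{-1}((\gamma))=d\Z$ explicitly before identifying $R/(\gamma)\cong\Z/d\Z$, whereas the paper states the surjection $\Z\twoheadrightarrow R/(\gamma)$ directly and reads off the transversal; one small slip in your exposition is the phrase ``surjectivity is where axiom (A3) enters'' — surjectivity follows purely from $R/(\gamma)$ being a quotient of $\Z$, and (A3) is only used to get $d>0$, which you in fact acknowledge two sentences later.
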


\begin{proof}
Recall that $R=\Z[x]/\frak a$. There is a surjective homomorphism of rings $\Z \cong \Z[x]/(x)\twoheadrightarrow R/(\gamma)$. If $d$ is the order of the cyclic group $R/(\gamma)$, then $[d]$ is a transversal.
\end{proof}

The transversal $T=[d]$ defines a base-$\gamma$ expansion for each element of $R$ as follows. Consider an element $r\in R$. There exists a unique element $a_0\in T$ such that $r-a_0\in (\gamma)$, and so  $r-a_0=r_1\gamma$ for some $r_1\in R$. Since $\gamma$ is not a zero divisor, the element $r_1$ is uniquely determined by $r-a_0$ and hence by $r$ itself. There is in turn a unique $a_1\in T$ with $r_1-a_1\in (\gamma)$, and so  $r_1-a_1=r_2\gamma$ for some $r_2\in R$. We have \[r=a_0+r_1\gamma=a_0+(a_1+r_2\gamma)\gamma=a_0+a_1\gamma+r_2\gamma^2.\] Suppose for induction that we have found $a_0, a_1, \ldots,a_{n-1}\in T$ and $r_n\in R$, each uniquely determined by $r$, such that \[r=a_0+a_1\gamma+\cdots+a_{n-1}\gamma^{n-1}+r_n\gamma^n.\] Then $r_n=a_n+r_{n+1}\gamma$ for some $a_n\in T$ and $r_{n+1}\in R$. Since $r_n$ is uniquely determined by $r$, so are $a_n$ and $r_{n+1}$. We have as before, \[r=a_0+\cdots+a_n\gamma^n+r_{n+1}\gamma^{n+1}.\]

We will call the formal power series $a_0+a_1\gamma+a_2\gamma^2+\cdots$ the \emph{$\gamma$-adic address} for $r$ or the base-$\gamma$ expansion of $r$. If $r,s\in R$ have the same $\gamma$-adic address $a_0+a_1\gamma+\cdots$, then they must in fact be equal, since for any $n$, there are $r_n,s_n\in R$ with \[r=a_0+a_1\gamma+\cdots+a_{n-1}\gamma^{n-1}+r_n\gamma^n, \ \ \ s=a_0+a_1\gamma+\cdots+a_{n-1}\gamma^{n-1}+s_n\gamma^n.\] We thus have $r-s=(r_n-s_n)\gamma^n\in (\gamma^n)$. Since $\bigcap_{n=1}^\infty (\gamma^n)=0$, this shows that $r=s$. To summarize:

\begin{lem}
Each $r\in R$ has a unique $\gamma$-adic address. If $r,s\in R$ have the same $\gamma$-adic address, then $r=s$.
\end{lem}

Such addresses lead us naturally to consider \emph{completions}. Completions will also be crucial to discussing hyperbolic structures on $G(R,\gamma)$. The $(\gamma)$-adic completion $\widehat{R}$  is the inverse limit $\varprojlim R/(\gamma^n)$. Here the morphisms $R/(\gamma^n)\to R/(\gamma^m)$ for $n\geq m$ are the natural quotient maps. The elements of $\widehat{R}$ are sequences $(x_n+(\gamma^n))_{n=0}^\infty$ with $x_n\in R$ for each $n$ and $x_n+(\gamma^m)=x_m+(\gamma^m)$ for $n\geq m$. Entry-by-entry addition and multiplication endow $\widehat{R}$ with well-defined ring operations. Furthermore, each element of $\widehat{R}$ has a well-defined $\gamma$-adic address. To see this, choose $x=(x_n+(\gamma^n))\in \widehat{R}$ and fix $m\geq 0$. The element $x_m$ has a $\gamma$-adic address $a_0+a_1\gamma+\cdots$. We have in particular that \[x_m+(\gamma^m)=a_0+a_1\gamma+\cdots+a_{m-1}\gamma^{m-1}+(\gamma^m).\] If $n\geq m$ and $b_0+b_1\gamma+\cdots$ is the $\gamma$-adic address of $x_n$ then
 \[b_0+\cdots+b_{m-1}\gamma^{m-1}+(\gamma^m)=x_n+(\gamma^m)=x_m+(\gamma^m)=a_0+\cdots+a_{m-1}\gamma^{m-1}+(\gamma^m).\] We must have $b_i=a_i$ for each $i\leq m-1$.

Consequently, if we choose $a_0,\ldots,a_n$ to agree with the coefficients of the first $n+1$ terms in the $\gamma$-adic address of $x_{n+1}$, then the sequence $a_0,a_1,\ldots$ is well-defined, independent of $n$, and we define $a_0+a_1\gamma+\cdots$ to be the $\gamma$-adic address of $x$. If $x=(x_n+(\gamma^n)),y=(y_n+(\gamma^n))\in \widehat{R}$ have the same $\gamma$-adic address, then we see by using addresses that $x_n+(\gamma^n)=y_n+(\gamma^n)$ for each $n$, so that $x=y$. Finally, given any sequence $a_0,a_1,\ldots$ of elements of $T$, the sequence $(a_0+\cdots+a_{n-1}\gamma^{n-1} +(\gamma^n))_{n=0}^\infty$ is an element of $\widehat{R}$. This yields:
 
\begin{lem}
Each $x\in \widehat{R}$ has a unique $\gamma$-adic address. If $x,y\in \widehat{R}$ have the same $\gamma$-adic address, then $x=y$. Finally, given any sequence $a_0,a_1,\ldots$ of elements of $T$, there is an element of $\widehat{R}$ with $\gamma$-adic address $a_0+a_1\gamma+a_2\gamma^2+\cdots$.
\end{lem}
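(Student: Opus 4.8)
The plan is to bootstrap from the preceding lemma, which gives existence and uniqueness of $\gamma$-adic addresses for elements of $R$, using one auxiliary observation: the first $m$ digits of the $\gamma$-adic address of an element of $R$ depend only on its residue class modulo $(\gamma^m)$. I would prove this observation by induction on $m$. For $m=1$: if $r\equiv s\pmod{(\gamma)}$ and $a_0,b_0\in T$ are the unique transversal elements with $r-a_0,s-b_0\in(\gamma)$, then $a_0\equiv r\equiv s\equiv b_0\pmod{(\gamma)}$, so $a_0=b_0$ since $T$ meets each coset of $(\gamma)$ exactly once. For the inductive step, write $r-a_0=r_1\gamma$ and $s-b_0=s_1\gamma$; since $r-a_0\equiv s-b_0\pmod{(\gamma^m)}$ and $\gamma$ is not a zero divisor by (A1), we get $r_1\equiv s_1\pmod{(\gamma^{m-1})}$, and the address of $r_1$ (resp.\ $s_1$) is obtained from that of $r$ (resp.\ $s$) by deleting the first digit, so the inductive hypothesis applies.

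With this in hand, the existence of a well-defined $\gamma$-adic address for $x=(x_n+(\gamma^n))_{n\ge 0}\in\widehat R$ follows as sketched in the text: for $n\ge m$ the coherence relation $x_n\equiv x_m\pmod{(\gamma^m)}$ together with the auxiliary observation forces the first $m$ digits of the addresses of $x_n$ and $x_m$ to agree, so for each $i$ the $i$-th digit of the address of $x_n$ stabilizes once $n>i$; declare this common value to be the $i$-th digit $a_i$ of the address of $x$. Equivalently, one takes $a_0,\ldots,a_n$ to be the first $n+1$ digits of $x_{n+1}$.

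For the uniqueness statement, suppose $x=(x_n+(\gamma^n))$ and $y=(y_n+(\gamma^n))$ have the same address $a_0+a_1\gamma+\cdots$. By construction the first $m$ digits of $x_m$ and of $y_m$ are both $a_0,\ldots,a_{m-1}$, whence $x_m\equiv a_0+\cdots+a_{m-1}\gamma^{m-1}\equiv y_m\pmod{(\gamma^m)}$, i.e.\ $x_m+(\gamma^m)=y_m+(\gamma^m)$ for all $m$, so $x=y$ in $\varprojlim R/(\gamma^n)$. For the realizability statement, given any sequence $(a_i)_{i\ge 0}$ with $a_i\in T$, set $x_n=a_0+a_1\gamma+\cdots+a_{n-1}\gamma^{n-1}\in R$; then $x_n\equiv x_m\pmod{(\gamma^m)}$ for $n\ge m$, so $(x_n+(\gamma^n))_{n\ge 0}$ is an element of $\widehat R$, and peeling off digits one at a time shows the address of $x_n$ is $a_0+\cdots+a_{n-1}\gamma^{n-1}+0+0+\cdots$, hence the address of $(x_n+(\gamma^n))$ is exactly $a_0+a_1\gamma+\cdots$.

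\textbf{The main obstacle} --- and really the only step with any content --- is the auxiliary observation in the first paragraph: an element of $R$ has many representatives modulo $(\gamma^m)$, so one cannot directly invoke uniqueness of addresses in $R$; instead one must descend the congruence one power of $\gamma$ at a time, and it is precisely the non-zero-divisor hypothesis (A1) that makes division by $\gamma$ legitimate at each stage. Everything after that is formal inverse-limit bookkeeping.
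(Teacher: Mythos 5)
Your proof is correct and follows the same strategy as the paper: lift addresses from the $x_m$ using the inverse-limit coherence relations, then read off uniqueness and realizability. The one place where you add substance — the auxiliary observation that the first $m$ digits of an element of $R$ depend only on its class modulo $(\gamma^m)$, proved by repeatedly dividing by $\gamma$ via axiom (A1) — is exactly the step the paper's proof asserts without comment (``We must have $b_i=a_i$ for each $i\leq m-1$''), so you have simply made explicit what the paper leaves implicit.
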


The ring $\widehat{R}$ is endowed with the inverse limit topology. That is, as the inverse limit of the rings $R/(\gamma^n)$ (considered as discrete sets) it is a subset of the product $\prod_{n=0}^\infty R/(\gamma^n)$ and inherits the subspace topology. In this topology a sequence $\{a_0^i+a_1^i\gamma+a_2^i\gamma^2+\ldots\}_{i= 1}^\infty \subset \widehat R$ converges to $b_0+b_1\gamma+b_2\gamma^2+\ldots$ if and only if for each $j$, $a_j^i$ is eventually equal to $b_j$ (as $i \to +\infty$).

Finally we discuss the operations on $\widehat{R}$ in terms of $\gamma$-adic addresses. Consider $x=\sum_{i=0}^\infty a_i\gamma^i$ and $y=\sum_{i=0}^\infty b_i\gamma^i$. Then $x+y=\sum_{i=0}^\infty c_i\gamma^i$ where the $c_i$ are determined inductively as follows. First of all, $c_0$ is the unique element of $T$ in the same coset as $a_0+b_0$. We have $a_0+b_0=c_0+r_0\gamma$ for some $r_0\in R$. If $c_0,\ldots,c_n\in T$ and $r_0,\ldots,r_n\in R$ have been determined such that $a_i+b_i+r_{i-1}=c_i+r_i\gamma$ for each $i\leq n$, then $c_{n+1}\in T$ and $r_{n+1}\in R$ are determined uniquely by the equation $a_{n+1}+b_{n+1}+r_n=c_{n+1}+r_{n+1}\gamma$. This formula for $x+y$ may be verified by summing \[\left(\sum_{i=0}^n a_i\gamma^i + \left(\gamma^{n+1}\right)\right)+\left(\sum_{i=0}^n b_i \gamma^i +\left(\gamma^{n+1}\right)\right)\] for each $n\geq 0$ and verifying that this is equal to $\sum_{i=0}^n c_i \gamma^i +(\gamma^{n+1})$. Similarly, $xy=\sum_{i=0}^\infty d_i\gamma^i$ where the $d_i$ are determined inductively as follows. First, $a_0b_0=d_0+s_0\gamma$ with $d_0\in T$ and $s_0\in R$. If $d_0,\ldots,d_n\in T$ and $s_0,\ldots,s_n\in R$ have been determined such that \[\sum_{i=0}^k a_ib_{k-i}+s_{k-1}=d_k+s_k\gamma\] for $k\leq n$, then $d_{n+1}\in T$ and $s_{n+1}\in R$ are determined uniquely by \[\sum_{i=0}^{n+1} a_ib_{n+1-i}+s_n=d_{n+1}+s_{n+1}\gamma.\] This formula for $xy$ may  be verified analogously.

Finally, we introduce one more property of the pair $(R,\gamma)$ which will be crucial for investigating hyperbolic structures on $G(R,\gamma)$. It says that $\gamma$-adic addresses of elements of $R$ are ``well-behaved.'' The importance of this property is for proving Lemma \ref{lem:RsubsetQ}. We say that a $\gamma$-adic address $a_0+a_1\gamma+\cdots$ is \emph{eventually periodic} if there exists $N\geq 0$ and $k>0$ with $a_{n+k}=a_n$ for all $n\geq N$. That is, the $a_j$ eventually repeat in a sequence of period $k$. Recall that the transversal $T=[d]$ defines the $\gamma$-adic address of any element of $R$ or $\widehat R$. We introduce one further axiom for the pair $(R,\gamma)$:

\begin{enumerate}[({A}1)]
\setcounter{enumi}{4}
\item for every $r\in R$, the $\gamma$-adic address of $r$ is eventually periodic.
\end{enumerate}

The eventually periodic property of addresses occurs immediately and naturally in simple examples:

\begin{ex}
Consider the ring $R=\Z$ with $\gamma=2$. The $\gamma$-adic address of $-1$ is $1+2+2^2+2^3+\cdots$. This would normally be written in base 2 as $-1=111\cdots$. Thus the $\gamma$-adic address for $-1$ is eventually periodic.
In fact, writing $-1=a_0+a_12+a_22^2+\cdots$ we have $a_n=a_{n+1}$ so that the eventually periodic property holds with $k=1$ and for all values of $n$.
For a (slightly) more complicated example, the $\gamma$-adic address of $-13$ is \[1+2+2^4+2^5+2^6+2^7+\cdots.\] This is eventually periodic with $a_n=a_{n+1}$ for all $n\geq 4$.
\end{ex}

\begin{ex}
Consider the ring $R=\Z[x]/(x^2-2)$ with $\gamma=x+(x^2-2)$. This is isomorphic to $\Z[\sqrt{2}]$ via the isomorphism sending $\gamma$ to $\sqrt{2}$. A transversal is given by $[2]=\{0,1\}$. One may check that the $\gamma$-adic address of $-1$ in this ring is \[-1=1+\gamma^2+\gamma^4+\gamma^6+ \cdots\] (in fact this essentially follows from the address $-1=1+2+2^2+\cdots$ from the last example). This is eventually periodic with period $k=2$.

In the ring $R=\Z[x]/(x^3-x-2)$ with $\gamma=x+(x^3-x-2)$, we again have that $[2]$ is a transversal. The $\gamma$-adic address of $-1$ is \[-1=1+\gamma+\gamma^3+\gamma^4+\gamma^6+\gamma^7+\gamma^9+\cdots.\] This is eventually periodic with period $k=3$.
\end{ex}

\section{Generalities on confining subsets}
\label{sec:confining}

In this section we consider the hyperbolic structures of a general abelian-by-cyclic group $G=A\rtimes_\alpha \Z$. Under certain hypotheses about abelianizations, we describe the structure of $\mathcal H(G)$ which holds without any of the additional machinery  introduced later in the paper. We will use this structure extensively when proving our main theorems.

\begin{prop}
\label{prop:generalstructure}
Let $G=A\rtimes_\alpha \Z$ with $A$ abelian, and denote by $t$ a generator of $\Z$. Suppose that the abelianization of $G$ is virtually cyclic and that the same holds for its finite index subgroup $A\rtimes \langle t^2\rangle$. Then $\mathcal{H}(G)$ has the following structure: every hyperbolic structure on $G$ is either elliptic, lineal, or quasi-parabolic. The union $\mathcal H_{qp}(G) \cup \mathcal H_{\ell}(G)$ consists of two subposets $\mathcal{P}_-(G)$ and $\mathcal{P}_+(G)$ which are both lattices and intersect in the unique lineal structure of $G$. Every other element of $\mathcal{P}_-(G)\cup \mathcal{P}_+(G)$ is quasi-parabolic. The elements of $\mathcal{P}_-(G) \setminus \mathcal H_\ell (G)$ are incomparable to the elements of $\mathcal{P}_+(G) \setminus \mathcal H_\ell (G)$. Finally, the unique lineal structure of $G$ dominates the unique elliptic structure. See Figure \ref{fig:general}.
\end{prop}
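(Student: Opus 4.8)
The plan is to combine the Gromov/Abbott--Balasubramanya--Osin classification of actions (Theorem~\ref{ClassHypAct}) with the amenability of $G$ and the confining-subset machinery. First, since $A$ and $\langle t\rangle\cong\Z$ are abelian, $G$ is metabelian, hence solvable, hence contains no non-abelian free subgroup; a general-type action would produce one via ping-pong, so $\mathcal H_{gt}(G)=\emptyset$ and every hyperbolic structure is elliptic, lineal, or quasi-parabolic. A structure $[S]$ is elliptic exactly when $\Gamma(G,S)$ is bounded, i.e.\ when $[S]=[G]$, so $\mathcal H_e(G)=\{[G]\}$; since $[G]$ is the minimum of $\mathcal G(G)$ it is dominated by every other structure, in particular by the lineal one once it is produced.

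Next I would show that $G$ has a \emph{unique} lineal structure $[\mathcal L]$. As $G$ is amenable, every homogeneous quasimorphism of $G$, or of any subgroup, is a homomorphism. Since $G^{\mathrm{ab}}$ is virtually cyclic and surjects onto $G/A\cong\Z$, it is isomorphic to $\Z\times(\text{finite})$, so $\mathrm{Hom}(G,\R)\cong\R$ and the image of $A$ in $G^{\mathrm{ab}}$ is finite. Hence $G$ has a unique nonzero pseudocharacter up to scaling, which by \cite[Lemma~4.15]{ABO} yields a single orientable lineal structure $[\mathcal L]$ (also represented by $\Gamma(G,A\cup\{t^{\pm1}\})$ via Proposition~\ref{niceprop}, since $A$ is confining but not strictly). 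A non-orientable lineal structure would restrict to an orientable one on an index-two subgroup $G_0$, with nonzero Busemann pseudocharacter $\beta$ satisfying $g\beta g^{-1}=-\beta$ for $g\in G\setminus G_0$; the limit set of the normal subgroup $A$ is $G$-invariant, ruling out $A$ parabolic, and the virtual cyclicity of the abelianization of $A\rtimes\langle t^2\rangle$ (again using amenability) rules out $A$ lineal, so the relevant part of $A$ acts elliptically, $\beta$ vanishes on it, $G_0$ is generated by it together with a power of $t$, and anti-invariance forces $\beta$ to vanish on that power of $t$ as well, so $\beta\equiv0$, a contradiction. Thus $\mathcal H_\ell(G)=\{[\mathcal L]\}$.

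For the quasi-parabolic structures I would use the confining-subset correspondence. Given $[X]\in\mathcal H_{qp}(G)$, its Busemann pseudocharacter $p$ is a homomorphism with $p(A)=0$ by the computation above, so Proposition~\ref{prop:qctoconfining} provides $Q\subseteq A$, confining under $\alpha$ or under $\alpha^{-1}$, with $X$ equivariantly quasi-isometric to $\Gamma(G,Q\cup\{t^{\pm1}\})$; conversely Proposition~\ref{niceprop} shows each such $Q$ represents a quasi-parabolic structure when $Q$ is strictly confining and $[\mathcal L]$ precisely when $Q=A$ (the only non-strictly confining subset). Let $\mathcal P_+(G)$ and $\mathcal P_-(G)$ be the structures arising from subsets confining under $\alpha$, resp.\ under $\alpha^{-1}$; then $\mathcal H_{qp}(G)\cup\mathcal H_\ell(G)=\mathcal P_+(G)\cup\mathcal P_-(G)$, each subposet contains $[\mathcal L]$, and every other element is quasi-parabolic. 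To see each is a lattice I would use that, for abelian $A$, the assignment $Q\mapsto[\Gamma(G,Q\cup\{t^{\pm1}\})]$ is an order-(anti-)isomorphism from confining subsets under $\alpha$, modulo the natural equivalence, onto $\mathcal P_+(G)$ (\cite[Proposition~4.5]{Amen}, \cite[Theorem~1.2]{ABR}), and then verify two closure properties: if $Q_1,Q_2$ are symmetric and confining under $\alpha$, so are $Q_1\cap Q_2$ (check axioms (a)--(c), using $\alpha^n(a)\in Q_i$ for large $n$ for the covering axiom) and the Minkowski sum $Q_1+Q_2$ (using $\alpha^{k_0}\bigl((Q_1+Q_1)+(Q_2+Q_2)\bigr)\subseteq Q_1+Q_2$); moreover $Q_1\cap Q_2$ is the largest confining subset inside both, and $Q_1+Q_2$ is the smallest, up to equivalence, containing both, since any confining $Q\supseteq Q_1\cup Q_2$ satisfies $Q_1+Q_2\subseteq Q+Q\subseteq\alpha^{-k_0}(Q)$. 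Hence $\mathcal P_+(G)$, and symmetrically $\mathcal P_-(G)$, has all finite meets and joins.

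Finally I would prove $\mathcal P_+(G)\cap\mathcal P_-(G)=\{[\mathcal L]\}$ and the incomparability of the quasi-parabolic parts by the same device. If $[X_+]$ (from $Q_+$ confining under $\alpha$) dominates $[X_-]$ (from $Q_-$ confining under $\alpha^{-1}$), then $\sup_{q\in Q_-}\|q\|_{Q_+\cup\{t^{\pm1}\}}<\infty$; applying Lemma~\ref{lem:wordrewrite} to write each $q\in Q_-$ as $t^{-r}x_1\cdots x_m t^{r}=\alpha^{-r}(x_1+\cdots+x_m)$ with $x_i\in Q_+$ and $r,m$ uniformly bounded, and using $mQ_+\subseteq\alpha^{-(m-1)k_0}(Q_+)$, gives $Q_-\subseteq\alpha^{-M}(Q_+)$ for fixed $M$; replacing $Q_+$ by the equivalent confining subset $\alpha^{-M}(Q_+)$ we may assume $Q_-\subseteq Q_+$, whence $[X_+]\preccurlyeq[X_-]$ and so $[X_+]=[X_-]$. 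Running the analogous argument with stable letter $t^{-1}$ and automorphism $\alpha^{-1}$ then produces a representing confining subset $Q$ with $\alpha(Q)=Q$, forcing $Q=A$ and $[X_+]=[X_-]=[\mathcal L]$; in particular no quasi-parabolic element of one of $\mathcal P_\pm(G)$ is comparable to one in the other. Combined with \cite[Corollary~4.26]{ABO} (every quasi-parabolic structure dominates a lineal one, hence $[\mathcal L]$) and the fact that $[\mathcal L]$ dominates $[G]$, this gives the picture of Figure~\ref{fig:general}. The main obstacle is the uniqueness of the lineal structure: the orientable case is immediate from amenability plus virtual cyclicity, but the non-orientable case requires carefully identifying the index-two subgroup carrying the restricted orientable structure (and controlling how $A$ acts on the boundary) before the anti-invariance of its Busemann pseudocharacter can be used.
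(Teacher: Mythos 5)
Your proposal follows essentially the same route as the paper's: solvability kills general type, amenability plus virtual cyclicity of the abelianization pins down the unique homomorphism $G\to\R$, the Caprace--Cornulier--Monod--Tessera/ABR correspondence (Propositions~\ref{niceprop} and~\ref{prop:qctoconfining}) converts the quasi-parabolic classification into a classification of confining subsets, and the lattice structure comes from the meet $Q_1\cap Q_2$ and join $Q_1+Q_2$ exactly as in Lemmas~\ref{lem:confiningpartialorder}--\ref{cor:lattice}. Two places differ from the paper and deserve comment.

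For the uniqueness of the lineal structure, the paper first treats the orientable case, then reduces the general lineal case to it by showing $A$ acts elliptically: if $A$ were lineal it would be orientable (abelian), and since $t$ acts on the two boundary points by an involution, $t^2$ preserves them pointwise, so $A\rtimes\langle t^2\rangle$ fixes $\partial X$ pointwise and the orientable analysis applies to it. Your version of this step works with the index-two kernel $G_0$ of the boundary action and tries to conclude directly from anti-invariance of the Busemann pseudocharacter; you correctly flag the identification of $G_0$ as the delicate point, but the deduction ``virtual cyclicity of the abelianization of $A\rtimes\langle t^2\rangle$ rules out $A$ lineal'' is asserted rather than proved. It is exactly the paper's $A$-elliptic reduction, and it is worth writing out: if $A$ is lineal, $A\rtimes\langle t^2\rangle$ fixes $\partial X$ pointwise, so its Busemann pseudocharacter is a nonzero homomorphism to $\R$, which by the hypothesis must kill $A$; but then $A$ has no loxodromic element, contradiction.

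For incomparability of $\mathcal P_+\setminus\mathcal H_\ell$ and $\mathcal P_-\setminus\mathcal H_\ell$, the paper argues geometrically (the $G$-fixed boundary point would have to be simultaneously the attracting and the repelling fixed point of $t$), while you argue algebraically through confining-subset containments. The algebraic argument does work, but as written the direction of the preorder is muddled and the conclusion overreaches. With the paper's conventions, $[X_+]$ dominating $[X_-]$ means $[Q_-\cup\{t^{\pm1}\}]\preccurlyeq[Q_+\cup\{t^{\pm1}\}]$, i.e.\ $\sup_{q\in Q_+}\|q\|_{Q_-\cup\{t^{\pm1}\}}<\infty$, not the supremum you wrote. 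Running the rewriting argument in that direction yields $Q_+\subseteq\alpha^{M}(Q_-)$, hence after adjusting you may assume $Q_+\subseteq Q_-$. From there the right conclusion is that $Q_-=A$: for any $a\in A$ choose $n\geq 0$ with $\alpha^n(a)\in Q_+\subseteq Q_-$, and since $\alpha^{-1}(Q_-)\subseteq Q_-$ we get $a=\alpha^{-n}(\alpha^n(a))\in Q_-$. So the \emph{dominated} structure is $[\mathcal L]$; the other one need not be (take $A=\Z[1/2]$, $\alpha$ multiplication by $2$, $Q_-=\Z[1/2]\cap[-1,1]$, $Q_+=A$: then $Q_+$ dominates $Q_-$ but $[X_-]$ is genuinely quasi-parabolic). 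Thus ``$[X_+]=[X_-]$'' is false in general; what the argument actually proves is that if a $\mathcal P_+$-structure and a $\mathcal P_-$-structure are comparable then the smaller one is lineal, which is precisely what the incomparability claim requires. With those two corrections the proposal is a sound alternative to the paper's proof.
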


The rather technical condition that the abelianizations of $G=A\rtimes \langle t \rangle$ and $A\rtimes \langle t^2 \rangle$ are both virtually cyclic is meant to rule out non-orientable lineal actions, as will become apparent in the proof. The proposition does not hold, e.g., when $G=\langle a, t : tat^{-1}=a^{-1}\rangle$ (the fundamental group of the Klein bottle), for which the finite index subgroup $\langle a \rangle \rtimes \langle t^2 \rangle$ is $\Z^2$.

\begin{figure}[h]
\centering

\begin{tikzpicture}[scale=0.35]

\node[circle, draw, minimum size=0.8cm] (triv) at (0,-3) {$*$};
\node[circle, draw, minimum size=0.8cm] (lin) at (0,0) {$\R$};

\node (low1) at (2,1) {};
\node (low2) at (3,1) {};
\node (low3) at (4,1) {};

\node (Low1) at (-2,1) {};
\node (Low2) at (-3,1) {};
\node (Low3) at (-4,1) {};

\draw[thick] (triv) -- (lin);
\draw[thick] (lin) -- (low1);
\draw[thick] (lin) -- (low2);
\draw[thick] (lin) -- (low3);

\draw[thick] (lin) -- (Low1);
\draw[thick] (lin) -- (Low2);
\draw[thick] (lin) -- (Low3);

\draw[thick, dotted, red, rotate=296.5] (-0.05, 3.4) ellipse (60pt and 140pt);

\draw[thick, dotted, blue, rotate=63.5] (-0.05, 3.4) ellipse (60pt and 140pt);

\node[red] (ideals) at (6,-1) {$\mathcal{P}_+(G)$};

\node[blue] (others) at (-6,-1) {$\mathcal{P}_-(G)$};

\end{tikzpicture}

\caption{The poset of hyperbolic structures for a general abelian-by-cyclic group $A\rtimes \Z$ under basic hypotheses on abelianizations. The subposets $\mathcal P_-(G)$ and $\mathcal P_+(G)$ are lattices.}
\label{fig:general}
\end{figure}
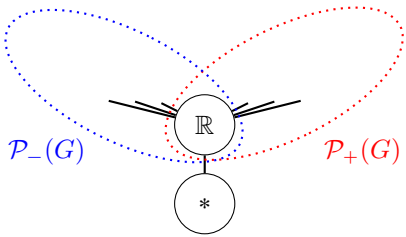

The remainder of this section is devoted to proving Proposition \ref{prop:generalstructure}. We show that we can  identify $\mc P_+(G)$ with the poset of equivalence classes of confining subsets of $A$ under $\alpha$, and, in  Corollary \ref{cor:lattice}, we show that this latter poset is a lattice.  An analogous argument holds for $\mc P_-(G)$ with $\alpha^{-1}$ in place of $\alpha$.  For the rest of the section let $G=A\rtimes_\alpha \Z$, where $A$ is abelian and $\alpha$ is an automorphism of $A$, and let $t$ denote the generator of the $\Z$ factor of $G$ satisfying $tat^{-1}=\alpha(a)$ for all $a\in A$. Finally,  assume that the abelianizations of $G$ and the subgroup $A\rtimes \langle t^2\rangle$ are virtually cyclic.  Recall that subsets which are confining under $\alpha$ are defined in Definition~\ref{def:confining}.

\begin{lem}
\label{lem:confiningpartialorder}
If $P,Q \subset A$ are confining under $\alpha$, then $P\cup \{t^{\pm 1}\} \preceq Q\cup \{t^{\pm 1}\}$ if and only if there exists $N\in \Z$ such that $\alpha^N(Q)\subset P$.
\end{lem}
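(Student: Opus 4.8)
The plan is to unpack the definition of the domination relation $\preceq$ directly in terms of word norms, using Lemma \ref{lem:wordrewrite} to get a normal form for elements of $A$ in the generating set $Q \cup \{t^{\pm 1}\}$. Recall $P \cup \{t^{\pm 1}\} \preceq Q \cup \{t^{\pm 1}\}$ means exactly that $\sup\{\|s\|_{P \cup \{t^{\pm 1}\}} : s \in Q \cup \{t^{\pm 1}\}\} < \infty$; since $\|t^{\pm 1}\|_{P \cup \{t^{\pm 1}\}} = 1$, this is equivalent to $\sup_{q \in Q} \|q\|_{P \cup \{t^{\pm 1}\}} < \infty$.

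For the direction ($\Leftarrow$): suppose $\alpha^N(Q) \subset P$; we may assume $N \geq 0$ (if $N < 0$, then since $\alpha(P) \subset P$ we have $\alpha^{N'}(Q) \subset \alpha^{N'-N}(P) \subset P$ for $N' = $ any nonnegative integer $\geq N$, wait — more carefully, if $\alpha^N(Q)\subset P$ with $N<0$, then $\alpha^0(Q) = \alpha^{-N}(\alpha^N(Q)) \subset \alpha^{-N}(P)\subset P$ using $\alpha(P)\subset P$, so in fact $Q \subset P$ and we can take $N = 0$). Then for any $q \in Q$, write $q = t^{-N}(\alpha^N(q))t^N$ as an identity in $G$ (this uses $tat^{-1} = \alpha(a)$), and since $\alpha^N(q) \in P$ this expresses $q$ as a word of length $2N + 1$ in $P \cup \{t^{\pm 1}\}$. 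Hence $\sup_{q \in Q}\|q\|_{P \cup \{t^{\pm 1}\}} \leq 2N+1 < \infty$, giving $P \cup \{t^{\pm 1}\} \preceq Q \cup \{t^{\pm 1}\}$.

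For the direction ($\Rightarrow$): suppose $M := \sup_{q \in Q} \|q\|_{P \cup \{t^{\pm 1}\}} < \infty$. I need to produce a single $N$ with $\alpha^N(Q) \subset P$. Fix $q \in Q$. Apply Lemma \ref{lem:wordrewrite} with $A$ the abelian normal subgroup, the subset being $P$, and $\alpha$: since $P \cup \{t^{\pm 1}\}$ generates $G$, $\alpha(P) \subset P$, and $q \in A$, we may write $q = t^{-r} x_1 \cdots x_m t^{r}$ with $x_i \in P$, $2r + m = \|q\|_{P \cup\{t^{\pm 1}\}} \leq M$. Conjugating, $\alpha^r(q) = t^r q t^{-r} = x_1 \cdots x_m \in P \cdot P \cdots P$ ($m$ factors). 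Now invoke the confining property (c) for $P$: $\alpha^{k_0}(P \cdot P) \subset P$, so inductively $\alpha^{(m-1)k_0}(P \cdot P \cdots P) \subset P$ for a product of $m$ factors. Thus $\alpha^{r + (m-1)k_0}(q) \in P$. Since $2r + m \leq M$, the exponent $r + (m-1)k_0 \leq \tfrac{M}{2} + (M-1)k_0 =: N_0$ is bounded independent of $q$; and because $\alpha(P) \subset P$, applying $\alpha$ further keeps us in $P$, so $\alpha^{N_0}(q) \in P$. As $q \in Q$ was arbitrary, $\alpha^{N_0}(Q) \subset P$, completing the proof with $N = N_0$.

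The main obstacle I anticipate is the bookkeeping in the ($\Rightarrow$) direction: one must be careful that the normal form from Lemma \ref{lem:wordrewrite} genuinely has matching exponents $r = \ell$ (which the lemma guarantees precisely because $q \in A$), and that the exponent needed to push a product of $m$ elements of $P$ back into $P$ via (c) grows only linearly in $m$, which is bounded by $M$ — so the total exponent is uniformly bounded. The monotonicity $\alpha(P)\subset P$ is what lets us replace the $q$-dependent exponent by the single uniform bound $N_0$. Everything else is a routine translation between the metric ($\preceq$) and algebraic ($\alpha^N(Q)\subset P$) formulations.
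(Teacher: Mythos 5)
Your proof is correct and follows essentially the same route as the paper: both directions unpack the word-norm definition of $\preceq$, the ($\Leftarrow$) direction uses the conjugation identity $q=t^{-N}\alpha^N(q)t^N$, and the ($\Rightarrow$) direction combines Lemma \ref{lem:wordrewrite} with the confining property (c) to push a bounded product of $P$-elements back into $P$ after a uniformly bounded number of applications of $\alpha$. Your version is slightly more explicit (the WLOG reduction to $N\geq 0$, and the explicit bound $\alpha^{(m-1)k_0}(P^m)\subset P$), but these are only bookkeeping refinements of the paper's argument, not a different approach.
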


\begin{proof}
If $\alpha^N(Q)\subset P$, then $t^Nqt^{-N}\in P$ for every $q\in Q$.  Thus $\|q\|_{P\cup \{t^{\pm 1}\}}\leq 2|N|+1$, which shows that $P\cup \{t^{\pm 1}\}\preceq Q\cup \{t^{\pm 1}\}$.

If $P\cup \{t^{\pm 1}\}\preceq Q\cup \{t^{\pm 1}\}$, then there exists $k>0$ such that $\|q\|_{P\cup \{t^{\pm 1}\}}\leq k$ for all $q\in Q$. By Lemma \ref{lem:wordrewrite} we may write any such $q$ as $q=t^{-a}u_1\cdots u_bt^a$ where $u_i\in P$ for all $i$ and $0\leq a,b\leq k$. Consequently $\alpha^a(q)=u_1\cdots u_b \in P^b\subset P^k$. Choose $d$ large enough that $\alpha^d(P^k)\subset P$; such $d$ exists by part (c) of Definition \ref{def:confining}. Then $\alpha^{a+d}(q)\in P$. We have $a+d\leq k+d$, so choosing $N=k+d$ completes the proof.
\end{proof}

Lemma \ref{lem:confiningpartialorder} implies that there is a preorder on confining subsets: $P\preceq Q$ if $\alpha^N(Q)\subset P$ for some $N\in \Z$. This defines an equivalence relation $P\sim Q$ if $P\preceq Q$ and $Q\preceq P$. Thus $P\preceq Q$ if and only if $[P\cup \{t^{\pm 1}\}]\preccurlyeq [Q\cup \{t^{\pm 1}\}]$ and $P\sim Q$ if and only if $[P\cup \{t^{\pm 1}\}]= [Q\cup \{t^{\pm 1}\}]$. We also define ``meet'' and ``join'' operations that descend to honest meet and join operations on the set of equivalence classes of confining subsets:
\begin{itemize} 
\item $P\vee Q\vcentcolon=P\cap Q$; and
\item $P\wedge Q\vcentcolon=P\cdot Q=\{q_1q_2 \colon q_1\in P, q_2\in Q\}$.
\end{itemize}

\begin{lem}
The sets $P\vee Q$ and $P\wedge Q$ are confining under $\alpha$.
\end{lem}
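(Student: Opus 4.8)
The plan is to verify conditions (a), (b), (c) of Definition~\ref{def:confining} directly for each of $P\vee Q=P\cap Q$ and $P\wedge Q=P\cdot Q$, using that $P$ and $Q$ are each confining under $\alpha$. Throughout I would write $\alpha$ additively is irrelevant; the only facts used are that $\alpha$ is an automorphism of the abelian group $A$ preserving $P$ and $Q$, that $A=\bigcup_{n\ge 0}\alpha^{-n}(P)=\bigcup_{n\ge 0}\alpha^{-n}(Q)$, and that $\alpha^{k_P}(P\cdot P)\subseteq P$ and $\alpha^{k_Q}(Q\cdot Q)\subseteq Q$ for some non-negative integers $k_P,k_Q$. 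I would also note at the outset that $P\cap Q$ and $P\cdot Q$ are symmetric since $P,Q$ are (for $P\cdot Q$, the inverse of $q_1q_2$ is $q_1^{-1}q_2^{-1}$ because $A$ is abelian, and $q_i^{-1}\in P,Q$ respectively), and that both contain the identity, so both are legitimate candidates to be confining subsets.

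For $P\vee Q=P\cap Q$: condition (a) is immediate, $\alpha(P\cap Q)=\alpha(P)\cap\alpha(Q)\subseteq P\cap Q$ since $\alpha$ is injective and $\alpha(P)\subseteq P$, $\alpha(Q)\subseteq Q$. For condition (b), given $a\in A$, choose $m$ with $\alpha^m(a)\in P$ and $\ell$ with $\alpha^\ell(a)\in Q$; then $\alpha^{\max(m,\ell)}(a)\in P\cap Q$ using condition (a) for $P$ and $Q$ individually to push up from $m$ (resp.\ $\ell$) to $\max(m,\ell)$. For condition (c), $(P\cap Q)\cdot(P\cap Q)\subseteq (P\cdot P)\cap(Q\cdot Q)$, so $\alpha^{\max(k_P,k_Q)}\big((P\cap Q)\cdot(P\cap Q)\big)\subseteq \alpha^{\max(k_P,k_Q)}(P\cdot P)\cap\alpha^{\max(k_P,k_Q)}(Q\cdot Q)\subseteq P\cap Q$, again using (a) to bump the exponents up to the common value $\max(k_P,k_Q)$.

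For $P\wedge Q=P\cdot Q$: condition (a) is $\alpha(P\cdot Q)=\alpha(P)\cdot\alpha(Q)\subseteq P\cdot Q$. Condition (b) follows since $P\subseteq P\cdot Q$ (as the identity lies in $Q$) and $A=\bigcup_{n\ge 0}\alpha^{-n}(P)$. Condition (c) is the one genuinely computational point: one has $(P\cdot Q)\cdot(P\cdot Q)=(P\cdot P)\cdot(Q\cdot Q)$ using commutativity of $A$ to rearrange the four factors. Applying $\alpha^{k_P+k_Q}$ and using $\alpha(P)\subseteq P$, $\alpha(Q)\subseteq Q$ to distribute the exponents as we like, we get $\alpha^{k_P+k_Q}\big((P\cdot P)\cdot(Q\cdot Q)\big)=\alpha^{k_P}(P\cdot P)\cdot\alpha^{k_Q}\big(\alpha^{k_P}(Q\cdot Q)\big)\subseteq P\cdot\alpha^{k_P}(Q)\subseteq P\cdot Q$, so $k_0=k_P+k_Q$ works. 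There is essentially no serious obstacle here; the only thing to be careful about is the bookkeeping of exponents and the repeated appeal to property (a) to enlarge an exponent — the point being that $\alpha^{n}(X)\subseteq X$ for all $n\ge 0$ whenever $\alpha(X)\subseteq X$, which I would state once as a trivial observation and reuse. I would close by remarking that this lemma, together with Lemma~\ref{lem:confiningpartialorder}, is what makes $\vee$ and $\wedge$ descend to well-defined operations on equivalence classes of confining subsets, to be shown in the next step (their lattice identities).
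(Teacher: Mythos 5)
Your proof is correct and takes essentially the same approach as the paper: direct verification of Definition~\ref{def:confining}(a)--(c) for both $P\cap Q$ and $P\cdot Q$, using commutativity of $A$ to rearrange the four factors in case (c) for $P\wedge Q$. Your exponent $k_P+k_Q$ in place of the paper's $\max(k_0,l_0)$ is a cosmetic difference, though the first equality sign in that final computation should be a containment~$\subseteq$, since $\alpha^{k_P+k_Q}(P\cdot P)$ is contained in, but need not equal, $\alpha^{k_P}(P\cdot P)$.
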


\begin{proof}
First we prove that $P\vee Q$ is confining. Since $\alpha(P)\subset P$ and $\alpha(Q)\subset Q$, we have $\alpha(P\cap Q) \subset \alpha(P) \cap \alpha(Q) \subset P\cap Q$. This verifies Definition \ref{def:confining}(a). If $g\in A$ then there exist $m,n\geq 0$ with $\alpha^m(g)\in P$ and $\alpha^n(g)\in Q$. Hence  $\alpha^{\max(m,n)}(g)\in P\cap Q$. This verifies Definition \ref{def:confining}(b). Finally there exist $k_0\geq 0$ and $l_0\geq 0$ with $\alpha^{k_0}(P\cdot P)\subset P$ and $\alpha^{l_0}(Q\cdot Q)\subset Q$. Hence $\alpha^{\max(k_0,l_0)}\left((P\cap Q)\cdot (P\cap Q)\right)\subset P\cap Q$, which verifies Definition \ref{def:confining}(c).

The fact that $P\wedge Q$ is confining can be similarly verified by using the following observations. First, $\alpha(P\cdot Q)= \alpha(P)\cdot \alpha(Q)\subset P\cdot Q$. Secondly, if $g\in A$ then there exists $m\geq 0$ with $\alpha^m(g)\in P$. Thus, $\alpha^m(g)\in P\cdot Q$. Finally, there exist $k_0\geq 0$ and $l_0\geq 0$ with $\alpha^{k_0}(P\cdot P)\subset P$ and $\alpha^{l_0}(Q\cdot Q)\subset Q$. If $q_1,q_2\in P$ and $u_1,u_2\in Q$, then as $A$ is abelian, \[\alpha^{\max(k_0,l_0)}(q_1u_1\cdot q_2u_2)=\alpha^{\max(k_0,l_0)}(q_1q_2)\alpha^{\max(k_0,l_0)}(u_1u_2)\in P\cdot Q.\] This shows that $\alpha^{\max(k_0,l_0)}\left((P\cdot Q) \cdot (P\cdot Q)\right)\subset P\cdot Q$.
\end{proof}

The next lemma shows that $\vee$ and $\wedge$ indeed descend to meet and join operations on equivalence classes:

\begin{lem}
If $P\sim P'$ and $Q\sim Q'$, then $P\vee Q\sim P'\vee Q'$ and $P\wedge Q\sim P'\wedge Q'$.
\end{lem}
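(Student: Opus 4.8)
The statement is that the operations $\vee$ and $\wedge$ on confining subsets respect the equivalence relation $\sim$. I would prove this by unwinding the definition of $\sim$ in terms of the preorder $\preceq$ (where $P \preceq Q$ means $\alpha^N(Q) \subset P$ for some $N$) and verifying monotonicity of each operation separately. It suffices to prove: if $P \preceq P'$ and $Q \preceq Q'$, then $P \vee Q \preceq P' \vee Q'$ and $P \wedge Q \preceq P' \wedge Q'$; the claim follows by applying this with the roles of primed and unprimed sets swapped.

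\textbf{The two monotonicity checks.} For the meet, since $P \preceq P'$ there is $M$ with $\alpha^M(P') \subset P$, and since $Q \preceq Q'$ there is $M'$ with $\alpha^{M'}(Q') \subset Q$. Setting $N = \max(M, M')$, and using that $\alpha^N(P') \subset \alpha^{N-M}(P) \subset P$ by Definition \ref{def:confining}(a) (and similarly for $Q'$), we get $\alpha^N(P' \cap Q') \subset \alpha^N(P') \cap \alpha^N(Q') \subset P \cap Q$. Hence $P \vee Q \preceq P' \vee Q'$. For the join, with the same $M, M', N$ as above, take any product $q'u'$ with $q' \in P'$, $u' \in Q'$. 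Since $A$ is abelian, $\alpha^N(q'u') = \alpha^N(q')\alpha^N(u') \in P \cdot Q$, using $\alpha^N(P') \subset P$ and $\alpha^N(Q') \subset Q$ as before. Thus $\alpha^N(P' \cdot Q') \subset P \cdot Q$, so $P \wedge Q \preceq P' \wedge Q'$.

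\textbf{Conclusion and main obstacle.} Applying the above with $(P',Q')$ and $(P,Q)$ interchanged gives $P' \vee Q' \preceq P \vee Q$ and $P' \wedge Q' \preceq P \wedge Q$, so both pairs are equivalent. There is no real obstacle here: the argument is a routine unwinding of definitions, and the only subtlety worth flagging is the systematic use of $\alpha(P) \subset P$ (part (a) of Definition \ref{def:confining}) to replace an exponent $N$ by any larger exponent, which is what lets us pick a common $N = \max(M, M')$, together with commutativity of $A$ for the join case. I should also note in passing that each of $P \vee Q$, $P \wedge Q$, $P' \vee Q'$, $P' \wedge Q'$ is confining by the previous lemma, so the equivalences are between genuine confining subsets and the statement is meaningful.
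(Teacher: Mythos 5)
Your proof is correct and takes essentially the same route as the paper: both arguments reduce the claim to showing that $\vee$ and $\wedge$ are compatible with the preorder $\preceq$, and both use the key step of choosing a common $N$ large enough and pushing forward by $\alpha^N$, relying on $\alpha(Q)\subset Q$ to enlarge exponents and on commutativity of $A$ for the join. The only organizational difference is that you vary both arguments at once via an explicit monotonicity statement, whereas the paper reduces to the case $P=P'$, $Q\sim Q'$ and handles the two coordinates one at a time; the content is the same.
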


\begin{proof}
It suffices to prove this for the case that $P=P'$ and $Q\sim Q'$.  Lemma \ref{lem:confiningpartialorder} provides $N\geq 0$ large enough so that $\alpha^N(Q)\subset Q'$ and $\alpha^N(Q')\subset Q$.

Then \[\alpha^N(P\vee Q)= \alpha^N(P)\cap \alpha^N(Q)\subset P\cap Q'=P\vee Q'.\]  Similarly $\alpha^N(P\vee Q')\subset P\vee Q$.  Therefore  $P\vee Q\sim P\vee Q'$ by Lemma~\ref{lem:confiningpartialorder}.    The proof that $P\wedge Q\sim P\wedge Q'$ is analogous. 
\end{proof}

Let $\mathcal{Q}$ denote the set of equivalence classes of confining subsets under the relation $\sim$ defined above. The preorder $\preceq$ induces a partial order on $\mathcal{Q}$. We denote by $[P]$ the equivalence class containing a particular confining subset $P$.

\begin{lem}
 The equivalence class $[Q_1\wedge Q_2]$ is a greatest lower bound for $[Q_1]$ and $[Q_2]$, and $[Q_1\vee Q_2]$ is a least upper bound for $[Q_1]$ and $[Q_2]$.
\end{lem}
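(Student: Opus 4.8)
The plan is to verify that the operations $\vee$ and $\wedge$ defined above satisfy the lattice axioms: that $[Q_1\wedge Q_2]$ is a lower bound for both $[Q_1]$ and $[Q_2]$ and is larger than any other lower bound, and dually for $[Q_1\vee Q_2]$. Throughout I will use the characterization of the partial order from Lemma \ref{lem:confiningpartialorder}: $[P]\preceq [Q]$ if and only if $\alpha^N(Q)\subset P$ for some $N\in\Z$ (equivalently $N\geq 0$, since $\alpha(P)\subset P$).

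First I would check that $[Q_1\vee Q_2]=[Q_1\cap Q_2]$ is a lower bound. Since $Q_1\cap Q_2\subseteq Q_1$, we have $\alpha^0(Q_1)\not\subset Q_1\cap Q_2$ in general, so instead note the inclusion goes the other way: $Q_1\cap Q_2\subseteq Q_1$ directly gives $[Q_1\cap Q_2]\preceq[Q_1]$ because we may take $N=0$ in Lemma \ref{lem:confiningpartialorder} (indeed $\alpha^0(Q_1)=Q_1\not\subseteq Q_1\cap Q_2$ — so I must be careful: the correct reading is that $P\preceq Q$ needs $\alpha^N(Q)\subset P$, and with $P=Q_1\cap Q_2$, $Q=Q_1$ we need $\alpha^N(Q_1)\subset Q_1\cap Q_2$, which is false in general). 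So $\vee$ is actually the join: $[Q_1\cap Q_2]$ should be an \emph{upper} bound for $[Q_1]$ and $[Q_2]$, which is consistent with the inclusion-reversing nature of the order. Concretely, $Q_1\cap Q_2\subseteq Q_1$ yields $[Q_1]\preceq[Q_1\cap Q_2]$ via $N=0$, and similarly for $Q_2$; so $[Q_1\vee Q_2]$ is an upper bound. For the least-upper-bound property, suppose $[Q_1]\preceq[P]$ and $[Q_2]\preceq[P]$, i.e.\ $\alpha^{N_1}(P)\subset Q_1$ and $\alpha^{N_2}(P)\subset Q_2$ for some $N_1,N_2\geq 0$. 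Then $\alpha^{\max(N_1,N_2)}(P)\subset Q_1\cap Q_2=Q_1\vee Q_2$ (using $\alpha(Q_i)\subset Q_i$ to push the smaller exponent up), so $[Q_1\vee Q_2]\preceq[P]$, as required.

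Dually, I would show $[Q_1\wedge Q_2]=[Q_1\cdot Q_2]$ is a lower bound, i.e.\ $[Q_1\wedge Q_2]\preceq[Q_1]$ and $[Q_1\wedge Q_2]\preceq[Q_2]$. Since $Q_i$ is symmetric and contains the identity (or at least, by condition (c) applied with one factor, $\alpha^{k_0}(Q_i)\subseteq\alpha^{k_0}(Q_i\cdot Q_i)\subseteq Q_i$, and more to the point $Q_1\subseteq Q_1\cdot Q_2$ once we know $Q_2$ contains an element that together with the confining property absorbs it) — more cleanly: because $1\in Q_2$ (confining subsets are symmetric and by (c) with the empty product, or one can note $Q_2\cdot Q_2\supseteq Q_2$ forces this up to the equivalence), we get $Q_1\subseteq Q_1\cdot Q_2$, hence $[Q_1\cdot Q_2]\preceq[Q_1]$ with $N=0$; similarly for $Q_2$. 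If the paper's conventions do not guarantee $1\in Q_i$ outright, one instead argues: pick any $q\in Q_2$; then $q^{-1}\in Q_2$ by symmetry, and for $x\in Q_1$ we have $xq\cdot q^{-1}$, so $\alpha^{k_0}(xq)\cdot\alpha^{k_0}(q^{-1})\in$-type manipulations show $\alpha^{N}(Q_1)\subset Q_1\cdot Q_2$ for suitable $N$; this is the routine bookkeeping I would not grind through. For the greatest-lower-bound property, suppose $[P]\preceq[Q_1]$ and $[P]\preceq[Q_2]$, i.e.\ $\alpha^{M_1}(Q_1)\subset P$ and $\alpha^{M_2}(Q_2)\subset P$. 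Then for $q_1\in Q_1, q_2\in Q_2$, writing $M=\max(M_1,M_2)$, we have $\alpha^M(q_1),\alpha^M(q_2)\in P$, so $\alpha^M(q_1 q_2)=\alpha^M(q_1)\alpha^M(q_2)\in P\cdot P$, and then $\alpha^{M+k_0}(q_1 q_2)\in P$ where $\alpha^{k_0}(P\cdot P)\subset P$. Hence $\alpha^{M+k_0}(Q_1\cdot Q_2)\subset P$, giving $[P]\preceq[Q_1\wedge Q_2]$.

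The main obstacle I anticipate is purely bookkeeping: being careful about the inclusion-reversing convention (so that $\vee=\cap$ really is the \emph{join} and $\wedge=\cdot$ the \emph{meet}, matching the naming in the excerpt), and handling the "identity is in $Q$" / symmetry points cleanly so that the trivial-looking lower/upper bound inclusions actually hold on the nose or up to the equivalence $\sim$. None of the steps requires new ideas beyond Lemma \ref{lem:confiningpartialorder}, the confining axioms (a)--(c) of Definition \ref{def:confining}, and the fact that $A$ is abelian (used to split $\alpha^M(q_1 q_2)$); the real content is just assembling these consistently.
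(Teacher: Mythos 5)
Your argument is essentially the one in the paper, and it is correct where it matters. The paper's own proof only writes out the ``greatest'' and ``least'' halves: given $P\preceq Q_1$ and $P\preceq Q_2$, take $N$ large enough that $\alpha^N(Q_1),\alpha^N(Q_2)\subset P$ and $\alpha^N(P\cdot P)\subset P$, and compute $\alpha^{2N}(Q_1\cdot Q_2)\subset\alpha^N(P\cdot P)\subset P$; the $\vee$ half is the $\alpha^{\max(N_1,N_2)}(P)\subset Q_1\cap Q_2$ observation you give. Your version of these two steps matches the paper's exactly, up to choosing one large $N$ versus carrying separate $M_1,M_2,k_0$.

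The place where your write-up wobbles is the ``is a lower bound'' check for $\wedge$, where you try to establish $1\in Q_2$ (i.e.\ $0\in Q_2$ in the paper's additive setting) via ``(c) with the empty product'' or ``$Q_2\cdot Q_2\supseteq Q_2$'' — neither of these is actually an argument, and the fallback ``$xq\cdot q^{-1}$-type manipulations'' sketch does not obviously close. The clean justification you want is Definition~\ref{def:confining}(b): since $0\in H=\bigcup_{n\geq 0}\alpha^{-n}(Q_2)$, there is $n\geq 0$ with $\alpha^n(0)\in Q_2$, and $\alpha^n(0)=0$ because $\alpha$ is an automorphism; hence $0\in Q_2$. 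With that, $Q_1\subseteq Q_1\cdot Q_2$ holds on the nose, and $\alpha^0(Q_1)\subset Q_1\cdot Q_2$ gives $[Q_1\wedge Q_2]\preceq[Q_1]$; similarly for $Q_2$, and similarly $Q_1\cap Q_2\subseteq Q_i$ gives the upper-bound claim for $\vee$. (The paper leaves these two checks implicit, so you were doing extra — but worthwhile — bookkeeping; it just needs the right one-line justification for $0\in Q$.) The visible back-and-forth at the start about which of $\vee,\wedge$ is join versus meet is harmless since you recover, but in a final write-up you would simply start from the inclusion-reversing nature of $\preceq$ and not exhibit the false start.
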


\begin{proof}
Suppose that $P\preceq Q_1$ and $P\preceq Q_2$. Choose $N\geq 0$ large enough such that $\alpha^N(Q_1)\subset P$, $\alpha^N(Q_2)\subset P$, and $\alpha^N(P\cdot P)\subset P$. Then   $\alpha^{2N}(Q_1\cdot Q_2)=\alpha^N\left(\alpha^N(Q_1)\cdot \alpha^N(Q_2)\right)\subset \alpha^N(P\cdot P)\subset P,$ and so $P\preceq Q_1\wedge Q_2$. 

If $Q_1\preceq P$ and $Q_2\preceq P$, then choose $N\geq 0$ large enough that $\alpha^N(P)\subset Q_1$ and $\alpha^N(P)\subset Q_2$. It follows immediately that $\alpha^N(P)\subset Q_1\cap Q_2$, and so $Q_1\vee Q_2\preceq P$.
\end{proof}

\begin{cor}
\label{cor:lattice}
The poset $\mathcal{Q}$ is a lattice.
\end{cor}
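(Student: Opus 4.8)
\textbf{Proof proposal for Corollary \ref{cor:lattice}.}

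The plan is to assemble the three preceding lemmas into the definition of a lattice. First I would recall that $\mathcal{Q}$ is already a poset: the preorder $\preceq$ on confining subsets (characterized concretely in Lemma \ref{lem:confiningpartialorder} by the condition $\alpha^N(Q)\subset P$ for some $N$) descends to a genuine partial order on the set of equivalence classes under $\sim$, exactly as in the passage from $\preceq$ to $\preccurlyeq$ in Definition \ref{def-GG}. So it remains only to exhibit, for each pair of elements of $\mathcal{Q}$, a meet and a join.

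Next I would invoke the lemma showing that $P\vee Q=P\cap Q$ and $P\wedge Q=P\cdot Q$ are again confining under $\alpha$, together with the lemma showing that the assignments $(P,Q)\mapsto P\vee Q$ and $(P,Q)\mapsto P\wedge Q$ respect the equivalence relation $\sim$; these two facts guarantee that $([P],[Q])\mapsto [P\vee Q]$ and $([P],[Q])\mapsto[P\wedge Q]$ are well-defined binary operations on $\mathcal{Q}$. Finally, the immediately preceding lemma states precisely that $[Q_1\wedge Q_2]$ is a greatest lower bound and $[Q_1\vee Q_2]$ is a least upper bound for $[Q_1]$ and $[Q_2]$ with respect to the partial order on $\mathcal{Q}$. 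Since every pair of elements of $\mathcal{Q}$ therefore has both an infimum and a supremum, $\mathcal{Q}$ is a lattice by definition.

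There is essentially no obstacle here — the corollary is purely a repackaging of the lemmas proved just above it, and the only thing to be slightly careful about is citing the correct lemma for each of the three ingredients (well-definedness of the operations, closure within the class of confining subsets, and the greatest-lower-bound / least-upper-bound property). If one wanted a self-contained one-line argument instead, it would read: by the previous lemma every two-element subset of $\mathcal{Q}$ has a meet and a join, hence $\mathcal{Q}$ is a lattice.
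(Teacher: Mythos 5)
Your proposal is correct and matches the paper's approach exactly: the corollary is an immediate consequence of the preceding lemmas, which show that $\vee$ and $\wedge$ are well-defined operations on equivalence classes yielding a least upper bound and a greatest lower bound for any pair of elements. The paper simply states the corollary without further proof for precisely this reason.
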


We will call the poset $\mathcal Q$ the \emph{lattice of confining subsets under $\alpha$}. Finally, we prove Proposition \ref{prop:generalstructure}.

\begin{proof}[Proof of Proposition \ref{prop:generalstructure}]
Since $G$ is solvable, every hyperbolic structure is elliptic, lineal, or quasi-parabolic.  The abelianization of $G$ is virtually cyclic, and so the natural homomorphism $G\to \Z$ is the unique non-trivial homomorphism $G\to \R$ up to scaling. Moreover, by amenability, every pseudocharacter (homogeneous quasi-morphism) on $G$ is a homomorphism to $\R$ (\cite{banach}, see also \cite[Theorem 6.16]{ghys}).

We first consider \emph{orientable} lineal actions $G\curvearrowright X$. That is, we consider actions where $X$ is quasi-isometric to a line and $G$ fixes both points of $\partial X$. The Busemann pseudocharacter $\rho$ defined by this action is a non-trivial homomorphism $\rho\colon G\to \R$ that vanishes on $A$. Caprace-Cornulier-Monod-Tessera show in \cite[Proposition~4.5]{Amen} that the action $G\curvearrowright X$ is equivalent to the action $G\curvearrowright \Gamma(G,Q\cup \{t^{\pm 1}\})$, where $Q$ is some confining subset of $A$ under $\alpha$ or $\alpha^{-1}$.  See also the discussion in  \cite[Section 3.2]{ABR}.  We note that both \cite[Proposition~4.5]{Amen} and \cite[Section 3.2]{ABR} consider only quasi-parabolic actions, but the discussion and proofs go through verbatim for lineal actions: both require only that $G\curvearrowright \partial X$ has (at least one) global fixed point.  By the discussion after \cite[Theorem~4.1]{Amen} (see also \cite[Theorem 1.1]{ABR}), the confining subset $Q$ cannot be strictly confining since $G\curvearrowright X$ is not quasi-parabolic.  Thus $Q=A$. We conclude that $G\curvearrowright X\sim G\curvearrowright \Gamma(G,A\cup \{t^{\pm 1}\})$ and hence $G$ has a unique orientable lineal structure. 

Now we consider the case of a general lineal action $G\curvearrowright X$. That is, we do not assume that $G$ fixes $\partial X$ pointwise. We will show that in fact the action must be orientable. To do this, we will first show that $A$ must act elliptically. Suppose for contradiction that $A$ does not act elliptically. Then the restricted action $A\curvearrowright X$ is a cobounded lineal action. Since $A$ is abelian, the action $A\curvearrowright X$ is orientable (see \cite[Example 4.23]{ABO}). Then $A$ and $t^2$ both fix $\partial X$ pointwise and therefore the subgroup that they generate, $G_0=A\rtimes \langle t^2\rangle$, also fixes $\partial X$ pointwise. By hypothesis, the abelianization of $G_0$ is virtually cyclic. Hence, we may apply the reasoning of the previous paragraph to the group $G_0$ in place of $G$, which shows that $A$ acts elliptically, contradicting our assumption. In any case, $A$ acts elliptically. Therefore the action $G\curvearrowright X$ is dominated by $G\curvearrowright \Gamma(G,A\cup \{t^{\pm 1}\})$. But two distinct lineal actions are either equivalent or incomparable (see \cite[Corollary 4.12]{ABO}). Thus, we have $(G\curvearrowright X) \sim (G\curvearrowright \Gamma(G,A\cup \{t^{\pm 1}\}))$. In other words, $[A\cup \{t^{\pm 1}\}]$ is the unique lineal structure on $G$.

Every quasi-parabolic structure on $G$ dominates this single lineal structure (see the discussion in Section \ref{sec:busemann}). By the first paragraph,  the Busemann pseudocharacter $\rho$ of a quasi-parabolic structure is proportional to the natural homomorphism $G\to \Z$, and so the element $t$ acts loxodromically. Moreover, the global fixed point of $G$ in this quasi-parabolic structure is either the attracting fixed point of $t$ (if $\rho(t)<0$) or the repelling fixed point of $t$ (if $\rho(t)>0$). Thus we may divide $\mathcal H_\ell(G) \cup \mathcal H_{qp}(G)$ into two  subposets: structures for which $G$ fixes the attracting fixed point of $t$, which we denote $\mathcal P_-(G)$,  and structures for which $G$ fixes the repelling fixed point of $t$, which we denote $\mathcal P_+(G)$. These subposets meet in the single lineal structure and are otherwise incomparable to each other. To see this last point, note, for instance, that if an element of $\mathcal P_+(G)$ dominates an element of $\mathcal P_-(G)$, then the element of $\mathcal P_-(G)$ must have \emph{both} the attracting and repelling fixed points of $t$ as global fixed points. This element can only be the single lineal structure.

By \cite[Theorems 1.1 \&  1.2]{ABR}, elements of $\mathcal H_{qp}(G)\cup \mathcal H_\ell(G)$ coincide with hyperbolic structures $[Q\cup \{t^{\pm 1}\}]$ where $Q$ is confining under $\alpha$ (if $\rho(t)<0$) or under $\alpha^{-1}$ (if $\rho(t)>0$). We have shown that, equivalently, the elements of $\mathcal P_-(G)$ coincide with the hyperbolic structures $[Q\cup \{t^{\pm 1}\}]$ where $Q$ is confining under $\alpha^{-1}$, and the elements of $\mathcal P_+(G)$ coincide with the hyperbolic structures $[Q\cup\{t^{\pm 1}\}]$ where $Q$ is confining under $\alpha$. By Lemma \ref{lem:confiningpartialorder} the partial order on $\mathcal P_-(G)$ is given by $[P\cup \{t^{\pm 1}\}]\preccurlyeq [Q\cup \{t^{\pm 1}\}]$ if and only if there exists $N\geq 0$ such that $\alpha^{-N}(Q)\subset P$. By Corollary \ref{cor:lattice}, this poset is a lattice (namely, isomorphic to the lattice of confining subsets under $\alpha^{-1}$). By the same reasoning interchanging $\alpha^{-1}$ with $\alpha$, $\mathcal P_+(G)$ is also a lattice (isomorphic to the lattice of confining subsets under $\alpha$).
\end{proof}

We pull out one fact from the proof of Proposition~\ref{prop:generalstructure} which will be needed in later sections.
\begin{prop}\label{prop:Ppmconfining}
Under the assumptions and notation of Proposition~\ref{prop:generalstructure}, the elements of $\mathcal P_-(G)$ coincide with the hyperbolic structures $[Q\cup \{t^{\pm 1}\}]$ where $Q$ is confining under $\alpha^{-1}$, and the elements of $\mathcal P_+(G)$ coincide with the hyperbolic structures $[Q\cup\{t^{\pm 1}\}]$ where $Q$ is confining under $\alpha$.
\end{prop}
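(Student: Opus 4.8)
The plan is to observe that this statement was, in fact, already established within the proof of Proposition~\ref{prop:generalstructure}, and to reassemble the relevant steps into a self-contained argument. First I would recall the reductions made there: since $G$ is solvable it admits no general type structures, and since $G$ is amenable every pseudocharacter on $G$ is a homomorphism, so $\mathcal H(G) = \mathcal H_e(G)\sqcup \mathcal H_\ell(G)\sqcup \mathcal H_{qp}(G)$; moreover the earlier part of the proof of Proposition~\ref{prop:generalstructure} shows that $G$ has a unique lineal structure, namely $[A\cup\{t^{\pm1}\}]$. Thus it suffices to describe the quasi-parabolic structures in terms of confining subsets and then check that the unique lineal structure is correctly recovered on both sides.

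Next I would invoke \cite[Proposition~4.5]{Amen} together with \cite[Theorems~1.1 \& 1.2]{ABR}: every element of $\mathcal H_\ell(G)\cup\mathcal H_{qp}(G)$ is of the form $[Q\cup\{t^{\pm1}\}]$ where $Q\subseteq A$ is confining under $\alpha$ or under $\alpha^{-1}$, and conversely (by Proposition~\ref{niceprop}) every such $Q$ yields a structure in $\mathcal H_\ell(G)\cup\mathcal H_{qp}(G)$. Which of $\alpha$, $\alpha^{-1}$ occurs is dictated by the sign of the Busemann pseudocharacter $\rho$ on $t$: since the abelianization of $G$ is virtually cyclic, $\rho$ is a scalar multiple of the homomorphism $G\to\Z$, so $t$ acts loxodromically and the unique boundary point fixed by $G$ is either the attracting or the repelling fixed point of $t$. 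This is exactly the dichotomy used to define $\mathcal P_-(G)$ (the case where $G$ fixes the attracting fixed point of $t$) and $\mathcal P_+(G)$ (the case where $G$ fixes the repelling fixed point of $t$) in the proof of Proposition~\ref{prop:generalstructure}; matching the two cases against the $\alpha^{-1}$/$\alpha$ alternative gives the stated correspondences.

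It remains only to place the unique lineal structure in both $\mathcal P_-(G)$ and $\mathcal P_+(G)$. For this, take $Q=A$: one checks straight from Definition~\ref{def:confining} that $A$ is (non-strictly) confining under $\alpha$ and under $\alpha^{-1}$, and $[A\cup\{t^{\pm1}\}]$ is the unique lineal structure, so it appears in each of the two descriptions. Since every structure in $\mathcal P_-(G)$ and in $\mathcal P_+(G)$ is either this lineal one or quasi-parabolic, the descriptions are exhaustive, which completes the proof.

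The only genuinely delicate point — and the step I would take most care over — is the orientation bookkeeping: verifying that the case ``$G$ fixes the attracting fixed point of $t$'' really corresponds to $Q$ confining under $\alpha^{-1}$ rather than $\alpha$. This amounts to tracking the sign convention for the Busemann pseudocharacter through the statements of \cite{Amen} and \cite{ABR} relative to the convention $tat^{-1}=\alpha(a)$ fixed in this section, and is already carried out inside the proof of Proposition~\ref{prop:generalstructure}; everything else is direct citation.
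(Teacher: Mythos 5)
Your proposal is correct and matches the paper's own treatment: Proposition~\ref{prop:Ppmconfining} is explicitly introduced as a fact extracted from the proof of Proposition~\ref{prop:generalstructure}, and your reconstruction traces exactly the same chain of reductions (solvability and amenability, uniqueness of the lineal structure via \cite[Prop.~4.5]{Amen}, the classification of quasi-parabolic structures by confining subsets via \cite[Thms.~1.1~\&~1.2]{ABR}, and the Busemann dichotomy distinguishing $\mathcal P_+(G)$ from $\mathcal P_-(G)$). You rightly isolate the $\alpha$ versus $\alpha^{-1}$ sign-matching as the one step requiring care; that bookkeeping does live entirely in the paper's proof of Proposition~\ref{prop:generalstructure}, so citing it is the appropriate move.
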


\section{Examples}\label{sec:examples}

Before turning to the proofs of Theorems \ref{thm:main} and \ref{thm:char=min}, we first illustrate their utility by classifying the hyperbolic actions of a number of groups.  
In addition to describing the poset $\mc H(G)$ for several new groups, we also show how the previous work of the authors in \cite{AR,Qp} fits into the more general framework of this paper, recovering the main results of \cite{AR} and \cite{Qp} with significantly shorter proofs.

Theorem \ref{thm:main} requires a bit of explanation. Consider the group $G=G(R,\gamma)$ where $R$ and $\gamma$ satisfy axioms (A1)--(A5), and consider the completion $\widehat R$. There is a preorder $\leq$ on ideals of $\widehat R$ defined by $\frak a\leq \frak b$ if for every $x\in \frak a$, there exists $i\in \Z_{\geq 0}$ with $\gamma^i x\in \frak b$. This induces an equivalence relation $\sim$ in the usual way: $\frak a \sim \frak b$ if $\frak a \leq \frak b$ and $\frak b\leq \frak a$, and a partial order $\preccurlyeq$ on the set of equivalence classes. The resulting poset of equivalence classes under $\sim$ with the partial order $\preccurlyeq$ is what we call the \emph{poset of ideals of $\widehat R$ up to multiplication by $\gamma$}.  Theorem~\ref{thm:main} states that  $\mc P_+(G)$ is isomorphic to this poset.

\subsection{Lamplighter groups}
\label{sec:lamplighterposet}

We first use Theorem~\ref{thm:main} to give a streamlined proof of the structure  of the poset of hyperbolic actions of the lamplighter group $(\Z/n\Z)\wr \Z$, for $n\geq 2$.

\begin{thm}[{\cite[Theorem 1.4]{Qp}}]
\label{thm:lamplighter}
If $G=(\Z/n\Z) \wr \Z$ is a lamplighter group for $n \geq 2$, then $\mathcal H(G)$ is as pictured in Figure \ref{fig:lamplighter}.
\end{thm}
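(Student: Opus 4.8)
The plan is to apply Theorem~\ref{thm:main} directly to the presentation $G = (\Z/n\Z)\wr\Z = G(R,\gamma)$ with $R = (\Z/n\Z)[x]$ and $\gamma = x$, which was already identified in Example~\ref{ex:BS1n,Lamplighter} as satisfying axioms (A1)--(A4). First I would check axiom (A5): the transversal here is $T = [n] = \{0,1,\dots,n-1\}$, and an element of $R = (\Z/n\Z)[x]$ is just a polynomial, hence its $\gamma$-adic address is eventually $0$ (i.e.\ eventually periodic with period $1$), so (A5) holds. Thus Theorem~\ref{thm:main} applies and gives the top-level shape of $\mathcal H(G)$: a single elliptic structure dominated by a single lineal structure $[\R]$, together with subposets $\mathcal P_-(G)$ and $\mathcal P_+(G)$ meeting in the lineal structure, with $\mathcal P_+(G)$ isomorphic to the opposite of the poset of ideals of the $(\gamma)$-adic completion $\widehat R$ up to multiplication by $\gamma$, and every element of $\mathcal P_+(G)$ containing an action on a simplicial tree.

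Next I would identify $\widehat R$ and its ideals up to multiplication by $\gamma$. The $(\gamma)$-adic completion of $(\Z/n\Z)[x]$ is the formal power series ring $(\Z/n\Z)[[x]]$. I would then argue that every ideal of $(\Z/n\Z)[[x]]$ is, up to multiplication by powers of $x$, one of the ideals $\mathfrak a_d = d\,(\Z/n\Z)[[x]]$ for $d$ a divisor of $n$: indeed, a power series in $(\Z/n\Z)[[x]]$ is a unit iff its constant term is a unit in $\Z/n\Z$, so after dividing out the largest possible power of $x$ we can reduce any element to one with a nonzero constant term, and the ideal structure is then governed by the constant coefficients, which range over the ideals of $\Z/n\Z$, i.e.\ over divisors of $n$. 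Carefully, the equivalence class under multiplication by $\gamma$ of the ideal generated by a set of power series is determined by the ideal of $\Z/n\Z$ generated by the constant terms of the elements after each has been divided by its maximal power of $x$; this yields an isomorphism from the poset of ideals of $\widehat R$ up to multiplication by $\gamma$ onto the poset of ideals of $\Z/n\Z$, equivalently $\operatorname{Sub}(\Z/n\Z)$ (ordered by inclusion). Passing to the opposite poset, and using that $\operatorname{Sub}(\Z/n\Z)$ is self-opposite (via $H \mapsto$ the unique subgroup of complementary order, i.e.\ $\Div(k_1,\dots,k_r)$ is isomorphic to its opposite), I conclude $\mathcal P_+(G) \cong \operatorname{Sub}(\Z/n\Z)$.

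For $\mathcal P_-(G)$, the key observation is the symmetry of the lamplighter group: $(\Z/n\Z)\wr\Z = (\Z/n\Z)[x^{\pm 1}]\rtimes\Z$ with $t$ acting by multiplication by $x$, and the inverse automorphism is multiplication by $x^{-1}$, which is intertwined with multiplication by $x$ via the ring automorphism of $(\Z/n\Z)[x^{\pm 1}]$ sending $x \mapsto x^{-1}$; this conjugates $\alpha$ to $\alpha^{-1}$ and so $\mathcal P_-(G)$ is isomorphic to $\mathcal P_+(G)$ as posets. Hence $\mathcal P_-(G) \cong \operatorname{Sub}(\Z/n\Z)$ as well, and each of its elements also contains an action on a tree by transporting the corresponding structure in $\mathcal P_+(G)$ through this automorphism. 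Assembling these pieces gives exactly Figure~\ref{fig:lamplighter}: the unique lineal structure, two copies of $\operatorname{Sub}(\Z/n\Z)$ glued along it, and actions on trees (or a point) everywhere. The main obstacle I anticipate is the bookkeeping in the second step --- precisely matching the equivalence relation ``up to multiplication by $\gamma$'' on ideals of $(\Z/n\Z)[[x]]$ with inclusion of subgroups of $\Z/n\Z$, and making sure the opposite-poset flip in Theorem~\ref{thm:main} is correctly absorbed by the self-duality of $\operatorname{Sub}(\Z/n\Z)$; everything else is a direct invocation of Theorem~\ref{thm:main} plus the symmetry argument.
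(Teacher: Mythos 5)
Your overall approach matches the paper's exactly: invoke Theorem~\ref{thm:main} with $R=(\Z/n\Z)[x]$, $\gamma=x$, verify (A5) via finite (hence eventually periodic) addresses, identify $\widehat R = (\Z/n\Z)[[x]]$, classify its ideals up to equivalence as $\operatorname{Sub}(\Z/n\Z)$, and use the ring automorphism $x\mapsto x^{-1}$ of $(\Z/n\Z)[x^{\pm 1}]$ to conjugate $\alpha$ to $\alpha^{-1}$ and deduce $\mathcal P_-(G)\cong \mathcal P_+(G)$. The structure is right, as is the self-duality observation for $\operatorname{Sub}(\Z/n\Z)$.

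However, there is a genuine gap in the step you yourself flag as ``the main obstacle'': the claim that the equivalence class of an ideal $\mathfrak a\subset (\Z/n\Z)[[x]]$ is determined by the constant terms of its elements after dividing by maximal powers of $x$, and that every ideal is equivalent to some $d(\Z/n\Z)[[x]]$. The phrase ``the ideal structure is then governed by the constant coefficients'' is an assertion, not an argument, and it is precisely what requires proof. Concretely: given a single power series $f = a_0 + a_1 x + \cdots$ with non-unit constant term $a_0$, it is not at all obvious that the principal ideal $(f)$ is equivalent to $(m)$ for $m$ generating $\langle a_0, a_1, \ldots\rangle$. The paper establishes this via Lemmas~\ref{lem:units}--\ref{lem:lamplighterideals}; the crucial step is Lemma~\ref{lem:x^k} (if the coefficients of $f$ generate $\Z/n\Z$, then $x^k \in (f)$ for some $k$), which is proved by a nontrivial induction on the number of prime factors of $n$ and involves passing to quotient rings $\Z/m\Z$ and lifting inverses back. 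That induction, Corollary~\ref{monomial}, and the argument of Lemma~\ref{lem:lamplighterideals} (which iteratively collects generators $m_1, m_2, \ldots$ whose span grows until it equals $\langle m\rangle$) are exactly what you would need to write out. As a sanity check that the issue is real: in $(\Z/4\Z)[[x]]$, the element $2+x$ has non-unit constant term $2$, yet $(2+x)^2 = x^2$, so $(2+x)$ is equivalent to the unit ideal --- the ``constant coefficient'' $2$ alone does not detect this; you must see that $1$ appears among the higher coefficients and then propagate that information down to a power of $x$, which is precisely Lemma~\ref{lem:x^k}.
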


The lamplighter group $(\Z/n\Z)\wr \Z$ may be expressed as $G(R,\gamma)$ for $R=(\Z/n\Z)[x]$ and $\gamma=x$, as described in Example~\ref{ex:BS1n,Lamplighter}.
For this $R$ and $\gamma$, the axioms (A1)--(A4) are automatic. Every element of $R$ has a finite base-$\gamma$ address with respect to the transversal $[n]=\{0,1,\ldots,n-1\}$ for $(\gamma)$, which  verifies (A5). We begin by using ideals of $\widehat{R}=(\Z/n\Z)[[x]]$ to classify $\mathcal P_+(G)$ and then later discuss $\mathcal P_-(G)$.

An element $m\in \Z/n\Z$ generates the ideal $(m)=m(\Z/n\Z)[[x]]$, and two such ideals $(m_1)$ and $(m_2)$ are equal exactly when $m_1$ and $m_2$ generate the same (additive) subgroup of $\Z/n\Z$.

\begin{lem}
\label{lem:units}
Let $f\in (\Z/n\Z)[[x]]$ with $f(x)=a_0+a_1x+a_2x^2+\cdots$.  If $a_0$ is a unit in the ring $\Z/n\Z$, then $f$ is a unit in $(\Z/n\Z)[[x]]$.
\end{lem}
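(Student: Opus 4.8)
The plan is to construct an explicit inverse for $f$ as a formal power series, building it coefficient-by-coefficient. Writing the putative inverse as $g(x) = b_0 + b_1 x + b_2 x^2 + \cdots$, the equation $fg = 1$ unwinds into the system
\[
a_0 b_0 = 1, \qquad \sum_{i=0}^{k} a_i b_{k-i} = 0 \text{ for } k \geq 1.
\]
First I would set $b_0 = a_0^{-1}$, which exists in $\Z/n\Z$ precisely because $a_0$ is a unit by hypothesis. Then I would observe that the $k$-th equation can be rearranged as
\[
a_0 b_k = -\sum_{i=1}^{k} a_i b_{k-i},
\]
so that $b_k = -a_0^{-1}\sum_{i=1}^{k} a_i b_{k-i}$ is uniquely determined once $b_0, \ldots, b_{k-1}$ are known. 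By induction this defines all the coefficients $b_k \in \Z/n\Z$, hence an element $g \in (\Z/n\Z)[[x]]$, and by construction $fg = 1$. Since $(\Z/n\Z)[[x]]$ is commutative, $g$ is a two-sided inverse, so $f$ is a unit.

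I expect no serious obstacle here; the only point requiring a moment of care is confirming that the recursion is well-posed, i.e.\ that invertibility of $a_0$ (rather than, say, of all the $a_i$) suffices — but this is exactly why the leading coefficient gets multiplied onto $b_k$ in each equation, so dividing by $a_0$ is always legitimate. An alternative, even shorter route would be to cite the standard fact that in a power series ring $A[[x]]$ over a commutative ring $A$, an element is a unit if and only if its constant term is a unit in $A$ (see, e.g., \cite[Ch.~1]{atiyah_macdonald} or a direct argument via the maximal ideal $(x)$ being contained in the Jacobson radical), and then simply apply it. I would likely present the self-contained recursive argument since it is elementary and keeps the exposition closed, relegating the general fact to a parenthetical remark.
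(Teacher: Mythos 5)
Your proposal is correct and takes exactly the same approach as the paper: both define $b_0=a_0^{-1}$ and then recursively set $b_k=-a_0^{-1}\sum_{i=1}^k a_i b_{k-i}$ to force each coefficient of $fg$ beyond the constant term to vanish. The paper writes $b_k=b_0(-a_1 b_{k-1}-\cdots-a_k b_0)$, which is the same formula.
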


\begin{proof}
Suppose that $f(x)$ is as given with $a_0$ a unit. We will  find a power series $g(x)=b_0+b_1x+b_2x^2+\cdots$ such that 
\[a_0b_0+(a_0b_1+a_1b_0)x+(a_0b_2+a_1b_1+a_2b_0)x^2+\cdots =f(x)g(x)=1.\] To this end, choose $b_0\in \Z/n\Z$ with $a_0b_0=1$, which is possible since $a_0$ is a unit. The other coefficients $b_i$ will be defined inductively.  
Suppose $b_0,\ldots,b_{k-1}$ have been chosen so that  $a_0b_0=1$ and the coefficients of $x,x^2,\dots,x^{k-1}$ in the product are zero, and set  $b_k=b_0(-a_1b_{k-1}-\cdots-a_kb_0)$. Then  \[a_0b_k+a_1b_{k-1}+a_2b_{k-2}+\cdots+a_kb_0=0,\] and so the coefficient of $x^k$ is zero, as desired.
\end{proof}

\begin{lem}\label{lem:x^k}
Let $f \in (\Z/n\Z)[[x]]$.  If  the coefficients of  $f$ generate $\Z/n\Z$ as a group, then  $x^k$ lies in the ideal generated by $f$ for some $k$.
\end{lem}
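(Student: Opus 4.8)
The plan is to reduce to the case where $n$ is a prime power by the Chinese Remainder Theorem, and then to split $f$ as a unit times a tail using Lemma~\ref{lem:units} and exploit nilpotency in $(\Z/p^e\Z)[[x]]$.

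First I would factor $n = p_1^{e_1}\cdots p_s^{e_s}$ and use the ring isomorphism $(\Z/n\Z)[[x]]\cong \prod_{i=1}^{s}(\Z/p_i^{e_i}\Z)[[x]]$ (a finite product commutes with the formal power series construction). Under this isomorphism $f$ corresponds to the tuple $(\bar f_1,\ldots,\bar f_s)$ of its reductions modulo the $p_i^{e_i}$, and the principal ideal generated by $f$ corresponds to $\prod_i(\bar f_i)$, so $x^k\in(f)$ if and only if $x^k\in(\bar f_i)$ for every $i$. Since the subgroup of $\Z/n\Z$ generated by the coefficients of $f$ equals $\gcd(a_0,a_1,\ldots,n)\cdot\Z/n\Z$, the hypothesis says precisely that no prime dividing $n$ divides all of the coefficients of $f$; hence for each $i$ some coefficient of $\bar f_i$ is a unit in $\Z/p_i^{e_i}\Z$, i.e.\ the coefficients of $\bar f_i$ generate $\Z/p_i^{e_i}\Z$. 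Thus it suffices to prove the lemma when $n=p^e$ is a prime power, taking $k$ to be the maximum of the exponents obtained for the individual factors.

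So assume $n=p^e$. The hypothesis now means that some coefficient is a unit, so we may let $j\ge 0$ be minimal with $a_j$ a unit in $\Z/p^e\Z$; then $a_0,\ldots,a_{j-1}$ are all divisible by $p$. Write $f=g+x^jh$, where $g=a_0+\cdots+a_{j-1}x^{j-1}$ has every coefficient divisible by $p$ and $h=a_j+a_{j+1}x+\cdots$ is a unit in $(\Z/p^e\Z)[[x]]$ by Lemma~\ref{lem:units}. Setting $w=g\,h^{-1}$ gives $fh^{-1}=w+x^j$, so $x^j\equiv -w\pmod{(f)}$. Every coefficient of $w$ is a $\Z/p^e\Z$-linear combination of the coefficients of $g$, hence divisible by $p$; therefore every coefficient of $w^e$ is divisible by $p^e$ and so vanishes, i.e.\ $w^e=0$. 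Raising the congruence to the $e$-th power yields $x^{ej}\equiv (-w)^e=0\pmod{(f)}$, so $x^{ej}\in(f)$, completing the prime power case with $k=ej$.

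The computation is routine; the point that actually needs care is the observation that peeling off the unit $h$ via Lemma~\ref{lem:units} leaves a remainder $w=g\,h^{-1}$ all of whose coefficients are still divisible by $p$, which is exactly what makes $w$ genuinely nilpotent (indeed $w^e=0$) in $(\Z/p^e\Z)[[x]]$. The CRT step, the identification of ideals in a finite product of rings, and the translation of the group-generation hypothesis into the divisibility statement are all standard bookkeeping. (One could instead avoid CRT by running the prime-power argument simultaneously over all $p\mid n$, at the cost of messier notation.)
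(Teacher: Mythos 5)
Your proof is correct, but it takes a genuinely different route from the paper's. The paper argues by induction on $\Omega(n)$, the number of prime factors of $n$ counted with multiplicity: after the base case ($n$ prime, so $f$ is $x^k$ times a unit), it multiplies $f$ by $n/p_r$ where $p_r$ is the \emph{smallest} prime factor, reads off a relation $n/p_r \cdot x^l \in (f)$, then separately applies the inductive hypothesis to the reduction of $f$ modulo $n/p_r$ and combines the two. Your argument instead reduces cleanly to the prime power case via the isomorphism $(\Z/n\Z)[[x]]\cong\prod_i(\Z/p_i^{e_i}\Z)[[x]]$ (which matches ideals correctly because a principal ideal in a finite product is the product of the componentwise principal ideals), and then for $n=p^e$ peels off the unit tail $h$ via Lemma~\ref{lem:units} and exploits that $w=gh^{-1}$ lies in $p\cdot(\Z/p^e\Z)[[x]]$, so $w^e=0$, giving $x^{ej}\in(f)$. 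Your version buys an explicit exponent ($k=ej$ in the prime-power case, and the max of these after CRT), avoids the double-layered induction and the ad hoc ``smallest prime'' device, and isolates the actual mechanism (nilpotency of $p$); the paper's version avoids the CRT/product-ring bookkeeping and stays entirely inside $(\Z/n\Z)[[x]]$, at the cost of a less transparent argument. Both are valid.
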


\begin{proof}
The proof is by induction on the total number of factors in the prime factorization of $n$. The base case is when $n$ is prime. Then since every nonzero element of the ring $\Z/n\Z$ is a unit, for some $k$ we have that $f(x)=a_kx^k+a_{k+1}x^{k+1} +\cdots=x^k(a_k+a_{k+1}x+\cdots)$, where $a_k$ is a unit in $\Z/n\Z$. By Lemma \ref{lem:units}, there exists an element $g(x)\in (\Z/n\Z)[[x]]$ with $(a_k+a_{k+1}x+\cdots)g(x)=1$. Thus, $f(x)g(x)=x^k$, and so  $x^k$ lies in the ideal generated by $f$.

Now suppose that the prime decomposition of $n$ is $n=p_1^{k_1}\cdots p_r^{k_r}$. Suppose for induction that the lemma has been proven for all numbers $m$ with at most $k_1+\cdots+k_r-1$ prime factors. Re-ordering if necessary,  suppose that $p_r$ is the smallest number in the set $\{p_1,\ldots,p_r\}$.

Consider the power series $p_1^{k_1}\cdots p_{r-1}^{k_{r-1}}p_r^{k_r-1}f(x)$. Its coefficients all lie in the additive subgroup of $\Z/n\Z$ generated by $p_1^{k_1}\cdots p_r^{k_r-1}$. Furthermore, this power series is nonzero, since some coefficient of $f$ is not divisible by $p_r$ (because the coefficients of $f$ generate $\Z/n\Z$ as an additive group). Note that as a set, the subgroup of $\Z/n\Z$ generated by $p_1^{k_1}\cdots p_r^{k_r-1}$ is given by \[\left\{0, p_1^{k_1}\cdots p_r^{k_r-1}, 2p_1^{k_1}\cdots p_r^{k_r-1}, 3p_1^{k_1}\cdots p_r^{k_r-1}, \ldots, (p_r-1)p_1^{k_1}\cdots p_r^{k_r-1}\right\}.\]

Hence we may write \[p_1^{k_1}\cdots p_r^{k_r-1}f(x)=p_1^{k_1}\cdots p_r^{k_r-1}x^l(a_0+a_1x+\cdots)\] for some $l\geq 0$, where $a_0\neq 0$ and $0\leq a_i\leq p_r-1$ for all $i$. In particular, since $a_0$ is not divisible by any $p_i$, it is a unit in $\Z/n\Z$. Thus by Lemma \ref{lem:units}, there is a power series $g(x)$ with $(a_0+a_1x+\cdots)g(x)=1$ and \[(p_1^{k_1}\cdots p_r^{k_r-1}f(x))g(x)=p_1^{k_1}\cdots p_r^{k_r-1}x^l.\]

Reduce $f(x)$ modulo $p_1^{k_1}\cdots p_r^{k_r-1}$, and denote the result by $\overline{f}(x)\in (\Z/p_1^{k_1}\cdots p_r^{k_r-1}\Z)[[x]]$. Since the coefficients of $f(x)$ generate $\Z/n\Z$ as a group, the coefficients of $\overline{f}(x)$ generate $\Z/p_1^{k_1}\cdots p_r^{k_r-1}\Z$ as a group. Thus, by the induction hypothesis, there exists a power series $\overline{h}(x)\in (\Z/p_1^{k_1}\cdots p_r^{k_r-1}\Z)[[x]]$ with $\overline{f}(x)\overline{h}(x)=x^m$ for some $m\geq 0$.

Choose a power series $h(x) \in (\Z/n\Z)[[x]]$ which reduces to $\overline{h}(x)$ modulo $p_1^{k_1}\cdots p_r^{k_r-1}$. Then \[f(x)h(x)=x^m + p_1^{k_1}\cdots p_r^{k_r-1}u(x)\] for some power series $u(x)$. Since $p_1^{k_1}\cdots p_r^{k_r-1}x^l$ is in the ideal generated by $f$, so is $p_1^{k_1}\cdots p_r^{k_r-1}x^lu(x)$. Thus, both $f(x)h(x)x^l=x^{m+l}+p_1^{k_1}\cdots p_r^{k_r-1}x^lu(x)$ and $ p_1^{k_1}\cdots p_r^{k_r-1}x^lu(x)$ lie in the ideal generated by $f$, and so their difference $x^{m+l}$ lies in $ (f)$, as well. This completes the inductive step.
\end{proof}

The following fact is probably well known and we leave its proof to the reader.

\begin{lem}
\label{lem:cyclicgroupgenerators}
If $a_1,\ldots,a_l\in \Z/n\Z$  generate the subgroup $\langle m\rangle$, then there exist elements $b_1,\ldots,b_l\in\Z/n\Z$  that  generate $\Z/n\Z$ and such that $a_j=b_jm$ for each $j$.
\end{lem}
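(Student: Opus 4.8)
The plan is to reduce to a prime-power modulus via the Chinese Remainder Theorem, where the claim becomes essentially automatic. Write $n=\prod_p p^{v_p}$ and fix the ring isomorphism $\Z/n\Z\cong \prod_p \Z/p^{v_p}\Z$; for $x\in\Z/n\Z$ write $x^{(p)}$ for its $p$-th component. Using the CRT idempotents, for any $x_1,\dots,x_l\in\Z/n\Z$ the subgroup $\langle x_1,\dots,x_l\rangle$ corresponds under this isomorphism to $\prod_p\langle x_1^{(p)},\dots,x_l^{(p)}\rangle$, and multiplication by $m$ acts componentwise. Hence the hypothesis $\langle a_1,\dots,a_l\rangle=\langle m\rangle$ is equivalent to $\langle a_1^{(p)},\dots,a_l^{(p)}\rangle=\langle m^{(p)}\rangle$ for every $p$, and it suffices to produce, for each $p$, elements $b_1^{(p)},\dots,b_l^{(p)}\in\Z/p^{v_p}\Z$ that generate $\Z/p^{v_p}\Z$ and satisfy $b_j^{(p)}m^{(p)}=a_j^{(p)}$; reassembling these componentwise via CRT then yields $b_1,\dots,b_l\in\Z/n\Z$ with $b_jm=a_j$ and $\langle b_1,\dots,b_l\rangle=\Z/n\Z$.

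It then remains to settle the prime-power case, which I would handle as follows. Fix $p$, set $v=v_p$, and let $e=\min(v_p(m),v)$, so that $\langle m^{(p)}\rangle=p^e\,\Z/p^v\Z$. If $e=v$, then $m^{(p)}=0$ and $\langle m^{(p)}\rangle=0$, so every $a_j^{(p)}=0$ and we may take $b_1^{(p)}=1$ and $b_j^{(p)}=0$ for $j>1$. If $e<v$, write $m^{(p)}=p^e u$ with $u$ a unit of $\Z/p^v\Z$ (possible since $m/p^e$ is prime to $p$), and write each $a_j^{(p)}=p^e t_j$. The hypothesis says $t_1,\dots,t_l$ generate $\Z/p^{v-e}\Z$, and since that ring is local, some $t_{j_0}$ is not divisible by $p$. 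Now choose $b_j^{(p)}\in\Z/p^v\Z$ to be any lift of $u^{-1}t_j\in\Z/p^{v-e}\Z$. Then $b_j^{(p)}m^{(p)}=p^e u\,b_j^{(p)}=p^e t_j=a_j^{(p)}$, and because $v-e\ge 1$ we have $b_{j_0}^{(p)}\equiv u^{-1}t_{j_0}\not\equiv 0\pmod p$, so $b_{j_0}^{(p)}$ is a unit of $\Z/p^v\Z$ and $b_1^{(p)},\dots,b_l^{(p)}$ generate the group.

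There is no serious obstacle here — which is presumably why the authors leave it to the reader — but the one point that wants care is the non-uniqueness of a lift of $a_j$ along multiplication by $m$: the argument works precisely because, after localizing at $p$, ``generating $\Z/p^v\Z$'' reduces to ``some $b_j^{(p)}$ is a unit,'' and the slack in the choice of lifts (an ambiguity by $p^e\,\Z/p^v\Z$) is more than enough to arrange this, with the strict inequality $e<v$ ensuring that an element which is a unit mod $p^{v-e}$ remains prime to $p$.
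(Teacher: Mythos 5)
Your proof is correct. The paper does not in fact give a proof of Lemma~\ref{lem:cyclicgroupgenerators} --- it is flagged as ``probably well known'' and explicitly left to the reader --- so there is no printed argument to compare against. Your reduction via the Chinese Remainder Theorem to the prime-power case, followed by the observation that in the local ring $\Z/p^{v}\Z$ a tuple generates additively precisely when some entry is a unit, is a clean and complete route, and you correctly identify the one delicate point: the lift $b_j^{(p)}$ of $u^{-1}t_j$ from $\Z/p^{v-e}\Z$ to $\Z/p^{v}\Z$ is ambiguous, but since $e<v$ its residue mod $p$ is determined, which is exactly what the generation criterion requires. One micro-gap worth a half-sentence in a final write-up: the assertion that $\langle x_1,\dots,x_l\rangle$ corresponds under CRT to $\prod_p\langle x_1^{(p)},\dots,x_l^{(p)}\rangle$ needs the remark that integer coefficients $k_i^{(p)}$ chosen independently for each $p$ can be glued to a single $k_i\in\Z$ via CRT, so that the right-hand product is actually contained in (and hence equal to) the left-hand side. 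An alternative, slightly more elementary phrasing avoids the full CRT decomposition by working directly with gcds: take $m\mid n$, write $a_j=c_jm$ with $c_j$ determined mod $n/m$, note $\gcd(c_1,\dots,c_l,n/m)=1$, and then adjust $c_1$ by a multiple of $n/m$ to kill the primes dividing $n$ but not $n/m$ --- but this is the same idea in different clothing, and your version is arguably cleaner.
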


From this we immediately derive:

\begin{cor}
\label{monomial}
Let $f(x)\in (\Z/n\Z)[[x]]$ and $m\in \Z/n\Z$ be such that the coefficients of $f$ generate the  subgroup $\langle m \rangle \leq \Z/n\Z$. Then $mx^k$ lies in the ideal of $(\Z/n\Z)[[x]]$ generated by $f$, for some $k\in \Z_{\geq 0}$.
\end{cor}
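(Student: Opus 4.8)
The plan is to deduce Corollary~\ref{monomial} directly from Lemma~\ref{lem:x^k} and Lemma~\ref{lem:cyclicgroupgenerators}. Write $f(x)=\sum_{j\geq 0} a_j x^j$, so the hypothesis is that the coefficients $a_0,a_1,\ldots$ generate the subgroup $\langle m\rangle \leq \Z/n\Z$. (Only finitely many distinct values occur among the $a_j$, so the generation statement only involves finitely many of them, and we may apply Lemma~\ref{lem:cyclicgroupgenerators}.)

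First I would apply Lemma~\ref{lem:cyclicgroupgenerators} to the coefficients of $f$: there exist $b_j\in \Z/n\Z$ with $a_j=b_j m$ for all $j$ and such that the $b_j$ generate all of $\Z/n\Z$. Set $g(x)=\sum_{j\geq 0} b_j x^j\in(\Z/n\Z)[[x]]$, so that $f(x)=m\cdot g(x)$. Since the coefficients of $g$ generate $\Z/n\Z$ as a group, Lemma~\ref{lem:x^k} applies to $g$ and yields $k\in\Z_{\geq 0}$ together with a power series $h(x)\in(\Z/n\Z)[[x]]$ with $g(x)h(x)=x^k$. Multiplying by $m$ gives $f(x)h(x)=m\cdot g(x) h(x)= m x^k$, so $mx^k$ lies in the ideal $(f)$, as desired.

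The only mild subtlety — not really an obstacle — is making sure Lemma~\ref{lem:cyclicgroupgenerators} is being invoked legitimately: that lemma is stated for a finite list $a_1,\ldots,a_l$, whereas $f$ has infinitely many coefficients. This is handled by noting that $\Z/n\Z$ is finite, so the sequence $(a_j)$ takes only finitely many values; choosing representatives $b_j$ for those finitely many values (consistently, so that equal $a_j$'s get equal $b_j$'s) and filling in the rest by the same rule gives the power series $g$ with all the required properties. With that observation the argument is a one-line combination of the two preceding lemmas, and there is no genuine difficulty.
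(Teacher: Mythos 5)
Your proposal is correct and follows the same route as the paper: apply Lemma~\ref{lem:cyclicgroupgenerators} to write $f=mg$ with the coefficients of $g$ generating $\Z/n\Z$, then invoke Lemma~\ref{lem:x^k} to get $g\cdot h=x^k$ and multiply by $m$. Your remark about the finiteness issue (Lemma~\ref{lem:cyclicgroupgenerators} is stated for a finite list while $f$ has infinitely many coefficients, but $\Z/n\Z$ is finite so only finitely many distinct values occur) is a fair point that the paper leaves implicit, and your fix is correct.
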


\begin{proof}
Applying Lemma \ref{lem:cyclicgroupgenerators} to the coefficients of $f$, we may write $f(x)=mg(x)$ where the coefficients of $g$ generate $\Z/n\Z$. By Lemma~\ref{lem:x^k}, there exist $h(x)$ and $k\geq 0$ with $g(x)h(x)=x^k$. Thus $mx^k\in (f)$.
\end{proof}

\begin{lem}
\label{lem:lamplighterideals}
If $\mathfrak{a}$ is an ideal of $(\Z/n\Z)[[x]]$, then there exists $m\in \Z/n\Z$ such that $\mathfrak a$ is equivalent to the ideal $m(\Z/n\Z)[[x]]$ generated by $m$.
\end{lem}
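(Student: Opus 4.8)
The plan is to show that every ideal $\mathfrak a$ of $(\Z/n\Z)[[x]]$ is equivalent (under the relation $\sim$ from multiplication by $\gamma = x$) to a principal ideal generated by a constant $m \in \Z/n\Z$. The natural candidate for $m$ is determined by the ``leading constant data'' of $\mathfrak a$: among all elements of $\mathfrak a$, consider the subgroup $H \leq \Z/n\Z$ generated by all coefficients of all power series in $\mathfrak a$, and write $H = \langle m \rangle$ for some $m$. The goal is then to prove $\mathfrak a \sim (m)$, i.e. that $(m) \leq \mathfrak a$ and $\mathfrak a \leq (m)$ in the preorder on ideals up to multiplication by $x$.

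First I would establish $\mathfrak a \leq (m)$. Every $f \in \mathfrak a$ has all of its coefficients in $H = \langle m \rangle$, so $f = m g$ for some $g \in (\Z/n\Z)[[x]]$ (working coefficient-by-coefficient, lifting each coefficient of $f$ through multiplication by $m$, as in Lemma~\ref{lem:cyclicgroupgenerators}). Hence $f \in (m)$, so in fact $\mathfrak a \subseteq (m)$, which certainly gives $\mathfrak a \leq (m)$ (take $i = 0$ in the definition of the preorder).

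Next, and this is the heart of the argument, I would establish $(m) \leq \mathfrak a$: for the generator $m$ of $(m)$ I must exhibit some $k \geq 0$ with $x^k m \in \mathfrak a$. By definition of $m$, there are finitely many elements $f_1, \dots, f_\ell \in \mathfrak a$ whose coefficients together generate $H = \langle m \rangle$; combining them (say $f = f_1 + x^{N_1} f_2 + x^{N_2} f_3 + \cdots$ with the exponents $N_j$ chosen large enough to separate the nonzero coefficients into disjoint ``blocks'' of the $x$-degrees) produces a single $f \in \mathfrak a$ whose coefficients generate $\langle m\rangle$. Then Corollary~\ref{monomial} applies directly: $m x^k$ lies in the ideal generated by $f$, hence in $\mathfrak a$, for some $k$. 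This gives $(m) \leq \mathfrak a$. Combining the two directions yields $\mathfrak a \sim (m)$.

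\textbf{Main obstacle.} The step requiring the most care is producing a single power series $f \in \mathfrak a$ whose coefficients generate the full subgroup $H$ from the finitely many generators $f_1, \ldots, f_\ell$. One must verify that $\mathfrak a$ is ``finitely generated as far as this coefficient data is concerned'' --- which is automatic since $\Z/n\Z$ is finite, so $H$ is generated by finitely many coefficients coming from finitely many elements of $\mathfrak a$ --- and then check that shifting the $f_j$ by suitably large powers of $x$ and adding them does not cause cancellation among the coefficients one cares about. This is a routine but slightly fiddly bookkeeping argument; once it is done, Corollary~\ref{monomial} does the real work. Everything else (the inclusion $\mathfrak a \subseteq (m)$, and translating between $\leq$ and $\sim$) is immediate from the definitions.
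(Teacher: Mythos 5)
Your proposal is correct, but the final step takes a genuinely different route from the paper. Both proofs set up the target subgroup $\langle m\rangle$ the same way and immediately note $\mathfrak a\subseteq(m)$; the difference is in how $mx^l\in\mathfrak a$ is obtained. The paper applies Corollary~\ref{monomial} separately to each of finitely many $f_i\in\mathfrak a$ to produce monomials $m_ix^{k_i}\in\mathfrak a$, then raises all of them to the common power $x^l$ with $l=\max k_i$ and adds integer combinations to get $mx^l$. You instead combine the $f_i$ first into a single power series $f=f_1+x^{N_1}f_2+\cdots\in\mathfrak a$ whose coefficients generate $\langle m\rangle$, and apply Corollary~\ref{monomial} only once. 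Your route works, but the paper's is cleaner because the combination happens at the monomial level, where there is nothing to check: a $\Z$-linear combination of elements $m_ix^l$ of $\mathfrak a$ is trivially in $\mathfrak a$, whereas your shifting argument genuinely requires an extra verification. On that verification, a word of caution about framing: you say the shifts should be ``large enough to avoid cancellation,'' but the coefficients of $f_1$ and $x^{N}f_2$ do in general overlap at all degrees $\geq N$ and cancellation does occur. The correct observation is that the cancellation is harmless: choose $N$ large enough that the coefficients $a_0,\dots,a_{N-1}$ of $f_1$ already generate $\langle m_1\rangle$; then the remaining coefficients $a_{N+j}+b_j$, together with $\langle m_1\rangle$, generate $\langle m_1,b_0,b_1,\dots\rangle=\langle m_1,m_2\rangle$, and one proceeds inductively. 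With that correction, your proof goes through.
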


\begin{proof}
Let $\mathfrak{a}$ be an arbitrary ideal. Let $\langle m \rangle\leq \Z/n\Z$ be the smallest subgroup containing the coefficients of \emph{every} power series in $\mathfrak a$. Define $\mathfrak{b}=m(\Z/n\Z)[[x]]$. Then $\frak a \subset \frak b$, and we claim that $\mathfrak{a}\sim \mathfrak{b}$. By the definition of the equivalence relation, it suffices to show that $mx^l\in \mathfrak{a}$ for some $l$.

To see this, choose any $f_1\in \mathfrak{a}$.  Its coefficients generate a subgroup $\langle m_1\rangle$ of $\langle m \rangle \leq \Z/n\Z$. By Corollary \ref{monomial}, we have that $m_1x^{k_1}\in \mathfrak{a}$ for some $k_1$. If $\langle m_1 \rangle \neq \langle m \rangle$, then since $\langle m\rangle$ is the \emph{smallest} subgroup containing the coefficients of $\mathfrak a$, there exists some $f_2\in \mathfrak{a}$ whose coefficients generate a subgroup $\langle m_2 \rangle \leq \Z/n\Z$ which is \textit{not} contained in $\langle m_1 \rangle$. We have $m_2x^{k_2} \in \mathfrak{a}$ for some $k_2$. Similarly, if $\langle m_1,m_2\rangle \neq \langle m\rangle$ then there exists some $f_3\in \mathfrak{a}$ whose coefficients generate a subgroup $\langle m_3 \rangle\leq \Z/n\Z$ with $\langle m_3 \rangle \not \leq \langle m_1,m_2\rangle$. We continue this process inductively. Since $\langle m\rangle$ is finite, we eventually find numbers $m_1,\ldots,m_r\in \Z/n\Z$ with $\langle m_1,\ldots,m_r \rangle = \langle m \rangle$ and elements $m_1x^{k_1},\ldots, m_rx^{k_r}\in \mathfrak{a}$. Setting $l=\max\{k_1,\ldots,k_r\}$ we have $m_1x^l,\ldots,m_rx^l \in \mathfrak{a}$. Since $\langle m_1,\ldots,m_r\rangle =\langle m\rangle$, we have $mx^l \in \mathfrak{a}$, as claimed.
\end{proof}

\begin{proof}[Proof of Theorem \ref{thm:lamplighter}]
By Lemma \ref{lem:lamplighterideals}, any ideal $\frak a\subset (\Z/n\Z)[[x]]$ is equivalent to an ideal $m(\Z/n\Z)[[x]]$ generated by some $m\in \Z/n\Z$. The ideal $m(\Z/n\Z)[[x]]$ satisfies the property that if $f\in (\Z/n\Z)[[x]]$ and $x^i f \in m(\Z/n\Z)[[x]]$ for some $i\geq 0$, then $f\in m(\Z/n\Z)[[x]]$. Thus we see that $m(\Z/n\Z)[[x]]\leq  m'(\Z/n\Z)[[x]]$ if and only if $m(\Z/n\Z)[[x]]\subset m'(\Z/n\Z)[[x]]$. Thus $m(\Z/n\Z)[[x]]\sim m'(\Z/n\Z)[[x]]$ if and only if $m(\Z/n\Z)[[x]]= m'(\Z/n\Z)[[x]]$. Moreover, $m(\Z/n\Z)[[x]]\subset m'(\Z/n\Z)[[x]]$ exactly if the subgroup $m(\Z/n\Z)$ is contained in $m'(\Z/n\Z)$. It follows that the poset of ideals of $(\Z/n\Z)[[x]]$ up to equivalence is isomorphic to $\operatorname{Sub}(\Z/n\Z)$, the poset of subgroups of $\Z/n\Z$ with inclusion. Thus $\mathcal P_+(G)$ is isomorphic to the opposite of $\operatorname{Sub}(\Z/n\Z)\cong \Div(k_1,\ldots,k_r)$, where $n=p_1^{k_1}\cdots p_r^{k_r}$ is the prime factorization. Additionally, $\operatorname{Sub}(\Z/n\Z)\cong \Div(k_1,\ldots,k_r)$ is isomorphic to its own opposite. 

To fully describe $\mathcal H(G)$, it remains to describe the lattice $\mathcal P_-(G)$. For this we use confining subsets and the partial order described in Section \ref{sec:confining}. Recall the automorphism $\alpha$ of $(\Z/n\Z)[x^{\pm 1}]$ defined by $\alpha(p(x))=xp(x)$.  The poset of confining subsets under $\alpha$ (that is, $\mathcal P_+(G)$) is the poset induced by the preorder defined by $Q\preceq P$ if $\alpha^N(P)\subset Q$ for some $N\geq 0$. The poset $\mathcal P_-(G)$ is isomorphic to the poset of confining subsets under $\alpha^{-1}$ induced by the preorder $Q\preceq P$ if $\alpha^{-N}(P)\subset Q$ for some $N\geq 0$. There is an isomorphism from $\mathcal P_+(G)$ to $\mathcal P_-(G)$. Namely, there is a unique automorphism of the ring $(\Z/n\Z)[x^{\pm 1}]$  defined by $x\mapsto x^{-1}$. It is straightforward to see that this automorphism sends confining subsets under $\alpha$ to confining subsets under $\alpha^{-1}$ and vice versa. Moreover, this automorphism respects the preorders, and so $\mc P_+(G)$ is isomorphic to $\mc P_-(G)$. This completes the classification of $\Hl(G)$.
\end{proof}

\subsection{Torsion-free finitely presented abelian-by-cyclic groups}

In this section we consider the class of torsion-free finitely presented abelian-by-cyclic groups. Recall that  such groups are ascending HNN extensions of free abelian groups $\Z^n$ by matrices $\gamma\in M_n(\Z)$ with non-zero determinant  \cite{finpres}. That is, a torsion-free finitely presented abelian-by-cyclic group $G$ has the form \[G=\langle \Z^n, t : txt^{-1}=\gamma x \text{ for } x\in \Z^n \rangle.\] We will denote $G$ by $G(\gamma)$ to make the matrix $\gamma$ explicit when necessary. We assume that $\gamma$ is admissible, that is:

\begin{itemize}
\item[(i)] $\gamma$ is expanding (i.e. all of its eigenvalues lie outside the unit disk in the complex plane);
\item[(ii)] $\Z^n$ is a cyclic $\Z[x]$-module (with the structure induced by letting $x$ act on $\Z^n$ by $\gamma$).
\end{itemize}

Consider the prime factorization $p=up_1^{n_1} \cdots p_r^{n_r}$ in $\Z[[x]]$, where $u\in \Z[[x]]$ is a unit and the $p_i$ are distinct irreducible power series in $\Z[[x]]$. We recall Theorem~\ref{thm:char=min} for the convenience of the reader.

\charmin*

In the next three subsections we will apply Theorem \ref{thm:char=min} to classify the hyperbolic actions of solvable Baumslag-Solitar groups and other torsion-free finitely presented abelian-by-cyclic groups. Theorem \ref{thm:char=min} will then be proved in the remaining sections of the paper.

\subsubsection{Solvable Baumslag-Solitar groups}
\label{sec:baumslagposet}

We now use Theorem \ref{thm:char=min} to give a streamlined proof of the structure of the poset of hyperbolic actions of the Baumslag-Solitar group $BS(1,n)$ for $n\in \Z\setminus \{-1,0,1\}$:

\begin{thm}[{\cite[Theorem 1.1]{AR}}]
\label{thm:baumslag}
Let $G=BS(1,n)$ be a Baumslag-Solitar group with $n \notin \{-1,0,1\}$, and let $n=p_1^{k_1} \cdots p_r^{k_r}$ be the prime factorization of $n$.  The poset $\mathcal H(G)$ is as pictured in Figure \ref{fig:bs1n}. 
\end{thm}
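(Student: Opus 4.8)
The plan is to recognize $BS(1,n)$ as one of the abelian-by-cyclic groups to which Theorem~\ref{thm:char=min} applies and then compute the two subposets $\mathcal P_\pm(G)$. Writing the abelian part as $\Z^1=\Z$, we have $BS(1,n)=G(\gamma)$ for the $1\times 1$ integer matrix $\gamma=(n)$. This $\gamma$ is admissible: its unique eigenvalue $n$ satisfies $|n|\geq 2$, so $\gamma$ is expanding, and $\Z^1$ is trivially a cyclic $\Z[x]$-module. (Equivalently, $p(x)=x-n$ is monic and irreducible over $\Q$ with its unique root $n$ outside the unit disk, so Corollary~\ref{cor:polynomialring} applies as well, after identifying $R=\Z[x]/(x-n)$ with $\Z$.) Theorem~\ref{thm:char=min} then supplies the global shape of $\mathcal H(G)$ — a unique elliptic structure dominated by a unique lineal structure, together with subposets $\mathcal P_-(G)$ and $\mathcal P_+(G)$ meeting exactly in the lineal structure — and the remaining task is to identify these two subposets and their representative actions.

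For $\mathcal P_-(G)$, Theorem~\ref{thm:char=min}(2) identifies it with the poset of $\gamma$-invariant subspaces of $\R^1=\R$. There are only two, $\{0\}$ and $\R$, so $\mathcal P_-(G)$ is a two-element chain: the bottom element is the lineal structure shared with $\mathcal P_+(G)$, and the top element is a quasi-parabolic structure. By Theorem~\ref{thm:char=min}(3) the latter is represented by an action on a quasi-convex subspace of the rank-one Heintze group attached to the eigenvalue $n$; here that subspace is quasi-isometric to $\R\rtimes\R$, hence to $\H^2$, and I would confirm that the usual action of $BS(1,n)\cong\Z[1/n]\rtimes\Z$ on the upper half plane by the affine maps $z\mapsto n^k z+a$ (with $k\in\Z$, $a\in\Z[1/n]$) is cobounded and represents this structure. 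The same count $\mathcal P_-(G)\cong\Div(1)$ also follows from Corollary~\ref{cor:polynomialring}, as $p$ has one real root and no non-real roots.

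For $\mathcal P_+(G)$, Theorem~\ref{thm:char=min}(1) gives $\mathcal P_+(G)\cong\Div(n_1,\dots,n_r)$, where $x-n=u\,q_1^{n_1}\cdots q_r^{n_r}$ is the factorization into irreducibles in $\Z[[x]]$ and $q_i$ corresponds to the prime $p_i\mid n$. When the constant term $-n$ is square-free this is immediate from Corollary~\ref{cor:squarefreeconstant}(1): $\mathcal P_+(G)\cong\Div(1,\dots,1)\cong 2^{\{1,\dots,r\}}$, with $r=\omega(n)$ the number of distinct prime divisors of $n$. In general I would argue that the same conclusion holds: evaluation $f\mapsto (f(n))_{p}$ induces a ring isomorphism $\Z[[x]]/(x-n)\cong\prod_{p\mid n}\Z_p$ (each series $f(n)$ converges $p$-adically since $|n|_p<1$; surjectivity holds because $\Z$ is dense in $\prod_{p\mid n}\Z_p=\varprojlim\Z/n^k\Z$; and a term-by-term division shows the kernel is exactly $(x-n)$), so $\Z[[x]]/(x-n)$ is a reduced ring with exactly $r$ minimal primes, whence $x-n$ is square-free in $\Z[[x]]$ with $r$ irreducible factors and all $n_i=1$. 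This gives $\mathcal P_+(G)\cong 2^{\{1,\dots,r\}}$. Equivalently, via Theorem~\ref{thm:main}, the $(\gamma)$-adic completion of $R=\Z$ is $\widehat R=\prod_{p\mid n}\Z_p$; every ideal of this finite product of DVRs has the form $\prod_p p^{e_p}\Z_p$ with $e_p\in\Z_{\geq 0}\cup\{\infty\}$; multiplication by $\gamma=n$ shifts $e_p\mapsto e_p+v_p(n)$; two such ideals are equivalent iff they share the same set of indices with $e_p=\infty$; so the poset of ideals up to multiplication by $\gamma$, and its opposite, are both $2^{\{1,\dots,r\}}$. Finally, the last clause of Theorem~\ref{thm:main} shows each element of $\mathcal P_+(G)$ is represented by an action on a simplicial tree. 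Assembling the descriptions of the elliptic and lineal structures, of $\mathcal P_-(G)$, and of $\mathcal P_+(G)$ reproduces Figure~\ref{fig:bs1n}.

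Since Theorem~\ref{thm:char=min} hands us the structural skeleton, the only real content is the pair of algebraic computations, and the substantive one is the determination of $\mathcal P_+(G)$. The main obstacle is that $n$ need not be square-free, so one cannot naively count one poset generator per prime power dividing $n$, nor quote Corollary~\ref{cor:squarefreeconstant} directly; the crucial observation is that inverting $\gamma=n$ collapses each local factor $\Z_p$ of $\widehat R$ to the field $\Q_p$ (equivalently, $x-n$ remains square-free in $\Z[[x]]$ no matter how large $v_p(n)$ is), so each prime $p\mid n$ contributes a single generator rather than $v_p(n)$ of them. Identifying the non-lineal element of $\mathcal P_-(G)$ with the $\H^2$ action is routine by comparison with the standard affine action.
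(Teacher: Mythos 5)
Your proof is correct, and for the key computation it takes a genuinely different route from the paper. The paper proves $\mathcal P_+(BS(1,n))\cong 2^{\{1,\ldots,r\}}$ by directly exhibiting the prime factorization of $n-x$ in $\Z[[x]]$: it uses Corollary~\ref{cor:seriesfactorization} to split $n-x$ according to the prime factorization of $n$, and then (Lemma~\ref{lem:baumslagfactorization}) shows each of the $r$ factors is itself prime by computing the coefficient of $x$ in the product, observing that it equals $\pm 1$, and invoking Lemma~\ref{lem:seriesprimality}. Your argument instead passes through the completion: you identify $\widehat R\cong\varprojlim\Z/n^k\Z\cong\prod_{p\mid n}\Z_p$ via the Chinese Remainder Theorem, note that inverting $\gamma=n$ collapses each factor $\Z_p$ to $\Q_p$ (since $n$ generates the maximal ideal up to a unit and a power), and then read off the ideal poset of the resulting product of fields as $2^{\{1,\ldots,r\}}$. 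You also observe that squarefreeness of $x-n$ in $\Z[[x]]$, and the count $r=\omega(n)$ of irreducible factors, follow from the fact that $\Z[[x]]/(x-n)\cong\prod_{p\mid n}\Z_p$ is reduced with $\omega(n)$ minimal primes, so that Theorem~\ref{thm:char=min}(1) applies with all $n_i=1$. The two approaches are logically interchangeable here; the paper's stays closer to the $\Z[[x]]$-factorization language in which Theorem~\ref{thm:char=min} is stated and is self-contained at the level of power-series lemmas, while yours is more conceptual (it explains \emph{why} all multiplicities must be $1$: inverting $\gamma$ collapses each local factor to a field) and would also go through directly from Theorem~\ref{thm:main} without invoking Theorem~\ref{thm:char=min} or the factorization machinery of Appendix~\ref{sec:appendix} at all. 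Your identification of the top element of $\mathcal P_-$ with the $\H^2$ affine action, and your application of Theorem~\ref{thm:char=min}(2)--(3) and of Corollary~\ref{cor:polynomialring} for $\mathcal P_-$, match the paper.

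One small point worth tightening if this were to be written out: the phrase ``a term-by-term division shows the kernel is exactly $(x-n)$'' compresses an inductive argument (solving $a_i=b_{i-1}-nb_i$ for the $b_i$ using the congruences $\sum_{i<k}a_in^i\equiv 0\pmod{n^k}$); alternatively, this is precisely the content of Lemma~\ref{lem:completionquotient}, which you could cite directly rather than reproving.
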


Recall that $BS(1,n)\cong \Z[\frac{1}{n}]\rtimes \Z$, so $BS(1,n)=G(\gamma)$ where $\gamma$ is the $1\times 1$ matrix $\begin{pmatrix} n\end{pmatrix}$ acting on $\R=\R^1$. The matrix $\gamma$ is expanding with characteristic polynomial $-n+x$, and $\Z$ is a cyclic $\Z[x]$-module. Hence we may apply Theorem \ref{thm:char=min}. There are only two  subspaces of $\R$ invariant under $\gamma$: 0 and $\R$ itself. It then remains to consider the prime factorization of the monic polynomial $-n+x$ in the formal power series ring $\Z[[x]]$. Equivalently, we will factor the polynomial $n-x$.

We start by stating some basic results on prime factorization in (the unique factorization domain) $\Z[[x]]$. These results are standard and straightforward to prove, so we omit their proofs. Note that since $\Z[[x]]$ is a unique factorization domain, an element is prime if and only if it is irreducible.

\begin{lem}
\label{lem:powerseriesunits}
Let $f(x)=a_0+a_1x+a_2x^2+\cdots \in \Z[[x]]$. Then $f$ is a unit if and only if $a_0=\pm 1$.
\end{lem}

The following allows us to factor any formal power series whose constant term  is not a prime power.

\begin{lem}
Let $f(x)=a_0+a_1x+a_2x^2+\cdots \in \Z[[x]]$. If $a_0=b_0c_0$ with $b_0,c_0\in \Z$ relatively prime, then $f$ may be factored as $f=gh$ where $g(x)=b_0+b_1x+\cdots$ and $h(x)=c_0+c_1x+\cdots$.
\end{lem}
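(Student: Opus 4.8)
The statement to prove is a lifting/Hensel-type factorization in $\Z[[x]]$: if $f(x) = a_0 + a_1 x + \cdots \in \Z[[x]]$ and $a_0 = b_0 c_0$ with $\gcd(b_0, c_0) = 1$, then $f = gh$ with $g(x) = b_0 + b_1 x + \cdots$ and $h(x) = c_0 + c_1 x + \cdots$. The plan is to construct the coefficients $b_i$ and $c_i$ recursively by matching coefficients of $x^n$ in the product $gh = f$, using at each stage that $\gcd(b_0, c_0) = 1$ (equivalently, there exist integers $u, v$ with $u b_0 + v c_0 = 1$) to solve the resulting linear Diophantine equation.

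Concretely, the coefficient of $x^n$ in $gh$ is $\sum_{i+j=n} b_i c_j = b_n c_0 + b_0 c_n + \sum_{i+j=n, \, 0 < i < n} b_i c_j$. So after choosing $b_0, \dots, b_{n-1}$ and $c_0, \dots, c_{n-1}$, the equation to solve at stage $n$ is $b_n c_0 + b_0 c_n = a_n - \sum_{\substack{i+j=n \\ 0<i<n}} b_i c_j$, where the right-hand side is a fixed integer. Since $\gcd(b_0, c_0) = 1$, this equation in the unknowns $b_n, c_n$ has an integer solution (pick any one). This gives well-defined power series $g, h \in \Z[[x]]$, and by construction all coefficients of $gh - f$ vanish, so $f = gh$.

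I'd write this as an induction. Base case: $b_0 c_0 = a_0$ holds by hypothesis. Inductive step: assume $b_0, \dots, b_{n-1}, c_0, \dots, c_{n-1}$ chosen so that the coefficients of $1, x, \dots, x^{n-1}$ in $gh$ agree with those of $f$; then use Bézout to solve for $b_n, c_n$. There is genuine non-uniqueness here (both in the Bézout coefficients and in adding multiples of $c_0$ to $b_n$ while subtracting multiples of $b_0$ from $c_n$), but the statement only asserts existence, so that's fine.

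The main obstacle — really the only subtle point — is ensuring the Diophantine equation $b_n c_0 + b_0 c_n = (\text{known integer})$ is solvable over $\Z$, which is exactly where coprimality of $b_0$ and $c_0$ is essential (and is why the hypothesis is stated that way rather than just $a_0 = b_0 c_0$). Everything else is routine bookkeeping with the Cauchy product. Since the excerpt explicitly says these standard results are stated without proof, a short proof along these lines suffices.

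\begin{proof}
Write $g(x) = \sum_{i \geq 0} b_i x^i$ and $h(x) = \sum_{j \geq 0} c_j x^j$, to be determined, with $b_0, c_0$ as in the statement. We construct the remaining coefficients by induction on $n \geq 1$ so that for each $n$ the coefficient of $x^k$ in $g(x)h(x)$ equals $a_k$ for all $k \leq n$. For $n = 0$ this holds since $b_0 c_0 = a_0$.

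Suppose $b_0, \dots, b_{n-1}$ and $c_0, \dots, c_{n-1}$ have been chosen so that $\sum_{i+j=k} b_i c_j = a_k$ for all $k \leq n-1$. The coefficient of $x^n$ in $g(x)h(x)$ is
\[
b_n c_0 + b_0 c_n + \sum_{\substack{i+j = n \\ 0 < i < n}} b_i c_j,
\]
and we want this to equal $a_n$. Equivalently, we need integers $b_n, c_n$ with
\[
b_n c_0 + b_0 c_n = a_n - \sum_{\substack{i+j = n \\ 0 < i < n}} b_i c_j.
\]
The right-hand side is a fixed integer, and since $\gcd(b_0, c_0) = 1$, this linear equation has a solution $(b_n, c_n) \in \Z^2$; fix one such solution. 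This completes the induction.

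The resulting power series $g, h \in \Z[[x]]$ satisfy $g(x)h(x) = f(x)$, since all coefficients agree, and $g$ has constant term $b_0$ while $h$ has constant term $c_0$, as required.
\end{proof}
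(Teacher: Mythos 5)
Your proof is correct. The paper explicitly omits the proof of this lemma (calling it standard), and your recursive construction — matching coefficients of $x^n$ and solving the linear Diophantine equation $b_n c_0 + b_0 c_n = a_n - \sum_{0<i<n} b_i c_{n-i}$ via Bézout, which is solvable precisely because $\gcd(b_0,c_0)=1$ — is exactly the standard argument the authors had in mind; the key observation that the correction term at stage $n$ depends only on the already-chosen $b_1,\dots,b_{n-1},c_1,\dots,c_{n-1}$ is stated and used correctly.
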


An immediate corollary is the following:

\begin{cor}
\label{cor:seriesfactorization}
Let $f(x)=a_0+a_1x+a_2x^2+\cdots \in \Z[[x]]$. If $a_0=(-1)^\delta p_1^{k_1}\cdots p_r^{k_r}$ is the prime factorization of $a_0$ then $f$ may be factored as $f=(-1)^\delta f_1\cdots f_r$ where the constant term of $f_i$ is   $p_i^{k_i}$.
\end{cor}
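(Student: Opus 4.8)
Corollary \ref{cor:seriesfactorization} follows from the preceding lemma by a straightforward induction on the number $r$ of distinct prime factors of $a_0$. I will set up the induction so that at each stage I peel off one prime power factor using the lemma on relatively prime splittings of the constant term.

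The plan is as follows. First, handle the sign: if $a_0 = (-1)^\delta p_1^{k_1}\cdots p_r^{k_r}$, then since a power series with constant term $-1$ is a unit by Lemma \ref{lem:powerseriesunits}, I may write $f = (-1)^\delta \tilde f$ where $\tilde f$ has constant term $p_1^{k_1}\cdots p_r^{k_r}$ (concretely, multiply $f$ by the unit $(-1)^\delta$, which is its own inverse). Thus it suffices to factor a power series whose constant term is a genuine product of prime powers $p_1^{k_1}\cdots p_r^{k_r}$ with the $p_i$ distinct primes. Now induct on $r$. When $r = 1$ there is nothing to prove: take $f_1 = f$. For the inductive step with $r \geq 2$, observe that $p_1^{k_1}$ and $p_2^{k_2}\cdots p_r^{k_r}$ are relatively prime integers whose product is the constant term of $f$. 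Apply the preceding lemma (the one immediately above the corollary in the excerpt) to obtain a factorization $f = f_1 g$ where $f_1$ has constant term $p_1^{k_1}$ and $g$ has constant term $p_2^{k_2}\cdots p_r^{k_r}$. By the induction hypothesis applied to $g$, we may factor $g = f_2\cdots f_r$ where $f_i$ has constant term $p_i^{k_i}$ for $i = 2,\ldots,r$. Combining, $f = f_1 f_2\cdots f_r$ with each $f_i$ having constant term $p_i^{k_i}$, which completes the induction and (after reinstating the sign) the proof.

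There is essentially no obstacle here — the only point requiring a moment's care is the bookkeeping on the sign, since the preceding lemma is stated for a splitting of $a_0$ into two relatively prime \emph{positive} factors (or at least one wants the constant terms $p_i^{k_i}$ to come out positive as stated), so one should first factor out $(-1)^\delta$ as a unit and then apply the positive-coefficient splitting lemma repeatedly. One could alternatively absorb the sign into $f_1$, but the cleaner statement is the one given, with all the $f_i$ having positive constant term $p_i^{k_i}$ and the sign collected into the leading $(-1)^\delta$.
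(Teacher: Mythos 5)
Your proof is correct and is exactly the argument the paper intends: the paper gives no explicit proof, calling this "an immediate corollary" of the relatively-prime-splitting lemma, and your induction on $r$ (peeling off $p_1^{k_1}$ against the coprime remainder, after first extracting the unit $(-1)^\delta$) is the natural way to make that precise. One small inaccuracy in your commentary: the splitting lemma does not actually require the factors $b_0, c_0$ to be positive, only relatively prime — but your decision to factor out the sign first is still the right move, since it is what makes each $f_i$ come out with the positive constant term $p_i^{k_i}$ as the corollary asserts.
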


However, there is no guarantee that the factors $f_i$ given by Corollary \ref{cor:seriesfactorization} are themselves prime. The following gives a necessary condition for a power series to be prime. However this is not sufficient: there are reducible power series with constant term a power of a prime (e.g., $(2+x)^2=4+4x+x^2$).

\begin{cor}
\label{cor:primepowerconstant}
Let $f(x)=a_0+a_1x+a_2x^2+\cdots \in \Z[[x]]$. If $f$ is prime, then $a_0=\pm p^n$ where $p$ is prime and $n\geq 1$.
\end{cor}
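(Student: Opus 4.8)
The statement to prove is Corollary~\ref{cor:primepowerconstant}: if $f(x)=a_0+a_1x+\cdots\in\Z[[x]]$ is prime, then $a_0=\pm p^n$ for some prime $p$ and $n\geq 1$.

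The plan is to argue by contrapositive, splitting into the cases where $a_0$ fails to have the prescribed shape. First I would observe that this is essentially an immediate consequence of Corollary~\ref{cor:seriesfactorization} together with Lemma~\ref{lem:powerseriesunits}. If $a_0 = 0$, then $f = x\cdot g$ for some $g\in\Z[[x]]$ (factor out the lowest-degree term), and neither $x$ nor $g$ is a unit unless $f$ itself were a unit times $x$ — but in any case $f$ is divisible by the nonunit $x$, and if $f$ were prime it would have to equal a unit times $x$, which has constant term $0=\pm p^n$ — wait, that is not of the required form either, so I should just note $a_0=0$ is excluded. Actually the cleanest phrasing: if $f$ is prime then in particular $f$ is not a unit, so by Lemma~\ref{lem:powerseriesunits} $a_0\neq\pm1$; and $a_0\neq 0$ since otherwise $x\mid f$ forces (by primality) $f=ux$ for a unit $u$, but then $f$ is \emph{associate} to $x$ — here I need to be a little careful about whether $x$ counts, but $x$ has constant term $0$, not $\pm p^n$, so this case must be ruled out, which it is because the statement only claims the conclusion for prime $f$ and I simply need $a_0\neq 0$; I'd handle this by remarking that $\bigcap_n(\gamma^n)=0$-style reasoning or just directly that a prime cannot be $x$ up to units if... hmm, actually $x$ \emph{is} prime in $\Z[[x]]$. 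So $f=x$ is a genuine counterexample candidate!

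Let me reconsider: $x$ is prime in $\Z[[x]]$ and has constant term $0$. So the corollary as literally stated would be false unless we read it as: the constant term, when nonzero, is $\pm p^n$; or the authors implicitly assume $a_0\neq 0$. Looking at the context (they want to factor $n-x$ whose constant term is $n\neq 0$), I believe the intended reading excludes $a_0 = 0$, or they regard this as understood. I would therefore state the proof assuming $a_0\neq 0$ (or note $x$ as the trivial exception), and then the real content is: if $a_0\neq 0,\pm1$, write $|a_0| = p_1^{k_1}\cdots p_r^{k_r}$; if $r\geq 2$, then by Corollary~\ref{cor:seriesfactorization} we can write $f = \pm f_1\cdots f_r$ where $f_i$ has constant term $p_i^{k_i}$, and since each $p_i^{k_i}\neq\pm1$, Lemma~\ref{lem:powerseriesunits} shows none of the $f_i$ is a unit; hence $f$ is a product of at least two nonunits, contradicting primality (irreducibility). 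Therefore $r=1$, i.e., $|a_0|=p^n$, so $a_0=\pm p^n$.

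The main (and only real) obstacle here is purely bookkeeping: making sure the corner case $a_0=0$ (and the unit case $a_0=\pm1$) is handled or explicitly set aside, and correctly invoking that in a UFD an irreducible element cannot be written as a product of two nonunits. Everything else is a direct citation of the two preceding results. I would write:

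\begin{proof}
We may assume $a_0\neq 0$; indeed, if $a_0=0$ then $f$ is associate to $x^m$ for some $m\geq 1$, and such an element is prime only when $m=1$, i.e.\ $f$ is associate to $x$, which we regard as the trivial exceptional case (equivalently, one imposes $a_0 \neq 0$). Since $f$ is prime, it is not a unit, so by Lemma~\ref{lem:powerseriesunits} we have $a_0\neq \pm 1$. Write $|a_0|=p_1^{k_1}\cdots p_r^{k_r}$ for the prime factorization, with $r\geq 1$ and each $k_i\geq 1$. If $r\geq 2$, then by Corollary~\ref{cor:seriesfactorization} we may factor $f=(-1)^\delta f_1\cdots f_r$ where the constant term of $f_i$ is $p_i^{k_i}$. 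Since $p_i^{k_i}\neq \pm 1$, Lemma~\ref{lem:powerseriesunits} shows that no $f_i$ is a unit, so $f$ is a product of at least two non-units. As $\Z[[x]]$ is a unique factorization domain, this contradicts the irreducibility of the prime element $f$. Hence $r=1$ and $a_0=\pm p_1^{k_1}$, as claimed.
\end{proof}
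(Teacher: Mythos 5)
Your proof is correct and is evidently the intended one (the paper omits it, calling these facts standard): rule out $a_0=\pm1$ via Lemma~\ref{lem:powerseriesunits}, then apply Corollary~\ref{cor:seriesfactorization} to decompose $f$ into non-unit factors whenever $|a_0|$ has two or more distinct prime divisors, contradicting irreducibility. Your catch that $x$ is a counterexample to the literal statement (it is prime in $\Z[[x]]$ with constant term $0$) is legitimate; the authors implicitly assume $a_0\neq 0$, which is forced already by the hypothesis of Corollary~\ref{cor:seriesfactorization} and which holds in every application in the paper (e.g.\ $f=n-x$ with $n\neq 0$).

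One small slip: your step "if $a_0=0$ then $f$ is associate to $x^m$ for some $m\geq 1$" is false as a standalone implication ($f=2x$ has $a_0=0$ but is not associate to any $x^m$). What you want is: if $a_0=0$ and $f\neq 0$, write $f=x^m g$ with $m\geq 1$ and $g(0)\neq 0$; irreducibility of $f$ then forces $g$ to be a unit and $m=1$, so that $f$ is associate to $x$. Your conclusion is right, but the primality hypothesis must be invoked before you can conclude associateness to a monomial.
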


\begin{lem}
\label{lem:relprimeseries}
Let $f(x)=a_0+a_1x+\cdots$ and $g(x)=b_0+b_1x+\cdots$ be two formal power series in $\Z[[x]]$. Then $f$ and $g$ are relatively prime if and only if $a_0$ and $b_0$ are relatively prime.
\end{lem}

The following lemmas give a partial converse to Corollary \ref{cor:primepowerconstant}.

\begin{lem}
\label{lem:seriesprimality}
Consider a formal power series $f(x)=a_0+a_1x+\cdots \in \Z[[x]]$. If $a_0$ is (up to sign) a power of a prime $p\in \Z$ and $a_1$ is relatively prime to $p$, then $f$ is prime. 
\end{lem}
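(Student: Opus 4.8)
The plan is to show that if $f(x) = a_0 + a_1 x + \cdots$ has $a_0 = \pm p^k$ for a prime $p$ and $\gcd(a_1, p) = 1$, then $f$ is irreducible in $\Z[[x]]$. Suppose for contradiction that $f = gh$ with $g(x) = b_0 + b_1 x + \cdots$ and $h(x) = c_0 + c_1 x + \cdots$ both non-units. By Lemma~\ref{lem:powerseriesunits}, neither $b_0$ nor $c_0$ equals $\pm 1$. Since $b_0 c_0 = a_0 = \pm p^k$, we must have $b_0 = \pm p^i$ and $c_0 = \pm p^j$ with $i, j \geq 1$ and $i + j = k$; in particular $p \mid b_0$ and $p \mid c_0$. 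Now examine the coefficient of $x$ in the product: $a_1 = b_0 c_1 + b_1 c_0$. Since $p \mid b_0$ and $p \mid c_0$, we get $p \mid a_1$, contradicting $\gcd(a_1, p) = 1$.

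The only point that requires a little care is justifying that the factorization $b_0 c_0 = \pm p^k$ forces each of $b_0, c_0$ to be (up to sign) a positive power of $p$, rather than allowing $b_0 = \pm 1$; but this is exactly the statement that $g, h$ are non-units, which follows from Lemma~\ref{lem:powerseriesunits}. One should also note the degenerate possibility that $k = 0$, i.e. $a_0 = \pm 1$; in that case $f$ is already a unit by Lemma~\ref{lem:powerseriesunits}, hence trivially not expressible as a product of two non-units, so the statement is vacuous (or one may simply assume $k \geq 1$ as is implicit). With $k \geq 1$ and $i + j = k$, $i, j \geq 1$, both factors are genuinely divisible by $p$.

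I do not anticipate a genuine obstacle here — the argument is a one-line congruence computation once the shape of $b_0, c_0$ is pinned down. The mild subtlety, if any, is purely bookkeeping: making sure the "up to sign" conventions on $a_0 = \pm p^k$ are handled uniformly (the signs of $b_0$ and $c_0$ are irrelevant to the divisibility argument, so they can be absorbed without comment), and confirming that $\Z$ is an integral domain so that $b_0 c_0 = \pm p^k$ with $b_0, c_0 \neq \pm 1$ indeed forces both to be non-trivial prime powers of $p$ via unique factorization in $\Z$. This is the entire proof.
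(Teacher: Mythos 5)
Your proof is correct and is essentially the same argument as the paper's: use unique factorization in $\Z$ to write $b_0, c_0$ as (signed) powers of $p$, then read off the coefficient of $x$ to get $p \mid a_1$, a contradiction. The paper phrases it directly (concluding $b_0 = 1$ or $c_0 = 1$) rather than by contradiction, but the content is identical.
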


\begin{proof}
Multiply by $-1$ if necessary to assume $a_0>0$, and write $a_0=p^n$ for some $n\geq 1$. Suppose that $f=gh$ where $g=b_0+b_1x+\cdots$ and $h=c_0+c_1x+\cdots$. Up to multiplying both $g$ and $h$ by $-1$, we have $b_0=p^k$ and $c_0=p^l$ where $k,l\geq 0$. Moreover, $a_1=b_0c_1+b_1c_0$.  Since $a_1$ is not divisible by $p$, it must be the case  that $b_0=1$ or $c_0=1$, and so either $g$ or $h$ is a unit.
\end{proof}

\begin{lem}
\label{lem:primeconstantterm}
Consider a formal power series $f(x)=a_0+a_1x+\cdots \in \Z[[x]]$. If $a_0$ is (up to sign) a prime $p\in \Z$, then $f$ is prime.
\end{lem}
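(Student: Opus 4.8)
The statement to prove is Lemma~\ref{lem:primeconstantterm}: if $f(x)=a_0+a_1x+\cdots\in\Z[[x]]$ has constant term $a_0=\pm p$ for a rational prime $p$, then $f$ is prime (equivalently, irreducible) in $\Z[[x]]$.

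\textbf{The plan.} The plan is to argue directly from the factorization behavior of the constant term, exactly as in the proof of Lemma~\ref{lem:seriesprimality}, but now exploiting that the constant term is a prime rather than merely a prime power. First I would reduce to the case $a_0 = p > 0$ by multiplying $f$ by a unit $-1$ if necessary (this is harmless since units don't affect irreducibility). Then suppose for contradiction that $f = gh$ is a nontrivial factorization with $g = b_0 + b_1 x + \cdots$ and $h = c_0 + c_1 x + \cdots$ in $\Z[[x]]$. Comparing constant terms gives $b_0 c_0 = a_0 = p$. Since $p$ is prime in $\Z$, one of $b_0, c_0$ is $\pm 1$; after multiplying both $g$ and $h$ by $-1$ if needed we may assume $\{b_0,c_0\} = \{1,p\}$, say $b_0 = 1$. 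By Lemma~\ref{lem:powerseriesunits}, a power series with constant term $\pm 1$ is a unit, so $g$ is a unit, contradicting the assumption that the factorization $f = gh$ was nontrivial. Hence $f$ is irreducible, and since $\Z[[x]]$ is a UFD (as noted in the excerpt), $f$ is prime.

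\textbf{Main obstacle.} There is essentially no obstacle here; the lemma is a clean special case of the preceding material and the argument is short. The one point that needs care is the bookkeeping with signs: $b_0 c_0 = p$ only forces $\{|b_0|,|c_0|\} = \{1,p\}$, so one should note that multiplying both factors by $-1$ simultaneously leaves the product $gh$ unchanged and allows us to normalize to $b_0 = 1$ (or $c_0 = 1$). After that, Lemma~\ref{lem:powerseriesunits} immediately finishes the job. I would present this in three or four sentences, citing Lemma~\ref{lem:powerseriesunits} for the unit criterion and the UFD property (already invoked in the text just before Lemma~\ref{lem:powerseriesunits}) to pass from ``irreducible'' to ``prime.''

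\begin{proof}
Since multiplying $f$ by the unit $-1$ does not affect whether $f$ is prime, we may assume $a_0=p$ with $p>0$ prime. Suppose $f=gh$ with $g(x)=b_0+b_1x+\cdots$ and $h(x)=c_0+c_1x+\cdots$ in $\Z[[x]]$. Comparing constant terms, $b_0c_0=p$, so $\{|b_0|,|c_0|\}=\{1,p\}$. Replacing both $g$ and $h$ by $-g$ and $-h$ if necessary (which leaves the product unchanged), we may assume $b_0=1$ or $c_0=1$; say $b_0=1$. Then by Lemma~\ref{lem:powerseriesunits}, $g$ is a unit in $\Z[[x]]$. Hence any factorization of $f$ is trivial, so $f$ is irreducible, and therefore prime since $\Z[[x]]$ is a unique factorization domain.
\end{proof}
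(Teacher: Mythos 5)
Your proof is correct and takes essentially the same approach as the paper's: normalize the constant term to $p$, compare constant terms in a factorization $f=gh$ to get $b_0c_0=p$, invoke primality of $p$ to force one of $b_0,c_0$ to be $\pm 1$, and then conclude via Lemma~\ref{lem:powerseriesunits} that one factor is a unit. The paper simply leaves $c_0 = \pm 1$ (rather than normalizing to $1$), since Lemma~\ref{lem:powerseriesunits} already covers both signs; otherwise the two arguments are identical.
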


\begin{proof}
Multiply by $-1$ if necessary to assume that $a_0=p$. If $f=gh$ where $g=b_0+b_1x+\cdots$ and $h=c_0+c_1x+\cdots$, then $b_0c_0=p$. Up to swapping $g$ and $h$, $p$ divides $b_0$ while $c_0=\pm 1$. Thus $h$ is a unit by Lemma \ref{lem:powerseriesunits}.
\end{proof}

We are now ready to apply these results to the polynomial $n-x$.

\begin{lem}
\label{lem:baumslagfactorization}
Let $n\in \Z\setminus \{-1,0,1\}$, and consider the polynomial $f(x)=n-x$. Let $(-1)^\delta p_1^{k_1}\cdots p_r^{k_r}$ be the prime factorization of $n$.  The prime factorization of $f$ is given by $f(x)=(-1)^\delta f_1\cdots f_r$, where
 the polynomials $f_i$ are prime;
 the polynomials $f_i$ are pairwise relatively prime; and
 the constant term of $f_i$ is $p_i^{k_i}$.
\end{lem}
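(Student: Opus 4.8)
The plan is to assemble the factorization from the results just proved and then check primality of each factor by a short congruence argument. First I would apply Corollary~\ref{cor:seriesfactorization} to the polynomial $f(x)=n-x$, whose constant term has prime factorization $n=(-1)^\delta p_1^{k_1}\cdots p_r^{k_r}$: this immediately yields a factorization $f=(-1)^\delta f_1\cdots f_r$ in $\Z[[x]]$ with the constant term of each $f_i$ equal to $p_i^{k_i}$. Pairwise relative primality of the $f_i$ is then automatic from Lemma~\ref{lem:relprimeseries}, since distinct prime powers $p_i^{k_i}$ and $p_j^{k_j}$ are relatively prime in $\Z$. So the only remaining point is that each $f_i$ is prime (equivalently irreducible, as $\Z[[x]]$ is a UFD).

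For that I would invoke Lemma~\ref{lem:seriesprimality}: a power series whose constant term is (up to sign) a prime power $p^k$ and whose linear coefficient is coprime to $p$ is prime. The constant term of $f_i$ is already the prime power $p_i^{k_i}$, so everything reduces to controlling the linear coefficient of $f_i$. Write $f_i=p_i^{k_i}+a_ix+(\text{higher-order terms})$. Comparing the coefficient of $x$ on both sides of $f=(-1)^\delta f_1\cdots f_r$: the left side contributes $-1$, while the right side contributes $(-1)^\delta\sum_{m=1}^r a_m\prod_{j\neq m}p_j^{k_j}$. Hence $\sum_{m=1}^r a_m\prod_{j\neq m}p_j^{k_j}=\pm 1$.

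Reducing this identity modulo $p_i$, every summand with $m\neq i$ is divisible by $p_i^{k_i}$ (it contains the factor $p_i^{k_i}$, since $i$ lies among the indices $j\neq m$), so it vanishes, leaving $a_i\prod_{j\neq i}p_j^{k_j}\equiv \pm 1\pmod{p_i}$. As $\prod_{j\neq i}p_j^{k_j}$ is a unit modulo $p_i$, this forces $a_i$ to be coprime to $p_i$. Lemma~\ref{lem:seriesprimality} then shows $f_i$ is prime, and combined with the first paragraph this establishes the claim. The one mildly delicate step is the bookkeeping for the linear coefficient of the product $f_1\cdots f_r$ and the mod-$p_i$ reduction, but this is routine; everything else is a direct appeal to the lemmas already in hand. (In the degenerate case $r=1$ the sum above is just $a_1=\pm1$, and in the case $k_i=1$ one could alternatively cite Lemma~\ref{lem:primeconstantterm}, but the argument above covers all cases uniformly.)
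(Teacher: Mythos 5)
Your proof is correct and follows essentially the same route as the paper: factor via Corollary~\ref{cor:seriesfactorization}, get coprimality from Lemma~\ref{lem:relprimeseries}, and establish primality of each $f_i$ by comparing linear coefficients and applying Lemma~\ref{lem:seriesprimality}. The only cosmetic difference is that you phrase the final step as a reduction modulo $p_i$ whereas the paper phrases it as a direct contradiction to $\sum_j a_1^j \prod_{m\neq j} p_m^{k_m} = \pm 1$; these are the same argument.
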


\begin{proof}
By Corollary $\ref{cor:seriesfactorization}$ we may factor $f$ as $(-1)^\delta f_1 \cdots f_r$, where the constant term of $f_i$ is $p_i^{k_i}$ for each $i$. We claim that each of these factors is prime. Write $f_i=p_i^{k_i} + a^i_1x +a^i_2x^2+\cdots$. Then the coefficient of $x$ in $f_1\cdots f_r$ is \[\sum_{j=1}^r (p_1^{k_1} \cdots \widehat{p_j^{k_j}} \cdots p_r^{k_r}) a^j_1,\] where $\widehat{\cdot}$ denotes omission of the corresponding factor. If some $a_1^i$ is divisible by $p_i$, then each term of this sum is divisible by $p_i$. However, we have \[\sum_{j=1}^r (p_1^{k_1} \cdots \widehat{p_j^{k_j}} \cdots p_r^{k_r}) a^j_1 = \pm 1.\] Thus $a_1^i$ is relatively prime to $p_i$ for each $i$. By Lemma \ref{lem:seriesprimality}, this proves that $f_i$ is prime. The power series $f_i$ are relatively prime to each other by Lemma \ref{lem:relprimeseries}.
\end{proof}

Finally, we can deduce the structure of $\mathcal H(BS(1,n))$ from Theorem \ref{thm:char=min}. Recall that $G=G(\gamma)$ where $\gamma=\begin{pmatrix} n \end{pmatrix}$. The matrix $\gamma$ is admissible. Hence, we have $\mathcal P_+(G)\cong \Div(1,\ldots,1)\cong 2^{\{1,\ldots,r\}}$ where there are $r$ 1's between the parentheses and where the number of distinct prime factors of $n$ is $r$. There are two invariant subspaces of $\R$: 0 and $\R$ itself. Thus $\mathcal P_-(G)$ is isomorphic to the lattice $\Div(1)$ with two elements, one being greater than the other. This matches the description given in Figure \ref{fig:bs1n}, proving Theorem~\ref{thm:baumslag}.

\subsubsection{Other abelian-by-cyclic groups}
\label{sec:otherabcyclicposet}

In this subsection we classify the hyperbolic actions of certain more exotic abelian-by-cyclic groups $G(\gamma)$. The methods used here generalize easily to classify the hyperbolic actions of other abelian-by-cyclic groups. As in the case of $BS(1,n)$, one must compute the prime factorization of the characteristic polynomial in $\Z[[x]]$ and the invariant subspaces of the matrix $\gamma$.

\begin{ex}
Consider the matrix $\gamma=\begin{pmatrix} 2 & 0 & 0 \\ 0 & 3 & 1 \\ 0 & 0 & 3 \end{pmatrix}$. Its characteristic polynomial is $p(x)=(2-x)(3-x)^2$. The reader may check that $\Z^3$ is a cyclic $\Z[x]$-module generated by $(1, 0, 1)$.
Thus Theorem \ref{thm:char=min} applies to the group $G=G(\gamma)=\langle \Z^n , t : txt^{-1}=\gamma x \text{ for } x\in \Z^n\rangle$.

The prime factorization of $p$ in $\Z[[x]]$ is immediate: it is $p(x)=(2-x)(3-x)^2$ since the factors $2-x$ and $3-x$ are prime in $\Z[[x]]$ by Lemma \ref{lem:primeconstantterm}. Thus by Theorem \ref{thm:char=min} we have $\mathcal P_+(G)\cong \Div(1,2)$.

The invariant subspaces of $\gamma$ are the direct sums of the generalized eigenspaces $\ker(\gamma-\lambda I)^i$ for $i\geq 0$ and $\lambda$ an eigenvalue of $\gamma$. The possible direct summands are \[\ker(\gamma-2I)=\operatorname{Span}\left\{\begin{pmatrix} 1 \\ 0 \\ 0\end{pmatrix}\right\}, \quad \ker(\gamma-3I)=\operatorname{Span}\left\{\begin{pmatrix} 0 \\ 1 \\ 0\end{pmatrix}\right\}, \quad \textrm{and } \ker(\gamma-3I)^2=\operatorname{Span}\left\{\begin{pmatrix} 0 \\ 1 \\ 0\end{pmatrix},\begin{pmatrix} 0 \\ 0 \\ 1\end{pmatrix}\right\},\] with the second kernel being contained in the third. Thus the poset of invariant subspaces is $\Div(1,2)$ and this is $\mathcal P_-(G)$; see Figure \ref{fig:matrix1}.
\end{ex}

\begin{ex}
Consider the matrix $\gamma=\begin{pmatrix} 0 & 0 & -210 \\ 1 & 0 & -1 \\ 0 & 1 & 0 \end{pmatrix}$. This is the companion matrix to the irreducible polynomial $p(x)=x^3+x+210$ (see Section \ref{section:preliminaries}), and so 
 $p$ is the characteristic and minimal polynomial of $\gamma$. It has two complex conjugate roots and one real root all lying outside the unit disk in $\C$, so $\gamma$ is expanding. That $\Z^3$ is a cyclic $\Z[x]$-module  follows automatically from the fact that $\gamma$ is the companion matrix to a polynomial. A cyclic vector for $\gamma$ is $(1,0,0)$. Thus Theorem \ref{thm:char=min} applies.

First we calculate the prime factorization of $p$ in $\Z[[x]]$. The constant term $210$ of $p$ factors as $2\cdot 3\cdot 5\cdot 7$. By Corollary \ref{cor:seriesfactorization}, we may factor $p$ in $\Z[[x]]$ as $(2+\cdots)(3+\cdots)(5+\cdots)(7+\cdots)$. Each of these factors is prime by Lemma \ref{lem:primeconstantterm}, so $\mathcal P_+(G)\cong \Div(1,1,1,1)$ by Theorem \ref{thm:char=min}.

Now we calculate the invariant subspaces. Either complex eigenvector gives rise to its real and imaginary parts which span a plane in $\R^3$. This plane is one invariant subspace for $\gamma$. The real eigenvector of $\gamma$ spans a line in $\R^3$ which is another invariant subspace for $\gamma$. The other invariant subspaces are 0 and $\R^3$, which are direct sums of some sub-collection of the invariant plane and the invariant line; see Section \ref{sec:invarsubspaces} for more details on this. Thus, the poset of invariant subspaces is $\Div(1,1)$ and this is $\mathcal P_-(G)$; see Figure \ref{fig:matrix2}.
\end{ex}

\begin{ex}
Consider the matrix $\gamma=\begin{pmatrix}  0 & 0 & 0 & 2 \\ 1 & 0 & 0 & 0  \\ 0 & 1 & 0 & 0 \\ 0 & 0 & 1 & 0 \end{pmatrix}$. This is the companion matrix to the irreducible polynomial $p(x)=x^4-2$, which is thus the characteristic and minimal polynomial of $\gamma$. It has two complex conjugate roots $\pm i \sqrt[4]{2}$ and two real roots $\pm \sqrt[4]{2}$, all outside the unit disk. Again, $\Z^4$ is a cyclic $\Z[x]$-module with generator $v=(1,0,0,0)$, and so Theorem \ref{thm:char=min} applies.

The polynomial $p$ is prime in $\Z[[x]]$ by Lemma \ref{lem:primeconstantterm}, so $\mathcal P_+(G)\cong \Div(1)$. Now we calculate the invariant subspaces. Again, either complex eigenvector gives rise to an $\gamma$-invariant plane in $\R^4$ by taking its real and imaginary parts. The real eigenvectors of $\gamma$ each give rise to a $\gamma$-invariant eigenline. Every invariant subspace is a direct sum of some collection of these three subspaces (that is, the plane and two lines); again, see Section \ref{sec:invarsubspaces} for more details. Thus the poset of invariant subspaces $\mathcal P_-(G)$ is $\Div(1,1,1)$; see Figure \ref{fig:matrix3}.
\end{ex}

\section{Structure of $\mc P_+(G)$: ideals of $\widehat R$}\label{sec:pplusasideals}

In this section we study a group $G=G(R,\gamma)$ where $R$ and $\gamma\in R$ satisfy axioms (A1)--(A5). Recall that $G(R,\gamma)$ is isomorphic to $\gamma^{-1}R\rtimes_\alpha \Z$ and we denote a generator of $\Z$ by $t$. In order to describe $\mathcal H(G)$, we wish to apply Proposition \ref{prop:generalstructure} to the group $G=G(R,\gamma)$. To do this, we need to understand  the abelianizations of both $G$ and $\gamma^{-1}R \rtimes \langle t^2\rangle$.

 \begin{prop}
 \label{prop:Grgammaabelianization}
Let $(R,\gamma)$ satisfy axioms (A1)--(A5), and set $G=G(R,\gamma)$. Then the abelianizations of $G$ and $\gamma^{-1}R \rtimes \langle t^2\rangle$ are virtually cyclic.
 \end{prop}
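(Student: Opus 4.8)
The plan is to compute the abelianization of $G = G(R,\gamma) \cong \gamma^{-1}R \rtimes_\alpha \Z$ directly from the presentation, using that $R$ is a finitely generated abelian group that becomes "nearly annihilated" after passing to the HNN quotient. First I would recall that in an ascending HNN extension $\langle A, t : tat^{-1} = \alpha(a)\rangle$ with $A$ abelian, the abelianization is $(A / \langle a - \alpha(a) : a \in A\rangle) \times \Z$, where the $\Z$ factor is generated by the image of $t$. Here $A = \gamma^{-1}R$ and $\alpha$ is multiplication by $\gamma$, so $a - \alpha(a) = (1-\gamma)a$, and the relevant quotient is $\gamma^{-1}R / (1-\gamma)\gamma^{-1}R$. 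Since $1-\gamma$ is a unit in $R$ precisely when... well, it need not be, but the key point is that $\gamma^{-1}R/(1-\gamma)$ is a quotient of $R/(1-\gamma)R$ (after clearing denominators, using that $\gamma$ is invertible in $\gamma^{-1}R$), and $R/(1-\gamma)R$ is a finitely generated abelian group because $R = \Z[x]/\mathfrak{a}$ is a finitely generated $\Z$-algebra generated by $\gamma$, so $R/(1-\gamma)R = \Z[x]/(\mathfrak{a} + (1-x))$ is a quotient of $\Z[x]/(1-x) \cong \Z$. Hence $\gamma^{-1}R/(1-\gamma)\gamma^{-1}R$ is a quotient of a cyclic group, so it is cyclic, and therefore $G^{ab}$ is a quotient of $\Z \times \Z$; I would then argue it is virtually cyclic. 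Actually the cleanest route: $\gamma^{-1}R/(1-\gamma)\gamma^{-1}R$ is a finitely generated $\Z$-module on which $\gamma$ acts as the identity (since $\gamma \equiv 1$) and invertibly, and it is generated by the image of $1$ (as $R$ is generated by $\gamma$, hence all powers of $\gamma$ map to the single generator); so it is cyclic, possibly infinite cyclic or finite cyclic. Either way $G^{ab}$ is an extension of $\Z$ by a cyclic group, hence virtually cyclic (indeed abelian of rank $\le 2$, but we need rank $\le 1$ virtually — a cyclic-by-$\Z$ abelian group has rank at most $2$, so this needs a sharper argument).

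The sharper point, which I expect to be the main obstacle, is ruling out that $G^{ab}$ has free rank $2$: one must show the cyclic quotient $\gamma^{-1}R/(1-\gamma)\gamma^{-1}R$ is actually \emph{finite}. Here I would invoke the expanding/non-unit structure: more precisely, I would use that $(\gamma)$ is finite index in $R$ (axiom (A3)) together with axiom (A4). A clean way: $R/(1-\gamma)R$ is a quotient of $\Z$, so it is $\Z$ or $\Z/m\Z$; it equals $\Z$ only if $\mathfrak{a} \subseteq (1-x)$ in $\Z[x]$, i.e. every element of $\mathfrak a$ vanishes at $x=1$. But then $1 - \gamma = 0$ would fail... hmm, rather: if $R/(1-\gamma)R \cong \Z$ then the composite $\Z[x] \to R \to R/(1-\gamma)R$ identifies with evaluation at $x = 1$, which is injective on $\mathbb Z$ and has the property that $\gamma \mapsto 1$; combined with $R/(\gamma)$ finite (axiom (A3)), reducing further mod $\gamma$ gives that $\Z \cong R/(1-\gamma)R$ surjects onto $R/(1-\gamma,\gamma)R = R/(1,\gamma)R = 0$... that's automatic and gives nothing. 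The actual obstruction to using purely (A1)–(A5) is that $R/(1-\gamma)R$ can genuinely be infinite (e.g. if $\gamma$ were a unit, excluded by (A1)); so I would instead argue that since $1-\gamma$ \emph{is} a non-zero-divisor-adjacent element and $\gamma$ acts expandingly, $1-\gamma$ cannot be a zero divisor and $R/(1-\gamma)R$ is finite because $\det$ of multiplication-by-$(1-\gamma)$ on the finite-rank lattice $R$ is nonzero — indeed the eigenvalues of $\gamma$ acting on $R\otimes\C$ all lie outside the unit disk in the admissible case, so $1$ is not among them, so $1-\gamma$ is invertible over $\Q$, so $R/(1-\gamma)R$ is finite. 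This is where I would lean on the expanding hypothesis; but note the proposition is stated only under (A1)–(A5), not expanding, so the intended proof must instead observe that axioms (A1)–(A5) force $R$ to have a particular structure — in fact $R/(\gamma^n)$ finite for all $n$ plus $\bigcap(\gamma^n) = 0$ plus $R$ finitely generated over $\Z$ generated by $\gamma$ strongly constrains $R$, and I would extract from this that $1 \notin \{$eigenvalues of $\gamma\}$, equivalently $1-\gamma$ is a non-zero-divisor with finite cokernel. I expect the authors prove this by a short direct argument: $R/(1-\gamma)R$ is a cyclic group, and if it were infinite cyclic then so would be its quotient $R/(1-\gamma, \gamma^N)R$ for all $N$, but this latter is a quotient of the finite group $R/(\gamma^N)R$, contradiction once one checks $1-\gamma$ acts with finite cokernel modulo each $(\gamma^N)$.

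With finiteness of $Z := \gamma^{-1}R/(1-\gamma)\gamma^{-1}R$ in hand, $G^{ab} \cong Z \times \Z$ is virtually cyclic. For the finite-index subgroup $H := \gamma^{-1}R \rtimes \langle t^2 \rangle$, I would run exactly the same computation: $H$ is again an ascending HNN extension of $\gamma^{-1}R$, now by $\alpha^2$ = multiplication by $\gamma^2$, so $H^{ab} \cong (\gamma^{-1}R/(1-\gamma^2)\gamma^{-1}R) \times \Z$. The module $\gamma^{-1}R/(1-\gamma^2)$ is again cyclic (quotient of $\Z[x]/(1-x^2)$, which is $\Z \times \Z$ via $x \mapsto (1,-1)$, hence $\gamma^{-1}R/(1-\gamma^2)$ is a quotient of a rank-$\le 2$ module, but it's generated by the image of $1$, so it is cyclic over $\Z[\gamma]$... actually over $\Z$ it need not be cyclic; I would instead note $R/(1-\gamma^2)R$ is finite by the same eigenvalue/finite-cokernel argument since $\pm 1$ are not eigenvalues of $\gamma$ — expanding, or forced by the axioms as above). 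Thus $H^{ab}$ is also virtually cyclic, completing the proof. The only real content is the finiteness statement, so I would state and prove a small lemma: \emph{under axioms (A1)–(A5), for any nonzero $f(x) \in \Z[x]$ with $f(0) \ne 0$ not killing $\gamma$, $R/f(\gamma)R$ is finite}, and apply it with $f = 1-x$ and $f = 1-x^2$; the proof of that lemma reduces mod $(\gamma^N)$ and uses (A3)–(A4) to bound the order.
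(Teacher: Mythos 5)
Your reduction---compute the abelianizations via the presentation and show $R/(1-\gamma)R$ and $R/(1-\gamma^2)R$ are finite---is the right skeleton, and the observation that $R/(1-\gamma)R$ is cyclic (being a quotient of $\Z[x]/(1-x)\cong\Z$) is correct. But the finiteness argument you propose does not work. The step ``if $R/(1-\gamma)R$ were infinite cyclic then so would be its quotient $R/(1-\gamma,\gamma^N)R$'' is false: quotients of $\Z$ can be finite, and here in fact $R/(1-\gamma,\gamma^N)R=0$, because $\gamma\equiv 1$ forces $1\equiv\gamma^N\equiv 0$ modulo $(1-\gamma,\gamma^N)$. So reducing modulo powers of $\gamma$ gives no contradiction (you noticed this yourself in the $N=1$ case, but then returned to the same idea in the final ``small lemma''). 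You also correctly flag that appealing to the expanding hypothesis is off-limits here, since the proposition is asserted under (A1)--(A5) alone; so the eigenvalue route is unavailable.

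The missing step uses axiom (A4) in a different, sharper way. First, for any $a\in R$, the element $1-a\gamma$ is not a zero divisor: if $(1-a\gamma)r=0$, then $r=a\gamma r=a^2\gamma^2 r=\cdots\in\bigcap_{k\ge 0}(\gamma^k)=0$. Now write $R=\Z[x]/\mathfrak a$. If $R/(1-\gamma)R$ were infinite, the surjection $\Z\cong\Z[x]/(1-x)\twoheadrightarrow R/(1-\gamma)R$ would be injective, which is equivalent to $f(1)=0$ for every $f\in\mathfrak a$, i.e.\ $\mathfrak a\subseteq(1-x)$. Pick $0\ne f\in\mathfrak a$ (possible since $\mathfrak a\ne 0$ by (A3)) and factor $f=(1-x)^kg$ with $g(1)\ne 0$; then $g\notin\mathfrak a$, so $g(\gamma)\ne 0$ in $R$, yet $(1-\gamma)^k g(\gamma)=f(\gamma)=0$. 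The binomial theorem gives $(1-\gamma)^k=1-a\gamma$ for some $a\in R$, contradicting the no-zero-divisor fact. Thus $R/(1-\gamma)R$ is finite cyclic. The identical argument applies to $1+\gamma$, and then the short exact sequence $0\to(1-\gamma)/(1-\gamma^2)\to R/(1-\gamma^2)\to R/(1-\gamma)\to 0$, together with the isomorphism $R/(1+\gamma)\cong(1-\gamma)/(1-\gamma^2)$, shows $R/(1-\gamma^2)R$ is finite. This also explains why your attempt at a general lemma for $f$ with $f(0)\ne 0$ stalls: the argument genuinely needs $f(0)=\pm 1$, so that the relevant element is $\pm(1-a\gamma)$.
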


 \begin{proof}
 We will prove the statement for $G=\gamma^{-1}R \rtimes \langle t \rangle$ first, developing some tools that will apply to $\gamma^{-1}R \rtimes \langle t^2 \rangle$ along the way. We can see from the presentation \[G=\langle R, t: trt^{-1} = \gamma r \text{ for } r \in R\rangle\] that the abelianization of $G$ is $H\times \Z$ where $H$ is the image of $R$. In particular, $H$ is the quotient of $R$ (as an abelian group) by the normal subgroup generated by the elements $r-\gamma r$ for $r\in R$. This is simply the quotient (of rings) $R/(1-\gamma)$. Similarly, $\gamma^{-1} R \rtimes \langle t^2 \rangle$ is the HNN extension \[\langle R, t: trt^{-1}=\gamma^2 r \text{ for } r \in R\rangle\] and hence its abelianization is $K\times \Z$ where $K=R/(1-\gamma^2)$. We will show that both $H$ and $K$ are finite.
 
 For use later in the proof, we first note that $1-a\gamma$ is not a zero divisor in $R$ for any $a\in R$. Indeed, suppose that $(1-a\gamma)r=0$. Then we have that \[r=a\gamma r=a^2\gamma^2r=\ldots\] lies in $\bigcap_{k=0}^\infty (\gamma^k)=0$, and hence $r =0$.
 
 Consider  the quotient $H=R/(1-\gamma)$; we will show that $H$ is finite cyclic. Let $\Z[x]\to R$ be the natural homomorphism sending $x$ to $\gamma$, and let $\frak a$ be the kernel. Then there are surjective ring homomorphisms  \[\Z[x]\to\Z[x]/(1-x)\to \Z[x]/(\frak a + (1-x))\to R/(1-\gamma).\] The restriction of the homomorphism $\Z[x]\to R/(1-\gamma)$ to $\Z$ is surjective, and this shows that $R/(1-\gamma)$ is cyclic. If the image of some $m\in \Z\setminus \{0\}$ in $R/(1-\gamma)$ is zero, then this will show that $R/(1-\gamma)$ is finite. Note that the image of any $f(x)\in \Z[x]$ in $R/(1-\gamma)$ is the same as the image of $f(1)$ and that the image of any $f\in \frak a\subset \Z[x]$ is zero. Thus it suffices to show that $f(1)\neq 0$ for some $f\in \frak a$.

 Suppose for contradiction that $f(1)=0$ for every $f\in \frak a$. Then every element of $\frak a$ is divisible by the linear polynomial $1-x$, and so  $\frak a \subset (1-x)$. Choose a non-zero $f\in \frak a$. We may factor $f$ as $f(x)=(1-x)^kg(x)$ where $k\geq 1$ and $g(1)\neq 0$, so that $g\notin \frak a$. In $R$ we have $0=f(\gamma)=(1-\gamma)^k g(\gamma)$ and $g(\gamma)\neq 0$. Set $r=g(\gamma)$. The binomial theorem implies that $(1-\gamma)^k=1-a\gamma$ for some $a\in R$, and so $(1-a\gamma)r=0$. However, this is a  contradiction, as we have shown that   $1-a\gamma$ is not a zero divisor. Therefore $H=R/(1-\gamma)$ is finite cyclic.
 
 Now consider $K=R/(1-\gamma^2)$. There is a short exact sequence \[0 \to (1-\gamma)/(1-\gamma^2) \to R/(1-\gamma^2) \to R/(1-\gamma)\to 0.\] By our previous work, $R/(1-\gamma)$ is finite cyclic. On the other hand there is an isomorphism of $R$-modules $R/(1+\gamma)\to (1-\gamma)/(1-\gamma^2)$ defined by $r+(1+\gamma) \mapsto r(1-\gamma)+(1-\gamma^2)$. By the same reasoning as for $R/(1-\gamma)$, $R/(1+\gamma)\cong (1-\gamma)/(1-\gamma^2)$ is also finite cyclic. Thus, $R/(1-\gamma^2)$ is finite, as desired. 
 \end{proof}
 
Thus Proposition \ref{prop:generalstructure} applies and we conclude that $\mc H_{qp}(G)\cup \mc H_{\ell}(G)=\mc P_-(G)\cup \mc P_+(G)$, where $\mc P_+(G)$ is the poset of subsets of $\gamma^{-1}R$ which are confining under the action of $\alpha$, while $\mc P_-(G)$ is the poset of confining subsets under $\alpha^{-1}$.  The intersection $\mc P_+(G)\cap \mc P_-(G)$ is the unique lineal structure of $\mc H(G)$.

We now proceed to show that there is a one-to-one correspondence between elements of $\mc P_+(G)$ and ideals of $\widehat R$ up to a certain equivalence relation. 
In Section \ref{sec:conffromideals}, we show how to construct a confining subset of $\gamma^{-1}R$ from an ideal in $\widehat R$.  In Section \ref{sec:idealfromconf}, we show how to construct an ideal in $\widehat R$ from a confining subset of $\gamma^{-1}R$ and analyze how these two constructions relate to each other.   Finally, in Section \ref{sec:P_+isotoideals} we prove our main result (Theorem~\ref{thm:I(G)}), which shows that there is an order-reversing isomorphism between the poset of ideals of $\widehat R$ up to equivalence and $\mc P_+(G)$.  

 The ideas and proofs in the rest of this section are generalizations of  the corresponding facts from \cite{AR}, and the proofs presented here mostly follow those in \cite{AR}.  We include full proofs here even when they are in essence identical to those in \cite{AR} because of the (significant) change of notation involved and to keep the paper self-contained.

We first introduce some new notation for elements of $\gamma^{-1}R$ and $\widehat R$ which will be useful for the sake of brevity in this section. Recall that $T=[d]=\{0,\ldots,d-1\}$  denotes a transversal for the ideal $(\gamma)$ in $R$, where $d$ is a positive integer multiple of 1, provided by Lemma \ref{lem:Tfinitesetofintegers}. An element $s\in R$ can be written in base-$\gamma$ as $s=a_0+a_1\gamma+\cdots,$ where $a_i \in T$ for each $i\geq 0$. An element $r\in \gamma^{-1}R$ has the form $s/\gamma^k$ for some $s\in R$ and some $k\geq 0$. This yields that every element $r\in  \gamma^{-1}R$ can be written as
\[
r=a_{-k}\gamma^{-k}+\cdots +a_{-1}\gamma^{-1}+a_0+a_1\gamma+a_2\gamma^2+\cdots ,
\]
for some $a_i\in T$ and some $k \geq0$.

In this section, and this section only, it will sometimes be convenient to use decimal notation for the $\gamma$--adic addresses of  elements of $\gamma^{-1}R$ and $\widehat R$.  If $r=a_{-k}\gamma^{-k}+\cdots +a_{-1}\gamma^{-1}+a_0+a_1\gamma+a_2\gamma^2+\cdots\in \gamma^{-1}R$, we write $r=\dots a_2a_1a_0.a_{-1}\dots a_{-k}$.  We say $\dots a_2a_1a_0$ is the \emph{integral part} of $r$ and $0.a_{-1}\dots a_{-k}$ is the \emph{fractional part} of $r$. Similarly, if $x=\sum_{i=0}^\infty x_i\gamma^i\in \widehat{R}$, we write $x=\dots x_2x_1x_0$.  We will use both the summation notation and the decimal notation to represent elements of $\gamma^{-1}R$ and $\widehat R$ interchangeably throughout the section. Note that multiplication by $\gamma$ shifts the decimal point one position to the right: if  $r=\ldots a_1 a_0.a_{-1} a_{-2} \ldots a_{-k}$, then $\gamma r=\ldots a_1 a_0 a_{-1}. a_{-2} \ldots a_{-k}.$

We begin by showing that $R$ (as a subset of $\gamma^{-1}R$)  is confining under $\alpha$.
\begin{lem}\label{lem:Rconfining}
$R\subseteq \gamma^{-1}R$ is confining under $\alpha$.
\end{lem}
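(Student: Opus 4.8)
The plan is to verify the three conditions (a)--(c) of Definition~\ref{def:confining} directly for the set $Q=R$ with the automorphism $\alpha$ (multiplication by $\gamma$) on $\gamma^{-1}R$. Condition (a), that $\alpha(R)\subseteq R$, is immediate: for any $r\in R$ we have $\gamma r\in R$ since $R$ is a ring. Condition (c), the near-closure under addition, is also immediate (and with $k_0=0$): $R$ is closed under addition, so $R\cdot R=R$ (writing the group operation additively), hence $\alpha^0(R+R)=R\subseteq R$. Note also that $R$ is symmetric as a subset of the abelian group $\gamma^{-1}R$, since it is closed under additive inverses.

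The only substantive point is condition (b), namely that $\gamma^{-1}R=\bigcup_{n\geq 0}\alpha^{-n}(R)$. First I would observe that each $\alpha^{-n}(R)=\gamma^{-n}R$, which is visibly contained in $\gamma^{-1}R$, giving one inclusion. For the reverse inclusion, I would take an arbitrary element $r\in\gamma^{-1}R$ and use the description recalled just before the lemma: every element of $\gamma^{-1}R$ has the form $s/\gamma^k$ for some $s\in R$ and some $k\geq 0$. Then $\alpha^k(r)=\gamma^k\cdot(s/\gamma^k)=s\in R$, so $r\in\alpha^{-k}(R)$. This establishes (b).

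I do not expect any real obstacle here — the statement is essentially a restatement of the fact that $\gamma^{-1}R$ is the localization of $R$ at the powers of $\gamma$, together with the fact that $R$ is a subring. The only thing to be mildly careful about is matching the definitions: the automorphism $\alpha$ on $\gamma^{-1}R$ is multiplication by $\gamma$ (as recalled in Section~\ref{sec:axioms}), so $\alpha^{-1}$ is multiplication by $\gamma^{-1}$, and $\alpha^{-n}(R)=\gamma^{-n}R$. With that dictionary in hand, all three axioms fall out in one or two lines each, and there is nothing that requires axioms (A1)--(A5) beyond the basic setup guaranteeing that $\gamma^{-1}R$ and $\alpha$ are defined (in particular that $\gamma$ is not a zero divisor, so that $\alpha$ is genuinely an automorphism of $\gamma^{-1}R$).
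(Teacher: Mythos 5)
Your proof is correct and follows the same approach as the paper: verify Definition~\ref{def:confining}(a) and (c) from the fact that $R$ is a subring closed under addition and multiplication by $\gamma$, and verify (b) by writing an arbitrary element of $\gamma^{-1}R$ as $s/\gamma^k$ with $s\in R$ and applying $\alpha^k$. The only difference is that you spell out the symmetry of $R$ and the direction $\gamma^{-n}R\subseteq\gamma^{-1}R$, which the paper leaves implicit.
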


\begin{proof}
Since $R$ is a ring containing $\gamma$, it is closed under addition and under multiplication by $\gamma$ (which is the automorphism $\alpha$ of $\gamma^{-1}R$). This verifies Definition \ref{def:confining}(a) and Definition \ref{def:confining}(c). Given any $x$ in $\gamma^{-1}R$, we may write $x=\gamma^{-\ell}r$ for some $\ell\in \Z_{\geq 0}$ and $r\in R$, and thus $\alpha^\ell(x)\in R$, verifying Definition \ref{def:confining}(b).
\end{proof}

The following lemma is crucial for identifying confining subsets with ideals of $\widehat{R}$. It is the only place axiom (A5), eventually periodic addresses, is used. Recall that the preorder $\preceq$ on confining subsets under $\alpha$ was defined in Section \ref{sec:confining}.

\begin{lem}\label{lem:RsubsetQ}
For any $Q\subseteq \gamma^{-1}R$ which is confining under $\alpha$, we have $Q\preceq R$ in the preorder $\preceq$ on confining subsets.
\end{lem}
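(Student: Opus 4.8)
The goal is to produce an integer $N\geq 0$ with $\gamma^N R\subseteq Q$; since $\gamma^N R=\alpha^N(R)$, this is exactly the containment $\alpha^N(R)\subseteq Q$ that defines $Q\preceq R$. \textbf{Normalization.} As $1\in\gamma^{-1}R$, Definition~\ref{def:confining}(b) gives some $m\geq 0$ with $\gamma^m\in Q$, hence $\gamma^i\in Q$ for all $i\geq m$ by (a), and $-\gamma^i\in Q$ by symmetry. Let $k_0$ be the constant from Definition~\ref{def:confining}(c). Since the transversal $T=[d]$ is \emph{finite}, iterating (c) produces a uniform $m_1\geq m$ with $a\gamma^i\in Q$ for all $a\in T$ and $i\geq m_1$, and, grouping digits, $\gamma^{c}(a\gamma^i)\in Q$ for every $i\geq 0$ with $c$ bounded. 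It is convenient to record the auxiliary function $f\colon\gamma^{-1}R\to\Z_{\geq 0}$, $f(x)=\min\{n\geq 0:\gamma^n x\in Q\}$, which is everywhere finite by (b); one checks $f(\gamma x)=\max(f(x)-1,0)$, $f(x+y)\leq\max(f(x),f(y))+k_0$, and more generally that $f$ of a sum of $\ell$ terms exceeds the maximum of the summands' $f$-values by at most $\lceil\log_2\ell\rceil k_0$. The statement to prove is: \emph{$f$ is bounded on $R$.}

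\textbf{Using (A5).} Given $r\in R$, write its $\gamma$-adic address; by axiom (A5) it is eventually periodic, so $r=g+\gamma^{N_0}w$ where $g$ has a finite address and $w\in R$ has a \emph{purely periodic} address of some period $k$. The periodic part I would handle via the identity $(1-\gamma^k)w=b$, where $b$ is a single period block (a polynomial in $\gamma$ of degree $<k$ with digits in $T$, so $f(b)$ is bounded in terms of $k$ by the normalization step): iterating $w=b+\gamma^k w$ gives $w=(1+\gamma^k+\dots+\gamma^{(t-1)k})b+\gamma^{tk}w$, and choosing $t$ with $tk\geq f(w)$ kills the last term. Estimating the first term by near-additivity then traps $f(w)$ inside a self-referential inequality of the form $f(w)\leq A+B\log_2 f(w)$ with $A,B$ independent of $r$, which forces $f(w)$ to be bounded, say by $M_W$; hence $f(\gamma^{N_0}w)=\max(f(w)-N_0,0)\leq M_W$.

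\textbf{Finite-address elements --- the main obstacle.} It remains to bound $f$ \emph{uniformly} over all finite-address (equivalently eventually-$\bar 0$) elements of $R$, whose addresses have unbounded length; this is the technical heart, and the place where the confining hypothesis must be used in full, not merely as ``near-closure under pairwise addition''. I would run the same self-referential strategy as in the previous step --- exploiting eventual periodicity, telescoping identities of the form $(1-\gamma^k)y=(\text{controlled element})$, and the \emph{rigidity} that a single near-addition in a confining set costs only the fixed constant $k_0$ no matter how ``far apart'' the summands lie --- so as to again trap $f$ in an inequality $f\leq A+B\log_2 f$ and conclude boundedness. The delicate point is precisely that the naïve near-additivity estimate bounds $f(y)$ only by something of size $\log(\text{length of }y)$, which is unbounded, so one genuinely needs the interplay of conditions (b), (c) (with a single constant), and (A5) --- and here the argument closely follows the corresponding step of \cite{AR}. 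Once $f$ is bounded on $R$, say by $N$, we obtain $\gamma^N R\subseteq Q$, i.e.\ $Q\preceq R$.
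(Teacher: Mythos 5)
Your reformulation (bound $f(r)=\min\{n\geq 0:\gamma^n r\in Q\}$ uniformly over $R$), the elementary properties of $f$, and the identification of the obstacle---that binary-tree bookkeeping bounds $f$ of an $\ell$-term sum only by $O(k_0\log_2\ell)$, which grows with the address length---are all correct and are genuinely the right way to begin. But the proposal never gets past that obstacle. For the finite-address case you concede the argument ``closely follows the corresponding step of \cite{AR}'' without supplying it, and there is no telescoping identity of the form $(1-\gamma^k)y=(\text{controlled})$ available for a general finite-address $y$, since such a $y$ is not periodic; the self-referential inequality strategy therefore has no foothold here. The paper's actual mechanism is a \emph{regrouping} (``zipping'') induction: after replacing $Q$ by the equivalent confining set $\overline Q=Q\cup\bigcup_{i\geq 0}\alpha^i(T)$ so that every monomial $a\gamma^i$ lies in $\overline Q$, one proves that every $y=\sum_{i\geq k_1}a_i\gamma^i\in\alpha^{k_1}(R_0)$ is a sum of \emph{at most $k_1$} elements of $\overline Q$, where $k_1$ satisfies $\alpha^{k_1}(\overline Q+\overline Q)\subseteq\overline Q$. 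The inductive step peels off the first $k_1$ monomials $b_0,\dots,b_{k_1-1}\gamma^{k_1-1}$, writes the tail as $\alpha^{k_1}(f_1+\dots+f_r)$ with $r\leq k_1$ and $f_j\in\overline Q$ by induction, and then zips them up pairwise: $y=\sum_j\alpha^{k_1}(b_j\gamma^j+f_{j+1})$, each summand lying in $\overline Q$. The crucial point is that the number of summands stays at $k_1$ rather than increasing with degree; this is the idea your $\log_2$ accounting cannot reproduce, and it is the heart of the lemma.

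Your treatment of the periodic part also has a concealed nonuniformity. You acknowledge that $f(b)$ is ``bounded in terms of $k$'' (the period), but then assert that the constants $A,B$ in $f(w)\leq A+B\log_2 f(w)$ are ``independent of $r$.'' They are not: $A$ absorbs $f(b)$, which is of size $O(k_0\log_2 k)$, and $k$---the period of the address of $r$---is not uniformly bounded over $R$ under axiom (A5), which only guarantees \emph{each} $r$ has an eventually periodic address with \emph{some} period. So $M_W$ depends on $r$ and the conclusion fails to be uniform. The paper avoids this by handling the periodic part \emph{after} and \emph{via} the finite-address bound: for purely periodic $y_2$, choose $i$ a multiple of the period with $\alpha^i(y_2)\in\overline Q$ (from Definition~\ref{def:confining}(b)); then $y_2-\alpha^i(y_2)$ has a finite address, so its $\overline Q$-length is at most $2k_1$ by Case~1 \emph{independently of the period}, and $y_2=\alpha^i(y_2)+(y_2-\alpha^i(y_2))$ has length at most $2k_1+1$. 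Your ordering (periodic first, finite second) inverts the logical dependence: without a uniform bound in the finite case already in hand, the periodic case cannot be made uniform in the period.
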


\begin{proof}
We will show that every element of $R$ has uniformly bounded word length with respect to the generating set $Q\cup \{t^{\pm1}\}$, which is equivalent to $Q\preceq R$.   As the fixed transversal  $T$ is finite, there is some constant $K$ such that $\alpha^{K}(f)\in Q$ for all $f\in T$.  Let $\overline Q=Q\cup \bigcup_{i\geq 0}\alpha^i{T}$. By \cite[Lemma~4.9 \& Corollary~4.10]{Qp} (see also \cite[Lemma~3.2]{AR}), $\overline Q$ is confining under $\alpha$ and $Q\sim \overline{Q}$.  Thus it suffices to show that elements of $R$ have bounded word length with respect to $\overline{Q}\cup \{t^{\pm 1}\}$. Note that $\overline Q$ contains all terms of the form $a \gamma^j$ for $a \in T$ and $j \geq 0$.

 We divide the proof into two cases and first consider only those elements of $R$ whose $\gamma$--adic addresses are finite.

\noindent {\bf Case 1:}  Let $R_0$ be the subset of $R$ consisting of the elements whose $\gamma$--adic addresses are finite.  We will show that every element of $R_0$ has uniformly bounded word length with respect to $\overline Q\cup\{t^{\pm 1}\}$. 

\begin{claim}\label{claim:bddlength}
Every element of $\alpha^{k_1}(R_0)$ has uniformly bounded word length $k_1$ in $\overline Q$, where $k_1>0$ is large enough that $\alpha^{k_1}(\overline Q+\overline Q)\subseteq \overline Q$ (see Definition \ref{def:confining}(c)).
\end{claim}

Assuming the claim, the statement about all elements of $R_0$ can be deduced as follows.  Let $x\in R_0$, and suppose $x$ has degree $n_x$, so that 
\begin{equation}\label{eqn:eltofR}
x=\sum_{i=0}^{n_x} a_i\gamma^i=\sum_{i=0}^{k_1-1} a_i\gamma^i + \sum_{i=k_1}^{n_x} a_i\gamma^i,
\end{equation}
where $a_i\in T$. The first term in \eqref{eqn:eltofR} has length $\leq k_1$ in $\overline{Q}$.  The second term in \eqref{eqn:eltofR} is an element of $\alpha^{k_1}(R_0)$ and so has length $\leq k_1$ in $\overline{Q}$ if we prove the claim.  Therefore $x$ will have length at most $2k_1$ with respect to $\overline Q\cup \{t^{\pm 1}\}$. 
We now prove the claim.
\begin{proof}[Proof of Claim~\ref{claim:bddlength}]  Fix $y\in \alpha^{k_1}(R_0)$, and let $n_y$ be the degree of $y$, so that 
\[
y=\sum_{i=k_1}^{n_y} a_i\gamma^i.
\] 
We will prove the statement by induction on $n_y$. For the base case, suppose $n_y\in  [k_1,2k_1)$.  Then 
\[
y=\sum_{i=k_1}^{2k_1-1} a_i\gamma^i,
\]
where $a_i\in T$ for each $i$ and $a_i=0$ for each $n_y<i\leq 2k_1-1$ (if $n_y<2k_1-1$). Since each term $a_i \gamma^i$ is an element of $\overline{Q}$, it follows that $y$ has length at most $k_1$ in $\overline{Q}$.

For the induction step, assume that $y$ has length at most $k_1$ in $\overline Q$ whenever $n_y\in [k_1,(l-1)k_1)$, and suppose $n_y\in [k_1,l k_1)$.  Then we may write 
\[
y=\sum_{i=k_1}^{l k_1-1} a_i\gamma^i= \sum_{i=k_1}^{2k_1-1} a_i\gamma^i + \sum_{i=2k_1}^{l k_1 -1} a_i\gamma^i = g + f,
\]
where $g,f$ are the two summands after the second equals sign. The element $g$ has $k_1$ terms, and  $$\displaystyle g = \alpha^{k_1} \left(\sum_{i=0}^{k_1 -1 } b_i\gamma^i \right),$$ where $b_i=a_{i+k_1}\in T$.
Moreover, $$\displaystyle f = \alpha^{k_1} \left(\sum_{i=k_1}^{(l-1)k_1 -1 } c_i\gamma^i \right) = \alpha^{k_1}(f'),$$ where $c_i\in T$ and $f' \in R_0$ has degree at most $(l-1)k_1 -1$. By the induction hypothesis, $f'$ has length at most $k_1$ in $\overline{Q}$, and so $f = \alpha^{k_1}(f_1 + f_2 + \cdots + f_r)$ with $r \leq k_1$ and $f_j \in \overline{Q}$. Thus 
\begin{align*} y & = \alpha^{k_1} \left(\sum_{i=0}^{k_1 -1 } b_i\gamma^i \right) + \alpha^{k_1}(f_1 + \cdots + f_r) \\
 & = \alpha^{k_1}(b_0 + f_1) + \alpha^{k_1}(b_1 \gamma + f_2) + \cdots + \alpha^{k_1}(b_{r-1} \gamma^{r-1} + f_r) + \alpha^{k_1}(b_r\gamma^r) + \cdots + \alpha^{k_1}(b_{k_1-1} \gamma^{k_1 -1})
\end{align*}
As each $b_i \gamma^i$ is an element of $ \overline{Q}$ and each $f_j$ is an element of $ \overline{Q}$, each term is contained in $\overline Q$ since $\alpha^{k_1}(\overline{Q}+ \overline{Q}) \subset \overline{Q}$ and $\overline Q$ is closed under $\alpha$. Thus $y$ has length at most $k_1$ in $\overline{Q}$, which completes the induction.
\end{proof}

\noindent {\bf Case 2:} We now consider elements of $R$ whose  $\gamma$--adic addresses are infinite.  Let $y\in R$ be such an element.  By axiom (A5), the $\gamma$--adic address of $y$ is eventually periodic, and so $y=y_1 + y_2$, where $y_1$ has a finite $\gamma$--adic address and $y_2$ has a periodic $\gamma$--adic address. Case 1 implies that $\|y_1\|_{\overline Q\cup\{t^{\pm 1}\}}\leq 2k_1$.  As $y_2\in R$, Definition \ref{def:confining}(b) ensures that there is some non-negative integer $N$ such that $\alpha^i(y_2)\in \overline Q$ for all $i\geq N$.  Choose $i\geq N$ so that $i$ is a multiple of the period of $y_2$. This ensures that $z=y_2-\alpha^i(y_2)$ has a finite $\gamma$--adic address, and so    $\|z\|_{\overline Q\cup\{t^{\pm 1}\}}\leq 2k_1$ by Case 1.  Thus $y_2=\alpha^i(y_2)+z$, and, using that fact that $\alpha^i(y_2)\in \overline Q$ by our choice of $i$, we see that $\|y_2\|_{\overline Q\cup\{t^{\pm 1}\}}\leq 2k_1 +1$.  Therefore  $\|y\|_{\overline Q\cup \{t^{\pm 1}\}}\leq 4k_1+1$.
\end{proof}

The following lemma will be useful in the next several subsections.

\begin{lem}\label{lem:idealtopclosed}
Let $\frak a$ be an ideal of $\widehat{R}$. Then $\frak a$ is closed topologically in $\widehat{R}$.
\end{lem}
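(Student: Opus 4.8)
The plan is to show that any ideal $\frak a\subseteq\widehat R$ is closed in the inverse limit topology by exhibiting its complement as a union of basic open sets, or equivalently by showing $\frak a$ is an intersection of open (hence closed) sets. The key structural fact is that $\widehat R$, being the $(\gamma)$-adic completion of the Noetherian ring $R$, is itself Noetherian and complete with respect to its own $(\gamma)$-adic filtration; in fact the natural topology on $\widehat R$ coincides with the $(\widehat\gamma)$-adic topology, where the ideals $(\gamma^n)\widehat R$ form a neighborhood basis of $0$. A standard commutative algebra fact (see, e.g., Atiyah--Macdonald) is that over a Noetherian ring, every ideal is closed in the adic topology defined by any ideal, essentially by the Artin--Rees lemma. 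So the cleanest route is to invoke this, but since the paper is trying to be self-contained, I would instead give a direct argument.

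Here is the direct approach I would take. First I would observe that the topology on $\widehat R$ has as a basis of neighborhoods of $0$ the ideals $\frak m_n\vcentcolon=(\gamma^n)\widehat R$, which via $\gamma$-adic addresses are exactly the sets of power series $\sum_{i\ge n} x_i\gamma^i$; this is immediate from the definition of the inverse limit topology given earlier in the excerpt (two elements are close iff their addresses agree to high order). Then to show $\frak a$ is closed, I would take $y\in\widehat R\setminus\frak a$ and produce an open neighborhood of $y$ disjoint from $\frak a$. Consider the quotient map $\pi_n\colon\widehat R\to\widehat R/\frak m_n$; since $\widehat R/\frak m_n\cong R/(\gamma^n)$ is finite (using axiom (A3), that $(\gamma)$ has finite index, so each $R/(\gamma^n)$ is finite), the image $\pi_n(\frak a)$ is a finite set. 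The point is that $\frak a=\bigcap_n (\frak a+\frak m_n)$: the inclusion $\subseteq$ is clear, and for $\supseteq$, if $z\in\frak a+\frak m_n$ for all $n$, write $z=a_n+m_n$ with $a_n\in\frak a$, $m_n\in\frak m_n$; then $\{a_n\}$ is a Cauchy sequence converging to $z$, and I need $\frak a$ to be complete, i.e. closed under such limits — which is circular. So this particular phrasing does not quite work without more care.

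The fix — and what I expect to be the main obstacle, requiring the Artin--Rees lemma or an equivalent — is precisely to show $\frak a=\bigcap_n(\frak a+\frak m_n)$, equivalently that the $(\gamma)$-adic topology on $\widehat R$ restricts to the $(\gamma)$-adic topology on $\frak a$ as a submodule. Since $\widehat R$ is Noetherian and $\frak a$ is finitely generated, Artin--Rees gives an integer $c$ with $\frak m_n\cap\frak a=\frak m_{n-c}(\frak m_c\cap\frak a)\subseteq\frak m_{n-c}\frak a$ for $n\ge c$, and then $\bigcap_n(\frak a+\frak m_n)=\frak a$ follows from Krull's intersection theorem applied in $\widehat R/\frak a$ (or directly: if $z\in\bigcap_n(\frak a+\frak m_n)$ then in the complete ring $\widehat R$ one constructs an explicit element of $\frak a$ equal to $z$ by successively correcting the address using generators of $\frak a$). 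Concretely, I would argue: given generators $f_1,\dots,f_k$ of $\frak a$ and $z\in\bigcap_n(\frak a+\frak m_n)$, inductively build partial sums $\sum_j g_j^{(n)}f_j$ agreeing with $z$ modulo $\frak m_n$ with the $g_j^{(n)}$ converging in $\widehat R$; completeness of $\widehat R$ then gives $z=\sum_j g_jf_j\in\frak a$. Once $\frak a=\bigcap_n(\frak a+\frak m_n)$ is established, closedness is immediate: each $\frak a+\frak m_n$ is open (it is a union of cosets of the open ideal $\frak m_n$) and hence also closed, so $\frak a$ is an intersection of closed sets. I would present the Artin--Rees-based argument as the primary one, remarking that it is standard, and the main work in the write-up is just checking that the abstract $(\gamma)$-adic topology on $\widehat R$ agrees with the inverse-limit topology described via addresses — which follows from $\widehat R$ being $(\gamma)$-adically complete and separated, a consequence of the Lemmas on unique $\gamma$-adic addresses already proved.
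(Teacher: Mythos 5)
Your proof is correct, but it takes a genuinely different route from the paper. Both arguments begin by invoking \cite[Theorem 10.26]{atiyah_macdonald} to get that $\widehat R$ is Noetherian and hence that $\frak a$ is finitely generated; after that the two diverge. You proceed through commutative algebra: reduce to showing $\frak a=\bigcap_n(\frak a+\frak m_n)$, where $\frak m_n=(\gamma^n)\widehat R$, and establish this via Artin--Rees together with the Krull intersection theorem applied to $\widehat R/\frak a$ (valid because $(\gamma)$ lies in the Jacobson radical of the complete ring $\widehat R$); then observe each $\frak a+\frak m_n$ is a union of cosets of an open ideal, hence open and closed, so $\frak a$ is an intersection of closed sets. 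You were right to notice that the naive ``$a_n\to z$ so $z\in\frak a$'' step is circular without this input, and your fix is the standard one. The paper instead goes topological: since each $R/(\gamma^n)$ is finite (axiom (A3)), $\widehat R$ is a closed subspace of a product of finite discrete spaces, hence compact metrizable and therefore sequentially compact. Given a convergent sequence $x_i=\sum_j a_j^i f_j\to x$ with $f_1,\dots,f_k$ generating $\frak a$, one passes to a subsequence where all the coefficient sequences $\{a_j^i\}_i$ converge, say $a_j^i\to a_j$, and concludes $x=\sum_j a_j f_j\in\frak a$ by continuity of the ring operations. The paper's argument is shorter and stays closer to the topology already set up via $\gamma$-adic addresses, avoiding any explicit appeal to Artin--Rees or Krull; yours is the textbook commutative-algebra proof and would generalize more readily to settings where the residue rings $R/(\gamma^n)$ are not finite (and $\widehat R$ is not compact). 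Your preliminary check that the inverse-limit topology agrees with the $(\gamma)$-adic topology on $\widehat R$ is correct and is implicit in the paper's description of convergence in terms of addresses.
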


\begin{proof}
By \cite[Theorem 10.26]{atiyah_macdonald}, $\widehat{R}$ is Noetherian, so every  ideal $\frak a$ of $\widehat R$ is finitely generated. The ring $\hat{R}$ is sequentially compact, and every finitely generated ideal in a sequentially compact ring is closed: if $\frak a=(f_1,\ldots,f_k)$ and $x_i=a_1^if_1 + \cdots +a_k^if_k\in \frak a$ with $x_i\to x\in \widehat{R}$, then we may pass to a subsequence to assume that the sequences $\{a_j^i\}_{i=1}^\infty$ all converge in $\widehat{R}$, say $a_j^i\to a_j\in \widehat{R}$. Then $x_i\to a_1f_1+\cdots+a_kf_k$, and so $x=a_1f_1+\cdots+a_kf_k\in \frak a$.
\end{proof}

\subsection{Constructing confining subsets from ideals}\label{sec:conffromideals}
Given an ideal $\frak a\subseteq \widehat R$, we first show how to construct a confining subset of $\gamma^{-1}R$.  Roughly, given any element $x\in \frak a$, the associated confining subset contains all elements of $\gamma^{-1}R$ whose fractional part coincides with (some number of) the final digits of $x$.  In particular, infinitely many elements of the confining subset can be constructed from a single element of the ideal.  Formally, we have the following definition.

\begin{defn}
Given an ideal $\frak a\subseteq \widehat R$, define 
\[
\mathcal C(\frak a)=\{\dots a_2a_1a_0.a_{-1}\dots a_{-k}\in \gamma^{-1} R: \exists x\in \frak a \textrm{ such that } 
x=\dots x_1x_0a_{-1}\dots a_{-k}  \textrm{ for some } x_i\in T\}.
\] 
\end{defn} 

It is important to note here that the string of non-negative digits $\ldots a_2a_1a_0$ is not arbitrary but must define an element of $R$. This is necessary in order for $\mc C(\frak a)$ to be a subset of $\gamma^{-1}R$.

\begin{ex}
If $R=\Z$ and $\gamma=2$, then $\gamma^{-1}R=\Z[\frac12]$ and $\widehat R=\Z_2$, the ring of $2$--adic integers.  Consider the ideal $\frak a=2^4\Z_2$ in $\widehat R=\Z_2$ and the element $x=\dots 101010000\in \frak a$.  Then, for example, the elements $1111.10101(=1111.101010000), 1.1 (=1.10000), $ and $0.101 (=0.1010000)$ are all elements of $\mc C(\frak a)$. 
\end{ex}

\begin{rem}\label{rem:R in C(frak a)}
Since $0\in \frak a$ for every ideal $\frak a\subseteq \widehat R$, we have that $R\subseteq \mc C(\frak a)$ for every ideal $\frak a$. 
\end{rem}

\begin{lem}
\label{lem:idealtoconfining}
If $\frak a\subseteq \widehat R$ is an ideal, then $\mathcal C(\frak a)$ is confining under $\alpha$.
\end{lem}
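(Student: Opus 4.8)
The plan is to verify directly the three conditions (a), (b), (c) of Definition \ref{def:confining} for the set $Q = \mathcal C(\frak a)$, using the decimal notation for $\gamma$-adic addresses and the fact that multiplication by $\gamma$ shifts the decimal point to the right. Condition (a), that $\alpha(\mathcal C(\frak a)) \subseteq \mathcal C(\frak a)$, should be the easiest: if $r = \dots a_2 a_1 a_0 . a_{-1} \dots a_{-k} \in \mathcal C(\frak a)$, witnessed by $x = \dots x_1 x_0 a_{-1} \dots a_{-k} \in \frak a$, then $\gamma r = \dots a_2 a_1 a_0 a_{-1} . a_{-2} \dots a_{-k}$, and this is witnessed by the \emph{same} element $x$ (now reading one fewer digit of the fractional part — the string $a_{-2}\dots a_{-k}$ is a suffix of the digit string of $x$); if $k = 0$ then $\gamma r \in R \subseteq \mathcal C(\frak a)$ by Remark \ref{rem:R in C(frak a)}. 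One should also note that $\mathcal C(\frak a)$ is symmetric: $-\mathcal C(\frak a) = \mathcal C(\frak a)$, since $-r$ has a fractional part agreeing with the final $k$ digits of $-x \in \frak a$ — here one has to be slightly careful because negation in base $\gamma$ involves carrying, but the final $k$ digits of $-r$ and $-x$ agree for the same reason the final $k$ digits of $r$ and $x$ do.

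Condition (b), that $\gamma^{-1}R = \bigcup_{n \geq 0} \alpha^{-n}(\mathcal C(\frak a))$, follows from Remark \ref{rem:R in C(frak a)} together with Lemma \ref{lem:Rconfining}: given $r \in \gamma^{-1}R$, write $r = \gamma^{-\ell} s$ with $s \in R$; since $R \subseteq \mathcal C(\frak a)$ we get $\alpha^\ell(r) = s \in \mathcal C(\frak a)$, so $r \in \alpha^{-\ell}(\mathcal C(\frak a))$.

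Condition (c), that $\alpha^{k_0}(\mathcal C(\frak a) + \mathcal C(\frak a)) \subseteq \mathcal C(\frak a)$ for some $k_0$, is the main obstacle and requires the ideal structure of $\frak a$. Take $r = \dots a_0 . a_{-1} \dots a_{-k}$ and $r' = \dots a'_0 . a'_{-1} \dots a'_{-k}$ in $\mathcal C(\frak a)$ (pad the shorter fractional part with zeros so both have length $k$), witnessed by $x, x' \in \frak a$ respectively. The sum $r + r'$ has a fractional part of length at most $k$, and the issue is that the digits of the fractional part of $r + r'$ need not be literally the final digits of $x + x'$, because addition in base $\gamma$ (as described in Section \ref{sec:completions}) produces a carry that propagates from the fractional part into the integral part. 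The key observation is that $\gamma^k r = \dots a_0 a_{-1} \dots a_{-k+1} . a_{-k}$ — wait, more usefully: the element $y := \gamma^k r - (\text{integral part of } \gamma^k r) \in \gamma^{-1}R$ captures the fractional part, but the cleaner route is this. Multiplying $x$ by $\gamma^k$ in $\widehat R$ gives $\gamma^k x \in \frak a$ (using that $\frak a$ is an ideal), and $\gamma^k x$ has $\gamma$-adic address ending (in its low-order digits) with $a_{-1}, \dots, a_{-k}$ shifted into integer positions $k-1, \dots, 0$; precisely, the first $k$ digits of $\gamma^k x \in \frak a$ are $a_{-k}, a_{-(k-1)}, \dots, a_{-1}$ (in increasing order of $\gamma$-power). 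Similarly the first $k$ digits of $\gamma^k x' \in \frak a$ are $a'_{-k}, \dots, a'_{-1}$. Since $\frak a$ is closed under addition, $\gamma^k(x + x') = \gamma^k x + \gamma^k x' \in \frak a$; by the addition formula for $\gamma$-adic addresses, the first $k$ digits of $\gamma^k(x+x')$ depend only on the first $k$ digits of $\gamma^k x$ and $\gamma^k x'$ (the carries only flow upward), hence agree with the first $k$ digits of $\gamma^k r + \gamma^k r' = \gamma^k(r + r')$ — equivalently, the fractional part of $r + r'$ (of length $\le k$) is read off from the final digits of the element $x + x' \in \frak a$. Therefore $r + r' \in \mathcal C(\frak a)$ already, so condition (c) holds with $k_0 = 0$. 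The care needed here is purely in tracking digit indices through the shift-by-$\gamma^k$ and through the carry mechanics of the addition formula; I expect the argument to go through cleanly once the bookkeeping that "only the low-order digits matter, and carries propagate upward" is made precise, exactly as in the corresponding argument in \cite{AR}.
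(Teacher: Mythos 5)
Your plan matches the paper's proof: verify (a), (b), (c) directly in decimal notation, using an element of $\frak a$ built from the two witnesses and the carries-flow-upward observation to get (c) with $k_0=0$. Conditions (a) and (b) are handled correctly, and your explicit check that $\mathcal C(\frak a)$ is symmetric (which the paper leaves implicit, though symmetry is a prerequisite of Definition~\ref{def:confining}) is a reasonable addition.

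There is, however, a concrete bookkeeping error in your treatment of (c) that, taken literally, breaks the argument. You claim that ``the first $k$ digits of $\gamma^k x \in \frak a$ are $a_{-k}, a_{-(k-1)}, \dots, a_{-1}$.'' This is false: multiplying $x$ by $\gamma^k$ in $\widehat R$ shifts every digit up by $k$ positions, so the first $k$ digits of $\gamma^k x$ are all $0$. What is true is that $x$ \emph{itself} already has first $k$ digits $a_{-k},\dots,a_{-1}$ --- that is precisely the defining property of the witness in the definition of $\mathcal C(\frak a)$. You have conflated the decimal shift $\gamma^k r$ (whose first $k$ digits really are $a_{-k},\dots,a_{-1}$, since multiplication by $\gamma$ shifts the decimal point) with $\gamma^k x$. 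Consequently, the correct element of $\frak a$ to exhibit is $x + x'$ (or $x_1 + \gamma^{k-\ell}x_2$, as the paper writes it when the fractional parts have different lengths $k>\ell$), not $\gamma^k(x+x')$: the first $k$ digits of $x+x'$ are determined by those of $x$ and $x'$ (carries flow upward) and hence agree with the fractional digits of $r+r'$, giving the desired witness. A related imprecision is that after you ``pad the shorter fractional part with zeros,'' the witness for the padded $r'$ must be replaced by $\gamma^{k-\ell}x'$ rather than $x'$ itself; this is what the paper's $\gamma^{k-\ell}x_2$ is doing. With these corrections the argument is exactly the one in the paper.
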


\begin{proof}
We will check that Definition \ref{def:confining} holds. 
For condition (a), let $r=\ldots a_2a_1a_0.a_{-1}\ldots a_{-k}\in \mc C(\frak a)$.
  By the definition of $C(\frak a)$, there exists $x=\ldots c_1c_0a_{-1}a_{-2}\ldots a_{-k} \in \frak a$ for some $c_i\in T$.  Thus $
  \alpha(r)=\gamma r= \ldots a_2a_1a_0a_{-1}.a_{-2}\ldots a_{-k}\in \mc C(\frak a),$ and so 
   $\alpha(\mc C(\frak a))\subseteq \mc C(\frak a)$, verifying (a). By Remark \ref{rem:R in C(frak a)}, $R\subseteq \mc C(\frak a)$, and so $\bigcup_{i=0}^\infty \alpha^{-i}(\mc C(\frak a))=\gamma^{-1}R$, verifying (b).
  
  To show condition (c) holds, let $r_1=\dots a_2a_1a_0.a_{-1}\dots a_{-k}$ and $r_2=\dots b_2b_1b_0.b_{-1}\dots b_{-\ell}$ be elements of $\mc C(\frak a)$.   We may assume without loss of generality that $k\geq \ell$.  The sum $r_1+r_2$ is calculated as follows:
 \begin{center} \begin{tabular}{cccccccccc}
 & $\dots$ & $a_1$ & $a_0.$ & $a_{-1}$ &  $\dots$ &  $a_{-\ell}$ & $a_{-\ell-1}$ & $\dots$ & $a_{-k}$\\
 $+$ &$\dots$ & $b_1$ & $b_0.$ & $b_{-1}$ & $\dots$  & $b_{-\ell}$ &&& \\\hline
 = & $\dots$ & $c_1$ & $c_0.$ & $c_{-1}$ & $\dots$ & $c_{-\ell}$ & $a_{-\ell-1}$ & $\dots$ & $a_{-k}$.
  \end{tabular}
 \end{center}
 That is, 
 \begin{equation}\label{eqn:r1+r2}
 r_1+r_2=\dots c_1c_0.c_{-1}\dots c_{-\ell}a_{-\ell-1}\dots a_{-k}.
 \end{equation}
Since $r_1,r_2\in \mc C(\frak a)$, there exist $x_1,x_2\in \frak a$ with $x_1=\dots d_2d_1d_0a_{-1}\dots a_{-k}$ and $x_2=\dots f_2f_1f_0b_{-1}\dots b_{-\ell}$ for some $d_i,f_i\in T$.  Since $\frak a$ is an ideal, 
 $x_1+\gamma^{k-\ell}x_2$ is also in $ \frak a$.  We see that  
\begin{align*}
x_1+\gamma^{k-\ell}x_2&=\left(\dots d_2d_1d_0a_{-1}\dots a_{-k}\right) + (\dots f_2f_1f_0b_{-1}\dots b_{-\ell}\underbrace{0\dots\dots 0}_{k-\ell \textrm{ times}}) \\
	& = \dots z_1z_0c_{-1}\dots c_{-\ell} a_{-\ell-1}\dots a_{-k}
\end{align*}
 for some $z_i\in T$.  Comparing this with \eqref{eqn:r1+r2}, we see that $r_1+r_2\in \mc C(\frak a)$.  Therefore (c) holds with $k_0=0$.
\end{proof}

In fact, the structure of $\mc C(\frak a)$ is even nicer than suggested by Lemma \ref{lem:idealtoconfining}.

\begin{lem}
\label{lem:idealtosubring}
If $\frak a\subseteq \widehat R$ is an ideal, then the confining subset $\mathcal C(\frak a)$ is a subring of $\gamma^{-1}R$ containing $R$.
\end{lem}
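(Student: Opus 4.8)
The plan is to leverage two facts already established: $\mathcal{C}(\mathfrak{a})$ contains $R$ (Remark~\ref{rem:R in C(frak a)}), and $\mathcal{C}(\mathfrak{a})$ is closed under addition — this is precisely condition (c) of Definition~\ref{def:confining} with $k_0=0$, which is verified in the proof of Lemma~\ref{lem:idealtoconfining} (recall that the ``multiplication'' $Q\cdot Q$ in that definition is the group operation of $\gamma^{-1}R$, i.e.\ addition). Since $1$ and $-1$ both lie in $R\subseteq\mathcal{C}(\mathfrak{a})$, it then suffices to prove that $\mathcal{C}(\mathfrak{a})$ is closed under multiplication: closure under negation follows from $-r=(-1)\cdot r$, so $\mathcal{C}(\mathfrak{a})$ is also closed under subtraction, contains $1$, and hence is a subring of $\gamma^{-1}R$ (associativity, commutativity and distributivity being inherited).

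To handle multiplication I would first translate the decimal definition of $\mathcal{C}(\mathfrak{a})$ into a congruence statement in $\widehat R$. Using the injection $R\hookrightarrow\widehat R$ (injective by axiom (A4)) and the identity $\gamma^k\widehat R=\{z\in\widehat R:\text{the first }k\text{ digits of the }\gamma\text{-adic address of }z\text{ vanish}\}$, I would prove the following reformulation: \emph{for $r\in\gamma^{-1}R$ one has $r\in\mathcal{C}(\mathfrak{a})$ if and only if there exist $k\ge 0$ and $x\in\mathfrak{a}$ with $\gamma^k r\in R$ and $x-\gamma^k r\in\gamma^k\widehat R$.} Both directions are pure bookkeeping: multiplying $r$ by $\gamma^k$ shifts the decimal point $k$ places to the right (as already noted in the text), turning the fractional digits $a_{-1},\dots,a_{-k}$ of $r$ into the first $k$ digits of $\gamma^k r\in R$, and the witness $x$ in the definition of $\mathcal{C}(\mathfrak{a})$ is exactly an element of $\mathfrak{a}$ whose first $k$ digits agree with those of $\gamma^k r$. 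I would also record that a witness $(k,x)$ may always be replaced by $(k+1,\gamma x)$, so that for two given elements of $\mathcal{C}(\mathfrak{a})$ the exponent $k$ may be taken the same.

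Given $r_1,r_2\in\mathcal{C}(\mathfrak{a})$, I would then choose a common $k$ and write $s_i:=\gamma^k r_i\in R$ with $s_i=x_i+\gamma^k w_i$, where $x_i\in\mathfrak{a}$ and $w_i\in\widehat R$. The computation is
\[
\gamma^{2k}(r_1 r_2)=s_1 s_2=\bigl(x_1 x_2+\gamma^k w_1 x_2+\gamma^k x_1 w_2\bigr)+\gamma^{2k}w_1 w_2 ,
\]
and the parenthesized term lies in $\mathfrak{a}$ because $x_1,x_2\in\mathfrak{a}$ and $\mathfrak{a}$ absorbs multiplication by $\gamma^k w_1,\gamma^k x_1\in\widehat R$ (this is the one place the ideal property of $\mathfrak{a}$ is genuinely used). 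Since $\gamma^{2k}(r_1 r_2)=s_1 s_2\in R$ and $s_1 s_2$ differs from an element of $\mathfrak{a}$ by $\gamma^{2k}w_1 w_2\in\gamma^{2k}\widehat R$, the reformulation above (applied with exponent $2k$) yields $r_1 r_2\in\mathcal{C}(\mathfrak{a})$.

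The step I expect to be the main obstacle — or at least the only part requiring a genuine idea rather than translation — is this multiplication argument: the fractional part of $r_1 r_2$ can involve up to $2k$ digits, so one needs an element of $\mathfrak{a}$ agreeing with $s_1 s_2$ \emph{to precision} $\gamma^{2k}$, not merely $\gamma^k$; the resolution is that the two cross terms $\gamma^k w_1 x_2$ and $\gamma^k x_1 w_2$ already lie in $\mathfrak{a}$, so only the truly higher-order error $\gamma^{2k}w_1 w_2$ is discarded. Everything else is routine passage between $\gamma$-adic addresses and congruences modulo powers of $\gamma$ in $\widehat R$.
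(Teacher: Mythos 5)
Your proof is correct and takes essentially the same approach as the paper: choose $k$ large enough that the fractional parts fit in $k$ digits, invoke ideal membership of a truncation, and show that the error in the product is $O(\gamma^{2k})$ because the cross terms already lie in $\mathfrak a$. Your reformulation of membership in $\mathcal C(\mathfrak a)$ as a congruence modulo $\gamma^k\widehat R$, together with the decomposition $\gamma^k r_i = x_i + \gamma^k w_i$, is a cleaner packaging of what the paper computes directly on $\gamma$-adic addresses (the paper's witness $(a+z\gamma^k)(b+w\gamma^k)+(a+z\gamma^k)(y-w)\gamma^k+(b+w\gamma^k)(x-z)\gamma^k$ expands to exactly your $x_1 x_2 + \gamma^k w_1 x_2 + \gamma^k x_1 w_2$), but it is the same argument.
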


\begin{proof}
By the proof of Lemma \ref{lem:idealtoconfining}, the set $\mathcal C(\frak a)$ is closed under addition and contains $R$. We will prove that it is also closed under multiplication.

Any element of $\gamma^{-1}R$ can be written as $a\gamma^{-k} +x$, where $a=a_0+a_1\gamma+\dots +a_{k-1}\gamma^{k-1}$ and $x\in R$, for any sufficiently large $k$.  Here, $x$ is the ``integral part'' of the element (involving only non-negative powers of $\gamma$) and  $a\gamma^{-k}$ is the ``fractional part'' (involving only negative powers).  Note that we are free to take any sufficiently large $k$ because the fractional part of the element can be made to have as many additional digits as desired simply by adding zero digits.

In particular, given any $r,s\in \mc C(\frak a)$,  we may choose $k$ large enough to write $r=a\gamma^{-k}+x$ and $s=b\gamma^{-k}+y$ with $x,y\in R$ and $a=a_0+a_1\gamma+\cdots+a_{k-1}\gamma^{k-1}$ and $b=b_0+b_1\gamma+\cdots+b_{k-1}\gamma^{k-1}$. Since $r,s\in \mc C(\frak a)$, by definition there must exist elements $a_0+a_1\gamma+\cdots+a_{k-1}\gamma^{k-1}+d_k\gamma^k+\cdots=a+z\gamma^k$  and  $b_0+b_1\gamma+\cdots+b_{k-1}\gamma^{k-1}+e_k\gamma^k+\cdots=b+w\gamma^k$ in $\frak a$, where $z,w\in \widehat{R}$. We will show that $rs\in \mc C(\frak a)$.  

We have \[rs=(a\gamma^{-k}+x)(b\gamma^{-k}+y)=(ab+ay\gamma^k+bx\gamma^k)\gamma^{-2k}+xy.\] Since $xy\in R$, the fractional part of $rs$ is contained in $(ab+ay\gamma^k+bx\gamma^k)\gamma^{-2k}$.
Hence it suffices to show that the first $2k$ digits of $ab+ay\gamma^k+bx\gamma^k$ agree with the first $2k$ digits of some element of $\frak a$.

As $a+z\gamma^k\in \frak a$ and $b+w\gamma^k\in \frak a$, the ideal $\frak a$ contains the element 
\[(a+z\gamma^k)(b+w\gamma^k)+(a+z\gamma^k)(-w\gamma^k+y\gamma^k)+(b+w\gamma^k)(-z\gamma^k+x\gamma^k) = ab+ay\gamma^k+bx\gamma^k+\gamma^{2k}(zy+wx-zw).\] 
  The first $2k$ digits of this element agree with the first $2k$ digits of $ab+ay\gamma^k+bx\gamma^k$, completing the proof.
\end{proof}

\subsection{Constructing ideals from confining subsets}\label{sec:idealfromconf}

We now show how to construct an ideal of $\widehat R$ from a confining subset of $\gamma^{-1}R$.   Roughly speaking, the ideal is the set of all elements of $\widehat R$ with the property  that every terminal string of digits appears as the fractional part of \emph{some} element of the confining subset. Alternatively, it may be thought of as a kind of limit set of the fractional parts of elements of $Q$. 
\begin{defn}
Given a subset $Q\subset \gamma^{-1}R$ that is confining under $\alpha$, define $\mathcal I(Q)$ by 
\[
\mathcal I(Q)\vcentcolon=\{\ldots a_2a_1a_0\in \widehat{R}: \forall t\geq 0 \,\exists q\in Q \textrm{ such that } q=\dots b_2b_1b_0.a_t\dots a_0
\textrm{ for some } b_i\in T\}.
\]
\end{defn}

\begin{lem} 
If $Q\subset \gamma^{-1}R$ is confining under $\alpha$, then $\mathcal I(Q)$ is an ideal of $\widehat R$.
\end{lem}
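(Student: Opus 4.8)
The plan is to verify the three defining properties of an ideal: that $\mathcal{I}(Q)$ is nonempty (or contains $0$), that it is closed under addition, and that it absorbs multiplication by arbitrary elements of $\widehat{R}$. For the first point, note $0 = \ldots 000 \in \widehat{R}$ has every terminal string equal to $0\ldots 0$, and since $R \subseteq Q$ by Lemma \ref{lem:RsubsetQ} — more simply, since $0 \in Q$ because $Q$ is symmetric and nonempty, or directly since $Q$ is confining hence $\gamma^{-1}R = \bigcup \alpha^{-n}(Q)$ contains $0$ — the element with address $0\ldots 0 a_t \ldots a_0 = 0$ lies in $Q$. (In fact $\alpha(0)=0\in Q$ forces $0\in Q$, but cleanest is to observe $0\in R\subseteq Q$ is not needed; we just need each truncation $a_t\ldots a_0 = 0$ realized, and $0\in Q$ since $Q$ is confining.) So $0 \in \mathcal{I}(Q)$.

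For closure under addition, take $x = \ldots a_1 a_0$ and $y = \ldots b_1 b_0$ in $\mathcal{I}(Q)$ with sum $x + y = \ldots c_1 c_0$ computed by the carrying rule from Section \ref{sec:completions}. Fix $t \geq 0$. I need $q \in Q$ whose fractional part is $0.c_t \ldots c_0$. The key observation is that the digits $c_0, \ldots, c_t$ of $x+y$ depend only on $a_0,\ldots,a_t$, $b_0,\ldots,b_t$ (and the carry $r_t$, which itself depends only on those digits). Choose $q_1 \in Q$ with fractional part $0.a_{t'}\ldots a_0$ and $q_2 \in Q$ with fractional part $0.b_{t'}\ldots b_0$ for a sufficiently large $t' \geq t$ — enough extra digits that, together with the carries already determined by the $a_i, b_i$ up through index $t$, the addition $q_1 + q_2$ produces, in positions $-1$ through $-(t+1)$, exactly the digits $c_{-1},\ldots,c_{-(t+1)}$ matching $0.c_t\ldots c_0$. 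Since $q_1, q_2 \in Q$ and $Q$ is confining, condition (c) gives $\alpha^{k_0}(q_1 + q_2) \in Q$; but $q_1 + q_2$ already has fractional part of the right form (its integral part just needs to define an element of $R$, which it does as a sum in $\gamma^{-1}R$), and applying $\alpha^{k_0}$ shifts the decimal point right, so one must instead pad $q_1, q_2$ with enough zeros that after applying $\alpha^{k_0}$ the terminal digits $0.c_t\ldots c_0$ survive. Concretely: pick $q_i$ with fractional parts agreeing with $a, b$ out to position $t + k_0$; then $\alpha^{k_0}(q_1+q_2) \in Q$ has fractional part beginning $0.c_t \ldots c_0$. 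Hence $x + y \in \mathcal{I}(Q)$.

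For absorption, let $x = \ldots a_1 a_0 \in \mathcal{I}(Q)$ and $\lambda = \ldots \lambda_1 \lambda_0 \in \widehat{R}$; I want $\lambda x \in \mathcal{I}(Q)$. Write $\lambda x = \ldots d_1 d_0$ via the multiplication-with-carry formula; again the digits $d_0, \ldots, d_t$ depend only on finitely many digits of $x$ and $\lambda$. The natural approach is to realize $\lambda x$ as a limit of things built from $Q$: approximate $\lambda$ by a polynomial $\lambda^{(N)} = \lambda_0 + \lambda_1 \gamma + \cdots + \lambda_N \gamma^N \in R$, which is a non-negative-digit element, i.e.\ a ``positive'' element of $R$. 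For $q \in Q$ with fractional part matching enough digits of $x$, consider $\lambda^{(N)} \cdot q$. Since $\lambda^{(N)} \in R$ and $Q$ is confining, $\lambda^{(N)} q$ is a sum of $\gamma$-shifts of $q$: writing $\lambda^{(N)} q = \sum_{j} \lambda_j \gamma^j q = \sum_j \lambda_j \alpha^j(q)$, each $\alpha^j(q) \in Q$ (by (a)), and $\lambda_j$ copies summed, repeatedly using (c), lands (after applying a bounded power $\alpha^{k_0 \cdot O(\log)}$ — or rather a power depending only on $N$ and the digit bound $d$) inside $Q$. The fractional part of this element then matches the first several digits of $\lambda^{(N)} x$, which for the low-order positions equals those of $\lambda x$ provided $N$ is large enough relative to $t$. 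Padding with zeros to absorb the required shifts, as in the addition case, gives an element of $Q$ with fractional part $0.d_t \ldots d_0$. Thus $\lambda x \in \mathcal{I}(Q)$.

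The main obstacle is bookkeeping the carries: one must argue cleanly that to pin down the last $t+1$ fractional digits of a sum or product it suffices to control finitely many digits of the inputs, and then ensure that the bounded powers of $\alpha$ forced by the confining condition (c), applied possibly many times when expanding a digit-$\lambda_j$ sum, still only shift the decimal point by a \emph{bounded} amount so that a single finite zero-padding of the chosen elements of $Q$ suffices. This is exactly the kind of estimate carried out in \cite{AR} in the special case $R = \Z$, and the argument here is a notational translation of that one; I would cite \cite{AR} for the structure while writing out the carry-dependence lemma explicitly since the base-$\gamma$ addition and multiplication formulas from Section \ref{sec:completions} make the finite-dependence transparent.
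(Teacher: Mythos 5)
Your argument for closure under addition is essentially the paper's: pick $q_1,q_2\in Q$ whose fractional parts agree with $x$ and $y$ through position $t+k_0$, observe the sum has the correct low-order carries, and apply $\alpha^{k_0}$ to land in $Q$. The genuine divergence is in the absorption step. The paper factors absorption into four small pieces: closure under addition, closure under multiplication by the digit set $T$ (an immediate corollary of closure under addition), closure under multiplication by $\gamma$ (a trivial shift), and \emph{topological closedness} of $\mathcal{I}(Q)$ in $\widehat{R}$. Multiplication by an arbitrary $r=\sum_i r_i\gamma^i\in\widehat{R}$ then preserves $\mathcal{I}(Q)$ because each partial sum $\bigl(\sum_{i\le s}r_i\gamma^i\bigr)x$ lies in $\mathcal{I}(Q)$ and the limit $rx$ stays inside the closed set. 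Your route instead does the approximation downstairs in $Q$: approximate $\lambda$ by $\lambda^{(N)}$ and $x$ by some $q\in Q$, then reduce $\lambda^{(N)}q=\sum_j\lambda_j\alpha^j(q)$ into $Q$ by repeated use of condition (c). This does work, and it avoids proving topological closedness as a separate fact, but the bookkeeping is heavier and one claim as written is wrong: the shift needed to collapse $(N+1)$ terms of digit-size at most $d$ is roughly $k_0\log_2((N+1)d)$, and since $N$ must grow with $t$ (so that $\lambda^{(N)}x$ and $\lambda x$ agree through position $t$), that shift is \emph{not} bounded over $t$, contrary to your last paragraph. The argument survives because the zero-padding on $q$ is allowed to grow with $t$ — and you implicitly need the small extra observation that it suffices for $q$'s fractional part to \emph{end} in the target string $a_t\dots a_0$, since applying a further power of $\alpha$ trims the leading fractional digits to length exactly $t+1$ — but the phrase ``bounded amount'' should be replaced by ``an amount that is finite and controllable for each $t$.'' One small side issue: Lemma \ref{lem:RsubsetQ} gives $\alpha^N(R)\subset Q$ for some $N$, not $R\subseteq Q$; your fallback argument that $0\in Q$ because $\gamma^{-1}R=\bigcup_n\alpha^{-n}(Q)$ is the correct justification.
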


\begin{proof}	
We first show that $\mc I(Q)$ is closed under addition.  Let $x=\dots x_2x_1x_0$ and $y=\dots y_2y_1y_0$ be elements of $\mc I(Q)$.  Then $x+y=z=\dots z_2z_1z_0$ for some $z_i\in T$.  Let $k_0$ be large enough that $\alpha^{k_0}(Q+Q)\subset Q$.  By the definition of $\mc I(Q)$, for any $t\geq 0$, there exist $r=\dots a_2a_1a_0.x_{t+k_0}\dots x_0$ and $ s=\dots b_2b_1b_0.y_{t+k_0}\dots y_0$ in $Q$  for some $a_i,b_i\in T$.  Thus $
r+s=\dots c_2c_1c_0.z_{t+k_0}\dots z_0$
for some $c_i\in T$, and
\[
\alpha^{k_0}(r+s)=\dots c_2c_1c_0z_{t+k_0}\dots z_{t+1}.z_t\dots z_0\in Q.
\]
Since $t$ is arbitrary, it follows that $z\in \mc I(Q)$.

We next show that $\mc I(Q)$ is closed under multiplication by elements of $\widehat R$. We do this in three steps. First, since $\mc I(Q)$ is closed under addition, $\mc I(Q)$ is closed under multiplication by any positive integer multiple  $z=z\cdot 1$ of $1$ in $\widehat{R}$: if $x\in \mc I(Q)$, then 
\[zx=\underbrace{x+\cdots +x}_{z \text{ times}} \in \mc I(Q).\] In particular, since $T=[d]=\{0,\dots, d-1\}$, it follows that $\mc I(Q)$ is closed under multiplication by elements of $T$.
Second, we show that $\mc I(Q)$ is closed under multiplication by $\gamma$.  If $x=\dots x_2x_1x_0 \in \mc I(Q)$, then for any $s\geq 0$ there exists $r=\dots a_2a_1a_0.x_s\dots x_0\in Q$, for some $a_i\in T$.  But \[r=\dots a_2a_1a_0.x_s\dots x_0=\dots a_2a_1a_0.x_s\dots x_00,\] and since $
\gamma x= \dots x_2x_1x_00$,
 this shows that $\gamma x\in \mc I(Q)$.
 
Third, we show that $\mc I(Q)$ is (topologically) closed in $\widehat R$. Fix $x=\dots x_2x_1x_0\in \overline{ \mc I(Q)}$.  Then for any $s\geq 0$, there exists $y=\dots y_2y_1y_0\in \mc I(Q)$ with $y_i=x_i$ for all $i\leq s$.  By the definition of $\mc I(Q)$, there exists $r\in Q$ with $r=\dots a_2a_1a_0.y_s\dots y_0$ for some $a_i\in T$.  But then we also have $r=\dots a_2a_1a_0.x_s\dots x_0$.  Since $s$ was arbitrary, this shows that $x\in \mc I(Q)$.

We can now put these three steps together to show that $\mc I(Q)$ is closed under multiplication by elements of $\widehat R$.  Let $x=\dots x_2x_1x_0\in\mc I(Q)$ and $r=\dots r_2r_1r_0\in \widehat R$, where $r_i\in T$.   For any $s\geq 0$, we have 
\[
\left(\sum_{i=0}^s r_i\gamma^i\right)\cdot x=\sum_{i=0}^s r_i\gamma^ix \in \mc I(Q).
\]  Since $\mc I(Q)$ is topologically closed, this implies that 
\[
rx=\lim_{s\to\infty}\left(\sum_{i=0}^s r_i\gamma^i\right)\cdot x =\lim_{s\to\infty}\sum_{i=0}^s r_i\gamma^i x\in \mc I(Q),\] as desired.  Therefore $\mc I(Q)$ is an ideal of $\widehat R$.
\end{proof}

\subsection{Saturated ideals and the poset $\mc P_+(G)$}\label{sec:P_+isotoideals}

In this subsection, we complete our discussion of the relationship between ideals and the poset $\mathcal P_+(G)$. We first define a preorder on ideals which leads to a notion of equivalence for ideals.

\begin{defn}
Define a preorder on ideals of  $\widehat R$ by $\frak a\leq \frak b$ if for all $x\in \frak a$, there exists $i\in \Z_{\geq 0}$ such that $\gamma^ix\in \frak b$. This induces an equivalence relation $\sim$ on the ideals of $\widehat R$ by $\frak a \sim \frak b$ whenever $\frak a\leq\frak b$ and $\frak b\leq \frak a$.
 \end{defn}
 
 The preorder $\leq$ induces a partial order $\preccurlyeq$ on the resulting set of equivalence classes of ideals.
 
 \begin{rem}
 Since ideals in $\widehat R$ are finitely generated, $\frak a \leq \frak b$ exactly when $\gamma^i \frak a\subset \frak b$ for some $i\geq 0$, i.e., there is a uniform power $i$ such that $\gamma^i x\in \frak b$ for all $x\in \frak a$, even though this is not required by the definition.
 \end{rem}
 
The following lemma shows that this relation on ideals of $\widehat R$ interacts well with the way we constructed confining subsets from ideals.
  \begin{lem}
 \label{lem:idealconfiningorder}
 If $\frak a\leq \frak b$, then $\mathcal C(\frak a)\subset\mathcal C(\frak b)$. If $\frak a \sim \frak b$, then $\mc C(\frak a)=\mc C(\frak b)$.
 \end{lem}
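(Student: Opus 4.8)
The statement has two parts: (1) if $\frak a\leq \frak b$ then $\mc C(\frak a)\subseteq \mc C(\frak b)$, and (2) if $\frak a\sim \frak b$ then $\mc C(\frak a)=\mc C(\frak b)$. Part (2) follows immediately from part (1) applied in both directions, so the work is entirely in part (1). The plan is to unwind the definition of $\mc C(\cdot)$ and exploit the fact that, by the remark just before the lemma, $\frak a\leq \frak b$ gives a \emph{uniform} power $i\geq 0$ with $\gamma^i\frak a\subseteq \frak b$.

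First I would fix such a uniform $i$ with $\gamma^i\frak a\subseteq\frak b$. Take an arbitrary element $r=\ldots a_2a_1a_0.a_{-1}\ldots a_{-k}\in \mc C(\frak a)$. By definition there is some $x\in\frak a$ whose $\gamma$-adic address ends in the string $a_{-1}\ldots a_{-k}$, i.e. $x=\ldots x_1x_0 a_{-1}\ldots a_{-k}$ for suitable $x_j\in T$. The natural move is to consider $\gamma^i x\in\frak b$. Multiplication by $\gamma$ shifts the decimal point, so $\gamma^i x$ has address $\ldots x_1 x_0 a_{-1}\ldots a_{-k}\underbrace{0\cdots 0}_{i}$ — that is, it ends in the string $a_{-1}\ldots a_{-k}0\cdots 0$ ($i$ zeros). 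This witnesses that the element $r' := \ldots a_2a_1a_0.a_{-1}\ldots a_{-k}0\cdots0 \in \mc C(\frak b)$. But $r'=r$ as elements of $\gamma^{-1}R$: appending zero digits to the fractional part does not change the element (adding zero digits is exactly the harmless operation used in the proof of Lemma~\ref{lem:idealtosubring}). Hence $r\in\mc C(\frak b)$, and since $r$ was arbitrary, $\mc C(\frak a)\subseteq\mc C(\frak b)$. I should also note at the start that the integral part $\ldots a_2a_1a_0$ is unchanged throughout, so it continues to define the same element of $R$ and there is no issue with $r$ lying in $\gamma^{-1}R$.

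For part (2): if $\frak a\sim\frak b$ then $\frak a\leq\frak b$ and $\frak b\leq\frak a$, so by part (1) both $\mc C(\frak a)\subseteq\mc C(\frak b)$ and $\mc C(\frak b)\subseteq\mc C(\frak a)$, giving equality.

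The only subtlety — and the one place to be careful — is the observation that padding the fractional part of an element of $\gamma^{-1}R$ with trailing zero digits gives back the same element, so that the element of $\mc C(\frak b)$ we produce really equals the original $r$ rather than merely a $\gamma^i$-translate of it. This is where the \emph{uniform} bound $i$ (from finite generation of ideals in the Noetherian ring $\widehat R$, Lemma~\ref{lem:idealtopclosed}) matters: without uniformity one would only get that each $\gamma^{i_r}r$ lies in $\mc C(\frak b)$ for a power depending on $r$, which is a weaker statement. I expect this to be essentially the entire content of the argument; there is no combinatorial or limiting obstacle beyond bookkeeping with $\gamma$-adic addresses.
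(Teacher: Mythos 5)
Your proof is correct and follows essentially the same route as the paper: fix (or just pick, for the given witness) a power $i$ with $\gamma^i$ carrying the witness from $\frak a$ into $\frak b$, observe that multiplication by $\gamma^i$ simply appends $i$ zero digits on the right of the address, and use the fact that padding the fractional part of an element of $\gamma^{-1}R$ with trailing zeros does not change the element. Part (2) is then a formal consequence of part (1).

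One small correction to a side remark: the uniformity of $i$ (from Noetherianity of $\widehat R$) is not actually load-bearing here, contrary to what you say in your final paragraph. Even if $i$ were allowed to depend on the witness $x\in\frak a$, the argument would still produce $r\in\mc C(\frak b)$ rather than only $\gamma^{i}r\in\mc C(\frak b)$: the zero-padding trick applies to the fractional part of $r$ for an $i$ depending on $r$, since $r=\ldots a_1a_0.a_{-1}\ldots a_{-k}\underbrace{0\cdots0}_{i_x}$ holds for every $i_x\geq 0$, and the definition of $\mc C(\frak b)$ only asks for \emph{some} representation of $r$ to be matched by \emph{some} element of $\frak b$. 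The paper also phrases things in terms of the uniform $i$, so this does not distinguish your proof from theirs; it is just that your explanation of why uniformity ``matters'' overstates its role.
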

 
 \begin{proof}
 Assume $\frak a\leq \frak b$. By the definition of $\mc C(\frak a)$, elements of the ideal $\frak a$ determine only the coefficients of the negative powers of $\gamma$ in elements of $\mc C(\frak a)$. That is, for each $a=\ldots a_1 a_0\in \frak a$, $n\geq 0$, and $z\in R$, there is an element $a_0\gamma^{-n}+\cdots +a_{n-1}\gamma^{-1} + z$ in $\mathcal C(\frak a)$. Moreover, if $a=\dots a_1 a_0$ lies in $\frak a$, then $a$ and $\gamma^i a=\dots a_n\dots a_0\underbrace{0\dots 0}_{i \textrm{ times}}$ determine the same elements of $\mc C(\frak a)$.  Since there exists $i$ such that $\gamma^i\frak a\subseteq \frak b$, this implies that $\mc C(\frak a)\subseteq \mc C(\frak b)$. The second sentence of the lemma follows immediately.
 \end{proof}

We now choose a canonical representative of each equivalence class $[\frak a]$ of ideals of $\widehat R$ under $\sim$. Since $\gamma$ is not a zero divisor in $\widehat R$, there is a natural injection from $\widehat R$ to the localization $\gamma^{-1} \widehat R$, which we denote by $f$.
If $\frak a$ is an ideal of $\widehat R$, its image $f(\frak a)$ generates an ideal denoted by $\frak a^e$. The ideal $f^{-1}(\frak a^e)$ contains $\frak a$ and is called the \emph{saturation} of $\frak a$. An ideal is \emph{saturated} if $\frak a$ is its own saturation: $\frak a=f^{-1}(\frak a^e)$. Equivalently, $\frak a\subset \widehat R$ is saturated if and only if whenever $r\in \widehat R$ and $\gamma^i r\in \frak a$ for some $i\geq 0$, we have $r\in \frak a$. 
Recall that an element of a poset is \emph{largest} when it is greater than or equal to every other element of the poset.

\begin{lem}
The saturation of an ideal $\frak a\subseteq \widehat R$ is the unique largest element in the equivalence class $[\frak a]$ with respect to the partial order given by inclusion.
\end{lem}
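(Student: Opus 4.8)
The plan is to translate the localization-theoretic definition of the saturation into an explicit elementwise description, and then read off the three facts that make the saturation the largest element of $[\frak a]$: it lies in $[\frak a]$, it contains every ideal in $[\frak a]$, and (automatically) it is the unique element with these properties. Write $\frak a^{\mathrm{sat}} = f^{-1}(\frak a^e)$ for the saturation of $\frak a$; it is an ideal of $\widehat R$ since it is the preimage of an ideal under the ring homomorphism $f$.

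First I would establish that $\frak a^{\mathrm{sat}} = \{\, r \in \widehat R : \gamma^i r \in \frak a \text{ for some } i \geq 0 \,\}$. Since $\gamma$ is not a zero divisor in $\widehat R$, the ring $\gamma^{-1}\widehat R$ consists of fractions $r/\gamma^k$, and because $\frak a$ is an ideal, clearing denominators shows that the extension $\frak a^e$ (the ideal generated by $f(\frak a)$) equals $\{\, b/\gamma^N : b \in \frak a,\ N \geq 0 \,\}$. Hence $f(r) = r/1$ lies in $\frak a^e$ exactly when $r\gamma^N = b$ for some $b \in \frak a$ and $N \geq 0$, i.e. exactly when $\gamma^N r \in \frak a$. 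This is precisely the criterion for saturatedness recorded just before the lemma, and it yields the claimed description of $\frak a^{\mathrm{sat}}$.

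Next I would verify that $\frak a^{\mathrm{sat}}$ belongs to $[\frak a]$ and dominates it. From the description, $\frak a \subseteq \frak a^{\mathrm{sat}}$ (take $i = 0$), so $\frak a \leq \frak a^{\mathrm{sat}}$; conversely every $x \in \frak a^{\mathrm{sat}}$ satisfies $\gamma^i x \in \frak a$ for some $i$, so $\frak a^{\mathrm{sat}} \leq \frak a$. Thus $\frak a^{\mathrm{sat}} \sim \frak a$, so $\frak a^{\mathrm{sat}}$ is an element of the equivalence class $[\frak a]$. Now let $\frak b$ be any ideal with $\frak b \sim \frak a$. Then in particular $\frak b \leq \frak a$, so for each $x \in \frak b$ there is $i \geq 0$ with $\gamma^i x \in \frak a$; by the explicit description, $x \in \frak a^{\mathrm{sat}}$. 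Hence $\frak b \subseteq \frak a^{\mathrm{sat}}$, so $\frak a^{\mathrm{sat}}$ is an upper bound for $[\frak a]$ under inclusion. Being itself a member of $[\frak a]$, it is the largest element of $[\frak a]$, and a largest element of a poset is automatically unique.

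The only step requiring any care is the first one, the identification of $f^{-1}(\frak a^e)$ with $\{\, r : \gamma^i r \in \frak a \,\}$, and even that is routine localization bookkeeping; once it is in place, the remainder is a direct unwinding of the definitions of $\leq$ and $\sim$ on ideals of $\widehat R$. So I do not anticipate a serious obstacle.
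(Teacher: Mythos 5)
Your proof is correct and follows essentially the same route as the paper's: show $\frak a^{\mathrm{sat}}\sim\frak a$ from the elementwise description, then use $\frak b\leq\frak a$ together with saturatedness to get $\frak b\subseteq\frak a^{\mathrm{sat}}$. The only difference is that you re-derive the elementwise characterization $\frak a^{\mathrm{sat}}=\{r:\gamma^i r\in\frak a \text{ for some } i\}$ from the localization definition, whereas the paper records that characterization in the paragraph immediately preceding the lemma and simply invokes it.
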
 

\begin{proof}
Let  $\frak m$  be the saturation of $\frak a$. 
Since $\frak a \subset \frak m$, it follows that $\frak a\leq \frak m$. For any $r\in \frak m$, we have $\gamma^i r\in \frak a$ for some $i\geq 0$, and so  $\frak m \leq \frak a$.  Thus $\frak m \sim \frak a$. To see that $\frak m$ is largest in $[\frak a]$, let $\frak b\in [\frak a]$.  Since $\frak b\sim \frak a$,  for every $b\in \frak b$, there is an $i\in \Z_{\geq 0}$ such that $\gamma^i b \in \frak a \subset \frak m$. As $\frak m$ is saturated, this implies that $b\in \frak m$, and so $\frak b\subseteq \frak m$.
\end{proof}

We may form the poset of saturated ideals of $\widehat R$  with the partial order given by inclusion. 
The following general lemma shows that this poset is isomorphic to the poset of (all) ideals of $\gamma^{-1} \widehat{R}$ with inclusion.

\begin{lem}[{\cite[Ch. 2 Sec. 2 No. 4 Proposition 10]{bourbaki_comm}}]
\label{lem:localideals}
Let $A$ be a ring, $S\subset A$ a multiplicatively closed subset of $A$, and $S^{-1}A$ the localization. Let $f:A\to S^{-1}A$ be the natural homomorphism. Then the map sending an ideal $\frak b\subset S^{-1}A$ to its preimage $f^{-1}(\frak b)$ is an isomorphism from the poset of ideals of $S^{-1}A$ with inclusion to the poset of saturated ideals of $A$ with inclusion.
\end{lem}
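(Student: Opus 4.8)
\emph{Proof proposal.} This is the standard correspondence between ideals of a localization and saturated (contracted) ideals, and the plan is to verify the requisite extension/contraction identities along $f\colon A\to S^{-1}A$. For an ideal $\frak a\subseteq A$ write $\frak a^e$ for the ideal of $S^{-1}A$ generated by $f(\frak a)$; clearing denominators in a finite $S^{-1}A$-linear combination of elements of $f(\frak a)$ shows that $\frak a^e=S^{-1}\frak a=\{x/s:x\in\frak a,\ s\in S\}$, the reverse containment being obvious. First I would show that $(f^{-1}(\frak b))^e=\frak b$ for every ideal $\frak b\subseteq S^{-1}A$. The inclusion $\subseteq$ is immediate since $f(f^{-1}(\frak b))\subseteq\frak b$ and $\frak b$ is an ideal. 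For $\supseteq$, take $x/s\in\frak b$; then $x/1=(s/1)(x/s)\in\frak b$, so $x\in f^{-1}(\frak b)$ and hence $x/s=(1/s)f(x)\in(f^{-1}(\frak b))^e$. In particular the contraction map $\frak b\mapsto f^{-1}(\frak b)$ is injective, with left inverse the extension map $\frak a\mapsto\frak a^e$.

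Next I would identify the image of the contraction map with the saturated ideals, via the computation $f^{-1}(\frak a^e)=\{a\in A:\exists u\in S,\ ua\in\frak a\}$. Indeed $a/1\in S^{-1}\frak a$ iff $a/1=x/t$ for some $x\in\frak a$, $t\in S$, which by the definition of equality in $S^{-1}A$ means $v(ta-x)=0$ for some $v\in S$, i.e. $(vt)a=vx\in\frak a$; conversely $ua\in\frak a$ gives $a/1=ua/u\in S^{-1}\frak a$. (For $\widehat R$ with $S=\{1,\gamma,\gamma^2,\dots\}$, where $\gamma$ is not a zero divisor, this reduces exactly to the condition ``$\gamma^i a\in\frak a$ for some $i$'' used in the text.) Consequently, for any ideal $\frak b\subseteq S^{-1}A$ the contraction $f^{-1}(\frak b)=f^{-1}((f^{-1}(\frak b))^e)$ is saturated, and if $\frak a$ is saturated then $\frak a=f^{-1}(\frak a^e)$ lies in the image of the contraction map (with $\frak b=\frak a^e$). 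Hence $\frak b\mapsto f^{-1}(\frak b)$ restricts to a bijection from the ideals of $S^{-1}A$ onto the saturated ideals of $A$, with inverse $\frak a\mapsto\frak a^e$.

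Finally, both maps are manifestly inclusion-preserving: $\frak b_1\subseteq\frak b_2$ implies $f^{-1}(\frak b_1)\subseteq f^{-1}(\frak b_2)$, and $\frak a_1\subseteq\frak a_2$ implies $\frak a_1^e\subseteq\frak a_2^e$. A bijection that preserves order and whose inverse also preserves order is an isomorphism of posets, which gives the claim. The only genuinely delicate point is the bookkeeping with the equivalence relation defining $S^{-1}A$ in the identification of $f^{-1}(\frak a^e)$ with the saturation — the step where one must be careful when $S$ contains zero divisors, although in the present application it does not, so there is no real obstacle here; the result is in any case the cited Bourbaki statement.
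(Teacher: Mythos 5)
The paper does not prove this lemma; it cites it directly from Bourbaki (Commutative Algebra, Ch.~II, \S 2, no.~4, Proposition 10). Your argument is the standard one — identifying $\frak a^e$ with $S^{-1}\frak a$, verifying $(f^{-1}(\frak b))^e=\frak b$, computing $f^{-1}(\frak a^e)=\{a\in A:\exists u\in S,\ ua\in\frak a\}$ so that contracted ideals are exactly the saturated ones, and then observing both maps preserve inclusion — and it is correct, matching both the cited Bourbaki proof and the paper's stated alternate characterization of ``saturated.''
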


Our goal is to prove the following theorem. 
 \begin{thm}\label{thm:I(G)}
 In the notation and terminology of Theorem~\ref{thm:main}, $\mc P_+(G)$ is isomorphic to the opposite of the poset of saturated ideals of $\widehat R$ and hence to the opposite of the poset of ideals of the localization $\gamma^{-1} \widehat R$.
\end{thm}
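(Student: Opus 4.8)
The goal is to show that the two constructions $\mathcal C$ and $\mathcal I$ (from the previous two subsections) induce mutually inverse, order-reversing bijections between the poset of saturated ideals of $\widehat R$ and the poset $\mathcal P_+(G)$ of confining subsets of $\gamma^{-1}R$ under $\alpha$, considered up to the equivalence $\sim$. The last sentence of the theorem is then immediate from Lemma~\ref{lem:localideals}. So the work splits into four pieces: (1) $\mathcal C$ is order-reversing on equivalence classes of ideals; (2) $\mathcal I$ is order-preserving from confining subsets to ideals, i.e.\ $P\preceq Q$ implies $\mathcal I(P)\le \mathcal I(Q)$; (3) $\mathcal I(\mathcal C(\frak a))$ is equivalent to $\frak a$ (in fact equal to its saturation) for every ideal $\frak a$; and (4) $\mathcal C(\mathcal I(Q))\sim Q$ for every confining $Q$, i.e.\ $[\mathcal C(\mathcal I(Q))\cup\{t^{\pm1}\}]=[Q\cup\{t^{\pm 1}\}]$. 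Combined with Proposition~\ref{prop:Ppmconfining} (which identifies $\mathcal P_+(G)$ with the lattice of confining subsets under $\alpha$) this yields the theorem.

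\textbf{Order direction and the easy inclusions.} For (1): if $\frak a\le \frak b$ then $\mathcal C(\frak a)\subseteq \mathcal C(\frak b)$ by Lemma~\ref{lem:idealconfiningorder}, and $\mathcal C(\frak a)\subseteq \mathcal C(\frak b)$ forces $\mathcal C(\frak b)\preceq \mathcal C(\frak a)$ since inclusion of subsets reverses the preorder $\preceq$ on confining subsets (this is the ``inclusion-reversing'' remark after Definition~\ref{def-GG}, together with the fact that $\alpha^0=\mathrm{id}$ witnesses $\alpha^0(\mathcal C(\frak a))=\mathcal C(\frak a)\supseteq \mathcal C(\frak b)$... careful: we want $\mathcal C(\frak b)\preceq \mathcal C(\frak a)$, meaning $\alpha^N(\mathcal C(\frak a))\subseteq \mathcal C(\frak b)$ for some $N$; taking $N=0$ works once $\mathcal C(\frak b)\supseteq \mathcal C(\frak a)$, which is what we have). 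Hence $\frak a\mapsto [\mathcal C(\frak a)\cup\{t^{\pm1}\}]$ is order-reversing. For (2): unwinding the definition of $\mathcal I$, if $\alpha^N(Q)\subseteq P$ and $x=\dots x_1x_0\in \mathcal I(P)$, then for each $t$ we can find $q\in Q$ whose fractional part is $0.x_{t+N}\dots x_0$; applying $\alpha^N$ shifts the radix point right by $N$, producing an element of $P$ whose fractional part is $0.x_t\dots x_0$, so $x\in \mathcal I(P)$ — actually this shows $\mathcal I(Q)\subseteq \mathcal I(P)$, hence $\mathcal I(P)\ge \mathcal I(Q)$ is false and instead $P\preceq Q\Rightarrow \mathcal I(Q)\le\mathcal I(P)$... which again is the correct \emph{order-reversing} behaviour, consistent with (1), so that the composite $\mathcal I\circ\mathcal C$ is order-preserving. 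I will set up the bookkeeping of which direction each arrow goes carefully at the start and keep it consistent throughout.

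\textbf{The round-trip identities.} The heart of the proof is (3) and (4). For (3), $\mathcal C(\frak a)$ is a subring containing $R$ by Lemma~\ref{lem:idealtosubring}, and an element $x=\dots x_1x_0\in\widehat R$ lies in $\mathcal I(\mathcal C(\frak a))$ iff every terminal string $x_t\dots x_0$ is the fractional part of some element of $\mathcal C(\frak a)$, iff (by definition of $\mathcal C$) every such string is a terminal string of digits of some element of $\frak a$. Since $\frak a$ is topologically closed (Lemma~\ref{lem:idealtopclosed}) and $\widehat R$ is sequentially compact, a diagonal/subsequence argument shows this is equivalent to $x$ itself lying in the saturation of $\frak a$: the point is that ``$x_t\dots x_0$ agrees with the last $t+1$ digits of some $a^{(t)}\in\frak a$'' means $x-a^{(t)}\in(\gamma^{t+1})$ in $\widehat R$, so after passing to a convergent subsequence of the $a^{(t)}$ (shifted appropriately) one produces an element of $\frak a$ equal to $\gamma^j x$ for suitable $j$; conversely if $\gamma^jx\in\frak a$ its digits past position $j$ are those of $x$, giving all terminal strings. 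Thus $\mathcal I(\mathcal C(\frak a))$ is the saturation of $\frak a$, so $\mathcal I\circ\mathcal C$ is the identity on saturated ideals. For (4), one inclusion is easy: $Q\subseteq \mathcal C(\mathcal I(Q))$ up to equivalence, because for $q\in Q$ with fractional part $0.a_{-1}\dots a_{-k}$, repeatedly applying $\alpha^{-1}$ and using confinement shows the infinite string obtained is (eventually) forced to lie in $\mathcal I(Q)$ — more precisely one should invoke Lemma~\ref{lem:RsubsetQ} and the analysis of fractional parts of confining subsets. The reverse inclusion, $\mathcal C(\mathcal I(Q))\preceq Q$, is the \textbf{main obstacle}: it requires exactly the key lemma flagged in the introduction, namely that in a confining subset $Q$ the fractional parts of elements are governed by the ideal $\mathcal I(Q)$ up to bounded ambiguity — i.e.\ any element of $Q$ has fractional part agreeing (up to finitely many digits, uniformly bounded) with a terminal segment of an element of $\mathcal I(Q)$, and conversely. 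Establishing this uniform-boundedness is where axiom (A5) and Lemma~\ref{lem:RsubsetQ} enter, and it will occupy the bulk of the argument; it parallels the corresponding step in \cite{AR} but must be redone in the current ring-theoretic notation. Once (3) and (4) are in hand, combined with (1) and (2) we get that $\mathcal C$ and $\mathcal I$ descend to mutually inverse order-reversing isomorphisms between saturated ideals of $\widehat R$ and $\mathcal P_+(G)$, and Lemma~\ref{lem:localideals} rewrites the former poset as the poset of ideals of $\gamma^{-1}\widehat R$, completing the proof.
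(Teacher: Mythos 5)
Your high-level plan is sound, but you take a genuinely different route from the paper's proof, and one of your four pieces has a subtle gap that's worth flagging.

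\textbf{What matches.} You correctly reduce the theorem to (i) identifying $\mathcal P_+(G)$ with equivalence classes of confining subsets via Proposition~\ref{prop:Ppmconfining}, (ii) using the map $\phi(\frak a)=[\mathcal C(\frak a)\cup\{t^{\pm1}\}]$, and (iii) showing $\mathcal C(\mathcal I(Q))\sim Q$ (your item 4). That last point is exactly the paper's Lemma~\ref{lem:surj}, and you rightly identify it as the hard technical part, where (A5) and Lemma~\ref{lem:RsubsetQ} enter. The last sentence of the theorem does indeed fall out of Lemma~\ref{lem:localideals}, as you note.

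\textbf{Where you diverge.} The paper never proves your item (3), i.e.\ that $\mathcal I(\mathcal C(\frak a))$ equals the saturation of $\frak a$. Instead it establishes injectivity of $\phi$ directly via two structural lemmas (Lemmas~\ref{lem:incomparable} and~\ref{lem:orderrev}): incomparable saturated ideals produce incomparable confining subsets, and $\frak a\subsetneq\frak b$ forces $\mathcal C(\frak a)\succ \mathcal C(\frak b)$ strictly. Both arguments exploit that saturated ideals of $\widehat R$ are topologically closed, but they avoid ever computing $\mathcal I\circ\mathcal C$. Your round-trip decomposition is cleaner conceptually, but it commits you to proving the extra identity.

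\textbf{The gap in item (3).} Your sketched argument reads the definition of $\mathcal C$ too literally. You write that $x=\ldots x_1x_0\in\mathcal I(\mathcal C(\frak a))$ iff ``every terminal string $x_t\ldots x_0$ is a terminal string of digits of some element of $\frak a$,'' and from there that $x-a^{(t)}\in(\gamma^{t+1})$. But the definition of $\mathcal C(\frak a)$ quantifies over \emph{some} decimal representation $\dots a_0.a_{-1}\dots a_{-k}$ of an element of $\gamma^{-1}R$, and that representation may append trailing zeros to the fractional part. Concretely, already with $R=\Z$, $\gamma=2$, and $\frak a=(2)$ one has $\mathcal C((2))=\Z[\tfrac12]$: the element $\tfrac12$, whose fractional digit is $1\notin\Z/2\Z\cdot 0$, enters $\mathcal C((2))$ because $0.10$ (padding with one zero) has witness $2\in(2)$. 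So the correct translation is: for each $t$ there exist $j_t\geq 0$ and $a_t\in\frak a$ with $a_t\equiv\gamma^{j_t}(x_t\gamma^t+\cdots+x_0)\pmod{\gamma^{t+1+j_t}}$. Your ``diagonal/subsequence argument'' then does not immediately give $x\in\frak a^{\mathrm{sat}}$, since the $j_t$'s may be unbounded. The statement is still true — e.g.\ one can argue that $a_t=\gamma^{j_t}(x-\gamma^{t+1}r_t)$ for suitable $r_t\in\widehat R$, so $x-\gamma^{t+1}r_t\in\frak a^{\mathrm{sat}}$ because $\frak a^{\mathrm{sat}}$ is saturated, and then let $t\to\infty$ and use that $\frak a^{\mathrm{sat}}$ is closed — but you have to make that argument rather than the one you sketched. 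The paper sidesteps this entirely by never needing the round-trip identity $\mathcal I\circ\mathcal C$.

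\textbf{One more small point.} Your map $\phi$ is defined on \emph{saturated} ideals. To conclude surjectivity you apply it to $\mathcal I(Q)$, so implicitly you need $\mathcal I(Q)$ to be saturated. This is true (if $\gamma y\in\mathcal I(Q)$, then for each $t$ there is $q\in Q$ with fractional part $0.y_{t-1}\dots y_0 0$; apply $\alpha$ once to drop the trailing zero and conclude $y\in\mathcal I(Q)$), and is worth stating. The paper also uses this implicitly.
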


The next two lemmas show the relationship between the order on saturated ideals and the order on $\mc P_+(G)$. Again we will investigate the structure of the preorder $\preceq$ on confining subsets under $\alpha$.

\begin{lem}\label{lem:incomparable}
If $\frak a,\frak b$ are saturated ideals of $\widehat R$ such that $\frak a\not\leq \frak b$ and $\frak b \not\leq \frak a$, then $\mc C(\frak a)$ and $\mc C(\frak b)$ are incomparable with respect to the preorder $\preceq$ on confining subsets.
\end{lem}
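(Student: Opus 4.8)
The plan is to prove the contrapositive: if $\mc C(\frak a)$ and $\mc C(\frak b)$ are \emph{comparable} under the preorder $\preceq$ on confining subsets, then $\frak a\subseteq\frak b$ or $\frak b\subseteq\frak a$, hence $\frak a\leq\frak b$ or $\frak b\leq\frak a$, contradicting the hypothesis. Since both the hypothesis and the desired conclusion are symmetric in $\frak a,\frak b$, we may assume $\mc C(\frak b)\preceq\mc C(\frak a)$. By Lemma~\ref{lem:confiningpartialorder} this means $\alpha^N(\mc C(\frak a))\subseteq\mc C(\frak b)$ for some integer $N$, and using $\alpha(\mc C(\frak a))\subseteq\mc C(\frak a)$ (Definition~\ref{def:confining}(a)) we may take $N\geq 0$. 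The goal is to deduce $\frak a\subseteq\frak b$, which yields $\frak a\leq\frak b$ (take $i=0$ in the definition of $\leq$), the contradiction we want.

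The core is a digit computation showing every $x\in\frak a$ lies in the closure of $\frak b$. Fix $x\in\frak a$ with $\gamma$-adic address $x=\sum_{i\geq 0}x_i\gamma^i$, $x_i\in T$, fix $m\geq 0$, and set $k=m+N$. Let $r=\sum_{i=0}^{k-1}x_i\gamma^{i-k}\in\gamma^{-1}R$ be the element with zero integral part whose $k$ fractional digits reproduce the first $k$ digits of $x$. Since the first $k$ $\gamma$-adic digits of $x\in\frak a$ are exactly $x_0,\dots,x_{k-1}$, the defining condition for membership in $\mc C(\frak a)$ is witnessed by $x$ itself, so $r\in\mc C(\frak a)$. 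Applying $\alpha^N$ (multiplication by $\gamma$, i.e. shifting the radix point $N$ places) gives $\gamma^N r=\sum_{i=0}^{k-1}x_i\gamma^{i-m}\in\mc C(\frak b)$. The integral part of $\gamma^N r$ is the finite sum $\sum_{i=m}^{k-1}x_i\gamma^{i-m}\in R$ and its fractional part $\sum_{i=0}^{m-1}x_i\gamma^{i-m}$ records precisely the digits $x_0,\dots,x_{m-1}$; so the definition of $\mc C(\frak b)$ supplies some $y_m\in\frak b$ whose first $m$ $\gamma$-adic digits are $x_0,\dots,x_{m-1}$. Hence $x-y_m\in(\gamma^m)\widehat R$.

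Now let $m$ vary. For each fixed index $j$, the $j$-th digit of $y_m$ equals $x_j$ whenever $m>j$, so $y_m\to x$ in the inverse-limit topology of $\widehat R$. Since $\frak b$ is topologically closed by Lemma~\ref{lem:idealtopclosed}, $x\in\frak b$. As $x\in\frak a$ was arbitrary, $\frak a\subseteq\frak b$, hence $\frak a\leq\frak b$ — contradicting $\frak a\not\leq\frak b$. Therefore $\mc C(\frak b)\not\preceq\mc C(\frak a)$, and symmetrically $\mc C(\frak a)\not\preceq\mc C(\frak b)$, so $\mc C(\frak a)$ and $\mc C(\frak b)$ are incomparable.

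The only delicate point is the bookkeeping: one must keep straight the reversal between the left-to-right order of digits in the notation $\ldots a_2a_1a_0.a_{-1}\ldots a_{-k}$ and the order of coefficients in a $\gamma$-adic address, check that after the shift by $\gamma^N$ the integral part of $\gamma^N r$ is still a genuine (finite) element of $R$ so that $\gamma^N r\in\gamma^{-1}R$, and note that producing just one approximant $y_m\in\frak b$ for each $m$ suffices because $\frak b$ is closed. Conceptually the content is simply that $\mc C(\frak a)$ remembers the closed ideal $\frak a$ through the fractional parts of its elements, and a bounded shift $\alpha^N$ cannot destroy that information.
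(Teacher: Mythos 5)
Your proposal takes a genuinely different route from the paper and, modulo one gap discussed below, is correct. The paper argues directly: it picks one $x\in\frak a\setminus\frak b$, uses closedness of $\frak b$ to find a length $M$ of initial digits of $x$ that no element of $\frak b$ can reproduce, and then shows the ``entry time'' $\inf\{u:\alpha^u(q)\in\mc C(\frak b)\}$ is unbounded over $q\in\mc C(\frak a)$. You instead prove the sharper contrapositive $\mc C(\frak b)\preceq\mc C(\frak a)\Rightarrow\frak a\subseteq\frak b$ by producing, for each $x\in\frak a$, a sequence $y_m\in\frak b$ converging to $x$. Your form is arguably cleaner and more economical: the same computation, combined with Lemma~\ref{lem:idealconfiningorder}, immediately yields Lemma~\ref{lem:orderrev} as well, so one proof does the work of two in the paper.

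There is, however, a real gap where you write ``so the definition of $\mc C(\frak b)$ supplies some $y_m\in\frak b$ whose first $m$ $\gamma$-adic digits are $x_0,\dots,x_{m-1}$.'' Membership $\gamma^N r\in\mc C(\frak b)$ is witnessed by \emph{some} decimal representation of $\gamma^N r$ together with a matching element of $\frak b$; you have no control over how many fractional digits that representation uses. If the witnessing representation has $j$ fractional digits, the element of $\frak b$ you obtain has its first $j$ digits equal to the last $j$ digits of $\gamma^N r$. When $j<m$ you can pad by multiplying by $\gamma^{m-j}$ (this needs nothing beyond $\frak b$ being an ideal). But when $j>m$ — i.e.\ the witness uses a representation padded with trailing zeroes — the matching element $z\in\frak b$ begins with $j-m$ zero digits followed by $x_0,\dots,x_{m-1}$, so $z=\gamma^{j-m}z''$ where $z''$ is the approximant you actually want. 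Concluding $z''\in\frak b$ is exactly the saturation hypothesis, which you never invoke. The paper's proof hits the same trailing-zero issue and explicitly cites saturation to strip the extra zeroes; your write-up lists several ``delicate points'' at the end but omits this one. Since saturation is in the lemma's hypotheses, the gap is easily filled, but as written the argument silently assumes more than the definition of $\mc C(\frak b)$ gives you.
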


\begin{proof}
Since $\frak a \not\leq \frak b$, we have that $\frak a\not\subseteq \frak b$.  Thus there exists $x=\dots x_2x_1x_0\in \frak a$ such that $x\not\in \frak b$.  Since $x\not\in\frak b$, there exists some non-negative integer $M$ such that no element $y=\dots y_2y_1y_0\in \frak b$ satisfies $y_i=x_i$ for all $i\leq M$.  To see this, notice that if such an element of $\frak b$ existed for each $M\in \Z_{\geq 0}$, then the limit of these elements would be equal to $x$.  Since $\frak b$ is topologically closed by Lemma \ref{lem:idealtopclosed}, this would contradict $x\notin \mathfrak b$. 

For any $K\in \Z_{\geq 0}$, there is an element $q\in \mc C(\frak a)$ such that $q=\dots q_2q_1q_0.x_K\dots x_0$. Assume $K\geq M$, and consider the smallest $u\in \Z$ such that $\alpha^u(q)\in \mathcal C(\frak b)$. 
We will show that $u>K-M$, so suppose for contradiction that $u\leq K-M$. Then the digits of $\alpha^u(q)$ to the right of the decimal point are $x_{K-u} x_{K-u-1} \ldots x_0$ and $K-u\geq M$.  These digits are uniquely determined by $\alpha^u(q)$ up to possibly adding or deleting zeroes all the way to the right of the expression. Since $\alpha^u(q)\in \mc C(\frak b)$, there is an element $y\in \frak b$ determining the same element of $\gamma^{-1} R$. That is, $y=\ldots y_{M+2} y_{M+1} x_M \ldots x_0 0 \ldots  0$ with some number of zeroes at the end. Since $\frak b$ is saturated, we have $y'=\ldots y_{M+2} y_{M+1} x_M \ldots x_0\in \frak b$ (with no extra zeroes). This contradicts our choice of $M$, and so $u>K-M$. Thus $\mc C(\frak b)\not\preceq \mc C(\frak a)$, since $K$ is arbitrarily large. A symmetric argument shows that $\mc C(\frak a)\not\preceq \mc C(\frak b)$, so that the confining subsets are incomparable.
\end{proof}

\begin{lem}\label{lem:orderrev}
If $\frak a,\frak b$ are two saturated ideals of $\widehat R$ such that $\frak a\lneq \frak b$, then $\mc C(\frak a) \succeq C(\frak b)$ and $\mc C(\frak a) \not\sim \mc C(\frak b)$.
\end{lem}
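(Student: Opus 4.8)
The plan is to prove Lemma~\ref{lem:orderrev} by combining the inclusion $\mc C(\frak a)\subseteq \mc C(\frak b)$ (already established in Lemma~\ref{lem:idealconfiningorder}, since $\frak a\lneq \frak b$ implies $\frak a\leq \frak b$) with a separating element argument modeled on the proof of Lemma~\ref{lem:incomparable}. First I would note that $\mc C(\frak a)\subseteq \mc C(\frak b)$ immediately gives $\mc C(\frak b)\preceq \mc C(\frak a)$, i.e.\ $\mc C(\frak a)\succeq \mc C(\frak b)$ (recall the order on confining subsets is inclusion-reversing in the sense that $P\preceq Q$ when $\alpha^N(Q)\subseteq P$, and $Q\subseteq P$ already witnesses this with $N=0$). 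So the content of the lemma is the strictness assertion $\mc C(\frak a)\not\sim \mc C(\frak b)$, which amounts to showing $\mc C(\frak a)\not\preceq \mc C(\frak b)$.

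The key step is therefore to show that there is no $N\geq 0$ with $\alpha^N(\mc C(\frak b))\subseteq \mc C(\frak a)$. Since $\frak a\lneq\frak b$ and both are saturated, $\frak b\not\leq\frak a$, so pick $x=\dots x_2x_1x_0\in \frak b$ with $x\notin\frak a$. Because $\frak a$ is topologically closed (Lemma~\ref{lem:idealtopclosed}), there is a threshold $M\in\Z_{\geq 0}$ such that no element $y=\dots y_2y_1y_0\in\frak a$ has $y_i=x_i$ for all $i\leq M$ — otherwise the limit of such $y$'s would be $x\in\frak a$. Now for any $K\geq M$, since $x\in\frak b$ there is $q\in\mc C(\frak b)$ with fractional part $x_K\dots x_0$, namely $q=\dots q_2q_1q_0.x_K\dots x_0$. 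I would then argue exactly as in Lemma~\ref{lem:incomparable}: let $u$ be minimal with $\alpha^u(q)\in\mc C(\frak a)$; if $u\leq K-M$, the fractional part of $\alpha^u(q)$ is $x_{K-u}\dots x_0$ with $K-u\geq M$, forcing (via saturation of $\frak a$) an element of $\frak a$ whose final $M+1$ digits are $x_M\dots x_0$, contradicting the choice of $M$. Hence $u>K-M$, and since $K$ is arbitrary, no fixed $N$ works, so $\mc C(\frak a)\not\preceq\mc C(\frak b)$.

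The main obstacle — really the only subtlety — is handling the trailing-zero ambiguity in $\gamma$-adic addresses of elements of $\gamma^{-1}R$ correctly, so that the "reverse-engineered" element of $\frak a$ genuinely violates the threshold $M$; this is exactly the point where saturation of $\frak a$ is used (to strip trailing zeros and land back in $\frak a$), and it is the same maneuver already carried out in the proof of Lemma~\ref{lem:incomparable}. Putting the two halves together — $\mc C(\frak a)\succeq\mc C(\frak b)$ from the inclusion, and $\mc C(\frak a)\not\preceq\mc C(\frak b)$ from the separating-element argument — yields both $\mc C(\frak a)\succeq\mc C(\frak b)$ and $\mc C(\frak a)\not\sim\mc C(\frak b)$, completing the proof.
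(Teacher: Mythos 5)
Your proof is correct and follows essentially the same route as the paper's: the inclusion $\mathcal C(\frak a)\subset\mathcal C(\frak b)$ from Lemma~\ref{lem:idealconfiningorder} gives $\mathcal C(\frak a)\succeq\mathcal C(\frak b)$, and for the strict inequality the paper simply says "an argument similar to the proof of Lemma~\ref{lem:incomparable} produces elements $q\in\mathcal C(\frak b)$ with $\inf\{u:\alpha^u(q)\in\mathcal C(\frak a)\}$ arbitrarily large," which is precisely the separating-element argument (pick $x\in\frak b\setminus\frak a$, use closedness of $\frak a$ for the threshold $M$, use saturation to strip trailing zeros) that you spell out. You have correctly filled in the details that the paper leaves implicit.
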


\begin{proof}
Since $\frak a\leq \frak b$, we have $\mc C(\frak a)\subset \mc C(\frak b)$ by Lemma \ref{lem:idealconfiningorder}. Thus $\mc C(\frak a)\succeq \mc C(\frak b)$. 
Since $\frak a \lneq \frak b$, an argument similar to the proof of Lemma~\ref{lem:incomparable} produces elements of $q\in \mc C(\frak b)$ with $\inf\{ u:\alpha^u(q)\in \mc C(\frak a)\}$ arbitrarily large, which shows that $\mc C(\frak b) \not\sim \mc C(\frak a)$.
\end{proof}

The final lemmas necessary to prove Theorem~\ref{thm:I(G)} show that, up to equivalence of hyperbolic structures, $\mathcal C(\mathcal I(Q))$ is the same as $Q$ for any confining subset $Q$. First we show that $Q\preceq \mathcal C (\mathcal I(Q))$.

\begin{lem}\label{lem:CIQinalphaKQ}
If $Q\subseteq \gamma^{-1}R$ is confining under $\alpha$, then $Q \preceq \mc C(\mc I(Q))$.
\end{lem}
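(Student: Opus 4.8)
The plan is to exhibit a single integer $N\geq 0$ with $\alpha^N(\mc C(\mc I(Q)))\subseteq Q$; by Lemma~\ref{lem:confiningpartialorder} this is precisely the assertion $Q\preceq \mc C(\mc I(Q))$. The underlying idea is that every element of $\mc C(\mc I(Q))$ has the same \emph{fractional part} as some element of $Q$, hence differs from an element of $Q$ by an element of $R$; since $R$ is confining under $\alpha$ (Lemma~\ref{lem:Rconfining}) and $Q\preceq R$ (Lemma~\ref{lem:RsubsetQ}), a uniformly bounded number of applications of $\alpha$ then returns everything to $Q$.

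Concretely, I would fix $r=\dots a_1a_0.a_{-1}\dots a_{-k}\in \mc C(\mc I(Q))$. If $k=0$ then $r\in R$, and the desired bound is immediate from Lemma~\ref{lem:RsubsetQ}, so assume $k\geq 1$. By the definition of $\mc C(\mc I(Q))$ there is $x\in\mc I(Q)$ whose $k$ lowest-order digits are $a_{-1},\dots,a_{-k}$ (i.e.\ the coefficients of $\gamma^{k-1},\dots,\gamma^{0}$ in $x$). Applying the definition of $\mc I(Q)$ with $t=k-1$ produces $q\in Q$ of the form $q=\dots b_2b_1b_0.a_{-1}\dots a_{-k}$, so $q$ has the same fractional part as $r$. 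Writing each of $r$ and $q$ as the sum of its integral part, which lies in $R$, and its fractional part, I conclude $z\vcentcolon= r-q\in R$. By Lemma~\ref{lem:RsubsetQ} and Lemma~\ref{lem:confiningpartialorder} there is $N_1\geq 0$ with $\alpha^{N_1}(R)\subseteq Q$, so $\alpha^{N_1}(z)\in Q$; since $\alpha$ is additive and $\alpha(Q)\subseteq Q$, we get $\alpha^{N_1}(r)=\alpha^{N_1}(q)+\alpha^{N_1}(z)\in Q+Q$. Picking $k_0\geq 0$ with $\alpha^{k_0}(Q+Q)\subseteq Q$ from Definition~\ref{def:confining}(c) then gives $\alpha^{N_1+k_0}(r)\in Q$. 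As $r$ was arbitrary, $\alpha^{N_1+k_0}(\mc C(\mc I(Q)))\subseteq Q$.

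The one step that needs to be written out carefully — and the main, if mild, obstacle — is the bookkeeping with $\gamma$-adic addresses: verifying that the digit positions line up so that the element $q$ obtained from the definition of $\mc I(Q)$ genuinely has the same fractional part as $r$, and confirming that equality of fractional parts really forces $r-q\in R$ in spite of the non-uniqueness of addresses caused by trailing zeros (equivalently, by the freedom in the choice of $k$). Everything else is a routine application of the confining axioms for $Q$ together with Lemma~\ref{lem:RsubsetQ}.
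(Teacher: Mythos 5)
Your proof is correct and follows the paper's argument essentially verbatim: find a $q\in Q$ sharing the fractional part of $r\in\mc C(\mc I(Q))$ by unwinding the definitions of $\mc C$ and $\mc I$, note $r-q\in R$, and conclude via Lemma~\ref{lem:RsubsetQ} and the confining axiom (c). The extra case $k=0$ and the remark about trailing zeros are harmless additions the paper leaves implicit, but the structure and constants are the same.
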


\begin{proof}
It suffices to find $K\geq 0$ with $\mc C(\mc I(Q))\subset \alpha^{-K}(Q)$ by Lemma~\ref{lem:confiningpartialorder}. Let $a=\dots a_2a_1a_0.a_{-1}\dots a_{-k}\in \mc C(\mc I(Q))$.  By definition of $\mc C(\mc I(Q))$, there exists an element $x\in \mc I(Q)$ such that $x=\dots c_2c_1c_0a_{-1}\dots a_{-k}$, where $c_i\in T$.  By definition of $\mc I(Q)$, there is an element $q\in Q$ such that $q=\dots b_2b_1b_0.a_{-1}\dots a_{-k}$ for some $b_i\in T$.   Let $r=a-q\in R$.  

By Lemma \ref{lem:RsubsetQ}, there is a constant $M\geq 0$ such that $\alpha^M(R)\subset Q$, and so $\alpha^M(a)=\alpha^M(q)+\alpha^M(r)\in Q+Q$.  Let $k_0$ be large enough that $\alpha^{k_0}(Q+Q)\subseteq 	Q$.  Then  $\alpha^{M+k_0}(a)=\alpha^{k_0}(\alpha^M(a))\in \alpha^{k_0}( Q+Q)\subseteq Q$, as desired. Setting $K=M+k_0$ concludes the proof.
\end{proof}

We now prove the equivalence of $Q$ and $\mathcal C(\mathcal I(Q))$.

\begin{lem}\label{lem:surj}
If $Q\subseteq \gamma^{-1}R$  is confining under  $\alpha$, then $\mc C(\mc I(Q))\sim Q$.
\end{lem}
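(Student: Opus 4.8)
By Lemma~\ref{lem:CIQinalphaKQ} we already have $Q \preceq \mc C(\mc I(Q))$, so the plan is to establish the reverse inequality $\mc C(\mc I(Q)) \preceq Q$. By Lemma~\ref{lem:confiningpartialorder}, it suffices to find a single $N \geq 0$ with $\alpha^N(\mc C(\mc I(Q))) \subseteq Q$, or equivalently to show that every element of $\mc C(\mc I(Q))$ has word length in $Q \cup \{t^{\pm 1}\}$ bounded by a uniform constant.

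First I would take an arbitrary $a = \ldots a_2 a_1 a_0 . a_{-1} \ldots a_{-k} \in \mc C(\mc I(Q))$ and decompose it into its integral and fractional parts, $a = x + a_{-1}\gamma^{-1} + \cdots + a_{-k}\gamma^{-k}$ with $x \in R$. The integral part $x$ lies in $R$, and by Lemma~\ref{lem:RsubsetQ} we have $Q \preceq R$, so $x$ has uniformly bounded word length with respect to $Q \cup \{t^{\pm 1}\}$; say $\alpha^M(R) \subseteq Q$ for a fixed $M$. It remains to control the fractional part. By the definition of $\mc C(\mc I(Q))$, there is an element $z = \ldots c_2 c_1 c_0 a_{-1} \ldots a_{-k} \in \mc I(Q)$ whose last $k$ digits are $a_{-1}, \ldots, a_{-k}$; and by the definition of $\mc I(Q)$, taking $t = k-1$, there is an element $q \in Q$ with $q = \ldots b_2 b_1 b_0 . a_{-1} \ldots a_{-k}$ — that is, $q$ has exactly the same fractional part as $a$. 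Then $a - q \in R$ (the difference has a purely integral $\gamma$-adic address), so $\alpha^M(a-q) \in Q$, and writing $a = q + (a - q)$ gives $\alpha^M(a) = \alpha^M(q) + \alpha^M(a-q) \in \alpha^M(Q) + Q \subseteq Q + Q$. Choosing $k_0$ with $\alpha^{k_0}(Q + Q) \subseteq Q$ (Definition~\ref{def:confining}(c)), we get $\alpha^{M + k_0}(a) \in Q$. Setting $N = M + k_0$ yields $\alpha^N(\mc C(\mc I(Q))) \subseteq Q$, hence $\mc C(\mc I(Q)) \preceq Q$.

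Combining this with Lemma~\ref{lem:CIQinalphaKQ} gives $\mc C(\mc I(Q)) \sim Q$, completing the proof. I expect no serious obstacle here: the argument is essentially a repackaging of Lemma~\ref{lem:CIQinalphaKQ}, and in fact the proof of that lemma already contained most of this computation. The only point requiring a moment's care is ensuring that the fractional part of $q$ can be matched exactly to that of $a$ (not merely up to trailing zeros) — this is handled by applying the definition of $\mc I(Q)$ with the index $t$ chosen at least as large as the length $k$ of the fractional part of $a$, and by noting that adding or deleting trailing zeros in a $\gamma$-adic address does not change the element of $\gamma^{-1}R$ it represents.
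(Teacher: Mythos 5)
Your proposal contains a genuine gap: you have applied Lemma~\ref{lem:confiningpartialorder} with the roles of the two subsets reversed. That lemma says $P\cup\{t^{\pm 1}\}\preceq Q\cup\{t^{\pm 1}\}$ if and only if $\alpha^N(Q)\subset P$ for some $N$ (and the induced preorder on confining subsets is $P\preceq Q$ iff $\alpha^N(Q)\subset P$). So the claim $\mc C(\mc I(Q))\preceq Q$ is equivalent to finding $N$ with $\alpha^N(Q)\subseteq \mc C(\mc I(Q))$, not, as you assert, $\alpha^N(\mc C(\mc I(Q)))\subseteq Q$. The latter containment is exactly the content of Lemma~\ref{lem:CIQinalphaKQ} (namely $Q\preceq\mc C(\mc I(Q))$), and your subsequent computation --- taking $a\in\mc C(\mc I(Q))$, producing $q\in Q$ with the same fractional part, and applying $\alpha^{M+k_0}$ --- is essentially a verbatim reproduction of the proof of Lemma~\ref{lem:CIQinalphaKQ}. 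You have re-derived the known inequality and concluded the unknown one.

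The missing direction is the genuinely hard part of this lemma, and it cannot be obtained by the same maneuver. To show $\alpha^N(Q)\subseteq\mc C(\mc I(Q))$ for a uniform $N$, one must take an arbitrary $a\in Q$ with fractional part $0.a_{-1}\dots a_{-\ell}$ and verify that, after chopping off a bounded number of leading fractional digits, the remaining tail $a_{-N-1}\dots a_{-\ell}$ extends to an element of $\mc I(Q)$. The obstruction is that $\mc I(Q)$ is a limit set, not the set of fractional parts itself, so there is no a priori reason for the tail of a single element of $Q$ to agree with an element of $\mc I(Q)$. The paper handles this by contradiction: assuming $m_a=\inf\{k:\alpha^k(a)\in\mc C(\mc I(Q))\}$ is unbounded over $a\in Q$, it uses Claim~\ref{claim:1} to extract a sequence of elements of $Q$ whose final digit blocks avoid $\mc I(Q)$, then passes to a convergent subsequence in $\widehat R$ to produce an element that must lie in $\mc I(Q)$ by its defining property, a contradiction. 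That compactness argument in $\widehat R$ is the real content here, and none of it appears in your proposal.
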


\begin{proof}
It follows from Lemma \ref{lem:CIQinalphaKQ} that $\mc C(\mc I(Q))\succeq Q$. We will show that $\mc C(\mc I(Q))\preceq Q$.  Suppose this is not the case.  Then $Q\not\subseteq \alpha^{-k}(\mc C(\mc I(Q)))$ for any $k\in \Z$ by Lemma~\ref{lem:confiningpartialorder}, and so there are elements $a\in Q$ with $m_a\vcentcolon=\inf\{k: \alpha^k(a)\in \mc C(\mc I(Q))\}$  arbitrarily large.  Choose such an element $a=\dots a_0.a_{-1}\dots a_{-\ell}\in Q$ with $m=m_a > k_0$, where $k_0$ is as in Definition \ref{def:confining}(c), which holds for $Q$. 

Choose $s\leq \ell$ largest such that no element of $\mc I(Q)$ has the form $\dots a_{-s}\dots a_{-\ell}$, that is, such that no element of $\mc I(Q)$ agrees with the last $\ell-s+1$ digits of $a$.  We must have $s\geq m$,  for $\alpha^s(a)=\dots a_0a_{-1}\dots a_{-s}.a_{-s-1}\dots a_{-\ell}\in \mc C(\mc I(Q))$, and if $s< m$, then this contradicts the definition of $m$ as an infimum.

The lemma will follow from the following  claim, whose proof we defer for the moment.

\begin{claim}\label{claim:1}
For $u\in \Z_{\geq 0}$ arbitrarily large, there is an element $d=\dots d_0.d_{-1}\dots d_{-u}\in Q$ with the property that there does not exist an element of the form $\dots d_{-u}\in \mc I(Q)$.
\end{claim}

Assuming Claim \ref{claim:1}, there is a sequence $u_i\to\infty$ and a sequence $\{d^i\}_{i=1}^\infty$ of elements of $Q$ with $d^i=\dots d^i_0.d^i_{-1}\dots d^i_{-u_i}$ with the property that there are no elements of the form $\dots d^i_{-u_i}$ in $\mc I(Q)$.  By passing to a subsequence, we may assume that the sequence of elements $d^i_{-1}\dots d^i_{-u_i}\in R\subseteq \widehat{R}$ converges to an element $\dots e_2e_1e_0\in \widehat R$.

 Given any $v\geq 0$ and any sufficiently large $i$, we have 
\[
d^i_{-1}\dots d^i_{-u_i}=d^i_{-1}\dots d^i_{-u_i+v+1}e_v\dots e_0.
\]  Thus 
\[
\alpha^{u_i-v-1}(d^i)=\dots d^i_0d^i_{-1}\dots d^i_{-u_i+v+1}.d^i_{-u_i+v}\dots d^i_{-u_i}=\dots d^i_0d^i_{-1}\dots d^i_{-u_i+v+1}.e_v\dots e_0 \in Q.
\]
This proves that $\dots e_1 e_0\in \mc I(Q)$.  However, this is a contradiction, 
as $d^i_{-u_i}=e_0$ for sufficiently large $i$, but there does not exist an element of the form $\dots d^i_{-u_i}$ in $\mc I(Q)$ by Claim \ref{claim:1}.
\end{proof}

We now prove  Claim \ref{claim:1}.
\begin{proof}[Proof of Claim \ref{claim:1}]
Consider an element $a=\dots a_0.a_{-1}\dots a_{-\ell}\in Q$ as in the first paragraph. If $s=\ell$, then by the definition of $s$ there is no element of the form $\dots a_{-\ell}$ in $\mc I(Q)$.  In this case, we take $d=a$.

On the other hand, if $s<\ell$, then by the definition of $s$ there exists 
\[
x=\dots x_2x_1x_0a_{-s-1}\dots a_{-\ell}\in \mc I(Q).
\]
Let $y=\dots y_2y_1y_0\in \mc I(Q)$ be the additive inverse of $x$, so that $x+y=0$.

By the definition of $\mc I(Q)$, there exists $b=\dots b_2b_1b_0.y_{\ell-1}\dots y_0\in Q$.  Then $c\vcentcolon=a+b\in Q+Q$, so  $\alpha^{k_0}(c)\in Q$.  The element $c=\dots c_0.c_{-1}\dots c_{-s}$ is given by
 \begin{center} \begin{tabular}{cccccccccc}
&$\dots $ & $a_1$ & $a_0.$ & $a_{-1}$& $\dots$ & $a_{-s}$ & $a_{-s-1}$ & $\dots$ & $a_{-\ell}$ \\
$+$& $\dots$ & $b_1$ & $b_0.$ & $y_{\ell-1}$ & $\dots$ & $y_{\ell-s}$ & $y_{\ell-s-1}$ & $\dots $ & $y_0$
  \\\hline
$=$ &$\dots$ & $c_1$ & $c_0.$ & $c_{-1}$ & $\dots$ & $c_{-s}$ & $0$ & $\dots$ & $0$ 
  \end{tabular}
 \end{center}
Recall that $s \geq m > k_0$. Therefore $\alpha^{k_0}(c)=\dots c_0c_{-1}\dots c_{-k_0}.c_{-k_0-1}\dots c_{-s}$. We will show that there does not exist an element $z\in \mc I(Q)$ of the form $\dots c_{-s}$.  This will show that we can take $d=\alpha^{k_0}(c)$ in this case.  To see this, suppose there were such an element $z=\dots z_2z_1z_0c_{-s}\in \mc I(Q)$.  Then since $s<\ell$, we have 
 \[
 \gamma^{\ell-s}z=\dots z_2z_1z_0c_{-s}\underbrace{0 \dots\dots 0}_{\ell-s \textrm{ times}}\in \mc I(Q).
 \]  
 By the definition of $c$, we see that $\gamma^{\ell-s}z-y\in \mc I(Q)$ is given by 
 \begin{center} \begin{tabular}{cccccc}
& $\dots$ & $c_{-s}$ & $0$ & $\dots$ & $0$ \\
$-$ & $\dots$ & $y_{\ell-s}$ & $y_{\ell-s-1}$ & $\dots$ & $y_0$ 
  \\\hline
$=$ & $\dots$ & $a_{-s}$ & $a_{-s-1}$ & $\dots$ & $a_{-\ell}$,
  \end{tabular}
 \end{center}
 which contradicts the definition of $s$. Since $s\geq m$, the quantity  $s-k_0$ may be chosen arbitrarily large. This  completes the proof of Claim \ref{claim:1} and the proof of the lemma.
\end{proof}

We are now ready to prove Theorem \ref{thm:I(G)}.

\begin{proof}[Proof of Theorem \ref{thm:I(G)}]
By Proposition \ref{prop:Grgammaabelianization}, the conditions of Proposition \ref{prop:generalstructure} are satisfied.  By Proposition~\ref{prop:Ppmconfining},  we may identify $\mc P_+(G)$ with the poset of equivalence classes of confining subsets of $\gamma^{-1}R$ under $\alpha$.  
Let $\mc S$ denote the poset of saturated ideals of $\widehat R$, and define a map $\phi\colon \mc S\to \mc P_+(G)$ by $\phi(\frak a)= \mc [C(\frak a)\cup \{t^{\pm 1}\}]$.

Lemmas \ref{lem:incomparable} and \ref{lem:orderrev} show that $\phi$ is an injective, order-reversing map of posets.  
 Fix $[T]\in \mc P_+(G)$. Again by Proposition \ref{prop:Ppmconfining}, there is a subset $Q\subseteq \gamma^{-1}R$ which is confining under $\alpha$ such that $[Q\cup\{t^{\pm 1}\}]=[T]$.  Lemma~\ref{lem:surj} then implies that $\phi(\mc I(Q))=[\mc C(\mc I(Q))\cup \{t^{\pm 1}\}]=[Q\cup\{t^{\pm 1}\}]=[T]$.  Therefore $\phi$ is surjective.
\end{proof}

\section{Elements of $\mc P_+(G)$: valuations and actions on trees}\label{sec:vals} 
\label{sec:valuations}

In this section, we complete the proof of Theorem \ref{thm:main}. Let $G=G(R,\gamma)$, where $R$ and $\gamma$ satisfy (A1)--(A5). The remaining step of the proof is to show that each element of $\mathcal P_+(G)$ is represented by an action on a tree.

\begin{thm}\label{thm:P+tree}
Let $G=G(R,\gamma)$.  Every element of $\mc P_+(G)$ contains an action on a tree. 
\end{thm}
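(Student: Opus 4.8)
The plan is to produce, for each equivalence class $[Q\cup\{t^{\pm1}\}]\in\mathcal P_+(G)$, a concrete simplicial tree on which $G$ acts in a way that realizes this hyperbolic structure. Since by Theorem~\ref{thm:I(G)} every element of $\mathcal P_+(G)$ has the form $[\mathcal C(\frak a)\cup\{t^{\pm1}\}]$ for a saturated ideal $\frak a\subseteq\widehat R$, and since $\mathcal C(\frak a)$ is actually a subring $S$ of $\gamma^{-1}R$ containing $R$ (Lemma~\ref{lem:idealtosubring}), it suffices to handle confining subsets $Q$ that are subrings. First I would record the key feature of such a $Q$: because $Q$ is a subring closed under multiplication by $\gamma$, the sets $\alpha^{-n}(Q)=\gamma^{-n}Q$ form an ascending chain of subgroups of $\gamma^{-1}R$ whose union is all of $\gamma^{-1}R$, and each $\alpha^{-n}(Q)$ is a union of cosets of $Q$. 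This is exactly the combinatorial data needed to build a Bass–Serre-type tree.

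\textbf{Constructing the tree.} I would define a tree $\mathcal T = \mathcal T(Q)$ whose vertex set is $\bigsqcup_{n\geq 0}\, (\gamma^{-n}Q)\big/\!\sim_n$, a disjoint union over ``levels'' $n\geq 0$, where level-$n$ vertices are the cosets of $Q$ inside $\gamma^{-n}Q$ (equivalently, cosets in $\gamma^{-1}R$ of the subgroup $\gamma^{-n}Q$). A level-$n$ vertex $v+\gamma^{-n}Q$ is joined by an edge to the level-$(n+1)$ vertex $v+\gamma^{-(n+1)}Q$ containing it. This is connected (every element of $\gamma^{-1}R$ lies in some $\gamma^{-n}Q$, giving a path down to the single level-$0$ vertex $Q$) and acyclic (a cycle would force two distinct cosets at some level to coincide), hence a tree. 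The group $G=\gamma^{-1}R\rtimes_\alpha\Z$ acts: an element of $\gamma^{-1}R$ acts by translation on cosets at each level (this is well-defined since $\gamma^{-n}Q$ is a subgroup), and the generator $t$ acts by $\alpha$, which sends the coset $v+\gamma^{-n}Q$ to $\gamma v+\gamma^{-(n+1)}Q$, i.e.\ it shifts levels — here one must check $\alpha(\gamma^{-n}Q)=\gamma^{-(n-1)}Q\subseteq\gamma^{-(n+1)}Q$, which holds because $\alpha(Q)\subseteq Q$. One should also confirm the action is without inversions, possibly after barycentric subdivision or by the standard trick of passing to $t^2$; this is a routine technicality. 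This construction is the group-equivariant version of the ``tree associated to a valuation'' alluded to in the section heading and in \cite{brown}; indeed the function recording the smallest $n$ with $x\in\gamma^{-n}Q$ is essentially a valuation on $\gamma^{-1}R$, and I would phrase the construction in those terms to match the rest of the paper.

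\textbf{Identifying the hyperbolic structure.} With the action in hand, I would compute the hyperbolic structure it represents via the Schwarz–Milnor lemma (Lemma~\ref{lem:MS}): picking the base vertex $Q$ at level $0$, an element $g\in G$ satisfies $d_{\mathcal T}(Q,gQ)\leq C$ iff $g$ can be written as a bounded-length word in $t^{\pm1}$ and elements of $Q$, by the normal-form lemma (Lemma~\ref{lem:wordrewrite}). This identifies the induced generating set with (something equivalent to) $Q\cup\{t^{\pm1}\}$, so the action on $\mathcal T$ represents exactly $[Q\cup\{t^{\pm1}\}]\in\mathcal P_+(G)$. Since $Q$ is strictly confining whenever the structure is quasi-parabolic (and $Q=\gamma^{-1}R$ for the lineal structure, giving a line), the tree is unbounded and genuinely hyperbolic. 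The \textbf{main obstacle} I anticipate is the bookkeeping needed to show the action on $\mathcal T$ is cobounded and that the Schwarz–Milnor generating set is $\preceq$-equivalent to $Q\cup\{t^{\pm1}\}$ rather than merely to a set containing it — this requires the confining hypothesis (particularly condition (c), near-closure under addition, and the fact from Lemma~\ref{lem:RsubsetQ} that $R$ sits ``boundedly'' inside $Q$) to control word lengths uniformly. A secondary subtlety is ensuring the tree is simplicial (locally finite / discrete levels), which follows from the finite-index axiom (A3): each $\gamma^{-n}Q$ has finitely many cosets in $\gamma^{-(n+1)}Q$ because this index divides a power of $[R:(\gamma)]$. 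Everything else — connectivity, acyclicity, equivariance — is direct verification.
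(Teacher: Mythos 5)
The approach---the Bass--Serre tree of the HNN splitting of $G$ over $\mathcal C(\frak a)$, equivalently a valuation tree---is the right one and is what the paper does (citing \cite[Prop.\ 3.14]{AR} for the Schwarz--Milnor identification and making the tree explicit afterward in Proposition~\ref{prop:isomorphictree}). However, the tree you construct has a genuine gap: it is missing half of its levels, and the $G$-action does not preserve the half you keep.

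You take levels $n\geq 0$ with level-$n$ vertices the cosets $\gamma^{-1}R/\gamma^{-n}Q$ (the reading consistent with the rest of your argument; the parenthetical ``cosets of $Q$ inside $\gamma^{-n}Q$'' describes the different set $\gamma^{-n}Q/Q$, and the two are not equivalent). The Bass--Serre tree has levels indexed by all of $\Z$: for $m\in\Z$ the level-$m$ vertices are the cosets of $\gamma^m Q$ in $\gamma^{-1}R$, and left multiplication by $t$ sends $v+\gamma^m Q$ at level $m$ \emph{bijectively} to $\gamma v+\gamma^{m+1}Q$ at level $m+1$. With your sign convention $m=-n$, one of $t$, $t^{-1}$ therefore carries every level-$0$ vertex to the absent level $-1$; the truncated tree is simply not $G$-invariant. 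Your stated formula $v+\gamma^{-n}Q\mapsto\gamma v+\gamma^{-(n+1)}Q$ compounds this by failing to be injective: $\gamma(v-v')\in\gamma^{-(n+1)}Q$ only forces $v-v'\in\gamma^{-(n+2)}Q$, which strictly contains $\gamma^{-n}Q$ when $Q$ is strictly confining. The fix is to use the full two-sided chain $\cdots\subset\gamma Q\subset Q\subset\gamma^{-1}Q\subset\cdots$, with $\bigcap_m\gamma^m Q=0$ by axiom (A4) and $\bigcup_m\gamma^m Q=\gamma^{-1}R$; this is the tree of the valuation $\overline v$ from Lemma~\ref{lem:overlinev}, which takes values in all of $\Z$---negative off $\mathcal C(\frak a)$---rather than your $\Z_{\geq0}$-valued truncation ``smallest $n$ with $x\in\gamma^{-n}Q$''. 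With this correction the tree is exactly the Bass--Serre tree of Proposition~\ref{prop:isomorphictree}, $t$ acts by a genuine level shift (so there are no inversions, making $t^2$ and barycentric subdivision unnecessary), and the Schwarz--Milnor verification you flag as the ``main obstacle'' is precisely the content of \cite[Prop.\ 3.14]{AR}, which the paper invokes; the rest of your outline then goes through.
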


This theorem follows quickly from the following:

\begin{prop}[{\cite[Proposition 3.14]{AR}}]
\label{prop:treeschwarzmilnor}
Let $G$ be a group which may be expressed as an ascending HNN extension \[A *_A=\langle A, s : sas^{-1}=\phi(a) \text{ for all } a\in A\rangle,\] where $A$ is a group and $\phi$ is an injective endomorphism of $A$.  The action of $G$ on the Bass-Serre tree associated to this HNN extension is equivalent to its action on $\Gamma(G,A\cup \{s^{\pm 1}\})$.
\end{prop}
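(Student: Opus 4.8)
The strategy is to realize the desired quasi-isometry by the orbit map into the Bass-Serre tree. First I would record the structure of the Bass-Serre tree $T$ of the ascending HNN extension $G=A*_A=\langle A,s:sas^{-1}=\phi(a)\rangle$: present $G$ as the HNN extension of the base group $A$ with associated subgroups $A$ and $\phi(A)$, isomorphism $a\mapsto\phi(a)$, and stable letter $s$. Then $V(T)=G/A$, the action of $G$ on $V(T)$ is transitive, the stabilizer of the base vertex $v_0=1\cdot A$ is exactly $A$, and the edges incident to $v_0$ are: one edge joining $v_0$ to $sv_0$, together with $[A:\phi(A)]$ edges joining $v_0$ to the vertices $as^{-1}v_0$ as $a$ runs over a transversal of $\phi(A)$ in $A$. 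In particular both $sv_0$ and $s^{-1}v_0$ are adjacent to $v_0$, and every neighbor of $v_0$ has the form $as^{\pm 1}v_0$ for some $a\in A$; applying $g$, every neighbor of an arbitrary vertex $gv_0$ has the form $gas^{\pm 1}v_0$ with $a\in A$.

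Next I would analyze the orbit map $o\colon (G,d_S)\to T$, $g\mapsto gv_0$, where $S=A\cup\{s^{\pm 1}\}$; it is $G$-equivariant by construction. It is coarsely surjective since $G$ acts transitively on $V(T)$ and every point of $T$ lies within distance $1$ of a vertex. It is Lipschitz on $0$-skeleta: if $d_S(g,h)=1$ then $g^{-1}h\in A$, so $o(g)=o(h)$, or $g^{-1}h=s^{\pm1}$, so $d_T(o(g),o(h))=d_T(v_0,s^{\pm1}v_0)=1$. The main point is the lower bound: by $G$-equivariance it suffices to show $\|h\|_S\le 2\,d_T(v_0,hv_0)+1$ for all $h\in G$. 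Set $k=d_T(v_0,hv_0)$ and take a geodesic edge path $v_0=p_0,p_1,\dots,p_k=hv_0$ in $T$. Lift it inductively: with $g_0=1$, if $p_i=g_iv_0$ then, since $p_{i+1}$ is a neighbor of $g_iv_0$, the link description produces $a_{i+1}\in A$ and $\epsilon_{i+1}\in\{\pm1\}$ with $p_{i+1}=g_ia_{i+1}s^{\epsilon_{i+1}}v_0$; set $g_{i+1}=g_ia_{i+1}s^{\epsilon_{i+1}}$. Then $g_kv_0=hv_0$, so $g_k^{-1}h\in\operatorname{Stab}(v_0)=A$, i.e.\ $h=a_1s^{\epsilon_1}\cdots a_ks^{\epsilon_k}a_{k+1}$ for some $a_{k+1}\in A$, a word of length at most $2k+1$ in $S$. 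Combined with the previous sentences, $o$ is a $G$-equivariant quasi-isometry from $\Gamma(G,A\cup\{s^{\pm1}\})$ to $T$, so the action $G\curvearrowright T$ is equivalent to $G\curvearrowright\Gamma(G,A\cup\{s^{\pm1}\})$, as claimed. (Both actions are cobounded — for $T$, the $G$-translates of the star of $v_0$ cover $T$ — so this is a genuine equality of elements of $\mathcal H(G)$.)

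As an alternative to the direct orbit-map computation, one could instead invoke Lemma~\ref{lem:MS}: applying it to the cobounded action $G\curvearrowright T$ with $B$ the closed star of $v_0$ yields a generating set $S'$ with $T$ $G$-equivariantly quasi-isometric to $\Gamma(G,S')$, and one then checks $[S']=[A\cup\{s^{\pm1}\}]$ in $\mathcal G(G)$ using the definitions of domination and equivalence from Section~\ref{sec:prelims} — namely that $A$ fixes $v_0$ and $s^{\pm 1}$ moves $v_0$ to an adjacent vertex (giving $S\preceq S'$), and the word-lifting estimate above (giving $S'\preceq S$). Either way, the only genuinely delicate step is pinning down the link of a vertex in $T$: here the edge group coincides with the vertex group $A$, which is a slightly degenerate instance of the HNN construction, so one must track edge orientations carefully to conclude that the neighbors of $v_0$ are exactly $\{sv_0\}\cup\{as^{-1}v_0:a\in A\}$. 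Once this combinatorial description is established, everything else is routine.
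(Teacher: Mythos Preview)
The paper does not actually prove this proposition; it is quoted from \cite{AR} and used as a black box in the proof of Theorem~\ref{thm:P+tree}. So there is no in-paper argument to compare against. Your direct orbit-map argument is correct and is essentially the standard proof: the Lipschitz bound is immediate from $d_T(v_0,sv_0)=1$ and $av_0=v_0$, and the reverse inequality via lifting a geodesic in $T$ to a word in $A\cup\{s^{\pm1}\}$ of length $2k+1$ is exactly how \cite[Proposition~3.14]{AR} proceeds. Your alternative route through Lemma~\ref{lem:MS} is also fine and amounts to the same computation repackaged.
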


\begin{proof}[Proof of Theorem \ref{thm:P+tree}]
An element of $\mathcal P_+(G)$ has the form $[\mathcal C(\frak a)\cup \{t^{\pm 1}\}]$ by Lemma \ref{lem:surj}. One may check that $G$ is isomorphic to the ascending HNN extension \[\langle \mathcal C(\frak a), t : tzt^{-1}=\gamma z \text{ for } z\in \mathcal C(\frak a)\rangle\] (see, e.g., the proof of \cite[Lemma 3.13]{AR}). There is an action of $G$ on the resulting Bass-Serre tree, and by Proposition \ref{prop:treeschwarzmilnor}, this action is a representative for the hyperbolic structure $[\mathcal C(\frak a)\cup \{t^{\pm 1}\}]$.
\end{proof}

\subsection{Proof of Theorem \ref{thm:main}}

The proof of Theorem \ref{thm:main} now follows immediately by combining our previous results.

\begin{proof}[Proof of Theorem \ref{thm:main}]
Let $G=G(R,\gamma)$. By Propositions \ref{prop:generalstructure} and   \ref{prop:Grgammaabelianization}, the poset $\mathcal H(G)$ splits into two lattices
$\mathcal P_+(G)$ and $\mathcal P_-(G)$ meeting in a unique lineal structure that dominates the unique elliptic structure.
By Theorem \ref{thm:I(G)}, $\mathcal P_+(G)$ is isomorphic to the poset of ideals of $\widehat R$ up to multiplication by $\gamma$ (equivalently the poset of \emph{saturated} ideals of $\widehat R$ with inclusion, equivalently the poset of \emph{all} ideals of $\gamma^{-1}\widehat R$ with inclusion). Moreover, the elements of $\mathcal P_+(G)$ are represented by actions on trees by Theorem \ref{thm:P+tree}.
\end{proof}

A downside of the proof of Theorem \ref{thm:P+tree} is that it is a priori quite difficult to visualize the Bass-Serre trees representing the elements of $\mathcal P_+(G)$. However, very explicit pictures of the Bass-Serre trees and their group actions may be made in practice using valuations on rings. We devote the rest of this section to describing this explicit construction; see Proposition \ref{prop:isomorphictree}. The construction of trees from valuations is analogous to that of \cite[Section I.3]{alp_bass} and \cite{brown}. We refer the reader to \cite{alp_bass} and \cite{brown} for further background.

We initially work in a more general setting and construct a tree from a valuation on an abstract set. In this level of generality, there is no group action. In  Section \ref{sec:groupvaluations}, we describe how to use this construction to incorporate a group action. 

Consider a set $X$ endowed with the discrete topology and a function $v\colon X\times X\to \R\cup \{\infty\}$ satisfying the following properties:
\begin{enumerate}[(V1)]
\item $v(x,y)=v(y,x)$;
\item $v(x,x)=\infty$; and 
\item $v(x,z)\geq \min\left\{v(x,y),v(y,z)\right\}$ for any $x, y, z \in X$, with equality unless $v(x,y)= v(y,z)$.
\end{enumerate}

We produce a tree $T$ from such a set $X$ and function $v$ as follows. 
\begin{defn}\label{def:tree}
Equipped with the product topology, the set $X\times \R$  is homeomorphic to a disjoint union of lines. Define an equivalence relation $\sim$ on $X\times \R$ by $(x,h)\sim (y,h)$ if $h\leq v(x,y)$. Note that $\sim$ is indeed  an equivalence relation, since  $(x,h)\sim(y,h)\sim(z,h)$ implies $h\leq \min\{v(x,y),v(y,z)\}\leq v(x,z)$. Finally,  consider the quotient space  $T=(X\times \R)/\sim$, endowed with the quotient topology, and the quotient map $\pi\colon X\times \R \to T$. If we want to emphasize the function $v$ which gave rise to $T$, we will write $T_v$. See Figure \ref{fig:easytree} for an easy example of $T_v$.
\end{defn}

\begin{figure}[h]
\begin{center}
\def\svgwidth{0.7\textwidth}
\begingroup%
  \makeatletter%
  \providecommand\color[2][]{%
    \errmessage{(Inkscape) Color is used for the text in Inkscape, but the package 'color.sty' is not loaded}%
    \renewcommand\color[2][]{}%
  }%
  \providecommand\transparent[1]{%
    \errmessage{(Inkscape) Transparency is used (non-zero) for the text in Inkscape, but the package 'transparent.sty' is not loaded}%
    \renewcommand\transparent[1]{}%
  }%
  \providecommand\rotatebox[2]{#2}%
  \newcommand*\fsize{\dimexpr\f@size pt\relax}%
  \newcommand*\lineheight[1]{\fontsize{\fsize}{#1\fsize}\selectfont}%
  \ifx\svgwidth\undefined%
    \setlength{\unitlength}{1584.96191791bp}%
    \ifx\svgscale\undefined%
      \relax%
    \else%
      \setlength{\unitlength}{\unitlength * \real{\svgscale}}%
    \fi%
  \else%
    \setlength{\unitlength}{\svgwidth}%
  \fi%
  \global\let\svgwidth\undefined%
  \global\let\svgscale\undefined%
  \makeatother%
  \begin{picture}(1,0.35963011)%
    \lineheight{1}%
    \setlength\tabcolsep{0pt}%
    \put(0,0){\includegraphics[width=\unitlength,page=1]{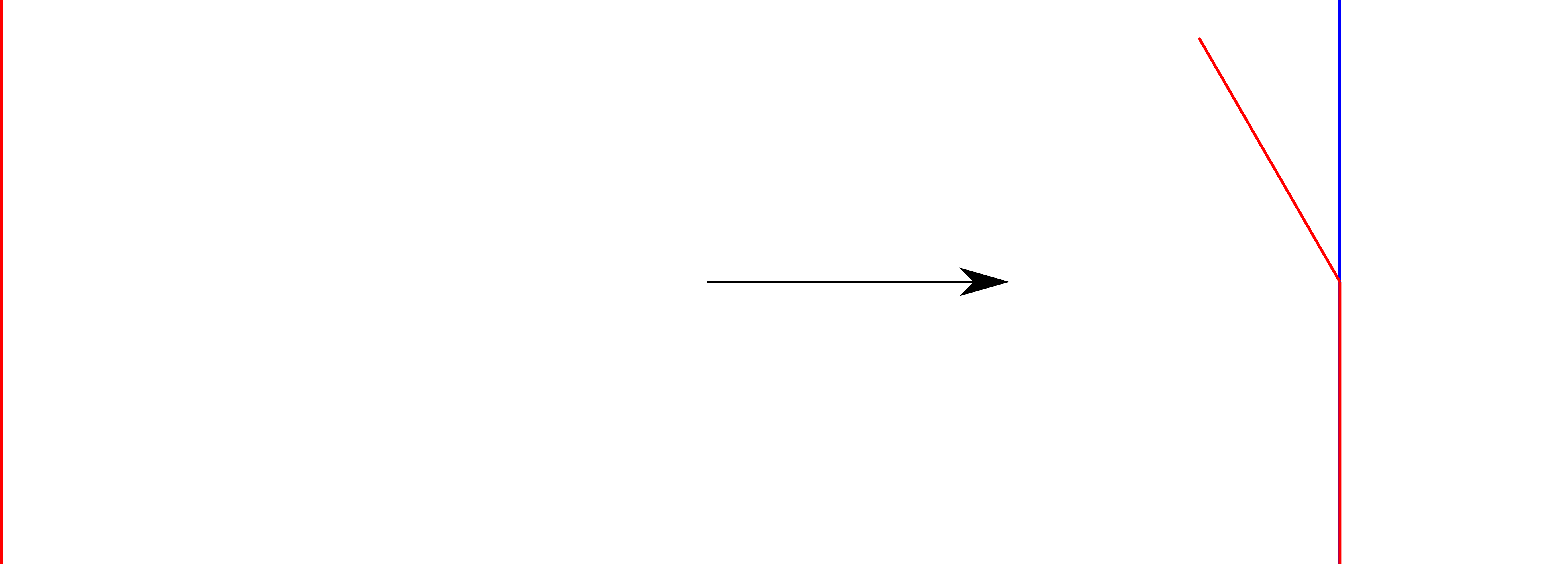}}%
    \put(0.00507676,0.20820691){\color[rgb]{1,0,0}\makebox(0,0)[lt]{\lineheight{1.25}\smash{\begin{tabular}[t]{l}\textit{$\{0\}\times \R$}\end{tabular}}}}%
    \put(0,0){\includegraphics[width=\unitlength,page=2]{treeconstruction.pdf}}%
    \put(0.15068435,0.20820691){\color[rgb]{0,0,1}\makebox(0,0)[lt]{\lineheight{1.25}\smash{\begin{tabular}[t]{l}\textit{$\{1\}\times \R$ }\end{tabular}}}}%
    \put(0,0){\includegraphics[width=\unitlength,page=3]{treeconstruction.pdf}}%
    \put(0.29650453,0.20820692){\color[rgb]{0,0.58823529,0}\makebox(0,0)[lt]{\lineheight{1.25}\smash{\begin{tabular}[t]{l}\textit{$\{2\}\times \R$}\end{tabular}}}}%
    \put(0.5456628,0.20820691){\color[rgb]{0,0,0}\makebox(0,0)[lt]{\lineheight{1.25}\smash{\begin{tabular}[t]{l}\textit{$\pi$}\end{tabular}}}}%
    \put(0,0){\includegraphics[width=\unitlength,page=4]{treeconstruction.pdf}}%
    \put(0.95289286,0.20820692){\color[rgb]{0,0,0}\makebox(0,0)[lt]{\lineheight{1.25}\smash{\begin{tabular}[t]{l}\textit{$T_v$}\end{tabular}}}}%
  \end{picture}%
\endgroup%

\end{center}
\caption{The tree $T_v$ for the function $v\colon \{0,1,2\}^2\to \R\cup \{\infty\}$ defined by $v(0,0)=v(1,1)=v(2,2)=\infty$, $v(0,1)=v(1,0)=1$, $v(0,2)=v(2,0)=0$, $v(1,2)=v(2,1)=0$.}
\label{fig:easytree}

\end{figure}

Recall that an \emph{arc} is a topological embedding of $[0,1]$ and that an \emph{$\R$-tree} is a metric space for which there is a unique arc joining any pair of points, which is a geodesic. We now introduce a metric on $T_v$. Consider two points $p_1=\pi(x_1,h_1)$ and $p_2=\pi(x_2,h_2)$ in $T_v$. We may assume without loss of generality that $h_2\leq h_1$. If $(x_1,h_2)\sim (x_2,h_2)$ then $\pi(\{x_1\} \times [h_2,h_1])$ is an arc between $p_1$ and $p_2$. Otherwise $(x_1,h_2)\not\sim (x_2,h_2)$ and one may check that the quantity $v(x_1,x_2)$ is well-defined, independent of representatives of $p_1$ and $p_2$. In this case $\pi(\{x_1\}\times [v(x_1,x_2),h_1]) \cup \pi(\{x_2\}\times [v(x_1,v_2),h_2])$ is an arc between $p_1$ and $p_2$. We define: \[d(p_1,p_2)=\begin{cases} |h_2-h_1| & \textrm{ if }(x_1,h_2)\sim (x_2,h_2) \textrm{ or } (x_1,h_1)\sim (x_2,h_1) \\ |h_2-v(x_1,x_2)|+|h_1-v(x_1,x_2)| & \textrm{ otherwise}\end{cases}.\]

\begin{thm}\label{thm:Tatree}
The space $T_v$ equipped with the metric $d(\cdot,\cdot)$ is an $\R$-tree.
\end{thm}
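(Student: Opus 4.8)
The plan is to verify directly that $(T_v,d)$ satisfies the definition of an $\R$-tree: that $d$ is a metric, and that any two points are joined by a unique arc, which is a geodesic. The construction already hands us explicit candidate arcs, so the work is to check that these candidates behave as required and that no other arcs exist.

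First I would check that $d$ is well-defined: when $(x_1,h_2)\not\sim(x_2,h_2)$, one must confirm the quantity $v(x_1,x_2)$ does not depend on the chosen representatives of $p_1=\pi(x_1,h_1)$ and $p_2=\pi(x_2,h_2)$. This uses the ultrametric-type axiom (V3): if $(x_1,h_1)\sim(x_1',h_1)$ then $h_1\le v(x_1,x_1')$, and combined with $(x_1,h_2)\not\sim(x_2,h_2)$ (so $h_2>v(x_1,x_2)$), the strong triangle inequality forces $v(x_1',x_2)=v(x_1,x_2)$ when $v(x_1,x_1')>v(x_1,x_2)$; one checks the relevant inequalities hold in the regime where the fibers are actually distinct at height $h_2$. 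I would also observe the two cases in the definition of $d$ overlap consistently (when both conditions of the first case hold, both give $|h_2-h_1|$, and when $(x_1,h_2)\sim(x_2,h_2)$ we have $v(x_1,x_2)\ge h_2$ so the ``otherwise'' formula would also need care — in fact the first case is designed precisely to cover exactly the situation $h_2\le v(x_1,x_2)$). Symmetry $d(p_1,p_2)=d(p_2,p_1)$ is clear from (V1) and the symmetry of the formula; $d(p_1,p_2)=0 \iff p_1=p_2$ follows because $|h_2-v|+|h_1-v|=0$ forces $h_1=h_2=v(x_1,x_2)$, hence $(x_1,h_1)\sim(x_2,h_2)$.

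Next, the triangle inequality. Given three points $p_i=\pi(x_i,h_i)$, I would reduce to cases according to how the heights $h_i$ and the pairwise values $v(x_i,x_j)$ compare, exploiting (V3) which says that among $v(x_1,x_2), v(x_2,x_3), v(x_1,x_3)$ the minimum is attained at least twice. The clean way to organize this is to note that $d$ is exactly the path metric for the candidate arcs: the distance from $\pi(x,h)$ to $\pi(x,h')$ along the fiber is $|h-h'|$, and moving between fibers costs passing down to the ``branch height'' $v(x_i,x_j)$ and back up. So I would first establish that for each pair $p_1,p_2$ the set \[\gamma_{12} = \pi\big(\{x_1\}\times[c,h_1]\big)\cup \pi\big(\{x_2\}\times[c,h_2]\big),\qquad c=\min\{h_1,h_2,v(x_1,x_2)\},\] is an arc from $p_1$ to $p_2$ of length $d(p_1,p_2)$ — here one checks $\pi$ restricted to this set is injective (two points $\pi(x_1,a),\pi(x_2,b)$ with $a,b$ in the relevant ranges coincide only if $a=b\le v(x_1,x_2)$, i.e. at the branch point) and continuous with continuous inverse, so it is a topological embedding of an interval, hence an arc after affine reparametrization. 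The triangle inequality $d(p_1,p_3)\le d(p_1,p_2)+d(p_2,p_3)$ then follows because concatenating $\gamma_{12}$ and $\gamma_{23}$ gives a path from $p_1$ to $p_3$ of length $d(p_1,p_2)+d(p_2,p_3)$, and $d(p_1,p_3)$ is the length of $\gamma_{13}$, which one shows is a \emph{sub}path (up to backtracking cancellation) of the concatenation using the ``minimum attained twice'' property of (V3).

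Finally, uniqueness of the arc and the geodesic property. Having shown $\gamma_{13}$ is a geodesic (an isometric embedding of an interval, since its length equals $d(p_1,p_3)$ and any subarc's length is computed the same way), it remains to show any arc $\eta$ from $p_1$ to $p_3$ equals $\gamma_{13}$ as a set. For this I would use that $T_v$ is uniquely arcwise-connected: a standard criterion is that a geodesic metric space in which ``geodesic bigons are degenerate'' (any two points joined by a unique geodesic, and every arc contains a geodesic with the same endpoints) is an $\R$-tree. Concretely, I would argue that removing any point $\pi(x,h)$ with $h$ strictly between the branch height and $h_1$ (resp. $h_3$) disconnects $p_1$ from $p_3$ in $T_v$ — because the fibers of $\pi$ over $X\times\{h'\}$ for $h'$ near $h$ separate, by the very definition of $\sim$ — so every arc from $p_1$ to $p_3$ must pass through every point of $\gamma_{13}$, forcing $\eta\supseteq\gamma_{13}$; and since $\eta$ is an arc (homeomorphic image of $[0,1]$) containing the arc $\gamma_{13}$ with the same endpoints, $\eta=\gamma_{13}$.

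\textbf{Main obstacle.} The bookkeeping in the triangle inequality and in proving $\gamma_{13}$ is genuinely a geodesic is the bulk of the work but is routine case analysis driven by (V3). The subtler point I expect to be the real obstacle is the topological argument for \emph{uniqueness} of arcs: one must argue carefully about the quotient topology on $T_v=(X\times\R)/\sim$ — in particular that $\pi$ restricted to each candidate arc is a homeomorphism onto its image and that the separation property (removing an interior point of $\gamma_{13}$ disconnects $p_1$ from $p_3$) holds. Since $X$ carries the discrete topology, the quotient is reasonably tame — the fiber over each height is a disjoint union of points indexed by $\sim$-classes at that height — but one should verify, e.g., that the image of a fiber $\{x\}\times[a,b]$ is closed and that distinct classes give disjoint closed sets, which is where one invokes the discreteness of $X$ and the fact that at each height the relation $h\le v(x,y)$ partitions $X$ into finitely-or-infinitely-many blocks. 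Once these topological facts are in hand, everything else assembles into the $\R$-tree axioms.
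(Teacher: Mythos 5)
The paper does not actually supply a proof of this theorem: its ``proof'' consists of the remark that the construction is closely related to \cite[Theorem 3.9]{alp_bass} and refers the reader there. So there is no internal argument to compare against, and yours is a from-scratch attempt. The overall strategy --- verify that $d$ is a well-defined metric, show that the explicit candidate set $\gamma_{12}=\pi(\{x_1\}\times[c,h_1])\cup\pi(\{x_2\}\times[c,h_2])$ with $c=\min\{h_1,h_2,v(x_1,x_2)\}$ is a geodesic, and then show any other arc coincides with it --- is the natural one and is correct in outline; the well-definedness and metric-axiom checks you sketch are right and are indeed driven by (V3).

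Two points should be tightened. First, the last paragraph's discussion of the quotient topology on $(X\times\R)/\sim$ is a red herring: the theorem is a statement about the metric space $(T_v,d)$, and the $\R$-tree axioms concern only the topology induced by $d$. Nothing about the quotient topology needs to enter the argument, nor does the statement require the two topologies to agree. Second, and more substantively, the separation argument for uniqueness of arcs is too vague to assess as written --- ``the fibers of $\pi$ over $X\times\{h'\}$ for $h'$ near $h$ separate, by the very definition of $\sim$'' does not address the possibility that an interior point $q$ of $\gamma_{13}$ carries additional branches, which is exactly what can happen in a tree. To make the separation precise you should exhibit a concrete $1$-Lipschitz function, for instance $g(r)=\tfrac12\bigl(d(r,p_1)-d(r,p_3)+d(p_1,p_3)\bigr)$, and verify from the explicit formula for $d$ (a case analysis powered by (V3)) that $g^{-1}\bigl(\{g(q)\}\bigr)=\{q\}$ for every interior point $q$ of $\gamma_{13}$; then $T_v\setminus\{q\}=g^{-1}\bigl((-\infty,g(q))\bigr)\sqcup g^{-1}\bigl((g(q),\infty)\bigr)$ with $p_1$ and $p_3$ on opposite sides. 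Alternatively, and arguably more cleanly, once the candidate arcs are shown to be geodesics you can bypass separation entirely: verify the four-point condition $d(w,x)+d(y,z)\le\max\{d(w,y)+d(x,z),\,d(w,z)+d(x,y)\}$ directly from (V3), and invoke the standard fact that a geodesic, $0$-hyperbolic metric space is an $\R$-tree. That reduces the remaining work to a purely combinatorial computation and avoids the topological delicacies you correctly flag as the main obstacle.
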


This construction and proof is closely related to that of \cite[Theorem 3.9]{alp_bass}, so we omit the proof here and refer the reader to that paper.

\subsection{Valuations and actions of $G(R,\gamma)$ on trees}
\label{sec:groupvaluations}
Thus far we have constructed trees associated to valuations on abstract sets. In this section, we will use valuations to construct new actions of the groups $G=G(R,\gamma)$ on simplicial trees. The construction in this section works for any group $H\rtimes_\alpha \Z^n$ where $H$ is abelian, and  we work in this level of generality. The result about $G=G(R,\gamma)$ is then a special case of this construction.

We begin by defining a valuation on an abelian group $H$. 

\begin{defn}\label{def:valuation} A \emph{valuation} on an abelian group $H$ is a function $\overline{v}\colon H\to \R\cup \{\infty\}$ satisfying:
\begin{enumerate}[(a)]
\item $\overline{v}(0)=\infty$; and
\item $\overline{v}(x+y)\geq \min \left\{\overline{v}(x),\overline{v}(y)\right\}$ with equality unless $\overline{v}(x)=\overline{v}(y)$.
\end{enumerate}
\end{defn}

\noindent Note that (a) and (b) imply that $\overline{v}(-x)=\overline{v}(x)$ for all $x\in H$. This follows since if $\overline v(x)\neq \overline{v}(-x)$, then \[\overline{v}(0)=\min\{\overline{v}(x),\overline{v}(-x)\}<\infty,\] which contradicts (a). Thus, the function $v\colon H\times H\to \R\cup \{\infty\}$ defined by  $v(x,y)=\overline{v}(x-y)$ satisfies conditions (V1)--(V3) listed in the previous section. Let $T_{\overline{v}}= (H\times \R)/\sim$ be the tree defined by $v$ as in Definition~\ref{def:tree}.

Consider the group $H\rtimes_\alpha \Z^n$, where $H$ is abelian, and fix a homomorphism $\rho\colon \Z^n\to \R$.
\begin{defn}
 A valuation $\overline{v}$ on $H$ is \emph{subordinate to $\rho$} if $\overline{v}(\alpha(g)(x))=\overline{v}(x)+\rho(g)$ for all $x\in H$ and $g\in \Z^n$.
\end{defn}

Define an action of $H\rtimes_\alpha \Z^n$ on $H\times \R$ as follows: the group $H$ acts on $H\times \R$ by left translation on itself and trivially on the $\R$--factor, and $\Z^n$ acts on $H\times \R$ by $g\cdot (x,h)=(\alpha(g)(x), h+\rho(g))$.  For certain valuations, this descends to an action of $H\rtimes_\alpha \Z^n$ on the associated tree $T_{\overline{v}}$.

\begin{lem} If $\ol v$ is subordinate to $\rho$, then the action described above defines an action of $H\rtimes_\alpha \Z^n$ on $T_{\overline{v}}$.
\end{lem}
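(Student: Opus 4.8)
The plan is to verify two things: first, that the given formulas define an action of $H\rtimes_\alpha \Z^n$ on $H\times \R$ (a routine semidirect-product computation), and second, that this action descends through the quotient map $\pi\colon H\times \R\to T_{\ol v}$, i.e.\ that it respects the equivalence relation $\sim$ used to build $T_{\ol v}$. The key point is that a group element sends $\sim$-classes to $\sim$-classes; once this is established, the universal property of the quotient topology gives a continuous action, and one checks it is by isometries using the explicit metric $d(\cdot,\cdot)$ on $T_{\ol v}$.

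First I would check that $H\rtimes_\alpha\Z^n$ acts on $H\times \R$. Writing a general element as $(y,g)$ with $y\in H$, $g\in\Z^n$, acting by $(y,g)\cdot(x,h) = (y+\alpha(g)(x),\, h+\rho(g))$, one verifies the identity $(0,0)$ acts trivially and that composition matches the group law $(y_1,g_1)(y_2,g_2) = (y_1+\alpha(g_1)(y_2),\, g_1+g_2)$; this uses that $\alpha$ is a homomorphism $\Z^n\to\Aut(H)$ and that $\rho$ is additive. Next, the crucial step: suppose $(x_1,h)\sim(x_2,h)$, i.e.\ $h\le v(x_1,x_2)=\ol v(x_1-x_2)$. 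I must show $(y,g)\cdot(x_1,h)\sim (y,g)\cdot(x_2,h)$, that is $(y+\alpha(g)(x_1),\, h+\rho(g))\sim(y+\alpha(g)(x_2),\, h+\rho(g))$. This amounts to $h+\rho(g)\le \ol v\big((y+\alpha(g)(x_1)) - (y+\alpha(g)(x_2))\big) = \ol v\big(\alpha(g)(x_1-x_2)\big)$. Here is exactly where the hypothesis that $\ol v$ is subordinate to $\rho$ enters: $\ol v(\alpha(g)(x_1-x_2)) = \ol v(x_1-x_2)+\rho(g) \ge h+\rho(g)$, using $h\le \ol v(x_1-x_2)$. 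Hence the action descends to a well-defined map on $T_{\ol v} = (H\times\R)/\sim$, and it is continuous by the universal property of the quotient topology since the action on $H\times\R$ is continuous (each element acts by a homeomorphism, being translation on $H$ composed with a shift on $\R$).

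Finally, I would confirm the induced maps are isometries of $(T_{\ol v}, d)$. Given two points $p_i = \pi(x_i,h_i)$, their distance is computed by the case formula in terms of $|h_i - v(x_1,x_2)|$ (or $|h_1-h_2|$ when the points lie on a common line below the branch point). Applying $(y,g)$ replaces $x_i$ by $y+\alpha(g)(x_i)$ and $h_i$ by $h_i+\rho(g)$, and as in the previous paragraph $v(y+\alpha(g)(x_1),\, y+\alpha(g)(x_2)) = v(x_1,x_2)+\rho(g)$. Since both the heights $h_i$ and the branch-point height $v(x_1,x_2)$ are translated by the same amount $\rho(g)$, all the differences $|h_i - v(x_1,x_2)|$ and $|h_1-h_2|$ are preserved, and the two cases of the definition of $d$ are interchanged consistently; therefore $d$ is preserved. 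This completes the verification that $H\rtimes_\alpha\Z^n$ acts on $T_{\ol v}$.

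The main obstacle is the well-definedness step — ensuring the action passes to the quotient $T_{\ol v}$ — but this is precisely what the ``subordinate to $\rho$'' hypothesis is designed to guarantee, so once that definition is unpacked the argument is short. Everything else is bookkeeping: checking the semidirect-product axioms and tracking how the metric formula transforms under a simultaneous vertical shift of all relevant heights.
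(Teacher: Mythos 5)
Your proposal is correct and takes essentially the same approach as the paper: in both, the subordination identity $\ol v(\alpha(g)(x)) = \rho(g) + \ol v(x)$ is what guarantees the action on $H\times\R$ respects $\sim$ and hence descends to $T_{\ol v}$. The only differences are organizational (the paper checks the $H$- and $\Z^n$-actions on $T_{\ol v}$ separately and then verifies the semidirect-product relation $gxg^{-1}=\alpha(g)(x)$, while you verify the full semidirect-product action on $H\times\R$ and then pass to the quotient), plus your explicit isometry check, which the paper leaves implicit under its standing convention that all actions are isometric.
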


\begin{proof}  It is clear that  $H\curvearrowright H\times \R$ descends to an action of $H$ on the quotient $T_{\overline{v}}$.  The action of $\Z^n$ also descends to an action  $\Z^n\curvearrowright T_{\overline{v}}$, since $$ (x,h)\sim (y,h) \Leftrightarrow h\leq \overline{v}(x-y) \Leftrightarrow h+\rho(g)\leq \overline{v}(\alpha(g)(x)-\alpha(g)(y)) \Leftrightarrow (\alpha(g)(x),h+\rho(g))\sim (\alpha(g)(y),h+\rho(g)).$$ Moreover, these define an action of $H\rtimes_{\alpha} \Z^n$ on $T_{\overline{v}}$ since $\alpha(g)(x)\cdot (y,h)=(y+\alpha(g)(x),h)$, whereas \[gxg^{-1}\cdot(y,h)=gx\cdot (\alpha(g)^{-1}(y),h-\rho(g))=g\cdot (\alpha(g)^{-1}(y)+x,h-\rho(g))=(y+\alpha(g)(x),h).\qedhere\]
\end{proof}

In this paper, we are most interested in groups $G(R,\gamma)= \gamma^{-1}R\rtimes_\alpha \Z$, where $R$ and $\gamma$ satisfy  axioms (A1)--(A5). However, valuations in the more general setting above are also of interest. We give two examples.

\begin{ex}Consider  the group $(\Z/n\Z)\wr \Z^2=(\Z/n\Z)[x^{\pm 1},y^{\pm 1}]\rtimes_\alpha \Z^2$.
\begin{enumerate}[(i)]
 \item Fix a homomorphism $\rho\colon \Z^2\to \R$, and  define a valuation $\overline{v}$ on $(\Z/n\Z)[x^{\pm 1},y^{\pm 1}]$ subordinate to $\rho$ as follows. Let $\overline{v}(0)=\infty$, and if $p(x,y)$ is a non-zero Laurent polynomial in $(\Z/n\Z)[x^{\pm 1},y^{\pm 1}]$, then let \[\overline{v}(p)=\min \{\rho(k,l) : ax^ky^l \text{ appears as a term in } p \text{ for some } a\neq 0\}.\] Then $\overline{v}$ satisfies the definition of a valuation since if $p,q\in (\Z/n\Z)[x^{\pm 1},y^{\pm 1}]$ and $ax^ky^l$ and $bx^uy^v$ are terms of $p$ and $q$, respectively, such that $\overline{v}(p)=\rho(k,l)$ and $\overline{v}(q)=\rho(u,v)$, then every monomial $cx^iy^j$ in $p+q$ satisfies $\rho(i,j)\geq \min\{\rho(k,l),\rho(u,v)\}$. Moreover, if $\rho(k,l)\neq \rho(u,v)$, say $\rho(k,l)<\rho(u,v)$, then $ax^ky^l$ is a monomial of $p+q$ with $\rho(k,l)$ minimal, and therefore \[\overline{v}(p+q)=\overline{v}(p)=\min\{\overline{v}(p),\overline{v}(q)\}.\] The tree $T_{\overline{v}}$ will be simplicial exactly if the image of $\rho$ is discrete in $\R$.
\item We now describe another valuation on this group coming from a ring homomorphism. There is a ring homomorphism $(\Z/n\Z)[x^{\pm 1},y^{\pm 1}]\to (\Z/n\Z)[z^{\pm 1}]$ defined by $x\mapsto z$ and $y\mapsto z^{-1}$. This induces a homomorphism of groups $(\Z/n\Z)[x^{\pm 1},y^{\pm 1}]\rtimes_\alpha \Z^2 \to (\Z/n\Z)[z^{\pm 1}]\rtimes \Z$ defined by sending the generators of $\Z^2$ to $1$ and $-1$ in $\Z$, respectively. The valuation $\overline{w}$ on $(\Z/n\Z)[z^{\pm 1}]$ defined by \[\overline{w}(p(z))=\inf \{ k : z^k \text{ appears in } p \text{ with non-zero coefficient} \} \text{ and } \overline{w}(0)=\infty\] induces a valuation $\overline{v}$ on $(\Z/n\Z)[x^{\pm 1},y^{\pm 1}]$ by $\overline{v}(p(x,y))=\overline{w}(p(z,z^{-1}))$. The valuation $\overline{v}$ is subordinate to the homomorphism $\rho\colon\Z^2\to \R$ that sends the generators to 1 and $-1$, respectively. The resulting tree of $(\Z/n\Z)[x^{\pm 1},y^{\pm1 }]$ induced by $\overline{v}$ is isomorphic to the tree of $(\Z/n\Z)[z^{\pm 1}]$ induced by $\overline{w}$ and the action of $(\Z/n\Z)\wr \Z^2=(\Z/n\Z)[x^{\pm 1},y^{\pm 1}]\rtimes_\alpha \Z^2$ is obtained by pulling back the action of $(\Z/n\Z)[z^{\pm 1}]\rtimes \Z$.
\end{enumerate}
\end{ex}

Suppose $R$ satisfies the axioms (A1)--(A5), and consider the corresponding group $G=G(R,\gamma)=\gamma^{-1}R \rtimes_\alpha \Z$.  We can associate a valuation to any ideal of $\widehat R$.
\begin{lem}\label{lem:overlinev}
For any ideal $\frak a$ of $\widehat R$, the function $\overline{v}\colon \gamma^{-1}R\to \R\cup\{\infty\}$ defined by \[ \overline{v}(x)=\begin{cases}\infty & \text{ if } \gamma^k x\in \mathcal C(\frak a) \text{ for all } k \\ -\inf\{k\in \Z:\gamma^kx\in \mathcal C(\frak a)\} & \textrm{ else}\end{cases}\]
is a valuation.
 
\end{lem}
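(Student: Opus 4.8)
The goal is to verify that the function $\overline{v}$ defined from an ideal $\frak a\subseteq \widehat R$ via the confining subset $\mathcal C(\frak a)$ satisfies the two axioms of Definition~\ref{def:valuation}. First I would check that $\overline v$ is well-defined, i.e.\ that $\{k\in\Z : \gamma^k x\in\mathcal C(\frak a)\}$ is a genuine ``upward-closed'' set in $\Z$ (or all of $\Z$), so the infimum makes sense; this is immediate from $\alpha(\mathcal C(\frak a))\subseteq\mathcal C(\frak a)$ (Lemma~\ref{lem:idealtoconfining}), which shows that if $\gamma^k x\in\mathcal C(\frak a)$ then $\gamma^{k+1}x\in\mathcal C(\frak a)$. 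Moreover, since $\mathcal C(\frak a)$ is confining, $\bigcup_{n\geq 0}\alpha^{-n}(\mathcal C(\frak a))=\gamma^{-1}R$ by Definition~\ref{def:confining}(b), so the set is nonempty for every $x$; hence $\overline v$ takes values in $\R\cup\{\infty\}$, and the only way to get $\infty$ is when $\gamma^k x\in\mathcal C(\frak a)$ for \emph{all} $k\in\Z$, matching the case distinction in the statement.

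\textbf{Axiom (a):} I would show $\overline v(0)=\infty$. Since $0\in R\subseteq\mathcal C(\frak a)$ by Remark~\ref{rem:R in C(frak a)}, and $\gamma^k\cdot 0=0$ for all $k$, we have $\gamma^k\cdot 0\in\mathcal C(\frak a)$ for all $k\in\Z$, so $\overline v(0)=\infty$ by definition.

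\textbf{Axiom (b):} This is the main point. I would first record that $\mathcal C(\frak a)$ is closed under addition; this is part of Lemma~\ref{lem:idealtoconfining} (verification of Definition~\ref{def:confining}(c) there is done with $k_0=0$, i.e.\ $\mathcal C(\frak a)+\mathcal C(\frak a)\subseteq\mathcal C(\frak a)$), and indeed $\mathcal C(\frak a)$ is a subring by Lemma~\ref{lem:idealtosubring}. Given $x,y\in\gamma^{-1}R$, set $a=\overline v(x)$, $b=\overline v(y)$ and assume without loss of generality $a\leq b$ (the symmetric case is identical). For the inequality $\overline v(x+y)\geq\min\{a,b\}=a$: if $a=\infty$ then $b=\infty$, so $\gamma^k x,\gamma^k y\in\mathcal C(\frak a)$ for all $k$, hence $\gamma^k(x+y)=\gamma^k x+\gamma^k y\in\mathcal C(\frak a)$ for all $k$ and $\overline v(x+y)=\infty$. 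If $a<\infty$, then for any $k>-a$ (equivalently $k\geq -a$ when the infimum is attained, but one should argue with strict inequalities / with $\varepsilon$ to be safe) we have $\gamma^k x\in\mathcal C(\frak a)$; and since $-b\leq -a<k$, also $\gamma^k y\in\mathcal C(\frak a)$; so $\gamma^k(x+y)\in\mathcal C(\frak a)$, giving $\overline v(x+y)\geq -k$ for all such $k$, hence $\overline v(x+y)\geq a=\min\{a,b\}$. For the equality clause, suppose $a<b$ (strict). I must show $\overline v(x+y)=a$. Combined with the inequality just proved it suffices to show $\overline v(x+y)\leq a$, i.e.\ that $\gamma^k(x+y)\notin\mathcal C(\frak a)$ whenever $\gamma^k x\notin\mathcal C(\frak a)$ and $\gamma^k y\in\mathcal C(\frak a)$ (which holds for the appropriate range of $k$, namely $k$ slightly below $-a$, using $-b<-a$ so that $\gamma^k y$ is safely in $\mathcal C(\frak a)$). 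Here is where I use that $\mathcal C(\frak a)$ is closed under negation and addition: if $\gamma^k(x+y)$ \emph{were} in $\mathcal C(\frak a)$, then $\gamma^k x=\gamma^k(x+y)+\gamma^k(-y)=\gamma^k(x+y)-\gamma^k y\in\mathcal C(\frak a)$ (since $\mathcal C(\frak a)$ is a subring it contains $-\gamma^k y$ and is closed under addition), contradicting $\gamma^k x\notin\mathcal C(\frak a)$. This contradiction gives $\gamma^k(x+y)\notin\mathcal C(\frak a)$, hence $\overline v(x+y)\leq -k$; letting $k$ approach $-a$ from below yields $\overline v(x+y)\leq a$, so $\overline v(x+y)=a=\min\{\overline v(x),\overline v(y)\}$ as required.

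\textbf{Anticipated obstacle.} The only real subtlety is careful bookkeeping of strict versus non-strict inequalities around the infimum $-\inf\{k:\gamma^k x\in\mathcal C(\frak a)\}$: one must be precise about whether the infimum is attained (it is, since the set is a subset of $\Z$ bounded below, so it is actually a minimum) and handle the boundary value of $k$ correctly so that the ``equality unless'' clause is deduced cleanly. Once one observes that the relevant set of exponents is upward-closed in $\Z$ and bounded below (so $\overline v$ takes values in $\Z\cup\{\infty\}$, except we report $-$ of it), everything reduces to the subring property of $\mathcal C(\frak a)$, which is already established. No new ideas are needed beyond Lemmas~\ref{lem:idealtoconfining} and~\ref{lem:idealtosubring}.
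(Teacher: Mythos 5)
Your proof is correct and follows essentially the same route as the paper's: both reduce everything to the fact that $\mathcal C(\frak a)$ is a subring of $\gamma^{-1}R$ (Lemma~\ref{lem:idealtosubring}) closed under multiplication by $\gamma$, so that the exponent set $\{k : \gamma^k x \in \mathcal C(\frak a)\}$ is a nonempty, upward-closed subset of $\Z$, and the ``equality unless'' clause is deduced by noting that if $\gamma^i x \in \mathcal C(\frak a)$, $\gamma^i y \notin \mathcal C(\frak a)$, and $\gamma^i(x+y)$ were in $\mathcal C(\frak a)$, then $\gamma^i y = \gamma^i(x+y) - \gamma^i x$ would lie in the subring $\mathcal C(\frak a)$, a contradiction. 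The paper's write-up is more compact (it directly names the two minimal exponents $k < l$ and observes $\gamma^i(x+y) \notin \mathcal C(\frak a)$ for $k \leq i < l$), but the content is identical; your only inessential hedge is the ``$k > -a$ versus $k \geq -a$'' remark, which dissolves immediately since the exponent set is a subset of $\Z$ bounded below and hence has a minimum, so one may simply take $k = -a$.
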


\begin{proof}
By Lemma \ref{lem:idealtosubring}, the set $\mathcal C(\frak a)$ is a subring of $\gamma^{-1}R$ containing $R$.
Thus  for every $x\in \gamma^{-1}R$,  we have $\gamma^k x\in \mathcal C(\frak a)$ for some $k$, and so $\overline v(x)$ has range in $\Z\cup \{\infty\}$. That $\ol v(0) = \infty$ also follows from the definition. To see that $\overline{v}$ is a valuation on $\gamma^{-1}R$, note that if $\gamma^kx, \gamma^ly\in \mathcal C(\frak a)$,  then $\gamma^{\max\{k,l\}}(x+y)\in \mathcal C(\frak a)$. Moreover, if $\ol v(x) = -k$ and $ \ol v(y) =-l$ with $k<l$,  then $\gamma^i(x+y)\notin \mathcal C(\frak a)$ for $k\leq i<l$, since $\gamma^ix\in \mathcal C(\frak a)$ whereas $\gamma^iy\notin \mathcal C(\frak a)$. Hence $\inf\{i:\gamma^i(x+y)\in \mathcal C(\frak a)\}=l$, and so $\ol v(x +y) = \op{min}\{\ol v(x), \ol v(y)\}$.
\end{proof}

The Bass-Serre trees of $G$  in Theorem \ref{thm:P+tree} can now be described explicitly using valuations.

\begin{prop}
\label{prop:isomorphictree}
Let $\frak a$ be an ideal of $\widehat R$. Let $\overline v$ be the valuation on $\gamma^{-1}R$ associated to $\frak a$ in Lemma \ref{lem:overlinev}. Then the Bass-Serre tree of $G$ as an HNN extension of $\mathcal C(\frak a)$ is $G$-equivariantly isomorphic to $T_{\overline v}$. Hence every element of $\mathcal P_+(G)$ is represented by an action on a tree $T_{\overline v}$.
\end{prop}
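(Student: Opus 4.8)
\textbf{Proof proposal for Proposition~\ref{prop:isomorphictree}.}

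The plan is to verify directly that the two $G$-trees agree by exhibiting a $G$-equivariant simplicial (or at least $\R$-tree) isomorphism between the Bass-Serre tree of the HNN decomposition $G = \langle \mathcal C(\frak a), t : tzt^{-1} = \gamma z\rangle$ and the tree $T_{\overline v}$ built from the valuation $\overline v$ of Lemma~\ref{lem:overlinev}. First I would unwind exactly what each tree looks like. The Bass-Serre tree $T_{BS}$ of an ascending HNN extension $A*_{\phi}$ with $A = \mathcal C(\frak a)$ and $\phi = \alpha|_{\mathcal C(\frak a)}$ has vertex set $G/A$ and edge set $G/\phi(A)$ (or $G/A$ again after identifying, depending on conventions); concretely its vertices are the cosets $gA$ and two cosets are joined when they are ``consecutive'' under the $t$-action. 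On the other side, $T_{\overline v} = (\gamma^{-1}R \times \R)/\!\sim$ carries the action where $\gamma^{-1}R$ translates the first coordinate, $t$ acts by $(x,h)\mapsto (\gamma x, h + \rho(t))$ for the homomorphism $\rho\colon \Z \to \R$ sending $t$ to (say) $1$, and the valuation $\overline v$ is subordinate to this $\rho$ because $\overline v(\gamma x) = \overline v(x) + 1$ — which is immediate from the formula $\overline v(x) = -\inf\{k : \gamma^k x \in \mathcal C(\frak a)\}$, since multiplying $x$ by $\gamma$ shifts that infimum down by one. So the hypothesis ``subordinate'' of the earlier lemma is satisfied and $G$ does act on $T_{\overline v}$.

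The key step is to identify a vertex stabilizer. In $T_{\overline v}$, consider the point $p_0 = \pi(0, 0)$. I claim its stabilizer in $G$ is exactly $\mathcal C(\frak a)$. Indeed, an element $x \in \gamma^{-1}R$ fixes $p_0$ iff $(x, 0) \sim (0,0)$, i.e. iff $0 \le \overline v(x - 0) = \overline v(x)$, i.e. iff $x \in \mathcal C(\frak a)$ (using that $\overline v(x)\ge 0 \iff x \in \mathcal C(\frak a)$, which follows from the definition of $\overline v$ together with $\alpha(\mathcal C(\frak a))\subseteq \mathcal C(\frak a)$ from Lemma~\ref{lem:idealtoconfining}); and $t^n$ for $n\ne 0$ moves the $\R$-coordinate, so no nontrivial power of $t$ (times an element of $\mathcal C(\frak a)$) can fix $p_0$ — here one uses Lemma~\ref{lem:wordrewrite} / the semidirect product structure to see that a general group element is $\gamma^{-k}r t^n$ and it fixes $p_0$ only when $n = 0$ and $\gamma^{-k}r \in \mathcal C(\frak a)$. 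Then I would check that the orbit $G\cdot p_0$ together with the natural edges (the arcs $\pi(\{0\}\times[0,\infty))$ and their translates, or rather the edges between $p_0$ and $t^{\pm1} p_0$) is all of $T_{\overline v}$, i.e. that $G$ acts coboundedly with fundamental domain an edge and vertex/edge stabilizers matching the HNN data $A = \mathcal C(\frak a)$, $\phi = \alpha$. By the uniqueness of the Bass-Serre tree of a splitting (a graph of groups is determined by its fundamental group action on its Bass-Serre tree), this identifies $T_{\overline v}$ $G$-equivariantly with $T_{BS}$.

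The main obstacle I anticipate is bookkeeping around the precise conventions for the Bass-Serre tree of an \emph{ascending} (non-reversible) HNN extension and matching them against the somewhat unusual quotient construction of $T_{\overline v}$ — in particular, confirming that the edge relation in $T_{\overline v}$ (which is governed by the levels $\overline v(x-y)\in \Z$) corresponds exactly to the edge relation $gA \sim gtA$ in $T_{BS}$, and that $T_{\overline v}$ has no extra ``missing'' or ``doubled'' edges. A clean way to sidestep some of this is: having shown $\mathrm{Stab}_G(p_0) = \mathcal C(\frak a)$, that $G$ acts without inversions, and that $T_{\overline v}/G$ is a single edge (one vertex orbit, one edge orbit — this uses coboundedness of the $G$-action on $T_{\overline v}$, which follows from Lemma~\ref{lem:CIQinalphaKQ}/\ref{lem:surj} giving $G = \langle \mathcal C(\frak a), t\rangle$), invoke the standard fact that a minimal action of a group on a tree with quotient an edge and prescribed vertex group recovers the splitting, hence the Bass-Serre tree. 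The final sentence of the proposition — that every element of $\mathcal P_+(G)$ is represented by an action on such a $T_{\overline v}$ — then follows by combining this isomorphism with Theorem~\ref{thm:P+tree} and Lemma~\ref{lem:surj}, which say every element of $\mathcal P_+(G)$ is $[\mathcal C(\frak a)\cup\{t^{\pm1}\}]$ for some ideal $\frak a$ and that this is represented by the Bass-Serre action of the HNN extension of $\mathcal C(\frak a)$.
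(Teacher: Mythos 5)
Your proposal follows essentially the same route as the paper's proof: compute that the stabilizer of the base vertex $\pi(0,0)$ is $\mathcal C(\frak a)$, observe that $G$ acts with a single orbit of vertices and a single orbit of edges, and invoke Bass-Serre theory to identify $T_{\overline v}$ with the Bass-Serre tree of the ascending HNN decomposition. The one step you gesture at but don't carry out — and which the paper does explicitly, since it is needed to pin down the splitting and not merely the vertex group — is computing the stabilizer of the adjacent vertex $\pi(0,1)$ and checking that it equals $\gamma\,\mathcal C(\frak a)$, so that the edge inclusion matches the gluing by $\alpha$.
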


\begin{proof}
Fix as a basepoint $b= \pi (0,0)\in T_{\overline v}=(\gamma^{-1} R\times \R)/\sim$. An element $rt^k$ with $r\in \gamma^{-1}R$ fixes $b$ if only if $k=0$ and $\overline{v}(r)\geq 0$. Thus the stabilizer of $b$ is $\mathcal C(\frak a)$. Since $G$ acts transitively on $\gamma^{-1}R \times \Z$, there is a single orbit of vertices. The vertices adjacent to and directly above $b$ in $T_{\overline{v}}$ are exactly the equivalence classes of the pairs $(r,1)$ with $r\in \mathcal C(\frak a)$. Since $\mathcal C(\frak a)$ acts transitively on these vertices, $T_{\overline{v}}$ also has a single orbit of edges. The stabilizer of the vertex $(0,1)$ is exactly the subgroup of $\gamma^{-1} R$ of elements $r\in \gamma^{-1} R$ satisfying $\overline{v}(r)\geq 1$ or, equivalently, satisfying $\inf \{k : \gamma^k r \in \mathcal C(\frak a)\}\leq -1$. This is exactly $\mathcal \gamma C(\frak a)$. Thus, $T_{\overline{v}}$ is the Bass-Serre tree for $G$ corresponding to the expression of $G$ as an ascending HNN extension of $\mathcal C(\frak a)$, which is glued to itself via the endomorphism $\alpha$ (which is multiplication by $\gamma$).
\end{proof}

Before moving on to the next section, we use valuations to describe Bass-Serre tree representatives for hyperbolic structures of $(\Z/n\Z)\wr \Z$ and $BS(1,n)$ and draw  explicit pictures of these trees. The posets $\mathcal H((\Z/n\Z)\wr \Z)$ and $\mathcal H(BS(1,n))$ are described in Section \ref{sec:examples}.

\subsubsection{Bass-Serre trees for lamplighter groups}

Consider the lamplighter group $G=(\Z/n\Z)\wr \Z$, where $n\geq 2$. We use Theorem \ref{thm:P+tree} to describe trees representing the elements of $\mathcal P_+(G)$. As $\mathcal P_-(G)$ is isomorphic to $\mathcal P_+(G)$ in this case, the construction for $\mathcal P_-(G)$ is analogous.

 A saturated ideal $m(\Z/n\Z)[[x]]$ for $m\in \Z/n\Z$  corresponds to the hyperbolic structure $[\mathcal C(m(\Z/n\Z)[[x]])\cup \{t^{\pm 1}\}]$. The ring $\mathcal C(m(\Z/n\Z)[[x]])$ consists of exactly the Laurent polynomials for which every coefficient of a negative power of $x$ is in the subgroup $m(\Z/n\Z)$. By Proposition \ref{prop:isomorphictree}, this hyperbolic structure is represented by the tree $T_{\overline{v}}$, where \[\overline{v}(p(x))=-\inf\{k : x^k p(x)\in \mathcal C\left(m(\Z/n\Z)[[x]]\right)\}.\] In other words, $\overline{v}(p(x))$ is the infimum of all $i$ such that the coefficient of $x^i$ in $p(x)$ doesn't lie in $m(\Z/n\Z)$. The largest such hyperbolic structure is $[\mathcal C(0)\cup \{t^{\pm 1}\}]$, which corresponds to the valuation $\overline{v}_0(p(x))$, which measures the smallest $i$ such that $x^i$ appears in $p(x)$.

We may choose the generator $m$ of $\langle m \rangle$ to divide $n$. There is an obvious quotient $\Z/n\Z\to \Z/m\Z$ which induces a quotient $\pi\colon (\Z/n\Z)\wr \Z\to (\Z/m\Z)\wr \Z$. We see that the valuation $\overline{v}$ induced by $\mathcal C(m(\Z/n\Z)[[x]])$ is the pullback of the main valuation $\overline{v}_0$ on $(\Z/m\Z)[x^{\pm 1}]$. That is, $\overline{v}(p(x))=\overline{v}_0(\pi(p(x)))$. Thus, the tree $T_{\overline{v}}$ is isomorphic to the tree $T_{\overline{v}_0}$ for $(\Z/m\Z)\wr \Z$ and the action is induced by the quotient homomorphism. Hence all of the trees representing the hyperbolic structures of $(\Z/n\Z)\wr \Z$ are obtained by pulling back the actions of quotients $(\Z/m\Z)\wr \Z$ on their main Bass-Serre trees. So in order to draw pictures of these actions it suffices to draw  the main Bass-Serre tree of $(\Z/m\Z)\wr \Z$ corresponding to the valuation, $\overline{v}_0$.

\begin{ex}
The main Bass-Serre tree for $(\Z/2\Z)\wr \Z$ is pictured in Figure \ref{fig:lamplightertree}. The main Bass-Serre tree for $(\Z/n\Z)\wr \Z$ is analogous, being an action on an $(n+1)$-regular tree with vertices labeled by equivalence classes of Laurent polynomials in $(\Z/n\Z)[x^{\pm 1}]$.
\end{ex}

\begin{figure}[h]

\begin{center}
\begin{overpic}[width=0.43\textwidth,percent]{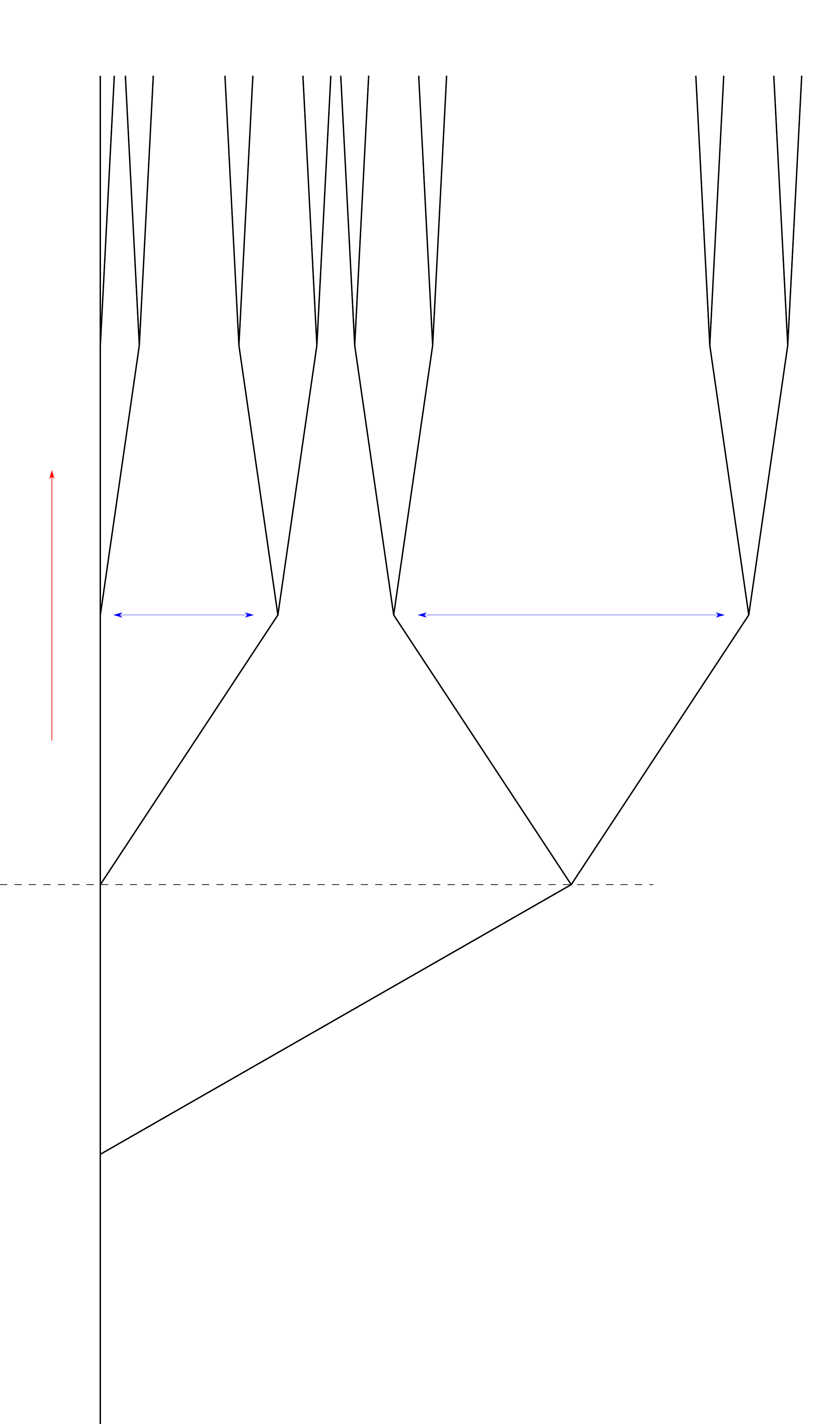}

\put (10,19) {\scriptsize $0$}
\put (10,38) {\scriptsize $0$}
\put (42,38) {\scriptsize $x^{-1}$}
\put (10,58) {\scriptsize $0$}
\put (21,58) {\scriptsize $1$}
\put (30,58) {\scriptsize $x^{-1}$}
\put (54,58) {\scriptsize $x^{-1}+1$}
\put (8,73) {\scriptsize $0$}
\put (11,77) {\scriptsize $x$}
\put (18,73) {\scriptsize $1$}
\put (22.5,77) {\scriptsize $1+x$}
\put (26,73) {\scriptsize $x^{-1}$}
\put (31,77) {\scriptsize $x^{-1}+x$}
\put (51,73) {\scriptsize $x^{-1}+1$}
\put (56,77) {\scriptsize $x^{-1}+1+x$}
\end{overpic}
\end{center}

\caption{The main Bass-Serre tree for $(\Z/2\Z)\wr \Z$. The vertices are represented by pairs in $(\Z/2\Z)[x^{\pm 1}]\times \Z$ where $(p(x),h)\sim (q(x),h)$ if $\overline{v}_0(p-q)\geq h$. Here heights are implicit, with height 0 being indicated by a horizontal dotted line. The generator $t$ of $\Z$ acts loxodromically with axis indicated by the red arrow. It has the effect of shifting $(p(x),h)$ ``vertically upward'' to $(p(x),h+1)$. The (order 2) action of the unit 1 of the ring $(\Z/2\Z)[x^{\pm 1}]$ is pictured in blue. It has the effect of interchanging $(p(x),h)$ with $(p(x)+1,h)$.}
\label{fig:lamplightertree}

\end{figure}

\subsubsection{Bass-Serre trees for Baumslag-Solitar groups}

Consider the Baumslag-Solitar group $BS(1,n)\cong \Z[\frac{1}{n}]\rtimes \Z$. For a divisor $m$ of $n$, $\Z[\frac{1}{m}]$ is a subring of $\Z[\frac{1}{n}]$, and two such divisors define the same subring if they have the same prime divisors. For any divisor $m$ of $n$, $BS(1,n)$ is an ascending HNN extension of $\Z[\frac{1}{m}]$. By Proposition \ref{prop:treeschwarzmilnor} these Bass-Serre trees represent all the hyperbolic structures in $\mc P_+(G)$.

For another perspective, if $n=(-1)^\delta p_1^{k_1} \cdots p_r^{k_r}$ is the prime factorization of $n$ then we may choose numbers $\epsilon_i \in \{0,1\}$ for each $1\leq i\leq r$ and consider the divisor $m=(-1)^\delta p_1^{\epsilon_1 k_1} \cdots p_r^{\epsilon_r k_r}$. The Bass-Serre tree corresponding to the expression of $BS(1,n)$ as an ascending HNN extension of $\Z[\frac{1}{m}]$ is the tree $T_{\overline{v}}$ associated to the $(n/m)$-adic valuation $\overline{v}$ on $\Z[\frac1n]$. Two of these trees are pictured below. See \cite{AR} for more details.

\begin{ex}
The group $BS(1,2)$ acts on its main Bass-Serre tree (corresponding to the expression of $BS(1,2)$ as an ascending HNN extension of $\Z$). This is the tree defined by the standard 2-adic valuation on $\Z[\frac{1}{2}]$. The tree is pictured on the left hand side of Figure \ref{fig:bstrees}.
\end{ex}

\begin{figure}[h]

\begin{center}
\begin{tabular}{l l}

\begin{overpic}[width=0.43\textwidth,percent]{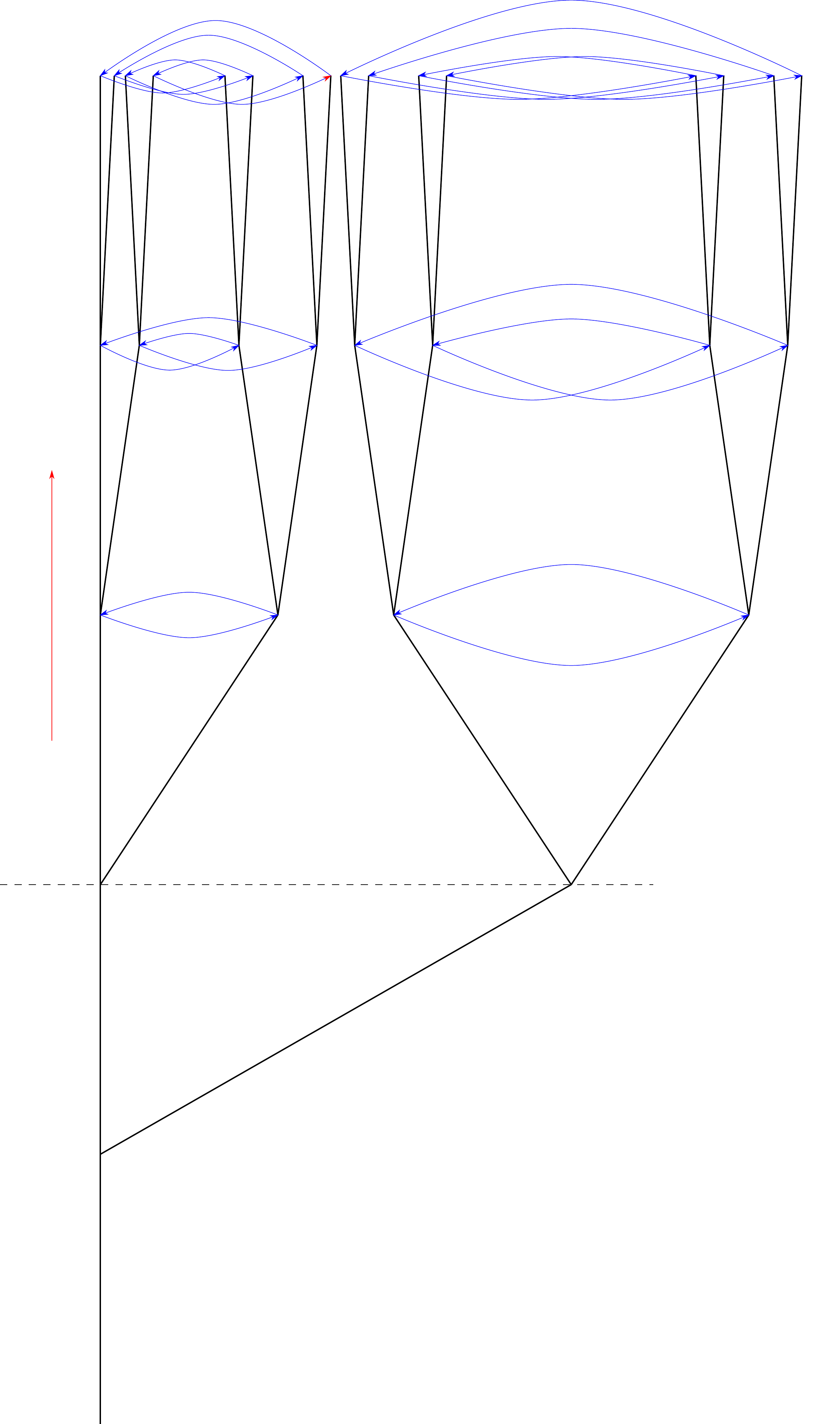}
\put (1,60) {\color{red} $t$}
\put (40,62) {\color{blue} $a$}

\put (10,19) {\scriptsize $0$}
\put (10,38) {\scriptsize $0$}
\put (42,38) {\scriptsize $0.1$}
\put (10,57) {\scriptsize $0$}
\put (21,57) {\scriptsize $1$}
\put (30,57) {\scriptsize $0.1$}
\put (54,57) {\scriptsize $1.1$}
\put (8,75) {\scriptsize $0$}
\put (11,75) {\scriptsize $10$}
\put (18,75) {\scriptsize $1$}
\put (22.5,75) {\scriptsize $11$}
\put (26,75) {\scriptsize $0.1$}
\put (31,75) {\scriptsize $10.1$}
\put (51,75) {\scriptsize $1.1$}
\put (56,75) {\scriptsize $11.1$}
\put (7,97) {\tiny $0$}
\put (8,95.5) {\tiny $100$}
\put (9,97) {\tiny $10$}
\put (11,95.5) {\tiny $110$}
\put (15,97) {\tiny $1$}
\put (17,95.5) {\tiny $101$}
\put (21,97) {\tiny $11$}
\put (22.5,95.5) {\tiny $111$}
\put (24,97) {\tiny $0.1$}
\put (26,95.5) {\tiny $100.1$}
\put (28.5,97) {\tiny $10.1$}
\put (31.5,95.5) {\tiny $110.1$}
\put (48,97) {\tiny $1.1$}
\put (50.5,95.5) {\tiny $101.1$}
\put (54,97) {\tiny $11.1$}
\put (56,95.5) {\tiny $111.1$}
\end{overpic}

&

\begin{overpic}[width=0.43\textwidth,percent]{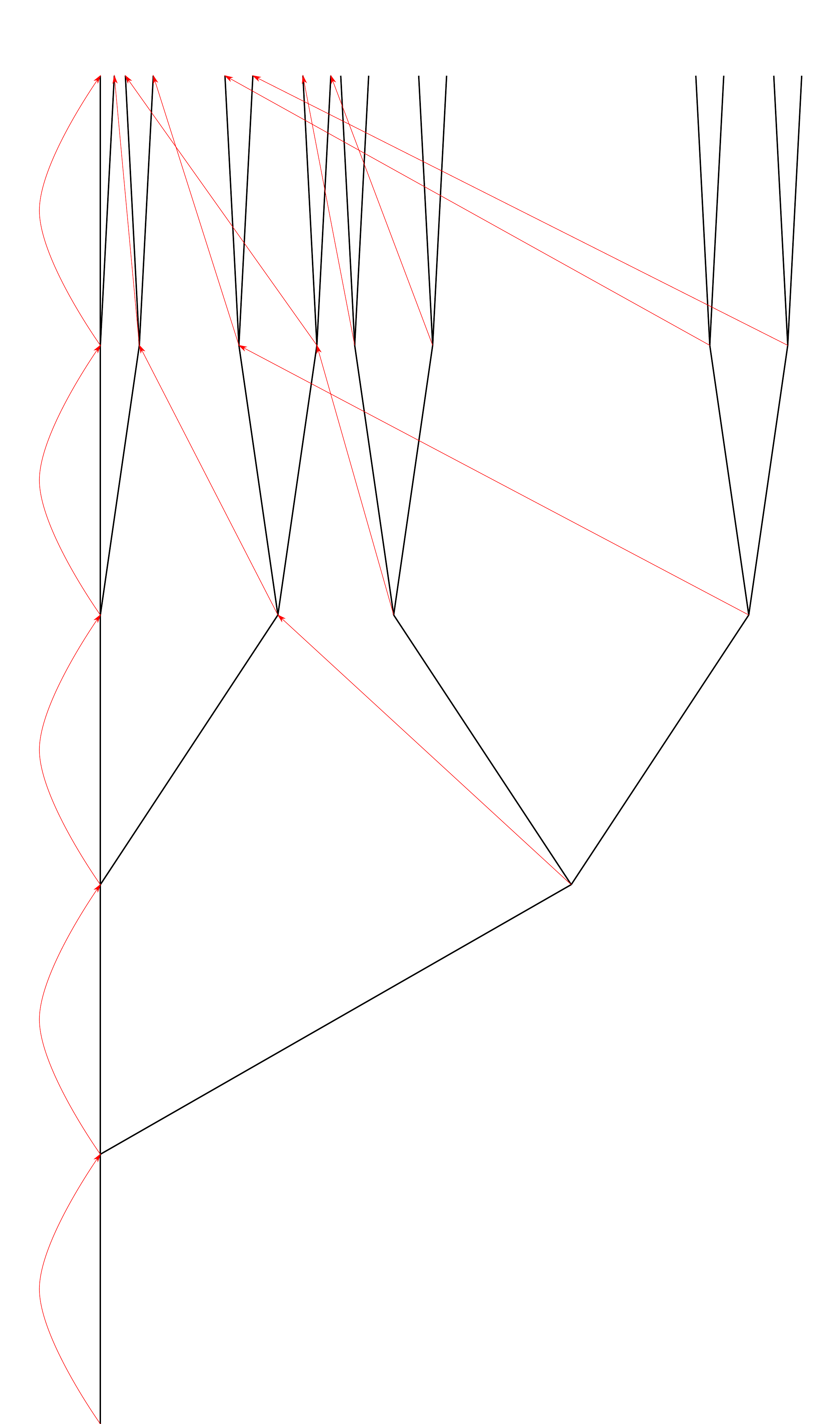}
\put (1,60) {\color{red} $t$}

\put (10,19) {\scriptsize $0$}
\put (10,38) {\scriptsize $0$}
\put (42,38) {\scriptsize $0.1$}
\put (10,57) {\scriptsize $0$}
\put (21,57) {\scriptsize $1$}
\put (30,57) {\scriptsize $0.1$}
\put (54,57) {\scriptsize $1.1$}
\put (8,75) {\scriptsize $0$}
\put (11,75) {\scriptsize $10$}
\put (18,75) {\scriptsize $1$}
\put (22.5,75) {\scriptsize $11$}
\put (26,75) {\scriptsize $0.1$}
\put (31,75) {\scriptsize $10.1$}
\put (51,75) {\scriptsize $1.1$}
\put (56,75) {\scriptsize $11.1$}
\put (7,97) {\tiny $0$}
\put (8,95.5) {\tiny $100$}
\put (9,97) {\tiny $10$}
\put (11,95.5) {\tiny $110$}
\put (15,97) {\tiny $1$}
\put (17,95.5) {\tiny $101$}
\put (21,97) {\tiny $11$}
\put (22.5,95.5) {\tiny $111$}
\put (24,97) {\tiny $0.1$}
\put (26,95.5) {\tiny $100.1$}
\put (28.5,97) {\tiny $10.1$}
\put (31.5,95.5) {\tiny $110.1$}
\put (48,97) {\tiny $1.1$}
\put (50.5,95.5) {\tiny $101.1$}
\put (54,97) {\tiny $11.1$}
\put (56,95.5) {\tiny $111.1$}
\end{overpic}

\end{tabular}
\end{center}

\caption{The action $BS(1,2)$ on its main Bass-Serre tree (left). The action $BS(1,6)$ on its Bass-Serre tree as an ascending HNN extension of $\Z[\frac{1}{3}]$ (right). The element $a$ acts as the same isometry in either action.}
\label{fig:bstrees}
\end{figure}

\begin{ex}
The group $BS(1,6)$ acts on its Bass-Serre tree as an ascending HNN extension of $\Z[\frac{1}{3}]$. This is the tree defined by the $2$-adic valuation on $\Z[\frac{1}{6}]$. It is pictured on the right of Figure \ref{fig:bstrees}. The labels correspond to equivalence classes of elements of $\Z[\frac{1}{6}]$ written in base 2.
\end{ex}

\section{Preliminaries on torsion-free finitely presented abelian-by-cyclic groups}
\label{section:preliminaries}

The rest of the paper is devoted to proving Theorems \ref{thm:char=min} and \ref{thm:P-description}. In this section, we consider a general class of abelian-by-cyclic groups and lay the groundwork to classify their hyperbolic actions. Let $G$ be an ascending HNN extension  of $\Z^n$, so that there is an injective endomorphism of $\Z^n$, represented by an $n\times n$ integral matrix $\gamma\in M_n(\Z)$ with non-zero determinant, such that \[G=\langle \Z^n,t:tzt^{-1}=\gamma z \text{ for } z\in \Z^n\rangle.\] We will sometimes denote $G$ by $G(\gamma)$ to emphasize the matrix $\gamma$. Let $p(x)$ be the  characteristic polynomial of $\gamma$, which has degree $n$. There is an associated ring $R=\Z[x]/(p)$, where $(p)$ is the ideal generated by $p$. In this section, we show that $R$ satisfies axioms (A1)--(A5) (Lemmas~\ref{lem:A1A4} and \ref{lem:A5}) and describe some properties of $R$ and $G$.  We then apply the machinery developed in the previous sections of the paper to $G(R,\gamma)$, which, along with the results of Appendix~\ref{sec:appendix}, allows us to characterize the poset $\mc P_+(G)$ (Theorem~\ref{thm:Thm1.2P_+}). In Sections \ref{sec:pminusasinvspaces} and \ref{sec:heintze} we prove Theorem \ref{thm:P-description}, which will describe $\mathcal P_-(G)$.

We first need to study the abelianization of $G(\gamma)$, in order to apply Proposition \ref{prop:generalstructure}.

\begin{lem}
\label{lem:abbycyclicabelianization}
Let $\gamma\in M_n(\Z)$ with non-zero determinant, and set $G=G(\gamma)$. If neither $1$ nor $-1$ is an eigenvalue of $\gamma$, then the abelianization of $G$ is virtually cyclic. The same is true of the finite index subgroup of $G$ generated by $\Z^n$ and $t^2$.
\end{lem}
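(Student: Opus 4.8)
The plan is to identify the abelianization of $G = G(\gamma)$ explicitly and show it is virtually cyclic under the eigenvalue hypothesis, then handle the index-two subgroup by the same argument applied to $\gamma^2$. From the presentation $G = \langle \Z^n, t : tzt^{-1} = \gamma z \text{ for } z \in \Z^n\rangle$, the abelianization $G^{\mathrm{ab}}$ is the quotient of $\Z^n \oplus \Z t$ by the relations $z = \gamma z$ for all $z \in \Z^n$, i.e.\ $G^{\mathrm{ab}} \cong \left(\Z^n / (\gamma - I)\Z^n\right) \oplus \Z$. So it suffices to show the cokernel $\Z^n/(\gamma - I)\Z^n$ is finite. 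This cokernel is finite precisely when $\det(\gamma - I) \neq 0$, which holds exactly when $1$ is not an eigenvalue of $\gamma$ — that is the standard fact that a finitely generated abelian group $\Z^n/M\Z^n$ is finite iff $\det M \neq 0$ (equivalently, $M$ has full rank over $\Q$). This is the crux of the argument and it is essentially immediate; the eigenvalue hypothesis on $-1$ is not needed for this first part but will be needed for the second.

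First I would write out the abelianization computation as above, reducing to the claim $\det(\gamma - I) \neq 0$, and observe that $\det(\gamma - I) = \pm p(1)$ where $p$ is the characteristic polynomial, which is nonzero precisely because $1$ is not a root of $p$, i.e.\ not an eigenvalue of $\gamma$. Hence $\Z^n/(\gamma - I)\Z^n$ is a finite abelian group and $G^{\mathrm{ab}} \cong (\text{finite}) \oplus \Z$ is virtually cyclic.

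For the index-two subgroup $G_0 = \langle \Z^n, t^2\rangle$, I would note that $G_0$ has the presentation $\langle \Z^n, s : szs^{-1} = \gamma^2 z \text{ for } z \in \Z^n\rangle$ (with $s = t^2$), so it is again of the form $G(\gamma^2)$, and by the same computation its abelianization is $\left(\Z^n/(\gamma^2 - I)\Z^n\right) \oplus \Z$. Now $\det(\gamma^2 - I) = \det(\gamma - I)\det(\gamma + I)$, and $\det(\gamma + I) = \pm p(-1) \neq 0$ precisely because $-1$ is not an eigenvalue of $\gamma$; combined with $\det(\gamma - I) \neq 0$ from above, we get $\det(\gamma^2 - I) \neq 0$, so the cokernel is finite and $G_0^{\mathrm{ab}}$ is virtually cyclic. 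The only mild subtlety — and the thing worth stating carefully rather than a genuine obstacle — is that $G_0$ really does have the HNN presentation with $\gamma^2$ (this follows since $t^2 z t^{-2} = \gamma^2 z$ and $G_0$ is generated by $\Z^n$ and $t^2$ with no further relations, as $G_0$ is the preimage of $2\Z$ under $G \to \Z$). I expect no serious difficulty; the proof is a two-line linear algebra observation packaged around the standard abelianization-of-HNN computation.
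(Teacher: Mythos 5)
Your proof is correct and follows essentially the same route as the paper's: both reduce to showing that $\Z^n/(\gamma - I)\Z^n$ (and $\Z^n/(\gamma^2 - I)\Z^n$ for the index-two subgroup) is finite, the paper via a row-reduction/Smith-normal-form argument and you via the equivalent observation that a full-rank integer matrix has finite cokernel. Your factorization $\det(\gamma^2 - I) = \det(\gamma - I)\det(\gamma + I)$ is a tidy way to phrase the second half, but the underlying argument is the same.
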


\begin{proof}
Since $G=\langle \Z^n,t: tzt^{-1}=\gamma z \text{ for all } z\in \Z^n\rangle$, the abelianization of $G$ is $H\times \Z$ where $H$ is the image of $\Z^n$. Thus $H$ is the quotient of $\Z^n$ by the relations $z=\gamma z$ for all $z\in \Z^n$. Let $e_i=(0,\ldots,0,1,0,\ldots,0)$ be the standard generators for $\Z^n$ for $1 \leq i \leq n$. The group $H$ is generated by the images $f_1,\ldots,f_n$ of $e_1,\ldots,e_n$ subject to the relations $[f_i,f_j]=1$ for all $i\neq j$ and  \[(\gamma-I)^T\begin{pmatrix} f_1 \\ \vdots \\ f_n\end{pmatrix}=\begin{pmatrix} 0 \\ \vdots \\ 0\end{pmatrix}\] where $\cdot^T$ denotes the transpose. We claim that each $f_i$ has finite order. This will prove that $H$ is a finite abelian group, as desired.

Since $1$ is not an eigenvalue of $\gamma$, the matrix $(\gamma-I)^T$ is  invertible. We may thus row reduce $(\gamma-I)^T$ (scaling only by integers and adding only integer multiples of one row to another) to a diagonal matrix with non-zero integers $k_1,\ldots,k_n$ on the diagonal. Thus we have $k_if_i=0$ for each $i$, and $f_i$ is finite order.

Finally, we consider the finite index subgroup generated by $\Z^n$ and $t^2$. This is simply isomorphic to $G(\gamma^2)$. Since $\pm 1$ is not an eigenvalue of $\gamma$, 1 is not an eigenvalue of $\gamma^2$. Hence the discussion above with $\gamma$ replaced by $\gamma^2$ proves that the abelianization of $G(\gamma^2)$ is also virtually cyclic (note that until this paragraph we only used that 1 is not an eigenvalue of $\gamma$).
\end{proof}

Now consider the ring $R=\Z[x]/(p)$ where $p$ is the characteristic polynomial of $\gamma$.  We will also denote $R=\Z[x]/(p)=\Z[\gamma]$ to emphasize the matrix $\gamma$. 

We pause  to introduce a convention that will be used for the rest of the paper.  The abelian group $\Z^n$ has the structure of an $R$-module where $x+(p)$ acts on $\Z^n$ by multiplication by the $n\times n$ integer matrix $\gamma$. This also induces on $\Z^n$ the structure of a $\Z[x]$-module where $x$ acts on $\Z^n$ by multiplication by $\gamma$. Our convention is introduced to avoid confusion, since $\Z^n$  admits multiple representations as an $R$-module:

\begin{conv}
Whenever $\Z^n$ is referred to \emph{as an $R$-module}, the $R$-module structure is determined by letting $x+(p)$ act on $\Z^n$ by multiplication by $\gamma$. Whenever $\Z^n$ is referred to \emph{as a $\Z[x]$-module}, the $\Z[x]$-module structure is obtained by pulling back the $R$-module structure.
\end{conv}

\begin{lem}
\label{lem:abbycyclictoring}
Let $\gamma$ be an injective endomorphism of $\Z^n$ whose characteristic polynomial is equal to its minimal polynomial.  The ring $\Z[\gamma]$, considered as an abelian group with addition, is free of rank $n$. If $\Z^n$ is a cyclic $\Z[\gamma]$-module, then $\Z[\gamma]$ is isomorphic to $\Z^n$ as a $\Z[\gamma]$-module and $G(\gamma)$ is isomorphic to the ascending HNN extension of $\Z[\gamma]$ defined by the endomorphism $\gamma$.
\end{lem}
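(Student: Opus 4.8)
The plan is to verify each of the three claims in turn, all of which are essentially linear-algebraic facts about $\Z[\gamma] = \Z[x]/(p)$ where $p$ is the common characteristic and minimal polynomial of $\gamma$.

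First I would establish that $\Z[\gamma]$ is free abelian of rank $n$. Since $p$ is monic of degree $n$, the quotient $\Z[x]/(p)$ has $\{1, x, x^2, \ldots, x^{n-1}\}$ as a $\Z$-basis: monic-ness of $p$ allows one to reduce any polynomial modulo $p$ to one of degree $< n$ (division algorithm works over $\Z$ because the leading coefficient is a unit), and a nontrivial $\Z$-linear relation among the $x^i + (p)$ of degree $< n$ would force $p$ to divide a nonzero polynomial of degree $< n$, contradicting $\deg p = n$. Thus $\Z[\gamma] \cong \Z^n$ as abelian groups. (Here I am using that the map $\Z[x] \to M_n(\Z)$ sending $x \mapsto \gamma$ has kernel exactly $(p)$, which holds precisely because the minimal polynomial of $\gamma$ equals $p$; this is why that hypothesis appears.)

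Next, assume $\Z^n$ is a cyclic $\Z[\gamma]$-module, say generated by $v \in \Z^n$. Then the $R$-module homomorphism $\psi\colon R = \Z[\gamma] \to \Z^n$ defined by $\psi(f(x) + (p)) = f(\gamma)v$ is surjective by cyclicity. It is also injective: if $f(\gamma)v = 0$ with $\deg f < n$, then since $\{v, \gamma v, \ldots, \gamma^{n-1}v\}$ must be a $\Z$-basis of $\Z^n$ (a spanning set of $n$ elements in a free $\Z$-module of rank $n$ with the quotient $\Z^n/\langle v, \gamma v, \ldots\rangle$ forced to be trivial — this follows because cyclicity as a $\Z[\gamma]$-module together with the degree reduction gives that these $n$ elements $\Z$-span, and $n$ spanning elements of $\Z^n$ are automatically a basis), the coefficients of $f$ must all vanish. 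Hence $\psi$ is an isomorphism of $\Z[\gamma]$-modules $R \xrightarrow{\sim} \Z^n$ intertwining multiplication by $x+(p)$ on the left with multiplication by $\gamma$ on the right.

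Finally, I would transport the HNN presentation across this isomorphism. The group $G(\gamma) = \langle \Z^n, t : tzt^{-1} = \gamma z \text{ for } z \in \Z^n\rangle$ is by definition the ascending HNN extension of the abelian group $\Z^n$ by the injective endomorphism "multiplication by $\gamma$." Applying the $\Z[\gamma]$-module isomorphism $\psi^{-1}\colon \Z^n \to R$, which by construction conjugates multiplication-by-$\gamma$ on $\Z^n$ to multiplication-by-$\gamma$ on $R = \Z[\gamma]$, yields an isomorphism $G(\gamma) \cong \langle R, t : trt^{-1} = \gamma r \text{ for } r \in R\rangle = G(R,\gamma)$, which is exactly the ascending HNN extension of $\Z[\gamma]$ defined by the endomorphism $\gamma$. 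I expect the main (though modest) obstacle to be bookkeeping the module structures carefully — in particular justifying cleanly that $\{v, \gamma v, \ldots, \gamma^{n-1} v\}$ is a $\Z$-basis of $\Z^n$ rather than merely a spanning set, and keeping straight the convention distinguishing the $R$-module and $\Z[x]$-module structures on $\Z^n$; once the isomorphism $\psi$ is in hand, the HNN statement is formal.
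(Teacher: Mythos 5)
Your proposal is correct and follows essentially the same line of argument as the paper: monicity of $p$ gives the division algorithm in $\Z[x]$ and hence the $\Z$-basis $\{1,\gamma,\ldots,\gamma^{n-1}\}$ of $\Z[\gamma]$; cyclicity produces the generating set $\{v,\gamma v,\ldots,\gamma^{n-1}v\}$ of $\Z^n$, which must be a $\Z$-basis since it has $n$ elements; and the resulting $\Z[\gamma]$-module isomorphism transports the HNN presentation. The only (harmless) difference is that your parenthetical about the kernel of $\Z[x]\to M_n(\Z)$ being exactly $(p)$ is slightly beside the point under the paper's convention that $\Z[\gamma]$ is \emph{defined} as $\Z[x]/(p)$, where freeness of rank $n$ follows from monicity alone; that fact would instead be needed to identify $\Z[x]/(p)$ with the literal subring of $M_n(\Z)$ generated by $\gamma$.
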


\begin{proof}
Since the characteristic polynomical $p$ is monic, for any $q\in \Z[x]$, we may write $q=ap+r$ where $a,r\in \Z[x]$ and $\deg(r)<\deg(p)=n$. Hence, $q$ and $r$ define the same equivalence class in $R=\Z[x]/(p)$. Moreover, the quotient homomorphism $\Z[x]\to R$ is injective on polynomials of degree $< n$ since every polynomial in $(p)$, except for $0$, has degree $\geq n$. Thus, the abelian subgroup of $\Z[x]$ generated by $1,x,\ldots,x^{n-1}$ is free and maps isomorphically onto $R$ under the quotient homomorphism.

Suppose that $\Z^n$ is a cyclic $\Z[\gamma]$-module. Then there is a vector $v\in \Z^n$ such that $v,\gamma v, \ldots,\gamma^{n-1}v$ generates $\Z^n$ and so is a basis for $\Z^n$ as a free abelian group. The unique homomorphism of $\Z[\gamma]$-modules $\Z[\gamma]\to \Z^n$ defined by $1\mapsto v$ is a bijection and hence an isomorphism of $\Z[\gamma]$-modules. Thus the ascending HNN extensions of $\Z[\gamma]$ and $\Z^n$ defined by multiplication by $\gamma$ are isomorphic.
\end{proof}

A particular case of cyclic $\Z[\gamma]$--modules is furnished by considering companion matrices to polynomials. If $ p(x)=a_0+a_1x+\cdots+a_{n-1}x^{n-1}+x^n\in \Z[x]$ is a monic polynomial, then its companion matrix is \[\gamma =\begin{pmatrix} 0 & 0 & \ldots &0 & -a_0 \\ 1 & 0 & \ldots & 0 & -a_1 \\ 0 & 1 & \ldots & 0 & -a_2 \\ \vdots & \vdots & \ddots & \vdots & \vdots \\  0 & 0 & \ldots & 1 & -a_{n-1}\end{pmatrix}.\] The $\Z[\gamma]$-module $\Z^n$ is cyclic (the vector $(1,0,\ldots,0)$ being a generator). Thus $G(\gamma)$ is  isomorphic to the ascending HNN extension of $\Z[\gamma]$ defined by multiplication by $\gamma$.

\subsection{Ring axioms for abelian-by-cyclic groups}

Let $\gamma$ be an admissible matrix. We now turn our attention to establishing the axioms (A1)--(A5) described in Section~\ref{sec:axioms} for the ring $R=\Z[\gamma]$. Recall that $p(x)\in \Z[x]$ is the characteristic polynomial of $\gamma$ and  $R=\Z[x]/(p)=\Z[\gamma]$. We will consider the ascending HNN extension $G(\gamma)$.

\begin{lem}
\label{lem:A1A4}
The ring $R=\Z[\gamma]$ and the element $\gamma=x+(p)$ satisfy (A1)--(A4). \end{lem}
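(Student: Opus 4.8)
The axioms (A1)--(A4) for the pair $(R,\gamma)$ with $R=\Z[x]/(p)=\Z[\gamma]$ and $\gamma=x+(p)$ are each quite quick given what we have already established. I will verify them in order.

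\emph{Axiom (A2).} This is essentially automatic: $R=\Z[x]/(p)$ is by construction generated as a $\Z$-algebra by the image of $x$, which is $\gamma$.

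\emph{Axiom (A1).} We must show $\gamma$ is neither a unit nor a zero divisor in $R$. By Lemma \ref{lem:abbycyclictoring}, $R=\Z[\gamma]$ is free of rank $n$ as an abelian group, and multiplication by $\gamma$ on $R$ is (conjugate to) the linear map given by the matrix $\gamma$ acting on $\Z^n$ (using that $\Z^n$ is a cyclic $\Z[\gamma]$-module, so $\Z[\gamma]\cong \Z^n$ as $\Z[\gamma]$-modules). Since $\gamma$ is admissible it has non-zero determinant, so multiplication by $\gamma$ is an injective endomorphism of the abelian group $R$; hence $\gamma$ is not a zero divisor. It is not a unit because its eigenvalues all lie strictly outside the unit disk, so $|\det\gamma|>1$; equivalently, $R/(\gamma)$ is non-trivial (the index of the sublattice $\gamma\Z^n$ in $\Z^n$ is $|\det\gamma|>1$), so $\gamma$ generates a proper ideal. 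Alternatively, since $p(0)=\pm\det\gamma$ has absolute value $>1$, $x+(p)$ is a non-unit in $\Z[x]/(p)$.

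\emph{Axiom (A3).} The ideal $(\gamma)\subseteq R$ corresponds, under the identification $R\cong \Z^n$ as abelian groups coming from a cyclic vector, to the image of multiplication by $\gamma$, i.e.\ to the sublattice $\gamma\Z^n\subseteq \Z^n$, which has index $|\det\gamma|<\infty$. Hence $(\gamma)$ is of finite index in $R$.

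\emph{Axiom (A4).} We must show $\bigcap_{k=1}^\infty (\gamma^k)=0$. Under the identification $R\cong \Z^n$, the ideal $(\gamma^k)$ corresponds to the sublattice $\gamma^k\Z^n$. If $z\in\bigcap_k \gamma^k\Z^n$, then for each $k$ we may write $z=\gamma^k z_k$ with $z_k\in\Z^n$. Working over $\R^n$ and using that $\gamma$ is expanding (all eigenvalues of norm $>1$, hence all eigenvalues of $\gamma^{-1}$ of norm $<1$), we get $z_k=\gamma^{-k}z\to 0$ in $\R^n$ as $k\to\infty$; but each $z_k$ lies in the discrete set $\Z^n$, forcing $z_k=0$ for $k$ large, hence $z=0$. (One can also invoke Remark \ref{rem:intdoms} when $R$ happens to be a domain, e.g.\ when $p$ is irreducible, but the direct lattice argument works in general and uses only that $\gamma$ is expanding.)

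The main point to be careful about is the translation between ideals of $R=\Z[\gamma]$ and sublattices of $\Z^n$: this relies on Lemma \ref{lem:abbycyclictoring}, which requires both that the characteristic polynomial equals the minimal polynomial and that $\Z^n$ is a cyclic $\Z[\gamma]$-module — both of which are part of admissibility (the former holds automatically since, for an $n\times n$ matrix, being a cyclic module forces characteristic $=$ minimal polynomial, cf.\ \cite[Section 7.1]{hoffman_kunze}). Once that dictionary is in place, (A1)--(A4) follow immediately from the non-vanishing of $\det\gamma$ (for (A1), (A3)) and from the expanding hypothesis (for (A4)); (A2) is formal. This is genuinely the easy half; the real work is axiom (A5), eventual periodicity of $\gamma$-adic addresses, which is handled separately in Lemma \ref{lem:A5}.
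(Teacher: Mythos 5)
Your proof is correct and takes essentially the same approach as the paper's: (A2) is immediate, and (A4) is the identical contraction-plus-discreteness argument (the paper phrases the decay $\|\gamma^{-i}\|\to 0$ via Gelfand's formula, but the substance is the same). The only stylistic difference is that you route (A1) and (A3) through the cyclic-module isomorphism $R\cong\Z^n$ of Lemma \ref{lem:abbycyclictoring}, whereas the paper disposes of ``not a zero divisor'' more lightly by observing that $x\nmid p$ in $\Z[x]$, and gets (A3) by simply noting that the image of an injective endomorphism of a rank-$n$ free abelian group is automatically of finite index — both of which sidestep the cyclicity hypothesis, though of course it is available here.
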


\begin{proof}
First, $\gamma$ is not a zero divisor since $x$ does not divide $p$ in $\Z[x]$. It is also not a unit, for elements of $\Z[\gamma]$ act on $\Z^n$ as integer matrices, and there is no integer matrix with determinant equal to $1/\det(\gamma)$. The ring $R$ is generated by $\gamma$ as a $\Z$-algebra by definition. Since multiplication by $\gamma$ defines an injective endomorphism of $\Z[\gamma]$, which is isomorphic to $\Z^n$ as an abelian group, we have that $\gamma\Z[\gamma]$ is a sub-module which is also isomorphic to $\Z^n$ as an abelian group. Hence it is finite index. These remarks verify axioms (A1)--(A3).

Finally we verify (A4) using the isomorphism $\Z[\gamma]\to \Z^n$. Embed $\Z^n$ in $\C^n$ and endow $\C^n$ with the Euclidean $L^2$ norm $\|\cdot\|$. By Gelfand's formula, since the eigenvalues of $\gamma^{-1}$ are $<1$ in absolute value, the $L^2$ operator norm $\|\gamma^{-i}\|\to 0$ as $i\to \infty$. If the intersection $\bigcap_{i=0}^\infty (\gamma^i)$ were non-zero, there would be a vector $v\in \bigcap_{i=0}^\infty \gamma^i \Z^n$. In other words, $\gamma^{-i}v\in \Z^n$ for all $i\geq 0$. But $\|\gamma^{-i}v\|\to 0$ as $i\to \infty$, whereas there is a lower bound on the $L^2$ norm of any element of $\Z^n$. Therefore no such $v$ can exist and this completes the proof.
\end{proof}

\begin{lem}
\label{lem:constanttransversal}
If  $d\in \Z_{>0}$ is the absolute value of the constant term of $p(x)$, then $[d]$ is a transversal for $(\gamma)$ in $R=\Z[\gamma]$.
\end{lem}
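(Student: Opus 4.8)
The plan is to identify the quotient ring $R/(\gamma)$ explicitly and show it is cyclic of order $d = |p(0)|$, which by Lemma~\ref{lem:Tfinitesetofintegers} (or rather the argument used there) gives that $[d]$ is a transversal. First I would observe that $R/(\gamma) = \Z[x]/(p(x),x)$, since $\gamma = x+(p)$ and modding $R=\Z[x]/(p)$ by $(\gamma)$ is the same as modding $\Z[x]$ by the ideal $(p,x)$. Now in $\Z[x]/(x)\cong \Z$ the polynomial $p(x)$ reduces to its constant term $p(0)$, so there is a natural isomorphism $\Z[x]/(p,x) \cong \Z/(p(0)) = \Z/d\Z$. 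Hence $R/(\gamma)$ is cyclic of order exactly $d$.

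Next I would verify that under this identification the standard set $[d]=\{0,1,\ldots,d-1\}$ of integers does indeed meet each coset of $(\gamma)$ in $R$ exactly once. The composite $\Z \hookrightarrow \Z[x] \twoheadrightarrow R \twoheadrightarrow R/(\gamma) \cong \Z/d\Z$ is the reduction-mod-$d$ map, which is surjective with kernel $d\Z$; thus the restriction of $\Z \to R/(\gamma)$ to $[d]$ is a bijection onto $R/(\gamma)$. This is exactly the assertion that $[d]$ is a transversal for $(\gamma)$ in $R$: each element of $R$ differs from a unique element of $[d]$ by an element of $(\gamma)$. This also confirms consistency with Lemma~\ref{lem:Tfinitesetofintegers}, whose proof showed the order of the cyclic group $R/(\gamma)$ is the required $d$, and here we are pinning down that order as $|p(0)|$.

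The one point requiring a small remark — and the only place anything could go wrong — is the assumption built into the statement that $d = |p(0)| > 0$, i.e.\ that $p(0)\neq 0$. This follows because $\gamma$ is admissible: all eigenvalues of $\gamma$ lie outside the unit disk, so in particular $0$ is not an eigenvalue, hence $\det\gamma = \pm p(0) \neq 0$, so $p(0)\neq 0$ and $d\geq 1$. (Equivalently, $x\nmid p$ in $\Z[x]$, which is already noted in the proof of Lemma~\ref{lem:A1A4}.) With $d$ a genuine positive integer the isomorphism $R/(\gamma)\cong \Z/d\Z$ above is literal and the transversal claim follows. I do not expect any real obstacle here; the argument is a direct computation with the two-step quotient $\Z[x]\to \Z[x]/(x)\to \Z/p(0)\Z$, and the main thing to be careful about is simply recording why $p(0)\neq 0$ so that $[d]$ is nonempty and the statement makes sense.
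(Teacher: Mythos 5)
Your proof is correct and follows essentially the same route as the paper's: identify $R/(\gamma)$ with $\Z[x]/(p,x)\cong \Z/p(0)\Z$ and conclude that it has order $d=|p(0)|$, so $[d]$ is a transversal by the reasoning of Lemma~\ref{lem:Tfinitesetofintegers}. The extra remark that $p(0)\neq 0$ (so $d>0$) is a worthwhile sanity check, though it is already guaranteed by $\det\gamma\neq 0$ and implicit in the paper's setup.
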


\begin{proof}
There is a short exact sequence \[0\to ((p)+(x))/(x)\to \Z[x]/(x)\to R/(\gamma)\to 0.\] Elements of $(p)+(x)$ are equivalent modulo $(x)$ to a multiple of the constant term of $p$. Thus the order of $R/(\gamma)$ is exactly $d$. 
\end{proof}

For the last axiom (A5) we use an argument of Vince (\cite[Lemma 2]{vince}).
\begin{lem}
\label{lem:A5}
The pair $\Z[\gamma]$ and $\gamma$ satisfy axiom (A5) with respect to the transversal $[d]$.
\end{lem}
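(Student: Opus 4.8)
\textbf{Proof plan for Lemma \ref{lem:A5}.} The statement to prove is that every element of $R=\Z[\gamma]$ has an eventually periodic $\gamma$-adic address with respect to the transversal $[d]$, where $d=|p(0)|$. The plan is to follow Vince's argument for radix representations in $\Z[\gamma]$, adapted to our setting. The key observation is that the base-$\gamma$ expansion algorithm, applied to an element $r\in R$, produces a sequence of ``remainders'' $r=r_0, r_1, r_2, \ldots$ in $R$ via $r_i = a_i + r_{i+1}\gamma$ with $a_i\in[d]$, equivalently $r_{i+1} = \gamma^{-1}(r_i - a_i)$ in $\gamma^{-1}R$. The address $a_0+a_1\gamma+\cdots$ is eventually periodic precisely when the sequence of remainders $(r_i)$ is eventually periodic, since each $a_i$ is determined by $r_i$ (it is the unique representative in $[d]$ of $r_i \bmod (\gamma)$) and each $r_{i+1}$ is determined by $r_i$ and $a_i$. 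So it suffices to show the sequence $(r_i)_{i\geq 0}$ takes only finitely many values in $R$.

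First I would set up the estimate that controls the size of the remainders. Using the isomorphism $\Z[\gamma]\cong \Z^n$ (as abelian groups) from Lemma \ref{lem:abbycyclictoring}, identify $R$ with a lattice in $\C^n$ (embedding $\Z^n \hookrightarrow \C^n$) and fix a norm $\|\cdot\|$. Since $\gamma$ is expanding, Gelfand's formula gives $\|\gamma^{-i}\|\to 0$; in particular there is a constant $\lambda<1$ and a constant $C$ with $\|\gamma^{-1}w\|\leq \lambda\|w\| + C$ for all $w$ — or more carefully, fix $k$ large enough that $\|\gamma^{-k}\|<1$ and iterate. The recursion $r_{i+1}=\gamma^{-1}(r_i-a_i)$ with $\|a_i\|$ bounded uniformly (as $a_i\in[d]$, a fixed finite set) then shows that $\|r_i\|$ is eventually bounded by a constant depending only on $\gamma$, $d$, and not on $i$ or $r$: the expanding estimate forces $\|r_{i+1}\|$ to contract toward a bounded region whenever $\|r_i\|$ is large. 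Concretely, one shows $\limsup_i \|r_i\| \leq M$ for an explicit $M = M(\gamma, d)$.

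Next, combine this bound with the fact that each $r_i$ lies in the lattice $R\cong\Z^n\subset\C^n$. A bounded region of $\C^n$ contains only finitely many lattice points, so the set $\{r_i : i\geq N\}$ is finite for $N$ large enough. A sequence in a finite set must eventually repeat — if $r_i = r_j$ for some $i<j$ with both $\geq N$, then by the deterministic nature of the recursion $r_{i+m}=r_{j+m}$ for all $m\geq 0$, so $(r_i)$ is eventually periodic with period dividing $j-i$. Hence the address $(a_i)$ is eventually periodic, which is exactly (A5).

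\textbf{Main obstacle.} The routine-but-essential technical point is making the norm estimate clean: one needs to be careful that $\gamma^{-1}$ does not literally contract in one step (only $\gamma^{-k}$ does, for suitable $k$), so the argument should either work with blocks of $k$ digits at a time or use an adapted norm in which $\gamma^{-1}$ is a strict contraction plus bounded error. Once the uniform bound $\limsup\|r_i\|\leq M$ is in hand, the finiteness-of-lattice-points and pigeonhole steps are immediate. There is also a minor bookkeeping subtlety — the transversal is $[d]=\{0,1,\ldots,d-1\}\subset\Z\subset R$, and one must confirm these integer representatives are indeed a transversal for $(\gamma)$, but that is precisely the content of Lemma \ref{lem:constanttransversal}, which we may assume.
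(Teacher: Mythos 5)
Your proposal is correct and follows essentially the same strategy as the paper's proof (which credits Vince): pass to $\Z^n\subset\C^n$, track the remainders $r_i$ with $r_{i+1}=\gamma^{-1}(r_i-a_i)$, show $\|r_i\|$ is uniformly bounded using the expanding hypothesis, and apply pigeonhole to the finitely many lattice points in that ball. On the one point you flag as the main obstacle — that $\gamma^{-1}$ need not contract in a single step — the paper sidesteps both of your suggested fixes (blocking by $k$, or an adapted norm) by instead using Gelfand's formula to get a single constant $c$ with $\|\gamma^{-i}\|\leq ca^i$ for \emph{all} $i\geq 0$ (some $a<1$), then bounding $\|r_i\|$ directly from the closed-form expression $r_i=\gamma^{-i}r_0-\sum_{j=0}^{i-1}\gamma^{j-i}a_j$ via a geometric series, which gives the uniform bound $\|r_i\|< c\|r_0\|+cb\,a/(1-a)$ in one stroke.
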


\begin{proof}
Via the isomorphism $\Z[\gamma]\to \Z^n$, the transversal $[d]=\{0,1,\ldots,d-1\}$ is identified with a transversal $\mathscr D=\{v_1,\ldots,v_d\}$ for $\gamma \Z^n$ in $\Z^n$. Each $x\in \Z^n$ has a uniquely determined $\gamma$-adic address $a_0+\gamma a_1+\cdots$, where $a_0,a_1,a_2,\ldots\in\mathscr D$. It is useful to review where the $a_i$ come from. We set $x_0=x$ and $a_0\in \mathscr D$ to be the unique element for which $x_0$ and $a_0$ lie in the same coset of $\gamma \Z^n$. We have $x=a_0+\gamma x_1$ for some unique $x_1\in \Z^n$. Solving for $x_1$ using the matrix $\gamma$, we have $x_1=\gamma^{-1}(x_0-a_0)$. Having chosen $a_0,\ldots,a_{i-1}$ and $x_0,\ldots,x_i$ such that $x_j=a_j+\gamma x_{j+1}$ for $j\leq i-1$, we have \[x=x_0=a_0+\gamma x_1=a_0+\gamma a_1 + \gamma^2 x_2=\ldots = a_0+\gamma a_1 +\cdots +\gamma^{i-1} a_{i-1} + \gamma^i x_i.\] Then $a_i\in \mathscr D$ is chosen to be the unique element with $x_i-a_i \in \gamma \Z^n$. We have $x_i=a_i+\gamma x_{i+1}$ for some unique $x_{i+1}\in \Z^n$; namely, $x_{i+1}=\gamma^{-1}(x_i-a_i)$.

Note that if $x_i=x_j$ for some distinct $i<j$, then the digits $a_i$ will repeat with period $j-i$ starting with $a_i$. 
As before, we embed $\Z^n$ in $\C^n$ and equip it with the $L^2$ norm $\| \cdot \|$. The proof will be complete if we can show that given $x$, $\|x_i\|$ is uniformly bounded for all $i$.

Choose $a<1$ with $|\lambda |<a<1$ for all eigenvalues $\lambda$ of $\gamma^{-1}$, which is possible since $\gamma$ is expanding. By Gelfand's formula, since the eigenvalues of $\frac{1}{a}\gamma^{-1}$ are $<1$ in absolute value, the $L^2$ operator norm $\|\left(\frac{1}{a}\gamma^{-1}\right)^i\|\to 0$ as $i \to \infty$.  It follows that $\|\gamma^{-i}\|<a^i$ for all sufficiently large $i$. Therefore there exists some $c>0$ such that $\|\gamma^{-i}\|<ca^i$ for \emph{every} $i\geq 0$, and so $\|\gamma^{-i} x\|\leq \|\gamma^{-i} \| \|x\|\leq ca^i \|x\|$ for each $i$ and all $x \in \Z^n$. Note that \[x_i=\gamma^{-i}x_0 - \sum_{j=0}^{i-1} \gamma^{j-i} a_j\] for each $i$. Thus, \[\|x_i\|\leq \|\gamma^{-i}x_0\| + \sum_{j=0}^{i-1} \|\gamma^{j-i}a_j\| \leq ca^{i} \|x_0\| + \sum_{j=0}^{i-1} ca^{i-j}\|a_j\|.\] Choosing $b$ larger than the norm of every element of $\mathscr D$, the above expression is at most \[ca^i\|x_0\| + cb(a+a^2+\cdots+a^i)= ca^i \|x_0\| + cb\frac{a-a^{i+1}}{1-a} <   c\|x_0\|+cb \frac{a}{1-a},\] which yields our upper bound, independent of $i$.
\end{proof}

\subsection{The structure of $\mc P_+(G)$ for abelian-by-cyclic groups}
In this section, we prove the portion of Theorem~\ref{thm:char=min} involving $\mc P_+(G)$. The proof of this theorem involves technical arguments from commutative algebra, which are made in Appendix \ref{sec:appendix}.  Here, we show how to use Theorem~\ref{thm:idealclassification} to describe the structure of $\mc P_+(G)$.

\begin{thm}
\label{thm:Thm1.2P_+}
Fix an expanding matrix $\gamma\in M_n(\Z)$, and  
suppose that $\Z^n$ is a cyclic $\Z[x]$-module. If the prime factorization in $\Z[[x]]$ of the characteristic polynomial $p$ of $\gamma$  is $p=up_1^{n_1}\cdots p_r^{n_r}$, then $\mathcal P_+(G)$ is isomorphic to $\Div(n_1,\ldots,n_r)$.
\end{thm}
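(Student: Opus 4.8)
\textbf{Proof plan for Theorem \ref{thm:Thm1.2P_+}.}

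The plan is to combine the general correspondence $\mc P_+(G)\cong$ (poset of ideals of $\gamma^{-1}\widehat R$ with inclusion, \emph{opposite}), established in Theorem \ref{thm:I(G)}, with a structural description of the localized completion $\gamma^{-1}\widehat R$. First, I would invoke the results of this section (Lemmas \ref{lem:A1A4}--\ref{lem:A5}) to verify that $R=\Z[\gamma]=\Z[x]/(p)$ together with $\gamma=x+(p)$ satisfies axioms (A1)--(A5), since $\gamma$ is admissible: expanding gives (A4) and (A5), the cyclic $\Z[x]$-module hypothesis identifies $G(\gamma)$ with $G(R,\gamma)$ via Lemma \ref{lem:abbycyclictoring}, and the remaining axioms are automatic. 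Thus Theorem \ref{thm:main}/Theorem \ref{thm:I(G)} applies, and $\mc P_+(G)$ is anti-isomorphic to the poset of ideals of $\gamma^{-1}\widehat R$ ordered by inclusion.

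The main work is then identifying the completion. I would argue that $\widehat R$, the $(\gamma)$-adic completion of $\Z[x]/(p)$, is isomorphic to $\Z[[x]]/(p)$ (using that $x$ acts as $\gamma$ and that the $(\gamma)$-adic and $(x)$-adic topologies agree on $R$ since the constant term of $p$ is a nonzero integer, so $(\gamma)\supseteq (x)^k$ fails but $(x)\subseteq (\gamma)$ up to a finite power — this comparison is exactly what Lemma \ref{lem:constanttransversal} and the radix material quantify). Then I would cite the commutative-algebra input (Theorem \ref{thm:idealclassification}, proved in Appendix \ref{sec:appendix}, resting on McDonough \cite{mcdonough} and Elliott \cite{elliott}): the prime factorization $p=up_1^{n_1}\cdots p_r^{n_r}$ in the UFD $\Z[[x]]$ yields, after localizing at $\gamma$, a decomposition
\[
\gamma^{-1}\widehat R \;\cong\; \prod_{i=1}^r \gamma^{-1}\bigl(\Z[[x]]/(p_i^{n_i})\bigr),
\]
where each factor $\gamma^{-1}(\Z[[x]]/(p_i^{n_i}))$ is a local ring whose only ideals form the finite chain $0=(p_i^{n_i})\subsetneq (p_i^{n_i-1})\subsetneq\cdots\subsetneq (p_i)\subsetneq (1)$, a chain of length $n_i+1$. (The key point, to be justified via Theorem \ref{thm:idealclassification}, is that inverting $\gamma$ kills all the ``transverse'' primes so that each local factor becomes a chain ring; the $p_i$ are pairwise coprime in $\Z[[x]]$ by Lemma \ref{lem:relprimeseries}, giving the product decomposition by CRT for the finite truncations and passing to the limit.)

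Finally, the poset of ideals of a finite product of rings is the product of the posets of ideals of the factors, so the poset of ideals of $\gamma^{-1}\widehat R$ with inclusion is $\prod_{i=1}^r [n_i+1]=\Div(n_1,\ldots,n_r)$. Since $\mc P_+(G)$ is the \emph{opposite} of this, and $\Div(n_1,\ldots,n_r)$ is isomorphic to its own opposite via $(i_1,\ldots,i_r)\mapsto(n_1-i_1,\ldots,n_r-i_r)$ (recorded in the Posets subsection), we conclude $\mc P_+(G)\cong\Div(n_1,\ldots,n_r)$, as claimed. The main obstacle is the second paragraph: correctly identifying $\widehat R$ with $\Z[[x]]/(p)$ and, crucially, showing that localization at $\gamma$ turns each $\Z[[x]]/(p_i^{n_i})$ into a chain ring with exactly $n_i+1$ ideals — this is precisely the content deferred to Appendix \ref{sec:appendix} and is where the work of \cite{mcdonough} and \cite{elliott} enters; everything else is bookkeeping with the already-established poset correspondence.
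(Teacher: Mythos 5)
Your proposal is correct and follows essentially the same route as the paper: identify $G(\gamma)$ with $G(R,\gamma)$ via Lemma \ref{lem:abbycyclictoring}, verify axioms (A1)--(A5) via Lemmas \ref{lem:A1A4}--\ref{lem:A5}, invoke Theorem \ref{thm:I(G)}, and then delegate the identification $\widehat R\cong\Z[[x]]/(p)$ and the chain-ring decomposition of $\gamma^{-1}\widehat R$ to Theorem \ref{thm:idealclassification} in the Appendix, finishing with the self-duality of $\Div(n_1,\ldots,n_r)$. The only wobble is your motivation for $\widehat R\cong\Z[[x]]/(p)$: there is no distinction between the $(\gamma)$-adic and $(x)$-adic topologies on $R=\Z[x]/(p)$ (since $\gamma$ \emph{is} the image of $x$), and the constant-term hypothesis is not used for this identification (which follows from exactness of completion for finitely generated modules, Lemma \ref{lem:completionquotient}) but rather for the ideal classification proper; since you correctly point to the Appendix for that step, this is an imprecision of motivation rather than a gap.
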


\begin{proof}
By Lemma \ref{lem:abbycyclictoring}, $\Z^n$ is actually isomorphic to $R$ as an $R$-module and $G(\gamma)$ is isomorphic to $G(R,x+(p))$. By Lemmas \ref{lem:A1A4} and \ref{lem:A5} the pair $(R,x+(p))$ satisfies axioms (A1)--(A5). Hence by Theorem \ref{thm:main}, $\mathcal P_+(G)$ is isomorphic to the opposite of the poset of ideals of the $(x+(p))$-adic completion $\widehat R$ considered up to multiplication by $x+(p)$. Since $\gamma$ is expanding, the constant term of $p$ is not in $\{-1,0,1\}$, and so it follows from Theorem \ref{thm:idealclassification}  that this poset of ideals is isomorphic to the poset of divisors of $p$ in $\Z[[x]]$. If the prime factorization of $p$ in $\Z[[x]]$ is $up_1^{n_1}\cdots p_r^{n_r}$, then the poset of divisors is exactly $\Div(n_1,\ldots,n_r)$, which is isomorphic to its opposite. Hence $\mathcal P_+(G)$ is isomorphic to $\Div(n_1,\ldots,n_r)$.
\end{proof}

\section{The structure of $\mc P_-(G$): invariant subspaces}\label{sec:pminusasinvspaces}

The goal of this section is to prove the conclusions about $\mc P_-(G)$ from Theorem \ref{thm:P-description} involving invariant subspaces. Consider an expanding matrix $\gamma \in M_n(\Z)$  with equal minimal and characteristic polynomial and the group $G(\gamma)$. Notably, we do not assume here that $\Z^n$ is a cyclic $\Z[\gamma]$-module. 

\begin{prop}\label{prop:P-invar}
If $G=G(\gamma)$, then $\mathcal P_-(G)$ is isomorphic to the poset of  subspaces of $\R^n$ invariant under~$\gamma$.
\end{prop}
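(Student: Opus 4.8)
The plan is to set up a correspondence between the poset $\mathcal P_-(G)$ and the poset of $\gamma$-invariant subspaces of $\R^n$ by using the identification, from Propositions~\ref{prop:generalstructure} and~\ref{prop:Ppmconfining}, of $\mathcal P_-(G)$ with the poset of equivalence classes of confining subsets of $\Z^n \otimes \mathbb{Q}$-wait, more precisely with confining subsets of the localization $\gamma^{-1}\Z^n = \bigcup_k \gamma^{-k}\Z^n \subseteq \R^n$ under $\alpha^{-1}$ (multiplication by $\gamma^{-1}$). First I would record that Lemma~\ref{lem:abbycyclicabelianization} applies: since $\gamma$ is expanding, neither $1$ nor $-1$ is an eigenvalue, so the abelianizations of $G$ and of $\Z^n \rtimes \langle t^2\rangle$ are virtually cyclic, and hence Proposition~\ref{prop:generalstructure} gives the stated global structure of $\mathcal H(G)$ and identifies $\mathcal P_-(G)$ with the lattice of confining subsets of $\gamma^{-1}\Z^n$ under $\alpha^{-1}$, ordered by $[P]\preccurlyeq[Q]$ iff $\gamma^{N}(Q)\subseteq P$ for some $N\ge 0$ (note that here $\alpha^{-1}$ is multiplication by $\gamma^{-1}$, so powers of $\alpha^{-1}$ are multiplications by negative powers of $\gamma$, i.e. $\alpha^{N}$ with $N\le 0$).

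The heart of the argument is the two-way construction. From a $\gamma$-invariant subspace $V\subseteq\R^n$ I would build a confining subset $Q_V$ under $\alpha^{-1}$ by taking (a suitable lattice neighborhood of) $V$: concretely $Q_V = \{x\in\gamma^{-1}\Z^n : \operatorname{dist}(x,V)\le 1\}$ using a fixed norm on $\R^n$. Checking the axioms of Definition~\ref{def:confining} for $\alpha^{-1}$: condition (a), $\gamma^{-1}Q_V\subseteq Q_V$, follows because $\gamma^{-1}$ preserves $V$ and $\|\gamma^{-1}\|_{op}$ can be taken $<1$ after choosing the norm adapted to the expanding eigenvalues (Gelfand's formula, exactly as in the proof of Lemma~\ref{lem:A5}); condition (c), that $\gamma^{-k_0}(Q_V\cdot Q_V)\subseteq Q_V$, follows similarly since $Q_V+Q_V$ lies within distance $2$ of $V$ and contracting enough brings this back inside distance $1$; condition (b), $\bigcup_{n\ge0}\gamma^{n}(Q_V)=\gamma^{-1}\Z^n$, holds because for any $x$, $\gamma^{-n}x\to$ the projection of $x$ onto $V$ along a $\gamma$-invariant complement, hence eventually lands within distance $1$ of $V$. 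Conversely, given a confining subset $Q$ under $\alpha^{-1}$, I would recover an invariant subspace by setting $V_Q$ to be the closure (or real span) of $\bigcap_{n\ge0}\overline{\gamma^{-n}(Q)}$, or more carefully the set of limits $\lim \gamma^{-n_i}q_i$; this is $\gamma^{-1}$-invariant hence $\gamma$-invariant since it is a real subspace preserved by an invertible operator, and one shows it is a genuine linear subspace using that $Q$ is symmetric and nearly closed under addition. The key monotonicity facts — $V\subseteq W \Rightarrow Q_V \preceq Q_W$ and back — follow directly from the definitions, and the two constructions are mutually inverse up to the equivalence $\sim$: $V_{Q_V}=V$ is immediate, and $Q_{V_Q}\sim Q$ requires showing any confining $Q$ is equivalent to a lattice neighborhood of $V_Q$, which is where most of the work lies.

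I expect the main obstacle to be the direction ``confining subset $\Rightarrow$ invariant subspace,'' specifically proving that the limit set $V_Q$ is actually a linear subspace and that $Q$ is coarsely a neighborhood of it (i.e. $Q \sim Q_{V_Q}$). The subtlety is that a confining subset need not literally be convex or a neighborhood of anything — it is only "attracting under $\gamma^{-1}$" and "nearly closed under addition" — so one has to argue that repeatedly contracting by $\gamma^{-1}$ and using near-closure under addition forces the limiting shape to be a subspace, and that no extra "thin directions" survive. This is analogous to reverse-engineering arguments in the literature (and to the $\mathcal P_+$ side via Lemma~\ref{lem:surj}); I would handle it by first showing $\gamma^{-n}(Q)$ Hausdorff-converges (on bounded sets) to a closed set $C$, then showing $C$ is closed under addition and negation and scaled by $\gamma^{-1}$, whence $C$ is a union of cosets but by near-closure and contraction actually equals a single subspace $V_Q$, and finally bounding $Q$ between $Q_{V_Q}$ and $\gamma^{m}(Q_{V_Q})$ for some $m$. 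With these pieces in place, combining with Theorem~\ref{thm:P-description}'s other parts (the global poset structure from Proposition~\ref{prop:generalstructure} and the Heintze-group realization from Section~\ref{sec:heintze}) yields Proposition~\ref{prop:P-invar}.
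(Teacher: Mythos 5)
Your architecture matches the paper's exactly: identify $\mathcal P_-(G)$ with equivalence classes of subsets of $A=\bigcup_{i\ge 0}\gamma^{-i}\Z^n$ that are confining under $\alpha^{-1}$ (via Propositions~\ref{prop:generalstructure} and~\ref{prop:Ppmconfining}, with the abelianization hypothesis supplied by Lemma~\ref{lem:abbycyclicabelianization}); build a confining subset $Q_\epsilon(V)$ as a metric neighborhood of an invariant subspace $V$ (your adapted-norm observation is a clean substitute for the finite-union fix in Definition~\ref{def:Q_epsilon}); recover a limit set $\mathcal L(Q)=\bigcap_k\overline{\gamma^{-k}(Q)}$ from a confining subset $Q$; prove $\mathcal L(Q)$ is a $\gamma$-invariant subspace (the paper uses Bourbaki's structure theorem for closed subgroups of $\R^n$ to rule out the discrete part, which is the ``no thin directions survive'' argument you gesture at); and show $Q\sim Q_\epsilon(\mathcal L(Q))$. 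You also correctly flag the last step as the crux.

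The gap is precisely there. Your one-line resolution, ``bounding $Q$ between $Q_{V_Q}$ and $\gamma^{m}(Q_{V_Q})$,'' compresses two separate claims, each of which needs a real idea you do not supply and which does not follow formally from the axioms of a confining set. For $Q\preceq Q_\epsilon(\mathcal L(Q))$, one must show that every element of a metric neighborhood of $\mathcal L(Q)$ has uniformly bounded word length in $Q\cup\{t^{\pm1}\}$. The paper reduces this (Lemma~\ref{cor:bddinQ}) to Lemma~\ref{lem:ballwordlength}, which bounds word lengths of elements of $B_\epsilon(0)\cap A$, and the proof of that lemma rests on the compactness of the Lagarias--Wang self-affine tile $T(\gamma,\mathscr D)$ combined with the base-$\gamma$ address machinery of Lemma~\ref{lem:RsubsetQ}. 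There is nothing in your sketch that plays this role. For the reverse direction $Q_\epsilon(\mathcal L(Q))\preceq Q$, one must show that $Q$ actually lies in a metric neighborhood of $\mathcal L(Q)$; this is exactly the claim that is a priori unclear (a confining set could in principle stretch off in directions transverse to its limit set), and the paper proves it by contradiction in Proposition~\ref{prop:Q_epsilonpreceqQ} by passing to the quotient vector space $\R^n/\mathcal L(Q)$, noting that the induced operator $\overline\gamma$ is still expanding, and using the two-sided Gelfand estimates of Lemma~\ref{lem:gelfand} to force a point of $\mathcal L(Q)$ into a compact set disjoint from $\mathcal L(Q)$. Until these two arguments (or substitutes for them) are in place, the proposal is a correct outline but not a proof.
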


Recall that $G=G(\gamma)=\langle \Z^n, t : tzt^{-1}=\gamma z \text{ for } z\in \Z^n\rangle$. As for any ascending HNN extension, we may express $G$ as $N\rtimes_\alpha \Z$ where $N=\bigcup_{i=0}^\infty t^{-i}\Z^n t^i$ is the normal closure of $\Z^n$ and $\alpha$ is conjugation by $t$.

To prove Proposition~\ref{prop:P-invar}, we first use Proposition~\ref{prop:Ppmconfining} to identify elements of $\mc P_-(G)$ with subsets of $N$ which are confining under $\alpha^{-1}$.  To such a confining subset, we show how to associate a subspace of $\R^n$ which is invariant under $\gamma$ (see Lemma~\ref{lem:conftoinvar}).  Conversely, given an invariant subspace of $\R^n$, we show how to associate a confining subset of $N$ (see Lemma~\ref{lem:invartoconf}).  To prove the isomorphism of posets, we will show that these operations are ``almost inverses" of each other, in the sense that any confining subset $Q$ of $N$ is equivalent to the confining subset formed by first considering the invariant subspace associated to $Q$, and then taking the confining subset associated to that subspace.  We do this in two steps.  The first is in Section~\ref{sec:tiles}, where we introduce the notion of \emph{tiles}, and the second is in Section~\ref{sec:invarsubspaces}, where we investigate the structure of invariant subspaces.

We begin by showing that we can identify $N$ with a subset of $\R^n$.  
We consider $\Z^n$ with its canonical $\Z[\gamma]$-module structure and embed it as a subset of $\R^n$.

\begin{defn} \label{def:A}
Let $A$ be the $\Z[\gamma]$-module
\[A\vcentcolon=\bigcup_{i\geq 0} \gamma^{-i} \Z^n.\]
\end{defn}
Thus $A$ is a sub-module of $\Q^n \subset \R^n$. The normal subgroup $N\trianglelefteq G$ is also naturally a $\Z[\gamma]$-module with $\gamma$ acting by conjugation by $t$. There is a map $N\to A$ defined by $t^{-i} z t^i \mapsto \gamma^{-i}z$. One may check that this is well-defined and an isomorphism of $\Z[\gamma]$-modules. We will  identify $N$ with the subset $A\subset \R^n$ in all that follows. The automorphism $\alpha^{-1}$ of $A$ induced by conjugation by $t^{-1}$ is multiplication by $\gamma^{-1}$.

We now define another subset of $\R^n$ associated to a subset $Q$ of $A$ that is confining under $\alpha^{-1}$. It may be thought of as the limit set of a confining subset under iteration of $\alpha^{-1}$. We will show that it is in fact a subspace of $\R^n$ invariant under $\gamma$.

\begin{defn}
Let $Q\subseteq A$ be confining under $\alpha^{-1}$.  Define a subset $\mc L(Q)$ of $\mathbb R^n$  consisting of the set of all limits of convergent sequences $\gamma^{-k}x_k$ where $x_k\in Q$ for all $k\geq 0$. That is, \[\mathcal L(Q)\vcentcolon=\left\{\lim_{k\to \infty} \gamma^{-k}x_k : \{x_k\}_{k=0}^\infty \subset Q \text{ and } \{\gamma^{-k} x_k\}_{k=0}^\infty \text{ is convergent}\right\}.\]
\end{defn}

The following lemma gives an alternate characterization of $\mc L(Q)$.
\begin{lem}
\label{lem:limsetequivdefn}
The set $\mathcal L(Q)$ is an intersection of closures
\[
\mathcal{L}(Q)=\bigcap_{k\geq 0}\overline{\gamma^{-k}(Q)}.
\]
\end{lem}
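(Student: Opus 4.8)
The plan is to prove the two inclusions $\mathcal L(Q) \subseteq \bigcap_{k\geq 0}\overline{\gamma^{-k}(Q)}$ and $\bigcap_{k\geq 0}\overline{\gamma^{-k}(Q)} \subseteq \mathcal L(Q)$ separately.

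For the forward inclusion, suppose $y = \lim_{k\to\infty} \gamma^{-k}x_k$ with $x_k \in Q$. I would fix $m \geq 0$ and show $y \in \overline{\gamma^{-m}(Q)}$. For $k \geq m$, since $\alpha^{-1}(Q) = \gamma^{-1}(Q) \subseteq Q$ (this is Definition~\ref{def:confining}(a) applied to the automorphism $\alpha^{-1}$), we have $\gamma^{-(k-m)}x_k \in Q$, hence $\gamma^{-k}x_k = \gamma^{-m}(\gamma^{-(k-m)}x_k) \in \gamma^{-m}(Q)$. So the tail of the sequence $\{\gamma^{-k}x_k\}$ lies in $\gamma^{-m}(Q)$, and its limit $y$ lies in $\overline{\gamma^{-m}(Q)}$. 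Since $m$ was arbitrary, $y \in \bigcap_{k\geq 0}\overline{\gamma^{-k}(Q)}$.

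For the reverse inclusion, suppose $y \in \overline{\gamma^{-k}(Q)}$ for every $k \geq 0$. Then for each $k$ I can choose $z_k \in \gamma^{-k}(Q)$ with $\|z_k - y\| < 1/(k+1)$ (using the Euclidean norm on $\R^n$), so that $z_k \to y$. Writing $z_k = \gamma^{-k}x_k$ with $x_k \in Q$, I get $y = \lim_{k\to\infty}\gamma^{-k}x_k$ with $x_k \in Q$ and the sequence $\{\gamma^{-k}x_k\}$ convergent, which is exactly the condition for $y \in \mathcal L(Q)$.

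I do not expect any serious obstacle here; the only subtlety is making sure the monotonicity direction is right, i.e.\ that applying $\gamma^{-1}$ keeps us inside $Q$ (rather than leaving it), which is precisely the confining condition (a) for $\alpha^{-1}$, and that in the reverse direction the chosen approximating points can genuinely be written in the form $\gamma^{-k}x_k$ with a single index $k$ matching the level of the closure — this is immediate from the definition of $\gamma^{-k}(Q)$. No deep input is needed beyond the definition of confining subsets and basic point-set topology of $\R^n$.
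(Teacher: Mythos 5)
Your proof is correct and takes essentially the same approach as the paper: for the forward inclusion you observe that the tail $\{\gamma^{-l}x_l\}_{l\geq m}$ lies in $\gamma^{-m}(Q)$ because $Q$ is closed under applying $\gamma^{-1}$, and for the reverse inclusion you choose approximating points $\gamma^{-k}x_k \in \gamma^{-k}(Q)$ converging to $y$. The only cosmetic difference from the paper is using $1/(k+1)$ rather than $1/k$ to handle $k=0$.
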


\begin{proof}
For any $y\in \mathcal{L}(Q)$, there exist $x_k\in Q$ such that $\gamma^{-k} x_k\to y$. Since $\gamma^{-k}(Q)$ is closed under applying $\gamma^{-1}$, for any $l\geq k$ we have  $\gamma^{-l} x_l= \gamma^{-k} (\gamma^{-l+k} x_l) \in \gamma^{-k}(Q)$. Thus $\{\gamma^{-l} x_l\}_{l\geq k}$ is a sequence in $\gamma^{-k}(Q)$ converging to $y$, and so $y\in \overline{\gamma^{-k}(Q)}$ for each $k\geq 0$.

On the other hand, if $y\in \bigcap_{k\geq 0} \overline{\gamma^{-k}(Q)}$, then for each $k$ there exists $\gamma^{-k}x_k \in \gamma^{-k}(Q)$ with $d(y,\gamma^{-k} x_k)\leq 1/k$ (here distance is being measured in $\R^n$). Hence $\{x_k\}_{k\geq 0}$ is a sequence in $Q$ and $\gamma^{-k} x_k\to y$, and so $y\in \mathcal{L}(Q)$.
\end{proof}

We now show that $\mc L(Q)$ is invariant under $\gamma$.
\begin{lem}\label{lem:conftoinvar}
The set $\mathcal{L}(Q)$ is a subspace of $\R^n$ that is invariant under $\gamma$. 
\end{lem}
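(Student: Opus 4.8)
The plan is to verify the two defining properties of a $\gamma$-invariant subspace for $\mathcal L(Q)$: that it is closed under scalar multiplication and addition (so a linear subspace), and that $\gamma(\mathcal L(Q))\subseteq \mathcal L(Q)$. The characterization $\mathcal L(Q)=\bigcap_{k\geq 0}\overline{\gamma^{-k}(Q)}$ from Lemma~\ref{lem:limsetequivdefn} will be the main computational handle, together with the confining axioms for $Q$ under $\alpha^{-1}$ (Definition~\ref{def:confining}), namely $\gamma^{-1}(Q)\subseteq Q$, $\bigcup_{n\geq 0}\gamma^n(Q)=A$, and $\gamma^{-k_0}(Q\cdot Q)\subseteq Q$ for some $k_0\geq 0$ (here the group operation on $A$ is addition, so $Q\cdot Q = Q+Q$).

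First I would prove invariance under $\gamma$. Take $y\in\mathcal L(Q)$; by the sequence definition there are $x_k\in Q$ with $\gamma^{-k}x_k\to y$. Then $\gamma y=\lim_k \gamma^{-(k-1)}x_k=\lim_k \gamma^{-(k-1)}x_k$, and since $x_k\in Q$ this exhibits $\gamma y$ as a limit of elements $\gamma^{-(k-1)}x_k$ with $x_k\in Q$; reindexing, $\gamma y\in\mathcal L(Q)$. (One must be slightly careful at $k=0$, but dropping the first term of a convergent sequence does not change its limit.) Next, closure under addition: given $y=\lim_k\gamma^{-k}x_k$ and $y'=\lim_k\gamma^{-k}x'_k$ with $x_k,x'_k\in Q$, consider $\gamma^{-k}(x_k+x'_k)=\gamma^{-k}(x_k+x'_k)$; since $x_k+x'_k\in Q+Q$ and $\gamma^{-k_0}(Q+Q)\subseteq Q$, we have $\gamma^{-(k+k_0)}(x_k+x'_k)\in\gamma^{-k}(Q)\subseteq$ something convergent, and $\gamma^{-(k+k_0)}(x_k+x'_k)=\gamma^{-k_0}\big(\gamma^{-k}(x_k+x'_k)\big)\to\gamma^{-k_0}(y+y')$. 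So $\gamma^{-k_0}(y+y')\in\mathcal L(Q)$, and applying the already-established $\gamma$-invariance $k_0$ times shows $y+y'\in\mathcal L(Q)$.

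Closure under scalar multiplication is the step I expect to require the most care, since $\mathcal L(Q)$ a priori only sees the additive and $\gamma^{-1}$-structure, not multiplication by arbitrary reals. The approach: first get closure under multiplication by rationals of the form $m/\det(\gamma)^j$ or, more robustly, first show $\mathcal L(Q)$ is closed under multiplication by every integer (immediate from additive closure and $-y\in\mathcal L(Q)$, which itself follows since $Q$ is symmetric so $-x_k\in Q$), then observe $\mathcal L(Q)$ is topologically closed (being an intersection of closed sets, by Lemma~\ref{lem:limsetequivdefn}), and finally show it is closed under multiplication by $1/N$ for a suitable integer $N$, e.g. $N=|\det\gamma|$, using that $\gamma^{-1}$ acts on $A$ and that $\gamma\cdot\gamma^{-1}=\mathrm{id}$; more precisely, if $y\in\mathcal L(Q)$ with $y=\lim\gamma^{-k}x_k$, then $\gamma^{-1}y=\lim\gamma^{-(k+1)}x_k\in\mathcal L(Q)$, so $\mathcal L(Q)$ is $\gamma^{-1}$-invariant as well. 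The subgroup of scalars $c\in\R$ with $c\cdot\mathcal L(Q)\subseteq\mathcal L(Q)$ then contains $\Z[\gamma,\gamma^{-1}]$-eigenvalue data — but the cleanest route is: $\mathcal L(Q)$ is a closed subgroup of $\R^n$ that is invariant under both $\gamma$ and $\gamma^{-1}$; a closed subgroup of $\R^n$ is of the form $V\oplus\Lambda$ for a subspace $V$ and a discrete subgroup $\Lambda$ of a complement, and invariance under the expanding map $\gamma^{-1}$ (all of whose eigenvalues have modulus $<1$) forces the discrete part to be trivial, since $\gamma^{-k}\Lambda\to 0$ would meet every neighborhood of $0$ yet stay in a discrete set. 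Hence $\mathcal L(Q)=V$ is a genuine subspace, and $\gamma$-invariance was already shown. I would write this last paragraph carefully, as the structure theory of closed subgroups of $\R^n$ plus the expanding dynamics is the real content; the additive and $\gamma$-invariance properties are routine manipulations with the intersection-of-closures description.
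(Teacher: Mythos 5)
Your argument follows essentially the same route as the paper's: establish that $\mathcal{L}(Q)$ is a closed additive subgroup (using symmetry of $Q$ for negation, confinement axiom (c) for addition, and the intersection-of-closures description for closedness), invoke the Bourbaki structure theorem to write $\mathcal{L}(Q) = V \oplus \Lambda$ with $V$ a subspace and $\Lambda$ discrete, and kill $\Lambda$ using the contraction dynamics of $\gamma^{-1}$. Two small cautions. First, a terminology slip: since $\gamma$ is expanding, $\gamma^{-1}$ is contracting (eigenvalues of modulus $<1$), not expanding as you write in the final paragraph; the mathematics you describe is the contraction argument. Second, the closing sentence ``$\gamma^{-k}\Lambda \to 0$ would meet every neighborhood of $0$ yet stay in a discrete set'' reads as if $\gamma^{-k}\Lambda \subseteq \Lambda$, which need not hold---the image $\gamma^{-k}\lambda$ lies in $\mathcal{L}(Q) = V \oplus \Lambda$ but may acquire a $V$-component. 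The correct version (which the paper carries out via translates $\gamma^{-k}(V+h)$) is: since $\Lambda$ is discrete and $V$ is $\gamma$-invariant, elements of $\mathcal{L}(Q)$ within a small enough $\epsilon$ of the origin must lie in $V$; hence $\gamma^{-k}\lambda \in V$ for large $k$, giving $\lambda = \gamma^k(\gamma^{-k}\lambda) \in \gamma^k V = V \cap V^\perp = 0$. With that cleanup the proof is sound. The middle paragraph's detour through scalars of the form $m/\det(\gamma)^j$ is unnecessary; the structure-theorem route you identify as ``the cleanest'' at the end is exactly what the paper does, and you could drop the preliminary explorations entirely.
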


\begin{proof}
First we check that $\mathcal{L}(Q)$ is an additive subgroup of $\R^n$. If $y,z\in \mathcal{L}(Q)$, then we may choose sequences $\{x_k\}_{k=0}^\infty,\{w_k\}_{k=0}^\infty\subset Q$ so that $\gamma^{-k}x_k\to y$ and $\gamma^{-k}w_k \to z$. Then $\gamma^{-k}(-x_k)\to -y$, and since $-x_k\in Q$ (recall $Q$ is symmetric), we conclude that $-y\in \mathcal{L}(Q)$. Furthermore, choosing $N$ large enough that $\gamma^{-N}(Q+Q)\subset Q$, we have $\gamma^{-N}(x_k+w_k)\in Q$.  Since $\gamma^{-k+N}(\gamma^{-N}(x_k+w_k))=\gamma^{-k}x_k+\gamma^{-k}w_k\to y+z$ as $k\to \infty$, we see that $y+z\in \mathcal{L}(Q)$. Thus, $\mathcal{L}(Q)$ is a subgroup of $\R^n$.

 Since $\gamma^{-1}Q\subset Q$, it follows that $\gamma^{-1} \mathcal L(Q)=\mathcal L(\gamma^{-1} Q)\subset \mathcal L(Q)$. As an intersection of closed sets, $\mc L(Q)$ is also closed. By a structure theorem of Bourbaki \cite[Section VII.1.2 Theorem 2]{bourbaki}, the closed subgroup $\mathcal{L}(Q)$  contains a largest linear subspace $V$ of $\R^n$ and  $\mathcal{L}(Q)=V\oplus H$, where $H=\mathcal{L}(Q)\cap V^\perp$ is discrete. Since $V$ and $\gamma^{-1}V$ are both subspaces of $\mathcal L(Q)$, we have $\gamma^{-1}V\subset V$.  Since $V$ and $\gamma^{-1}V$ have the same dimension, $V=\gamma^{-1}V$. In particular, $V$ is invariant under $\gamma$.
 
 We claim that $H=0$, which will show that $\mathcal L(Q)=V$ is an invariant subspace under $\gamma$. 
 The set $\mathcal{L}(Q)$ consists of the disjoint translates $V+h$ where $h$ ranges over $H$. Since $\gamma^{-1} \mathcal L(Q)\subset \mathcal L(Q)$, the element $\gamma^{-1}$ must send translates $V+h$ to other such translates. If $H\neq 0$ then we may choose $h\in H\setminus \{0\}$. It follows that the sets $\gamma^{-k}(V+h)$  are translates of $V$ which do not pass through the origin but accumulate at the origin. But this is impossible, since if $\epsilon$ is small enough that the points in $H$ are distance at least $\epsilon$ apart, then $B_\epsilon(0)\cap \mathcal{L}(Q)$ is contained in $V$. Thus $H=0$, completing  the proof.
\end{proof}

Conversely we may associate confining subsets to invariant subspaces of $\R^n$. Throughout the remainder of this section, $\|M\|$ will denote the operator norm of a matrix $M\in M_n(\R)$ with respect to the standard Euclidean inner product on $\C^n$ and $\|v\|$ will denote the Euclidean norm of a vector $v\in \C^n$. As we are using an operator norm, we have $\|M v\| \leq \|M \| \|v\|$ for any $M\in M_n(\R)$ and any $v\in \C^n$. 

Given an invariant subspace of $\R^n$, we now define a confining subset of $A$ as, essentially, the intersection of a neighborhood of this subspace with $A$, modulo some technical modifications. 

\begin{defn} \label{def:Q_epsilon}
Let $V\subset \R^n$ be a subspace that is invariant under $\gamma$. For any $\epsilon>0$, first define \[ Q_\epsilon^0(V)\vcentcolon=B_\epsilon(V) \cap A.\] That is, $Q_\epsilon^0(V)$ is the set of points of $A$ in the open $\epsilon$-neighborhood of $V$. This set is not necessarily confining under $\alpha^{-1}$, because it may not be closed under applying $\alpha^{-1}$. To remedy this,  choose $k_0$ large enough so that $\|\gamma^{-k}\|<1$ for all $k\geq k_0$, and define \[Q_\epsilon(V)\vcentcolon=\bigcup_{0\leq k< k_0} \gamma^{-k}(Q_\epsilon^0(V)).\] 
\end{defn} 
\noindent When $V$ is understood we will frequently write simply $Q_\epsilon$.

\begin{lem}\label{lem:invartoconf}
If $V$ is a subspace of $\R^n$ that is invariant under $\gamma$, then $Q_\epsilon(V)\subseteq A$ is confining under $\alpha^{-1}$.
\end{lem}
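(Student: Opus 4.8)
The plan is to verify the three conditions (a)--(c) in Definition~\ref{def:confining} for $\alpha^{-1}$ acting on $Q_\epsilon = Q_\epsilon(V)$, recalling that $\alpha^{-1}$ is multiplication by $\gamma^{-1}$ on $A$. The key point behind the construction is that $V$ being $\gamma$-invariant forces $V = \gamma^{-1}V$ (since $\gamma$ is invertible on $\R^n$ and $V$ has finite dimension), so $\gamma^{-1}$ maps $V$ to itself and, crucially, once $\|\gamma^{-k}\| < 1$ the map $\gamma^{-k}$ does not expand the distance to $V$.

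First I would check condition (a), $\alpha^{-1}(Q_\epsilon) \subseteq Q_\epsilon$. By definition $Q_\epsilon = \bigcup_{0 \le k < k_0} \gamma^{-k}(Q_\epsilon^0)$, so $\gamma^{-1}(Q_\epsilon) = \bigcup_{1 \le k \le k_0} \gamma^{-k}(Q_\epsilon^0)$. The only new term compared to $Q_\epsilon$ is $\gamma^{-k_0}(Q_\epsilon^0)$. So it suffices to show $\gamma^{-k_0}(Q_\epsilon^0) \subseteq Q_\epsilon^0$. If $a \in Q_\epsilon^0 = B_\epsilon(V) \cap A$, write $a = v + w$ with $v \in V$ and $\|w\| < \epsilon$; then $\gamma^{-k_0}a = \gamma^{-k_0}v + \gamma^{-k_0}w$, where $\gamma^{-k_0}v \in V$ (as $V$ is $\gamma^{-1}$-invariant) and $\|\gamma^{-k_0}w\| \le \|\gamma^{-k_0}\|\,\|w\| < \epsilon$ since $\|\gamma^{-k_0}\| < 1$; also $\gamma^{-k_0}a \in A$ since $A$ is a $\Z[\gamma]$-module closed under $\gamma^{-1}$. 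Hence $\gamma^{-k_0}a \in Q_\epsilon^0$, giving (a). A small remark is needed that $0 \in Q_\epsilon^0$ and that $Q_\epsilon^0$ (hence $Q_\epsilon$) is symmetric, since $V$ is a subspace and $B_\epsilon(V)$ is symmetric about $0$.

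Next, condition (b), $A = \bigcup_{n \ge 0} \alpha^n(Q_\epsilon)$, i.e.\ every element of $A$ lands in $Q_\epsilon$ after enough applications of $\gamma^{-1}$. Given $a \in A$, I would show $\gamma^{-n}a \in B_\epsilon(V)$ for $n$ large: decompose $\R^n = V \oplus V'$ into $\gamma$-invariant subspaces, or more simply note that $a = v + w$ with $v \in V$, and $\gamma^{-n}a = \gamma^{-n}v + \gamma^{-n}w$ with $\gamma^{-n}v \in V$ and $\|\gamma^{-n}w\| \to 0$ (Gelfand's formula, as $\gamma$ is expanding so $\|\gamma^{-n}\| \to 0$); in fact one does not even need a complement — simply writing $\gamma^{-n}a = \gamma^{-n}v + \gamma^{-n}(a - v)$ suffices, where $v$ is \emph{any} point of $V$, so $\gamma^{-n}a$ is within $\|\gamma^{-n}\|\,\|a - v\|$ of $V$, which is eventually $< \epsilon$. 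Then $\gamma^{-n}a \in Q_\epsilon^0 \subseteq Q_\epsilon$, so $a \in \alpha^n(Q_\epsilon)$. Finally, condition (c): I would pick $k_1$ with $\|\gamma^{-k}\| < \tfrac12$ for $k \ge k_1$ and show $\gamma^{-(k_0 + k_1)}(Q_\epsilon + Q_\epsilon) \subseteq Q_\epsilon$. The subtlety is that a sum $q_1 + q_2$ of two elements of $Q_\epsilon$ a priori has each $q_i = \gamma^{-k_i}a_i$ with $a_i \in Q_\epsilon^0$ and $0 \le k_i < k_0$; the point is that applying enough extra powers of $\gamma^{-1}$ both "normalizes" each summand back into $Q_\epsilon^0$ and halves the distance to $V$, so the sum lies within $\epsilon$ of $V$ and in $A$. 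I expect condition (c) to be the main obstacle, precisely because of this bookkeeping: one must carefully track that $\gamma^{-j}q_i$ is within distance $\|\gamma^{-j}\| \cdot \text{(const)} \cdot \epsilon$ of $V$ — not exactly $\epsilon \cdot \|\gamma^{-j}\|$ unless $q_i$ itself is already in $Q_\epsilon^0$ — so one wants $j$ large enough that the accumulated constant times $\|\gamma^{-j}\|$ is at most $1$; choosing $k_0$ slightly larger at the outset (absorbing the bound $\max_{0 \le k < k_0}\|\gamma^{-k}\|$) makes this clean, and then $k_0 = k_0$ works as the constant $k_0$ in Definition~\ref{def:confining}(c).
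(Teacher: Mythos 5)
Your proposal is correct and takes essentially the same route as the paper's proof: verify (a) via $\gamma^{-k}(Q_\epsilon^0)\subset Q_\epsilon^0$ for $k\geq k_0$, verify (b) via $\|\gamma^{-k}x\|\to 0$, and verify (c) via $\gamma^{-k_0-k_1}$ with $\|\gamma^{-k_1}\|<\tfrac12$. The one place where you've manufactured extra work is the ``subtlety'' you anticipate in (c): the accumulated constant $\max_{0\leq k<k_0}\|\gamma^{-k}\|$ never actually enters, and there is no need to enlarge $k_0$. The clean bookkeeping is to never bound $d(y,V)$ directly for $y\in Q_\epsilon$; instead write $y=\gamma^{-j}a$ with $a\in Q_\epsilon^0$ and $0\leq j<k_0$, so that $\gamma^{-k_0}y=\gamma^{-(k_0+j)}a$ with $k_0+j\geq k_0$, whence $\|\gamma^{-(k_0+j)}\|<1$ immediately gives $\gamma^{-k_0}y\in Q_\epsilon^0$. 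This is precisely what the paper does: it first records $\gamma^{-k}(Q_\epsilon^0)\subset Q_\epsilon^0$ for all $k\geq k_0$ and $\gamma^{-k_1}(Q_\epsilon^0+Q_\epsilon^0)\subset Q_\epsilon^0$, then for arbitrary $y,z\in Q_\epsilon$ notes $\gamma^{-k_0}(y),\gamma^{-k_0}(z)\in Q_\epsilon^0$ and hence $\gamma^{-k_0-k_1}(y+z)\in Q_\epsilon^0\subset Q_\epsilon$, so the constant $k_0+k_1$ works in Definition~\ref{def:confining}(c) with no modification of $k_0$. Your handling of (a), (b), and the symmetry of $Q_\epsilon$ all match the paper's reasoning.
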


\begin{proof}
We first study properties of the set $Q_\epsilon^0(V)=Q_\epsilon^0$. It follows from the fact that $V$ (and $A$) is invariant under $\gamma$ and our choice of $k_0$ that $\gamma^{-k}(Q_\epsilon^0)\subset Q_\epsilon^0$ for $k\geq k_0$. For any $x\in A$, we have $\|\gamma^{-k}x\|\to 0$ as $k\to \infty$, and so $\gamma^{-k}(x)\in Q_\epsilon^0$ for any sufficiently large $k$. 
Choose $k_1$ large enough that $\|\gamma^{-k_1}\|<\frac{1}{2}$; then $\gamma^{-k_1}(Q_\epsilon^0+Q_\epsilon^0)\subset Q_\epsilon^0$.  

With these properties in hand we can show that $Q_\epsilon$ is confining. The fact that $Q_\epsilon$ is closed under multiplication by $\gamma^{-1}$ (Definition~\ref{def:confining}(a)) follows from the following two facts:
\begin{itemize}
\item[(i)] $\gamma^{-1}(\gamma^{-k}(Q_\epsilon^0))\subset \gamma^{-k-1}(Q_\epsilon^0)$ for $0\leq k<k_0-1$; and
\item[(ii)] $\gamma^{-1}(\gamma^{-k_0+1}(Q_\epsilon^0))\subset Q_\epsilon^0$.
\end{itemize}
\noindent Definition~\ref{def:confining}(b) holds since it holds for $Q_\epsilon^0\subset Q_\epsilon$. 
Finally, for any $y,z\in Q_\epsilon$ we have $\gamma^{-k_0}(y),\gamma^{-k_0}(z)\in Q_\epsilon^0$. Thus, $\gamma^{-k_0-k_1}(y+z)\in Q_\epsilon^0\subset Q_\epsilon$, and Definition~\ref{def:confining}(c) holds. Therefore, $Q_\epsilon$ is confining under $\alpha^{-1}$.
\end{proof}

In the final results in this subsection, we investigate how the sets $Q_\epsilon(V)$ change under modifying $\epsilon$ or $V$. We start by showing that varying $\epsilon$ yields equivalent confining subsets.

\begin{lem}
\label{lem:nbhdequiv}
If $V\subset \R^n$ is a subspace invariant under $\gamma$, then  $Q_\epsilon(V)\sim Q_\delta(V)$ for any $\epsilon,\delta>0$.
\end{lem}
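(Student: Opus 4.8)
The plan is to prove that $Q_\epsilon(V) \sim Q_\delta(V)$ by showing each is dominated by the other in the preorder $\preceq$ on confining subsets under $\alpha^{-1}$. By Lemma~\ref{lem:confiningpartialorder} (applied with $\alpha^{-1}$ in place of $\alpha$), it suffices to show that for each pair $\epsilon,\delta > 0$ there exists $N \geq 0$ with $\alpha^{-N}(Q_\delta(V)) \subseteq Q_\epsilon(V)$, and symmetrically. Without loss of generality assume $\delta \geq \epsilon$, so that $Q_\epsilon^0(V) \subseteq Q_\delta^0(V)$ and hence $Q_\epsilon(V) \subseteq Q_\delta(V)$; this immediately gives $Q_\epsilon(V) \succeq Q_\delta(V)$ (taking $N = 0$), so the only real work is to show $Q_\delta(V) \preceq Q_\epsilon(V)$, i.e., that applying $\gamma^{-1}$ enough times shrinks the larger neighborhood into the smaller one.

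First I would record the key contraction estimate: since $V$ is $\gamma$-invariant and $\gamma$ is expanding, for any $x \in \R^n$ the distance from $\gamma^{-k}x$ to $V$ satisfies $d(\gamma^{-k}x, V) = d(\gamma^{-k}x, \gamma^{-k}V) \leq \|\gamma^{-k}\| \cdot d(x,V)$, using that $\gamma^{-1}V = V$ (as established in the proof of Lemma~\ref{lem:conftoinvar}) and that $\gamma^{-k}$ is linear. Because $\gamma$ is expanding, Gelfand's formula gives $\|\gamma^{-k}\| \to 0$, so choose $K$ large enough that $\|\gamma^{-K}\| < \epsilon/\delta$. Then for any $y \in Q_\delta^0(V) = B_\delta(V) \cap A$, we have $d(\gamma^{-K}y, V) \leq \|\gamma^{-K}\|\,\delta < \epsilon$, and $\gamma^{-K}y \in A$ since $A$ is $\gamma$-invariant, so $\gamma^{-K}y \in Q_\epsilon^0(V) \subseteq Q_\epsilon(V)$. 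Thus $\alpha^{-K}(Q_\delta^0(V)) \subseteq Q_\epsilon(V)$.

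Next I would promote this from $Q_\delta^0(V)$ to all of $Q_\delta(V) = \bigcup_{0 \leq k < k_0} \gamma^{-k}(Q_\delta^0(V))$. For an element $\gamma^{-k}y$ with $0 \leq k < k_0$ and $y \in Q_\delta^0(V)$, we have $\alpha^{-K}(\gamma^{-k}y) = \gamma^{-k}(\gamma^{-K}y) = \gamma^{-k}(\gamma^{-K}y)$; since $\gamma^{-K}y \in Q_\epsilon^0(V)$ and $Q_\epsilon(V)$ contains all $\gamma^{-j}(Q_\epsilon^0(V))$ for $0 \leq j < k_0$ and is closed under $\alpha^{-1}$ (Lemma~\ref{lem:invartoconf}), it follows that $\gamma^{-k}(\gamma^{-K}y) \in Q_\epsilon(V)$. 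Hence $\alpha^{-K}(Q_\delta(V)) \subseteq Q_\epsilon(V)$, which gives $Q_\delta(V) \preceq Q_\epsilon(V)$ by Lemma~\ref{lem:confiningpartialorder}. Combined with $Q_\epsilon(V) \succeq Q_\delta(V)$ from the inclusion, we conclude $Q_\epsilon(V) \sim Q_\delta(V)$.

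I do not anticipate a serious obstacle here; the argument is essentially bookkeeping around the contraction estimate $\|\gamma^{-k}\| \to 0$ and the fact that the auxiliary union over $0 \leq k < k_0$ in Definition~\ref{def:Q_epsilon} is a bounded, $\alpha^{-1}$-controlled modification that does not affect the equivalence class. The one point requiring a little care is making sure the elements stay in $A$ throughout (clear, since $A$ is a $\Z[\gamma]$-module hence $\gamma$- and $\gamma^{-1}$-invariant) and confirming that the distance-to-$V$ estimate genuinely uses $\gamma^{-1}V = V$ rather than just $\gamma^{-1}V \subseteq V$ — but that equality was already proved in Lemma~\ref{lem:conftoinvar}, so it may be cited directly.
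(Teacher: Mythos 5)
Your proof is correct and follows essentially the same route as the paper's: one inclusion from the monotonicity of $Q_\epsilon^0(V)$ in $\epsilon$, and the other from the contraction estimate $\|\gamma^{-K}\|<\epsilon/\delta$ combined with $\gamma^{-1}V=V$. You spell out the passage from $Q_\delta^0(V)$ to the full union $Q_\delta(V)$ a bit more carefully than the paper does, but the underlying argument is identical.
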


\begin{proof}
Suppose without loss of generality that $\epsilon>\delta$. Then clearly $Q_\epsilon(V) \preceq Q_\delta(V)$. On the other hand, we may choose $n$ large enough that $\|\gamma^{-n}\|<\delta/\epsilon$.  
Thus $\gamma^{-n}(Q_\epsilon(V)) \subset Q_\delta(V)$, and so $Q_\delta(V)\preceq Q_\epsilon(V)$.
\end{proof}

Now we investigate what happens to the confining subsets when we change subspaces. As expected, the order on confining subsets is reversed with respect to the inclusion of subspaces, and distinct subspaces yield inequivalent confining subsets. 

\begin{lem}
\label{lem:subspacechange}
Let $V,W\subset \R^n$ be  subspaces invariant under $\gamma$. If $V\subsetneq W$, then $Q_\epsilon(W)\precneq Q_\epsilon(V)$. Moreover, if neither $V\not\subset W$ nor $W\not\subset V$, then $Q_\epsilon(V)$ and $Q_\epsilon(W)$ are incomparable.
\end{lem}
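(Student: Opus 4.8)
\textbf{Proof plan for Lemma \ref{lem:subspacechange}.}

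The plan is to relate the preorder on the confining subsets $Q_\epsilon(\cdot)$ to the containment relation on the invariant subspaces, using the limit-set operator $\mathcal{L}$ developed in Lemma~\ref{lem:limsetequivdefn} and Lemma~\ref{lem:conftoinvar} as the key bookkeeping device. The cleanest route is to first establish the identity $\mathcal{L}(Q_\epsilon(V)) = V$ for any $\gamma$-invariant subspace $V$; this should be the central computation. One inclusion is easy: since $Q_\epsilon^0(V) = B_\epsilon(V)\cap A$ and $\gamma^{-k}$ shrinks distances for $k\ge k_0$, every element $\gamma^{-k}x_k$ with $x_k\in Q_\epsilon(V)$ is within distance $C\epsilon$ of $V$ for a uniform constant $C$ (absorbing the finitely many extra powers $\gamma^{-k}$, $0\le k<k_0$, in Definition~\ref{def:Q_epsilon}); after applying $\gamma^{-m}$ once more the distance to $V$ is at most $\|\gamma^{-m}\|\cdot C\epsilon \to 0$, so by Lemma~\ref{lem:limsetequivdefn} any point of $\mathcal{L}(Q_\epsilon(V))$ lies in $\overline{B_{\eta}(V)}$ for every $\eta>0$, hence in $V$ (as $V$ is closed). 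For the reverse inclusion $V\subseteq \mathcal{L}(Q_\epsilon(V))$: given $v\in V$, I want a sequence $x_k\in Q_\epsilon(V)$ with $\gamma^{-k}x_k\to v$. Since $A$ is dense in $\R^n$ (it contains $\Z^n$ and all its $\gamma^{-i}$-dilates, and $\gamma$ is expanding), for each $k$ choose $y_k\in A$ with $\|y_k-\gamma^k v\|<\epsilon$; then $\gamma^k v\in V$ so $y_k\in B_\epsilon(V)\cap A = Q_\epsilon^0(V)\subseteq Q_\epsilon(V)$, and $\gamma^{-k}y_k$ converges to $v$ provided $\|\gamma^{-k}\|\cdot\epsilon\to 0$, which holds by Gelfand's formula since $\gamma$ is expanding. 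Hence $v\in\mathcal{L}(Q_\epsilon(V))$, giving $\mathcal{L}(Q_\epsilon(V))=V$.

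Next I would record the monotonicity of $\mathcal{L}$ under the confining preorder: if $P\preceq Q$ (i.e. $\gamma^{-N}(Q)\subseteq P$ for some $N\ge 0$), then $\mathcal{L}(Q)\subseteq \mathcal{L}(P)$. This is immediate from the characterization $\mathcal{L}(Q)=\bigcap_{k\ge0}\overline{\gamma^{-k}(Q)}$ in Lemma~\ref{lem:limsetequivdefn}: $\gamma^{-k-N}(Q)\subseteq\gamma^{-k}(P)$, so $\overline{\gamma^{-(k+N)}(Q)}\subseteq\overline{\gamma^{-k}(P)}$, and intersecting over all $k$ gives the claim. Similarly, $P\sim Q$ implies $\mathcal{L}(P)=\mathcal{L}(Q)$.

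Now the two assertions of the lemma follow formally. If $V\subsetneq W$, then $B_\epsilon(V)\subseteq B_\epsilon(W)$ gives $Q_\epsilon^0(V)\subseteq Q_\epsilon^0(W)$ and hence $Q_\epsilon(V)\subseteq Q_\epsilon(W)$; since these are both confining under $\alpha^{-1}$, set-containment gives $Q_\epsilon(W)\preceq Q_\epsilon(V)$ (this is the inclusion-reversing remark after Definition~\ref{def-GG}, applied with $\alpha^{-1}$ in place of $\alpha$). They are \emph{not} equivalent: if $Q_\epsilon(W)\sim Q_\epsilon(V)$ then by the monotonicity of $\mathcal{L}$ we would get $W=\mathcal{L}(Q_\epsilon(W))=\mathcal{L}(Q_\epsilon(V))=V$, contradicting $V\subsetneq W$. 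Hence $Q_\epsilon(W)\precneq Q_\epsilon(V)$. For the second assertion, suppose $V\not\subseteq W$ and $W\not\subseteq V$, and suppose for contradiction that $Q_\epsilon(V)$ and $Q_\epsilon(W)$ are comparable, say $Q_\epsilon(V)\preceq Q_\epsilon(W)$. Then by monotonicity $\mathcal{L}(Q_\epsilon(W))\subseteq\mathcal{L}(Q_\epsilon(V))$, i.e. $W\subseteq V$, a contradiction; the other case is symmetric. Therefore $Q_\epsilon(V)$ and $Q_\epsilon(W)$ are incomparable.

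\textbf{Main obstacle.} The genuinely substantive point is the identity $\mathcal{L}(Q_\epsilon(V))=V$, and within it the reverse inclusion $V\subseteq\mathcal{L}(Q_\epsilon(V))$, which uses both the density of $A$ in $\R^n$ and the Gelfand-formula estimate $\|\gamma^{-k}\|\to0$ to control the approximating sequence; one must be a little careful that the finitely many extra dilates in the definition of $Q_\epsilon(V)$ (versus $Q_\epsilon^0(V)$) do not interfere, but since $Q_\epsilon^0(V)\subseteq Q_\epsilon(V)$ the easy direction of the approximation only needs $Q_\epsilon^0(V)$, so this is a minor point. Everything after that identity is soft poset/closure manipulation.
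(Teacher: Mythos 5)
Your proof is correct, and it takes a genuinely different route from the paper's. The paper argues directly and quantitatively: using Lemma~\ref{lem:nbhdequiv} it takes $\epsilon$ large enough that $\Z^n$ (hence $A$) is $\epsilon$-dense, picks a point of $Q_\epsilon(W)$ far from $V$, and uses the lower bound $d(\gamma^{-r}w,V)\geq \|\gamma\|^{-r}\,d(w,V)$ to show that such a point takes many applications of $\gamma^{-1}$ before it enters $Q_\epsilon(V)$; incomparability is handled symmetrically. You instead isolate two structural facts --- the identity $\mathcal L(Q_\epsilon(V))=V$ and the monotonicity of $\mathcal L$ under the confining preorder --- and then deduce both claims by purely formal poset manipulation. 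This is arguably the conceptually cleaner account, and the identity $\mathcal L(Q_\epsilon(V))=V$ is a worthwhile lemma to record (in the paper it is only implicit, recoverable after the fact from Lemmas~\ref{lem:conftoinvar} and~\ref{cor:bddinQ} and Proposition~\ref{prop:Q_epsilonpreceqQ}). I verified the ingredients: $Q_\epsilon(V)\subset B_{C\epsilon}(V)$ for $C=\max_{0\le k<k_0}\|\gamma^{-k}\|$ gives the inclusion $\mathcal L(Q_\epsilon(V))\subseteq V$ via Lemma~\ref{lem:limsetequivdefn} and Gelfand; density of $A$ gives the reverse inclusion; $\gamma^{-N}(Q)\subseteq P$ together with the nested-decreasing nature of $\overline{\gamma^{-k}(Q)}$ gives $\mathcal L(Q)\subseteq\mathcal L(P)$; and the rest follows formally.

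One dependency worth flagging: your Step~2 requires that $A=\bigcup_i\gamma^{-i}\Z^n$ be \emph{dense} in $\R^n$, whereas the paper's proof of this lemma uses only the trivial fact that $\Z^n$ is $\delta$-dense for large $\delta$. The paper does prove density of $A$, but only later (Lemma~\ref{lem:heintzecobounded}), via Lagarias--Wang tiles and Bourbaki's structure theorem. You gesture at a more elementary route, and indeed it works: the covering radius of the lattice $\gamma^{-i}\Z^n$ is at most $\|\gamma^{-i}\|$ times that of $\Z^n$, and $\|\gamma^{-i}\|\to 0$ by Gelfand since $\gamma$ is expanding. If you adopt your approach, you should state this short density argument explicitly so the proof is self-contained at this point in the paper. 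Alternatively, you could avoid density entirely by first establishing $\mathcal L(Q_\epsilon(V))=V$ for a \emph{single} sufficiently large $\epsilon$ (using only $\delta$-density of $\Z^n$) and then propagating to all $\epsilon$ via $\sim$-invariance of $\mathcal L$ together with Lemma~\ref{lem:nbhdequiv}.
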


\begin{proof}
Since $A$ contains $\Z^n$, it is $\delta$-dense in $\R^n$ for any sufficiently large $\delta$. By Lemma \ref{lem:nbhdequiv}, we may choose $\epsilon=\delta$ large enough to satisfy this property, without loss of generality. Suppose  $V\subsetneq W$.  Then $Q_\epsilon(V)\subset Q_\epsilon(W)$, and so $Q_\epsilon(W) \preceq Q_\epsilon(V)$. If $w\in Q_\epsilon(W)$, then the distance from $\gamma^{-r}w$ to $V$ is at least $\frac{1}{\|\gamma\|^r}$ times the distance from $w$ to $V$.  Since $V\neq W$, we may choose elements of $W\subseteq Q_\epsilon(W)$ at arbitrarily large distance from $V$.  
This shows that $\inf \{k : \gamma^{-k} w \in Q_\epsilon(V)\}$ is arbitrarily large for elements $w\in Q_\epsilon(W)$. Therefore, $Q_\epsilon(V)\not\preceq Q_\epsilon(W)$.

If $V\not\subset W$ and $W\not\subset V$, then considering elements of $V$ arbitrarily far from $W$ shows that $Q_\epsilon(W)\not\preceq Q_\epsilon(V)$. Similarly, $Q_\epsilon(V)\not\preceq Q_\epsilon(W)$. This verifies the moreover statement.
\end{proof}

\subsection{Tiles}\label{sec:tiles}

Our goal is to show that all confining subsets of $A$ under $\alpha^{-1}$ are equivalent to the subsets $Q_\epsilon(V)$ defined by  subspaces $V$ invariant under $\gamma$. In this subsection, we use \emph{tiles} \`{a} la Lagarias--Wang \cite{lag_wang} to show that if $Q$ is such a confining subset, then $Q\preceq Q_\epsilon(\mc L(Q))$ for some invariant subspace $V$ (see Corollary~\ref{cor:bddinQ}).   The reverse relation will be shown in the next subsection.

  We now choose a transversal $\mathscr D$ for $\gamma \Z^n$ in $\Z^n$ and require that $0\in \mathscr D$. The cardinality of $\mathscr D$ will be the absolute value of $\det \gamma$. Lagarias--Wang in \cite{lag_wang} consider the following set: 
 
\begin{defn}\label{def:tile}
A \emph{tile} is a set 
\[T=T(\gamma,\mathscr D)= \left\{ \sum_{j=1}^\infty \gamma^{-j} v_j : v_j \in \mathscr D \text{ for all } j \geq 1 \right\}.\]
\end{defn}
   Since $0\in \mathscr{D}$, the tile $T$ contains the set of finite sums $\left\{ \sum_{j=1}^k \gamma^{-j} v_j : k\geq 1 \text{ and } v_j \in \mathscr D \text{ for all } j \geq 1 \right\}\subset A$. As shown in \cite{lag_wang}, the tile $T\subset \R^n$ is a compact subset of $\R^n$.

\begin{lem}
\label{lem:ballwordlength}
If $Q\subset A$ is confining under $\alpha^{-1}$, then for all $\epsilon>0$, there is a bound on the word length of any element of $B_\epsilon(0)\cap A$ with respect to $Q\cup \{t^{\pm 1}\}$.
\end{lem}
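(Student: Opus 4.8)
The plan is to reduce the statement to a bound on the word length of elements of the tile $T=T(\gamma,\mathscr D)$, using the fact (quoted above) that $T$ is compact. First I would observe that any element $x\in B_\epsilon(0)\cap A$ can be decomposed as $x = \gamma^{-k} z + w$, where $z$ is (the $A$-element corresponding to) a partial sum $\sum_{j=1}^{k}\gamma^{-j}v_j$ with $v_j\in\mathscr D$ lying inside the tile $T$, and $w\in\Z^n$ is an integer vector. Indeed, writing $\gamma^k x$ in its $\gamma$-adic address (via the transversal $\mathscr D$), the ``fractional part'' (the contribution of the negative powers of $\gamma$ in $x$) is exactly such a partial-sum element of $T$, and the ``integral part'' $w$ is an element of $\Z^n$. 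So it suffices to bound separately (i) the word length of $w\in \Z^n$ and (ii) the word length of elements of the form $\gamma^{-k}z$ with $z$ a partial sum in $T$, both with respect to $Q\cup\{t^{\pm1}\}$.

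For part (i): since $\|\gamma^{-k}\|\to 0$, and $x\in B_\epsilon(0)$, and $z$ stays in the compact (hence bounded) set $T$, we get $\|\gamma^{-k}z\|\le \|\gamma^{-k}\|\cdot\operatorname{diam}(T)$, which is uniformly bounded; hence $\|w\| = \|x - \gamma^{-k}z\|$ is uniformly bounded, so $w$ ranges over a finite subset of $\Z^n$, and in particular has uniformly bounded word length with respect to $Q\cup\{t^{\pm1}\}$ (using that $Q\cup\{t^{\pm 1}\}$ generates $G$, or more directly Lemma~\ref{lem:RsubsetQ}/\ref{lem:confiningpartialorder} applied with $\alpha^{-1}$, which gives that $\Z^n$ sits inside $\alpha^{j}(Q)$ for a uniform $j$). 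For part (ii): the element $\gamma^{-k}z$ equals $t^{-k}\,z\,t^{k}$ in $G$, so it suffices to bound the word length of the integer vector $z$. But $z$ is a partial sum $\sum_{j=1}^{k}\gamma^{-j}v_j$ lying in $T$, and again compactness of $T$ bounds $\|z\|$ uniformly; alternatively one can peel off digits one at a time and use that $Q$ is confining (Definition~\ref{def:confining}(a),(c)) to rewrite $z = v_1 \cdot \alpha^{-1}(v_2 \cdot \alpha^{-1}(v_3\cdots))$ where each $v_i\in\mathscr D$ has bounded length; since $Q$ absorbs sums after applying a fixed power of $\alpha^{-1}$, this telescoping word in the $v_i$'s collapses to bounded length in $Q\cup\{t^{\pm1}\}$, exactly as in Case~1 of the proof of Lemma~\ref{lem:RsubsetQ}. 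Combining (i) and (ii) via $x = \gamma^{-k}z + w = (t^{-k}zt^k)\cdot w$ gives the desired uniform bound.

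The main obstacle I anticipate is making the telescoping argument in part (ii) clean: one needs the constant $k_0$ from Definition~\ref{def:confining}(c) for $Q$ (so that $\alpha^{-k_0}(Q+Q)\subseteq Q$), and one needs to know that the ``tails'' $\sum_{j>m}\gamma^{-j}v_j$ get absorbed into $Q$ after applying boundedly many powers of $\alpha^{-1}$ — this follows because $\alpha^{-1}$ is multiplication by $\gamma^{-1}$, which is a contraction for large powers, so the tails lie in a fixed bounded region and hence (by Definition~\ref{def:confining}(b) plus finiteness of $\mathscr D$) in $\alpha^{-N}(Q)$ for a uniform $N$. This is morally identical to the inductive argument already carried out in Case~1 of Lemma~\ref{lem:RsubsetQ}, just transported to the $\alpha^{-1}$ setting and with ``finite $\gamma$-adic address'' replaced by ``element of the tile $T$,'' so I expect the write-up to follow that template closely. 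An alternative, slicker route that avoids the induction entirely: use Lemma~\ref{lem:RsubsetQ}'s analogue for $\alpha^{-1}$ to get $\Z^n\subseteq \alpha^{-N}(Q)$ for a uniform $N$ (this is immediate from Definition~\ref{def:confining}(b) and finiteness of any fixed bounded subset of $\Z^n$), conclude that $z$ (bounded, hence in a fixed finite subset of $\Z^n$) has bounded length, and then note $\gamma^{-k}z = t^{-k}zt^k$ has length at most $2k + \|z\|_{Q\cup\{t^{\pm1}\}}$ — but $k$ is \emph{not} bounded, so this naive estimate fails and one genuinely does need to absorb the $t^{\pm k}$ using that $z\in\gamma^k Q$-type statements; this is precisely where confiningness under $\alpha^{-1}$ (Definition~\ref{def:confining}(a): $\gamma^{-1}Q\subseteq Q$) is essential, since it lets one write $\gamma^{-k}z$ directly as an element of $Q$ (or of $\alpha^{-M}(Q)$) once $z$ is known to lie in $\mathscr D$-spanned pieces. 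I would present the telescoping version as the main argument since it is self-contained.
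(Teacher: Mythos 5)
Your proposal is correct and follows essentially the same route as the paper: decompose the element as (fractional part in the tile $T$) plus (integer vector), use compactness of $T$ together with the bound $\|x\|<\epsilon$ to confine the integer part to a finite set, and invoke the telescoping argument from Case 1 of Lemma~\ref{lem:RsubsetQ} to bound the word length of the fractional part. One caveat worth cleaning up in the write-up: you introduce $z$ both as the partial sum $\sum_{j=1}^k\gamma^{-j}v_j\in T$ and as an integer vector with $t^{-k}zt^k=\gamma^{-k}z$, and the estimate $\|\gamma^{-k}z\|\le\|\gamma^{-k}\|\cdot\operatorname{diam}(T)$ does not follow from the first reading; the correct (and simpler) statement is just that the fractional part lies in the bounded set $T$, hence $\|w\|\le\epsilon+\operatorname{diam}(T)$.
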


\begin{proof}
Fix any $\epsilon>0$, and let $z\in B_\epsilon(0)\cap A$.  We may write $z=x+y$, where $x=\gamma^{-k}a_{-k}+\cdots+\gamma^{-1}a_{-1}$, for some $k\geq 1$ and $a_i \in \mathscr D$, and $y\in \Z^n$. The element $x$ is in $T$. Since $T$ is compact and since $z=x+y$ is lies in the bounded set $B_\varepsilon(0)\cap A$ and the translate $T+y$, there are finitely many possibilities for the element $y\in \Z^n$. In particular, the word length of $y$ is bounded.
By  the method of proof of Lemma \ref{lem:RsubsetQ}, any finite sum $\sum_{i=1}^k \gamma^{-i} v_i$ with $v_i \in \mathscr D$ has bounded word length in $Q\cup \{t^{\pm 1}\}$ (the proof is also more or less identical to that of \cite[Lemma 3.20]{AR}). Hence $x$, and therefore also $z$, has bounded word length in $Q\cup \{t^{\pm 1}\}$.
\end{proof}

\begin{lem}\label{cor:bddinQ}
If $Q\subset A$ is confining under $\alpha^{-1}$, then  $Q\preceq Q_\epsilon(\mathcal{L}(Q))$ for any $\epsilon>0$.
\end{lem}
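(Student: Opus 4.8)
The goal is to show that for any confining subset $Q \subset A$ under $\alpha^{-1}$, we have $Q \preceq Q_\epsilon(\mathcal{L}(Q))$ for all $\epsilon > 0$. By Lemma~\ref{lem:confiningpartialorder}, this is equivalent to producing an $N \geq 0$ with $\alpha^{-N}(Q_\epsilon(\mathcal{L}(Q))) \subseteq Q$; equivalently (passing to word norms), it suffices to show that every element of $Q_\epsilon(\mathcal{L}(Q))$ has uniformly bounded word length with respect to $Q \cup \{t^{\pm 1}\}$. First I would fix $\epsilon > 0$ and let $V = \mathcal{L}(Q)$, which is a $\gamma$-invariant subspace of $\R^n$ by Lemma~\ref{lem:conftoinvar}. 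Recall $Q_\epsilon(V) = \bigcup_{0 \le k < k_0} \gamma^{-k}(Q_\epsilon^0(V))$ with $Q_\epsilon^0(V) = B_\epsilon(V) \cap A$; since $\gamma^{-1}$ changes word length by a bounded amount (conjugation by $t$), it is enough to bound the word length of an arbitrary $z \in Q_\epsilon^0(V) = B_\epsilon(V) \cap A$.

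The key step is a decomposition of such a $z$ into a piece close to the origin and a piece that can be reached from $Q$ by applying $\gamma^{-1}$ finitely many times while staying controlled. Given $z \in B_\epsilon(V) \cap A$, write $z = w + u$ where $w \in V$ with $\|z - w\| < \epsilon$; I want to approximate $w$ itself by an element of $Q$. Here is where the definition of $\mathcal{L}(Q) = \bigcap_{k \ge 0} \overline{\gamma^{-k}(Q)}$ (Lemma~\ref{lem:limsetequivdefn}) enters: for any $k$, since $w \in \overline{\gamma^{-k}(Q)}$, there is $x_k \in Q$ with $\|\gamma^{-k} x_k - w\|$ small. Apply $\gamma^k$: then $x_k \in Q$ and $\gamma^{-k}x_k$ is close to $w$. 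The element $z - \gamma^{-k} x_k$ then has norm bounded by $\epsilon$ plus a small error, so it lies in a fixed bounded ball $B_{2\epsilon}(0) \cap A$ — \emph{provided} $z - \gamma^{-k}x_k$ actually lies in $A$, which holds since $z \in A$ and $x_k \in Q \subseteq A$. By Lemma~\ref{lem:ballwordlength}, elements of $B_{2\epsilon}(0) \cap A$ have word length bounded by a constant $C = C(\epsilon)$ independent of $z$. Hence $z = \gamma^{-k} x_k + (z - \gamma^{-k}x_k)$ has word length at most (word length of $\gamma^{-k}x_k$) $+ C$. But $\gamma^{-k}x_k = t^{-k} x_k t^k$ has word length at most $2k + \|x_k\|_{Q \cup \{t^{\pm1}\}} = 2k + 1$, which is \emph{not} bounded as $k \to \infty$ — so a naive choice of $k$ fails, and I need to be smarter.

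The fix, and the main obstacle, is to choose $k$ \emph{small} — ideally $k$ bounded independent of $z$ — while still getting $\gamma^{-k}x_k$ close to $w$. This is genuinely subtle: $\mathcal{L}(Q)$ only guarantees approximations $\gamma^{-k}x_k \to w$ as $k \to \infty$, with no a priori rate. The resolution I would pursue is to not approximate $w$ at all, but instead to run the argument with $Q_{\delta}$ for a cleverly chosen $\delta$ and exploit Lemma~\ref{lem:nbhdequiv} ($Q_\epsilon(V) \sim Q_\delta(V)$ for all $\delta$), together with a compactness/tile argument in the spirit of Lemma~\ref{lem:ballwordlength}. Concretely: cover $V$ (or rather a fundamental domain for the $\gamma$-action, using that $\gamma$ is expanding so $\gamma^{-k} \to 0$) by translates, and observe that since $A$ is $\delta$-dense in $\R^n$ and $Q$ accumulates onto all of $V$ in the sense of $\mathcal{L}(Q)$, there is a uniform $k_*$ such that every $w \in V$ with $\|w\|$ bounded is within $\epsilon$ of some $\gamma^{-k}x$ with $x \in Q$ and $k \le k_*$; then rescale a general $w \in V$ by a suitable power of $\gamma$ to land in the bounded region, apply this, and rescale back — the rescaling by $\gamma^{-m}$ costs $2m$ in word length but is absorbed because $\gamma^{-m}$ is precisely the operation under which $Q$ is confining, so $\gamma^{-m}(\text{bounded-length element}) $ again has bounded length once $m$ is large enough relative to $Q$'s confining constant. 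I expect the bookkeeping here — tracking how the bound on $k$ interacts with the confining constant $k_0$ of $Q$ and with the operator norms $\|\gamma^{-j}\|$ — to be the crux; the paper likely handles it by an induction on a ``scale'' parameter mirroring the two-case structure (finite vs. infinite addresses) in the proof of Lemma~\ref{lem:RsubsetQ}. Once the uniform word-length bound is established, Lemma~\ref{lem:confiningpartialorder} immediately gives $Q \preceq Q_\epsilon(\mathcal{L}(Q))$, completing the proof.
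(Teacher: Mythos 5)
You correctly identify the goal (a uniform word-length bound with respect to $Q\cup\{t^{\pm1}\}$) and the two key ingredients: Lemma~\ref{lem:ballwordlength} for bounded balls, and the characterization $\mathcal{L}(Q)=\bigcap_{k\geq 0}\overline{\gamma^{-k}(Q)}$ from Lemma~\ref{lem:limsetequivdefn}. You even write the crucial fact down: ``for any $k$, since $w\in\overline{\gamma^{-k}(Q)}$, there is $x_k\in Q$ with $\|\gamma^{-k}x_k - w\|$ small.'' But you then mistakenly conclude that you are forced to take $k$ large, and that ``a naive choice of $k$ fails.'' This is the gap. The naive choice $k=0$ works perfectly: the $k=0$ term in the intersection says $\mathcal{L}(Q)\subseteq\overline{Q}$, so any $w\in V=\mathcal{L}(Q)$ is approximated \emph{arbitrarily well by elements of $Q$ itself}, with no rescaling at all. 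Your statement that ``$\mathcal{L}(Q)$ only guarantees approximations $\gamma^{-k}x_k\to w$ as $k\to\infty$, with no a priori rate'' conflates the original sequential definition of $\mathcal{L}(Q)$ with the intersection characterization; the latter gives you a closure statement at every fixed level $k$, including $k=0$, and at any level the approximation can be as good as you like by definition of closure.

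Once you take $k=0$, the proof collapses to three lines and matches the paper's. Given $z\in Q^0_\epsilon(V)$, pick $w\in V$ with $\|z-w\|<\epsilon$, then pick $y\in Q$ with $\|w-y\|<\epsilon$ (using $V\subseteq\overline{Q}$). Then $z-y\in B_{2\epsilon}(0)\cap A$, which has word length $\leq D$ by Lemma~\ref{lem:ballwordlength}, and $y\in Q$ has word length $1$, so $z$ has word length $\leq D+1$. (The reduction from $Q_\epsilon(V)$ to $Q^0_\delta(V)$ for suitable $\delta\geq\epsilon$ that you describe is fine and matches the paper's handling of the extra $\gamma^{-k}$ layers in Definition~\ref{def:Q_epsilon}.) Your proposed detour — covering $V$ by translates, using tiles and density, rescaling into a fundamental region and tracking operator norms — is not needed, and you acknowledge you do not complete it; as written the proposal does not constitute a proof. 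The fix is simply to use the $k=0$ case of your own observation.
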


\begin{proof}
Fix $\epsilon>0$. We will show that the set $Q_\epsilon(\mathcal{L}(Q))$ has bounded word length with respect to $Q\cup \{t^{\pm 1}\}$.
By Lemma \ref{lem:limsetequivdefn}, $\mathcal{L}(Q)$ lies in the closure of $Q$. Note that $Q_\epsilon(\mathcal{L}(Q)) \subset Q_\delta^0(\mathcal{L}(Q))$ for some suitably large $\delta\geq \epsilon$, where $Q_\delta^0(\mc L(Q))$ is as in Definition~\ref{def:Q_epsilon}. So for any $z\in Q_\epsilon(\mathcal{L}(Q))$, there is $x\in \mathcal{L}(Q)$ with $\|z-x\|< \delta$. By the definition of $\mc L(Q)$, there exists $y\in Q$ with $\|x-y\|<\delta$. Thus  
$z\in B_{2\delta}(y)\cap A=y+(B_{2\delta}(0)\cap A)$.  Lemma \ref{lem:ballwordlength} provides a constant $D$ large enough that the elements of $B_{2\delta}(0)\cap A$ have word length at most $D$ with respect to $Q\cup \{t^{\pm 1}\}$.  Therefore the word length of $z$ with respect to $Q\cup \{t^{\pm 1}\}$ is at most $D+1$.
\end{proof}

The next lemma will be useful in the following subsections for proving that $Q\sim Q_\epsilon(\mathcal L(Q))$.

\begin{lem}
\label{lem:gelfand}
There exist constants $A,B>0$ and $0<\zeta<\eta<1$ such that \[A\zeta^i \leq \|\gamma^{-i}\| \leq B\eta^i\] for all $i\geq 0$. Moreover, for any $v\in \R^n$ we have $A\zeta^i \|v\|\leq \|\gamma^{-i}v\| \leq B\eta^i\|v\|$ for all $i\geq 0$.
\end{lem}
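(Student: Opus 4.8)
\textbf{Proof plan for Lemma~\ref{lem:gelfand}.}

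The statement is a quantitative version of the well-known fact that, for an expanding matrix $\gamma$, the norms $\|\gamma^{-i}\|$ decay geometrically, together with a matching lower bound. The plan is to produce both bounds from Gelfand's spectral radius formula applied to $\gamma^{-1}$ and to $\gamma$, and then to upgrade the upper bound on the operator norm to the pointwise statement for vectors in the obvious way.

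First I would fix notation. Let $\rho$ denote the spectral radius of $\gamma^{-1}$, i.e.\ $\rho = \max\{|\lambda|^{-1} : \lambda \text{ an eigenvalue of } \gamma\}$. Since $\gamma$ is expanding, every eigenvalue $\lambda$ of $\gamma$ satisfies $|\lambda|>1$, so $0<\rho<1$; in fact $\rho^{-1}$ is the spectral radius of $\gamma$ and is $>1$. Choose any $\eta$ with $\rho<\eta<1$. By Gelfand's formula, $\|\gamma^{-i}\|^{1/i}\to\rho$ as $i\to\infty$, so $\|\gamma^{-i}\|<\eta^i$ for all sufficiently large $i$; enlarging a constant $B\geq 1$ to absorb the finitely many initial terms gives $\|\gamma^{-i}\|\leq B\eta^i$ for all $i\geq 0$. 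For the lower bound, note that for each $i$ we have $1 = \|I\| = \|\gamma^i\gamma^{-i}\|\leq \|\gamma^i\|\,\|\gamma^{-i}\|$, hence $\|\gamma^{-i}\|\geq \|\gamma^i\|^{-1}$. Applying Gelfand's formula to $\gamma$ itself: $\|\gamma^i\|^{1/i}\to \rho^{-1}$, so $\|\gamma^i\|^{1/i}<\theta$ for large $i$ where $\rho^{-1}<\theta$, i.e.\ $\|\gamma^i\|\leq C\theta^i$ for all $i\geq 0$ with some $C\geq 1$; therefore $\|\gamma^{-i}\|\geq C^{-1}\theta^{-i}$. Setting $\zeta=\theta^{-1}$ and $A=C^{-1}$, and noting $\zeta = \theta^{-1} < \rho < \eta < 1$ and $A\leq 1$, we obtain $A\zeta^i\leq \|\gamma^{-i}\|\leq B\eta^i$ as desired (shrinking $A$ or $\zeta$, or enlarging $B$ or $\eta$, if needed so that $0<\zeta<\eta<1$ strictly).

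For the ``moreover'' clause, the upper bound is immediate from the definition of the operator norm: $\|\gamma^{-i}v\|\leq \|\gamma^{-i}\|\,\|v\|\leq B\eta^i\|v\|$ for every $v\in\R^n$ and $i\geq 0$. For the lower bound, apply the operator-norm upper bound to the vector $w=\gamma^{-i}v$ under the matrix $\gamma^i$: $\|v\| = \|\gamma^i w\|\leq \|\gamma^i\|\,\|w\| = \|\gamma^i\|\,\|\gamma^{-i}v\|$, so $\|\gamma^{-i}v\|\geq \|\gamma^i\|^{-1}\|v\|\geq C^{-1}\theta^{-i}\|v\| = A\zeta^i\|v\|$.

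There is essentially no obstacle here: the only mild subtlety is bookkeeping to ensure the \emph{same} constants $A,B,\zeta,\eta$ serve for all $i\geq 0$ (not just large $i$) and that the strict inequality $0<\zeta<\eta<1$ is maintained, which is handled by choosing $\eta,\theta$ strictly between the relevant spectral radii and $1$ (resp.\ $\rho^{-1}$) and then enlarging/shrinking the multiplicative constants to cover the finitely many small indices. I would present this compactly rather than belaboring the constant-chasing.
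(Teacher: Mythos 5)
Your overall strategy is exactly the paper's: apply Gelfand's formula to $\gamma^{-1}$ for the upper bound on $\|\gamma^{-i}\|$, apply it to $\gamma$ and use submultiplicativity of the operator norm to get the lower bound, and then deduce the pointwise estimates for vectors from the operator-norm estimates. However, there is a concrete error in the middle of your argument.

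You write that $\rho^{-1}$ is the spectral radius of $\gamma$ and hence $\|\gamma^i\|^{1/i}\to\rho^{-1}$. This is false in general. You defined $\rho=\max\{|\lambda|^{-1}\}$, so $\rho^{-1}=\min\{|\lambda|\}$, the \emph{smallest} modulus of an eigenvalue of $\gamma$. The spectral radius of $\gamma$ is $\max\{|\lambda|\}$, the \emph{largest} such modulus, and Gelfand gives $\|\gamma^i\|^{1/i}\to\max\{|\lambda|\}$. When $\gamma$ has eigenvalues of different moduli (the generic case here), any $\theta$ with $\rho^{-1}<\theta<\max\{|\lambda|\}$ breaks your proposed bound $\|\gamma^i\|\leq C\theta^i$: the ratio $\|\gamma^i\|/\theta^i$ tends to infinity. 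The paper avoids this by tracking two distinct quantities, $\zeta_0=\max|\lambda|$ (spectral radius of $\gamma$) and $\eta_0^{-1}$ with $\eta_0=\min|\lambda|$ (spectral radius of $\gamma^{-1}$), and choosing $\zeta_1>\zeta_0$ and $\eta_1>\eta_0^{-1}$ separately. The repair to your argument is minimal: take $\theta$ strictly larger than $\max|\lambda|$ rather than larger than $\rho^{-1}$. The final chain $\zeta=\theta^{-1}<(\max|\lambda|)^{-1}\leq(\min|\lambda|)^{-1}=\rho<\eta<1$ still yields $0<\zeta<\eta<1$, so the conclusion is unaffected; but as written the intermediate bound does not hold.
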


\begin{proof}
Set $\zeta_0$ and $\eta_0$ to be the largest and smallest absolute value of an eigenvalue of $\gamma$, respectively. We have $\zeta_0\geq \eta_0>1$, since $\gamma$ is expanding. By Gelfand's formula, we have \[\|\gamma^i\|^{1/i}\to \zeta_0 \text{ and } \|\gamma^{-i}\|^{1/i}\to \eta_0^{-1} \text{ as } i\to \infty.\] Choose any numbers $\zeta_1,\eta_1$ with $1<\zeta_0<\zeta_1$ and $1>\eta_1>\eta_0^{-1}$. Then there exists $J>0$ such that \[\|\gamma^i\|^{1/i} <\zeta_1 \text{ and } \|\gamma^{-i}\|^{1/i}<\eta_1 \text{ for all } i> J.\] Setting $A_1=\sup\{\|\gamma^i\|/\zeta_1^i\}_{0\leq i\leq J}$ and $B_1=\sup\{\|\gamma^{-i}\|/\eta_1^i\}_{0\leq i\leq J}$, we immediately  have  that $\|\gamma^i\|\leq A_1\zeta_1^i$ and $\|\gamma^{-i}\|\leq B_1\eta_1^i$ for each $i\geq 0$. The first of these inequalities  gives \[1=\|\operatorname{Id}\|=\|\gamma^{-i}\gamma^i\|\leq \|\gamma^{-i}\| \|\gamma^i\| \leq \|\gamma^{-i}\| A_1\zeta_1^i,\] and so $\|\gamma^{-i} \|\geq \frac{1}{A_1}\zeta_1^{-i}$. Setting $A=A_1^{-1}$, $B=B_1$, $\zeta=\zeta_1^{-1}$, and $\eta=\eta_1$ gives the claimed inequalities for $\|\gamma^{-i}\|$. For the moreover statement, the inequality $\|\gamma^{-i}v\|\leq B\eta^i\|v\|$ follows immediately by definition of the operator norm. For the inequality $\|\gamma^{-i}v\|\geq A\zeta^i \|v\|$, note that $\|v\|=\|\gamma^i\gamma^{-i} v\| \leq A_1\zeta_1^i \|\gamma^{-i}v\|$.
\end{proof}

\subsection{Invariant subspaces of $\R^n$}\label{sec:invarsubspaces}
In this subsection, we will complete the proof that $Q\sim Q_\epsilon(\mathcal L(Q))$ by showing that $Q_\epsilon(\mc L(Q))\preceq Q$ (Proposition~\ref{prop:Q_epsilonpreceqQ}).  To do so, we need to understand the structure of invariant subspaces of $\R^n$.  We begin with a brief review of the generalized eigenspaces of $\gamma$ and their properties. 

The \emph{generalized eigenspaces} of $\gamma$ are the subspaces of $\C^n$, which we denote $C_{\lambda,i}$, defined by $C_{\lambda,i}\vcentcolon=\ker(\gamma-\lambda I)^i$, where $\lambda \in \C$ is an eigenvalue of $\gamma$ and $i\geq 0$. Each $C_{\lambda, i}$ is invariant under $\gamma$.  Moreover,  \[0=C_{\lambda,0} \subsetneq C_{\lambda,1} \subsetneq \ldots \subsetneq C_{\lambda,i_\lambda} = C_{\lambda,i_\lambda+1}=C_{\lambda,i_\lambda+2}=\ldots\] for some unique number $i_\lambda$. The subspace $C_{\lambda,i_{\lambda}}$ is the space of generalized $\lambda$-eigenvectors. Since we assume that  the minimal and characteristic polynomials of $\gamma$ are equal, we have in fact that $\dim_\C C_{\lambda,i}=i$ for $0\leq i\leq i_\lambda$ and $i_\lambda$ is the algebraic multiplicity of $\lambda$.

If $\lambda$ is real, then $\gamma$ also preserves the real vector space $R_{\lambda,i}\vcentcolon=C_{\lambda,i}\cap \R^n$ for each $i$, and $\dim_\R R_{\lambda,i}=i$. In this case, there is an ordered basis $\mathcal B=\{w_1,\ldots,w_{i_\lambda}\}$ for $R_{\lambda,i_\lambda}$ which restricts to a basis $\{w_1,\ldots,w_i\}$ of $R_{\lambda,i}$ for each $i\leq i_\lambda$ and such that $\gamma$ acts on $R_{\lambda,i}$ relative to this ordered basis 
as the $i\times i$ Jordan block with $\lambda$ on the diagonal.

If $\lambda$ is complex, then similarly there is an ordered basis $\mathcal B=\{w_1,\ldots,w_{i_\lambda}\}$ for the complex vector space $C_{\lambda,i_\lambda}$ which restricts to a basis $\{w_1,\ldots,w_i\}$ for each $C_{\lambda,i}$ and for which the action of $\gamma$ on $C_{\lambda,i}$ is given by an $i\times i$ Jordan block as  above. In this case, the vectors $w_i$ are not real, and we consider the real and imaginary parts $u_i=\Re w_i$ and $v_i=\Im w_i$. The element $\gamma$ preserves each of the subspaces $R_{\lambda,i}\vcentcolon=\langle u_1,v_1,\ldots,u_i,v_i\rangle$ of $\R^n$ for $i=1,\ldots,i_\lambda$. Note that $\dim_\R R_{\lambda,i}=2i$. Taking $ \{u_1,v_1,\ldots,u_i,v_i\}$ as an ordered basis of $R_{\lambda,i}$ and writing $\lambda = a+bi$,  the matrix for $\gamma$ relative to $\{u_1,v_1,\ldots,u_i,v_i\}$ is given by the $2i\times 2i$ \emph{real Jordan block} \begin{equation}\label{eqn:realblock}\begin{pmatrix} Q(a,b) & I_2 & 0 & \cdots & 0 \\ 0 & Q(a,b) & I_2 & \cdots & 0 \\ 0 & 0 & Q(a,b) & \cdots & 0 \\ \vdots & \vdots & \vdots & \ddots & \vdots \\ 0 & 0 & 0 & \cdots & Q(a,b) \end{pmatrix}\end{equation} where $Q(a,b)$ is the $2\times 2$ matrix $\begin{pmatrix} a & b \\ -b & a \end{pmatrix}$ and $I_2$ is the $2\times 2$ identity matrix.

If $\lambda$ is not real, then we may consider the complex conjugate eigenvalue $\overline{\lambda}$. If $\mathcal B=\{w_1,\ldots,w_{i_\lambda}\}$ is the ordered basis for $C_{\lambda,i_\lambda}$ constructed in the last paragraph, then $\overline{\mathcal B}=\{\overline{w_1},\ldots,\overline{w_{i_\lambda}}\}$ is an ordered basis for $C_{\overline{\lambda},i_\lambda}$ satisfying all the same properties. We will assume that $\overline{\mathcal B}$ has been chosen for $C_{\overline{\lambda},i_\lambda}$ if $\mathcal B$ has been chosen for $C_{\lambda,i}$. Then $R_{\lambda,i}=R_{\overline{\lambda},i}$ for each $i$, and moreover, \[(C_{\lambda,i}\oplus C_{\overline{\lambda},i})\cap \R^n=R_{\lambda,i}=R_{\overline{\lambda},i}.\]

The following is well-known (see \cite[Theorems 12.2.1 and 12.2.4]{invariant}). 

\begin{prop}
\label{prop:invariantsubspaceclassification}
The  subspaces  of $\R^n$ that are invariant under $\gamma$ are exactly the direct sums of subspaces of the form $R_{\lambda,i}$, where $\lambda$ is a real or complex eigenvalue of $\gamma$ and $0\leq i\leq i_\lambda$.
\end{prop}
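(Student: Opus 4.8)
\textbf{Proof proposal for Proposition~\ref{prop:invariantsubspaceclassification}.} This is a standard fact, but since the paper invokes it directly from \cite{invariant}, I will explain the structure of the argument I would use if a self-contained proof were desired. The key simplification is that the minimal and characteristic polynomials of $\gamma$ agree, so each eigenvalue $\lambda$ occurs in exactly one Jordan block, of size equal to its algebraic multiplicity $i_\lambda$; this is what rigidifies the lattice of invariant subspaces.

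First I would reduce to the case of a single Jordan block. Over $\C$, the space $\C^n$ decomposes as a direct sum of the generalized eigenspaces $C_{\lambda, i_\lambda}$, and any $\gamma$-invariant \emph{complex} subspace $W$ decomposes compatibly as $W = \bigoplus_\lambda (W \cap C_{\lambda, i_\lambda})$ — this uses that the projections onto the generalized eigenspaces are polynomials in $\gamma$. For a real invariant subspace $V \subseteq \R^n$, I would pass to its complexification $V_\C = V \otimes_\R \C \subseteq \C^n$, which is $\gamma$-invariant and stable under complex conjugation; decompose $V_\C$ over the generalized eigenspaces; and then recover $V$ from $V_\C$ by intersecting back with $\R^n$. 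Conjugation stability forces the $C_{\lambda}$-component and the $C_{\bar\lambda}$-component of $V_\C$ to be conjugate, so for a complex eigenvalue the two blocks must be treated as a pair — this is exactly where the subspaces $R_{\lambda,i} = (C_{\lambda,i} \oplus C_{\bar\lambda,i}) \cap \R^n$ come from.

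Next, for a single Jordan block, I would show that the only invariant subspaces of $C_{\lambda, i_\lambda}$ are the $C_{\lambda, i} = \ker(\gamma - \lambda I)^i = \operatorname{im}(\gamma - \lambda I)^{i_\lambda - i}$ for $0 \le i \le i_\lambda$. Writing $N = \gamma - \lambda I$, which is nilpotent with a single Jordan block, any $N$-invariant subspace $W \ne 0$ contains a vector of maximal ``$N$-depth'' $k$ (i.e. $N^k w = 0$, $N^{k-1} w \ne 0$); applying powers of $N$ to $w$ and using that in a single Jordan block $w, Nw, \ldots, N^{k-1}w$ together with a top vector generate everything, one shows $W = C_{\lambda, k}$. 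Combining the single-block analysis with the decomposition step: a real invariant subspace $V$ has $V_\C$ equal to a direct sum, over eigenvalues, of pieces $C_{\lambda, i_\lambda(V)}$, with the constraint $i_\lambda(V) = i_{\bar\lambda}(V)$ for non-real $\lambda$; intersecting with $\R^n$ turns each real-eigenvalue piece into $R_{\lambda, i} = C_{\lambda,i} \cap \R^n$ and each conjugate pair into $R_{\lambda,i} = R_{\bar\lambda,i}$, giving exactly a direct sum of the claimed form. Conversely every such direct sum is manifestly $\gamma$-invariant, since each $R_{\lambda,i}$ is.

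The main obstacle — really the only nontrivial point — is the passage between real and complex invariant subspaces: one must check that complexification and ``intersect with $\R^n$'' are mutually inverse bijections between real $\gamma$-invariant subspaces and conjugation-stable complex $\gamma$-invariant subspaces, and that the real dimension count comes out right (namely $\dim_\R R_{\lambda,i} = i$ when $\lambda$ is real and $= 2i$ when $\lambda$ is complex, as recorded in the paragraphs preceding the proposition). Everything else is the classical single-Jordan-block computation. Since the paper is content to cite \cite{invariant} here, in the actual text I would simply write: ``This is well-known; see \cite[Theorems 12.2.1 and 12.2.4]{invariant}. The equality of the minimal and characteristic polynomials guarantees each eigenvalue lies in a single Jordan block, which is what pins down the invariant subspaces as the stated direct sums.''
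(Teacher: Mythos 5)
The paper gives no proof of this proposition --- it simply cites \cite[Theorems 12.2.1 and 12.2.4]{invariant} --- and your proposal correctly identifies that. The sketch you supply (decomposition of any complex $\gamma$-invariant subspace over generalized eigenspaces via projections that are polynomials in $\gamma$, classification of invariant subspaces of a single Jordan block by maximal $N$-depth, and the $\C/\R$ Galois descent identifying real invariant subspaces with conjugation-stable complex ones) is the standard argument behind the cited theorems, and it is correct as stated; the hypothesis that the characteristic and minimal polynomials of $\gamma$ coincide enters exactly where you say, in guaranteeing that each eigenvalue has a single Jordan block so the single-block analysis pins down each component uniquely. The one step I would tighten in a write-up is the single-block claim: rather than the slightly vague phrase about $w, Nw, \ldots, N^{k-1}w$ ``together with a top vector'' generating everything, the cleanest route is to observe that these $k$ vectors are linearly independent (so $\dim W \geq k$), that $W \subseteq \ker N^k$ by maximality of the depth, and that $\dim \ker N^k = k$ because the block is single, forcing $W = \ker N^k$. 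Since the paper defers entirely to the reference, there is no competing proof in the text to compare against; your version is a faithful filling-in of what that reference does.
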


Lastly, we discuss real Jordan forms of matrices. Our discussion follows \cite[Section 3.1]{farb_mosher} with certain modifications. A \emph{real Jordan form} of $\gamma$ is a real block diagonal matrix $\beta$ with diagonal blocks that may be described as follows. Denote  the real eigenvalues of $\gamma$ by $\lambda_1,\ldots, \lambda_u$ and  the complex eigenvalues of $\gamma$ (which occur in complex conjugate pairs) by $\mu_1,\overline{\mu_1},\ldots,\mu_s,\overline{\mu_s}$. Let $j_t$ be the algebraic multiplicity of $\lambda_t$ and  $k_t$  the algebraic multiplicity of $\mu_t$. Then $\beta$ has one $j_t\times j_t$ Jordan block for each real eigenvalue $\lambda_t$ and one $2k_t\times 2k_t$ real Jordan block for each complex conjugate pair $\mu_t,\overline{\mu_t}$. The matrix $\gamma$ is conjugate to $\beta$ via a real invertible matrix. Moreover, the real Jordan form $\beta$ is uniquely determined by $\gamma$ up to permuting the diagonal blocks.

With this description of invariant subspaces and real Jordan forms in hand, we proceed to prove $Q_\epsilon(\mathcal{L}(Q))\preceq Q$, which, combined with Lemma \ref{cor:bddinQ}, will show that $Q_\epsilon(\mathcal{L}(Q))\sim Q$.

\begin{prop}\label{prop:Q_epsilonpreceqQ}
If $Q\subset A$ is confining under $\alpha^{-1}$ and $\epsilon>0$, then $Q_\epsilon(\mathcal{L}(Q))\preceq Q$.
\end{prop}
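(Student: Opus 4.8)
The plan is to show that every element $q \in Q$ has uniformly bounded word length with respect to $Q_\epsilon(\mathcal L(Q)) \cup \{t^{\pm 1}\}$, which by Lemma \ref{lem:confiningpartialorder} is equivalent to $Q_\epsilon(\mathcal L(Q)) \preceq Q$. Fix $q \in Q$. The key idea is that after applying $\alpha^{-1}$ (i.e.\ multiplying by $\gamma^{-1}$) a controlled number of times, $q$ gets pulled close to the invariant subspace $V = \mathcal L(Q)$, and hence lands in $Q_\epsilon(V)$; we then need to bound the number of times we applied $\alpha^{-1}$ in terms of $q$ in a way that translates into a bound on word length. So first I would analyze where $\gamma^{-k} q$ goes as $k \to \infty$. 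Decompose $\R^n = V \oplus W$ using the invariant complement coming from Proposition \ref{prop:invariantsubspaceclassification} (write $V$ as a direct sum of certain $R_{\lambda,i}$'s and let $W$ be the direct sum of the complementary real Jordan pieces). Write $q = q_V + q_W$ accordingly. Then $\gamma^{-k} q = \gamma^{-k} q_V + \gamma^{-k} q_W$, and the distance from $\gamma^{-k} q$ to $V$ is exactly $\|\gamma^{-k} q_W\|$ (measuring in the complement), which by the upper bound in Lemma \ref{lem:gelfand} is at most $B \eta^k \|q_W\| \le B\eta^k \|q\|$. Thus once $k$ is large enough that $B\eta^k\|q\| < \epsilon$, we have $\gamma^{-k} q \in Q_\epsilon^0(V) \subseteq Q_\epsilon(V)$.

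The subtlety is that this $k$ depends on $\|q\|$, which is unbounded over $Q$, so a naive argument would only give $q \in \alpha^{K}(Q_\epsilon(V))$ with $K$ growing. To fix this I would peel off the part of $q$ causing the problem. Decompose $q = q' + q''$ where $q'$ consists of the ``small'' digits and $q''$ of the ``large'' ones in an appropriate sense; more precisely, since $q \in A = \bigcup_i \gamma^{-i}\Z^n$, write $q = x + y$ with $x \in T$ (the tile, i.e.\ a finite sum $\sum_{j=1}^\ell \gamma^{-j} v_j$ with $v_j \in \mathscr D$) lying in a compact set and $y \in \Z^n$. The tile part $x$ has word length bounded by a universal constant in $Q \cup \{t^{\pm 1}\}$ by the argument of Lemma \ref{lem:RsubsetQ} / Lemma \ref{lem:ballwordlength}, hence (by Lemma \ref{cor:bddinQ}) also bounded word length in $Q_\epsilon(\mathcal L(Q)) \cup \{t^{\pm 1}\}$ — wait, more carefully, I should argue the other direction and instead just bound $x$ directly in $Q \cup \{t^{\pm1}\}$; what we need is a bound in $Q_\epsilon(\mathcal L(Q))\cup\{t^{\pm 1}\}$, and since $0 \in \mathscr D$ gives $T \subset Q_\epsilon^0(V) + (\text{bounded})$... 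Actually the cleanest route: observe $T$ is compact, so $T \cap A$ (the finite-sum part) is contained in $B_\rho(0) \cap A$ for some $\rho$, and by Lemma \ref{lem:ballwordlength} applied with $Q_\epsilon(\mathcal L(Q))$ in place of $Q$ (which is legitimate since $Q_\epsilon(\mathcal L(Q))$ is confining under $\alpha^{-1}$ by Lemma \ref{lem:invartoconf}), every element of $B_\rho(0)\cap A$ has bounded word length in $Q_\epsilon(\mathcal L(Q)) \cup \{t^{\pm 1}\}$. So the tile part $x$ is handled.

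It remains to bound $y \in \Z^n$. Here I would use that $y$ itself lies in $Q$-translates: since $q \in Q$ and $x$ is a finite sum of elements $\gamma^{-j}v_j$ each of which lies in $A$, and since $Q$ is symmetric and nearly additive, $\alpha^{-N_0}(y) = \alpha^{-N_0}(q - x) \in \alpha^{-N_0}(Q + Q') \subseteq Q$ for a fixed $N_0$ (using $x$'s bounded word length to see $-x$ is a bounded product of $Q$-elements, then applying Definition \ref{def:confining}(c) finitely many times). Then $y \in \Z^n$ and I need to push $\gamma^{-k} y$ into $Q_\epsilon(V)$. Now $\gamma^{-k} y = \gamma^{-k} y_V + \gamma^{-k} y_W$; the $W$-component shrinks geometrically by Lemma \ref{lem:gelfand}, so it enters the $\epsilon$-tube after finitely many steps — but again the number of steps depends on $\|y_W\|$. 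The point that makes this work is that $\gamma^{-k} y$ is a sequence in $A$ whose $V$-distance tends to $0$, so eventually $\gamma^{-k} y \in Q_\epsilon^0(V)$; but I want a \emph{uniform} count. The resolution is that we are allowed the count to depend on $q$ as long as it is $\le$ (word length of $q$ in $Q\cup\{t^{\pm 1}\}$) times a constant — no wait, that's the wrong direction too. The genuinely correct mechanism, and I believe the intended one: given $q \in Q$, let $m = \inf\{k : \gamma^{-k} q \in Q_\epsilon(\mathcal L(Q))\}$; I claim $m$ is \emph{uniformly bounded over $q \in Q$}. Suppose not; then there are $q_i \in Q$ with $m_i \to \infty$. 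Consider $z_i = \gamma^{-(m_i - 1)} q_i \in Q$ (using $\alpha^{-1}(Q) \subseteq Q$), which by definition of $m_i$ does \emph{not} lie in $Q_\epsilon(\mathcal L(Q))$, so $\mathrm{dist}(z_i, V) \ge \epsilon'$ for a fixed $\epsilon'$ (coming from the $k_0$-fold union in Definition \ref{def:Q_epsilon}), while $\gamma^{-1} z_i \in Q_\epsilon(\mathcal L(Q))$... hmm, this requires $\gamma^{-1} z_i$ to be close to $V$, and by Lemma \ref{lem:gelfand} the $W$-component can only shrink by a bounded factor $\eta^{-1}$ per step, giving $\mathrm{dist}(\gamma^{-1} z_i, V) \ge (\text{const}) \cdot \epsilon'$, a contradiction with $\gamma^{-1}z_i$ being in a small tube — \emph{unless} the $V$-components $z_i^V$ are themselves wandering off, but applying $\gamma^{-1}$ to a point whose $V$-part is huge doesn't immediately bring it into the tube either since $\gamma^{-1}$ is an isomorphism of $V$ (so $\|\gamma^{-1}z_i^V\| \ge A\zeta\|z_i^V\|$, bounded below when $\|z_i^V\|$ is). So the failure of uniform boundedness would force $z_i^W \to 0$ while $\mathrm{dist}(z_i, V) \ge \epsilon'$, which is a direct contradiction since $\mathrm{dist}(z_i,V) = \|z_i^W\|$. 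Hence $m$ is uniformly bounded, say $m \le M$; then $\gamma^{-M}(Q) \subseteq Q_\epsilon(\mathcal L(Q))$, giving $Q_\epsilon(\mathcal L(Q)) \preceq Q$ by Lemma \ref{lem:confiningpartialorder}.

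The main obstacle is exactly this uniform-boundedness argument: making precise that $\mathrm{dist}(\gamma^{-k} q, V) = \|(\gamma^{-k}q)_W\|$ decays geometrically with a \emph{ratio bounded away from $1$ in both directions} (upper bound $\eta$, and per-step lower-bound control on the $V$-part via $\zeta$ from Lemma \ref{lem:gelfand}), so that the last step before entering the $\epsilon$-tube is at a definite positive distance from $V$ that cannot be achieved by an element sitting in the (compact, $V$-thin) set $Q_\epsilon(\mathcal L(Q))$ near $V$. I would set this up by fixing the $\gamma$-invariant splitting $\R^n = V \oplus W$ from Proposition \ref{prop:invariantsubspaceclassification}, noting $\gamma|_W$ has all eigenvalues off the unit circle (expanding) so Lemma \ref{lem:gelfand} applies to $\gamma|_W^{-1}$ giving $A'\zeta'^k \|w\| \le \|\gamma^{-k}w\| \le B'\eta'^k\|w\|$ with $0 < \zeta' \le \eta' < 1$, and then running the contradiction argument above. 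The rest — handling the tile part via Lemma \ref{lem:ballwordlength} and reducing a general $q \in A$ to $\Z^n$ plus a tile element via compactness of $T$ — is routine bookkeeping with Definition \ref{def:confining}(c) and Lemma \ref{lem:RsubsetQ}.
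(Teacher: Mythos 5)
Your strategy of reducing to showing that $m_q = \inf\{k : \gamma^{-k}q \in Q_\epsilon(\mathcal{L}(Q))\}$ is uniformly bounded over $q\in Q$ is the right target, but the contradiction argument does not close. Having $z_i \in Q$ with $\mathrm{dist}(z_i, V) \geq \epsilon$ while $\gamma^{-1}z_i \in Q_\epsilon(V)$ is perfectly consistent: the Gelfand estimates give only $\mathrm{dist}(\gamma^{-1}z_i, V) \geq A\zeta\,\mathrm{dist}(z_i,V)$ with $A\zeta < 1$, which is compatible with $\gamma^{-1}z_i$ lying in a tube of width roughly $\epsilon$. Your assertion that ``$z_i^W \to 0$'' does not follow from anything written; the preceding claim that a large $V$-component obstructs entry into the tube is incorrect, since membership in $B_\epsilon(V)$ depends only on the $W$-component, and the $V$-part of $z_i$ is unconstrained. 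Notice also that beyond $\gamma$-invariance your argument never uses the defining property of $V = \mathcal{L}(Q)$; if the reasoning were sound it would prove $Q_\epsilon(V) \preceq Q$ for \emph{every} $\gamma$-invariant subspace $V$, which is false (e.g.\ $V = 0$ when $\mathcal{L}(Q) \neq 0$).

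The missing ingredient is the characterization $\mathcal{L}(Q) = \bigcap_{k\geq 0}\overline{\gamma^{-k}(Q)}$ from Lemma \ref{lem:limsetequivdefn}, used twice. First, since $V \subset \overline{\gamma^{-j}(Q)}$ for every $j$, one may choose $u\in Q$ so that $\gamma^{-j}(u)$ is close to $\operatorname{proj}_V(\gamma^{-j}v)$; the difference $\gamma^{-j}(v)-\gamma^{-j}(u)$ then has small $V$-component, lies in $\gamma^{-j+k_0}(Q)$, and can be arranged to sit in the fixed compact set $\overline{B_2(0)}\setminus B_{A\zeta/2}(V)$ disjoint from $V$ (using the quotient/$W$-factor Gelfand bounds you set up). Second, letting $j\to\infty$ and passing to a convergent subsequence produces a limit point in this compact set which, again by Lemma \ref{lem:limsetequivdefn}, lies in $\mathcal{L}(Q) = V$ --- a contradiction. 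In short, the analytic estimates in your write-up are sound, but the contradiction must be manufactured by projecting out the $V$-direction to land in a compact set and then invoking the limit-set description of $\mathcal{L}(Q)$, neither of which appears in your argument. (The tile/$\Z^n$ decomposition in your first paragraph does not contribute to the final argument and can be dropped.)
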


\begin{proof}
For simplicity, we will write $V=\mathcal L(Q)$ throughout the proof. By Lemma~\ref{lem:nbhdequiv}, $Q_\epsilon(V) \sim Q_\delta(V)$ for any $\epsilon,\delta>0$, so it suffices to prove the statement for $Q_1(V)$. If $Q$ is contained in some metric neighborhood of $V$, then $Q_1(V)\preceq Q$ by the proof of Lemma \ref{lem:nbhdequiv}. 

Assume for contradiction that $Q$ is not contained in \emph{any} metric neighborhood of $V$. We will show that there is a compact set $C\subset \R^n\setminus V$ that contains points of $\gamma^{-i}(Q)$ for $i$ arbitrarily large. Since $C$ is compact, we may then take a subsequence to find points of $\gamma^{-i}(Q)$ which converge to a point of $C$ as $i\to \infty$. This will imply that that there is a point of $V$ in $C$, which is a contradiction.

To prove the existence of such a set $C$, we consider the quotient vector space $\R^n/V$. Since $V$ is invariant, $\gamma$ induces a linear automorphism $\overline{\gamma}$ of $\R^n/V$.  The real Jordan form for $\overline{\gamma}$ may be obtained by taking sub-blocks of the real Jordan blocks of $\gamma$. This shows that the eigenvalues of $\overline{\gamma}$ are a subset of the eigenvalues of $\gamma$, and hence 
$\overline{\gamma}$ is also expanding.

The Euclidean norm $\|\cdot\|$ on $\R^n$ induces a norm $\|\cdot\|'$ on $\R^n/V$ as follows. If $u\in \R^n$ and $\overline{u}\in \R^n/V$ is its equivalence class, then \[\|\overline{u}\|'=\|u-\operatorname{proj}_V(u)\|=d(u,V)\] where $\operatorname{proj}_V$ denotes orthogonal projection to $V$ and $d(u,V)$ denotes the distance from $u$ to $V$ in $\R^n$. By Lemma \ref{lem:gelfand} applied to the expanding matrix $\overline{\gamma}$, there are constants $A,B>0$ and $0<\zeta<\eta<1$ such that \[A\zeta^i \|\overline{v}\|' \leq \|\overline{\gamma}^{-i}\overline{v}\|'\leq B\eta^i \|\overline{v}\|'\] for all $i\geq 0$ and all $\overline{v}\in \R^n/V$. Translating this into a statement about vectors in $\R^n$ yields that for all $i\geq 0$,
\[A\zeta^i d(v,V)\leq d(\gamma^{-i}v,V)\leq B\eta^id(v,V).\]

By assumption, there are elements $v\in Q$ with $d(v,V)$ arbitrarily large. Since $d(\gamma^{-i}v,V)\geq A\zeta^id(v,V)$, the quantity $\min \{i : \gamma^{-i} v \in Q_1^0(V)=B_1(V)\}$ is arbitrarily large. Fix $v\in Q$, and set $j=\min\{i:\gamma^{-i}v\in Q_1^0(V)\}$. We may assume that $j>1$. By definition of $j$, we have $d(\gamma^{-j+1}v,V)>1$. Consequently, 
\[1\geq d(\gamma^{-j}v,V)=d(\gamma^{-1}(\gamma^{-j+1}v),V)\geq A\zeta d(\gamma^{-j+1}v,V)\geq A\zeta.\]
 Since $V$ lies in the closure of $\gamma^{-j}(Q)$, we may choose $u\in Q$ with $\gamma^{-j}(u)$ arbitrarily close to $\operatorname{proj}_V(\gamma^{-j}v)$. Thus $\gamma^{-j}(v)-\gamma^{-j}(u)$ may be taken to be nearly orthogonal to $V$ and  of length approximately $d(\gamma^{-j}(v),V)$. More precisely, we may choose $u\in Q$ such that 
\[\|\gamma^{-j}(v)-\gamma^{-j}(u)\|\leq 2 \text{ and } d(\gamma^{-j}(v)-\gamma^{-j}(u),V) > A\zeta/2.\] 
Consequently $\gamma^{-j}(v)-\gamma^{-j}(u)$  lies in the compact set $C=\overline{B_2(0)}\setminus B_{A\zeta/2}(V)$.  Since  $\gamma^{-j}(v)-\gamma^{-j}(u)\in \gamma^{-j}(Q+Q)$, taking $k_0$ large enough that $\gamma^{-k_0}(Q+Q)\subset Q$ yields $\gamma^{-j}(v)-\gamma^{-j}(u)\in \gamma^{-j+k_0}(Q)$.

Since $j$ may be taken arbitrarily large, we have shown  that there are points of $\gamma^{-k}(Q)$ in the compact set $C$ for $k$ arbitrarily large. Choose a sequence $k_i\to \infty$ and $v_i\in Q$ with $\gamma^{-k_i}(v_i)\in C$. Since $C$ is compact, we may pass to a subsequence to assume that $\gamma^{-k_i}(v_i)$ converges to a point $z\in C$. Then $\gamma^{-k_i}(v_i)\in \gamma^{-k}(Q)$ for any fixed $k$ and for any $i$ large enough, and hence $z$ lies in the closure of $\gamma^{-k}(Q)$. Since $k$ is arbitrary, this show that $z\in \mathcal L(Q)=V$ by Lemma \ref{lem:limsetequivdefn}. This is a contradiction, as $C\cap V=\emptyset$ by construction.
\end{proof}

\subsection{The poset of invariant subspaces}

The  subspaces of $\R^n$ invariant under $\gamma$  are partially ordered by inclusion. In this section we describe the resulting poset concretely. Let $\lambda_1,\ldots,\lambda_r$ be the real eigenvalues of $\gamma$ and $\mu_1,\overline{\mu_1},\ldots,\mu_s,\overline{\mu_s}$ the complex eigenvalues of $\gamma$. Let $k_i$ be the algebraic multiplicity of $\lambda_i$ and $l_i$ the algebraic multiplicity of $\mu_i$. Then any invariant subspace has the form \[R_{i_1,\ldots,i_r,j_1,\ldots,j_s} = \left( \bigoplus_{u=1}^r R_{\lambda_u,i_u}\right)\oplus \left(\bigoplus_{v=1}^s R_{\mu_v,j_v}\right)\] for some $i_u\leq k_u$ and $j_v \leq l_v$ for each $u$ and $v$. We have $R_{i_1,\ldots,i_r,j_1,\ldots,j_s}\subset R_{i'_1,\ldots,i'_r,j'_1,\ldots,j'_s}$ if and only if $i_u\leq i'_u$ and $j_v\leq j'_v$ for each $u$ and $v$. Thus the following proposition follows immediately:

\begin{prop}
\label{prop:posetofsubspaces}
Let $k_1,\ldots,k_r$ be the algebraic multiplicities of the real eigenvalues of $\gamma$. Choose one eigenvalue from each complex conjugate pair of complex eigenvalues of $\gamma$, and let $l_1,\ldots,l_s$ be their algebraic multiplicities.  The poset of  subspaces of $\R^n$ invariant under $\gamma$ is isomorphic to $\Div(k_1,\ldots,k_r,l_1,\ldots,l_s)$.  
\end{prop}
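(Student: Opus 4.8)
The plan is to prove Proposition~\ref{prop:posetofsubspaces} by reducing the classification of $\gamma$-invariant subspaces of $\R^n$ to the already-stated structural result Proposition~\ref{prop:invariantsubspaceclassification}, and then identifying the resulting inclusion poset explicitly with $\Div(k_1,\ldots,k_r,l_1,\ldots,l_s)$. First I would recall the standing hypothesis that the minimal and characteristic polynomials of $\gamma$ agree, which forces each generalized eigenspace chain to be a \emph{strictly increasing} chain with one-dimensional jumps; this is exactly what was noted in Section~\ref{sec:invarsubspaces}, so $\dim_\C C_{\lambda,i}=i$ for a real eigenvalue and $\dim_\R R_{\mu,j}=2j$ for (a chosen representative of) a complex conjugate pair. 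Consequently, for each real eigenvalue $\lambda_u$ the family $\{R_{\lambda_u,i_u}\}_{0\le i_u\le k_u}$ is a single maximal chain of invariant subspaces inside the $\lambda_u$-root space, and likewise for each complex pair $\mu_v,\overline{\mu_v}$ the family $\{R_{\mu_v,j_v}\}_{0\le j_v\le l_v}$ is a single chain inside the corresponding real root space.

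The core of the argument is then bookkeeping. By Proposition~\ref{prop:invariantsubspaceclassification}, every $\gamma$-invariant subspace of $\R^n$ is uniquely a direct sum $\bigoplus_u R_{\lambda_u,i_u}\oplus\bigoplus_v R_{\mu_v,j_v}$. I would establish three things: (1) uniqueness of the tuple $(i_1,\ldots,i_r,j_1,\ldots,j_s)$ attached to an invariant subspace — this follows because the root spaces are linearly independent and the chain along each root space is strict, so the dimension of the intersection of the subspace with each root space recovers $i_u$ (resp.\ $j_v$); (2) the inclusion $R_{i_1,\ldots,j_s}\subseteq R_{i_1',\ldots,j_s'}$ holds iff $i_u\le i_u'$ and $j_v\le j_v'$ for all $u,v$ — the ``if'' direction is immediate from the chain inclusions $R_{\lambda_u,i_u}\subseteq R_{\lambda_u,i_u'}$, and the ``only if'' direction follows by intersecting both sides with a fixed root space and using strictness of the chains; (3) the assignment sending $(i_1,\ldots,i_r,j_1,\ldots,j_s)$ to $R_{i_1,\ldots,i_r,j_1,\ldots,j_s}$ is a bijection onto the set of invariant subspaces (surjectivity is Proposition~\ref{prop:invariantsubspaceclassification}, injectivity is (1)). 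Together, (2) and (3) say precisely that the poset of invariant subspaces is order-isomorphic to $\prod_{u=1}^r[k_u+1]\times\prod_{v=1}^s[l_v+1]=\Div(k_1,\ldots,k_r,l_1,\ldots,l_s)$ with the product order, which is the claim.

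I would write this up in two or three short paragraphs: one recalling the strict-chain property from the equal-polynomials hypothesis and fixing notation for the bijection; one proving the order-equivalence via the intersection-with-root-space trick; and a one-line conclusion. The only mild subtlety — and the step most worth stating carefully rather than waving through — is the ``only if'' half of point (2): one must check that a genuine containment of direct sums forces componentwise containment. The clean way is to note that $R_{\lambda_u,i_u}$ is precisely the intersection of the invariant subspace with the (fixed, $\gamma$-invariant) generalized $\lambda_u$-root space $R_{\lambda_u,k_u}$ (and similarly in the complex case, using $R_{\mu_v,l_v}$), so applying $\cap\, R_{\lambda_u,k_u}$ to both sides of the containment and invoking that $i\mapsto R_{\lambda_u,i}$ is a strictly increasing chain (hence injective on dimensions) yields $i_u\le i_u'$. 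Everything else is routine, so I do not anticipate a genuine obstacle; the proposition really is an immediate corollary of Proposition~\ref{prop:invariantsubspaceclassification} once the direct-sum decomposition is organized into one chain per eigenvalue (or conjugate pair).
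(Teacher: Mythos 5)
Your proposal is correct and follows essentially the same route as the paper, which also deduces the result directly from Proposition~\ref{prop:invariantsubspaceclassification} by writing every invariant subspace as $R_{i_1,\ldots,i_r,j_1,\ldots,j_s}$ and observing that inclusion is componentwise; the paper simply states this without spelling out the uniqueness and ``only if'' verifications that you carefully supply via the intersection-with-root-space argument.
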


We are now ready to prove Proposition~\ref{prop:P-invar}.

\begin{proof}[Proof of Proposition~\ref{prop:P-invar}]
 Consider an expanding matrix $\gamma\in M_n(\Z)$  with equal minimal and characteristic polynomial, and form the group $G=G(\gamma)$. The lattice $\mathcal P_-(G)$ is isomorphic to the poset of confining subsets of $A$ under $\alpha^{-1}$, which is multiplication by $\gamma^{-1}$, by Proposition~\ref{prop:Ppmconfining}. If $V\subset \R^n$ is an invariant subspace for $\gamma$ then the set $Q_1(V)$ is confining. Thus there is a function $V\mapsto Q_1(V)$ from invariant subspaces of $\R^n$ to confining subsets of $A$ considered up to equivalence. By Lemma \ref{lem:subspacechange} this function is injective and order-reversing. By Lemmas \ref{cor:bddinQ} and \ref{prop:Q_epsilonpreceqQ} this function is surjective: if $Q\subset A$ is confining then $\mathcal L(Q)$ is an invariant subspace by Lemma \ref{lem:conftoinvar} and we have $Q\sim Q_1(\mathcal L(Q))$.  Therefore,  $\mathcal P_-(G)$ is isomorphic to the opposite of the poset of invariant subspaces of $\gamma$. By Proposition \ref{prop:posetofsubspaces}, this poset is isomorphic to its own opposite, so $\mathcal P_-(G)$ is isomorphic to the poset of invariant subspaces.
\end{proof}

\subsection{Corollaries to Theorem \ref{thm:char=min}}

Having discussed invariant subspaces, we will now pause to derive the various corollaries to Theorem \ref{thm:char=min}. We do this to illustrate the immediate applications of the invariant subspace machinery. The only component of Theorem \ref{thm:char=min} that remains to be proven is the association to actions on Heintze groups; this will be completed in Section~\ref{sec:heintze}.

\begin{proof}[Proof of Corollary \ref{cor:polynomialring}]
Consider the monic polynomial $p(x)=a_0+a_1x+\cdots +a_{n-1}x^{n-1}+x^n$ satisfying the hypotheses of the corollary and the associated ring $R=\Z[x]/(p)$. Then $R$ is isomorphic to $\Z^n$ as an abelian group, with $\{1+(p),x+(p),\ldots,x^{n-1}+(p)\}$ being a basis of free abelian groups. Of course, $R$ is itself a cyclic $R$-module. Moreover, with respect to our free basis for $R$, the matrix of the linear endomorphism of $R$ defined by multiplication by $\gamma=x+(p)$ is exactly the companion matrix $\delta$ to $p$. The matrix $\delta$ is expanding since its characteristic polynomial (and minimal polynomial) is $p$, all of whose roots lie outside the unit disk. Hence $G(R,\gamma)$ is isomorphic to $G(\delta)=\langle \Z^n, t : trt^{-1}=\delta r \text{ for } r\in \Z^n\rangle$ and the poset $\mathcal P_+(G(R,\gamma))$ is described by Theorem \ref{thm:char=min}. Thus, $\mathcal P_+(G)$ is isomorphic to $\Div(n_1,\ldots,n_r)$, where $p=up_1^{n_1}\cdots p_r^{n_r}$ is the prime factorization of $p$ in $\Z[[x]]$.

It remains only to describe $\mathcal P_-(G)$, the poset of invariant subspaces for $\delta$. Since $p$ is irreducible, it has no repeated roots in $\C$. By Proposition \ref{prop:posetofsubspaces}, the poset of invariant subspaces is isomorphic to $\Div(1,\ldots,1)$ with the number of 1's between the parentheses being the number of real roots plus half the number of complex roots of $p$. This completes the proof.
\end{proof}

\begin{proof}[Proof of Corollary \ref{cor:squarefreeconstant}]
The statement about $\mc P_-(G)$ follows from Corollary \ref{cor:polynomialring}. It remains  to describe $\mathcal P_+(G)$, which is isomorphic to the poset of divisors of $p$ in $\Z[[x]]$, up to associates. Let $(-1)^{s} q_1\cdots q_r$ be the prime factorization of the constant term of $p$. Then by Corollary \ref{cor:seriesfactorization}, $p$ factors as $p=(-1)^{s} p_1\cdots p_r$ where the constant term of $p_i$ is $q_i$. By Lemma \ref{lem:primeconstantterm}, $p_i$ is a prime power series. Hence $p=(-1)^{s} p_1\cdots p_r$ is the prime factorization of $p$, and so the poset $\mathcal P_+(G)$ is isomorphic to $\Div(1,\ldots,1)$ where the number of 1's between the parentheses is $r$, the number of prime factors of the constant term of $p$.
\end{proof}

\section{Elements of $\mc P_-(G)$: actions on Heintze groups}\label{sec:heintze}

The goal of this section is to prove the remaining part of Theorem \ref{thm:char=min} and Theorem \ref{thm:P-description},  that each element of $\mc P_-(G)$ contains an action on a Heintze group. The theorems will follow from:

\begin{thm}\label{thm:heintzegroup}
Fix an expanding matrix $\gamma\in M_n(\Z)$ whose characteristic and minimal polynomials are equal, and consider the ascending HNN extension $G=G(\gamma)$ defined by $\gamma$. 
Every element of $\mc P_-(G)$ contains an action of $G$ on a quasi-convex subspace $\R^k \times \R$ of a Heintze group $\C^k \rtimes \R$ for some $k\leq n$.
\end{thm}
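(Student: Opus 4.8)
The plan is to build, for each invariant subspace $V\subset\R^n$ of $\gamma$, an explicit negatively curved Lie group $\C^k\rtimes\R$ together with a cobounded $G$-action on a quasi-isometrically embedded subset $\R^k\times\R$, and then identify the resulting hyperbolic structure with the element of $\mc P_-(G)$ corresponding to $V$ via Proposition~\ref{prop:P-invar}. First I would reduce to the quotient picture: since $V$ is $\gamma$-invariant, $\gamma$ induces an expanding automorphism $\overline\gamma$ of $\R^n/V\cong\R^m$ (using the argument already given in the proof of Proposition~\ref{prop:Q_epsilonpreceqQ} that the real Jordan blocks of $\overline\gamma$ are sub-blocks of those of $\gamma$, hence all eigenvalues of $\overline\gamma$ are eigenvalues of $\gamma$ and lie outside the unit disk). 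The relevant Heintze group is then constructed from a $1$-parameter subgroup of $\GL_k(\C)$, where $k=\dim_\C$ of the complexification of $\R^n/V$, whose generator is (a logarithm of) $\overline\gamma$; concretely one takes $N=\C^k$ with $\R$ acting by $s\mapsto \exp(s\log\overline\gamma_\C)$, so that the integer points $s\in\Z$ act by powers of $\overline\gamma$. Because $\overline\gamma$ is expanding, a classical computation (Heintze) shows $\C^k\rtimes\R$ with a suitable left-invariant metric is Gromov hyperbolic (indeed has pinched negative curvature after rescaling), and the subspace $\R^k\times\R$ — real parts, which is $\overline\gamma$-invariant inside $\C^k$ — is quasi-convex, hence itself hyperbolic.

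Next I would define the $G$-action. The group $G=G(\gamma)=N_0\rtimes_\alpha\Z$, where $N_0=\bigcup_i t^{-i}\Z^n t^i\cong A=\bigcup_i\gamma^{-i}\Z^n\subset\R^n$ as in Definition~\ref{def:A}. Composing the inclusion $A\hookrightarrow\R^n$ with the quotient $\R^n\to\R^n/V$ gives a homomorphism $A\to\R^n/V$ intertwining $\alpha^{-1}$ with $\overline\gamma^{-1}$; extending by sending $t$ to the generator of the $\R$-factor (acting as $\overline\gamma$) defines a homomorphism $G\to (\R^n/V)\rtimes_{\overline\gamma}\R\hookrightarrow \C^k\rtimes\R$, and hence an action of $G$ on the Heintze group by left translations. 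The orbit of the identity is $\{(\overline v, j): \overline v\in \overline A,\ j\in\Z\}$ where $\overline A$ is the (dense) image of $A$; since $\overline A$ is dense in $\R^n/V$ and the $\R$-orbit of integer translates of a bounded ball covers $\R^n/V\rtimes\R$ up to bounded error, the action on $\R^k\times\R$ is cobounded. The crux is then the computation that this action realizes exactly the confining subset $Q_\epsilon(V)$ up to equivalence: by the Schwarz–Milnor lemma (Lemma~\ref{lem:MS}) the structure is $[S]$ where $S=\{g\in G: d(o,go)\le C\}$ for a suitable $C$, and one checks that for $g=\gamma^{-i}z\in A$, $d(o,go)$ is comparable to $\log^+$ of the distance from $\gamma^{-i}z$ to $V$ in $\R^n$ — precisely because the Busemann function in the $\R$-direction of a Heintze group measures the exponential rate at which a point approaches the boundary fixed point, which here is governed by $d(\overline v,0)=d(v,V)$ and the expansion rate of $\overline\gamma$ (Lemma~\ref{lem:gelfand} applied to $\overline\gamma$). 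This shows $S$ is coarsely $Q_\epsilon(V)\cup\{t^{\pm1}\}$, so the hyperbolic structure is $[Q_\epsilon(\mc L(Q_\epsilon(V)))\cup\{t^{\pm1}\}]=[Q_\epsilon(V)\cup\{t^{\pm1}\}]$ by Lemma~\ref{cor:bddinQ} and Proposition~\ref{prop:Q_epsilonpreceqQ}, i.e.\ the element of $\mc P_-(G)$ attached to $V$.

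I expect the main obstacle to be the metric/quasi-isometry estimate connecting the Heintze-group distance to the Euclidean distance-to-$V$ function, carried out uniformly over the $G$-orbit. The subtleties are: (i) controlling the left-invariant metric on $\C^k\rtimes\R$ when $\overline\gamma$ has Jordan blocks of size $>1$ (so the $\R$-action is not diagonalizable and one only gets polynomial-times-exponential distortion in the fibers), which forces one to use the $\eta,\zeta$ bounds of Lemma~\ref{lem:gelfand} rather than clean exponentials; (ii) verifying that $\R^k\times\R$ is genuinely quasi-convex in $\C^k\rtimes\R$ — this should follow because $\R^k$ is $\overline\gamma$-invariant and a complement is too, so geodesics in the ambient Heintze space project to quasi-geodesics; and (iii) bookkeeping the two ``fudge'' operations built into $Q_\epsilon(V)$ (enlarging by $\bigcup_{0\le k<k_0}\gamma^{-k}$ and the choice of $\epsilon$), which only change the structure by the equivalence relation $\sim$, so they are harmless once phrased correctly. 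Everything else — hyperbolicity of Heintze groups, the HNN/semidirect decomposition of $G$, surjectivity of $V\mapsto Q_\epsilon(V)$ onto $\mc P_-(G)$ — is either classical or already established earlier in the paper.
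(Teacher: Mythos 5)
Your overall strategy coincides with the paper's: pass to the quotient $\R^n/V$, build the Heintze group $(\C^n/W)\rtimes\R$ from a one-parameter group through $\overline\gamma$, show $\R^k\times\R$ is quasi-convex, and then use Schwarz--Milnor to match the resulting hyperbolic structure with $[Q_\epsilon(V)\cup\{t^{\pm1}\}]$ and hence with the element of $\mathcal P_-(G)$ attached to $V$. The parts about coboundedness (density of the $A$-orbit, via the Lagarias--Wang tile argument), the Schwarz--Milnor identification, and the equivalence bookkeeping for $Q_\epsilon$ all track the paper.

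However, your item (ii) --- quasi-convexity of $\R^k\times\R$ --- contains a genuine gap, and it is precisely the point where the paper does the most work. You claim quasi-convexity ``should follow because $\R^k$ is $\overline\gamma$-invariant and a complement is too, so geodesics project to quasi-geodesics.'' But $\R^k$ is \emph{not} invariant under the continuous one-parameter group $\overline\gamma^t$ for non-integer $t$; it is only invariant under integer powers. Already for $\overline\gamma=-2$ acting on $\R\subset\C$, any logarithm has the form $\log 2+i(2m+1)\pi$, so $\overline\gamma^{1/2}$ is a nonzero multiple of $i$ and maps $\R$ to $i\R$. The same phenomenon persists with Jordan blocks and complex eigenvalues. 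Because $\R^k$ and its complement $i\R^k$ are mixed by $\overline\gamma^t$, there is no $\R$-invariant orthogonal splitting, and ``projecting geodesics'' has no a priori controlled meaning.

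What the paper does instead (Lemma~\ref{lem:unitaryrealgroup}, Corollary~\ref{cor:unitarynorm}, Proposition~\ref{prop:lipschitzretraction}, and its quotient analogue Lemma~\ref{lem:oneparamquotient}/Proposition~\ref{prop:olPqiemb}) is to choose the one-parameter group carefully, factoring it as $\overline\gamma^t=\delta^t\epsilon^t$ with $\delta^t$ conjugate into $U(k)$ (hence uniformly bounded in operator norm) and $\epsilon^t$ genuinely real, with the two factors commuting. Then the set-theoretic retraction $(z,s)\mapsto(\Re z,s)$ is shown to have uniformly Lipschitz derivative: one uses that $\epsilon^{-t}$ commutes with taking real parts (since it is real), that $\|\Re w\|\le\|w\|$, and that $\|\delta^{\pm t}\|\le M$. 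This gives the quasi-isometric embedding directly, without any assertion about invariance of $\R^k$ under the continuous flow. That factorization lemma is a nontrivial input (built from real Jordan form block by block, with separate treatments of positive-real, negative-real, and complex-conjugate-pair eigenvalues), and your argument as written does not supply it or a substitute. Your ``subtlety (i)'' (Jordan block distortion, handled via Lemma~\ref{lem:gelfand}) is real but secondary; the unitary/real commuting factorization is the load-bearing step.

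Everything else in your sketch is sound and matches the paper's route; you would have a complete proof by replacing the heuristic in (ii) with the factorization of the one-parameter subgroup and the resulting Lipschitz retraction.
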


Recall that $A$ is the $\Z[\gamma]$-module $A=\bigcup_{i\geq 0} \gamma^{-i} \Z^n$ (see Definition~\ref{def:A}), a sub-module of $\Q^n \subset \R^n$, and that $A$ is identified with the normal closure of $\Z^n$ in $G(\gamma)$.

Our strategy for proving Theorem~\ref{thm:heintzegroup} is to embed $G=A\rtimes \Z$ into a \emph{Heintze group} $\C^n\rtimes \R$. This Heintze group  can be equipped with a left-invariant Riemannian metric that makes it a hyperbolic metric space on which $G$  acts on by isometries (details are provided below). However, this action will not be cobounded and, in particular, will not define an element of $\mc H(G)$.  To deal with this problem, we single out an embedded quasi-convex subspace of $\C^n\rtimes \R$ on which $G$ acts coboundedly.  Quasiconvexity of the subspace ensures that it is also a hyperbolic metric space, and so the action on this subspace \emph{will} define an element of $\mc H(G)$. 

To form a Heintze group, we need a one-parameter subgroup of $\GL_n(\C)$ containing $\gamma$. We will denote such a subgroup by $\gamma^t=e^{tX}$ where $X$ lies in $\mathfrak{gl}_n(\C)$, the Lie algebra of $\GL_n(\C)$ consisting of all $n\times n$ complex matrices. However, we emphasize that the matrix $\gamma$ lies on multiple one-parameter subgroups, so that $\gamma^t$ is not defined solely in terms of $\gamma$. For ease in the proofs that follow, we will choose the one-parameter subgroup carefully as in the following lemma, which we prove later in this section.

\begin{lem} 
\label{lem:unitaryrealgroup}
The matrix $\gamma$ lies on a one-parameter subgroup $\gamma^t$ of $\GL_n(\C)$ satisfying $\gamma^t=\delta^t\epsilon^t$, where
\begin{itemize}
\item the one-parameter subgroup $\delta^t$ is conjugate into $U(n)$, the group of unitary $n\times n$ matrices;
\item the one-parameter subgroup $\epsilon^t$ is contained in $\GL_n(\R)$; and
\item the groups $\delta^t$ and $\epsilon^t$ commute. 
\end{itemize}
\end{lem}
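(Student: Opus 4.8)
The statement is purely about linear algebra: we must decompose a one-parameter subgroup through $\gamma$ into a ``rotational'' part (conjugate into $U(n)$) and a ``real'' part (lying in $\GL_n(\R)$), with the two commuting. The plan is to use the multiplicative Jordan--Chevalley type decomposition adapted to the real structure, combined with a polar-type decomposition of the eigenvalues.

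First I would pass to the complexification and work with the eigenvalues of $\gamma$. Each eigenvalue $\lambda$ can be written in polar form $\lambda = r_\lambda e^{i\theta_\lambda}$ with $r_\lambda > 0$ and $\theta_\lambda \in (-\pi,\pi]$; since $\gamma$ is expanding, $r_\lambda > 1$ for all $\lambda$, and in particular no eigenvalue is a negative real, so we may even take $\theta_\lambda \in (-\pi,\pi)$. Using the real Jordan form of $\gamma$ described in Section~\ref{sec:pminusasinvspaces} (real Jordan blocks for real eigenvalues, $2k\times 2k$ real blocks built from $Q(a,b)=\begin{pmatrix} a & b \\ -b & a\end{pmatrix}$ for complex pairs), I would define $\epsilon$ to be the ``modulus part'': on a real Jordan block with real eigenvalue $\lambda_t$ replace $\lambda_t$ by $|\lambda_t|$ keeping the same superdiagonal; on a complex block built from $Q(a,b)$ replace $Q(a,b)$ by $\sqrt{a^2+b^2}\,I_2 = |\mu|\,I_2$ (again keeping the nilpotent superdiagonal). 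Then $\epsilon$ is real, has all real positive eigenvalues, commutes with the nilpotent part, and lies on an obvious one-parameter subgroup $\epsilon^t$ inside $\GL_n(\R)$ (take $\log\epsilon$, which is a real matrix since $\epsilon$ has positive real eigenvalues, and set $\epsilon^t = e^{t\log\epsilon}$). Define $\delta = \gamma\epsilon^{-1}$: on real blocks this multiplies by $\mathrm{sign}(\lambda_t)=+1$ (expanding), on complex blocks it replaces $Q(a,b)$ by $\frac{1}{|\mu|}Q(a,b) = Q(\cos\theta,\sin\theta)$, a genuine rotation block, so $\delta$ is a real matrix all of whose (complex) eigenvalues have modulus $1$ — i.e.\ $\delta$ is semisimple with unitary spectrum, hence conjugate into $U(n)$ over $\C$. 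It lies on the one-parameter subgroup $\delta^t$ obtained by replacing each rotation $Q(\cos\theta,\sin\theta)$ by $Q(\cos t\theta,\sin t\theta)$ (equivalently $\delta^t=e^{tY}$ for the appropriate real skew-type generator $Y$), and this subgroup is still conjugate into $U(n)$.

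The commutation $\delta^t\epsilon^s = \epsilon^s\delta^t$ holds block by block: on a complex block $\epsilon^s$ is a scalar matrix plus nilpotent built from the same nilpotent as $\delta^t$'s underlying structure, and scalars commute with everything while the nilpotent superdiagonal pieces commute with each other (they are polynomials in the single nilpotent shift); on real blocks $\delta^t=I$. Consequently $\gamma^t \vcentcolon= \delta^t\epsilon^t$ is a one-parameter subgroup of $\GL_n(\C)$ (its generator is $Y+\log\epsilon$, a sum of commuting matrices) with $\gamma^1 = \delta\epsilon = \gamma$, which is exactly the asserted decomposition.

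The only mild obstacle is bookkeeping: one must carefully verify that on each complex real-Jordan block the factorization $\begin{pmatrix} Q(a,b) & I_2 & \\ & \ddots & \ddots \\ & & Q(a,b)\end{pmatrix} = \big(\tfrac{1}{|\mu|}\text{-block rotation}\big)\cdot\big(|\mu|\text{-block with nilpotent}\big)$ is valid and that both factors commute — this is where one uses that $I_2$ commutes with $Q(\cos\theta,\sin\theta)$ and with the scalar $|\mu| I_2$, so the nilpotent superdiagonal is shared cleanly. I would present this as: (1) reduce to real Jordan form; (2) on each block exhibit the explicit $\delta$-part, $\epsilon$-part, their one-parameter subgroups, and their commutation; (3) reassemble and conjugate the $\delta$-part into $U(n)$. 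No step requires anything beyond the standard real Jordan form theory already cited in the paper (\cite{invariant}, \cite{farb_mosher}) together with Gelfand-type estimates that are not even needed here.
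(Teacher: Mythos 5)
Your proposal contains a genuine gap, and it happens to hit precisely the case that motivates the lemma's somewhat awkward statement.

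You assert that ``since $\gamma$ is expanding, $r_\lambda>1$ for all $\lambda$, and in particular no eigenvalue is a negative real.'' The ``in particular'' is a non sequitur: expanding means only that each eigenvalue has modulus greater than $1$, which says nothing about its argument. For instance $\gamma=(-2)$ (the $1\times 1$ case, giving $BS(1,-2)$) is expanding and has the negative real eigenvalue $-2$. With that false premise removed, your claim ``on real blocks $\delta^t=I$'' fails for negative real eigenvalues, and the whole construction loses its footing there. Indeed, for a simple negative real eigenvalue $-\nu$, the corresponding $1\times 1$ Jordan block $(-\nu)$ does \emph{not} lie on any one-parameter subgroup of $\GL_1(\R)=\R^{\times}$, since the identity component is $\R_{>0}$. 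Negative real eigenvalues are exactly the reason the lemma is phrased with a possibly non-real factor $\delta^t$: the paper's proof handles them in Case 2 by writing $J(-\nu,k)=e^{i\pi}\cdot(-J(-\nu,k))$ and putting the scalar one-parameter subgroup $e^{i\pi t}$ (which is unitary) into $\delta^t$ while $-J(-\nu,k)$, having positive spectrum, contributes to $\epsilon^t$. Your argument never produces such a factor because you believed there was nothing to produce.

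On the other blocks your approach is essentially the same as the paper's, and I'd note one further point you already half-flag as ``bookkeeping'': writing $M(a,b,k)=\delta\cdot\epsilon$ with $\delta$ the block-diagonal rotation and $\epsilon$ having $|\mu|I_2$ on the diagonal \emph{and the same superdiagonal $I_2$} does not literally multiply out to $M(a,b,k)$; the superdiagonal of $\delta\epsilon$ becomes $Q(\cos\theta,\sin\theta)$, not $I_2$. One has to conjugate to fix this (the paper's $\zeta$ in \eqref{eq:conjugatematrices2} does exactly that), and the resulting $\epsilon$-part is still real and still commutes with $\delta$ because both are built block-wise from matrices of the form $Q(c,d)$, which mutually commute. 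That part is repairable. The negative real eigenvalue omission, however, is a real gap in the argument and not a bookkeeping matter.
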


In the special case that $\gamma$ lies on a one-parameter subgroup of $\GL_n(\R)$, we may also define a Heintze group $\R^n\rtimes \R$ into which $G$ embeds. It will carry a left-invariant hyperbolic metric. The action of $G$ will, in fact, be cobounded in this case, and hence it will not be necessary to pass to a subspace. However, in the case that $\gamma$ does not lie on a one-parameter subgroup of $\GL_n(\R)$, we will need the full generality of the following discussion.

Consider $\C^n$ with the Euclidean norm $\| \cdot\|$. This is preserved by the unitary matrices $U(n)$. We let $\|\cdot\|$ denote the induced operator norm on $\GL_n(\C)$. 

\begin{cor}
\label{cor:unitarynorm}
Let $\gamma^t=\delta^t\epsilon^t$ be a one-parameter subgroup as in Lemma \ref{lem:unitaryrealgroup}.  There is a constant $M\geq 1$ such that $\|\delta^t\|\leq M$ for all $t$.
\end{cor}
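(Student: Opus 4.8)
\textbf{Proof plan for Corollary \ref{cor:unitarynorm}.}

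The plan is to exploit the first bullet of Lemma \ref{lem:unitaryrealgroup}: the one-parameter subgroup $\delta^t$ is conjugate into the unitary group $U(n)$. Concretely, fix an invertible matrix $P\in \GL_n(\C)$ such that $P\delta^t P^{-1}\in U(n)$ for all $t\in \R$. Since every unitary matrix has operator norm exactly $1$ with respect to the Euclidean norm on $\C^n$, we obtain $\|P\delta^t P^{-1}\|=1$ for all $t$. The remaining step is then the routine submultiplicativity estimate
\[
\|\delta^t\|=\|P^{-1}(P\delta^t P^{-1})P\|\leq \|P^{-1}\|\,\|P\delta^t P^{-1}\|\,\|P\|=\|P^{-1}\|\,\|P\|,
\]
so one sets $M=\max\{1,\|P^{-1}\|\,\|P\|\}$, which is independent of $t$ and satisfies $M\geq 1$. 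This gives the claimed uniform bound.

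There is essentially no obstacle here: the only input needed beyond Lemma \ref{lem:unitaryrealgroup} is the elementary fact that conjugation by a fixed invertible matrix distorts the operator norm by at most the fixed factor $\|P\|\,\|P^{-1}\|$, together with the fact that unitary matrices are isometries of $(\C^n,\|\cdot\|)$ and hence have operator norm $1$. One small bookkeeping point is to make sure $M\geq 1$ as asserted, which is why we take the maximum with $1$ (in fact $\|P\|\,\|P^{-1}\|\geq \|PP^{-1}\|=1$ already, so the maximum is automatic, but stating it does no harm). Since Lemma \ref{lem:unitaryrealgroup} is proved later in the section and we are permitted to assume statements appearing earlier in the excerpt, invoking it here is legitimate.
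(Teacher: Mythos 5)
Your proof is correct and essentially identical to the paper's: both fix the conjugating matrix (your $P$, the paper's $\xi$) and use submultiplicativity of the operator norm together with the fact that unitary matrices have operator norm $1$ to obtain $M=\|P\|\,\|P^{-1}\|\geq 1$. No gaps.
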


\begin{proof}
Choose $\xi\in \GL_n(\C)$ with $\xi \delta^t \xi^{-1} \in U(n)$ for all $t$. Then $\|\delta^t\|\leq \|\xi^{-1}\| \|\xi\delta^t \xi^{-1}\|\|\xi\|=\|\xi^{-1}\|\|\xi\|$, and we set $M=\|\xi^{-1}\|\|\xi\|$. Note that $M\geq \|\xi^{-1} \xi\|=1$.
\end{proof}

With this particular one-parameter subgroup in hand, we  let $\R$ act on $\C^n$ by the group $\gamma^t$ and define the group $H=\C^n\rtimes_{\gamma^t} \R$. That is, $H$ is identified with $\C^n\times \R$ as a set and the group operation is given by $(z,t)\cdot (w,s)=(z+\gamma^tw,t+s)$. The Lie group $H$ is an example of a Heintze group (see \cite{heintze} for the general definition) and is equipped with a left-invariant Riemannian metric defined as follows. As a manifold, $H$ may be equipped with a single chart $(z,t)\mapsto (x_1,y_1,\ldots,x_n,y_n,t)$ if  $z=(x_1+iy_1,\ldots,x_n+iy_n)\in \C^n.$ The vector fields $\frac{\partial}{\partial x_i},\frac{\partial}{\partial y_i}$, and $\frac{\partial}{\partial t}$ identify each tangent space of $H$ with $\R^{2n+1}$ in the natural way. At the identity $(0,0)$, we equip $H$ with the usual Euclidean metric $dx_1^2+dy_1^2+\cdots +dx_n^2+dy_n^2+dt^2.$ 
The inner product on $T_{(z,t)}(H)$ is given by pulling back the metric at $(0,0)$ via left-translations, i.e.,  the inner product of $(v,r),(w,s)\in T_{(z,t)}(H)\cong \C^n\times \R$ is $\langle \gamma^{-t}v,\gamma^{-t}w\rangle +rs$.

\begin{thm}[{\cite[Theorem~3]{heintze}}]
\label{thm:heintze}
The Lie group $H$ with the left-invariant Riemannian metric described above has negative sectional curvature.
\end{thm}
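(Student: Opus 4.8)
\textbf{Proof proposal for Theorem \ref{thm:heintze}.}

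The statement is classical and due to Heintze, so the plan is to cite it directly rather than to reprove it; however, to be self-contained I would sketch the standard argument. The key point is that the left-invariant metric on $H = \C^n \rtimes_{\gamma^t}\R$ is, by construction, the metric obtained by declaring the left-invariant vector fields dual to $dx_1,dy_1,\dots,dx_n,dy_n,dt$ to be orthonormal at the identity. Writing $N = \C^n$ for the nilpotent (here abelian) factor and identifying its Lie algebra $\mathfrak n$ with $\C^n$, the $\R$-action is $v \mapsto \gamma^t v = e^{tX}v$ for $X \in \mathfrak{gl}_n(\C)$, and the derivation $D = X$ governing the semidirect product has the property that \emph{all} of its eigenvalues have strictly positive real part. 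This positivity is exactly the expanding hypothesis on $\gamma$: the eigenvalues of $\gamma = e^X$ lie outside the unit disk, so the eigenvalues of $X$ (a fixed choice of logarithm) have positive real part.

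The main step is then to invoke Heintze's theorem: if $N$ is a simply connected nilpotent Lie group with Lie algebra $\mathfrak n$, and $D$ is a derivation of $\mathfrak n$ all of whose eigenvalues have positive real part, then $N \rtimes_{e^{tD}} \R$, equipped with the left-invariant metric extending any inner product on $\mathfrak n \oplus \R$, admits a left-invariant metric of strictly negative sectional curvature (after possibly rescaling the $\R$-direction, or replacing $D$ by a positive multiple so that its eigenvalues have real parts in a controlled range; in our abelian case $\mathfrak n = \C^n$ no further adjustment of the nilpotent part is needed). Since our $D = X$ has eigenvalues with positive real part, this applies verbatim. I would present this as: ``This follows from \cite[Theorem~3]{heintze}, whose hypotheses are satisfied since $\gamma$ is expanding, hence any logarithm $X$ of $\gamma$ has all eigenvalues with positive real part, and the left-invariant metric on $H$ described above is precisely the metric to which that theorem applies.''

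The only genuine obstacle is bookkeeping: one must make sure the particular one-parameter subgroup $\gamma^t$ chosen in Lemma \ref{lem:unitaryrealgroup} still has the positive-real-part spectrum property. But the eigenvalues of $\gamma^t$ are $\lambda_i^t$ where the $\lambda_i$ are the eigenvalues of $\gamma$, so the generator $X$ of $\gamma^t$ has eigenvalues of the form $\log\lambda_i$ (for an appropriate branch), all with real part $\log|\lambda_i| > 0$ since $|\lambda_i| > 1$; the decomposition $\gamma^t = \delta^t\epsilon^t$ does not affect this. Thus no real work beyond citing Heintze and checking this spectral condition is required.
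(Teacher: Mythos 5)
Your proposal is correct and matches the paper's approach exactly: the paper states this as a theorem whose proof is simply the attribution \cite[Theorem~3]{heintze}, with no further argument given. Your check that the generator $X$ of the chosen one-parameter subgroup $\gamma^t$ has all eigenvalues with positive real part (since they are branches of $\log \lambda_i$ for eigenvalues $\lambda_i$ of $\gamma$ with $|\lambda_i| > 1$, and the $\delta^t\epsilon^t$ decomposition does not alter $X$) usefully makes explicit the hypothesis of Heintze's theorem that the paper leaves tacit.
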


Since $H$ has a cocompact group of isometries (i.e., its action on itself by left translations), $H$ in fact has sectional curvature $\leq \kappa$ for some $\kappa<0$. Thus $H$ is a complete, locally CAT($\kappa$) space. It follows from the Cartan-Hadamard Theorem \cite[Theorem II.4.1]{bridson_haefliger} that $H$ is in fact globally CAT($\kappa$):

\begin{cor}
\label{cor:heintzehyperbolic}
The Lie group $H$ is CAT($\kappa$) for some $\kappa<0$ and therefore Gromov hyperbolic.
\end{cor}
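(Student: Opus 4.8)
The statement to prove is Corollary~\ref{cor:heintzehyperbolic}: the Lie group $H = \C^n \rtimes_{\gamma^t} \R$ with its left-invariant Riemannian metric is CAT($\kappa$) for some $\kappa < 0$, and therefore Gromov hyperbolic.

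The plan is to assemble the corollary from the two results cited in its statement together with a homogeneity/compactness argument. First I would invoke Theorem~\ref{thm:heintze} (Heintze's theorem), which gives that $H$ with the stated left-invariant metric has strictly negative sectional curvature at every point. Next I would observe that $H$ acts on itself by left translations, and these are isometries of the left-invariant metric by construction; this action is transitive, hence in particular cocompact (the quotient is a point). Strict negativity of sectional curvature is a continuous, left-invariant function of the 2-plane in the (compact) Grassmann bundle of tangent 2-planes over a fundamental domain, so by compactness there is a uniform bound: there exists $\kappa < 0$ with all sectional curvatures $\le \kappa$. This is exactly the step written just after Theorem~\ref{thm:heintze} in the excerpt; I would spell it out in one or two sentences.

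Then I would note that $H$, being a connected Lie group with a left-invariant Riemannian metric, is a complete Riemannian manifold (left-invariant metrics on Lie groups are geodesically complete, e.g. by homogeneity plus Hopf--Rinow), and a complete Riemannian manifold with sectional curvature $\le \kappa$ is locally CAT($\kappa$). Applying the Cartan--Hadamard theorem in the form \cite[Theorem II.4.1]{bridson_haefliger} — valid because $H$ is simply connected, being diffeomorphic to $\C^n \times \R \cong \R^{2n+1}$ — upgrades this to globally CAT($\kappa$). Finally, a CAT($\kappa$) space with $\kappa < 0$ is $\delta$-hyperbolic for $\delta$ depending only on $\kappa$ (comparison triangles are thin because ideal triangles in $\mathbb{H}^2_\kappa$ are uniformly thin), which gives Gromov hyperbolicity and completes the proof.

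The only genuine subtlety — hardly an obstacle — is justifying the passage from pointwise negative curvature to a uniform upper bound $\kappa < 0$ and confirming simple connectivity; both are immediate here because $H$ is homogeneous under its own isometric left action and diffeomorphic to Euclidean space. Everything else is a direct citation. I would therefore keep the write-up to a short paragraph chaining Theorem~\ref{thm:heintze} $\Rightarrow$ uniform $\kappa<0$ $\Rightarrow$ locally CAT($\kappa$) $\Rightarrow$ (Cartan--Hadamard) globally CAT($\kappa$) $\Rightarrow$ Gromov hyperbolic.
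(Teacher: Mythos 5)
Your proposal is correct and follows essentially the same path as the paper's own argument (which appears in the paragraph preceding the corollary): Heintze's theorem for pointwise negative curvature, homogeneity/cocompactness of left translations to get a uniform bound $\kappa<0$, completeness and local CAT($\kappa$) from the curvature bound, and then Cartan--Hadamard to pass to global CAT($\kappa$). You simply spell out details the paper leaves implicit — completeness of left-invariant metrics, simple connectivity of $H\cong\R^{2n+1}$, and the compactness argument yielding the uniform bound — all of which are correct.
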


The group $G=A\rtimes \Z$ naturally embeds into $H=\C^n\rtimes \R$ and therefore acts isometrically on $H$ by  left translations. However, this action is never cobounded. Instead, we see that $G$ preserves the subspace $P=\R^n\times \R$ of the space $H=\C^n\times \R$. It is important to emphasize that $P$ is \emph{not} a subgroup of $H$ (unless $\gamma^t\subset \GL_n(\R)$) but simply a sub-manifold. We equip $P$ with the induced Riemannian metric as a sub-manifold of $H$. However, the submanifold $P$ still admits a useful structure proved in the next result. 

\begin{prop}
\label{prop:lipschitzretraction}
The subspace $P$ is quasi-isometrically embedded and hence quasiconvex in $H$.
\end{prop}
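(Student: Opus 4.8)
\textbf{Plan for the proof of Proposition \ref{prop:lipschitzretraction}.}

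The plan is to show that the inclusion $P = \R^n \times \R \hookrightarrow H = \C^n \rtimes_{\gamma^t}\R$ is a quasi-isometric embedding by comparing distances computed along paths in $P$ with distances computed along paths in $H$. First I would record that, since $P$ carries the induced Riemannian metric, the distance $d_P$ is always at least $d_H$ restricted to $P$, so one inequality is immediate and only the reverse (that $d_P$ is at most a linear function of $d_H$) requires work. For this, I would take two points $q_0 = (v_0, s_0)$ and $q_1 = (v_1, s_1)$ in $P$ and a geodesic (or near-geodesic) $\sigma$ in $H$ joining them, parametrized as $\sigma(\tau) = (z(\tau), t(\tau))$ with $z(\tau) \in \C^n$, and project it onto $P$ by taking real parts: define $\bar\sigma(\tau) = (\Re z(\tau), t(\tau))$. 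This projection is the key construction; I then need to bound $\operatorname{length}_P(\bar\sigma)$ in terms of $\operatorname{length}_H(\sigma)$.

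The central computation is to compare the Riemannian norms of $\sigma'(\tau)$ in $H$ and $\bar\sigma'(\tau)$ in $P$. Using the left-invariant metric, the speed of $\sigma$ at $\tau$ is $\big(\|\gamma^{-t(\tau)} z'(\tau)\|^2 + t'(\tau)^2\big)^{1/2}$, and the speed of $\bar\sigma$ is $\big(\|\gamma^{-t(\tau)} \Re z'(\tau)\|^2 + t'(\tau)^2\big)^{1/2}$ (the real part commutes with the real matrix exponential issue is exactly where Lemma \ref{lem:unitaryrealgroup} enters). Writing $\gamma^t = \delta^t \epsilon^t$ with $\delta^t$ conjugate into $U(n)$, $\epsilon^t \in \GL_n(\R)$, and the two commuting, I would use that $\epsilon^{-t}$ is real, so $\epsilon^{-t}\Re w = \Re(\epsilon^{-t} w)$ for any $w \in \C^n$, and that $\|\delta^{-t}\| \le M$ and $\|(\delta^{-t})^{-1}\| = \|\delta^t\| \le M$ uniformly by Corollary \ref{cor:unitarynorm}. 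Hence $\|\gamma^{-t} w\|$ and $\|\epsilon^{-t} w\|$ are comparable up to the factor $M$, and similarly for $\Re w$ in place of $w$; combined with the elementary inequality $\|\Re u\| \le \|u\|$ in $\C^n$, this gives $\|\gamma^{-t}\Re z'\| \le M\|\epsilon^{-t}\Re z'\| = M\|\Re(\epsilon^{-t} z')\| \le M\|\epsilon^{-t} z'\| \le M^2\|\gamma^{-t} z'\|$. Therefore the speed of $\bar\sigma$ is at most $M^2$ times the speed of $\sigma$ at each point (absorbing constants, since the $t'$-contribution is unchanged), so $\operatorname{length}_P(\bar\sigma) \le M^2 \operatorname{length}_H(\sigma)$, and since $\bar\sigma$ joins $q_0$ to $q_1$ in $P$ we get $d_P(q_0,q_1) \le M^2 d_H(q_0,q_1)$. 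This establishes the quasi-isometric embedding (indeed a bilipschitz one). Finally, quasiconvexity follows from the standard fact that a quasi-isometrically embedded subspace of a Gromov hyperbolic space is quasiconvex (using Corollary \ref{cor:heintzehyperbolic} that $H$ is hyperbolic); I would cite this directly.

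The main obstacle I anticipate is handling the real-part projection cleanly: one must be careful that "taking real parts" genuinely interacts well with the twisted metric, and this is precisely why the factorization $\gamma^t = \delta^t\epsilon^t$ of Lemma \ref{lem:unitaryrealgroup} was set up — it isolates the non-real behavior into the bounded unitary-type factor $\delta^t$, over which we have uniform control, while the potentially unbounded factor $\epsilon^t$ is real and hence commutes with $\Re(\cdot)$. A secondary subtlety is that geodesics of $H$ need not lie in $P$ and their projections need not be geodesics of $P$, but this is harmless since we only need a length comparison along \emph{some} path, not a geodesic-to-geodesic statement.
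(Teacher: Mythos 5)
Your proposal is correct and follows essentially the same approach as the paper: both use the retraction $(z,t)\mapsto(\Re z,t)$, the factorization $\gamma^t=\delta^t\epsilon^t$ from Lemma~\ref{lem:unitaryrealgroup}, the uniform bound $\|\delta^{\pm t}\|\le M$ from Corollary~\ref{cor:unitarynorm}, and the commutation $\epsilon^{-t}\Re w=\Re(\epsilon^{-t}w)$. The paper phrases it as showing the derivative $\rho_*$ of the retraction is uniformly Lipschitz on tangent spaces, whereas you phrase it as a length comparison of projected paths; these are the same computation in different words.
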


\begin{proof}
There is a retraction $\rho\colon H\to P$ defined by $\rho(z,t)=(\Re z,t)$. It suffices to show that $\rho$ is a Lipschitz map. 
For this, it suffices in turn to show that the derivative $\rho_*$ of $\rho$ is uniformly Lipschitz on tangent spaces.

Consider a point $(z,t)\in H$ and a tangent vector $(v,s)\in T_{(z,t)}(H)$. The map $\rho$ sends $(z,t)$ to $(\Re z,t)$ and $(v,s)$ to $(\Re v,s)\in T_{(\Re z,t)}(H)$. We have \[\|(v,s)\|^2=\|\gamma^{-t}v\|^2+s^2=\|\delta^{-t}\epsilon^{-t} v\|^2+s^2.\] By Corollary \ref{cor:unitarynorm}, we have $\|\delta^{-t}\epsilon^{-t}v\|\geq \frac{1}{\|\delta^{t}\|}\|\epsilon^{-t}v\|\geq \frac{1}{M}\|\epsilon^{-t}v\|$. Thus \[\|(v,s)\|^2\geq \frac{1}{M^2}\|\epsilon^{-t}v\|^2+\frac{1}{M^2}s^2.\] On the other hand, $\|(\Re v,s)\|^2=\|\delta^{-t}\epsilon^{-t} \Re v\|^2 + s^2$. Since $\epsilon^{-t}$ is real, we have $\epsilon^{-t} \Re v=\Re \epsilon^{-t} v$, and thus, \[\|(\Re v,s)\|^2=\|\delta^{-t}\Re \epsilon^{-t} v\|^2+s^2\leq M^2\|\Re \epsilon^{-t} v\|^2+M^2s^2\leq M^2\|\epsilon^{-t}v\|^2+M^2s^2.\] Therefore $\rho_*$ and $\rho$ are $M^2$-Lipschitz. 
\end{proof}

Lastly, it remains to prove Lemma \ref{lem:unitaryrealgroup} in order to proceed to the proof of Theorem \ref{thm:heintzegroup}. For this we recall the real Jordan form of the matrix $\gamma$. To describe it, consider as before the real and complex eigenvalues of $\gamma$. We further partition the real eigenvalues of $\gamma$ into a set $\lambda_1,\ldots,\lambda_u$ of positive eigenvalues and a set $-\nu_1,\ldots,-\nu_v$ of negative eigenvalues. As before, denote the complex eigenvalues by $\mu_1,\overline{\mu_1},\ldots,\mu_s,\overline{\mu_s}$, and set $\mu_j=a_j+b_ji$ with $a_j,b_j\in \R$. For a real number $\lambda$, denote by $J(\lambda,k)$ the $k\times k$ Jordan block with $\lambda$ on the diagonal.  For $a,b\in \R$ denote by $M(a,b,k)$ the $2k\times 2k$ real Jordan block defined in \eqref{eqn:realblock}.  Denote by $j_t, k_t,$ and $l_t$ the algebraic multiplicities of $\lambda_t, \nu_t,$ and $\mu_t$, respectively. Then there is a real invertible matrix $\eta$ such that $\eta \gamma \eta^{-1}$ is the real Jordan form of $\gamma$; i.e., $\eta \gamma \eta^{-1}$ is block diagonal with diagonal blocks $J(\lambda_1,j_1),\ldots,J(\lambda_u,j_u),J(-\nu_1,k_1),\ldots,J(-\nu_v,k_v),M(a_1,b_1,l_1),\ldots,M(a_s,b_s,l_s)$ occurring in that order.   

Our discussion of one-parameter groups is related to the discussion in \cite[Section 3.1]{farb_mosher}.

\begin{proof}[Proof of Lemma \ref{lem:unitaryrealgroup}]
To prove the lemma, we construct one-parameter groups of matrices containing each block of the real Jordan form of $\gamma$. We will then put these all together to create a one-parameter subgroup of the desired form containing $\gamma$ .  We consider each type of block in turn.

\noindent{\bf Case 1: $J(\lambda,k)$, $\lambda >0$.}  Consider a Jordan block $J(\lambda,k)$ for $\lambda$ a positive real number and $k> 0$. We will show that  $J(\lambda,k)$ lies on a one-parameter subgroup $\mathcal{G}(\lambda,k,t)$ such that  $\mathcal{G}(\lambda,k,1)=J(\lambda,k)$. 

 Consider the $k\times k$ nilpotent matrix $N$ with 1's on the superdiagonal and zeros elsewhere.   The matrix exponential $e^{tN}$ is seen to be \[\begin{pmatrix} 1 & t & \frac{1}{2!} t^2 & \cdots & \frac{1}{(k-2)!}t^{k-2} & \frac{1}{(k-1)!}t^{k-1} \\ 0 & 1 & t & \cdots & \frac{1}{(k-3)!}t^{k-3} & \frac{1}{(k-2)!}t^{k-2} \\ 0 & 0 & 1 & \cdots & \frac{1}{(k-4)!}t^{k-4} & \frac{1}{(k-3)!}t^{k-3} \\ \vdots & \vdots & \vdots & \ddots & \vdots & \vdots \\ 0 & 0 & 0 & \cdots & 1 & t \\ 0 & 0 & 0 & \cdots & 0 & 1 \end{pmatrix}.\] In particular, consider what happens when $t=1$. The matrices \[e^{N}=\begin{pmatrix} 1 & 1& \frac{1}{2!}  & \cdots & \frac{1}{(k-2)!} & \frac{1}{(k-1)!} \\ 0 & 1 & 1& \cdots & \frac{1}{(k-3)!}& \frac{1}{(k-2)!} \\ 0 & 0 & 1 & \cdots & \frac{1}{(k-4)!}& \frac{1}{(k-3)!}\\ \vdots & \vdots & \vdots & \ddots & \vdots & \vdots \\ 0 & 0 & 0 & \cdots & 1 & 1 \\ 0 & 0 & 0 & \cdots & 0 & 1 \end{pmatrix}, \begin{pmatrix} 1 & 1 & 0 & \cdots & 0 & 0 \\ 0 & 1 & 1 & \cdots & 0& 0 \\ 0 & 0 & 1 & \cdots & 0& 0 \\ \vdots & \vdots & \vdots & \ddots & \vdots & \vdots \\ 0 & 0 & 0 & \cdots & 1 & 1 \\ 0 & 0 & 0 & \cdots & 0 & 1 \end{pmatrix}, \begin{pmatrix} 1 & \lambda^{-1} & 0 & \cdots & 0 & 0\\ 0 & 1 & \lambda^{-1} & \cdots & 0 & 0 \\ 0 & 0 & 1 & \cdots & 0 & 0 \\ \vdots & \vdots & \vdots & \ddots & \vdots & \vdots \\ 0 & 0 & 0 & \cdots & 1 & \lambda^{-1} \\ 0 & 0 & 0 & \cdots & 0 & 1 \end{pmatrix} \tag{\textasteriskcentered} \label{eq:conjugatematrices}\] are all conjugate in $\GL_k(\R)$, as can be seen by considering the dimensions of their generalized eigenspaces for the unique eigenvalue 1. The matrix in the middle is the Jordan normal form of all three. Thus, choosing $\theta\in \GL_k(\R)$ conjugating the first matrix to the last, we have that $J(\lambda,k)=\lambda^1 \theta e^{N} \theta^{-1}$.  In particular, $J(\lambda, k)$  lies on the one-parameter subgroup $\mathcal{G}(\lambda,k,t)\vcentcolon=\lambda^t (\theta e^{Nt}\theta^{-1})$ of $\GL_k(\R)$ (note that $\lambda^t$ here is just a scalar, which commutes with any matrix $\theta e^{Nt'} \theta^{-1}$).

\noindent{\bf Case 2: $J(-\nu,k)$, $\nu>0$.}  Next we consider a Jordan block $J(-\nu,k)$ where $\nu$ is a positive real number. As in the last paragraph, there is a one-parameter subgroup of $\GL_k(\R)$ containing  $-J(-\nu,k)$.  Let $\mathcal H(-\nu,k,t)\vcentcolon=e^{tA}$ with $A\in M_k(\R)$ be such a subgroup, so that $e^A=-J(-\nu,k)$. Note that $e^{tA}$ commutes with the scalars $e^{i\pi t'}\in \C$ and $J(-\nu,k)=e^{i\pi}e^A$. Thus $e^{i\pi t}\mathcal{H}(-\nu,k,t)=e^{i\pi t} e^{tA}$ is a one-parameter subgroup of $\GL_k(\C)$ (not $\GL_k(\R)$) which contains $J(-\nu,k)$. 

\noindent{\bf Case 3: $M(a,b,k)$.}  Finally, consider a matrix $M(a,b,k)$ where $a,b\in \R$ are not both zero, and let $r=\sqrt{a^2+b^2}>0$. We first consider the matrix $Q(a,b)$, which can be written as $rQ(a/r,b/r)$ where $Q(a/r,b/r)$ is a rotation matrix. Consequently, $Q(a/r,b/r)$ lies on the one-parameter subgroup $e^{t K}$ of $\GL_2(\R)$, where $K$ is the matrix $\begin{pmatrix} 0 & -1 \\ 1 & 0 \end{pmatrix}$. Replace the matrix $K\in M_2(\R)$ by a scalar multiple to ensure that $Q(a/r,b/r)=e^K$. Then $Q(a,b)$ lies on the one-parameter subgroup $r^t e^{tK}$ of $\GL_2(\R)$. Note that $e^{tK}$ commutes with the scalars $r^{t'}$ and all the matrices $e^{tK}$ are orthogonal (and, in particular, unitary). 

Now consider the full matrix  $M(a,b,k)$. The nilpotent matrix $N\in M_{2k}(\R)$ defined by \[N= \begin{pmatrix} 0 & I_2 & 0 & \cdots & 0 \\ 0 & 0 & I_2 & \cdots & 0 \\ \vdots & \vdots &\vdots & \ddots & \vdots \\ 0 & 0 & 0 & \cdots & I_2 \\ 0 & 0 & 0 & \cdots & 0\end{pmatrix} \text{ satisfies } e^{tN}= \begin{pmatrix} I & t I & \frac{1}{2!} t^2 I & \cdots & \frac{1}{(k-1)!} t^{k-1} I \\ 0 & I & tI & \cdots & \frac{1}{(k-2)!}t^{k-2} I \\ \vdots & \vdots &\vdots & \ddots & \vdots \\ 0 & 0 & 0 & \cdots & t I \\ 0 & 0 & 0 & \cdots & I \end{pmatrix} \] (where all blocks in these matrices are $2\times 2$). As before, we see that $e^N$ is conjugate via a real invertible matrix to the real Jordan block \[\begin{pmatrix} I & I & 0 & \cdots & 0 \\ 0 & I & I & \cdots & 0 \\ \vdots & \vdots &\vdots & \ddots & \vdots \\ 0 & 0 & 0 & \cdots & I \\ 0 & 0 & 0 & \cdots & I\end{pmatrix}.\] In fact, if $(a_{ij})_{i,j=1}^k$ is any matrix conjugating the first matrix to the second in equation \eqref{eq:conjugatematrices}, then the same matrix with each entry $a_{ij}$ replaced by the $2\times 2$ block $a_{ij} I$ conjugates $e^N$ to the desired $2k\times 2k$ real Jordan block. Denote this conjugating block matrix by $\xi$. Our $2k\times 2k$ real Jordan block is in turn conjugate to \[\begin{pmatrix} I & Q(a,b)^{-1} & 0 & \cdots & 0 \\ 0 & I & Q(a,b)^{-1} & \cdots & 0 \\ \vdots & \vdots & \vdots & \ddots & \vdots \\ 0 & 0 & 0 & \cdots & Q(a,b)^{-1} \\ 0 & 0 & 0 & \cdots & I\end{pmatrix} \text{ via } \zeta = \begin{pmatrix} Q(a,b)^{-k+1} &   &   &   &   \\   & Q(a,b)^{-k+2} &   &   &   \\   &   &   & \ddots &   \\   &   &   &   &   \\   &   &   &  & Q(a,b)^0 \end{pmatrix}. \tag{\textasteriskcentered\textasteriskcentered} \label{eq:conjugatematrices2}\]  We have shown that $\zeta \xi$ conjugates $e^N$ to the first matrix in \eqref{eq:conjugatematrices2}. Define $\mathcal V(a,b,k,t)\vcentcolon= r^t \zeta \xi e^{tN}\xi^{-1} \zeta^{-1}$, and let $\mathcal U(a,b,k,t)$  be the $2k\times 2k$ block diagonal matrix with diagonal blocks $e^{tK}$. Thus, $M(a,b,k)$ lies on the one-parameter subgroup $\mathcal W(a,b,k,t)\vcentcolon = \mathcal U(a,b,k,t) \mathcal V(a,b,k,t)$. 
The matrix $\mathcal{U}(a,b,k,t)$ is orthogonal (in particular unitary), $\mathcal{V}(a,b,k,t)$ is real, and moreover, $\mathcal U(a,b,k,t)$ commutes with $\mathcal V(a,b,k,t')$ for any $t,t'\in \R$. To see this last fact, note that $\mathcal{U}(a,b,k,t)$ commutes with: $r^{t'}$ since it's a scalar; $\zeta$, since $Q(a,b)$ and $e^{tK}$ lie in the same one-parameter subgroup of $\GL_2(\R)$ up to scaling; and $\xi$ and $e^{t'N}$ since $\mathcal{U}(a,b,k,t)$ is block-diagonal and $\xi$ and $e^{t'N}$ are block matrices whose blocks are scalar multiples of $I_2$.

We now show how to combine these one-parameter groups to form the desired one-parameter subgroup containing $\gamma$.   Recall the positive real eigenvalues $\lambda_1,\ldots,\lambda_u$, the negative real eigenvalues $-\nu_1,\ldots,-\nu_v$, and the complex eigenvalues $\mu_1,\ldots,\mu_s$ with $\mu_j=a_j+ib_j$. Denote by $\mathcal G(t)$ the block diagonal matrix with diagonal blocks $\mathcal G(\lambda_1,j_1,t),\ldots, \mathcal G(\lambda_u,j_u,t)$. Denote by $\mathcal I(t)$ the block diagonal matrix with diagonal blocks $e^{i\pi t} \mathcal H(-\nu_1,k_1,t),\ldots, e^{i\pi t} \mathcal H(-\nu_v,k_v,t)$. The matrix $\mathcal I(t)$ is the product of $e^{i\pi t}$ with $\mathcal H(t)$, where $\mathcal H(t)$ is the block diagonal matrix with diagonal blocks $\mathcal H(-\nu_j,k_j,t)$. Finally denote by $\mathcal W(t)$ the block diagonal matrix with diagonal blocks $\mathcal W(a_1,b_1,l_1,t),\ldots, \mathcal W(a_s,b_s,l_s,t)$. We have $\mathcal W(a_j,b_j,l_j,t)=\mathcal U(a_j,b_j,l_j,t) \mathcal V(a_j,b_j,l_j,t)$ for each $j$, and thus $\mathcal W(t)=\mathcal U(t) \mathcal V(t)$ where $\mathcal U(t)$ is block diagonal with diagonal blocks $\mathcal U(a_j,b_j,l_j,t)$ and $\mathcal V(t)$ is block diagonal with diagonal blocks $\mathcal V(a_j,b_j,l_j,t)$. Combining the three cases considered above, we see that $\eta \gamma \eta^{-1}$,  the real Jordan form of $\gamma$,  lies on the one-parameter subgroup \[\begin{pmatrix} \mathcal G(t) & & \\ & \mathcal I(t) & \\ & & \mathcal W(t) \end{pmatrix}.\]

Setting $J=j_1+\cdots+j_u$ and $K=k_1+\cdots+k_v$, we see that this is the product of two commuting one-parameter subgroups \[\begin{pmatrix} I_J & & \\ & e^{i\pi t} I_K & \\ & & \mathcal U(t)\end{pmatrix} \text{ and } \begin{pmatrix} \mathcal G(t) & & \\ & \mathcal H(t) & \\ & & \mathcal V(t)\end{pmatrix}. \] 
The first of these two subgroups is a unitary subgroup of $\GL_n(\C)$ and the second is a subgroup of $\GL_n(\R)$. Finally, set $\delta^t$ to be the conjugate of the (first) unitary one-parameter group by $\eta^{-1}$ and $\epsilon^t$ to be the conjugate of the (second) real one-parameter group by $\eta^{-1}$. Since $\eta$ is a real invertible matrix, $\delta^t$ and $\epsilon^t$ are subgroups of the form specified by the lemma. 
\end{proof}

\subsection{The word metric for the main Heintze group}

In this section we study the action of $G=A\rtimes \Z$ on the quasi-convex subspace $P=\R^n\times \R$ of the Heintze group $\C^n\rtimes_{\gamma^t} \R$. We will show that the action of $G$ on $P$ is cobounded (Lemma~\ref{lem:heintzecobounded}) and that the action $G\curvearrowright P$ is equivalent to the action $G\curvearrowright \Gamma(G,Q_\epsilon(0)\cup \{t^{\pm 1}\})$ (Lemma~\ref{lem:heintzeconfsubset}). This will show that the quasi-parabolic structure determined by the invariant subspace 0 is represented by the action on $P$. In the next subsection we investigate the quasi-parabolic structures corresponding to other invariant subspaces of $\R^n$ and show they  correspond to actions of $G$ on subspaces of quotients of the Heintze group $\C^n\rtimes \R$.

\begin{lem}
\label{lem:heintzecobounded}
The action $G\curvearrowright P$ is cobounded.
\end{lem}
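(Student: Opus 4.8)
The goal is to show that the action of $G = A\rtimes \Z$ on the submanifold $P = \R^n\times \R\subset \C^n\rtimes_{\gamma^t}\R$ by left translations is cobounded. I would fix the basepoint $o = (0,0)\in P$ and aim to show that every point of $P$ lies within a bounded distance of the orbit $G\cdot o$. Concretely, a point of $P$ has the form $(v,h)$ with $v\in\R^n$ and $h\in\R$; I will exhibit a group element $g\in G$ with $g\cdot o$ close to $(v,h)$ and bound the discrepancy uniformly.

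\textbf{Key steps.} First, by left-invariance of the metric and the embedding of $G$ in $H = \C^n\rtimes\R$, the orbit $G\cdot o$ consists of the points $(a,m)$ with $a\in A$, $m\in\Z$, since $A$ is identified with the normal closure of $\Z^n$ and $\Z$ with the $t$-direction. So coboundedness amounts to: every $(v,h)\in\R^n\times\R$ is a bounded distance (in $P$) from some $(a,m)$ with $a\in A$, $m\in\Z$. Second, I would reduce the height coordinate: choose $m = \lfloor h\rfloor\in\Z$, so $|h-m|\le 1$, and note the left translation by $(0,m)\in G$ (which acts on $P$) maps $(a',0)$-type points around; equivalently it suffices to treat the case $0\le h\le 1$, since translating vertically by $t^m$ is an isometry of $P$ preserving the orbit. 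Third, and this is the heart, for $(v,h)$ with $h$ in a compact interval $[0,1]$ I must find $a\in A$ with $d_P((a,h),(v,h))$ bounded uniformly. Here I would use that $A = \bigcup_{i\ge 0}\gamma^{-i}\Z^n$ is dense in $\R^n$: indeed $\gamma^{-i}\Z^n$ becomes $\epsilon$-dense in $\R^n$ for large $i$ because $\|\gamma^{-i}\|\to 0$ by Lemma \ref{lem:gelfand} (the lattice $\gamma^{-i}\Z^n$ has covolume $|\det\gamma|^{-i}\to 0$ and shrinking mesh). So I can pick $a\in A$ with $\|a - v\|$ as small as I like, say $\le 1$. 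It then remains to bound $d_P\big((a,h),(v,h)\big)$ in terms of $\|a-v\|$ and the fact that $h$ ranges in $[0,1]$: the path $s\mapsto (v + s(a-v), h)$, $s\in[0,1]$, lies in $P$ and has length $\int_0^1 \|\gamma^{-h}(a-v)\|\,ds = \|\gamma^{-h}(a-v)\| \le \|\gamma^{-h}\|\cdot\|a-v\|$, which is bounded because $h\in[0,1]$ makes $\|\gamma^{-h}\|$ bounded (it is a continuous function of $h$ on a compact interval) and $\|a-v\|\le 1$. Combining the vertical adjustment ($\le 1$ in the flat $t$-direction) with this horizontal adjustment gives a uniform bound $R$, establishing coboundedness.

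\textbf{Main obstacle.} The one point requiring care is that $P$ is \emph{not} a subgroup of $H$, so I cannot simply invoke left-invariance of the metric on $P$; I must work with the \emph{induced} Riemannian metric on $P$ as a submanifold and estimate lengths of explicit paths lying inside $P$. The computation above does exactly this: the straight-line path in the $\R^n$-slice at fixed height $h$ stays in $P$, and its induced length is computed using the ambient inner product at $(z,t)$, namely $\langle\gamma^{-t}\cdot,\gamma^{-t}\cdot\rangle$ on the $\C^n$-factor, restricted to real tangent vectors. The only subtlety is ensuring the bound is \emph{uniform} in $h$; this is handled by the vertical reduction to $h\in[0,1]$ and continuity of $t\mapsto\|\gamma^{-t}\|$. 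I expect the rest to be routine, and I would keep the write-up short, citing Lemma \ref{lem:gelfand} for the density of $A$ in $\R^n$ and the quasiconvexity of $P$ (Proposition \ref{prop:lipschitzretraction}) only insofar as it is needed to know $P$ is itself a reasonable metric space.
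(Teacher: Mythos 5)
Your proof is correct and its key step diverges from the paper's in a way worth noting. To show $A=\bigcup_i\gamma^{-i}\Z^n$ is dense in $\R^n$, the paper observes that $A$ is dense in the Lagarias--Wang tile $T(\gamma,\mathscr D)$, invokes \cite[Corollary 1.1]{lag_wang} to conclude that $T$ has nonempty interior, and then applies Bourbaki's structure theorem for closed subgroups of $\R^n$ to upgrade ``dense in an open set'' to ``dense.'' You instead argue directly that the lattice $\gamma^{-i}\Z^n$ has shrinking covering radius: given $y\in\R^n$, pick $v\in\Z^n$ with $\|\gamma^i y - v\|\le\sqrt{n}/2$, so $\|y-\gamma^{-i}v\|\le\|\gamma^{-i}\|\sqrt{n}/2\to 0$ by Lemma~\ref{lem:gelfand}. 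This is more elementary and self-contained, bypassing both the tile theorem (which is nontrivial) and Bourbaki. One small caution in your phrasing: shrinking \emph{covolume} $|\det\gamma|^{-i}\to 0$ alone does not imply shrinking covering radius (a lattice can have tiny covolume and huge deep holes); what carries the argument is the operator-norm decay $\|\gamma^{-i}\|\to 0$, which you do invoke --- so spell out the covering-radius estimate rather than mentioning covolume. The second half of your argument, reducing $h$ to $[0,1]$ by translating with $t^{\lfloor h\rfloor}$ (which preserves $P$ since $\gamma$ is real) and then bounding the length of the horizontal segment at fixed height $h$ by $\sup_{h\in[0,1]}\|\gamma^{-h}\|\cdot\|a-v\|$, is a quantitative version of the paper's ``dense orbit on each horosphere $\R^n\times\{m\}$, plus each point is distance $\le 1$ from such a horosphere,'' and is sound; both give a uniform coboundedness constant.
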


\begin{proof}
 We first claim that the orbit  of the base point $(0,0)\in P$ under the action of $A$ is dense in the subset $\R^n \times \{0\}$ of $P$. To see this, note that $A$ contains the set \[\left\{ \sum_{j=1}^k \gamma^{-j}v_j : k\geq 1 \text{ and } v_j \in \mathscr D \text{ for all } 1\leq j\leq k\right\},\] which is dense in the tile \[T=T(\gamma,\mathscr D) =\left\{ \sum_{j=1}^\infty \gamma^{-j} v_j : v_j \in \mathscr D \text{ for all } j\geq 1\right\}.\] By \cite[Corollary 1.1]{lag_wang} the tile $T$ contains an open set. Hence $A$ is dense in some open subset of $\R^n$. Since the closed subgroup $\overline{A}$ of $\R^n$ contains an open set of $\R^n$,  Bourbaki's structure theorem \cite[Chapter 7, Theorem 2]{bourbaki} implies that  it must in fact be equal to all of $\R^n$. Therefore, $A$ is dense.

Now, every point of $P$ lies at distance $\leq 1$ from a horosphere $\R^n \times \{m\}$ for some $m\in \Z$. Since $t^m$ translates $\R^n\times \{0\}$ to this horosphere, we see that the orbit of $(0,0)$ under $G$ is dense in this horosphere as well. Choosing any $\epsilon>0$, we see that every point of $P$ is distance $\leq 1+\epsilon$ from the orbit of $(0,0)$.
\end{proof}

\begin{lem}
\label{lem:heintzeconfsubset}
We have $(G\curvearrowright P) \sim (G\curvearrowright \Gamma(G,Q_\epsilon(0)\cup \{t^{\pm 1}\}))$ for any $\epsilon>0$.
\end{lem}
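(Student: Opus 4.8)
The goal is to identify the action $G\curvearrowright P$ with the hyperbolic structure $[Q_\epsilon(0)\cup\{t^{\pm 1}\}]$, where $Q_\epsilon(0)$ is the confining subset (under $\alpha^{-1}$) associated to the trivial invariant subspace $V=0$. The strategy is to apply the Schwarz--Milnor lemma (Lemma~\ref{lem:MS}) to the cobounded action $G\curvearrowright P$, which is cobounded by Lemma~\ref{lem:heintzecobounded} and whose ambient space $P$ is Gromov hyperbolic since it is quasiconvex in $H=\C^n\rtimes\R$ by Proposition~\ref{prop:lipschitzretraction} and Corollary~\ref{cor:heintzehyperbolic}. Thus the action $G\curvearrowright P$ is equivalent to $G\curvearrowright\Gamma(G,S)$ where $S=\{g\in G: d_P((0,0),g\cdot(0,0))\leq 2D+1\}$ for a suitable diameter bound $D$. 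It then remains to show $S\sim Q_\epsilon(0)\cup\{t^{\pm 1}\}$ as generating sets of $G$.

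\textbf{Key steps.} First I would set up coordinates: an element of $G=A\rtimes\Z$ has the form $a t^m$ with $a\in A\subset\R^n$, acting on $P=\R^n\times\R$ by $(a t^m)\cdot(z,s)=(a+\gamma^m\operatorname{Re}\text{-part stuff},\dots)$; more precisely, inside $H$ we have $a t^m\cdot(0,0)=(a,m)$, and $d_H((0,0),(a,m))$ is comparable (by quasiconvexity of $P$, up to additive error) to $d_P((0,0),(a,m))$. The main computation is to estimate $d_H((0,0),(a,m))$. Using the left-invariant negatively curved metric on the Heintze group, this distance is, up to bounded additive error, governed by a ``logarithmic'' formula: $d_H((0,0),(a,m))\asymp |m|+\log^+\|\gamma^{-N}a\|$ where $N$ is roughly $\max(0,\dots)$ — concretely, for the Heintze-type metric on $\C^n\rtimes\R$ one has $d((0,0),(z,m))$ comparable to $\max(|m|,\, 2\inf\{k\geq 0: \|\gamma^{-k}z\|\leq 1\}+\dots)$, i.e.\ it detects how many times one must apply $\gamma^{-1}$ to bring $a$ into the unit ball. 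I would cite or reprove the standard fact (as in \cite{AR}, or via the comparison with the real hyperbolic-space-like structure of $H$) that $d_H((0,0),(a,m))\leq C$ if and only if $|m|\leq C'$ and $a$ lies within bounded distance of $0$ after applying a bounded number of powers of $\gamma^{-1}$ — equivalently $a\in\gamma^{-C''}(B_{C''}(0)\cap A)$ for a constant $C''$ depending on $C$. Since $Q_\epsilon(0)=\bigcup_{0\le k<k_0}\gamma^{-k}(B_\epsilon(0)\cap A)$ by Definition~\ref{def:Q_epsilon}, and since $\gamma^{-j}(B_\epsilon(0)\cap A)\subseteq B_\epsilon(0)\cap A$ for $j\geq 0$ large (as $\|\gamma^{-j}\|<1$), the set $Q_\epsilon(0)$ is (up to the equivalence $\sim$ on confining subsets, via Lemma~\ref{lem:nbhdequiv}) exactly the set of $a\in A$ that land in a fixed-radius ball after boundedly many applications of $\gamma^{-1}$, i.e.\ exactly the elements with $d_H((0,0),(a,0))$ uniformly bounded.

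\textbf{Carrying out the equivalence.} Concretely: (i) to show $Q_\epsilon(0)\cup\{t^{\pm1}\}\preceq S$, I note $t^{\pm1}\in S$ for $D$ large, and for $a\in B_\epsilon(0)\cap A$ the point $(a,0)$ is within bounded distance of $(0,0)$, so $a\in S$; since $S$ is symmetric and, by the coboundedness argument, a generating set with $S\supseteq$ (a bounded neighborhood of $0$ in $A$)$\cup\{t^{\pm1}\}$, and since $\gamma^{-k}(B_\epsilon(0)\cap A)$ for $0\le k<k_0$ is obtained by conjugating $B_\epsilon(0)\cap A$ by $t^{-k}$, these elements have word length $\leq 2k_0+1$ in $S$; hence $Q_\epsilon(0)\preceq S$. (ii) Conversely, for $g=at^m\in S$ I use the distance estimate: $d_H((0,0),(a,m))\leq 2D+1$ forces $|m|$ bounded and forces $\inf\{k\geq 0:\gamma^{-k}a\in B_\epsilon(0)\cap A\}$ bounded, say $\leq K$; then $\gamma^{-K}a\in B_\epsilon(0)\cap A\subseteq Q_\epsilon(0)$, so $a=\alpha^K(\gamma^{-K}a)=t^K(\gamma^{-K}a)t^{-K}$ has word length $\leq 2K+1$ in $Q_\epsilon(0)\cup\{t^{\pm1}\}$, whence $g$ does too. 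This gives $S\preceq Q_\epsilon(0)\cup\{t^{\pm1}\}$, completing $S\sim Q_\epsilon(0)\cup\{t^{\pm1}\}$ and hence $(G\curvearrowright P)\sim(G\curvearrowright\Gamma(G,Q_\epsilon(0)\cup\{t^{\pm1}\}))$.

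\textbf{Main obstacle.} The crux is the distance estimate in the Heintze group: proving the comparison $d_H((0,0),(a,m))\asymp |m|+2\inf\{k\geq 0:\|\gamma^{-k}a\|\lesssim 1\}$ up to bounded additive error, and controlling the additional distortion from replacing $H$ by the non-subgroup subspace $P=\R^n\times\R$. The latter is handled cleanly by Proposition~\ref{prop:lipschitzretraction} (the retraction $\rho$ is $M^2$-Lipschitz and $P$ is quasi-isometrically embedded), so distances in $P$ and $H$ between points of $P$ differ by a bounded multiplicative-and-additive factor. For the former, I expect the cleanest route is to reuse the analysis of \cite{AR} in the rank-one case, or to observe directly that $H$ is a Gromov-hyperbolic space with a distinguished ``vertical'' geodesic flow (the $t$-lines) and Busemann function $(z,m)\mapsto -m$, so that $d_H((0,0),(a,m))$ is computed, up to bounded error, by going up to a height $h$ where $\gamma^{-h}a$ is small, across (a bounded horizontal distance at that height), and back down; the up-and-down contributes $\approx 2h+|m|$ with $h=\inf\{k:\|\gamma^{-k}a\|\leq\epsilon\}$, using Lemma~\ref{lem:gelfand} to see that the horizontal displacement at height $h$ is $O(1)$ and that $h$ is well-defined and finite. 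Once this estimate is in hand the rest is the bookkeeping above.
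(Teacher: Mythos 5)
Your overall architecture matches the paper's: apply the Schwarz--Milnor lemma to the cobounded action $G\curvearrowright P$ (valid since $P$ is hyperbolic by Proposition~\ref{prop:lipschitzretraction} and Corollary~\ref{cor:heintzehyperbolic} and the action is cobounded by Lemma~\ref{lem:heintzecobounded}), obtain a generating set $S$, and prove $S \sim Q_\epsilon(0)\cup\{t^{\pm1}\}$. The bookkeeping in your steps (i)--(ii) is also sound, though your labels for the two directions of $\preceq$ are swapped relative to the paper's convention (where $P\preceq Q$ means elements of $Q$ have bounded $P$-word length).

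However, the route you take through the middle --- establishing the ``logarithmic'' distance formula $d_H((0,0),(a,m))\asymp |m|+2\inf\{k\geq 0:\|\gamma^{-k}a\|\lesssim 1\}$ --- is where there is a genuine gap. You correctly flag this as ``the crux'' and ``the main obstacle,'' but you do not prove it, and the proposed remedies are not immediate: \cite{AR} only handles the rank-one Baumslag--Solitar case, and reproving the two-sided estimate for a general Heintze group $\C^n\rtimes_{\gamma^t}\R$ (and then transferring it across the non-subgroup quasi-convex subspace $P$) is nontrivial work. The paper's proof deliberately avoids ever needing this global distance formula. The trick is to observe that the only distances one must lower-bound are those $d(b,gb)\le 5$; a geodesic realizing such a distance is automatically confined to the slab of heights $|s(u)|\le 5$, and within that slab the constant $L=\sup_{|s|\le 5}\max(\|\gamma^s\|,\|\gamma^{-s}\|)$ makes the Heintze metric bi-Lipschitz to the Euclidean metric. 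This immediately yields $\|x\|\le 5L$ and $|j|\le 5L$, i.e.\ a lower bound $d(b,gb)\ge \frac1L\sqrt{\|x\|^2+j^2}$, without ever comparing with the large-scale logarithmic geometry. For the upper bound the paper simply uses the Euclidean straight-line path, and then Lemma~\ref{lem:nbhdequiv} recalibrates between $Q_{5L}(0)$ and $Q_\eta(0)$. So the missing estimate in your write-up is not a small lemma you forgot to state: it is the entire technical content of the argument, and the paper shows that it is in fact unnecessary. If you wanted to keep your route, you would need to actually prove that asymptotic distance formula (including its transfer from $H$ to $P$), which would be substantially more work than the paper's slab-confinement observation.
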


\begin{proof}
Since the orbit of the base point $b=(0,0)$ is $2$-dense in $P$, it follows from the Schwarz--Milnor Lemma (Lemma~\ref{lem:MS})  that $G\curvearrowright P$ is equivalent to $G\curvearrowright \Gamma(G,S)$, where $S=\{g\in G : d(b,gb)\leq 5\}$.

Any element $g\in S\subset G$ can be written as $g=xt^j$ with $x\in A$ and $j\in \Z$, so that $g b=(x,j)$. Consider a geodesic $p\colon [0,1]\to P$ from $b$ to $g b$, where  $p(u)=(z(u),s(u))$ for $0\leq u\leq 1$. Since every point on the horosphere $\R^n\times \{j\}$ is distance at least $|j|$ from $b$ and the length of $p$ is at most 5, we must have $|s(u)|\leq 5$ for all $0\leq u\leq 1$. Choose $L\geq 1$ with $\frac{1}{L}\leq \|\gamma^s\|\leq L$ for all $s$ with $|s|\leq 5$. Then we have (denoting derivatives by $\cdot'$), \[d(b,gb)=\ell(p)=\int_0^1 \sqrt{\|\gamma^{-s(u)}z'(u)\|^2+s'(u)^2}du\geq \frac{1}{L} \int_0^1 \sqrt{\|z'(u)\|^2+s'(u)^2}du.\] This lower bound is $\frac{1}{L}$ times the length of $p$ when considered as a path in $\R^n\times \R$ with the Euclidean metric. Thus, \[5\geq d(b,gb)\geq \frac{1}{L} \sqrt{\|x\|^2+j^2},\] and we have both $\|x\|\leq 5L$ and $|j| \leq 5L$. In other words, $Q_{5L}(0) \cup \{t^{\pm 1}\}\preceq S$.

On the other hand, we have $t\in S$. Furthermore, the norm of any element of $Q_\epsilon(0)$ is uniformly bounded in terms of $\epsilon$, and this uniform bound approaches 0 as $\epsilon\to 0$. Choose $\eta$ small enough that every element of $Q_\eta(0)$ has norm bounded by 5. For any $x\in Q_\eta(0)$,  the (Euclidean) straight line path $p$ from $b=(0,0)$ to $xb=(x,0)$ in $\R^n\times \{0\}$ has length equal to  its Euclidean length. Hence $d(b,xb)\leq \|x\|\leq 5$, so that $x\in S$ as well. This shows that $S\preceq Q_\eta(0)\cup \{t^{\pm 1}\}$, and the result now follows from Lemma \ref{lem:nbhdequiv}, as $Q_\eta(0) \sim Q_{5L}(0)$.
\end{proof}

\subsection{Quotients of Heintze groups}
\label{sec:heintzequotient}

The previous section showed that the quasi-parabolic structure corresponding to the invariant subspace $0$ is represented by the action on the quasiconvex subspace $P=\R^n\times \R$ of the Heintze group $\C^n\rtimes_{\gamma^t} \R$.   In this section,  we show that the quasi-parabolic structures corresponding to other invariant subspaces of $\R^n$ correspond to actions of $G$ on subspaces of \emph{quotients} of the Heintze group $\C^n\rtimes \R$. The proofs in this section are almost identical to those in the previous section, and we simply note where there are differences.

Consider a $\gamma$--invariant subspace $V$ of $\R^n$. The complexification $W$ of $V$ is a subspace of $\C^n$ invariant under $\gamma$ and containing $V$. The matrix $\gamma$ induces a linear automorphism $\overline{\gamma}$ of the (real) quotient vector space $\R^n/V$. The complexification of $\R^n/V$ is the (complex) quotient vector space $\C^n/W$, and the action of $\overline{\gamma}$ on $\R^n/V$ extends to a linear automorphism of $\C^n/W$. Choosing a basis for $\R^n/V$ yields a representative of $\overline{\gamma}$ as a real matrix.  The Jordan normal form of $\overline{\gamma}$ is obtained by taking sub-blocks of the Jordan blocks of $\gamma$, and so, as before, it follows that $\overline{\gamma}$ is expanding.  Unlike the case of $V=0$ discussed in the previous section, the matrix $\overline{\gamma}$ may not be integral with respect to a particular basis of $\R^n/V$. However, this will not cause any problems in what follows.

Choose a (real-valued) inner product $\langle \cdot, \cdot \rangle$ on $\R^n/V$, and extend it to a complex-valued inner product on $\C^n/W$. Thus, the notion of a unitary matrix acting on $\C^n/W$ is well-defined. Note that any real matrix acting on $\C^n/W$ preserves $\R^n/V$. We first show that $\overline{\gamma}$ lies on a particular one-parameter subgroup of $\GL(\C^n/W)$.

\begin{lem}
\label{lem:oneparamquotient}
There is a one-parameter subgroup $\overline{\gamma}^t$ of $\GL(\C^n/W)$ of the form $\overline{\gamma}^t=\overline{\delta}^t\overline{\epsilon}^t$ where $\overline{\delta}^t$ is a one-parameter group conjugate into the subgroup of unitary matrices, which commutes with the one-parameter group of real matrices $\overline{\epsilon}^t$.
\end{lem}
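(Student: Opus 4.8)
The statement is the exact analogue of Lemma~\ref{lem:unitaryrealgroup}, but now for the induced automorphism $\ol\gamma$ of the (complexified) quotient $\C^n/W$ rather than for $\gamma$ itself. The plan is to reduce to Lemma~\ref{lem:unitaryrealgroup} by observing that $\ol\gamma$ is itself an expanding linear automorphism of a finite-dimensional real vector space $\R^n/V$ (with complexification $\C^n/W$), and that the entire proof of Lemma~\ref{lem:unitaryrealgroup} only used the \emph{real Jordan form} of the matrix in question, together with the fact that its eigenvalues lie outside the unit disk --- it never used integrality of the matrix. Concretely, I would first fix a real basis of $\R^n/V$ so that $\ol\gamma$ is represented by a real matrix, and record (as in the discussion preceding the lemma, and via Proposition~\ref{prop:invariantsubspaceclassification} / the real Jordan form material in Section~\ref{sec:invarsubspaces}) that the real Jordan blocks of $\ol\gamma$ are sub-blocks of those of $\gamma$; in particular all eigenvalues of $\ol\gamma$ are eigenvalues of $\gamma$, hence lie outside the closed unit disk, so $\ol\gamma$ is expanding.

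Next I would invoke, verbatim, the construction in the proof of Lemma~\ref{lem:unitaryrealgroup}: for each positive real eigenvalue block $J(\lambda,k)$ one builds the one-parameter subgroup $\mathcal G(\lambda,k,t)=\lambda^t(\theta e^{Nt}\theta^{-1})$ inside $\GL_k(\R)$; for each negative real eigenvalue block $J(-\nu,k)$ one writes $J(-\nu,k)=e^{i\pi}e^A$ and obtains the one-parameter subgroup $e^{i\pi t}\mathcal H(-\nu,k,t)$, a product of a scalar unitary factor with a real factor; and for each complex-conjugate-pair block $M(a,b,k)$ one obtains $\mathcal W(a,b,k,t)=\mathcal U(a,b,k,t)\mathcal V(a,b,k,t)$ with $\mathcal U$ orthogonal (hence unitary), $\mathcal V$ real, and $\mathcal U$ commuting with $\mathcal V$. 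Assembling these blocks exactly as in that proof produces a one-parameter subgroup of $\GL(\C^n/W)$ through the real Jordan form of $\ol\gamma$, which factors as a product of a commuting unitary one-parameter group and real one-parameter group. Conjugating back by the real change-of-basis matrix $\ol\eta$ that puts $\ol\gamma$ in real Jordan form (which, being real, preserves the class of real matrices and the class of matrices conjugate into the unitary group, and preserves the commutation) gives $\ol\gamma^t=\ol\delta^t\ol\epsilon^t$ with the required properties. The only genuine bookkeeping point is that the inner product on $\C^n/W$ used to define ``unitary'' is the chosen one $\langle\cdot,\cdot\rangle$ extended complex-bilinearly (as set up just before the lemma), so ``orthogonal with respect to $\langle\cdot,\cdot\rangle$ implies unitary'' holds --- this is exactly parallel to the situation in Lemma~\ref{lem:unitaryrealgroup}.

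There is essentially no new obstacle here: the content is that Lemma~\ref{lem:unitaryrealgroup}'s proof is insensitive to replacing $\gamma$ by $\ol\gamma$. The one thing to be slightly careful about, and the place I would spend a sentence or two, is the remark in the paragraph before the lemma that $\ol\gamma$ need not be integral with respect to the chosen basis --- I would note explicitly that integrality was never invoked in the proof of Lemma~\ref{lem:unitaryrealgroup} (only the real Jordan form and the expanding hypothesis), so the argument goes through unchanged. Thus the cleanest write-up is: ``The proof is identical to that of Lemma~\ref{lem:unitaryrealgroup}, applied to the expanding real automorphism $\ol\gamma$ of $\R^n/V$ in place of $\gamma$; note that the proof of Lemma~\ref{lem:unitaryrealgroup} uses only the real Jordan form of the matrix and the fact that it is expanding, not that it is integral.'' If a fuller argument is wanted, one spells out the block-by-block construction above and the final conjugation by $\ol\eta$.
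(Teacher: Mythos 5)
Your proof is correct and matches the paper's own, which is the single sentence ``The proof is identical to that of Lemma~\ref{lem:unitaryrealgroup}, which did not rely on $\gamma$ being integral.'' You have simply spelled out the reduction (real Jordan form of $\overline{\gamma}$, block-by-block construction, conjugation by a real matrix) that the paper leaves implicit.
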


\begin{proof}
The proof is identical to that of Lemma \ref{lem:unitaryrealgroup}, which did not rely on $\gamma$ being integral.
\end{proof}

Form the Heintze group $(\C^n/W)\rtimes_{\overline{\gamma}^t} \R$, and equip it with the left-invariant metric induced by the real-valued inner product $\langle \langle \cdot,\cdot\rangle \rangle$ on the tangent space $(\C^n/W)\times \R$ at $(0,0)$ defined by $\langle \langle (v,s),(w,t)\rangle \rangle \vcentcolon= \langle \Re v, \Re w\rangle + \langle \Im v, \Im w\rangle + st$.  The natural homomorphisms $A\to \C^n\to \C^n/W$ and $\Z\to \R$ induce a homomorphism $G=A\rtimes_\gamma \Z\to (\C^n/W)\rtimes_{\overline{\gamma}^t} \R$, so that $G$ acts on this Heintze group isometrically by left translations. Denote the homomorphism $A\to \C^n/W$ by $\pi$.

Define the subspace $\overline{P}\vcentcolon=(\R^n/V)\times \R$ of the Lie group $(\C^n/W)\rtimes_{\overline{\gamma}^t} \R$, which is preserved by $G$.

\begin{prop}\label{prop:olPqiemb}
The subspace $\overline{P}$ is  quasi-isometrically embedded, and hence quasi-convex. Further, the action $G\curvearrowright \overline{P}$ is cobounded. 
\end{prop}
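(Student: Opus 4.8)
The plan is to mimic the arguments of Lemmas~\ref{lem:heintzecobounded}, \ref{lem:heintzeconfsubset}, and Proposition~\ref{prop:lipschitzretraction}, now applied to the quotient Heintze group $(\C^n/W)\rtimes_{\ol\gamma^t}\R$ and its subspace $\ol P=(\R^n/V)\times \R$. Since $\ol\gamma$ is expanding (as noted, its real Jordan form is obtained from sub-blocks of that of $\gamma$) and, by Lemma~\ref{lem:oneparamquotient}, lies on a one-parameter subgroup $\ol\gamma^t=\ol\delta^t\ol\epsilon^t$ with $\ol\delta^t$ conjugate into the unitary group and $\ol\epsilon^t$ real and commuting with $\ol\delta^t$, all the structural inputs needed for the earlier proofs carry over verbatim.

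\textbf{Quasi-isometric embedding and quasi-convexity.} First I would observe, exactly as in Corollary~\ref{cor:unitarynorm}, that there is a constant $M\ge 1$ with $\|\ol\delta^t\|\le M$ for all $t$ (conjugate $\ol\delta^t$ into the unitary group via some $\ol\xi$, and take $M=\|\ol\xi^{-1}\|\,\|\ol\xi\|$). Then the retraction $\ol\rho\colon (\C^n/W)\rtimes\R\to \ol P$ given by $\ol\rho(z,t)=(\Re z,\,t)$ is $M^2$-Lipschitz by the identical computation in the proof of Proposition~\ref{prop:lipschitzretraction}: the only facts used there were that $\ol\epsilon^{-t}$ is real (so commutes with taking real parts), that $\|\ol\delta^{\pm t}\|\le M$, and the explicit form of the left-invariant metric on the Heintze group, all of which hold here with $\langle\langle\cdot,\cdot\rangle\rangle$ in place of the Euclidean inner product. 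Since $\ol P$ is a totally geodesic-looking retract of a CAT$(\kappa)$ space (the quotient Heintze group is Gromov hyperbolic by Theorem~\ref{thm:heintze} and Corollary~\ref{cor:heintzehyperbolic}, applied to $\ol\gamma^t$), the existence of a Lipschitz retraction forces $\ol P$ to be quasi-isometrically embedded; a quasi-isometrically embedded subspace of a hyperbolic space is quasi-convex. This gives the first two assertions.

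\textbf{Coboundedness.} For the last assertion I would argue as in Lemma~\ref{lem:heintzecobounded}. The orbit of the basepoint $(0,0)\in\ol P$ under $A$ is $\pi(A)\times\{0\}$, where $\pi\colon A\to\C^n/W$ is the map induced by $A\hookrightarrow \C^n\to \C^n/W$. Since $A$ is dense in $\R^n$ (by the tile argument of \cite{lag_wang}, exactly as recalled in the proof of Lemma~\ref{lem:heintzecobounded}), its image $\pi(A)$ is dense in $\R^n/V$: $\pi$ restricted to $\R^n$ is the continuous surjection $\R^n\to\R^n/V$, and continuous images of dense sets are dense. Hence the $A$-orbit of $(0,0)$ is dense in $(\R^n/V)\times\{0\}$, and translating by powers of $t$ (which act by the vertical shift on $\ol P$ by $1$) shows the $G$-orbit of $(0,0)$ is $(1+\epsilon)$-dense in all of $\ol P$ for every $\epsilon>0$. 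Therefore $G\curvearrowright\ol P$ is cobounded.

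\textbf{Main obstacle.} The only genuinely non-routine point is justifying quasi-convexity of $\ol P$ inside the quotient Heintze group: one must be a little careful that the induced metric on $\ol P$ as a submanifold, versus the path metric it inherits, versus the restriction of the ambient metric, are all uniformly comparable, and that the Lipschitz retraction argument indeed yields a quasi-isometric embedding (the standard fact is that if $Y\subseteq X$ with $X$ geodesic and there is a Lipschitz retraction $X\to Y$, then $Y$ is quasi-isometrically embedded; since $\ol P$ with its intrinsic metric is geodesic, this applies). Everything else is a transcription of the $V=0$ case, since Lemma~\ref{lem:oneparamquotient} was precisely engineered so that the earlier proofs go through unchanged.
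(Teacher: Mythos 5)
Your proof is correct and takes essentially the same approach as the paper: the Lipschitz retraction $(z,t)\mapsto(\Re z,t)$ built from Lemma~\ref{lem:oneparamquotient} (mirroring Corollary~\ref{cor:unitarynorm} and Proposition~\ref{prop:lipschitzretraction}), plus density of $\pi(A)$ in $\R^n/V$ and vertical $t$-translations (mirroring Lemma~\ref{lem:heintzecobounded}). The paper phrases it a bit more tersely but the reductions are identical.
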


\begin{proof}
Since $\C^n/W$ is the complexification of the real vector space $\R^n/V$, there is a linear map $\C^n/W\to \R^n/V$ defined by $z\mapsto \Re z$. This defines a retraction $\C^n/W\to P$ given by  $(z,t)\mapsto (\Re z,t)$. The fact that this map is Lipschitz  follows from Lemma~\ref{lem:oneparamquotient} just as Proposition~\ref{prop:lipschitzretraction} follows from Lemma~\ref{lem:unitaryrealgroup}.

To prove coboundedness, note that since $A$ is dense in $\R^n$, the orbit of $A$ is dense in $\R^n/V \times \{0\}$. The rest of the proof follows exactly as in Lemma \ref{lem:heintzecobounded}. In particular, the orbit of $G$ is 2-dense.
\end{proof}

Proposition \ref{prop:olPqiemb} implies that $\overline{P}$ is a hyperbolic metric space, as quasi-convex subspaces of hyperbolic spaces are hyperbolic (\cite[Theorem H.1.9]{bridson_haefliger}). We are now ready to show the association between classes in $\mc P_-(G)$ and actions on Heintze groups.

\begin{lem}
\label{lem:heintzerepresentative}
 For any $\epsilon>0$, we have $(G\curvearrowright \overline{P})\sim (G\curvearrowright \Gamma(G,Q_\epsilon(V)\cup\{t^{\pm 1}\}))$.
\end{lem}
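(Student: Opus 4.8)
The plan is to mimic the proof of Lemma~\ref{lem:heintzeconfsubset} essentially verbatim, using the quotient setup in place of the original one, since the key ingredients (coboundedness, the Schwarz--Milnor Lemma, and the comparison between the left-invariant metric on the Heintze group and the ambient Euclidean metric) all go through in the quotient setting. First I would invoke Proposition~\ref{prop:olPqiemb} to conclude that the action $G\curvearrowright \overline{P}$ is cobounded and that $\overline{P}$ is a hyperbolic metric space, so that the Schwarz--Milnor Lemma (Lemma~\ref{lem:MS}) applies: fixing the basepoint $\bar b=(0,0)\in\overline{P}$, whose $G$--orbit is $2$--dense, we get $(G\curvearrowright \overline{P})\sim (G\curvearrowright \Gamma(G,S))$ where $S=\{g\in G: d(\bar b,g\bar b)\leq 5\}$.

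Next I would prove the two-sided domination $S\sim Q_\epsilon(V)\cup\{t^{\pm 1}\}$, following the two halves of the proof of Lemma~\ref{lem:heintzeconfsubset}. For one direction: given $g=xt^j\in S$ with $x\in A$, $j\in\Z$, a geodesic from $\bar b$ to $g\bar b$ stays within the slab $|s(u)|\leq 5$ (since each horosphere $(\R^n/V)\times\{j\}$ lies at distance $\geq |j|$ from $\bar b$), and on this slab the left-invariant metric is bi-Lipschitz to the ambient Euclidean metric on $(\R^n/V)\times\R$ with a constant $L\geq 1$ depending only on $\sup_{|s|\leq 5}\|\overline{\gamma}^{\pm s}\|$. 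Here I must be slightly careful: $x\in A\subset\R^n$ but its image in $\overline{P}$ is $\pi(x)\in\R^n/V$, so what the argument bounds directly is $\|\pi(x)\|$ and $|j|$, giving $\|\pi(x)\|\leq 5L$ and $|j|\leq 5L$. The set of $x\in A$ with $\|\pi(x)\|\leq 5L$ is precisely (a subset of) $B_{5L}(V)\cap A$, and since $Q_{5L}^0(V)=B_{5L}(V)\cap A\subseteq Q_{5L}(V)$, this yields $Q_{5L}(V)\cup\{t^{\pm 1}\}\preceq S$. For the reverse direction: $t\in S$ since $d(\bar b,t\bar b)=1$, and for $x\in Q_\eta^0(V)=B_\eta(V)\cap A$ with $\eta$ small, we have $\|\pi(x)\|<\eta$, so the Euclidean straight-line path from $(0,0)$ to $(\pi(x),0)$ in $(\R^n/V)\times\{0\}$ has length $\|\pi(x)\|<\eta\leq 5$, hence $d(\bar b,x\bar b)\leq 5$ and $x\in S$. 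This shows $S\preceq Q_\eta^0(V)\cup\{t^{\pm 1}\}$; but $Q_\eta^0(V)\sim Q_\eta(V)$ since $Q_\eta(V)$ is a finite union of $\gamma^{-k}$--translates of $Q_\eta^0(V)$, and $Q_\eta(V)\sim Q_{5L}(V)$ by Lemma~\ref{lem:nbhdequiv}. Chaining these equivalences gives $S\sim Q_\epsilon(V)\cup\{t^{\pm 1}\}$ for any $\epsilon>0$, again by Lemma~\ref{lem:nbhdequiv}, and combining with the Schwarz--Milnor equivalence completes the proof.

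The main subtlety — more a bookkeeping point than a real obstacle — is keeping straight the distinction between elements $x$ of $A\subseteq\R^n$ and their images $\pi(x)\in\R^n/V$, and checking that the confining subsets $Q_\epsilon(V)$, which are defined as neighborhoods of $V$ \emph{inside} $\R^n$ intersected with $A$ (then closed up under $\gamma^{-1}$), are exactly what the metric estimates in $\overline{P}$ see. Concretely: $x\in B_\epsilon(V)\cap A$ iff $d(x,V)<\epsilon$ iff $\|\pi(x)\|'<\epsilon$ for the quotient norm (up to the bi-Lipschitz comparison between that norm and $\|\Re(\cdot)\|$ coming from the inner product $\langle\langle\cdot,\cdot\rangle\rangle$), so the two notions agree up to adjusting $\epsilon$ by a bounded factor, which is harmless by Lemma~\ref{lem:nbhdequiv}. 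Everything else is a direct transcription of the $V=0$ case; I would state explicitly at the start of the proof that the argument is identical to that of Lemma~\ref{lem:heintzeconfsubset} with $\gamma,\R^n,P,Q_\epsilon(0)$ replaced by $\overline{\gamma},\R^n/V,\overline{P},Q_\epsilon(V)$, using Lemma~\ref{lem:oneparamquotient} in place of Lemma~\ref{lem:unitaryrealgroup}, and only spell out the one place where the projection $\pi$ enters.
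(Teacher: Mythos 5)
Your proposal is correct and follows essentially the same route as the paper's proof: invoke Schwarz--Milnor via the coboundedness in Proposition~\ref{prop:olPqiemb}, then transcribe the two-sided metric estimate from the proof of Lemma~\ref{lem:heintzeconfsubset}, with the one genuine new ingredient being the comparison (via equivalence of norms on a finite-dimensional space) between the quotient norm $d(\cdot,V)$ that defines $Q_\epsilon(V)$ and the chosen inner-product norm on $\R^n/V$ underlying the Heintze metric. The paper makes this comparison explicit by naming the bi-Lipschitz constant $F$ and also notes that $d(x,W)=d(x,V)$ for real $x$ because the orthogonal projection of a real vector onto $W=V\otimes\C$ lands in $V$ — you fold both points into your final paragraph rather than carrying the constant through, but the content is the same.
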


\begin{proof}
Let  $b=(0,0)$ be the basepoint of $\overline{P}$. Let $S'=\{g\in G:d(b,gb)\leq 5\}$, so that $G\curvearrowright \overline P \sim G\curvearrowright \Gamma(G,S')$ by the Schwarz-Milnor Lemma (Lemma~\ref{lem:MS}). We consider two norms on $\C^n/W$. The first is the norm $\|\cdot\|_{\operatorname{Euc}}$ induced by the inner product that we chose on $\C^n/W$ at the beginning of this subsection. The second is the quotient norm $\|\cdot\|_{\operatorname{Quo}}$ induced by the Euclidean norm on $\C^n$, i.e., $\|x\|_{\operatorname{Quo}}=d(x,W)$. Since any norms on a finite-dimensional vector space are equivalent, there is a number $F\geq 1$ such that \[\frac{1}{F} \|\cdot\|_{\operatorname{Quo}} \leq \|\cdot\|_{\operatorname{Euc}} \leq F \|\cdot\|_{\operatorname{Quo}}.\]

Consider an element $g=xt^j \in S'\subset G$ with $x\in A$ and $j\in \Z$. Then $g\cdot b=(\pi(x),j)$, where $\pi$ is the homomorphism $A\to \C^n/W$. The first part of the proof of Lemma \ref{lem:heintzeconfsubset} yields a number $L'>0$ such that $\|\pi(x)\|_{\operatorname{Euc}}\leq 5L'$ and $|j| \leq 5L'$. Thus $\|\pi(x)\|_{\operatorname{Quo}}\leq 5L'F$, and $x$ lies in the $5L'F$-neighborhood of $W$. Since $x$ is real, its orthogonal projection to $W$ lies in $W\cap \R^n=V$, and $x\in Q_{5L'F}(V)$. Therefore $Q_{5L'F}(V)\cup \{t^{\pm 1}\}\preceq S'$.

On the other hand,  $t\in S'$. The distance from any element of $Q_\epsilon(V)$ to $V$ is uniformly bounded in terms of $\epsilon$, and the bound approaches 0 as $\epsilon\to 0.$ Choose $\eta$ small enough that every point of $Q_\eta(V)$ has distance $\leq 5/F$ from $V$. The second part of the proof of Lemma \ref{lem:heintzeconfsubset} shows that for any $x\in Q_{\eta}(V)$, $d(b,xb)\leq \|\pi(x)\|_{\operatorname{Euc}}\leq F\|\pi(x)\|_{\operatorname{Quo}}\leq 5$. Thus $x\in S'$ as well. This shows that $S'\preceq Q_{\eta}(V)\cup \{t^{\pm 1}\}$, and the result now follows from Lemma \ref{lem:nbhdequiv}.
\end{proof}

As $\R^n/V$ and $\C^n/W$ are isomorphic to $\R^k$ and $\C^k$, respectively, for some $k\leq n$, Theorem~\ref{thm:heintzegroup} follows immediately from Lemma~\ref{lem:heintzerepresentative}.

\subsection{Proof of Theorems \ref{thm:char=min}
and \ref{thm:P-description}}

In this section we prove the remaining main results from the introduction. 

\begin{proof}[Proof of Theorem \ref{thm:P-description}]

Consider an expanding matrix $\gamma\in M_n(\Z)$  with equal minimal and characteristic polynomial, and consider the group $G=G(\gamma)$. By Proposition \ref{prop:generalstructure} and Lemma \ref{lem:abbycyclicabelianization}, $\Hl(G)$ consists of the single elliptic structure plus two lattices $\mathcal P_+(G)$ and $\mathcal P_-(G)$ meeting in the single lineal structure. By Proposition~\ref{prop:P-invar}, $\mathcal P_-(G)$ is isomorphic to the poset of invariant subspaces.

Consider an element of $\mathcal P_-(G)$. It has the form $[Q_1(V)\cup \{t^{\pm 1}\}]$ for an invariant subspace $V$ of $\gamma$. By the discussion in Section \ref{sec:heintzequotient} and Lemma \ref{lem:heintzerepresentative}, the hyperbolic structure $[Q_1(V)\cup \{t^{\pm 1}\}]$ is represented by an action on the subset $(\R^n/V)\times \R$ of a Heintze group $(\C^n/W)\rtimes_{\overline{\gamma}^t}\R$.
\end{proof}

Theorem~\ref{thm:char=min} now follows immediately from  Theorems~\ref{thm:P-description} and \ref{thm:Thm1.2P_+}.

\appendix

\section{Appendix: Classification of ideals and factorization of formal power series  \\ {\normalsize \normalfont \it Co-authored with Sam Payne}}\label{sec:appendix} 

The goal of this section is to prove the following theorem, which is used to prove Theorem~\ref{thm:Thm1.2P_+}.

\begin{restatable}{thm}{idealclassification}
\label{thm:idealclassification}
Let $f$ be a monic polynomial in $\Z[x]$ with constant term not lying in $\{-1,0,1\}$. Then the $(x+(f))$-adic completion of $\Z[x]/(f)$ is isomorphic to $\Z[[x]]/(f)$. Moreover, every ideal of the localization $x^{-1}\Z[[x]]/(f)$ is generated by a divisor of $f$ in $\Z[[x]]$. The poset of ideals of $x^{-1}\Z[[x]]/(f)$ is isomorphic to the poset of divisors of $f$. Equivalently, the poset of ideals of $\Z[[x]]/(f)$ considered up to equivalence is isomorphic to the poset of divisors of $f$.
\end{restatable}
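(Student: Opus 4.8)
The plan is to prove Theorem~\ref{thm:idealclassification} in three stages: first identifying the completion, then understanding the localization $x^{-1}\Z[[x]]/(f)$ as a product of local rings, and finally reading off the poset of ideals.

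\textbf{Stage 1: Identifying the completion.} First I would show that the $(x+(f))$-adic completion of $R=\Z[x]/(f)$ is $\Z[[x]]/(f)$. The natural map $\Z[x]\to \Z[[x]]$ induces $\Z[x]/(f)\to \Z[[x]]/(f)$, and since $f$ is monic, $\Z[[x]]/(f)$ is a free $\Z$-module of rank $n=\deg f$ (division algorithm against the monic $f$), hence $(x)$-adically complete when we check that $(x)$-adic convergence works coefficientwise. The key point is that $(x^m)\subseteq (f) + (x)^{m'}$-type comparisons show the $(x)$-adic and $(f,x)$-adic topologies agree, so $\varprojlim R/(x+(f))^m \cong \varprojlim \Z[[x]]/((f)+(x)^m) \cong \Z[[x]]/(f)$ because $f$ has nonzero constant-term-free part (i.e.\ $x \nmid f$, which follows from the constant term being nonzero) — actually here one uses that $f$ is monic so $\Z[[x]]/(f)$ is already $(x)$-adically separated and complete as a finite free $\Z$-module. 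I would write $\widehat{R} = \Z[[x]]/(f)$ from here on.

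\textbf{Stage 2: Structure of $x^{-1}\widehat{R}$.} This is where the real content lies and where I expect the main obstacle. Write the prime factorization $f = u\, p_1^{n_1}\cdots p_r^{n_r}$ in the UFD $\Z[[x]]$, with $u$ a unit and the $p_i$ irreducible. Since $f$ has constant term not in $\{-1,0,1\}$, by Corollary~\ref{cor:primepowerconstant} each $p_i$ has constant term $\pm p^{m}$ for a prime $p$, so $x \nmid p_i$; that is, each $p_i$ becomes a unit precisely when... no — rather, the point is that the $p_i$ are pairwise non-associate and pairwise coprime (by Lemma~\ref{lem:relprimeseries} their constant terms live over distinct primes or are comparable powers). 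Now in the localization $x^{-1}\widehat{R} = x^{-1}\Z[[x]]/(f)$, I want to invoke CRT: since the $p_i^{n_i}$ are pairwise comaximal in $x^{-1}\Z[[x]]$ (this needs: $x^{-1}\Z[[x]]$ is a PID or at least that coprime elements generate the unit ideal after inverting $x$ — this is the delicate step, relying on the cited work of McDonough and Elliott on quotients of $\Z[[x]]$, and is presumably where $x$ must be inverted to kill the residual arithmetic obstruction at the prime over the constant term), we get
\[
x^{-1}\Z[[x]]/(f) \;\cong\; \prod_{i=1}^r x^{-1}\Z[[x]]/(p_i^{n_i}).
\]
Then I would argue each factor $x^{-1}\Z[[x]]/(p_i^{n_i})$ is a local ring whose ideals form the finite chain $(1) \supsetneq (p_i) \supsetneq (p_i^2) \supsetneq \cdots \supsetneq (p_i^{n_i}) = (0)$, i.e.\ it is a ``chain ring'' with exactly $n_i+1$ ideals — this uses that $p_i$ is prime (so $x^{-1}\Z[[x]]/(p_i)$ is a domain, in fact a field after inverting $x$, again by the McDonough/Elliott structure results for $\Z[[x]]/(\text{prime})$) and that every element of the quotient is a unit times a power of $p_i$.

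\textbf{Stage 3: The poset.} Given Stage 2, the poset of ideals of $x^{-1}\widehat{R}$ is the product $\prod_{i=1}^r \{(1)\supsetneq \cdots \supsetneq (p_i^{n_i})\}$, which as a poset is $[n_1+1]\times\cdots\times[n_r+1] = \Div(n_1,\dots,n_r)$, matching the poset of divisors of $f$ up to associates (the divisor $p_1^{j_1}\cdots p_r^{j_r}$ corresponding to the ideal with $i$-th coordinate $(p_i^{j_i})$). Finally, to get the statement about ideals of $\widehat{R}=\Z[[x]]/(f)$ up to the equivalence $\sim$ from Section~\ref{sec:P_+isotoideals}: by Lemma~\ref{lem:localideals} the saturated ideals of $\widehat{R}$ correspond bijectively and order-isomorphically to all ideals of $x^{-1}\widehat{R}$, and by the lemma preceding Theorem~\ref{thm:I(G)} each $\sim$-class has a unique largest (saturated) representative, so the poset of ideals up to $\sim$ is the poset of saturated ideals, which is the poset of divisors of $f$. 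The main obstacle, to reiterate, is Stage 2: establishing that inverting $x$ makes the $p_i^{n_i}$ comaximal and makes each $x^{-1}\Z[[x]]/(p_i^{n_i})$ a chain ring — this is precisely the commutative-algebra heart of the appendix and is where the results of McDonough \cite{mcdonough} and Elliott \cite{elliott} on quotients and factorizations in $\Z[[x]]$ get used in an essential way.
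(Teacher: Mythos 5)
Your Stages 2 and 3 follow essentially the same route as the paper: factor $f$ in $\Z[[x]]$, pass to $x^{-1}\Z[[x]]/(f)$, apply CRT using coprimality of the $p_i^{n_i}$ after inverting $x$, show each factor is a chain ring with ideals $(1)\supsetneq (p_i)\supsetneq\cdots\supsetneq (p_i^{n_i})=(0)$, and conclude via the bijection (Lemma~\ref{lem:localideals}) between ideals of the localization and saturated ideals. You correctly flag that the coprimality of the $p_i^{n_i}$ in $x^{-1}\Z[[x]]$ (Lemma~\ref{lem:coprime} in the paper) and the chain-ring structure (Lemmas~\ref{lem:localizationfield} and \ref{lem:primepowerideals}, resting on McDonough/Elliott's identification $\Z[[x]]/(g)\cong\Z_p[\alpha]$) are exactly where the content lies, so your outline is on target.

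However, Stage 1 contains a genuine error. You assert that since $f$ is monic, $\Z[[x]]/(f)$ is a free $\Z$-module of rank $\deg f$. This is false: for $f=x-2$ (monic, degree $1$), the evaluation map $\sum a_ix^i\mapsto\sum a_i2^i$ induces an isomorphism $\Z[[x]]/(x-2)\cong\Z_2$, which is uncountable and so is not even finitely generated over $\Z$. More generally, $\Z[[x]]/(f)$ is a free $\Z_p$-module (or a product of such), not a free $\Z$-module. Consequently, the justification you offer for $(x)$-adic completeness of $\Z[[x]]/(f)$ does not hold. The paper sidesteps this by a different, cleaner argument (Lemma~\ref{lem:completionquotient}): take the short exact sequence $0\to (f)\to\Z[x]\to\Z[x]/(f)\to 0$ of finitely generated $\Z[x]$-modules, use that $(x)$-adic completion is exact on finitely generated modules over the Noetherian ring $\Z[x]$ \cite[Prop.~10.12]{atiyah_macdonald}, identify $\widehat{\Z[x]}=\Z[[x]]$ and $\widehat{(f)}=\Z[[x]]\otimes_{\Z[x]}(f)=(f)\Z[[x]]$ via \cite[Prop.~10.13]{atiyah_macdonald}, and conclude $\widehat{R}\cong\Z[[x]]/(f)$. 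You should replace your Stage-1 argument with this one; the ``division algorithm against the monic $f$'' reasoning is the source of the trouble, since monicity does not give a remainder in a finitely generated $\Z$-module when dividing \emph{power series} (as opposed to polynomials) by $f$.
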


Consider the ring $R=\Z[x]/(f)$ and its element $\gamma=x+(f)$. First we give an alternative description of the $(\gamma)$-adic completion $\widehat{R}$ of $R$, with no assumptions on $f$.

\begin{lem}
\label{lem:completionquotient}
The $(\gamma)$-adic completion of $\Z[x]/(f)$ is isomorphic to $\Z[[x]]/(f)$.
\end{lem}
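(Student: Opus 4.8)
The statement to prove is Lemma~\ref{lem:completionquotient}: the $(\gamma)$-adic completion of $R = \Z[x]/(f)$ is isomorphic to $\Z[[x]]/(f)$, where $\gamma = x + (f)$. By definition, $\widehat R = \varprojlim R/(\gamma^n)$. The plan is to first rewrite each level of the inverse system and then pass to the limit.

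\textbf{Step 1: Identify $R/(\gamma^n)$.} Observe that $(\gamma^n) = (x^n + (f))$ is the image of the ideal $(x^n, f) \subseteq \Z[x]$ in $R$. Hence by the third isomorphism theorem, $R/(\gamma^n) \cong \Z[x]/(x^n, f)$. I would then note that this in turn is isomorphic to $\big(\Z[x]/(x^n)\big)/(\bar f)$, where $\bar f$ denotes the image of $f$ modulo $x^n$; equivalently, since a polynomial and its truncation mod $x^n$ have the same image in $\Z[x]/(x^n)$, we get $R/(\gamma^n) \cong \big(\Z[[x]]/(x^n)\big)/(\bar f)$ as well (the natural map $\Z[x]/(x^n) \to \Z[[x]]/(x^n)$ is an isomorphism, since a power series truncated mod $x^n$ is a polynomial of degree $< n$). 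So $R/(\gamma^n) \cong \Z[[x]]/(x^n, f)$, compatibly with the transition maps $R/(\gamma^{n+1}) \to R/(\gamma^n)$, which become the obvious quotient maps $\Z[[x]]/(x^{n+1}, f) \to \Z[[x]]/(x^n, f)$.

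\textbf{Step 2: Pass to the inverse limit.} Taking $\varprojlim$ of the system $\Z[[x]]/(x^n, f)$, I would first use that $\varprojlim_n \Z[[x]]/(x^n, f) \cong \big(\varprojlim_n \Z[[x]]/(x^n)\big)/(\text{closure of the image of }(f))$, or more directly argue as follows: there is a natural surjection $\Z[[x]] \to \varprojlim_n \Z[[x]]/(x^n,f)$ (since $\Z[[x]]$ is $(x)$-adically complete, it maps onto each term compatibly). Its kernel is $\bigcap_n (x^n, f) \subseteq \Z[[x]]$. The heart of the matter is to show $\bigcap_n (x^n, f) = (f)$. One inclusion is trivial. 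For the other, I would use that $\Z[[x]]$ is Noetherian together with the Krull intersection theorem, or argue more hands-on: if $g \in \bigcap_n (x^n, f)$, write $g = a_n x^n + b_n f$ for each $n$; since $f$ has a well-defined $x$-adic order $m$ (the smallest index of a nonzero coefficient — note $f \neq 0$ since it is monic), one shows inductively that the $b_n$ converge $x$-adically to an element $b$ with $g = bf$. This is where a small amount of care is needed; it is essentially the statement that $(f)$ is already closed in the $(x)$-adic topology on $\Z[[x]]$, which holds because $\Z[[x]]/(f)$ is a finitely generated $\Z$-module (as $f$ is monic of degree $n$) hence $(x)$-adically separated and complete, so $(f)$ is the kernel of the complete-to-complete map.

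\textbf{Main obstacle.} The only genuine content is Step 2, specifically verifying $\bigcap_n (x^n,f) = (f)$ — i.e. that $(f)$ is $(x)$-adically closed in $\Z[[x]]$. Everything else is bookkeeping with the isomorphism theorems and the compatibility of transition maps. I expect the cleanest route is to invoke that $\Z[[x]]/(f)$ is a free $\Z$-module of rank $n$ (monic $f$), hence $(x)$-adically complete and separated, so the surjection $\Z[[x]] \to \varprojlim \Z[[x]]/(x^n,f)$ factors through an isomorphism $\Z[[x]]/(f) \xrightarrow{\sim} \widehat R$; one can then cite \cite[Theorem 10.26]{atiyah_macdonald} or a direct Artin–Rees argument to close the gap. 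I would keep the write-up short: state the level-$n$ isomorphism, check it commutes with the transition maps, take limits, and dispatch the intersection claim with the completeness remark.
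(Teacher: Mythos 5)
Your route --- identifying $R/(\gamma^n)\cong\Z[[x]]/(x^n,f)$ level by level, exhibiting a surjection $\Z[[x]]\to\varprojlim_n\Z[[x]]/(x^n,f)$, and computing its kernel to be $\bigcap_n(x^n,f)$ --- is genuinely different from the paper's. The paper instead completes the short exact sequence $0\to(f)\to\Z[x]\to R\to 0$ and cites exactness of $(x)$-adic completion over the Noetherian ring $\Z[x]$ together with the identification $\widehat{(f)}\cong f\,\Z[[x]]$ (\cite[Propositions 10.12, 10.13]{atiyah_macdonald}); your argument is more elementary in that it avoids the general exactness statement, at the cost of having to verify by hand that $\bigcap_n(x^n,f)=(f)$.

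On that verification, the route you flag as ``cleanest'' rests on a false premise. It is $\Z[x]/(f)$, not $\Z[[x]]/(f)$, that is a free $\Z$-module of rank $\deg f$ when $f$ is monic; $\Z[[x]]/(f)$ is in general very far from finitely generated over $\Z$. A concrete counterexample within the paper's setting: $\Z[[x]]/(x-2)\cong\Z_2$, the ring of $2$-adic integers, via $\sum a_n x^n\mapsto\sum a_n 2^n$. Indeed, the entire content of the lemma is that $\Z[[x]]/(f)$ \emph{is} the $(\gamma)$-adic completion $\widehat R$, which is essentially never finitely generated over $\Z$, so the argument you label cleanest would be circular if it worked. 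Your other two alternatives do close the gap, however: the hands-on Cauchy argument on the $b_n$ works (it needs only $f\neq 0$, so that $f$ has a finite $x$-adic order, together with the fact that $\Z[[x]]$ is an $(x)$-adically complete domain), and so does Krull's intersection theorem, since $\Z[[x]]$ is Noetherian, $\Z[[x]]/(f)$ is a finitely generated $\Z[[x]]$-module, and $(x)$ lies in the Jacobson radical of $\Z[[x]]$. With either of these in place of the free-$\Z$-module claim, your proof is correct and is a legitimate alternative to the paper's.
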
 

\begin{proof}
The  sequence \[0\to (f)\to \Z[x]\to R\to 0\] is an exact sequence of finitely generated $\Z[x]$-modules. Since $\Z[x]$ is Noetherian, the sequence of $(x)$--adic completions \[0\to \widehat{(f)}\to \widehat{\Z[x]}\to \widehat{R} \to 0\] is exact by \cite[Proposition 10.12]{atiyah_macdonald}. Since $x$ acts on $R$ in the same way as $\gamma$, the $(x)$-adic completion $\widehat{R}$ is just the usual $(\gamma)$-adic completion. Moreover, $\widehat{\Z[x]}$ is the formal power series ring $\Z[[x]]$. The $\Z[x]$-module homomorphism $(f)\to \widehat{(f)}$ induces a $\Z[[x]]$-module isomorphism $\Z[[x]]\otimes_{\Z[x]} (f)\to \widehat{(f)}$ by \cite[Proposition 10.13]{atiyah_macdonald}, whose image is the ideal of $\Z[[x]]$ generated by $f$.
\end{proof}

For the rest of the section, we consider quotients of $\Z[[x]]$ by monic polynomials with non-zero constant terms that are not units in $\Z$. To prove Theorem \ref{thm:idealclassification}, we must classify the ideals of $x^{-1}(\Z[[x]]/(f))$. Note that localization commutes with taking quotients (\cite[Corollary 3.4(iii)]{atiyah_macdonald}). Therefore $x^{-1}(\Z[[x]]/(f))$ is canonically identified with the quotient of the Laurent series ring $x^{-1}\Z[[x]]$ by its principal ideal $(f)$, and we denote this ring by $x^{-1}\Z[[x]]/(f)$.

Recall that elements $a$ and $b$ in a ring are \emph{associates} if $b=ua$ for some unit $u\in R$. We first consider quotients $\Z[[x]]/(g)$, where $g$ is an irreducible power series that is neither associate to any prime in $\Z$ nor to the monomial $x$. Thus the constant term of $g$ is equal to a power of a prime $p\in\Z$ times $\pm 1$.  By \cite[Proposition 3.1.3]{mcdonough} (see also \cite[Theorem 1.4]{elliott}), there is an isomorphism $\Z[[x]]/(g)\to \Z_p[\alpha]$ sending $x$ to $\alpha$, where 
$\alpha$ is a root of an irreducible polynomial $w(x)\in \Z_p[x]$ such that \begin{equation} \label{eq:w}
w(x)=p u(x)+x^n \text{ where } u\in\Z_p[x]  \text{ and } \deg(u)\leq n-1.
\end{equation}

\begin{lem}
\label{lem:localizationfield}
Let $g\in \Z[[x]]$ be an irreducible power series that is neither associate to $x$ nor to any prime in $\Z$. Then $x^{-1}\Z[[x]]/(g)$ is a field.
\end{lem}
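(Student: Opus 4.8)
The plan is to combine the structure theorem recalled above with the fact, already noted, that localization commutes with passage to quotients. Since $\Z[[x]]/(g)\cong\Z_p[\alpha]$ via $x\mapsto\alpha$, the ring $x^{-1}\Z[[x]]/(g)$ is identified with $\alpha^{-1}\Z_p[\alpha]$, so it suffices to show that the latter is a field.

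I would first dispose of nondegeneracy. Since $g$ is irreducible in the UFD $\Z[[x]]$, the ideal $(g)$ is prime and $\Z_p[\alpha]\cong\Z[[x]]/(g)$ is a domain. The constant term of $g$ is $\pm p^m$ for some $m\ge 1$ (it is irreducible and not associate to $x$, hence has nonzero constant term), so if $g$ divided $x$ in $\Z[[x]]$, comparing constant terms in $x=gh$ would force $h(0)=0$, and cancelling $x$ in the domain $\Z[[x]]$ would make $g$ a unit — a contradiction. Thus $x\notin(g)$, i.e.\ $\alpha\neq 0$ in $\Z_p[\alpha]$, so $\alpha^{-1}\Z_p[\alpha]$ is a nonzero ring. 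Also, the shape $w(x)=pu(x)+x^n$ with $\deg u\le n-1$ forces $n\ge 1$ (if $n=0$ then $u=0$ and $w=1$ is a unit, not irreducible), so $w$ is a polynomial of positive degree $n$.

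The crux is that $p$ becomes a unit after inverting $\alpha$: the relation $w(\alpha)=0$ gives $\alpha^n=-p\,u(\alpha)$, whence $p\cdot\bigl(-u(\alpha)\,\alpha^{-n}\bigr)=1$ in $\alpha^{-1}\Z_p[\alpha]$. Hence $\alpha^{-1}\Z_p[\alpha]$ contains $\Z_p[1/p]=\Q_p$, so it equals $\Q_p[\alpha,\alpha^{-1}]$. Since $w$ is monic — hence primitive over the PID $\Z_p$ — and irreducible over $\Z_p$, Gauss's lemma shows it is irreducible of degree $n\ge 1$ over $\Q_p$, so $\Q_p[\alpha]=\Q_p[x]/(w)$ is a field; in particular $\alpha^{-1}\in\Q_p[\alpha]$ already, and therefore $\alpha^{-1}\Z_p[\alpha]=\Q_p[\alpha]$ is a field. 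I do not expect any real obstacle: the identity $p\cdot(-u(\alpha)\,\alpha^{-n})=1$ is immediate, and everything else is routine commutative algebra. The only point needing attention is the bookkeeping that ties the excluded cases ($g$ not associate to $x$, and, for the structure theorem itself, $g$ not associate to a prime) to the nondegeneracy statements $\alpha\neq 0$ and $n\ge 1$.
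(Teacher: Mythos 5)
Your proof is correct and follows essentially the same route as the paper's: both pass through the structure-theorem identification $x^{-1}\Z[[x]]/(g)\cong\alpha^{-1}\Z_p[\alpha]$, both extract the key fact that $p$ is a unit in the localization from the relation $\alpha^n=-p\,u(\alpha)$, and both conclude that the ring is the finite field extension $\Q_p(\alpha)=\Q_p[x]/(w)$. The only cosmetic difference is the final bookkeeping: the paper sets up the natural injection $\alpha^{-1}\Z_p[\alpha]\hookrightarrow\Q_p(\alpha)$ and proves surjectivity by clearing denominators with powers of $p$, while you argue by double containment ($\alpha^{-1}\Z_p[\alpha]\supseteq\Q_p[\alpha]$ since $p$ is invertible, and $\subseteq$ since $\Q_p[\alpha]$ is already a field containing $\alpha^{-1}$); your invocation of Gauss's lemma to see that $w$ stays irreducible over $\Q_p$ makes explicit a step the paper leaves tacit in writing $\Q_p(\alpha)=\Q_p[x]/(w)$.
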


\begin{proof}
Consider the prime $p\in \Z$ coming from the constant term of $g$ and the polynomial $w\in \Z_p[x]$ as in (\ref{eq:w}).  Then $x^{-1}\Z[[x]]/(g)$ is isomorphic to $\alpha^{-1} \Z_p[\alpha]$. Since $\alpha^n=-pu(\alpha)$, it follows that $p$ is a unit in $\alpha^{-1}\Z_p[\alpha]$. There is a natural injection from $\alpha^{-1}\Z_p[\alpha]$ to the finite field extension $\Q_p(\alpha)$: $\Q_p(\alpha)=\Q_p[x]/(w)$, and $\Z_p[x]/(w)$ includes into $\Q_p[x]/(w)$ in such a way that $x+(w)$ is a unit. We claim that this homomorphism is also surjective and thus an isomorphism of fields. We may write an element of $\Q_p(\alpha)$ as $h(\alpha)$ with $h\in \Q_p[x]$. Multiplying by a high enough power of $p$ to clear the denominators of the coefficients of $h$, we have $p^ih(x)\in \Z_p[x]$ for some $i$. Since $p$ is a unit in $\alpha^{-1}\Z_p[\alpha]$, we have $p^{-i}(p^i h(\alpha))\in \alpha^{-1}\Z_p[\alpha]$, and the image of this element is $h(\alpha)$. Thus, the natural map $\alpha^{-1}\Z_p[\alpha]\to \Q_p(\alpha)$ is surjective as desired.
\end{proof}

Next, we consider quotients by powers of irreducibles $\Z[[ x]]/(g^j)$, where $g$ is neither associate to $x$ nor to any prime $p\in \Z$.  Let $\bar h \vcentcolon= h + (g^j)$ denote the image of a power series $h \in \Z[[ x]]$ in this quotient.

\begin{lem}
\label{lem:primepowerideals}
Let $g\in \Z[[ x]]$ be an irreducible power series that is neither associate to $x$ nor to any prime in $\Z$. If $j \geq 2$, then $(g)$ is the unique non-zero prime ideal of $x^{-1}\Z[[ x]]/(g^j)$.  Moreover, the ideals of $x^{-1}\Z[[ x]]/(g^j)$ are exactly the ideals $({g}^i)$ generated by powers ${g}^i$ for $0\leq i\leq j$.
\end{lem}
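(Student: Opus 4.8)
The claim has two parts: first, that $(\bar g)$ is the unique non-zero prime ideal of $x^{-1}\Z[[x]]/(g^j)$ for $j\geq 2$; and second, that the ideals of this ring are exactly the $j+1$ ideals $(\bar g^i)$ for $0\leq i\leq j$. The plan is to work in the localization $x^{-1}\Z[[x]]/(g^j)$ throughout, exploiting that $g$ is a prime element of the UFD $\Z[[x]]$ that is coprime to $x$, so that $x$ becomes a unit and $g$ remains prime (indeed, the localization map $\Z[[x]]/(g^j)\to x^{-1}\Z[[x]]/(g^j)$ is well-behaved since $x$ is not a zero divisor modulo $g^j$).

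\textbf{Step 1: the prime ideals.} First I would identify the prime ideals of $x^{-1}\Z[[x]]/(g^j)$ with the prime ideals of $x^{-1}\Z[[x]]$ containing $g^j$, equivalently the primes containing $g$ (since a prime ideal containing $g^j$ contains $g$). A prime of $x^{-1}\Z[[x]]$ containing $g$ corresponds, under the correspondence between primes of a localization and primes of the original ring disjoint from the multiplicative set, to a prime $\mathfrak p$ of $\Z[[x]]$ with $g\in\mathfrak p$ and $x\notin\mathfrak p$. Now $\Z[[x]]$ is a two-dimensional Noetherian UFD, and the height-one prime $(g)$ is contained in exactly the maximal ideals of $\Z[[x]]$ containing $g$; since $g$ is irreducible with constant term $\pm p^k$, any maximal ideal containing $g$ also contains $p$, hence is one of the maximal ideals $(p,x)$-type objects — but crucially I must check such a maximal ideal contains $x$, which it does (the only maximal ideals of $\Z[[x]]$ are of the form $(q,x)$ for $q$ prime, since $\Z[[x]]/(x)=\Z$ and more generally $x$ lies in the Jacobson radical of $\Z[[x]]$). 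Therefore every maximal ideal of $\Z[[x]]$ containing $g$ contains $x$, and so in the localization $x^{-1}\Z[[x]]$ the only prime containing $g$ is $(g)$ itself. This gives that $(\bar g)$ is the unique non-zero prime of $x^{-1}\Z[[x]]/(g^j)$. (Alternatively, and perhaps more cleanly, I could invoke Lemma~\ref{lem:localizationfield}: $x^{-1}\Z[[x]]/(g)$ is a field, so $(\bar g)$ is maximal in $x^{-1}\Z[[x]]/(g^j)$; and since $g^j\in\mathfrak p$ forces $g\in\mathfrak p$ for any prime $\mathfrak p$, the ideal $(\bar g)$ is the unique prime.)

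\textbf{Step 2: classifying all ideals.} With the prime structure understood, I would show $x^{-1}\Z[[x]]/(g^j)$ has exactly the ideals $(\bar g^i)$, $0\le i\le j$, and that these are distinct and totally ordered. Distinctness and the chain $(\bar 1)\supsetneq(\bar g)\supsetneq\cdots\supsetneq(\bar g^j)=(0)$ follow from unique factorization in $\Z[[x]]$ together with the fact that $g^i$ and $g^{i'}$ generate the same ideal of $x^{-1}\Z[[x]]$ only when $i=i'$ (for $i,i'\leq j$), using that $x$ is coprime to $g$. For surjectivity of the list, let $I$ be a proper nonzero ideal and let $h\in I$ be nonzero; writing $\bar h=$ image of $h$, lift to $h\in x^{-1}\Z[[x]]$ and factor $h = u\cdot g^{i}\cdot x^{m}$ with $u$ coprime to both $g$ and $x$ — wait, more carefully: in $x^{-1}\Z[[x]]$ we can write any nonzero element as a unit times $g^i$ for some $i\geq 0$ only if $g$ is the unique relevant prime, which is not the case in $\Z[[x]]$; instead I would argue via the local structure. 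The right approach: $x^{-1}\Z[[x]]/(g^j)$ is a quotient of a Noetherian ring, hence Noetherian, with a single prime $(\bar g)$ which is therefore its nilradical and Jacobson radical; so it is a Noetherian local ring of dimension $0$. For the ideals to be exactly the powers of the maximal ideal, it suffices that the maximal ideal $(\bar g)$ is principal — which it is, generated by $\bar g$. A Noetherian local ring whose maximal ideal is principal (a "special principal ideal ring" / Artinian chain ring once we also know it's Artinian, which holds since dimension $0$ Noetherian implies Artinian) has all its ideals equal to powers of the maximal ideal; and since $(\bar g)^j=0$ but $(\bar g)^{j-1}\neq 0$ (the latter because $g^{j-1}\notin(g^j)$ in $x^{-1}\Z[[x]]$, again by unique factorization and coprimality with $x$), the complete list of ideals is $(\bar g^i)$ for $0\le i\le j$.

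\textbf{Main obstacle.} The main technical point to get right is the passage between divisibility in $\Z[[x]]$ and in the localization $x^{-1}\Z[[x]]$: I must be careful that $g$ stays prime and that no new identifications among the powers $g^0,\dots,g^j$ are introduced by inverting $x$ — this rests on $g$ and $x$ being coprime in the UFD $\Z[[x]]$ (Lemma~\ref{lem:relprimeseries}, since the constant term of $g$ is $\pm p^k\neq 0$), which is exactly the hypothesis that $g$ is not associate to $x$. The other delicate point is justifying that $x^{-1}\Z[[x]]/(g^j)$ is Artinian local with principal maximal ideal; I expect this to follow smoothly from Step 1 plus the observation that $\bar g$ generates the unique maximal ideal, but I would want to state explicitly the standard fact (e.g.\ from \cite{atiyah_macdonald}) that in a local Artinian principal ideal ring every ideal is a power of the maximal ideal. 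If invoking that classification feels too heavy, an elementary alternative is to prove directly by induction on word length / order of vanishing: for $\bar h\neq 0$ in the quotient, let $i<j$ be maximal with $g^i\mid h$ in $x^{-1}\Z[[x]]$; then $h=g^i v$ with $v$ a unit mod $(g)$, hence (since the only obstruction to being a unit in $x^{-1}\Z[[x]]/(g^j)$ is divisibility by $g$, and $\bar v$ is not in the unique maximal ideal) $\bar v$ is a unit, so $(\bar h)=(\bar g^i)$.
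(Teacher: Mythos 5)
Your proposal is correct, and Step~2 takes a genuinely different route from the paper. Where the paper argues by hand — showing that every element $\bar h/\bar x^k$ is associate to $\bar g^i$ for some $i$ and then picking the minimal such $i$ realized in a given ideal — you instead identify $x^{-1}\Z[[x]]/(g^j)$ as a Noetherian local ring of dimension $0$ (hence Artinian) with principal maximal ideal $(\bar g)$, and invoke the structure theory of such rings (every ideal is a power of the maximal ideal). This is a cleaner conceptual framing but leans on a standard theorem that the paper avoids citing; your ``elementary alternative'' at the end recovers the paper's hands-on argument for principal ideals, though as stated it still needs the final sentence (take the minimal $i$ over all nonzero elements of the ideal $\frak a$, so that $\bar g^i \in \frak a \subseteq (\bar g^i)$) to cover non-principal ideals — a step your Artinian argument supplies automatically. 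For Step~1, your primary argument via maximal ideals of $\Z[[x]]$ is correct but more involved than necessary; the ``cleaner'' alternative you note in passing — $(\bar g)$ is maximal by Lemma~\ref{lem:localizationfield}, any prime contains $g^j$ hence $g$, so equals $(\bar g)$ — is exactly what the paper does. Overall the Artinian-local-ring packaging buys generality and avoids the explicit divisibility calculation; the paper's version buys self-containedness and avoids appealing to structure theorems beyond what was already set up.
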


\begin{proof}
The quotient of $x^{-1}\Z[[ x]]/(g^j)$ by $(g)$ is a field by Lemma~\ref{lem:localizationfield}. Thus $(g)$ is maximal, and hence prime. Moreover, we have $(g)^j=0$. Let $\frak p$ be a non-zero prime ideal of $x^{-1}\Z[[ x]]/(g^j)$. Then $(g)^j\subset \frak p$, and, since $\frak p$ is prime, we must have $(g)\subset \frak p$. As $(g)$ is maximal, it follows that $\frak p = (g)$.  Thus $(g)$ is the unique prime ideal of $x^{-1}\Z[[ x]]/(g^j)$.  This proves the first statement.

For the moreover statement, note that any element of $x^{-1}\Z[[ x]]/(g^j)$ can be written as $\bar h/ \bar{x}^k$ for some  $h\in \Z[[ x]]$.  If $g$ does not divide $h$, then $\bar{h}/\bar{x}^k$ is a unit. To see this, suppose for contradiction that $\bar{h}/\bar{x}^k\in (g)$. Then $\bar{h}/\bar{x}^k=\bar{g}\bar{h'}/\bar{x}^\ell$ for some $h'\in \Z[[ x]]$ and $\ell\in \Z_{\geq 0}$, and so $\bar{h}\bar{x}^\ell=\bar{g}\bar{h'}\bar{x}^k$. In other words, $hx^\ell+(g^j)=gh'x^k+(g^j)$ so that $hx^\ell=gh'x^k+g^j h''$ for some $h''\in \Z[[ x]]$. However, $g$ divides the right hand side of this equation while it does not divide the left, which is a contradiction. Thus, if $g$ does not divide $h$ then $\bar{h}/\bar{x}^k$ does not lie in $(g)$, so it is a unit.

Given any $\bar{h}/\bar{x}^k \in x^{-1}\Z[[ x]]/(g^j)$, we may write $h=h'g^i$, where $i\geq 0$ and $h'\in \Z[[x]]$ is not divisible by $g$. Then $\bar{h}/\bar{x}^k=(\bar{h'}/\bar{x}^k)(\bar{g}^i/1)$, and, by the previous paragraph, $\bar{h'}/\bar{x}^k$ is a unit. In other words, $\bar{h}/\bar{x}^k$ is associate to $\bar{g}^i$. Consider an ideal $\frak a$, and choose $i$ to be the minimum such that there is an element of $\frak a$ associate to $\bar{g}^i$. Then  $\frak a\subset (\bar{g}^i)$, but also $\bar{g}^i\in \frak a$. That is, $\frak a=(\bar{g}^i)=(\bar{g})^i$.
\end{proof}

A localization of a unique factorization domain is a unique factorization domain. Thus $x^{-1}\Z[[x]]$ is a unique factorization domain, and so its prime and irreducible elements coincide.

\begin{lem}
\label{lem:coprime}
Let $g\in\Z[[x]]$ be a prime power series which is neither associate to $x$ nor to a prime $p\in \Z$. Then $g$ is prime as an element of $x^{-1}\Z[[x]]$. Moreover, consider two prime power series $g_1,g_2\in \Z[[x]]$, neither of which is associate to $x$ or to a prime $p\in \Z$. If $g_1$ and $g_2$ are not associate to each other in $\Z[[x]]$, then $g_1^{n_1}$ and $g_2^{n_2}$ are coprime in $x^{-1}\Z[[x]]$ for any $n_1,n_2\geq 1$.
\end{lem}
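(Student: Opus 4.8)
The plan is to derive both statements from the fact that $x^{-1}\Z[[x]]$ is a localization of the unique factorization domain $\Z[[x]]$, and hence is itself a UFD, together with an explicit description of its units. We are localizing at the multiplicative set $\{x^k\}_{k\geq 0}$, and $x$ is prime in $\Z[[x]]$ since $\Z[[x]]/(x)\cong\Z$ is a domain. For the first assertion, I would invoke the standard fact that a prime element $q$ of a UFD $A$, upon localization at a multiplicative set $S$, either becomes a unit --- exactly when $(q)$ meets $S$ --- or remains a prime element. Here $(g)$ meets $\{x^k\}_{k\geq 0}$ if and only if $g$ divides some $x^k$, which in the UFD $\Z[[x]]$ forces $g$ to be an associate of $x$. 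Since by hypothesis $g$ is not an associate of $x$, it follows that $g$ is prime in $x^{-1}\Z[[x]]$.

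For the second assertion, the essential point is to promote ``$g_1$ and $g_2$ are non-associate in $\Z[[x]]$'' to ``$g_1$ and $g_2$ are non-associate in $x^{-1}\Z[[x]]$.'' To this end I would first identify the unit group of $x^{-1}\Z[[x]]$: combining Lemma~\ref{lem:powerseriesunits} with the observation that $x$ is a unit in $x^{-1}\Z[[x]]$, a nonzero element is a unit precisely when it has the form $x^a v$ for some $a\in\Z$ and some $v\in\Z[[x]]$ with constant term $\pm1$. Now suppose for contradiction that $g_1 = x^a v\, g_2$ with $v$ a unit of $\Z[[x]]$ and $a\in\Z$. If $a\geq 0$, then $x^a$ divides $g_1$ in $\Z[[x]]$; since $g_1$ is prime and not an associate of $x$, we must have $a=0$, and then $g_1 = v g_2$ contradicts the non-associativity of $g_1$ and $g_2$ in $\Z[[x]]$. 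If $a<0$, applying the same reasoning to the identity $g_1 x^{-a} = v g_2$ forces $-a=0$, the same contradiction. Hence $g_1$ and $g_2$ are non-associate prime elements of the UFD $x^{-1}\Z[[x]]$, so the only prime factor of $g_1^{n_1}$ is $g_1$ and the only prime factor of $g_2^{n_2}$ is $g_2$; having no common prime factor, $g_1^{n_1}$ and $g_2^{n_2}$ are coprime in $x^{-1}\Z[[x]]$.

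If coprimality is needed in the stronger form that $(g_1^{n_1})$ and $(g_2^{n_2})$ are comaximal --- as it will be, in order to apply the Chinese Remainder Theorem to decompose $x^{-1}\Z[[x]]/(f)$ --- I would add the remark that $\Z[[x]]$ is $x$-adically complete, so every maximal ideal of $\Z[[x]]$ contains $x$; consequently $x^{-1}\Z[[x]]$ has Krull dimension $1$, a one-dimensional UFD is a PID, and in a PID ``no common prime factor'' coincides with ``comaximal.'' The only genuinely delicate point is the middle step, verifying that non-associativity is preserved under localization; this rests entirely on having pinned down the units of the Laurent series ring $x^{-1}\Z[[x]]$, after which the argument is a routine application of UFD (or PID) formalism.
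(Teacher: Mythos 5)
Your proof is correct but takes a genuinely different route from the paper's, especially for the coprimality statement. For primality, you cite the general fact that a prime element of a UFD remains prime under localization when its principal ideal misses the multiplicative set; the paper instead verifies irreducibility of $g$ in $x^{-1}\Z[[x]]$ by hand, clearing denominators in a factorization $g=(h_1/x^{k_1})(h_2/x^{k_2})$ and using primality of $g$ in $\Z[[x]]$ to see that one factor must be an associate of $g$ and the other a unit. For coprimality --- which, as you correctly note, must mean comaximality for the Chinese Remainder Theorem in Theorem~\ref{thm:idealclassification} to apply --- the paper never passes through non-associativity in the localization at all: it invokes Lemma~\ref{lem:primepowerideals} to see that every element of $x^{-1}\Z[[x]]/(g_2^{n_2})$ is a unit or nilpotent, observes that $\overline{g_1}^{n_1}$ is not nilpotent (since $g_2\nmid g_1$) and hence a unit, and extracts $1\in(g_1^{n_1},g_2^{n_2})$ directly after clearing a power of $\overline{x}$. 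Your argument instead pins down the unit group of $x^{-1}\Z[[x]]$, shows $g_1$ and $g_2$ remain non-associate there, and then upgrades ``no common prime factor'' to comaximality by arguing that $x^{-1}\Z[[x]]$ is a one-dimensional UFD (every maximal ideal of $\Z[[x]]$ contains $x$ by $x$-adic completeness) and therefore a PID. This is sound and makes the conclusion transparent in UFD terms, but it imports heavier machinery --- the Jacobson radical and completeness, dimension theory, Kaplansky's PID criterion --- where the paper gets by with the local-ring structure from Lemma~\ref{lem:primepowerideals}, a lemma that the ideal classification needs anyway.
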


\begin{proof}
If $g$ is a unit in $x^{-1}\Z[[x]]$, then there is an element $h/x^k\in x^{-1}\Z[[x]]$ with $gh/x^k=1$ in $x^{-1}\Z[[x]]$. Hence $gh=x^k$. However, this contradicts  that $g$ does not divide the prime power $x^k$, and so $g$ is not a unit.

Consider a product of two elements of $x^{-1}\Z[[x]]$ that is equal to $g$; that is, $g=(h_1/x^{k_1})(h_2/x^{k_2})$ where $h_1,h_2\in \Z[[x]]$. Then  $gx^{k_1}x^{k_2}=h_1h_2$ in $\Z[[x]]$, and so $g$ divides exactly one of the $h_i$. Say $g$ divides $h_1$. Then $g$ and $h_1/x^{k_1}$ divide each other in the domain $x^{-1}\Z[[x]]$, and therefore they are associates while $h_2/x^{k_2}$ is a unit. This proves that $g$ is irreducible in $x^{-1}\Z[[x]]$ and hence also prime.

For the last statement, consider two non-associate prime power series $g_1,g_2\in \Z[[x]]$ with the desired properties. By the proof of Lemma \ref{lem:primepowerideals}, an element of $x^{-1}\Z[[x]]/(g_2^{n_2})$ is either a unit or nilpotent. Moreover, an element $\overline{h}/\overline{x}^k$ with $h\in \Z[[x]]$ is nilpotent exactly if $g_2$ divides $h$ in $\Z[[x]]$. The element $\overline{g_1}^{n_1} \in x^{-1}\Z[[x]]/(g_2^{n_2})$ is a unit, since $g_2$ does not divide $g_1^{n_1}$ in $\Z[[x]]$. Thus, $\overline{g_1}^{n_1}(\overline{h}/\overline{x}^k)=\overline{1}$ for some $h\in \Z[[x]]$. In other words, $g_1^{n_1}h+(g_2^{n_2})=x^k+(g_2^{n_2})$, and hence $ g_1^{n_1}h=x^k+g_2^{n_2}h' $ for some $ h'\in \Z[[x]].$ Therefore $1=g_1^{n_1}(h/x^k)-g_2^{n_2}(h'/x^k)\in (g_1^{n_1},g_2^{n_2})$.
\end{proof}

We are now ready to prove Theorem~\ref{thm:idealclassification}.
\begin{proof}[Proof of Theorem \ref{thm:idealclassification}]
Consider a monic polynomial $f\in \Z[x]$ whose constant term does not lie in $\{-1,0,1\}$ and  its prime factorization $f=uf_1^{n_1}\cdots f_r^{n_r}$ in $\Z[[x]]$, where $u\in \Z[[x]]$ is a unit and each $f_i$ is prime. Since the constant term of $f$ is not $\pm 1$, the polynomial $f$ is not a unit in $\Z[[x]]$. Thus there is at least one prime power series in its prime factorization. The prime power series of $\Z[[x]]$ are either  associate to $x$, associate to a prime in $\Z$, or neither associate to $x$ nor to any prime in $\Z$. A power series of the last type has constant term equal to a power of a prime in $\Z$ times $\pm 1$.
A prime of the first type has constant term 0, while a prime of the second type has all its coefficients divisible by $p$. Since $f$ has non-zero constant term, the same holds for each $f_i$, and so no $f_i$ is associate to $x$. As $f$ is monic, the coefficients of $f$ are not all divisible by a common prime integer $p$, and so the same holds for each $f_i$, and no $f_i$ is associate to a prime integer $p$.  In particular, this implies that the constant term of each $f_i$ is a power of a prime in $\Z$ times $\pm 1$.

By Theorem \ref{lem:localideals}, the poset of ideals of $\Z[[x]]/(f)$ considered up to multiplication by $\gamma=x+(f)$ is isomorphic to the poset of ideals of the localization $x^{-1}\Z[[x]]/(f)$. We have $(f)=(f_1^{n_1})\cdots(f_r^{n_r})$ in $x^{-1}\Z[[ x]]$, and by Lemma \ref{lem:coprime}, the ideals $(f_i^{n_i})$ are pairwise coprime. Thus, by the Chinese Remainder Theorem, 
\[
x^{-1}\Z[[ x]]/(f) \cong 
x^{-1}\Z[[ x]]/(f_1^{n_1}) \times \cdots \times x^{-1}\Z[[ x]]/(f_r^{n_r}).
\] An ideal of this ring has the form $\frak a_1 \times \cdots \times \frak a_r$, where each $\frak a_i$ is an ideal of $x^{-1}\Z[[x]]/(f_i^{n_i})$. By Lemma \ref{lem:primepowerideals}, an ideal must actually have the form $(f_1^{i_1}) \times \cdots \times (f_r^{i_r})$, where $0\leq i_j\leq n_j$ for each $j$.

The divisors of $f$ in $\Z[[x]]$ up to associates are $f_1^{i_1} \cdots f_r^{i_r}$, and the map \[f_1^{i_1} \cdots f_r^{i_r} \mapsto (f_1^{i_1}) \times \cdots \times (f_r^{i_r})\] is a bijection from the poset of divisors of $f$ with the order of divisibility to the poset of ideals of $x^{-1}\Z[[x]]/(f)$. This bijection is order-reversing, and thus the poset of ideals of $x^{-1}\Z[[x]]/(f)$ is isomorphic to the opposite of the poset of divisors of $f$ in $\Z[[x]]$. The poset of divisors of $f$ is isomorphic to $\Div(n_1,\ldots,n_r)$, which is isomorphic to its own opposite. Therefore, the poset of ideals of $x^{-1}\Z[[x]]/(f)$ is isomorphic to $\Div(n_1,\ldots,n_r)$ and to the poset of divisors of $f$ in $\Z[[x]]$.
\end{proof}

\section{Appendix: Pseudo-valuations and confining subsets} \label{sec:pseudoval}

The definition of a confining subset in this paper (Definition \ref{def:confining}) is only for groups with abelianizations of rank 1.
Nonetheless, groups of the more general form $H\rtimes\Z^n$, for $H$ an abelian group and $n \geq 1$, were considered by the authors in \cite{ABR}, in which an analogous concept of confining subsets was defined and some of the results of \cite{Amen} were generalized.
In this section, we work in this more general setting and introduce \emph{pseudo-valuations}, which  are functions that are a slight weakening of valuations. 
The main result of this section is the following; the terms in the statement are defined immediately after. 

\begin{thm}\label{thm:PvalandConfining}
Let $G=H\rtimes_\alpha \Z^n$ where $H$ is abelian. Given a homomorphism $\rho\colon \Z^n \to \R$, the poset of equivalence classes of pseudo-valuations on $H$ subordinate to $\rho$ is isomorphic to the opposite of the poset of equivalence classes of subsets of $H$ which are confining under $\alpha$ with respect to $\rho$.
\end{thm}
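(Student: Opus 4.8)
\textbf{Proof plan for Theorem \ref{thm:PvalandConfining}.}
The plan is to follow the same two-directional correspondence used in Section \ref{sec:confining} and Section \ref{sec:pminusasinvspaces}, but carried out with the extra parameter $\rho$ in place of the single automorphism $\alpha$. First I would recall (or state, using the machinery of \cite{ABR}) the precise definitions: a subset $Q \subseteq H$ is \emph{confining under $\alpha$ with respect to $\rho$} if $\alpha(g)(Q) \subseteq Q$ whenever $\rho(g) \geq 0$, if $H = \bigcup_{g} \alpha(g)^{-1}(Q)$ over all $g$, and if there is some $g_0$ with $\rho(g_0)>0$ and $\alpha(g_0)(Q\cdot Q)\subseteq Q$; two confining subsets are equivalent if each is carried into the other by some $\alpha(g)$. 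A \emph{pseudo-valuation on $H$ subordinate to $\rho$} is a function $\pval\colon H \to \R\cup\{\infty\}$ satisfying $\pval(0)=\infty$, $\pval(x+y)\geq \min\{\pval(x),\pval(y)\}$ (but \emph{without} demanding equality when the two values differ --- this is the weakening that distinguishes pseudo-valuations from the valuations of Definition \ref{def:valuation}), $\pval(-x)=\pval(x)$, and the compatibility condition $\pval(\alpha(g)(x)) = \pval(x) + \rho(g)$; boundedness-below on a generating set should also be imposed so that the associated action is cobounded. Equivalence of pseudo-valuations is the natural coarse relation: $\pval_1 \sim \pval_2$ if $|\pval_1 - \pval_2|$ is bounded.

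The two constructions go as follows. Given a confining subset $Q$, define $\pval_Q(x) = -\inf\{\rho(g) : \alpha(g)^{-1}(x)\in Q\}$ (with the convention that the infimum over the empty set, which cannot happen by axiom (b), is $+\infty$, and $\pval_Q(x)=+\infty$ when $\alpha(g)^{-1}(x)\in Q$ for all $g$). The confining axioms translate directly: axiom (a) gives that $\pval_Q$ only decreases under $\alpha(g)^{-1}$, axiom (b) gives that $\pval_Q$ is finite-valued away from those $x$ lying in every $\alpha(g)^{-1}(Q)$, and axiom (c) forces the super-additivity $\pval_Q(x+y)\geq \min\{\pval_Q(x),\pval_Q(y)\}$ up to the constant $\rho(g_0)$; homogenizing (replacing $\pval_Q$ by $\lim_k \frac{1}{k}\pval_Q(kx)$ along a cyclic subgroup, exactly as in the passage from quasi-characters to pseudocharacters in Section \ref{sec:busemann}) removes this constant and yields a genuine pseudo-valuation subordinate to $\rho$. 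This is essentially the argument of Lemma \ref{lem:overlinev}, generalized. Conversely, given a pseudo-valuation $\pval$, set $Q_{\pval} = \{x\in H : \pval(x)\geq 0\}$; the pseudo-valuation axioms immediately give that $Q_{\pval}$ is symmetric, contains a neighborhood of $0$ in the relevant sense, is closed under $\alpha(g)^{-1}$ for $\rho(g)\geq 0$, exhausts $H$ under the $\alpha(g)^{-1}$, and is almost closed under addition (with the defect absorbed by any $g_0$ with $\rho(g_0)>0$), so $Q_{\pval}$ is confining under $\alpha$ with respect to $\rho$.

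The remaining work is to check that these two constructions are mutually inverse up to the respective equivalence relations, and that each reverses the partial order. For the order reversal: $Q_1 \preceq Q_2$ (i.e. $\alpha(g)(Q_2)\subseteq Q_1$ for some $g$ with $\rho(g)\geq 0$, meaning $Q_1$ is ``larger'') should correspond to $\pval_{Q_1} \leq \pval_{Q_2}$ pointwise up to a bounded error, since a bigger confining set makes it easier to land inside after applying $\alpha(g)^{-1}$, hence gives a smaller value of $\pval$; this is the same inclusion-reversal phenomenon recorded throughout the paper. For ``mutually inverse'': starting from $\pval$, forming $Q_{\pval}$, and forming $\pval_{Q_{\pval}}$ should recover $\pval$ exactly on the homogeneous part (the sub-level set at height $0$ determines the homogeneous pseudo-valuation up to the coarse equivalence, by the subordinacy relation $\pval(\alpha(g)(x)) = \pval(x)+\rho(g)$, which lets one read off $\pval(x)$ from which translates of $Q_{\pval}$ contain $x$); starting from $Q$, the set $Q_{\pval_Q}$ is $\{x : \exists g,\ \rho(g)\geq 0,\ \alpha(g)^{-1}(x)\in Q\}$ up to homogenization, which is a standard ``saturation'' of $Q$ that is equivalent to $Q$ as a confining subset --- this is the analogue of Lemma \ref{lem:surj}, and I expect its proof to be the main obstacle, since in the higher-rank setting one must control the interaction between the lattice $\Z^n$, the linear functional $\rho$ (whose image may be dense in $\R$), and the failure of $\pval_Q$ to be exactly homogeneous before the limiting procedure. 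Concretely, the delicate point is showing that the homogenization limit exists and that the resulting saturation does not properly enlarge the equivalence class; I would handle this by adapting the finite-defect / bounded-word-length estimates of Lemma \ref{lem:RsubsetQ} and Claim \ref{claim:1}, replacing powers of $\alpha$ by the family $\{\alpha(g)\}_{g\in\Z^n}$ graded by $\rho(g)$, and invoking \cite[Theorems 1.1 \& 1.2]{ABR} to identify the equivalence classes of confining subsets with hyperbolic structures, so that the coarse-equivalence bookkeeping is already available. Once these lemmas are in place, the isomorphism of posets follows formally, exactly as Theorem \ref{thm:I(G)} followed from Lemmas \ref{lem:incomparable}, \ref{lem:orderrev}, and \ref{lem:surj}.
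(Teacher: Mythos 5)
Your overall structure—two maps between pseudo-valuations and confining subsets, proved order-reversing and mutually inverse up to the coarse equivalences—matches the paper. But there is a genuine gap in the middle step, stemming from the definition you chose for ``pseudo-valuation.''

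You define a pseudo-valuation to satisfy exact super-additivity $\pval(x+y)\geq \min\{\pval(x),\pval(y)\}$ and identify the weakening from Definition \ref{def:valuation} as merely dropping the equality clause. With that definition, the function $\pval_Q$ you build from a confining subset $Q$ only satisfies super-additivity up to a bounded defect $\lambda=\rho(z_0)$ coming from axiom (c), and you propose to fix this by homogenizing, $\widetilde{v}(x)=\lim_k \tfrac1k\,\pval_Q(kx)$. That step fails: by (PV3) one has $\pval_Q(k\,\alpha(z)(x))=\pval_Q(\alpha(z)(kx))=\rho(z)+\pval_Q(kx)$, so
\[
\widetilde{v}(\alpha(z)(x)) \;=\; \lim_k \tfrac{1}{k}\bigl(\rho(z)+\pval_Q(kx)\bigr) \;=\; \widetilde{v}(x),
\]
which is $\widetilde{v}(x)$ rather than $\rho(z)+\widetilde{v}(x)$. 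Homogenization annihilates exactly the subordinacy condition (PV3) that makes the correspondence with confining subsets work (it also has convergence issues, since $\pval_Q$ is super-additive, not sub-additive). The paper avoids all of this by building the bounded defect $\lambda$ into the definition (condition (PV1) of Definition \ref{pval}) —this is the actual weakening that the prefix ``pseudo'' signals, in exact analogy with quasi-morphisms. With that definition, $\Pmap(Q)$ is \emph{already} a pseudo-valuation (Lemma \ref{fromqtopval}), no limiting procedure is needed, and the mutual-inverse statements (Propositions \ref{SLid} and \ref{LSid}) become short, elementary unwindings of the formulas—nothing like the number-system estimates of Lemma \ref{lem:RsubsetQ} or Claim \ref{claim:1}, which you flagged as the main obstacle but which are specific to the axiom (A5) setting and are not used here.

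Two smaller points. Your formula $\pval_Q(x)=-\inf\{\rho(g):\alpha(g)^{-1}(x)\in Q\}$ has the wrong sign: by axiom (a) the set $\{\rho(g):\alpha(g)^{-1}(x)\in Q\}$ is downward-closed in the image of $\rho$, so $-\inf$ is identically $+\infty$. You want either $\sup$ over that set, or (as in the paper, equation (\ref{eqn:pmap})) $-\inf\{\rho(z):\alpha(z)(x)\in Q\}$, which is what your later discussion implicitly uses. Also, there is no need to impose boundedness-below on a generating set; Theorem \ref{thm:PvalandConfining} is a poset isomorphism and coboundedness of the associated action plays no role in its proof.
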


We start by giving the general definition of a confining subset and then define pseudo-valuations  subordinate to $\rho$. Let $H$ be abelian, and let $G= H\semi_\alpha \Z^n$, where $\alpha\colon \Z^n \to \Aut(H)$ is a fixed homomorphism. An element $z\in \Z^n$ acts on $H$ by conjugation via $zhz^{-1}=\alpha(z)(h)$. Fix a (non-zero) homomorphism $\rho\colon \Z^n \to \R$.  

\begin{defn} \label{def:higherconfining}
A symmetric subset $Q$ of $H$ is  \emph{confining under $\alpha$ with respect to $\rho$} if the following hold:
\begin{itemize}
\item[(a)] For all $z\in \Z^n$ with $\rho(z)\geq 0$, $\alpha(z)(Q)\subseteq Q$.
\item[(b)] For each $h\in H$, there exists $z\in \Z^n$ such that $\alpha(z)(h)\in Q$.
\item[(c)] There exists $z_0\in\Z^n$ such that $\alpha(z_0)(Q+Q)\subseteq Q$.
\end{itemize}
If there exists $z\in\Z^n$ with $\rho(z) >0$ such that $\alpha(z)(Q)\subsetneq Q$, then $Q$ is \emph{strictly confining}.
\end{defn}

\begin{defn}\label{pval} A \emph{pseudo-valuation subordinate to $\rho$} is a function $\pval \colon H \to \R \cup \{+\infty \}$ satisfying:
\begin{enumerate}[(PV1)]
\item there exists a constant $\lambda\geq 0$ such that $\pval(h_1 + h_2) \geq \op{min}\{ \pval(h_1), \pval(h_2) \} - \lambda$ for all $h_1, h_2 \in H$;
\item$ \pval(h) = \pval(-h)$ for all $h \in H$; and
\item $\pval( \alpha(z)(h)) = \rho(z) + \pval(h)$ for all $z \in \Z^n$ and $h \in H$.
\end{enumerate} 
\end{defn} 

\begin{rem} \label{rem:pval0} Since the image of $\rho$ is unbounded in $\R$ and $\pval(\alpha(z)(0))=\pval(0)$ for all $z\in \Z^n$, it follows from condition (PV3) above that $\pval(0) = + \infty$. \end{rem}

There is an equivalence relation on the set of homomorphisms $\Z^n \to \R$ given by $\rho_1 \sim \rho_2$ if and only if  $\rho_1$ and $\rho_2$ are positive scalar multiples of each other. 

Fix a generating set $\{t_1,\dots, t_n\}$ of $\Z^n$.  We construct a (possibly infinite) generating set of $\Z^n$ as follows.  Fix a constant $C_\rho>0$  such that $\rho(t_i) \in [-C_\rho, C_\rho]$ for all $i \in \{1,2,\dots,n\}$ and there exists $y\in \Z^n$ such that $\rho(y)=C_\rho$, and let 
\begin{equation}\label{eqn:Zrho}
Z _\rho= \{z \in \Z^n : |\rho(z)| \leq C_\rho \}.
\end{equation}

Suppose that $Q\subseteq H$ is confining under $\alpha$ with respect to $\rho$. It is straightforward to check that $Q\cup Z_\rho$ is symmetric, and $Q\cup Z_\rho$ generates $G$ by Definition \ref{def:higherconfining}(b). We denote the word norm on $G$ with respect to $Q\cup Z_\rho$ by $\|\cdot\|_{Q\cup Z_\rho}$. If $Q_1$ and $ Q_2$ are two confining subsets with respect to $\rho$, then we say that $Q_1 \preceq_\rho Q_2$  if $\displaystyle \op{sup}_{q_2 \in Q_2}\| q_2 \|_{Q_1 \cup Z_\rho} < \infty$ or, equivalently, $[Q_1 \cup Z_\rho] \preccurlyeq [Q_2 \cup Z_\rho]$ as generating sets. The confining subsets are equivalent, denoted $Q_1 \sim_\rho Q_2$,  if $Q_1 \preceq_\rho Q_2 $ and $Q_2 \preceq_\rho Q_1 $. 

We also consider an equivalence relation on pseudo-valuations. 

\begin{defn} Suppose that $\pval, \widetilde{u}$ are two pseudo-valuations on $H$ subordinate to $\rho$. We say that $\pval \preceq_\rho \widetilde{u}$ if there exists a constant $K\in \R$ such that $ \pval(h) \leq \tilde{u}(h) +K$ for all $h \in H$. The preorder $\preceq_\rho$ induces an equivalence relation $\sim_\rho$ as usual via $\pval \sim_\rho \widetilde{u}$ if $\pval \preceq_\rho \widetilde{u}$ and $\widetilde{u}\preceq_\rho \pval$.
\end{defn}

The first result of this section describes how to obtain a confining subset from a pseudo-valuation.  
 Define a map $\conf$ from the set of pseudo-valuations subordinate to $\rho$ to subsets of $H$ as follows: 
\begin{equation}\label{eqn:conf}
\conf(\pval)=\{h \in H : \pval(h) \geq 0 \}.
\end{equation}

\begin{lem}\label{frompvaltoq} Let $\pval$ be a pseudo-valuation subordinate to $\rho$. 
Then $\conf(\pval)$ is symmetric and confining under $\alpha$ with respect to $\rho$. \end{lem}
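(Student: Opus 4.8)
The claim is that $\conf(\pval) = \{h \in H : \pval(h) \geq 0\}$ is symmetric and confining under $\alpha$ with respect to $\rho$; that is, it satisfies conditions (a), (b), (c) of Definition~\ref{def:higherconfining}. Each of these is a direct translation of the defining properties (PV1)--(PV3) of a pseudo-valuation, so the plan is to verify them one at a time.

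First, symmetry is immediate from (PV2): $\pval(h) = \pval(-h)$, so $h \in \conf(\pval)$ if and only if $-h \in \conf(\pval)$. For condition (a), I would take $z \in \Z^n$ with $\rho(z) \geq 0$ and $h \in \conf(\pval)$, so $\pval(h) \geq 0$. By (PV3), $\pval(\alpha(z)(h)) = \rho(z) + \pval(h) \geq 0$, hence $\alpha(z)(h) \in \conf(\pval)$, establishing $\alpha(z)(\conf(\pval)) \subseteq \conf(\pval)$. For condition (b), given $h \in H$, I would choose (using that the image of $\rho$ is unbounded, as guaranteed since $\rho$ is nonzero) an element $z \in \Z^n$ with $\rho(z)$ large enough that $\rho(z) + \pval(h) \geq 0$; this is possible as long as $\pval(h) > -\infty$, which always holds. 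Then $\pval(\alpha(z)(h)) = \rho(z) + \pval(h) \geq 0$ by (PV3), so $\alpha(z)(h) \in \conf(\pval)$.

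For condition (c), the idea is to use (PV1) together with (PV3) to absorb the additive defect $\lambda$ by applying an automorphism corresponding to a suitably large value of $\rho$. Let $\lambda \geq 0$ be the constant from (PV1). Choose $z_0 \in \Z^n$ with $\rho(z_0) \geq \lambda$. For any $h_1, h_2 \in \conf(\pval)$, we have $\pval(h_1), \pval(h_2) \geq 0$, so by (PV1), $\pval(h_1 + h_2) \geq \min\{\pval(h_1), \pval(h_2)\} - \lambda \geq -\lambda$. Then by (PV3),
\[
\pval(\alpha(z_0)(h_1 + h_2)) = \rho(z_0) + \pval(h_1 + h_2) \geq \lambda + (-\lambda) = 0,
\]
so $\alpha(z_0)(h_1 + h_2) \in \conf(\pval)$, i.e., $\alpha(z_0)(\conf(\pval) + \conf(\pval)) \subseteq \conf(\pval)$.

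This completes the verification of all three conditions. I do not anticipate any real obstacle here: the statement is a routine unwinding of definitions, and the only point requiring a moment's care is ensuring in condition (b) that $\rho$ takes arbitrarily large values (so that $\alpha(z)(h)$ can be pushed into $\conf(\pval)$ for \emph{every} $h$, including those with very negative pseudo-valuation), which follows from $\rho$ being a nonzero homomorphism $\Z^n \to \R$.
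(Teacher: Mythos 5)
Your proof is correct and follows essentially the same approach as the paper's: symmetry from (PV2), then (a), (b), (c) each from a direct unwinding of (PV3), (PV3) plus unboundedness of $\rho$, and (PV1) combined with (PV3) respectively. As a minor remark, you correctly cite (PV1) for the defect bound in condition (c), whereas the paper's proof contains a typo attributing that step to (PV3).
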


\begin{proof} Set $Q=\conf(\pval)$. Note that (PV2) ensures that $Q$ is symmetric. 
We verify the conditions of Definition~\ref{def:higherconfining}. Let $q \in Q$ and $z \in \Z^n$ be such that $\rho(z) \geq 0$. Then $\pval(q) \geq 0$, and thus $\pval(\alpha(z)(q)) = \rho(z) + \pval(q) \geq 0$ and $\alpha(z)(q) \in Q$. This verifies Definition \ref{def:higherconfining}(a).
To show Definition~\ref{def:higherconfining}(b) holds,  let $h \in H$. By choosing $z \in \Z^n$ such that $\rho(z) + \pval(h) \geq 0$, we ensure that $\pval(\alpha(z)(h)) \geq 0$.  Thus $\alpha(z)(h) \in Q$, as desired.
For Definition \ref{def:higherconfining}(c), it follows from (PV3) that $\pval$ is at least $ -\lambda$ on $Q+Q$. Taking $z_0\in \Z^n$ with $\rho(z_0)\geq \lambda$ yields $\alpha(z_0)(Q+Q)\subset Q$.
\end{proof} 

Conversely, define a map $\Pmap$ from subsets of $H$ which are confining under $\alpha$ with respect to $\rho$ to the set of functions $H\to \R\cup\{+\infty\}$ by 
\begin{equation}\label{eqn:pmap}
\Pmap(Q)(h) = - \op{inf} \{ \rho(z) : z\in 
\Z^n \textrm{ and } \alpha(z)(h) \in Q \}.
\end{equation}

\begin{lem}\label{fromqtopval} Let $Q\subseteq H$ be a subset of $H$ which is confining under $\alpha$ with respect to $\rho$.  Then $\Pmap(Q) $ is a pseudo-valuation subordinate to $\rho$. \end{lem}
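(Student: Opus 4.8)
The plan is to verify the three conditions (PV1), (PV2), (PV3) directly from the definition of $\Pmap(Q)$ given in \eqref{eqn:pmap}, using the three parts of Definition~\ref{def:higherconfining} in turn. Throughout, write $\pval \vcentcolon= \Pmap(Q)$. The value $\pval(h)$ is $-\inf\{\rho(z) : z\in \Z^n,\ \alpha(z)(h)\in Q\}$, where the infimum over the empty set is $+\infty$ (so $\pval(h) = -\infty$ would be the formal value, but this never occurs); in fact Definition~\ref{def:higherconfining}(b) guarantees the set $\{z : \alpha(z)(h)\in Q\}$ is non-empty for every $h$, so $\pval(h) < +\infty$ except when $h=0$, matching Remark~\ref{rem:pval0}. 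One should first record the key monotonicity fact coming from Definition~\ref{def:higherconfining}(a): if $\alpha(z)(h)\in Q$ and $\rho(z')\ge 0$, then $\alpha(z+z')(h) = \alpha(z')(\alpha(z)(h))\in Q$; hence $\rho(z+z')\le \rho(z)$ is not what we want, rather we note that the set $\{\rho(z): \alpha(z)(h)\in Q\}$ is ``upward closed up to $\rho$'' in the sense that if $r$ is in it and $r'\ge r$ is realized by some $z'$ with $\rho(z')\ge 0$ added, then $r'$ is in it too. This will be used to make the infimum behave well.

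For (PV3): fix $h\in H$ and $w\in\Z^n$. We compute $\pval(\alpha(w)(h)) = -\inf\{\rho(z): \alpha(z)(\alpha(w)(h))\in Q\} = -\inf\{\rho(z): \alpha(z+w)(h)\in Q\}$. Substituting $z'' = z+w$, this equals $-\inf\{\rho(z''-w): \alpha(z'')(h)\in Q\} = -\inf\{\rho(z'')-\rho(w): \alpha(z'')(h)\in Q\} = \rho(w) - \inf\{\rho(z''): \alpha(z'')(h)\in Q\} = \rho(w) + \pval(h)$, as desired; this is a bijective reindexing so requires no confining hypothesis beyond that $\alpha$ is a homomorphism $\Z^n\to\Aut(H)$. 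For (PV2): since $Q$ is symmetric, $\alpha(z)(h)\in Q \iff -\alpha(z)(h) = \alpha(z)(-h)\in Q$, so the two infimum sets defining $\pval(h)$ and $\pval(-h)$ coincide, giving $\pval(h)=\pval(-h)$.

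For (PV1), which I expect to be the main obstacle: take $z_0\in\Z^n$ as in Definition~\ref{def:higherconfining}(c) with $\alpha(z_0)(Q+Q)\subseteq Q$, and set $\lambda \vcentcolon= \rho(z_0)$ if $\rho(z_0)\ge 0$, otherwise replace $z_0$ by $z_0 + y$ for a suitable $y\in Z_\rho$ (or use any $z$ with $\rho(z)$ large) to arrange $\lambda \ge 0$ while keeping $\alpha(z_0)(Q+Q)\subseteq Q$ (using part (a)). Now fix $h_1,h_2\in H$ and let $\mu = \min\{\pval(h_1),\pval(h_2)\}$; I must show $\pval(h_1+h_2)\ge \mu - \lambda$. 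The subtlety is that $\pval(h_i)$ is defined by an infimum which may not be attained, so I argue with an $\epsilon$: for any $\epsilon>0$ choose $z_1,z_2$ with $\alpha(z_i)(h_i)\in Q$ and $-\rho(z_i) < \pval(h_i) + \epsilon$, i.e. $\rho(z_i) > -\pval(h_i)-\epsilon \ge -\mu -\epsilon$ if we also arrange... actually I want $\alpha$ of a \emph{common} element on both $h_1$ and $h_2$, so I should push both $z_i$ up to a common $z$: pick $z$ with $\rho(z)$ close to $-\mu$ such that $\alpha(z)(h_i)\in Q$ for $i=1,2$ — this is possible because the sets are upward-closed under adding elements of non-negative $\rho$-value and $\rho$ has dense/unbounded image enough to approximate; more carefully, choose $z$ with $\rho(z) > \max\{\rho(z_1),\rho(z_2)\} - \rho(z_j)$... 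The clean way: since $\rho(z_i) \ge -\pval(h_i)-\epsilon \ge -\mu-\epsilon$, and since $Z_\rho$ lets us adjust $\rho$-values in a bounded range, replace each $z_i$ by $z_i + w_i$ with $\rho(w_i)\ge 0$ so that $\rho(z_1+w_1) = \rho(z_2+w_2) =: s$ for a common value $s$ with $|s - (-\mu)| \le \epsilon + C_\rho$; then $\alpha(z_i+w_i)(h_i)\in Q$ by part (a). Then $\alpha(z_0)\big(\alpha(z_1+w_1)(h_1) + \alpha(z_2+w_2)(h_2)\big) = \alpha(z_0+s\text{-realizing }z)(h_1+h_2) \in Q$ using $Q+Q$-closure, where the relevant element has $\rho$-value $\rho(z_0) + s = \lambda + s \le \lambda - \mu + \epsilon + C_\rho$. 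Hence $\pval(h_1+h_2) \ge -(\lambda - \mu + \epsilon + C_\rho) = \mu - \lambda - C_\rho - \epsilon$. Letting $\epsilon\to 0$ gives $\pval(h_1+h_2)\ge \mu - \lambda - C_\rho$, so (PV1) holds with the constant $\lambda + C_\rho$ in place of $\lambda$. The main work is bookkeeping the $\rho$-values and the approximation, and confirming that the bounded ambiguity $C_\rho$ coming from $Z_\rho$ (equivalently, from the fact that $\rho(\Z^n)$ is a possibly non-discrete subgroup of $\R$) can be absorbed into the constant $\lambda$; this is exactly where care is needed and where a naive argument would wrongly claim the constant is $\rho(z_0)$ itself.
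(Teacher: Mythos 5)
Your handling of (PV2) and (PV3) is correct and essentially identical to the paper's. The problem is in (PV1). You arrange for $\rho(z_1 + w_1) = \rho(z_2 + w_2) =: s$, but equality of $\rho$-values does not give $z_1 + w_1 = z_2 + w_2$ as elements of $\Z^n$ (the kernel of $\rho$ is generally large). Without a single common $z\in\Z^n$ satisfying $\alpha(z)(h_1)\in Q$ and $\alpha(z)(h_2)\in Q$, the identity $\alpha(z_1+w_1)(h_1) + \alpha(z_2+w_2)(h_2) = \alpha(z)(h_1+h_2)$ you invoke simply does not hold, so you cannot feed the sum into $\alpha(z_0)(Q+Q)\subseteq Q$. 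The extra $C_\rho$ that appears in your final constant is a symptom of trying to hammer this incorrect scaffolding into place.

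The fix is easier than your approximation detour, and is the route the paper takes. Assume WLOG $\pval(h_1)\geq \pval(h_2)$, i.e., $i_1 := -\pval(h_1)\leq i_2 := -\pval(h_2)$. Then for \emph{any} $z$ with $\alpha(z)(h_2)\in Q$ and $\rho(z) > i_1$, you get $\alpha(z)(h_1)\in Q$ automatically: by definition of $i_1$ there is $z'$ with $\alpha(z')(h_1)\in Q$ and $\rho(z') < \rho(z)$, and then $\alpha(z)(h_1) = \alpha(z - z')(\alpha(z')(h_1))\in Q$ by Definition~\ref{def:higherconfining}(a) since $\rho(z-z')>0$. (Alternatively, in your notation, if $\rho(z_1)\leq\rho(z_2)$ you may simply take $w_1 = z_2-z_1$ and $w_2 = 0$, forcing $z_1+w_1 = z_2+w_2 = z_2$ as elements, not merely with equal $\rho$-values.) So the single $z$ gives $\alpha(z)(h_1+h_2)\in Q+Q$, whence $\alpha(z_0+z)(h_1+h_2)\in Q$ and $\pval(h_1+h_2)\geq -\rho(z_0)-\rho(z)$; taking the infimum over such $z$ yields $\pval(h_1+h_2)\geq \min\{\pval(h_1),\pval(h_2)\}-\rho(z_0)$ with $\lambda = \rho(z_0)$ and no extraneous $C_\rho$. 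The monotonicity of ``$\alpha(z)(h)\in Q$'' in $\rho(z)$ from part~(a) is exactly the mechanism your common-value manoeuvre was groping for.
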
 
\begin{proof} Let $\Pmap(Q)=\pval$.  Note that $\pval$ always takes values in $\R \cup \{+ \infty \}$, as $\pval(h) = - \infty$ is impossible by Definition \ref{def:higherconfining}(b). We verify the conditions from Definition \ref{pval}. 

Condition (PV1) is equivalent to the existence of $\lambda \geq 0$ such that the following holds for every $h_1,h_2\in H$: $$ \op{inf}\{ \rho(z) : \alpha(z)(h_1 + h_2) \in Q\} \leq \op{max}\{i_1 ,i_2\} + \lambda,$$ where $i_j = \op{inf}\{ \rho(z) : \alpha(z)(h_j) \in Q\}$ for $j =1,2$. We may assume, without loss of generality, that $i_1 \leq i_2$.

Let $z \in \Z^n$ be such that $\alpha(z)(h_2) \in Q$. As $i_1 \leq i_2 \leq \rho(z)$, we also have $\alpha(z)(h_1) \in Q$.
Thus $\alpha(z)(h_1 +h_2) \in Q+ Q$. Let $z_0$ be as in Definition \ref{def:higherconfining}(c). We may assume $\rho(z_0) \geq 0$, and so  $\alpha(z_0z)(h_1 +h_2) \in Q$. Thus \[\op{inf}\{ \rho(z) : \alpha(z)(h_1 + h_2) \in Q\} \leq \rho(z_0z) = \rho(z) + \rho(z_0).\]
Setting $\lambda = \rho(z_0)$ and taking the infimum over all $z$ such that $\alpha(z)(h_2) \in Q$, we obtain $\op{inf}\{ \rho(z) : \alpha(z)(h_1 + h_2) \in Q\} \leq i_2+ \lambda$, establishing (PV1). 

To check condition (PV2), note that $\alpha(z)(h) \in Q$ if and only if $\alpha(z)(-h)\in Q$ since $Q$ is symmetric. This shows that $\pval(h)=\pval(-h)$.

Finally, for a fixed $z \in \Z^n$ and $h \in H$, we have that
$\pval(\alpha(z)(h)) = - \op{inf} \{ \rho(z') : \alpha(z')(\alpha(z)(h)) \in Q \}  = - \op{inf} \{ \rho(z') : \alpha(z'z)(h)) \in Q \}. $
Let $z'z = y$. Then $z' = yz^{-1}$, and  \[ \op{inf} \{ \rho(z') : \alpha(z'z)(h) \in Q \}
= \op{inf} \{ \rho(y) -\rho(z) : \alpha(y)(h) \in Q \} = \op{inf} \{ \rho(y) : \alpha(y)(h) \in Q \} - \rho(z).\] Taking the negative infimum yields $\pval(\alpha(z)(h)) = \pval(h) + \rho(z)$, which proves condition (PV3).
\end{proof}

Thus far, we have a map $\conf$ from pseudo-valuations subordinate to $\rho$ to confining subsets with respect to $\rho$, and a map $\Pmap$ in the opposite direction. Since we have equivalence relations on the confining subsets and on pseudo-valuations, our next goal is to ensure that these maps descend to the equivalence classes. We first show that $\Pmap$ is order-reversing, and as a consequence, respects equivalence. For this and other results proved in this section, we utilize the following alternative characterization of the order on confining subsets.

\begin{lem}\label{altdescofconf} Suppose that $Q_1$ and $Q_2$ are confining subsets with respect to $\rho$. Then $Q_2 \preceq_\rho Q_1 $ if and only if there exists $\mu \in \R$ such that $\alpha(z)(Q_1) \subset Q_2$ for all $z \in \Z^n$ such that $\rho(z) \geq \mu$.  
\end{lem}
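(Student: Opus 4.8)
\textbf{Proof plan for Lemma \ref{altdescofconf}.}

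The plan is to mimic the proof of Lemma \ref{lem:confiningpartialorder}, which is the rank-one analogue, adapting it to the presence of the homomorphism $\rho$. First, for the ``if'' direction, suppose such a $\mu$ exists. Fix some $y_\mu \in \Z^n$ with $\rho(y_\mu) \geq \mu$; such an element exists since the image of $\rho$ is unbounded. Then for every $q \in Q_1$ we have $\alpha(y_\mu)(q) \in Q_2$, so $\|q\|_{Q_2 \cup Z_\rho} \leq 2\|y_\mu\|_{Z_\rho} + 1$, a bound independent of $q$ (here $y_\mu$ and $y_\mu^{-1}$ are written as words in $Z_\rho$). Hence $\sup_{q \in Q_1}\|q\|_{Q_2 \cup Z_\rho} < \infty$, which is exactly $Q_2 \preceq_\rho Q_1$.

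For the ``only if'' direction, assume $Q_2 \preceq_\rho Q_1$, so there is a constant $k$ with $\|q\|_{Q_2 \cup Z_\rho} \leq k$ for all $q \in Q_1$. I would first establish a normal-form statement analogous to Lemma \ref{lem:wordrewrite}: any element of $H$ that is a product of $b$ elements of $Q_2 \cup Z_\rho$ and lies in $H$ can be rewritten as $\alpha(z')(u_1 \cdots u_b)$ where $u_i \in Q_2$ and $z' \in \Z^n$ is a product of at most $k$ elements of $Z_\rho$, using that conjugation by $Z_\rho$ elements preserves $Q_2$ up to a bounded shift in $\rho$-value and that $\rho(Z_\rho)$-generators have bounded $\rho$-value $C_\rho$; in particular $|\rho(z')| \leq kC_\rho$. (This uses Definition \ref{def:higherconfining}(a) to push all the $Z_\rho$-letters with nonnegative $\rho$-value past the $Q_2$-letters, and symmetry of $Q_2$ together with (a) applied to inverses.) Applying this to $q \in Q_1$, we get $\alpha(z')(q) = u_1 \cdots u_b \in Q_2 + \cdots + Q_2$ ($b \leq k$ summands, writing $H$ additively), with $|\rho(z')| \leq kC_\rho$. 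Now choose $z_0 \in \Z^n$ as in Definition \ref{def:higherconfining}(c), with $\rho(z_0) \geq 0$ (replacing $z_0$ by a power if necessary, or invoking that we may take $\rho(z_0)$ as large as we like), so that $\alpha(z_0)(Q_2 + Q_2) \subseteq Q_2$; iterating, a bounded power $\alpha(z_0^{\lceil \log_2 k \rceil})$ collapses a sum of $k$ copies of $Q_2$ into $Q_2$. Thus there is $z_1 \in \Z^n$, with $\rho(z_1)$ bounded by a constant $D$ depending only on $Q_2$, $\rho$, and $k$, such that $\alpha(z_1 z')(q) \in Q_2$ for all $q \in Q_1$. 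Since $\rho(z_1 z') \leq D + kC_\rho =: \mu_0$, and since for any $z$ with $\rho(z) \geq \mu_0$ we may write $z = z'' (z_1 z')$ with $\rho(z'') \geq 0$ and apply Definition \ref{def:higherconfining}(a), we conclude $\alpha(z)(Q_1) \subseteq Q_2$ for all $z$ with $\rho(z) \geq \mu_0$. Taking $\mu = \mu_0$ finishes the proof.

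The main obstacle is the bookkeeping in the normal-form step: in the rank-one case one simply counts occurrences of $t^{\pm 1}$, but here one must track how conjugation by each generator in $Z_\rho$ interacts with membership in $Q_2$ and simultaneously control the accumulated $\rho$-value. The key points are that (i) only generators $z$ with $\rho(z) \geq 0$ are guaranteed by (a) to preserve $Q_2$, so one must handle the negative-$\rho$ generators by first moving them to one side and using symmetry, and (ii) every generator in $Z_\rho$ changes the $\rho$-value by at most $C_\rho$ in absolute value, which is what keeps $\rho(z')$ bounded. Once this normal form is in hand, the rest is a direct adaptation of Lemma \ref{lem:confiningpartialorder}, using Definition \ref{def:higherconfining}(c) exactly as part (c) of Definition \ref{def:confining} was used there.
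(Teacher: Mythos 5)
Your proposal is correct and mirrors the paper's proof. The one substantive difference is that the paper invokes \cite[Lemma 3.6]{ABR} for the normal form $q = w^{-1}s_1\cdots s_m w$ with $s_i \in Q_2$, $m \leq K_1$, and $\|w\|_{Q_2\cup Z_\rho}\leq K_1$ (hence $|\rho(w)|\leq K_1 C_\rho$) --- this is precisely the rewriting step you sketch deriving from scratch --- and after that the step of collapsing the bounded sum into $Q_2$ via a bounded power of $z_0$ (the paper uses $z_0^{K_1}$ rather than your $z_0^{\lceil\log_2 k\rceil}$, but both work) and the closing appeal to Definition \ref{def:higherconfining}(a) to extend from one $z$ with large $\rho$-value to all of them match your plan.
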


\begin{proof} We first show the forward implication. Let $q \in Q_1$. Since $Q_2  \preceq_\rho Q_1$, there is a constant $K_1$ such that $\|q\|_{Q_2 \cup Z_\rho} \leq K_1$ for all $q \in Q_1$. It follows from \cite[Lemma 3.6]{ABR} that there is a geodesic representative for $q$ in $Q_2 \cup Z_\rho$ of the form $w^{-1}s_1s_2\cdots s_mw$ for some $w \in \Z^n$, some $ s_i \in Q_2$, and some $m \in \N$ such that $\|w\|_{Q_2\cup Z_\rho}\leq K_1$ and $m\leq K_1$. By the definition of $Z_\rho$, $|\rho(w)| \leq K_1 C_\rho$.

Since $w q w^{-1} = s_1s_2 \cdots s_m \in Q^{K_1}_2$, we have $\alpha(w)(q) \in Q^{K_1}_2$. Letting $z_0$ be the element from Definition \ref{def:higherconfining}(c) for $Q_2$, we see that $\alpha(z^{K_1}_0w)(q) \in \alpha(z^{K_1}_0)(Q^{K_1}_2) \subset Q_2.$ 

We claim that the forward implication holds for $\mu = K_1 \rho(z_0) + K_1C_\rho \in \R$. Indeed, if $\rho(z) \geq \mu$, then $\rho(zz_0^{-K_1}w^{-1})\geq 0$, 
and so $\alpha(z)(q) = \alpha(zz^{-K_1}_0w^{-1}) \alpha(z^{K_1}_0w)(q) \in \alpha(zz^{-K_1}_0w^{-1})(Q_2) \subset Q_2$.

For the reverse implication, suppose that we have a $\mu \in \R$ such that $\alpha(z)(Q_1) \subset Q_2$ for all $z \in \Z^n$ satisfying $\rho(z) \geq \mu$. Fix a $z_1$ such that $\rho(z_1) \geq \mu$. Then  $\|q\|_{Q_2 \cup Z_\rho} \leq 1 + 2\|z_1\|_{Z_\rho}$ for any $q \in Q_1$, giving a uniform bound independent of $q$. \end{proof}

\begin{prop}\label{Sorderrev} Suppose that $Q_1$ and $Q_2$ are confining subsets with respect to $\rho$ such that $Q_2 \preceq_\rho Q_1 $. Then $\Pmap(Q_1) \preceq_\rho \Pmap(Q_2)$. In particular, if $Q_2 \sim_\rho Q_1 $, then $\Pmap(Q_2)\sim_\rho \Pmap(Q_1)$. \end{prop}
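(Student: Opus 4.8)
\textbf{Proof plan for Proposition~\ref{Sorderrev}.}

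The plan is to translate the inclusion condition characterizing $Q_2 \preceq_\rho Q_1$ (as furnished by Lemma~\ref{altdescofconf}) directly into a bound between the pseudo-valuations $\Pmap(Q_1)$ and $\Pmap(Q_2)$. The main point is that the pseudo-valuations are defined as infima over translates $\alpha(z)(h)$ landing in the respective confining sets, so enlarging the set we land in (as we do when passing from $Q_1$ to $Q_2$ via a fixed shift) can only decrease such an infimum, hence increase $\Pmap$.

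First I would invoke Lemma~\ref{altdescofconf} to obtain a constant $\mu \in \R$ such that $\alpha(z)(Q_1) \subseteq Q_2$ for every $z \in \Z^n$ with $\rho(z) \geq \mu$; without loss of generality $\mu \geq 0$, and I may also fix one element $z_* \in \Z^n$ with $\rho(z_*) = \mu'$ for some $\mu' \geq \mu$ (the image of $\rho$ is unbounded). Next, fix $h \in H$. By definition, $\Pmap(Q_1)(h) = -\inf\{\rho(z) : \alpha(z)(h) \in Q_1\}$. Take any $z$ with $\alpha(z)(h) \in Q_1$. Then $\rho(z_* z) = \rho(z_*) + \rho(z) \geq \mu$, wait — I should be careful: I need $\rho(\text{the shifting element}) \geq \mu$ regardless of $\rho(z)$, so I should instead argue as follows. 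For any $z$ with $\alpha(z)(h) \in Q_1$ and any $w \in \Z^n$ with $\rho(w) \geq \mu$, we have $\alpha(w)(\alpha(z)(h)) = \alpha(wz)(h) \in \alpha(w)(Q_1) \subseteq Q_2$. Hence $wz$ witnesses membership for $\Pmap(Q_2)$, giving $\inf\{\rho(y) : \alpha(y)(h)\in Q_2\} \leq \rho(wz) = \rho(w) + \rho(z)$. Fixing a single choice of $w$ with $\rho(w) = \mu$ (using unboundedness of the image of $\rho$, or passing to $\mu$ replaced by the smallest realized value $\geq \mu$), and taking the infimum over all valid $z$, we get $\inf\{\rho(y):\alpha(y)(h)\in Q_2\} \leq \mu + \inf\{\rho(z) : \alpha(z)(h)\in Q_1\}$. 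Negating both sides yields $\Pmap(Q_2)(h) \geq \Pmap(Q_1)(h) - \mu$, i.e.\ $\Pmap(Q_1)(h) \leq \Pmap(Q_2)(h) + \mu$ for all $h \in H$. This is exactly the statement $\Pmap(Q_1) \preceq_\rho \Pmap(Q_2)$ with constant $K = \mu$. The ``in particular'' clause follows by applying this implication in both directions: if $Q_2 \sim_\rho Q_1$ then $Q_1 \preceq_\rho Q_2$ and $Q_2 \preceq_\rho Q_1$, so $\Pmap(Q_2) \preceq_\rho \Pmap(Q_1)$ and $\Pmap(Q_1) \preceq_\rho \Pmap(Q_2)$, hence $\Pmap(Q_1) \sim_\rho \Pmap(Q_2)$.

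The only mild subtlety — and the step I would flag as requiring a moment's care rather than being truly hard — is the quantifier handling around the shifting element $w$: one must ensure $\rho(w) \geq \mu$ can be realized by a genuine element of $\Z^n$ and that this $w$ can be chosen \emph{once}, independently of $h$ and of $z$, so that the resulting additive constant $K$ is uniform. This is immediate since $\rho \neq 0$ has unbounded (in fact, if $\rho$ has rank-one image, cyclic or dense) image in $\R$; picking any $w_0$ with $\rho(w_0) > 0$ and taking a suitable multiple gives $\rho(w) \geq \mu$, and we set $K = \rho(w)$. No calculation beyond this bookkeeping is needed, so the proof is short.
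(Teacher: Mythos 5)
Your proof is correct and takes essentially the same route as the paper: invoke Lemma~\ref{altdescofconf} to obtain a uniform $\mu$ with $\alpha(w)(Q_1)\subseteq Q_2$ whenever $\rho(w)\geq\mu$, then turn each witness $z$ for $\Pmap(Q_1)(h)$ into a shifted witness for $\Pmap(Q_2)(h)$ and pass to the infimum. The only cosmetic difference is that the paper arranges for $\mu$ itself to lie in the image of $\rho$ and composes with a shift of $\rho$-value exactly $\mu$, whereas you fix a single $w$ with $\rho(w)\geq\mu$ once and for all; both yield the same uniform additive constant.
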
 

\begin{proof} Set $\widetilde{v}_i=\Pmap(Q_i)$. Choose $\mu\geq 0$ such that $\alpha(w)(Q_1)\subset Q_2$ for any $w\in \Z^n$ with $\rho(w)\geq \mu$ and such that there exists $w\in \Z^n$ with $\rho(w)=\mu$. Given $h \in H$, suppose that $z \in \Z^n$ is such that $\alpha(z)(h) \in Q_1$. If $w\in \Z^n$ with $\rho(w)= \rho(z)+\mu$ then we see that \[\alpha(w)(h)=\alpha(wz^{-1})(\alpha(z)(h))\in \alpha(wz^{-1})(Q_1)\subset Q_2.\] Thus $\inf\{\rho(w):\alpha(w)(h)\in Q_2\}\leq \rho(z)+\mu$. Taking the negative infimum over all such $z$ yields $\widetilde{v}_2(h)\geq \widetilde{v}_1(h)-\mu$, and so $\widetilde{v}_2\succeq_\rho \widetilde{v}_1$, as desired.
\end{proof} 

We now show that the map $\conf$ is also order-reversing, and consequently respects the equivalence relation on pseudo-valuations. 
\begin{lem}\label{Lorderrev} If $\pval_1, \pval_2$ are pseudo-valuations such that $\pval_1 \preceq_\rho \pval_2$, then $\conf(\pval_2) \preceq_\rho \conf(\pval_1)$. In particular, if $\pval_1 \sim_\rho \pval_2$, then $\conf(\pval_1)  \sim_\rho \conf(\pval_2) .$\end{lem}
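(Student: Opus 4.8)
The plan is to prove Lemma~\ref{Lorderrev} by a direct argument, exactly parallel to the proof of Proposition~\ref{Sorderrev}, using the defining inequality for $\preceq_\rho$ on pseudo-valuations and the alternative description of $\preceq_\rho$ on confining subsets provided by Lemma~\ref{altdescofconf}. First I would unwind the hypothesis: $\pval_1 \preceq_\rho \pval_2$ means there is a constant $K\in\R$ with $\pval_1(h)\le \pval_2(h)+K$ for all $h\in H$. The goal $\conf(\pval_2)\preceq_\rho \conf(\pval_1)$ translates, via Lemma~\ref{altdescofconf}, to finding $\mu\in\R$ such that $\alpha(z)(\conf(\pval_1))\subseteq \conf(\pval_2)$ whenever $\rho(z)\ge \mu$.

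The key computation is then immediate from property (PV3). Suppose $h\in\conf(\pval_1)$, i.e.\ $\pval_1(h)\ge 0$. By the hypothesis, $\pval_2(h)\ge \pval_1(h)-K\ge -K$. For any $z\in\Z^n$ with $\rho(z)\ge K$, property (PV3) gives
\[
\pval_2(\alpha(z)(h))=\rho(z)+\pval_2(h)\ge K+(-K)=0,
\]
so $\alpha(z)(h)\in\conf(\pval_2)$. Thus $\mu=K$ works, and Lemma~\ref{altdescofconf} yields $\conf(\pval_2)\preceq_\rho\conf(\pval_1)$. (One should also record that $\conf(\pval_1)$ and $\conf(\pval_2)$ are indeed confining subsets, which is exactly Lemma~\ref{frompvaltoq}, so that Lemma~\ref{altdescofconf} applies.) The ``in particular'' clause follows formally: if $\pval_1\sim_\rho\pval_2$ then $\pval_1\preceq_\rho\pval_2$ and $\pval_2\preceq_\rho\pval_1$, hence $\conf(\pval_2)\preceq_\rho\conf(\pval_1)$ and $\conf(\pval_1)\preceq_\rho\conf(\pval_2)$, i.e.\ $\conf(\pval_1)\sim_\rho\conf(\pval_2)$.

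I do not anticipate a genuine obstacle here; the lemma is a routine order-reversal statement and the only subtlety is making sure to invoke Lemma~\ref{altdescofconf} to reinterpret $\preceq_\rho$ for confining subsets (rather than trying to bound word lengths directly), and to note that $K$ may be taken $\ge 0$ without loss of generality if one wants $\mu\ge 0$ in applying that lemma — though the statement of Lemma~\ref{altdescofconf} allows arbitrary $\mu\in\R$, so even that is unnecessary. Here is the proof.

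\begin{proof}
By Lemma~\ref{frompvaltoq}, $\conf(\pval_1)$ and $\conf(\pval_2)$ are confining under $\alpha$ with respect to $\rho$, so the preorder $\preceq_\rho$ applies to them and is characterized by Lemma~\ref{altdescofconf}. Since $\pval_1\preceq_\rho\pval_2$, there is a constant $K\in\R$ with $\pval_1(h)\le\pval_2(h)+K$ for all $h\in H$. Let $h\in\conf(\pval_1)$, so $\pval_1(h)\ge 0$ and hence $\pval_2(h)\ge\pval_1(h)-K\ge -K$. For any $z\in\Z^n$ with $\rho(z)\ge K$, property (PV3) gives
\[
\pval_2(\alpha(z)(h))=\rho(z)+\pval_2(h)\ge K-K=0,
\]
so $\alpha(z)(h)\in\conf(\pval_2)$. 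Thus $\alpha(z)(\conf(\pval_1))\subseteq\conf(\pval_2)$ for all $z$ with $\rho(z)\ge K$, and by Lemma~\ref{altdescofconf} we conclude $\conf(\pval_2)\preceq_\rho\conf(\pval_1)$.

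If $\pval_1\sim_\rho\pval_2$, then $\pval_1\preceq_\rho\pval_2$ and $\pval_2\preceq_\rho\pval_1$, so applying the above twice gives $\conf(\pval_2)\preceq_\rho\conf(\pval_1)$ and $\conf(\pval_1)\preceq_\rho\conf(\pval_2)$, i.e.\ $\conf(\pval_1)\sim_\rho\conf(\pval_2)$.
\end{proof}
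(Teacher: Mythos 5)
Your proof is correct and follows the same route as the paper: both derive the constant $K$ from $\pval_1 \preceq_\rho \pval_2$, use (PV3) to show $\alpha(z)(\conf(\pval_1)) \subseteq \conf(\pval_2)$ whenever $\rho(z) \geq K$, and then invoke Lemma~\ref{altdescofconf}. You merely spell out the intermediate computation (and the ``in particular'' clause) in more detail than the paper does.
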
 

\begin{proof} Set $Q_i=\conf(\pval_i)$. Since $\pval_1 \preceq_\rho \pval_2$, there exists a constant $K$ such that $\pval_1(h) \leq \pval_2(h) + K$ for all $h \in H$. It then follows from the property (PV3) applied to $\pval_2$ that  $\alpha(z)(Q_1) \subset Q_2$ for any $z \in \Z^n$ such that $\rho(z) \geq K$. Thus $Q_2 \preceq_\rho Q_1$ by  Lemma \ref{altdescofconf}. \end{proof} 

It follows that our maps $\conf$ and $\Pmap$ descend to maps between the equivalence classes of confining sets and equivalence classes of pseudo-valuations subordinate to $\rho$. We will now show that these maps are inverses, which will prove that they are order-reversing isomorphisms. 
We first show the composition $\Pmap \circ \conf$ is the identity on equivalence classes of pseudo-valuations. 

\begin{prop}\label{SLid} If $\pval$ is a pseudo-valuation subordinate to $\rho$, then $\pval\sim_\rho \Pmap(\conf(\pval))$. 
\end{prop}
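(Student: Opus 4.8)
The plan is to show directly that $\widetilde v$ and $\Pmap(\conf(\widetilde v))$ differ by a bounded additive constant. Write $Q = \conf(\widetilde v) = \{h \in H : \widetilde v(h) \geq 0\}$ and $\widetilde v' = \Pmap(Q)$, so that by definition $\widetilde v'(h) = -\inf\{\rho(z) : z \in \Z^n,\ \alpha(z)(h) \in Q\}$. The key observation is that $\alpha(z)(h) \in Q$ means precisely $\widetilde v(\alpha(z)(h)) \geq 0$, and by property (PV3) this is equivalent to $\rho(z) + \widetilde v(h) \geq 0$, i.e. $\rho(z) \geq -\widetilde v(h)$. Thus $\widetilde v'(h) = -\inf\{\rho(z) : \rho(z) \geq -\widetilde v(h)\}$. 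If the image of $\rho$ were dense (or all of $\R$) this infimum would be exactly $-\widetilde v(h)$ and we would be done with equality; since $\rho(\Z^n)$ is a nonzero subgroup of $\R$ (hence either dense or a discrete copy of $c\Z$), the infimum of $\{\rho(z) : \rho(z) \geq -\widetilde v(h)\}$ lies within the "gap constant" $C_\rho$ of $-\widetilde v(h)$.

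Concretely, first I would record the elementary fact that for any real number $r$, the quantity $m(r) := \inf\{\rho(z) : z \in \Z^n,\ \rho(z) \geq r\}$ satisfies $r \leq m(r) \leq r + C_\rho$, where $C_\rho$ is the constant fixed in \eqref{eqn:Zrho} (there exists $y \in \Z^n$ with $\rho(y) = C_\rho$, and the subgroup $\rho(\Z^n)$ meets every interval of length $C_\rho$; the lower bound is immediate). Applying this with $r = -\widetilde v(h)$ gives $-\widetilde v(h) \leq m(-\widetilde v(h)) \leq -\widetilde v(h) + C_\rho$, and hence $\widetilde v(h) - C_\rho \leq \widetilde v'(h) \leq \widetilde v(h)$ for every $h \in H$. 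One should treat the case $\widetilde v(h) = +\infty$ (equivalently $h$ with $\alpha(z)(h) \in Q$ for all $z$, which happens exactly for $h=0$ by Remark~\ref{rem:pval0} together with Definition~\ref{def:higherconfining}(b), though in fact the bound $\widetilde v'(h) \le \widetilde v(h)$ and $\widetilde v'(h)\ge \widetilde v(h)-C_\rho$ hold vacuously or by interpreting $+\infty - C_\rho = +\infty$) separately but trivially: if $\widetilde v(h) = \infty$ then $\rho(z) \geq -\widetilde v(h)$ for all $z$, so the infimum is $\inf \rho(\Z^n) = -\infty$ and $\widetilde v'(h) = +\infty$ as well.

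From the two-sided bound $\widetilde v(h) - C_\rho \leq \widetilde v'(h) \leq \widetilde v(h)$ we get both $\widetilde v \preceq_\rho \widetilde v'$ (taking $K = 0$) and $\widetilde v' \preceq_\rho \widetilde v$ (taking $K = C_\rho$), hence $\widetilde v \sim_\rho \widetilde v'$ as claimed. There is essentially no obstacle here: the only mild subtlety is making sure the constant $C_\rho$ is used correctly — that $\rho(\Z^n)$ genuinely intersects every half-line's worth of "allowable" values within distance $C_\rho$, which follows because $C_\rho \in \rho(\Z^n)$ and $\rho(\Z^n)$ is a subgroup, so $\rho(\Z^n) \cap [r, r+C_\rho] \neq \emptyset$ for all $r$. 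I would state this as a one-line sublemma and then the proposition follows in a couple of lines.

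\begin{proof}
Write $Q = \conf(\widetilde v)$ and $\widetilde v' = \Pmap(Q)$. By Lemmas~\ref{frompvaltoq} and \ref{fromqtopval}, $Q$ is confining under $\alpha$ with respect to $\rho$ and $\widetilde v'$ is a pseudo-valuation subordinate to $\rho$. We show that $\widetilde v(h) - C_\rho \leq \widetilde v'(h) \leq \widetilde v(h)$ for all $h \in H$, where $C_\rho$ is the constant from \eqref{eqn:Zrho}; this gives $\widetilde v \sim_\rho \widetilde v'$.

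First observe that, since $C_\rho \in \rho(\Z^n)$ and $\rho(\Z^n)$ is a subgroup of $\R$, for every $r \in \R$ the set $\{\rho(z) : z \in \Z^n,\ \rho(z) \geq r\}$ is nonempty and its infimum $m(r)$ satisfies $r \leq m(r) \leq r + C_\rho$: indeed $\rho(\Z^n)$ meets the interval $[r, r+C_\rho]$, and every element of the set is $\geq r$.

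Now fix $h \in H$. By definition,
\[
\widetilde v'(h) = -\inf\{\rho(z) : z \in \Z^n,\ \alpha(z)(h) \in Q\}.
\]
For $z \in \Z^n$, we have $\alpha(z)(h) \in Q$ if and only if $\widetilde v(\alpha(z)(h)) \geq 0$, which by (PV3) is equivalent to $\rho(z) + \widetilde v(h) \geq 0$, i.e. $\rho(z) \geq -\widetilde v(h)$. If $\widetilde v(h) = +\infty$, this holds for all $z \in \Z^n$, so the infimum is $\inf \rho(\Z^n) = -\infty$ and $\widetilde v'(h) = +\infty = \widetilde v(h)$, and the desired inequalities hold trivially. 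If $\widetilde v(h) < \infty$, then
\[
\widetilde v'(h) = -\inf\{\rho(z) : \rho(z) \geq -\widetilde v(h)\} = -m(-\widetilde v(h)),
\]
and by the previous paragraph $-\widetilde v(h) \leq m(-\widetilde v(h)) \leq -\widetilde v(h) + C_\rho$, which gives exactly $\widetilde v(h) - C_\rho \leq \widetilde v'(h) \leq \widetilde v(h)$.

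From $\widetilde v'(h) \leq \widetilde v(h)$ for all $h$ we get $\widetilde v' \preceq_\rho \widetilde v$ (with $K = 0$), and from $\widetilde v(h) \leq \widetilde v'(h) + C_\rho$ for all $h$ we get $\widetilde v \preceq_\rho \widetilde v'$ (with $K = C_\rho$). Hence $\widetilde v \sim_\rho \widetilde v' = \Pmap(\conf(\widetilde v))$.
\end{proof}
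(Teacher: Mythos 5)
Your proof is correct and follows essentially the same route as the paper's: both establish the two-sided bound $\widetilde v(h) - K \le \Pmap(\conf(\widetilde v))(h) \le \widetilde v(h)$ by observing via (PV3) that $\alpha(z)(h) \in Q$ iff $\rho(z) \ge -\widetilde v(h)$, and then using that $\rho(\Z^n)$ has no gaps longer than a fixed constant. Your version is slightly cleaner in that it isolates the sub-lemma $r \le m(r) \le r + C_\rho$ and reuses the constant $C_\rho$ already fixed in \eqref{eqn:Zrho}, whereas the paper introduces a fresh constant $K_0$ with the same property; this is a cosmetic difference. One small inaccuracy in your preamble: the parenthetical claim that $\widetilde v(h) = +\infty$ ``happens exactly for $h = 0$'' is not correct in general---a pseudo-valuation may take the value $+\infty$ on a nontrivial $\Z^n$-invariant subgroup $K \subseteq H$, which is precisely the subgroup studied in Proposition~\ref{prop:StructureofQ}. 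Fortunately your actual proof handles the $\widetilde v(h)=+\infty$ case without relying on that claim, so this is a misstatement rather than a gap.
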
 
\begin{proof}
Set $Q = \conf(\pval)$ and $\widetilde u = \Pmap(Q)$. 
We first show that $\pval(h) = +\infty$ if and only if $\widetilde u(h) = +\infty$. Let $\pval(h) = + \infty$. Observe that $\pval(\alpha(z)(h)) = \rho(z) + \pval(h) = +\infty$ for any $z \in \Z^n$.  It follows that $\op{inf} \{ \rho(z) : \alpha(z)(h) \in Q \} = - \infty$,  and thus $\widetilde u(h) = + \infty$. Conversely, suppose that $\widetilde u(h) = +\infty$. By the definition of $\widetilde u$, there is a sequence $\{y_i \} \subset \Z^n$ with $\rho(y_i)  \to- \infty$ such that $\alpha(y_i)(h) \in Q$ for all $i$. But then $ \pval(\alpha(y_i)(h)) = \rho(y_i) + \pval(h) \geq 0$ for all $i$. As the $\rho(y_i)$ are arbitrarily negative (and $\rho$ takes values only in $\R$), this is only possible if $\pval(h) = + \infty$.  

We now show that there is a constant $K_0$ such that $$ \widetilde u(h) \leq \pval(h) \leq \widetilde{u}(h) + K_0$$ for all $h \in H$, which will prove the result. Let $h\in H$. In light of the above, we may assume that both $\widetilde u(h), \pval(h)$ are finite values, for if $\widetilde u(h)=+\infty = \pval(h)$, they satisfy the inequality for any constant $K_0$. Suppose that $z \in \Z^n$ such that $\alpha(z)(h) \in Q$. By definition $\pval(\alpha(z)(h)) \geq 0$, and therefore $\rho(z) + \pval(h) \geq 0$ and $\rho(z) \geq - \pval(h).$ Taking an infimum on the left-hand side over all such $z$ gives us that  $$\op{inf} \{ \rho(z) : \alpha(z)(h) \in Q \} \geq - \pval(h).$$ Thus $\widetilde u(h) \leq \pval(h)$.
To establish the other inequality, note that since the image of $\rho$  in $\R$ is either dense or cyclic, there exists $K_0>0$ such that every point of $\R$ is within $K_0$ of a point of $\Im(\rho)$.
Choose  $z \in \Z^n$ such that $0 \leq \pval(h) + \rho(z) \leq K_0$. Thus $0 \leq \pval(\alpha(z)(h)) \leq K_0$ so that $\alpha(z)(h) \in Q$ and $\rho(z) \leq -\pval(h) + K_0$. It then follows that $\widetilde u(h) \geq \pval(h) - K_0$ so that $\pval(h) \leq \widetilde u(h) + K_0$. This completes the proof.\end{proof} 

Next, we prove that the composition $\conf \circ \Pmap$ is the identity on equivalence classes of confining subsets. 

\begin{prop}\label{LSid} If  $Q$ is confining with respect to $\rho$, then $Q \sim_\rho \conf(\Pmap(Q))$. 
\end{prop}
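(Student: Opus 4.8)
The statement asserts that starting from a confining subset $Q$, passing to the pseudo-valuation $\Pmap(Q)$ and then back to a confining subset $\conf(\Pmap(Q))$ recovers $Q$ up to equivalence. Write $\pval = \Pmap(Q)$ and $Q' = \conf(\pval)$. By Lemma~\ref{altdescofconf} it suffices to show that $Q' \preceq_\rho Q$ and $Q \preceq_\rho Q'$, i.e.\ to exhibit constants $\mu_1,\mu_2\in\R$ with $\alpha(z)(Q)\subset Q'$ whenever $\rho(z)\geq \mu_1$ and $\alpha(z)(Q')\subset Q$ whenever $\rho(z)\geq \mu_2$.

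First I would prove $Q \preceq_\rho Q'$, which is the easy direction. If $q\in Q$, then by definition of $\pval=\Pmap(Q)$ we have $\pval(q) = -\inf\{\rho(z): \alpha(z)(q)\in Q\}\geq 0$, since $z=0$ is an admissible choice (here I use that $Z_\rho$, hence $0$, acts trivially — more precisely that $\alpha(0)(q)=q\in Q$). Hence $q\in \conf(\pval)=Q'$, so in fact $Q\subseteq Q'$, which gives $Q'\preceq_\rho Q$ trivially. Wait — I need the other containment direction for $Q\preceq_\rho Q'$; so instead: since $\pval$ is subordinate to $\rho$ (Lemma~\ref{fromqtopval}), for any $q\in Q$ and $z$ with $\rho(z)\geq 0$ we get $\pval(\alpha(z)(q)) = \rho(z)+\pval(q)\geq 0$, so $\alpha(z)(q)\in Q'$; thus $\alpha(z)(Q)\subseteq Q'$ for all $z$ with $\rho(z)\geq 0$, and Lemma~\ref{altdescofconf} gives $Q'\preceq_\rho Q$. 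Combined with $Q\subseteq Q'$ (which gives $Q'\preceq_\rho Q$ as well, consistently), I still need $Q\preceq_\rho Q'$, i.e.\ $\alpha(z)(Q')\subseteq Q$ for $\rho(z)$ large. This is the substantive direction.

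For the substantive direction, take $h\in Q' = \conf(\pval)$, so $\pval(h)\geq 0$, meaning $\inf\{\rho(z):\alpha(z)(h)\in Q\}\leq 0$. Thus for every $\varepsilon>0$ there exists $z$ with $\alpha(z)(h)\in Q$ and $\rho(z)\leq \varepsilon$. I would use the fact (as in the proof of Proposition~\ref{SLid}) that $\Im(\rho)$ is either dense or cyclic in $\R$, so there is a uniform constant $K_0>0$ such that one can choose $z$ with $\alpha(z)(h)\in Q$ and $-K_0\leq \rho(z)\leq K_0$. Then for any $w\in\Z^n$ with $\rho(w)\geq K_0$, writing $\alpha(w)(h)=\alpha(wz^{-1})(\alpha(z)(h))$ with $\rho(wz^{-1})=\rho(w)-\rho(z)\geq 0$, Definition~\ref{def:higherconfining}(a) for $Q$ gives $\alpha(w)(h)\in Q$. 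Hence $\alpha(w)(Q')\subseteq Q$ for all $w$ with $\rho(w)\geq K_0$, so by Lemma~\ref{altdescofconf}, $Q\preceq_\rho Q'$. The main obstacle is the careful handling of the infimum: since it is an infimum and not necessarily attained, one must invoke the density/cyclicity of $\Im(\rho)$ to turn "$\pval(h)\geq 0$'' into the existence of an actual $z$ with $\alpha(z)(h)\in Q$ and $\rho(z)$ bounded by a uniform constant, rather than merely bounded above by $0$; everything else is a direct unwinding of definitions together with Lemma~\ref{altdescofconf}. Combining the two directions yields $Q\sim_\rho \conf(\Pmap(Q))$, and together with Proposition~\ref{SLid}, Proposition~\ref{Sorderrev}, and Lemma~\ref{Lorderrev}, this completes the proof of Theorem~\ref{thm:PvalandConfining}.
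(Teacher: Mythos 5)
Your proof is correct, and the easy containment $Q\subseteq Q'$ (hence $Q'\preceq_\rho Q$) matches the paper's argument. For the substantive direction $Q\preceq_\rho Q'$, however, you introduce a detour that the paper avoids: you invoke density/cyclicity of $\Im(\rho)$ to manufacture a uniform $K_0$ with $-K_0\leq\rho(z)\leq K_0$ and $\alpha(z)(h)\in Q$, and only then conjugate up into $Q$. The paper observes that this is unnecessary. Since $\pval(q)\geq 0$ means $\inf\{\rho(w):\alpha(w)(q)\in Q\}\leq 0$, one can, for any \emph{given} $z$ with $\rho(z)>0$, simply take $\varepsilon=\rho(z)$ and produce $w$ with $\alpha(w)(q)\in Q$ and $\rho(w)<\rho(z)$; then $\rho(zw^{-1})>0$ so Definition~\ref{def:higherconfining}(a) gives $\alpha(z)(q)=\alpha(zw^{-1})(\alpha(w)(q))\in Q$ directly. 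This yields $\alpha(z)(Q')\subset Q$ for every $z$ with $\rho(z)>0$, and Lemma~\ref{altdescofconf} applies with any $\mu>0$. The point is that altdescofconf requires a uniform threshold $\mu$ on $\rho(z)$, not a uniform bound on the witness $\rho(w)$ across all $h\in Q'$, so the density/cyclicity argument you import from Proposition~\ref{SLid} is overkill here. Your version still works (and the uniform $K_0$ you produce is genuine, since the witness set $\{\rho(w):\alpha(w)(h)\in Q\}$ is upward-closed in $\Im(\rho)$ with infimum $\leq 0$, hence contains some fixed positive element of $\Im(\rho)$), but the paper's direct handling of the infimum is both shorter and avoids bringing in the structure of $\Im(\rho)$ at all.
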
 
\begin{proof} Let $\pval = \Pmap(Q)$ and $Q' = \conf(\pval)$. Since $Q$ is confining with respect to $\rho$,  Definition \ref{def:higherconfining}(a) yields that
$ q \in Q \Rightarrow    \op{inf} \{ \rho(z) : \alpha(z)(q) \in Q \} \leq 0  \Rightarrow  \pval(q) \geq 0  \Rightarrow   q \in Q'.$ Thus $Q \subset Q'$ and hence $Q' \preceq_\rho  Q$. Conversely, let $q \in Q'$. Then $\pval(q)\geq 0$, and the definition of $\pval$ yields that $\alpha(z)(q) \in Q$ for any $z \in \Z^n$ such that $\rho(z) > 0$. Thus  $Q \preceq_\rho Q'$  by Lemma \ref{altdescofconf}, completing the proof.
\end{proof}

Finally we are ready to put together the above results to prove Theorem \ref{thm:PvalandConfining}.

\begin{proof}[Proof of Theorem \ref{thm:PvalandConfining}] 
By Lemmas \ref{Sorderrev} and \ref{Lorderrev}, the maps $\conf$ and $ \Pmap$ descend to order-reversing maps on the equivalence classes of pseudo-valuations subordinate to $\rho$ and confining subsets with respect to $\rho$, respectively. It follows from Propositions \ref{SLid} and \ref{LSid} that the maps are inverses of each other on the equivalence classes. Since both are order-reversing, they descend to inverse isomorphisms between the poset of pseudo-valuations and the opposite of the poset of confining subsets.
\end{proof}

We end this section with a discussion of the structure of a confining subset associated to a pseudo-valuation on $H$ subordinate to $\rho$.  In particular, we show that, up to equivalence of generating sets, $Q$ is the union of a collection of cosets of a subgroup $K$ of $H$, defined by
$$K = \{h \in H :\pval(h) = + \infty \}.$$ The reader may check using conditions (PV2) and (PV3) that $K$ is indeed a subgroup. Since $Q= \{ h \in H : \pval(h) \geq 0\}$, it is clear that $K \subset Q$. Let $T$ be a transversal for $K$ in $H$ containing $0$.  Let $T_Q\subseteq T$ be  the set of  representatives of cosets of $K$ that have non-empty intersection with $Q$, and let $\overline{K} = \displaystyle  \bigcup_{g \in T_Q} (g+K) $.

Our goal is to prove the following. 

\begin{prop}\label{prop:StructureofQ}
Let $\pval$ be a pseudo-valuation on $H$ subordinate to $\rho$, and let $Q=\conf(\pval)$ be the associated confining set. Then we have $
[Q \cup Z_\rho] = \left[  \overline K \cup Z_\rho \right].
$
\end{prop}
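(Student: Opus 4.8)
The plan is to show the two inclusions $[Q\cup Z_\rho]\preccurlyeq[\overline K\cup Z_\rho]$ and $[\overline K\cup Z_\rho]\preccurlyeq[Q\cup Z_\rho]$ of generating sets. The second inclusion is easy: since every element of $T_Q$ lies in some coset meeting $Q$, for each $g\in T_Q$ we may fix $q_g\in Q$ with $q_g\in g+K$. Then for any $h\in\overline K$, writing $h=g+k$ with $g\in T_Q$ and $k\in K\subseteq Q$, property (PV1) together with $\pval(k)=+\infty$ gives $\pval(h-q_g)=\pval(g+k-q_g)\geq\min\{\pval(g-q_g+k),\,\cdot\}$; more directly, $h-q_g\in K\subseteq Q$ since $h$ and $q_g$ lie in the same coset of $K$. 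Hence $h=(h-q_g)+q_g$ is a product of two elements of $Q$, so $\|h\|_{Q\cup Z_\rho}\leq 2$. This gives a uniform bound over $\overline K$, so $\overline K\cup Z_\rho\preceq_\rho Q\cup Z_\rho$ (in the notation of the excerpt, $[\overline K\cup Z_\rho]\preccurlyeq[Q\cup Z_\rho]$).

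For the first inclusion, I want to bound $\|q\|_{\overline K\cup Z_\rho}$ uniformly over $q\in Q$. The key observation is that $K\subseteq\overline K$ and $\overline K$ contains a representative-plus-$K$ for every coset meeting $Q$, so in fact $Q\subseteq\overline K$ directly: if $q\in Q$, then $q$ lies in some coset $g+K$ with $g\in T$, and since $q\in Q$ witnesses that this coset meets $Q$, we have $g\in T_Q$ and hence $q\in g+K\subseteq\overline K$. Therefore $Q\subseteq\overline K$, which immediately yields $\|q\|_{\overline K\cup Z_\rho}\leq 1$ for all $q\in Q$ and so $[Q\cup Z_\rho]\preccurlyeq[\overline K\cup Z_\rho]$. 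Combining the two inclusions gives $[Q\cup Z_\rho]=[\overline K\cup Z_\rho]$.

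The main subtlety — and the step I would be most careful about — is verifying that $\overline K$ (equivalently $Q\cup Z_\rho$ together with $\overline K\cup Z_\rho$) genuinely generates $G$ and that the equivalence of generating sets is well-defined in this $\Z^n$ setting, i.e. that $Z_\rho$ is the right "ambient" piece. This uses Definition~\ref{def:higherconfining}(b): for each $h\in H$ there is $z\in\Z^n$ with $\alpha(z)(h)\in Q\subseteq\overline K$, so $h$ is a product of elements of $\overline K$ and $Z_\rho$ (after possibly decomposing $z$ into generators of bounded $\rho$-value, which is exactly what $Z_\rho$ is built to allow, cf.\ \cite[Lemma 3.6]{ABR}). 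One should also double-check that $\overline K$ is symmetric, which follows from symmetry of $Q$, symmetry of $K$ (via (PV2)), and the fact that $g\in T_Q$ iff the coset $-g+K$ meets $Q$, so that choosing $T$ compatibly (or passing to $T\cup(-T)$, which changes $\overline K$ by at most a bounded amount) makes $\overline K$ symmetric up to equivalence. Since the statement is about equivalence classes of generating sets, these bounded ambiguities in the choice of transversal $T$ are harmless, and I would remark on this explicitly rather than belabor it.
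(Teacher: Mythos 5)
Your proof is correct and essentially the same as the paper's, with one pleasant simplification: where the paper writes $g+k'=q+(k'-k)\in Q+Q$ and then applies $\alpha(z_0)$ from Definition~\ref{def:higherconfining}(c) to land in $Q$, obtaining the bound $2\|z_0\|_{Z_\rho}+1$, you observe directly that $h-q_g\in K\subseteq Q$, so $h=(h-q_g)+q_g$ is a sum of two elements of $Q$ and has word length at most $2$ — the detour through $z_0$ is not needed. Your extra worry about symmetry of $\overline K$ is harmless but unnecessary: $\overline K$ is a union of cosets of $K$, and if $g+K$ meets $Q$ then $-(g+K)=-g+K$ also meets $Q$ (since $Q$ is symmetric), and $-g+K$ equals $g'+K$ for the unique representative $g'\in T$ of that coset, which therefore lies in $T_Q$; hence $\overline K$ is symmetric for any choice of transversal, with no need to pass to $T\cup(-T)$.
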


If $K=0$, then the proposition is vacuous. However, it may yield interesting information if $K\neq 0$. We begin by proving some properties of the subgroup $K$.
\begin{lem} If $\pval$ is a pseudo-valuation on $H$ subordinate to $\rho$ and $Q = \conf(\pval)$, then $K$ is a subgroup of $H$ invariant under the action of $\Z^n$ and hence normal in $G$. Further, $K$ is the (unique) largest such subgroup contained in $Q$. 
\end{lem}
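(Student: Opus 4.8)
The plan is to verify the three asserted properties of $K$ directly from the definition of a pseudo-valuation, and then identify $K$ with the largest $\Z^n$-invariant subgroup contained in $Q$. First I would recall that $K=\{h\in H:\widetilde v(h)=+\infty\}$ and observe, as already noted immediately before the statement, that (PV2) gives $\widetilde v(-h)=\widetilde v(h)$ and (PV1) gives $\widetilde v(h_1+h_2)\geq \min\{\widetilde v(h_1),\widetilde v(h_2)\}-\lambda$. If $h_1,h_2\in K$ then the right-hand side is $+\infty$, so $\widetilde v(h_1+h_2)=+\infty$; together with $\widetilde v(0)=+\infty$ (Remark \ref{rem:pval0}) and closure under negation this shows $K$ is a subgroup. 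Invariance under $\Z^n$ is immediate from (PV3): $\widetilde v(\alpha(z)(h))=\rho(z)+\widetilde v(h)$, which is $+\infty$ if and only if $\widetilde v(h)=+\infty$, so $\alpha(z)(K)=K$ for every $z\in\Z^n$. Since $G=H\rtimes_\alpha\Z^n$ is generated by $H$ and the $t_i$, and $K$ is normal in the abelian group $H$ and carried to itself by each $t_i$, the subgroup $K$ is normal in $G$.

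The containment $K\subseteq Q$ is clear: if $\widetilde v(h)=+\infty\geq 0$ then $h\in \conf(\widetilde v)=Q$. For maximality among $\Z^n$-invariant subgroups of $Q$, suppose $L$ is any subgroup of $H$ with $\alpha(z)(L)\subseteq L$ for all $z\in\Z^n$ and $L\subseteq Q$. The key point is that invariance under \emph{all} of $\Z^n$ (not just the $z$ with $\rho(z)\geq 0$) forces elements of $L$ to have infinite pseudo-valuation. Indeed, fix $h\in L$ and any real number $M$; since the image of $\rho$ is unbounded below (it is a nonzero subgroup of $\R$), choose $z\in\Z^n$ with $\rho(z)\leq M-0$, i.e. with $\rho(z)$ as negative as desired. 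Then $\alpha(z)(h)\in L\subseteq Q$, so $\widetilde v(\alpha(z)(h))\geq 0$, and by (PV3) this gives $\widetilde v(h)=\widetilde v(\alpha(z)(h))-\rho(z)\geq -\rho(z)$. Letting $\rho(z)\to-\infty$ shows $\widetilde v(h)=+\infty$, so $h\in K$ and hence $L\subseteq K$. This simultaneously proves that $K$ is the largest $\Z^n$-invariant subgroup contained in $Q$ and that it is unique with this property.

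The main (modest) obstacle is purely one of bookkeeping: one must be careful that the relevant invariance hypothesis used for maximality is invariance under the full group $\Z^n$, since $Q$ itself is only closed under $\alpha(z)$ for $\rho(z)\geq 0$, and it is precisely the ability to apply $\alpha(z)$ for arbitrarily negative $\rho(z)$ that upgrades ``$h\in Q$'' to ``$\widetilde v(h)=+\infty$.'' I would also remark that this statement does not depend on the choice of transversal $T$ or the auxiliary set $Z_\rho$, so it can be proved in isolation before turning to Proposition \ref{prop:StructureofQ}. No deep input is needed beyond Definition \ref{pval}, Remark \ref{rem:pval0}, and the fact that a nonzero homomorphism $\Z^n\to\R$ has unbounded image.
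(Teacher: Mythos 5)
Your proof is correct and follows essentially the same route as the paper's: subgroup and $\Z^n$-invariance from (PV1)/(PV2)/(PV3) together with $\pval(0)=+\infty$, and maximality by applying $\alpha(z)$ with $\rho(z)$ arbitrarily negative to force $\pval(h)=+\infty$ for any $h$ in a $\Z^n$-invariant subgroup contained in $Q$. The only cosmetic differences are that you phrase maximality directly rather than by contradiction, and you (correctly) invoke (PV1) for closure under addition where the paper's preamble slightly misleadingly cites (PV2) and (PV3).
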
 

\begin{proof} To see the invariance of $K$ under the action of $\Z^n$, observe that $\pval(\alpha(z)(k)) = \rho(z) + \pval(k) = + \infty$ for any $k \in K$ and $z \in \Z^n$, as $\rho$ only takes finite values in $\R$. Thus $\alpha(z)(k) \in K$ for all $k \in K, z \in \Z^n$. The normality of $K$ follows from its invariance under $\Z^n$ and the fact that $H$ is abelian.

To prove the final statement, suppose for contradiction that $K'$ is a subgroup contained in $Q$ that is invariant under the action of $\Z^n$ and such that $K' \backslash K \neq \emptyset$. Then there exists an $h \in K'$ with $0 \leq \pval(h) < +\infty$. Since $K'$ is $\Z^n$--invariant,  $\alpha(z)(h) \in K'$  for any $z \in \Z^n$. However, $\pval(\alpha(z)(h)) = \rho(z) + \pval(h)$, and so  choosing $z$ with $\rho(z)$ very negative shows there are elements in $K'\subseteq Q$ whose valuations are negative. This contradicts the definition of $Q$, and so $K'\subseteq K$. 
\end{proof} 

The subgroup $K$ can be very different in different cases. For instance, let $\phi \in SL_3(\Z)$ be a matrix with irrational eigenvalues. If $G=\Z^3 \rtimes_\phi \Z$, then $K$ has to be a $\phi-$invariant subgroup of $\Z^3$ for any confining subset $Q$. If $Q$ is a strictly confining subset,  we must have $K = \{0 \}$. On the other hand, if $Q=\Z^3$, then $K = \Z^3$. In the case of $G=\R^3 \rtimes_\phi \Z$, the subgroup $K$ will be one of the invariant subspaces for the action of $\phi$ on $\R^3$, depending on the choice of confining subset.

The next lemma shows  that every strictly confining subset  intersects one of the non-trivial cosets of $K$. 

\begin{lem} If $\displaystyle Q \cap \left( \bigcup_{g \in T\setminus\{0\}} g + K \right) = \emptyset$, then $Q = K = H$. \end{lem}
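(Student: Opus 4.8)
The statement to prove is: if $Q\cap\bigl(\bigcup_{g\in T\setminus\{0\}}(g+K)\bigr)=\emptyset$, then $Q=K=H$. The hypothesis says that $Q$ meets \emph{only} the trivial coset of $K$, so $Q\subseteq K$ (since $Q$ is a union of partial intersections with cosets, and the only coset it can hit is $0+K$; more precisely $T_Q\subseteq\{0\}$). Combined with the already-established inclusion $K\subseteq Q$, this gives $Q=K$ immediately. So the real content is to show $K=H$, equivalently $Q=H$, equivalently that the pseudo-valuation $\pval$ (with $Q=\conf(\pval)$) takes the value $+\infty$ everywhere.

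First I would reduce to showing $Q=H$. Fix any $h\in H$; I want to show $h\in Q$, i.e. $\pval(h)\geq 0$, and in fact that $\pval(h)=+\infty$. Suppose for contradiction that $\pval(h)<+\infty$ for some $h$. By Definition~\ref{def:higherconfining}(b) (or directly by (PV3), choosing $z\in\Z^n$ with $\rho(z)$ large enough that $\rho(z)+\pval(h)\geq 0$), there is some $z_0$ with $h_0:=\alpha(z_0)(h)\in Q$ and $0\leq\pval(h_0)<+\infty$. Now $h_0\in Q$ and, since $Q\subseteq K$ by the hypothesis, $h_0\in K$, which means $\pval(h_0)=+\infty$ by the definition of $K$ --- a contradiction. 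Hence $\pval(h)=+\infty$ for every $h\in H$, so $H=K\subseteq Q$, giving $Q=H$, and therefore $Q=K=H$.

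The only subtle point --- and the one I would state carefully --- is the passage ``$Q$ meets only the coset $0+K$ $\Rightarrow$ $Q\subseteq K$''. This is where the definition of $T_Q$ and $\overline K$ from the paragraph preceding the statement is used: $T_Q$ is by construction the set of coset representatives $g\in T$ with $(g+K)\cap Q\neq\emptyset$, and the hypothesis is precisely that $T_Q\cap(T\setminus\{0\})=\emptyset$, i.e. $T_Q\subseteq\{0\}$. Since every element of $Q$ lies in $g+K$ for exactly one $g\in T$, and that $g$ must lie in $T_Q\subseteq\{0\}$, we get $Q\subseteq 0+K=K$. I do not anticipate any real obstacle here; the argument is short and the main thing is to invoke $K\subseteq Q$ (from the preceding lemma) and the definition of $K$ as the set where $\pval=+\infty$ at the right moment. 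One could alternatively phrase the contradiction purely in terms of confining subsets without mentioning $\pval$ at all --- if $Q\subseteq K$ then, since $K$ is $\Z^n$-invariant and contained in $Q$, and $K$ is the largest such subgroup, one still needs the input that $K$ itself equals $Q$; but I find the pseudo-valuation argument cleaner, so that is the route I would write up.
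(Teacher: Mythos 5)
Your proof is correct and follows essentially the same route as the paper's: both first establish $Q=K$ from the coset hypothesis together with $K\subseteq Q$, and then use the existence (Definition~\ref{def:higherconfining}(b), or equivalently (PV3)) of $z\in\Z^n$ with $\alpha(z)(h)\in Q$ to conclude $h\in Q$. The paper phrases the final step directly via $\alpha(z^{-1})(Q)=Q$ for the $\Z^n$-invariant subgroup $Q=K$, while you run the equivalent contradiction in pseudo-valuation language ($\pval(h_0)<+\infty$ yet $h_0\in K$); this is a cosmetic rather than substantive difference.
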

\begin{proof} We must have $Q=K$, as $K\subset Q$ and $Q$ does not intersect any non-trivial coset of $K$ by assumption. Thus $Q$ is a $\Z^n$--invariant subgroup of $H$. By Definition \ref{def:higherconfining}(b), for every $h \in H$, there is a $z \in \Z^n$ such that $\alpha(z)(h) \in Q$, and hence $h \in \alpha(z^{-1})(Q) = Q$.\end{proof} 

We are now ready to prove Proposition~\ref{prop:StructureofQ}. 

\begin{proof}[Proof of Proposition~\ref{prop:StructureofQ}] 

We first show that $\overline K \cup Z_\rho$ is a generating set for $G=H \rtimes_\alpha \Z^n$.   As $Z_\rho$ generates $\Z^n$, it suffices  to show that $H$ is contained in the subgroup generated by $\overline{K}\cup Z_\rho$.

Let $h \in H$.  Then $h \in g+K$ for some $g \in T$. By Definition \ref{def:higherconfining}(b), there is a $z \in \Z^n$ such that $\alpha(z)(h) \in Q$. Hence $\alpha(z)(h) \in Q \cap (g' +K)$ for some $g' \in T$. In particular, $g' \in T_Q$ and $h \in \alpha(z^{-1})(g' + K)$, so  $h$ is contained in the subgroup generated by $\overline{K}\cup Z_\rho$.

To establish the equivalence of the generating sets, we will show that each generating set has uniformly bounded word length with respect to the other. First of all, $Q\subset\overline{K}$ by definition of $T_Q$.

Conversely, $K \subset Q$ by definition. If $Q$ intersects the coset $g+K$ non-trivially, we will show that this coset has uniformly bounded word length in $Q \cup Z_\rho$, which will prove the result. By assumption, there exists an element $q \in Q$ of the form $q = g+k$ with $k\in K$. For any other $k' \in K$, write $g + k' = q + (k' -k)$. As $K$ is a subgroup contained in $Q$, this implies  that $g +k' \in Q+ Q$. If $z_0$ is the element from Definition \ref{def:higherconfining}(c), then  $\alpha(z_0)(g+k') \in Q$. As $k'$ was arbitrary, it follows that the word length of any element of $g+K$ with respect to $Q\cup Z_\rho$ is at most $2\|z_0\|_{Z_\rho} +1$. 
\end{proof}

\bibliographystyle{abbrv}
\bibliography{ab_by_cyclic}
\addcontentsline{toc}{section}{References}

\end{document}